\PassOptionsToPackage{table}{xcolor}
\documentclass[10pt, a4paper, reqno]{amsart}
\usepackage[T1]{fontenc}
\usepackage[utf8]{inputenc}

\usepackage{amsmath,amsthm,amssymb}
\usepackage{amscd}
\numberwithin{equation}{section}
\usepackage{times}
\usepackage{bbm}
\usepackage{cite}
\usepackage{enumitem}
\usepackage[left=1.12in,right=1.12in,top=1.12in,bottom=1.12in]{geometry} 
\usepackage{calc}
\usepackage{ifthen,tabularx,graphicx,multirow}
\graphicspath{ %
	{Figures/} %
	{Figures/Zoom} %
}
\usepackage{rotating}
\usepackage{hyperref}
\usepackage{tikz}
\usepackage{csquotes}
\usepackage{mathrsfs}
\usepackage{dsfont}
\usepackage[utf8]{inputenc}

\usepackage{tikz-cd}
\usetikzlibrary{tikzmark}
\usepackage{ etoolbox}
\usepackage{tcolorbox}
\usepackage{enumitem}
\usepackage{scrextend}

\usepackage{algorithm2e}
\newcolumntype{Y}{>{\centering\arraybackslash}X}
\newtheorem{lemma}{Lemma}
\newtheorem{proposition}[lemma]{Proposition}
\newtheorem{theorem}[lemma]{Theorem}
\numberwithin{lemma}{section}
\newtheorem{corollary}[lemma]{Corollary}

\theoremstyle{definition}
\newtheorem{definition}[lemma]{Definition}

\newcommand{\nh}{\text{NH}}

\newcommand{\strbdepsph}{\epsilon_{\mathbb{P}h\mathrm{B}}^{\mathrm{s}}(\mathrm{p})}
\newcommand{\strbdepsp}{\epsilon_{\mathbb{P}\mathrm{B}}^{\mathrm{s}}(\mathrm{p})}

\newcommand{\gprob}{\Gamma^{\mathrm{ran}}}

\newcommand{\pr}{\mathbb{P}}
\theoremstyle{definition}
\newtheorem{remark}[lemma]{Remark}

\theoremstyle{definition}

\theoremstyle{definition}

\theoremstyle{theorem}

\theoremstyle{theorem}
\newtheorem{problem}{Problem}

\theoremstyle{theorem}

\linespread{1.2}

\newcount\colveccount
\newcommand*\colvec[1]{
	\global\colveccount#1
	\begin{pmatrix}
		\colvecnext
	}
	\def\colvecnext#1{
		#1
		\global\advance\colveccount-1
		\ifnum\colveccount>0
		\\
		\expandafter\colvecnext
		\else
	\end{pmatrix}
	\fi
}
\makeatletter
\newcounter{framedeqn}
\AtBeginEnvironment{subequations}{\let\c@equation\c@framedeqn}
\makeatother

\newcommand{\SetNine}{\mathcal{S}^9}

\newcommand{\opBall}[2]{\mathcal{B}_{#1}{\left(#2\right) }}
\newcommand{\clBall}[2]{\overline{\mathcal{B}}_{#1}{\left(#2\right) }}

\newcommand{\xilp}{\Xi_{\mathrm{LP}}}

\newcommand{\xibp}{\Xi_{\mathrm{BP}}}
\newcommand{\xibpdn}{\Xi_{\mathrm{BPDN}}}

\newcommand{\xicl}{\Xi_{\mathrm{CL}}}
\newcommand{\xiul}{\Xi_{\mathrm{UL}}}

\newcommand{\xibptv}{\Xi_{\mathrm{BPTV}}}

\newcommand{\xiultv}{\Xi_{\mathrm{ULTV}}}

\newcommand{\complex}{\mathbb{C}}
\newcommand{\real}{\mathbb{R}}
\newcommand{\indic}{\chi}
\newcommand{\sgn}{\text{sgn}}
\newcommand{\argmin}{\mathop{\mathrm{arg\, min}}}

\newcommand{\supp}{\text{supp}}

\newcommand{\tv}[1]{\|#1\|_{\mathrm{TV}}}

\newcommand{\cond}[1]{\mathrm{cond}(#1)}

\newcommand{\probab}{\mathbb{P}}

\newcommand{\dalgo}{d}

\newcommand{\vecYCL}{y^{\mathrm{CL}}}
\newcommand{\vecYLP}{y^{\mathrm{A}}}
\newcommand{\vecYl}{y^{\mathrm{L}}}
\newcommand{\vecYTV}{y^{\mathrm{TV}}}
\newcommand{\matLP}{A}
\newcommand{\matl}{L}
\newcommand{\matTV}{T}

\newcommand{\orvec}{\mathscr{O}_{\mathrm{vec}}}
\newcommand{\ormat}{\mathscr{O}_{\mathrm{mat}}}
\newcommand{\orsol}{\mathscr{O}_{\mathrm{sol}}}

\newcommand{\flipOp}{P_{\text{flip}}}

\newcommand{\albetSet}{\mathcal{L}}
\newcommand{\albetSetBPTV}[1]{\mathcal{L}^{\text{BP,TV},#1}}
\newcommand{\albetSetULTV}[1]{\mathcal{L}^{\text{UL,TV},#1}}
\newcommand{\albetSetEFBPTV}[1]{\mathcal{L}^{\text{BP,TV,E},#1}}
\newcommand{\albetSetEFULTV}[1]{\mathcal{L}^{\text{UL,TV,E},#1}}
\newcommand{\dist}{\mathrm{dist}}
\newcommand{\disM}{\mathrm{dist}_{\mathcal{M}}}

\newcommand{\actv}[1]{{\left(#1\right)}^{\text{act}}}

\newcommand{\omlp}{\Omega^{\mathrm{LP}}}

\newcommand{\ombpl}{\Omega^{\mathrm{BP},\ell^1}}
\newcommand{\omcll}{\Omega^{\mathrm{CL},\ell^1}}
\newcommand{\omull}{\Omega^{\mathrm{UL},\ell^1}}

\newcommand{\ombptv}{\Omega^{\mathrm{BP},\mathrm{TV}}}
\newcommand{\omultv}{\Omega^{\mathrm{UL},\mathrm{TV}}}

\newcommand{\omlpef}{\Omega^{\mathrm{LP},\mathrm{E}}}

\newcommand{\ombplef}{\Omega^{\mathrm{BP},\ell^1,\mathrm{E}}}
\newcommand{\omcllef}{\Omega^{\mathrm{CL},\ell^1,\mathrm{E}}}
\newcommand{\omullef}{\Omega^{\mathrm{UL},\ell^1,\mathrm{E}}}

\newcommand{\ombptvef}{\Omega^{\mathrm{BP},\mathrm{TV},\mathrm{E}}}
\newcommand{\omultvef}{\Omega^{\mathrm{UL},\mathrm{TV},\mathrm{E}}}

\newcommand{\omlpb}[1][K]{\Omega^{\mathrm{LP}}_{m,N,#1}}
\newcommand{\ombplb}[1][K]{\Omega^{\mathrm{BP},\ell^1}_{m,N,#1}}
\newcommand{\omcllb}[1][K]{\Omega^{\mathrm{CL},\ell^1}_{m,N,#1}}
\newcommand{\omullb}[1][K]{\Omega^{\mathrm{UL},\ell^1}_{m,N,#1}}
\newcommand{\ombptvb}[1][K]{\Omega^{\mathrm{BP},\mathrm{TV}}_{m,N,#1}}
\newcommand{\omultvb}[1][K]{\Omega^{\mathrm{UL},\mathrm{TV}}_{m,N,#1}}

\newcommand{\omlps}[1][K]{\Omega^{\mathrm{LP},\mathrm{s}}_{m,N,#1}}

\newcommand{\ombpls}[1][K]{\Omega^{\mathrm{BP},\ell^1,\mathrm{s}}_{m,N,#1}}
\newcommand{\omclls}[1][K]{\Omega^{\mathrm{CL},\ell^1,\mathrm{s}}_{m,N,#1}}
\newcommand{\omulls}[1][K]{\Omega^{\mathrm{UL},\ell^1,\mathrm{s}}_{m,N,#1}}

\newcommand{\ombptvs}[1][K]{\Omega^{\mathrm{BP},\mathrm{TV},\mathrm{s}}_{m,N,#1}}
\newcommand{\omultvs}[1][K]{\Omega^{\mathrm{UL},\mathrm{TV},\mathrm{s}}_{m,N,#1}}
\newcommand{\omlpw}[1][K]{\Omega^{\mathrm{LP},\mathrm{w}}_{m,N,#1}}

\newcommand{\ombplw}[1][K]{\Omega^{\mathrm{BP},\ell^1,\mathrm{w}}_{m,N,#1}}
\newcommand{\omcllw}[1][K]{\Omega^{\mathrm{CL},\ell^1,\mathrm{w}}_{m,N,#1}}
\newcommand{\omullw}[1][K]{\Omega^{\mathrm{UL},\ell^1,\mathrm{w}}_{m,N,#1}}

\newcommand{\ombptvw}[1][K]{\Omega^{\mathrm{BP},\mathrm{TV},\mathrm{w}}_{m,N,#1}}
\newcommand{\omultvw}[1][K]{\Omega^{\mathrm{UL},\mathrm{TV},\mathrm{w}}_{m,N,#1}}

\newcommand{\omlpos}{\Omega^{\mathrm{LP},\mathrm{s}}_{m,N}}
\newcommand{\omlpow}{\Omega^{\mathrm{LP},\mathrm{w}}_{m,N}}
\newcommand{\vecYMainLP}[1]{y^{\mathrm{LP},#1}}
\newcommand{\vecYMainBP}[1]{y^{\mathrm{BP},\ell^1,#1}}
\newcommand{\vecYMainUL}[1]{y^{\mathrm{UL},\ell^1,#1}}
\newcommand{\vecYMainCL}[1]{y^{\mathrm{CL},#1}}
\newcommand{\vecYMainBPTV}[1]{y^{\mathrm{BP},\mathrm{TV},#1}}
\newcommand{\vecYMainULTV}[1]{y^{\mathrm{UL},\mathrm{TV},#1}}

\newcommand{\matLPBPExit}{A^{\mathrm{E}}}
\newcommand{\matLPObj}{A^{\mathrm{LP}, \mathrm{D}}}
\newcommand{\oh}{\mathcal{O}}

\newcommand{\dyadic}{\mathbb{D}}
\newcommand{\length}{\mathrm{Len}}
\newcommand{\ones}{\mathbf{1}}
\newcommand{\CompactK}{\mathcal{K}}
\newcommand{\DataTur}{\mathrm{Data}^{\mathrm{Tur}}}
\newcommand{\DataBSS}{\mathrm{Data}^{\mathrm{BSS}}}
\newcommand{\Alphabet}{\mathcal{A}}
\newcommand{\InputVecsBSS}{\mathcal{V}}
\newcommand{\InfeasibleInputs}{\Sigma^{\mathrm{FP}}}
\newcommand{\MultiMinInputs}{\Sigma^{\mathrm{RCC}}}
\usepackage{mathtools}
\DeclarePairedDelimiter\floor{\lfloor}{\rfloor}
\DeclarePairedDelimiter\ceil{\lceil}{\rceil}

\setcounter{tocdepth}{1}

\begin{document}
\title[Computational barriers in estimation, regularisation and learning]{The extended Smale's 9th problem \\
---\\ On computational barriers and paradoxes in estimation,  regularisation, computer-assisted proofs and learning}

\author{A. Bastounis} 
\address{School of Mathematics, University of Edinburgh}
\email{abastoun@ed.ac.uk}

\author{A. C. Hansen} 
\address{Department of Applied Mathematics and Theoretical Physics, University of Cambridge}
\email{ach70@cam.ac.uk}

\author{V. Vla\v{c}i\'{c}} 
\address{D-ITET, ETH Zürich}
\email{vlacicv@mins.ee.ethz.ch}

\maketitle

\section{Introduction}
Linear and semidefinite programming (LP, SDP), regularisation through basis pursuit (BP) and Lasso have seen great success in mathematics, statistics, data science, computer-assisted proofs and learning. The success and performance of LP is traditionally attributed to the fact that it is polynomially solvable (colloquially, ``in P'') for rational inputs. On the other hand, in his list of problems for the 21st century \cite{21century_Smale} S. Smale calls for ``[Computational] models which process approximate inputs and which permit round-off computations''.  Indeed, since e.g. $\sqrt{\cdot}$ and $\exp(\cdot)$ do not have exact representations, inaccurate data input is a daily encounter. The relevance of such a model is further emphasised by the fact that even a rational number such as $1/3$ is stored approximately in base-2 when using floating-point arithmetic, a situation faced by most software. This model allowing inaccurate input of arbitrary precision, which we call \emph{the extended model}, leads to extended versions of fundamental problems such as: ``Are LP and other aforementioned problems in P?'' The same question can be asked of an extended version of Smale's 9th problem \cite{21century_Smale} on the list of mathematical problems for the 21st century.  Recall, Smale's 9th problem reads:
\begin{displayquote}
	\normalsize
	{\it Is there a polynomial time algorithm over the real numbers which decides the feasibility of the linear system of inequalities $Ax \geq y$, and if so, outputs such an $x$?}
\end{displayquote}
One can thus pose this problem in the extended model where $A$ and $y$ are given as inexact inputs with arbitrary precision (see for example L. Lov\'{a}sz \cite[p. 34]{lovasz1987algorithmic}).
Similarly, the optimisation problems BP, SDP and Lasso, where the task is to output a solution to a specified precision, can likewise be posed in the extended model.
Given the widespread use of randomised algorithms, one can then ask questions on the existence of randomised algorithms providing an approximate solution with a certain probability. We will collectively refer to these problems as the \emph{extended Smale's 9th problem} (see Problem \ref{Extended_Smale} for the precise formulation), which we will consider in both the Turing \cite{Turing_Machine} and the Blum-Shub-Smale (BSS) \cite{BSS_machine} model for real arithmetic.

We settle this problem in both the negative and the positive, revealing two surprises: (1) In mathematics, sparse regularisation, statistics, and learning, one successfully computes with non-computable functions. The same happens also in computer-assisted proofs, for example in the proof of Kepler's conjecture (Hilbert's 18th problem) \cite{Hales1, Hales2}. (2) In order to mathematically characterise this phenomenon, one needs an intricate complexity theory for, seemingly paradoxically, non-computable functions. 

\vspace{2mm}

{\bf Main results (The extended Smale's 9th problem).} Short summary of Theorems \ref{Cor:main}, \ref{thm:ExitFlag}, \ref{thm:Smales9}, \ref{th:smale_comp_sens}: {\it Consider the task of computing a minimiser of  LP, BP, or Lasso in the extended model, and choose any $\ell^p$-norm to measure the error. Then, for any integer $K>2$, there exists a class of feasible inputs $\Omega$ such that, simultaneously, we have the following.
\begin{itemize}[leftmargin=8mm]
	\item[(i)]
	No algorithm, even randomised, can produce $K$ correct digits of the true solution for all inputs in $\Omega$ (with probability exceeding $\mathrm{p} > 1/2$ in the randomised case).
	\item[(ii)]
	If we allow randomised algorithms with non-zero probability of not halting (i.e., not producing an output), then no such algorithm can produce $K$ correct digits for all inputs in $\Omega$ with probability exceeding $\mathrm{p} > 2/3$. However, there does exist such an algorithm that produces $K$ correct digits for all inputs in $\Omega$ with probability $2/3$.
	\item[(iii)]
	One cannot decide if a given algorithm taking inputs from $\Omega$ fails to produce $K$ correct digits on a given input (with probability exceeding $\mathrm{p} > 1/2$ in the randomised case).  This is in fact strictly harder than solving the original problem in the following sense. Even if given an oracle for solving, e.g., LP accurately, one cannot decide if the algorithm for solving LP successfully produces $K$ correct digits of the true solution. 
	\item[(iv)]
	There does exist an algorithm that provides $K-1$ correct digits for all inputs in $\Omega$. However, any such algorithm will need an arbitrarily long time to achieve this. Specifically, there is an $\Omega^{\prime} \subset \Omega$ with inputs of fixed dimensions, such that, for any $T > 0$ and any algorithm $\Gamma$, there exists an input $\iota \in \Omega^{\prime}$ so that either $\Gamma(\iota)$ does not approximate the true solution with $K-1$ correct digits or the runtime of $\Gamma$ on $\iota$ exceeds  $T$. Moreover, for any randomised algorithm $\Gamma^{\mathrm{ran}}$ and $\mathrm{p} \in (0,1/2)$, there exists an input $\iota \in \Omega^{\prime}$ such that     
	\begin{equation*}
		\begin{split}
			&\mathbb{P}\big(\gprob(\iota) \text{ does not approximate the true solution with } K-1 \text{ correct digits } \\
			& \hspace{4cm} \text{ or the runtime of } \Gamma \text{ on } \iota \text{ exceeds }  T \big) > \mathrm{p}.
		\end{split}
	\end{equation*}
	\item[(v)]
	The problem of producing $K-2$ correct digits for all inputs in $\Omega$ is in P, i.e., can be solved in polynomial time in the number of variables $n$.
	\item[(vi)] If one only considers (i) - (iv), $\Omega$ can be chosen with any fixed dimensions $m < N$ with $m \geq 4$ (see \S \ref{sec:comp_prob} for the precise formulation of the problems). Moreover, if one only considers (i) - (iii), then $K$ can be chosen to be 1.
	\item[(vii)]
	For BP problems with $\delta$ as in \eqref{problems3}, satisfying the robust nullspace property (which is satisfied with high probability in many cases in the sciences) there is an approximation threshold $\epsilon_0(\delta) > 0$. That is, the problem of computing an $\epsilon$-approximation of a minimiser is in P for $\epsilon > \epsilon_0 $ and $\notin$ P for $\epsilon < \epsilon_0$, regardless of P vs NP and
\begin{equation}\label{eq:approx_trhold}
\frac{1}{2}\delta \leq \epsilon_0(\delta) \lesssim \delta.
\end{equation} 
\item[(viii)] Similar results to the above hold for the extended LP feasibility decision problem, see Theorem \ref{thm:Smales9}. 
\end{itemize}
}
\vspace{2mm}

In the above, we use the unqualified term \emph{algorithm} to mean an algorithm that always provides an output, i.e., always halts.
More precise and elaborate versions of these claims will follow in Theorem \ref{Cor:main}, Theorem \ref{thm:ExitFlag}, Theorem \ref{thm:Smales9}, and Theorem \ref{th:smale_comp_sens}.

\begin{remark}[{\bf Condition and failure of modern algorithms}]
	It may come as a surprise that the above results are independent of the many condition numbers in the literature (see \S \ref{condition}). Indeed, bounded condition numbers generally do not imply the existence of successful algorithms, and, on the other hand, there are problem classes in P with infinite condition numbers, see Theorem \ref{th:smale_comp_sens} and Remark \ref{rem:inf_cond}. Moreover, the input can be bounded from above and below. The reader is invited to consult \S \ref{sec:failure} for a demonstration of the failure of modern software consistent with the above results. 
\end{remark}

\noindent The extended Smale's 9th problem has many implications that can be summarised briefly as follows.

{\bf \emph{Computer-assisted proofs and non-computable problems:}} The recent computer-assisted proof \cite{Hales1, Hales2} of Kepler's conjecture/Hilbert's 18th problem (see also \cite{Lagarias2011, Lagarias2011_intro}) was possible despite relying on computing with non-computable problems. This is a consequence of the extended Smale's 9th problem for deciding feasibility as shown in Theorem \ref{thm:Smales9} in \S \ref{sec:Smale_Kepler}.  A crucial part of the proof of  Kepler's conjecture consists of the numerical computation of LPs with inexact input, as in the extended model described above. In view of (i) - (iii) above, this may seem paradoxical. However, as shown in \S \ref{sec:Smale_Kepler}, non-computable problems can indeed be used in computer-assisted proofs, the Dirac-Schwinger conjecture \cite{fefferman1990, fefferman1992, fefferman1993aperiodicity,  fefferman1994, fefferman1994_2, fefferman1995, fefferman1996interval, fefferman1996, fefferman1997} serving as another example.  Our result in Theorem \ref{thm:ExitFlag} -- described in (iii) above on the limits of computing the exit flag -- can be interpreted as limitations on proof checkers.   

{\bf \emph{New phase-transitions in optimisation (regardless of P vs NP):}} The problem of computing $\epsilon$-approximations (see \S \ref{sec:hardness}) to the objective function of NP-hard optimisation problems often leads to phase transitions at the approximation threshold $\epsilon_{\mathrm{A}} > 0$ (see \eqref{eq:appr_thr} and note that in some cases $\epsilon_{\mathrm{A}}$ depends on the size of the problem). Indeed, assuming that P $\neq$ NP we often have the following:
\vspace{1mm}
\begin{equation}\label{eq:phase-transition_hardness}
\begin{tabular}{c}
			{\bf \emph{Classical phase}} \\
			{\bf \emph{transition in hardness}} \\
			{\bf \emph{of approximation}}
	\end{tabular} 
	\quad
\begin{tikzcd}[column sep = 2.5cm]
		 \fbox{ \begin{tabular}{c}
			Computing 
			\\ $\epsilon$-approx $\in$ P
		\end{tabular} }
 \arrow[r, shift left, "\epsilon_A > \epsilon "{name=U, above}]
		\arrow[leftarrow,r, shift right, "\epsilon_A < \epsilon  "{name=D,below}]
		&  \fbox{ \begin{tabular}{c}
			Computing $\epsilon$-approx\\ is NP-Hard (thus $\notin$ P)
		\end{tabular} }
\end{tikzcd}
\end{equation}
\vspace{1mm}
The fact that $\epsilon_{\mathrm{A}} > 0$ often follows from the PCP theorem \cite{Sudan, Arora_JACM_98_2, Lovasz_JACM_96}, for overviews see \cite{Arora2007} and \cite{Sudan_Overview_2009} and references therein.
The extended Smale's 9th problem leads to similar -- yet more complex -- phase transitions for the problem of computing $\epsilon$-approximations to minimisers in the extended model for classical combinatorial optimisation problems such as LP and problems in continuous optimisation such as BP. This phenomenon is characterised by the \emph{strong breakdown-epsilon} $\epsilon_{\mathrm{B}}^{\mathrm{s}}$ and the \emph{weak breakdown-epsilon} $\epsilon_{\mathrm{B}}^{\mathrm{w}}$ (Definition \ref{def:Breakdown-epsilons_strong} and Definition \ref{def:Breakdown-epsilons}), yielding phase transitions in several directions for LP (the computational cost is measured as a function of the number of variables): 
\begin{equation}\label{eq:phase}
\begin{tikzcd}[column sep=0mm, row sep=0.9cm]
\begin{tabular}{c}
			{\bf \emph{New phase}}  \\
			{\bf \emph{transitions due to the}}\\
			 {\bf \emph{extended Smale's 9th}}\\
	\end{tabular}
	&\fbox{\begin{tabular}{c}
			Computing $\epsilon$-approx $\notin$ k-EXPTIME $\forall \, k$\\
			 but $\in$ R (computable)
			 \end{tabular} }
		 \arrow[dl, shift left, "\epsilon_{\mathrm{B}}^{\mathrm{w}} < \epsilon "{name=U, below,sloped}]
\arrow[leftarrow,dl, shift right, "\epsilon_{\mathrm{B}}^{\mathrm{w}}> \epsilon > \epsilon_{\mathrm{B}}^{\mathrm{s}}"{name=D,anchor=center,sloped,above}] 
\arrow[dr, shift left, "\epsilon_{\mathrm{B}}^{\mathrm{s}} > \epsilon "{name=U, above,sloped}]
\arrow[leftarrow,dr, shift right, "\epsilon_{\mathrm{B}}^{\mathrm{s}}< \epsilon < \epsilon_{\mathrm{B}}^{\mathrm{w}}"{name=D,below,sloped}] & \\
	 \fbox{ \begin{tabular}{c}
			Computing 
			\\ $\epsilon$-approx $\in$ P
		\end{tabular} }
\arrow[rr, shift left, "\epsilon_{\mathrm{B}}^{\mathrm{s}}=\epsilon_{\mathrm{B}}^{\mathrm{w}} > \epsilon "{name=U, above}]
\arrow[leftarrow,rr, shift right, "\epsilon_{\mathrm{B}}^{\mathrm{s}} = \epsilon_{\mathrm{B}}^{\mathrm{w}}< \epsilon"{name=D,below}] & & 
\fbox{ \begin{tabular}{c}
			Computing\\ $\epsilon$-approx $\notin$ R
			\\  (non-computable)
		\end{tabular} }
\end{tikzcd}
\end{equation}

The extended Smale's 9th problem is related to many areas of optimisation such as complexity theory, approximation algorithms for hard problems, fundamental limits in continuous optimisation and robust optimisation \cite{Nesterov1, Nesterov_Nemirovski_Acta, Nemirovski1995, Nemirovski07, Nesterov2, Nesterov2018, Nemirovski_robust, Nemirovski_robust2}. It leads to phase transitions as in \eqref{eq:phase} for the problem of computing $\epsilon$-approximations
to minimisers in continuous optimisation and in combinatorial optimisation. The phase transitions are independent of the P vs NP question. 

Our statements above using integers ($K$, $K-1$, $K-2$) can be viewed as `quantised' phase transition thresholds. In particular, we consider the integers $\lceil|\log(\epsilon_{\mathrm{B}}^{\mathrm{w}})|\rceil$ and $\lceil|\log(\epsilon_{\mathrm{B}}^{\mathrm{s}})|\rceil$, but one can easily state our main results with the actual breakdown-epsilons describing the 'unquantised' phase transition threshold as in \eqref{eq:phase}. 
The phase transition between P and non-computable problems (the lower part of \eqref{eq:phase}) is not a direct consequence of our stated theorem, but can easily be established from our constructions used in the proofs with simple modifications. The new results suggest a classification program on which problems in continuous and combinatorial optimisation will have such phase transitions in the extended model. 

To address the extended Smale's 9th problem in the probabilistic setting we develop a theory for randomised algorithms through the \emph{probabilistic strong breakdown epsilon} $\epsilon_{\mathbb{P}\mathrm{B}}^{\mathrm{s}}$ and the \emph{probabilistic weak breakdown epsilon} $\epsilon_{\mathbb{P}\mathrm{B}}^{\mathrm{w}}$ (Definition \ref{prob_strong_break} and Definition \ref{prob_weak_break}). This yields phase transitions similar to \eqref{eq:phase} in the probabilistic sense for both continuous and combinatorial optimisation problems. See \eqref{eq:prob_phase} for an example of a phase transition diagram in the probabilistic setting.  

{\bf \emph{The extended Smale's 9th in practice -- Non-computability is not rare:}} 
The phenomenon illustrated in our summary of the main results above is not due to exotic situations that will never occur in practice. Indeed, in \S \ref{sec:not_rare} we demonstrate how standard algorithms will fail on basic problems drawn from standard probability distributions. In fact, our proof techniques reveal much more than the statements of our theorems, and can be used to explain exactly why we get the failures documented in \S \ref{sec:not_rare}. Moreover, in Theorem \ref{th:smale_comp_sens} we 
characterise the approximation threshold \eqref{eq:approx_trhold} given the standard assumptions required in the sciences, and it is not zero. However, the approximation threshold \eqref{eq:approx_trhold} matches the classical recovery error bounds -- up to a constant -- explaining the success of the use of convex optimisation in the sciences despite the non-computability phenomenon. 
The first results leading to upper bounds on the approximation threshold \eqref{eq:approx_trhold} were done in \cite{NemirovskiLMCO}.

\tableofcontents

\subsection{The computational problems -- Finding minimisers}\label{sec:comp_prob}
Finding minimisers for linear and semidefinite programming, regularisation techniques such as basis pursuit, Lasso etc. has become a main focus over the last decades. These approaches have in many areas of mathematics, statistics, learning and data science changed the state of the art from linear to non-linear approaches, typically via obtaining minimisers of convex problems \cite{Adcock2016, Juditsky_2011, Juditsky_2012, Nesterov_Nemirovski_Acta, candesCSMag, CandesRombergTao, CohenDahmenDeVore, Chambolle_Alg, Chambolle_Lions, Chambolle_2011, Chan, donohoCS, Donoho_BP, Gabriel1, Gabriel2, TibshiraniLasso, Tibshirani_Book, Osher_ROF, Osher_JAMS, Becker_2011, Mario}. The list of areas using these techniques is far reaching and their influence has been extensive. The key problems to compute are:
\begin{itemize}
	\item[(i)] Linear Programming (LP)
	\vspace{-1mm}
	\begin{equation}\label{problems}
		z \in \mathop{\mathrm{arg min}}_{x} \langle x , c \rangle \text{ subject to } Ax=y, \quad x \geq 0,
	\end{equation}
	\vspace{-5mm}
	\item[(ii)] Basis Pursuit (BP)
	\vspace{-1mm}
	\begin{equation}\label{problems3}
		z \in \mathop{\mathrm{arg min}}_{x} \mathcal{J}(x) \text{ subject to } \|Ax - y\|_2 \leq \delta, \qquad \delta \in [ 0,1],
	\end{equation}
	\vspace{-5mm}
	\item[(iii)] Unconstrained Lasso (UL)
	\vspace{-1mm}
	\begin{equation}\label{problems4}
		z \in  \mathop{\mathrm{arg min}}_{x} \|Ax-y\|^2_2 + \lambda\,  \mathcal{J}(x) , \qquad \qquad \quad \, \, \, \, \lambda \in (0,1],
	\end{equation}
	\vspace{-5mm}
	\item[(iv)]  Constrained Lasso (CL)
	\vspace{-1mm}
	\begin{equation}\label{problems5}
		z \in  \mathop{\mathrm{arg min}}_{x} \|Ax-y\|_2 \text{ subject to } \|x\|_1 \leq \tau, \quad  \, \, \, \tau > 0,
	\end{equation}
	\vspace{-5mm}
	\item[(v)] Semidefinite Programming (SDP)
	\vspace{-1mm}
	\begin{equation}\label{problems2}
		Z \in \mathop{\mathrm{arg min}} _{X \in \mathbb{S}^n}\langle C,X\rangle_{\mathbb{S}^n} \text{ subject to } \langle A_{k},X\rangle_{\mathbb  {S}^n} = b_{k}, \, X \succeq 0, \, k =  1, \hdots, m.
	\end{equation}
	\vspace{-5mm}
\end{itemize}
In the above notation we have
\[
A \in \mathbb{R}^{m \times N}, y \in \mathbb{R}^m, c \in \mathbb{R}^N, \quad \mathcal{J}(x) = \|x\|_1 \text{ or } \mathcal{J}(x) = \|x\|_{\mathrm{TV}},
\]
where the TV semi-norm is defined as $\|x\|_{\mathrm{TV}}=\sum_{j=1}^{N-1}|x_j-x_{j+1}|$.
For SDP, the notation is
\[
C, A_k \in \mathbb{S}^n \text{ (real $n \times n$ symmetric matrices)}, \quad b_k \in \mathbb{R},  \quad \langle C,X\rangle_{\mathbb{S}^n} = \mathrm{trace}(C^TX).
\]

Note that all of the problems above may have multi-valued solutions in certain cases. Whenever this occurs, the computational problem of interest is to compute any of these solutions. 
We will throughout the paper use the notation 
\begin{equation}\label{eq:the_Xi}
	\Xi: \Omega \rightrightarrows  \mathcal{M},
\end{equation}
to denote the multivalued solution map, mapping an input $\iota \in \Omega$ to a metric space $(\mathcal{M},d_{\mathcal{M}})$, allowing measurement of error.  The metric space is typically $\mathbb{R}^N$ or $\mathbb{C}^N$ equipped with the $\|\cdot\|_2$ norm, however, any metric can be considered.  Even though the solution map $\Xi$ may be multivalued, in our theory the output of an algorithm will always be single-valued.
	Thus, if $\Gamma: \Omega \rightarrow \mathcal{M}$ is an algorithm we measure the approximation error by 
	\[
	\disM(\Gamma(\iota), \Xi(\iota)) = \inf_{\xi \in \Xi(\iota)} d_{\mathcal{M}}(\Gamma(\iota),\xi).
	\]

\begin{remark}[{\bf Objective function vs minimisers}]
	In this paper we are primarily concerned with the problem of obtaining minimisers that are vectors and not the real-valued minimum value of the objective function. There is a very rich literature \cite{Boyd, Condition, Nesterov1, Nesterov2, Nemirovski_Book2001, Wright2, Lagarias_Wright, Wrig97, NoceWrig06} on how to compute the objective function, and, in particular, the minimum value
	$f(x^*) = \min \{f(x)\, \vert \, x \in \mathcal{X}\},$ for some convex function $f:\mathbb{R}^d \rightarrow \mathbb{R}$, convex set $\mathcal{X} \subset \mathbb{R}^d$, and minimiser $x^* \in \mathcal{X}$. The traditional problem of interest is as follows. Given $\epsilon > 0$, compute an $x_{\epsilon} \in \mathbb{R}^d$ such that $f(x_{\epsilon}) - f(x^*) \leq \epsilon$. Note that  $f(x_{\epsilon}) - f(x^*) \leq \epsilon$ does not necessarily mean that  
	\begin{equation}\label{eq:the_bound}
		\|x_{\epsilon} - x^*\| \leq \epsilon.
	\end{equation}
	In this paper, however, the problem of computing $x_{\epsilon}$ satisfying \eqref{eq:the_bound} is the main focus. 
	The motivation behind this is self-evident as there are vast areas of mathematics of information, regularisation, estimation, learning, compressed sensing and data sciences where the object of interest is the minimiser and not the minimum value. 
	\end{remark}

\section{The extended model and the extended Smale's 9th problem}
The question: ``is LP in P?'' \cite{khachiyan1980polynomial, gacs1981khachiyan, lawler1980great} was a fundamental problem whose solution, proven by L. Khachiyan -- based on work by N. Shor, D. Yudin,  A. Nemirovski -- reached the front page of The New York Times \cite{Lovasz_book}. The affirmative answer has been refined several times and is now typically stated in the following form. One can solve LPs with rational inputs in runtime is bounded by
\begin{equation}\label{eq:Karm_bound}
	\mathcal{O}(n^{{3.5}}L^{2}\cdot \log L\cdot \log \log L),
\end{equation}
where $n$ denotes the number of variables and $L$ is the number of bits or digits required in the representation of the inputs \cite{karmarkar1984new, renegar1988polynomial}. The problem, however, is that in an overwhelming number of problems in computational mathematics and scientific computing the input contains irrational numbers. This leads to the following basic question:
\begin{displayquote}
	\normalsize
	{\it Given a class of LPs that contain irrational numbers which can be computed in polynomial time, what is the computational cost of computing a $K$-digit accurate approximate minimiser? Is that problem in P (solvable in polynomial time in the number of variables $n$)?}
\end{displayquote} 
Note that the estimate \eqref{eq:Karm_bound} will not answer this question as $L = \infty$ for an irrational number. 

\subsection{Inexact input and the extended model} 
An example of an LP where the matrix $A$ contains irrational numbers is when its rows derive from the discrete cosine transform or a discrete wavelet transform. Note that these are not contrived examples. In fact, in the fields of inverse problems, medical imaging, compressed sensing, etc. this is a common occurrence.  Since $A$ contains numbers that cannot be represented exactly as binary numbers, the bound \eqref{eq:Karm_bound} does not apply.
Therefore, the classical model for asking ``is LP in P?'' does not address the common case of irrepresentable input. On the practical side, an overwhelming amount of the modern software used is based on floating-point arithmetic, and hence if the input is rational, there will be inexactness due to the floating-point representation. For example, $1/3$ can only be approximated in base $2$, giving rise to round-off approximation. Indeed, the following quote explains the situation succinctly:
\begin{displayquote}
	\normalsize
	{\it ``But real number computations and algorithms which work only in exact arithmetic can offer only limited understanding. Models which process approximate inputs and which permit round-off computations are called for.''}
	\\[5pt]
	\rightline{ --- S. Smale (from the list of mathematical problems for the 21st century \cite{21century_Smale}) \hspace{15mm}}
\end{displayquote}

This issue illustrates the classical dichotomy in mathematics between the discrete and continuous. Indeed, as mentioned above, classical complexity analysis for LP is done in a discrete model. Yet, LP can also be naturally considered in the continuous world, which is the standard way in which Smale's 9th problem is stated. The following quote draws attention to this dichotomy:
\begin{displayquote}
	\normalsize
	{\it `` Perhaps the most successful tool in economics and
		operations research is linear programming, which lives on the boundary of discrete and continuous.''}
	\\[5pt]
	\rightline{ --- L. Lov\'{a}sz (from``Discrete and Continuous:
		Two sides of the same?'' \hspace{20mm}}  \rightline{in ``Visions in Mathematics'', essays on mathematics entering the 21st century \cite{Lovasz2010}) \hspace{20mm}}
\end{displayquote}
These issues call for an extension of the continuous model that accommodates inexact input, discussed next.

\subsubsection{The extended model - inexact input provided by an oracle}\label{rem:extended_model}
Suppose that we are given an algorithm (a Turing or Blum-Shub-Smale (BSS) machine) intended to solve LP (or any of the other problems in \S \ref{sec:comp_prob}), and furthermore assume that the algorithm is equipped with an oracle $\mathscr{O}$ that can acquire the true input to any accuracy $\epsilon$. A natural assumption in this scenario is that the oracle completes its task in time polynomial in $|\log(\epsilon)|$ (see for example 
Lov\'{a}sz \cite[p. 36]{lovasz1987algorithmic}). More concretely, given a domain $\Omega \subset \mathbb{C}^n$ of inputs, the algorithm cannot access $\iota \in \Omega$, but rather, for any $k \in \mathbb{N}$, it can call the oracle $\mathscr{O}$ to obtain $\tilde \iota = \mathscr{O}(\iota,k) \in \mathbb{C}^n$ satisfying
\begin{equation}\label{eq:oracle22}
	\|\mathscr{O}(\iota,k)-\iota\|_{\infty} \leq 2^{-k}, \qquad \forall\, \iota \in \Omega, \, \forall k\in\mathbb{N},
\end{equation} 
and the time cost of accessing $\mathscr{O}(\iota,k)$ is polynomial in $k$. Another key assumption when discussing the success of the algorithm is that it must be ``oracle agnostic'', i.e., it must work with any choice of the oracle $\mathscr{O}$ satisfying \eqref{eq:oracle22}. In the Turing model the Turing machine accesses the oracle via an oracle tape and in the BSS model the BSS machine accesses the oracle through an oracle node. Note that the extended computational model of having inexact input can be found in many areas of the mathematical literature, and we mention only a small subset here \cite{bishop1967foundations, bravermancook2006computing, Cucker_Smale97, Fefferman_Klartag, Fefferman_Klartag2, Ko1991ComplexityTO, lovasz1987algorithmic}.

\subsubsection{The extended Smale's 9th problem}

Given that the input is inexact, the output of an algorithm will come with an error as well.
The model, both in the Turing and the BSS case, where one measures the computational cost of running the algorithm in terms of the number of variables $n$ and the error (or the number of correct digits $K = |\log(\epsilon)|$, where $\epsilon$ is the error) is well established. See, for example \cite[p. 29]{BCSS}, \cite[p. 34]{ Lovasz_book}  and \cite[p. 131]{Valiant_book}). 
We thus arrive at the following extension of Smale's 9th problem.

\begin{problem}[{\bf The extended Smale's 9th problem}]\label{Extended_Smale}
	Given any of the problems in \eqref{problems} - \eqref{problems5}, represented by the solution map $\Xi$ mapping a class of inputs $\Omega$ into a metric space $(\mathcal{M},d_{\mathcal{M}})$, is there an algorithm which decides the feasibility of the problem, and if so, produces an output that is correct up to $K$ digits (where the error is measured via $\disM$) and whose computational cost is bounded by a polynomial in $K$ and the number of variables $n$?
\end{problem}

This question can be asked both in the Turing model, where the computational cost can be expressed either in terms of the number of steps performed by the Turing machine, or alternatively in terms of the total number of arithmetic operations and comparisons as well as the space complexity. In the BSS model, the computational cost is given by the total number of arithmetic operations and comparisons executed by the BSS machine. We will consider all these cases.

\begin{remark}[Weaker and randomised versions of the extended Smale's 9th problem]
	The question in Problem \ref{Extended_Smale} can be weakened by asking if, for a fixed $K$, there is an algorithm -- with polynomial runtime in the number of variables -- that produces an output that is correct up to $K$ digits. One can also weaken the statement by allowing randomised algorithms and asking whether an algorithm succeeds with probability $\mathrm{p} \in [0,1]$. In the case of feasibility questions one can weaken the question by relaxing the constraints to only be satisfied up to a certain accuracy. For example, given $K \in \mathbb{N} \cup \{\infty\}$ and $ M \in \real$, one may ask to decide whether there is an $x \in \mathbb{R}^N$ such that
	\begin{equation*}
			\langle x , c \rangle_K \leq M \text{ subject to } Ax = y, \quad x \geq 0,
	\end{equation*}
	where 
	$
	\langle x , c \rangle_K = \lfloor  10^{K} \langle x , c \rangle \rfloor  10^{-K}.
	$
	We will discuss this particular problem later in \S \ref{sec:Kepler} in connection with the computer-assisted proof of Kepler's conjecture. 
\end{remark}

\section{Main Theorem I (Part a): The extended Smale's 9th -- Computing solutions}
The main results on the existence of successful polynomial cost algorithms for the extended Smale's 9th problem are summarised in Theorem \ref{Cor:main}, Theorem \ref{thm:Smales9}, and Theorem \ref{th:smale_comp_sens}, whereas Theorem \ref{thm:ExitFlag} deals with the decision problem of certifying the correctness of an algorithm (the ``exit flag'' problem). We now present each of these theorems.

\subsection{Universality of the results}\label{sec:complexity-for-noncomp}
The statements in the theorems below are well-defined up to the definition of an algorithm, randomised algorithm and runtime. There are a myriad of different types of machines that can be used to model an algorithm: the Turing machine \cite{Turing_Machine} (and its cousins including the Markov model \cite{MarkovModel}), the BSS machine \cite{BSS_machine}, the von Neumann architecture  \cite{von_Neumann}, the real RAM \cite{realRAM}, etc. as well as their randomised versions (see Remark \ref{rem:model_of_comp}). The different models are not equivalent when it comes to computability and runtime. Thus, to create universal impossibility results we use general algorithms (defined in \S \ref{sec:SCI_hierarchy}) and a randomised general algorithms (defined in \S \ref{sec:randomised}) that encompass any reasonable definition of a computational model in the way that they are more powerful than any standard machine, therefore making the impossibility results stronger.  Some of the stated results are slightly weaker than what we actually prove. The fully formal statements of the theorems can be found in the propositions in \S\ref{Sec:background} further below, and references to these follow each theorem.   

The first paradoxical result demonstrates the following intricate phenomenon. Key problems used in statistical estimation, sparse regularisation, compressed sensing, learning, and modern data science require a complexity theory for non-computable functions. The term ``non-computable'' here refers to the classical definition given by Turing in \cite{Turing_Machine} (there is an algorithm that for any $\epsilon > 0$ produces an $\epsilon$-approximation).
In the following $\Xi$ will denote the solution map (as in \eqref{eq:the_Xi}) to any of the problems \eqref{problems} - \eqref{problems5} with set of inputs $\Omega$, so that 
\begin{equation}\label{eq:the_Omega}
	\Omega = \bigcup_{N>m\geq 4}^{\infty} \Omega_{m,N},  \quad \Xi:  \Omega_{m,N} \rightrightarrows \mathcal{M}_{N},
\end{equation}
where $\Omega_{m,N}$ is a nonempty set of inputs for $\Xi$ of fixed dimensions $m$ and $N$, and $\mathcal{M}_N$ is $\mathbb{R}^{N}$ equipped with the $\|\cdot\|_p$ norm for some $p \in [1,\infty]$. The standard doubled arrow notation $\Xi:  \Omega_{m,N} \rightrightarrows \mathcal{M}_{N}$ indicates that the mapping $\Xi$ is multi-valued. The fact that the dimensions blow up is crucial in order to make sense of ``in P''-type statements. 

\begin{remark}[Computing $K$ correct digits]
	Having specified the $p$-norm $\|\cdot\|_p$ for measuring the error in $\mathcal{M}_N$, we frequently discuss whether an algorithm can ``produce (or compute) $K$ correct digits for $\Xi$''. For an algorithm having access to the dimensions $m$, $N$ and an oracle representation $\tilde\iota$ (according to \ref{eq:oracle22}) of an input $\iota\in \Omega_{m,N}$, this will mean the assertion that
	\begin{equation}\label{eq:getting_K}
		\disM(\Gamma(m,N,\tilde{\iota}), \Xi(\iota)) = \inf_{\xi \in \Xi(\iota)} \|\Gamma(m,N,\tilde\iota)- \xi\|_p \leq 10^{-K},
	\end{equation}
	for all $m$, $N$ and all possible oracle representations $\tilde\iota$ of all $\iota\in \Omega_{m,N}$. Moreover, in the randomised case, \eqref{eq:getting_K} should happen with a certain probability that will always be made explicit. 
\end{remark}

\begin{remark}[Condition numbers]
In the following we refer to several condition numbers common in the literature, namely the condition of a matrix, the feasibility-primal condition number $C_{\mathrm{FP}}$, and the condition of a solution map. The precise definitions of these can be found in \S\ref{condition}.
\end{remark}

\begin{theorem}[The extended Smale's 9th problem - computing solutions]\label{Cor:main}
	Let $\Xi$ denote the solution map to any of the problems \eqref{problems} - \eqref{problems5} with the regularisation parameters satisfying $\delta\in[0,1]$, $\lambda\in(0,1/3]$, and $\tau\in[1/2,2]$ (and additionally being rational in the Turing case) and consider the $\|\cdot\|_p$-norm for measuring the error, for an arbitrary $p\in[1,\infty]$. 
	Let $K > 2$ be an integer.
	There exists a class $\Omega$ of feasible inputs as in \eqref{eq:the_Omega} so that, simultaneously, we have the following. 
	\begin{itemize}[leftmargin=8mm]
		\item[(i)]
		No algorithm can produce $K$ correct digits on each input in $\Omega$ as in \eqref{eq:getting_K}. Moreover, for any $\mathrm{p} > \frac{1}{2}$, no randomised algorithm can produce $K$ correct digits with probability greater than or equal to $\mathrm{p}$ on each input in $\Omega$.  
		\item[(ii)] If we allow randomised algorithms with a non-zero probability of not halting (not producing an output), then, for any $\mathrm{p} > \frac{2}{3}$, no such algorithm can produce $K$ correct digits with probability greater than or equal to $\mathrm{p}$ on each input in $\Omega$. However, there does exist such an algorithm that can produce $K$ correct digits on each input in $\Omega$ with probability $2/3$.
		\item[(iii)]
		There does exist an algorithm (a Turing or a BSS machine) that produces $K-1$ correct digits for all inputs in $\Omega$. However, any such algorithm will need an arbitrarily long time to achieve this. In particular, for any fixed dimensions $m$, $N$, any $T > 0$, and any algorithm $\Gamma$, there exists an input $\iota \in \Omega_{m,N}$ such that either $\Gamma$ on input $\iota$ does not produce $K-1$ correct digits for $\Xi(\iota)$ or the runtime of $\Gamma$ on $\iota$ exceeds $T$. Moreover, for any randomised algorithm $\Gamma^{\mathrm{ran}}$ and $\mathrm{p} <1/2$ there exists an input $\iota \in \Omega_{m,N}$ such that     
		\begin{equation*}
			\begin{split}
				&\mathbb{P}\big(\gprob(\iota) \text{ does not produce $K-1$ correct digits for } \Xi(\iota)  \\
				& \hspace{5cm} \text{ or the runtime of } \Gamma \text{ on } \iota \text{ exceeds } T \big) > \mathrm{p}.
			\end{split}
		\end{equation*}
		\item[(iv)]
		There exists a polynomial $\mathrm{pol}: \mathbb{R} \rightarrow \mathbb{R}$, as well as a Turing machine and a BSS machine that both produce $K-2$ correct digits for all inputs in $\Omega$, so that the number of arithmetic operations for both machines is bounded by $\mathrm{pol}(n)$, where $n=m+mN$ is the number of variables, and the number of digits required from the oracle \eqref{eq:oracle22} is bounded by $\mathrm{pol}(\log(n))$. Moreover, the space complexity of the Turing machine is bounded by $\mathrm{pol}(n)$. 
		\item[(v)] If one only considers (i) - (iii), $\Omega$ can be chosen with any fixed dimensions $m$ and $N$ provided that $m\geq 4$ and $N > m$. Moreover, if one only considers (i) then $K$ can be chosen to be 1.
	\end{itemize}
	The statements (i) - (iii) above are true even when we require the input in each $\Omega_{m,N}$ to be well-conditioned and bounded from above. In particular, for any input $\iota = (y,A) \in \Omega_{m,N}$ ($\iota = (y,A,c)$ in the case of LP) we have 
	$\mathrm{Cond}(AA^*)\leq 3.2$, $C_{\mathrm{FP}}(\iota)\leq 4$, $\mathrm{Cond}(\Xi) \leq 179$, $\|y\|_\infty\leq 2$, and $\|A\|_{\max} = 1$. 
\end{theorem}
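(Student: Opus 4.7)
The plan is to build, for each pair $(m,N)$ with $m\geq 4$ and $N>m$, a parametrised family of feasible inputs $\{\iota_\alpha\}\subset \Omega_{m,N}$ whose minimisers $\Xi(\iota_\alpha)$ jump between well-separated target values according to a hidden computational event encoded at the scale $\sim 10^{-K}$, and then to combine this family with the oracle model \eqref{eq:oracle22} to deduce (i)--(v) simultaneously. The elementary mechanism is that two inputs $\iota^+,\iota^-$ whose entries differ by $\mathcal{O}(10^{-K})$ are oracle-indistinguishable (no finite sequence of oracle queries can separate them) yet can have minimisers separated by more than $2\cdot 10^{-K}$ in $\|\cdot\|_p$; reductions among LP, BP, UL, CL, and SDP via slack-variable and primal--dual reformulations let us treat all five problem classes with essentially one construction. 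The constraint matrix and right-hand side will be drawn from a scaled, well-conditioned dictionary (e.g., rows of a normalised DCT/Hadamard-type matrix, padded appropriately) so as to enforce $\mathrm{Cond}(AA^\ast)\leq 3.2$, $C_{\mathrm{FP}}(\iota)\leq 4$, $\mathrm{Cond}(\Xi)\leq 179$, $\|y\|_\infty\leq 2$, and $\|A\|_{\max}=1$ uniformly on $\Omega$.

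\textbf{Impossibility parts (i)--(iii).} Statement (i) will follow by a diagonalisation argument: given any general algorithm $\Gamma$, identify an oracle-indistinguishable pair $\iota^+,\iota^-\in\Omega_{m,N}$ with well-separated minimisers, feed $\Gamma$ a common oracle response, and observe that $\Gamma$ must fail on one of the two. The same pair caps the success probability of any always-halting randomised algorithm at $\tfrac{1}{2}$, since $\Gamma^{\mathrm{ran}}$ has a fixed output distribution on the shared oracle tape while the two correct-answer balls are disjoint. For the sharper $\tfrac{2}{3}$ bound in (ii), I would upgrade the construction so that the probabilistic breakdown epsilons $\epsilon_{\mathbb{P}\mathrm{B}}^{\mathrm{s}}$, $\epsilon_{\mathbb{P}\mathrm{B}}^{\mathrm{w}}$ of Definitions \ref{prob_strong_break}--\ref{prob_weak_break} land at exactly $10^{-K}$, extracting the $\tfrac{2}{3}$ threshold from the randomised SCI hierarchy analysis; the matching positive direction is a randomised combination of the $K-1$-digit procedure from (iii) with a verification step that either halts with a correct output or abstains. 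For (iii), I would refine the family so that the branch of a given $\iota$ is encoded in a halting-problem-style computation whose length is unbounded over $\Omega_{m,N}$; any algorithm producing $K-1$ digits must unfold this computation, so no uniform time bound $T$ suffices, and a coin-flipping variant yields the randomised $\mathrm{p}<\tfrac{1}{2}$ version.

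\textbf{Positive part (iv) and the fixed-dimension strengthenings (v).} For (iv), I would exhibit an explicit polynomial-cost algorithm (primal--dual interior point for LP/BP/CL/UL, ellipsoid-style with a polynomially computable separation oracle for SDP) that deliberately aims only at $K-2$ correct digits. Since the obstructions in (i)--(iii) concentrate at scale $\sim 10^{-(K-1)}$, querying the oracle at precision $2^{-\mathrm{pol}(\log n)}$, rounding to a rational instance, and running a standard solver on the rounded problem already produces $K-2$ correct digits with arithmetic cost $\mathrm{pol}(n)$ and (in the Turing model) space $\mathrm{pol}(n)$. The fixed-dimension claim in (v) follows because the branching data rather than $(m,N)$ parametrises the construction, so restricting to any fixed $m\geq 4$, $N>m$ preserves the arguments; the $K=1$ strengthening of (i) uses only a single oracle-indistinguishable pair with minimisers separated by more than $2\cdot 10^{-1}$.

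\textbf{Main obstacle.} The principal difficulty is to orchestrate all of (i)--(v) within a \emph{single} family $\Omega$: the solution map must be simultaneously discontinuous enough at scale $10^{-K}$ to rule out $K$-digit approximation, but regular enough at scale $10^{-(K-2)}$ to admit a polynomial-time $K-2$-digit algorithm, while every input continues to satisfy the uniform conditioning bounds $\mathrm{Cond}(AA^\ast)\leq 3.2$, $C_{\mathrm{FP}}\leq 4$, $\mathrm{Cond}(\Xi)\leq 179$, $\|y\|_\infty\leq 2$, $\|A\|_{\max}=1$. This forces a delicate calibration of the encoded perturbation amplitudes against the singular-value geometry of $A$ and the robust nullspace structure, and this is where the bulk of the technical work will sit; once such a family is in hand, the remaining ingredients (diagonalisation, oracle pairing, randomised-SCI bookkeeping, and reductions between the five problem classes) are comparatively standard.
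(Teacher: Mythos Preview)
Your high-level diagonalisation picture for (i) and the use of oracle-indistinguishable input pairs is correct and matches the paper's mechanism (formalised there via breakdown epsilons and a general driving proposition). However, two of your proposed ingredients diverge from what actually makes the theorem work, and one of them is a genuine gap.

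\textbf{Part (iii): no halting-problem encoding is used, and your proposal would not fit the statement.} You propose to ``encode a halting-problem-style computation whose length is unbounded'' so that any $K-1$-digit algorithm ``must unfold this computation''. But (iii) asserts that there \emph{does} exist an algorithm producing $K-1$ digits on every input; it merely has no uniform runtime bound. So the obstruction is not undecidability but an information-theoretic one: the amount of oracle precision required is unbounded over $\Omega_{m,N}$. The paper achieves this by a purely geometric device. One takes a second ``weak'' family $\Omega^{\mathrm{w}}_{m,N}$ in which two sequences $\iota^1_n,\iota^2_n$ converge (in the oracle metric) to a common limit $\iota^0$ that is \emph{deliberately excluded} from $\Omega$, while $\Xi(\iota^1_n)$ and $\Xi(\iota^2_n)$ sit in disjoint sets more than $2\cdot 10^{-(K-1)}$ apart. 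Because $\iota^0\notin\Omega$, a correct algorithm exists (loop, requesting more oracle digits until the sign of the branching parameter is resolved), but distinguishing $\iota^1_n$ from $\iota^2_n$ forces at least $n$ oracle digits, so no uniform time bound holds. This is exactly the weak breakdown epsilon; no computability-theoretic embedding enters. The distinction between the $1/2$ threshold in (i) and the $1/3$ (equivalently $2/3$) threshold in (ii) likewise has a geometric origin: it depends on whether the common limit $\iota^0$ is \emph{included} in $\Omega$ (strong family $\Omega^{\mathrm{s}}$, giving (i)--(ii)) or \emph{excluded} (weak family $\Omega^{\mathrm{w}}$, giving (iii)). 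The full $\Omega$ is then $\Omega^{\mathrm{s}}\cup\Omega^{\mathrm{w}}$.

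\textbf{Part (iv) and the explicit families.} You propose to run a general-purpose interior-point or ellipsoid solver on a rounded instance. The paper instead exploits the fact that $\Omega$ is \emph{known to the algorithm}: it first reads $O(1)$ oracle digits of $y$ to decide whether the input lies in $\Omega^{\mathrm{s}}$ or $\Omega^{\mathrm{w}}$, and then outputs a precomputed midpoint of the two possible solution branches, which is automatically within $10^{-(K-2)}$ of every minimiser. This is why the oracle-digit count is only $\mathrm{pol}(\log n)$ and the arithmetic cost is $\mathrm{pol}(n)$ with tiny constants. Relatedly, the paper does not use DCT/Hadamard dictionaries or inter-problem reductions for this theorem; it builds, for each of LP, $\ell^1$-BP, $\ell^1$-UL, CL, TV-BP and TV-UL, a separate explicit one-parameter family of the form $A(\alpha,\beta,m,N)=(\text{small }\alpha,\beta\text{-block})\oplus (I_{m-1}\;0)$ with $(\alpha,\beta)$ ranging over an L-shaped set in $[1/4,1/2]^2$, and computes the minimisers, $\mathrm{Cond}(AA^*)$, $C_{\mathrm{FP}}$, and $\mathrm{Cond}(\Xi)$ by hand. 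This is what makes the uniform conditioning bounds tractable; your proposed dictionary-plus-reduction route would make those bounds much harder to certify.
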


The precise and slightly stronger version of Theorem \ref{Cor:main} is summarised in Proposition \ref{Cor:main_SCI} and Proposition \ref{Cor:main_SCI_cont}.  
\begin{remark}[The proof techniques of Theorem \ref{Cor:main}]
	The techniques developed in this paper to prove Theorem \ref{Cor:main} can be used and extended to produce computability and complexity results in other fields. In particular, they form the basis of some of the developments in \cite{NNInstab} and \cite{NNInstab2} on the limitations of AI and Smale's 18th problem.
	
\end{remark}
\begin{remark}[Models of computation -- Universal lower bounds]\label{rem:model_of_comp}
	As previously mentioned, our results and proof techniques are universal across all reasonable models of computation. In particular, the impossibility results in Theorem \ref{Cor:main} hold in the Markov model based on the Markov algorithm \cite{MarkovModel} -- i.e. when the inexact input is required to be computable. The technical justification of this claim is given in Remark \ref{rem:TurVsMarkov} and Remark \ref{remark:CompDelta1Markov}. 
\end{remark}

\begin{remark}[Lower bound on the vector component of the inputs]
Note that the matrix component of every input $\iota=(y,A)\in\Omega$ in Theorem \ref{Cor:main} is bounded from both below and above with universal bounds, specifically, $\|A\|_{\max}:=\max_{i,j}|A_{i,j}|=1$. The vector component $y$ admits an upper bound but does not admit a lower bound uniform across $\Omega$. This minor deficiency is an artefact of our construction of $\Omega$ and could be easily remedied by carrying out a more involved construction. This, however, would significantly increase the technical detail of the proof while not contributing to the theory nor any of the techniques developed, so we refrain from doing so.
\end{remark}

\begin{remark}[Theorem \ref{Cor:main} and semidefinite programming (SDP)]
The results of Theorem \ref{Cor:main} also hold for the SDP problem \eqref{problems2}, which can be shown easily by employing the standard argument to recast the LP problem (together with the associated class $\Omega$) as an SDP. With this embedding the impossibility results hold immediately, however the parts referring to the existence of algorithms would need to be proven separately. 
\end{remark}

\subsection{Consequence of Theorem \ref{Cor:main} -- New phase transitions}\label{sec:hardness}
A crucial problem in combinatorial optimisation is the question of hardness of approximation \cite{Arora_JACM_98, Arora_JACM_98_2, Lovasz_JACM_96, Arora2007}. In particular, we follow \cite{Papadimitriou1994} and suppose we are given an optimisation problem depending on $n$ variables. Thus, for each instance $z \in \Omega \subset \mathbb{R}^n$, where $\Omega$ is some domain of inputs, we have a set of feasible solutions, call it $F(z)$. Moreover, the goal is to minimise an objective function/cost function $f_z :\mathbb{R}^d \rightarrow \mathbb{R}$ for some $d \in \mathbb{N}$ and 
\[
\mathrm{OPT}(z) := \min_{x\in F(z)} f_z(x).
\]
Many optimisation problems may be NP-hard. However, for any $\epsilon > 0$ one can ask if there exists an algorithm $\Gamma$ such that for any instance $z \in \Omega$, $\Gamma(z) \in F(z)$ and 
\begin{equation}\label{eq:eps_alg}
f_z(\Gamma(z)) \leq (1+\epsilon) \mathrm{OPT}(z), \qquad \mathrm{Runtime}(\Gamma(z)) \leq \mathrm{pol}(n),
\end{equation} 
for some polynomial $\mathrm{pol}: \mathbb{R} \rightarrow \mathbb{R}$.  If the optimisation problem is a maximisation problem, the inequality will be reversed as well as the sign in front of $\epsilon$. 
The output $\Gamma(z)$ satisfying \eqref{eq:eps_alg} is referred to as an \emph{$\epsilon$-approximate solution}. Hardness of approximation is traditionally investigated in the Turing model, thus the word algorithm here means a Turing machine, and $\mathrm{Runtime}(\Gamma(z))$ is the runtime of the Turing machine $\Gamma$ given input $z$. One can also ask if there is a randomised algorithm $\gprob$ such that 
\begin{equation}\label{eq:eps_alg_rand}
\mathbb{P}\Big(f_z(\gprob(z)) \leq (1+\epsilon) \mathrm{OPT}(z)\Big) \geq 2/3, \qquad \mathrm{Runtime}(\gprob(z)) \leq \mathrm{pol}(n).
\end{equation} 

 An algorithm $\Gamma$, as described above -- such that \eqref{eq:eps_alg} is satisfied -- is called an \emph{$\epsilon$-approximation algorithm}, and a randomised algorithm $\gprob$ such that \eqref{eq:eps_alg_rand} is satisfied is called a \emph{randomised $\epsilon$-approximation algorithm}. 
Given an optimisation problem one defines the \emph{approximation threshold}, as defined in \cite{Papadimitriou1994}
\begin{equation}\label{eq:appr_thr}
		\epsilon_{\mathrm{A}} := \inf\{\epsilon \geq 0 \, \vert \, \text{there exists an $\epsilon$-approximation algorithm}\}, 	
	\end{equation}
and the \emph{probabilistic approximation threshold}
\[
		\epsilon_{\mathbb{P}\mathrm{A}} := \inf\{\epsilon \geq 0 \, \vert \, \text{there exists a randomised $\epsilon$-approximation algorithm}\}.
\]
For several NP-hard optimisation problems it follows that $\epsilon_{\mathbb{P}\mathrm{A}} =0$ and $\epsilon_{\mathrm{A}} = 0$. Indeed, in several cases there are \emph{polynomial-time approximation schemes} (PTAS) or \emph{polynomial-time randomized approximation schemes} (PRAS) \cite{Arora2007, Papadimitriou1994}. 
However, if P $\neq$ NP, there are also many examples where $\epsilon_{\mathrm{A}} > 0$, which implies a sharp \cite{Hastad_JACM, Hastad_Acta} phase transition, where the problem of computing an $\epsilon$-approximation is in P for $\epsilon > \epsilon_{\mathrm{A}} $ but $\notin P$ for $\epsilon < \epsilon_{\mathrm{A}}$ as visualised in the phase transition diagram \eqref{eq:phase-transition_hardness}. 

The techniques built to prove the probabilistic statements in Theorem \ref{Cor:main} can be used to establish phase transition diagrams for LP, BP, Lasso, etc. as in \eqref{eq:prob_phase} in the probabilistic case. The key are the
\emph{probabilistic strong breakdown epsilon} $\epsilon_{\mathbb{P}\mathrm{B}}^{\mathrm{s}}$ and the \emph{probabilistic weak breakdown epsilon} $\epsilon_{\mathbb{P}\mathrm{B}}^{\mathrm{w}}$ (Definition \ref{prob_strong_break} and Definition \ref{prob_weak_break}):
\begin{equation}\label{eq:prob_phase}
\begin{tikzcd}[column sep=0mm, row sep=1.3cm]
\begin{tabular}{c}
			{\bf \emph{New phase}}  \\
			{\bf \emph{transitions in the}}\\
			{\bf \emph{probabilistic cases}}\\
\end{tabular}
	&\fbox{\begin{tabular}{c}
			Computing $\epsilon$-approx $\notin$ BPP\\
			 but $\in$ R (computable)
			 \end{tabular} }
		 \arrow[dl, shift left, "\epsilon_{\mathbb{P}\mathrm{B}}^{\mathrm{w}} < \epsilon "{name=U, below,sloped}]
\arrow[leftarrow,dl, shift right, "\epsilon_{\mathbb{P}\mathrm{B}}^{\mathrm{w}}> \epsilon > \epsilon_{\mathbb{P}\mathrm{B}}^{\mathrm{s}}"{name=D,anchor=center,sloped,above}] 
\arrow[dr, shift left, "\epsilon_{\mathbb{P}\mathrm{B}}^{\mathrm{s}} > \epsilon "{name=U, above,sloped}]
\arrow[leftarrow,dr, shift right, "\epsilon_{\mathbb{P}\mathrm{B}}^{\mathrm{s}} < \epsilon < \epsilon_{\mathbb{P}\mathrm{B}}^{\mathrm{w}}"{name=D,below,sloped}] & \\
	 \fbox{ \begin{tabular}{c}
			Computing 
			\\ $\epsilon$-approx $\in$ P
		\end{tabular} }
\arrow[rr, shift left, "\epsilon_{\mathbb{P}\mathrm{B}}^{\mathrm{s}}=\epsilon_{\mathbb{P}\mathrm{B}}^{\mathrm{w}} > \epsilon "{name=U, above}]
\arrow[leftarrow,rr, shift right, "\epsilon_{\mathbb{P}\mathrm{B}}^{\mathrm{s}} = \epsilon_{\mathbb{P}\mathrm{B}}^{\mathrm{w}}< \epsilon"{name=D,below}] & & 
\fbox{ \begin{tabular}{c}
Computing $\epsilon$-approx $\notin$ R,\\
		$\forall \, \gprob, c > 1/2 \, \exists \, \iota \text{ s.t. }$\\
		$\mathbb{P}(\gprob(\iota) \text{ is not an $\epsilon$-approx}) < c$
		\end{tabular} }
\end{tikzcd}
\end{equation}

\section{Failure of modern algorithms and computing the exit flag}\label{sec:failure} 

\subsection{Computing the exit flag -- Can correctness of algorithms be certified?}
A crucial topic in computational mathematics is the reliability of algorithms and certification of their correctness.  It is therefore natural to test whether the built-in algorithms in, for example MATLAB are reliable. We consider two concrete examples: the linear program
\begin{equation}\label{eq:lin_prog43}
	\min_{x \in \mathbb{R}^2} x_1 + x_2\,\,\, \mbox{subject to \, $ x_1 + (1-\delta)x_2 = 1$, $\qquad x_1,x_2 \geq 0$,}
\end{equation}
where $\delta > 0$ is a parameter, and the centred and standardised (so that the columns of the design matrix are normalised) Lasso problem 
\begin{equation}\label{eq:lasso43}
	\min_{x \in \real^N} \frac{1}{m}\|A_{\delta}D_{\delta}x - y\|_2^2 + \lambda \|x\|_1,
\end{equation}
where 
$m =3, N=2$, $\lambda \in (0,1/\sqrt{3}]$, 
\begin{equation}\label{eq:the_matrix}
	A_\delta = \begin{pmatrix} \frac{1}{\sqrt{2}} - \delta & \frac{1}{\sqrt{2}} \\ -\frac{1}{\sqrt{2}} - \delta & -\frac{1}{\sqrt{2}}\\ 2\delta & 0 \end{pmatrix} \in \real^{3 \times 2}, \quad y = \begin{pmatrix} 1/\sqrt{2} & -1/\sqrt{2} & 0 \end{pmatrix}^T \in \real^3,
\end{equation}
and $D_{\delta}$ is the unique diagonal matrix such that each column of $A_{\delta}D_{\delta}$ has norm $\sqrt{m}$.

In order to compute a solution to \eqref{eq:lin_prog43}, we consider MATLAB's \texttt{linprog} command; a well-established optimisation solver for linear programs. This is a general purpose solver, which offers three different algorithms: `dual-simplex' (the default), `interior-point', and `interior-point-legacy'. Besides a minimiser, \texttt{linprog} also computes an additional output -- \texttt{EXITFLAG} -- which is an integer value corresponding to the reason for why the algorithm halted. Note that $+1$ indicates convergence to a minimiser, all other values indicate some form of failure. In Table \ref{tab:test_linprog} we apply  the three \texttt{linprog} algorithms (with default settings) to the problem \eqref{eq:lin_prog43} with different values of $\delta$. The results are fascinating. Not only does \texttt{linprog} completely fail to compute a minimiser accurately, it also fails to recognise that the computed minimiser is incorrect: in all cases, the \texttt{EXITFLAG} returns the value $+1$ indicating a successful termination. 

\begin{table}
\centering
	\begin{tabular}{@{}c| cc|cc|cc|@{} } 
		\cline{2-7}
		&
		\multicolumn{2}{c|}{`dual-simplex'} &
		\multicolumn{2}{c|}{`interior-point'} &
		\multicolumn{2}{c|}{`interior-point-legacy'} 
		\\
		$\delta$ &  Error\ & \texttt{EXITFLAG} & Error \ & \texttt{EXITFLAG} & Error\ & \texttt{EXITFLAG} \\
		\hline
		$2^{-1}$   & $0$ & $1$ & $0$ & $1$ & $6.0 \cdot 10^{-12}$ & $1$ \\
		$2^{-15}$  & $0$ & $1$ & $0$ & $1$ & $3.0 \cdot 10^{-5}$ & $1$ \\
		$2^{-20}$  & $0$ & $1$ & $0$ & $1$ & $7.0 \cdot 10^{-7}$ & $1$ \\
		$2^{-24}$  & $0$ & $1$ & $0$ & $1$ & $7.1 \cdot 10^{-8}$ & $1$ \\
		$2^{-26}$  & $1.4$ & $1$ & $1.4$ & $1$ & $1.2\cdot 10^{-1}$ & $1$ \\
		$2^{-28}$  & $1.4$ & $1$ & $1.4$ & $1$ & $4.6\cdot 10^{-1}$ & $1$ \\
		$2^{-30}$  & $1.4$ & $1$ & $1.4$ & $1$ & $7.1\cdot 10^{-1}$ & $1$ \\
		\hline
	\end{tabular}
	\vspace{2mm}
	\caption{Testing the output of \texttt{linprog} applied to the problem in \eqref{eq:lin_prog43} for the algorithms `dual-simplex', `interior-point' and `interior-point-legacy'. The table shows the error $\|\hat{x} - \tilde{x}\|_{\ell^2}$ and the value of \texttt{EXITFLAG} ($1$ means successful output), where $\hat{x}$ is the true minimiser of \eqref{eq:lin_prog43} and $\tilde{x}$ is the computed approximate minimiser. Note that machine epsilon is $\epsilon_{\mathrm{mach}} = 2^{-52}$.}
	\label{tab:test_linprog}
\end{table}

\begin{table}
	\begin{tabular}{@{}c |ccc|ccc|ccc|@{} } 
		\cline{2-10}
		&
		\multicolumn{3}{c|}{\qquad Default settings}  &
		\multicolumn{3}{c|}{`\texttt{RelTol}' $= \epsilon_{\mathrm{mach}}$} &
		\multicolumn{3}{c|}{`\texttt{RelTol}' $= \epsilon_{\mathrm{mach}}$} 
		\\
		&
		\multicolumn{3}{c|}{} &
		\multicolumn{3}{c|}{} &
		\multicolumn{3}{c|}{`\texttt{MaxIter}' $= \epsilon^{-1}_{\mathrm{mach}}$} 
		\\
		\hline
	$\delta$ & Error & Runtime & \texttt{Warn} & Error & Runtime & \texttt{Warn} & Error & Runtime & \texttt{Warn} \\
		\hline
		$2^{-1}$  & $1 \cdot 10^{-16}$  & $< 0.01s$ & $0$ & $1 \cdot 10^{-16}$  & $< 0.01s$ & $0$ & $1 \cdot 10^{-16}$ & $<0.01s$ & $0$ \\
		$2^{-7}$   & $0.68$ & $< 0.01s$ & $0$ & $2\cdot 10^{-16}$ & $0.02s$    & $0$ & $2\cdot 10^{-16}$ & $0.02s$ & $0$\\
		$2^{-15}$  &$1.17$ & $< 0.01s$ & $0$ & $1.17$ & $0.33s$    & $1$ & $1\cdot 10^{-11}$ & $1381.5s$ & $0$ \\
		$2^{-20}$  &$1.17$ & $< 0.01s$ & $0$ & $1.17$ & $0.33s$    & $1$ & $\text{no output}$ & $>12h$ & $0$ \\
		$2^{-24}$  &$1.17$ & $< 0.01s$ & $0$ & $1.17$ & $0.34s$    & $1$ & $\text{no output}$ & $>12h$ & $0$ \\
		$2^{-26}$  &$1.17$ & $< 0.01s$ & $0$ & $1.17$ & $0.34s$    & $1$ & $\text{no output}$ & $>12h$ & $0$ \\
		$2^{-28}$  &$1.17$ & $< 0.01s$ & $0$ & $1.17$ & $< 0.01s$    & $0$ & $1.17$ & $<0.01s$ & $0$ \\
		$2^{-30}$  &$1.17$ & $< 0.01s$ & $0$ & $1.17$ & $< 0.01s$    & $0$ & $1.17$ & $<0.01s$ & $0$ \\
		\hline
	\end{tabular}
	\vspace{2mm}
	\caption{The output of \texttt{lasso} applied to \eqref{eq:lasso43} with inputs as in \eqref{eq:the_matrix} and $\lambda = 0.1$. The table shows the error $\|\hat{x} - \tilde{x}\|_{\ell^2}$ (where $\hat{x}$ is the true minimiser and $\tilde{x}$ is the computed minimiser), the CPU runtime, and a boolean value indicating whether a \texttt{Warning} was issued.}
	\label{tab:test_lasso}
\end{table}

To compute a solution to \eqref{eq:lasso43}, we consider Matlab's \texttt{lasso} command. We test it with default settings as well as the tolerance parameter set to machine epsilon $\epsilon_{\mathrm{mach}} = 2^{-52}$ and also the maximum number of iterations to $\epsilon_{\mathrm{mach}}^{-1}$. The \texttt{lasso} routine does not have an `exit flag', however, it provides a \texttt{Warning} if it considers the output to be untrustworthy. The results of this experiment are summarised in Table \ref{tab:test_lasso}, where we display $1$ under the \texttt{Warn} column if a \texttt{Warning} was issued, or $0$ if no warning was issued. As is evident, the failure of \texttt{lasso} is similar to the failure of \texttt{linprog}, however, an interesting observation is that the \texttt{Warning} parameter is occasionally able to verify the wrong solution, yet, most of the time, no warning is issued despite completely inaccurate outputs.  

\begin{figure}
\begin{center}
    \setlength{\tabcolsep}{2pt}
    \resizebox{1\textwidth}{!}{
    \begin{tabular}{@{}>{\centering}m{0.5\textwidth}>{\centering\arraybackslash}m{0.5\textwidth}@{}}   
\texttt{spgl1} on basis pursuit with $\delta = 0$ & MATLAB's \texttt{lasso} on Lasso with $\lambda = 10^{-2}$\\
\includegraphics[width=1\linewidth]{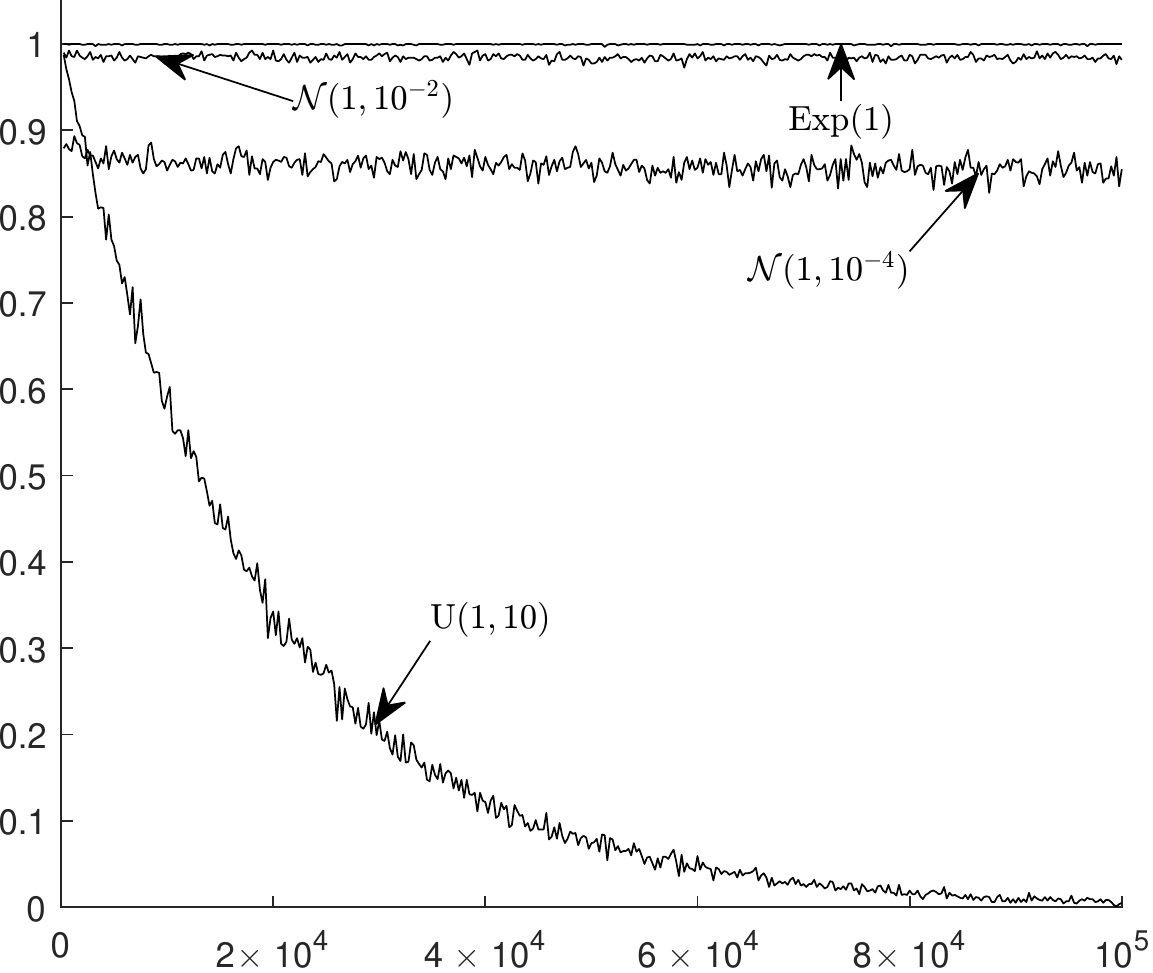} &
\includegraphics[width=1\linewidth]{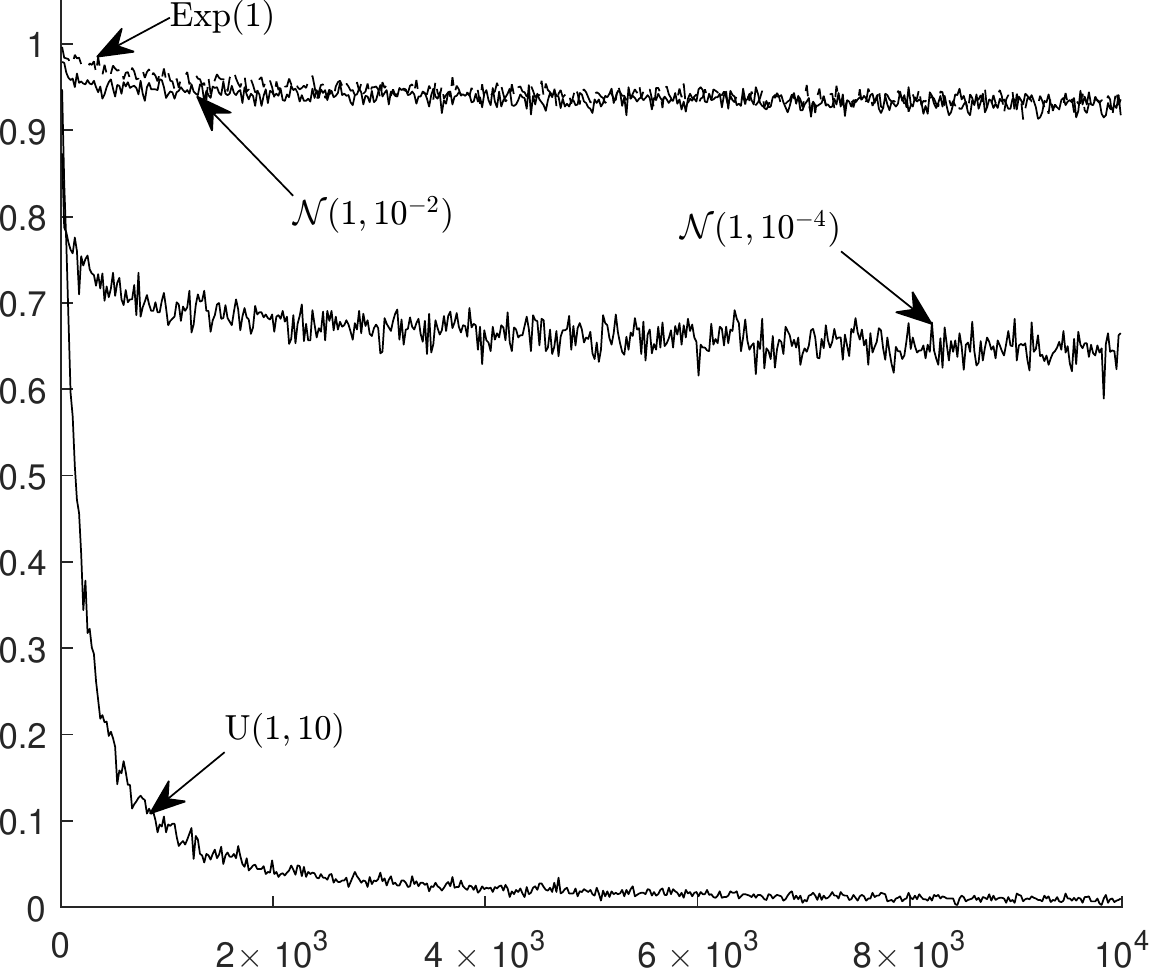}
\end{tabular}
}
\end{center}
\caption{({\bf Random matrices -- Non-computability is not rare}). The vertical axis represents the success rate $\frac{\# \text{ of successes }}{\# \text{ of trials }}$, where $\# \text{ of trials } = 1200$. Success $\Leftrightarrow$ computed solution is accurate to at least $K = 2$ digits ($\|\cdot\|_{\infty}$ norm). The horizontal axis shows the dimension $N$. In all cases, $A \in \mathbb{R}^{m \times N}$ in \eqref{problems3} and  \eqref{problems4} is iid -- as described in \S \ref{sec:not_rare} -- according to the distributions $\mathrm{U}(a,b)$, $\mathrm{Exp}(\nu)$ and $\mathcal{N}(\mu, \sigma^2)$, being the uniform distribution on $[a,b]$, the exponential distribution with parameter $\nu$ and the normal distribution with mean $\mu$ and variance $\sigma$. }  
\label{fig:rand_matrix}
\end{figure}

\subsection{Non-computability is not rare -- The proof techniques reveal much more}\label{sec:not_rare}
Theorem \ref{Cor:main} demonstrates that for any integer $K$, there are -- for all problems \eqref{problems} - \eqref{problems2} --  classes of inputs for which no algorithm can compute a correct $K$ digit approximate solution. This statement, as is typical for a result regarding non-computability, describes a worst case scenario. However, the proof techniques of our theorems reveal much more. Indeed, for random matrix ensembles, one can characterise the probability of failure of algorithms. This is because our proof of Theorem \ref{Cor:main} is constructive.

To be more precise, Figure \ref{fig:rand_matrix} displays experiments with well-established algorithms such as \texttt{spgl1} \cite{spgl1} and MATLAB's \texttt{lasso}. We have tested these algorithms on BP \eqref{problems3} with $\delta = 0$ and Lasso \eqref{problems4} with $\lambda = 10^{-2}$. In both cases, all accuracy parameters in the algorithms were set to machine precision $\epsilon_{\mathrm{mach}}$ in MATLAB, and the number of iterations in \texttt{spgl1} and \texttt{lasso} were set to $1000$ and the default parameter respectively. We executed these algorithms on inputs $A \in \mathbb{R}^{m \times N}$ and $y \in \mathbb{R}^{m}$, where the entries of $A$ are iid according to a distribution $\mathcal{D}$ and $y = Ae_i$ where $i \in \{1,\hdots, N\}$ is chosen uniformly at random. In particular, we examine the cases where $\mathcal{D}$ is a normal distribution $\mathcal{N}(\mu, \sigma^2)$ with mean $\mu$ and variance $\sigma$, $\mathcal{D}$ is a uniform distribution $\mathrm{U}(a,b)$ on the interval $[a,b]$, or $\mathcal{D}$ is an exponential distribution $\mathrm{Exp}(\nu)$ with parameter $\nu$.  Figure \ref{fig:rand_matrix} displays the results for $m = 1$ and varying $N$s, where we plot the `success rate' given by
\[
\text{ Success rate } = \frac{\# \text{ of successes }}{\# \text{ of trials }} \in [0,1],
\]
 as a function of the dimension $N$. In all cases $\# \text{ of trials } = 1200$. 
 Given a distribution $\mathcal{D}$, let $\iota_N = (y,A)$ be such that $y \in \real^{1}$ and $A \in \real^{1 \times N}$ are randomly chosen as described above. For any choice of algorithm $\Gamma$ that solve Basis Pursuit or Lasso implemented in floating point arithmetic and any $K \in \mathbb{N}$ define (when the limit exists)
\[
\lim_{N \rightarrow \infty}\mathbb{P}\big (\Gamma(\iota_N) \text{ provides $K$ correct digits}\big) =: P_{\infty}^{\Gamma}(\mathcal{D}).
\]

Our proof techniques allow one to show that, for example, when $\Gamma$ represents the \texttt{spgl1} algorithm we have 
\[
P_{\infty}^{\Gamma}(\mathcal{D}) = 0 \text{ for } \mathcal{D} = \mathrm{U}(a,b). 
\]
Note that this asymptotic behaviour is already visible in Figure \ref{fig:rand_matrix}.  
For the Gaussian and Exponential distributions, the issue is more complicated and there may be transient behaviour.  Indeed, let $\Xi$ denote the solution map to the Lasso problem in Figure \ref{fig:rand_matrix} (so that $\Xi$ outputs a solution to \eqref{problems4}) and assume that $\mathcal{D}$ is $\mathcal{N}(\mu,\sigma^2)$ or that $\mathcal{D}$ is $\mathrm{Exp}(\nu)$. Then $\mathbb{P}(\|\Xi(\iota_N)\|_{\infty} < 10^{-k}) \rightarrow 1$, as $N \rightarrow \infty$ for all $k \in \mathbb{N}$. Thus, an algorithm that always outputs zero will eventually become correct with high probability.   
However, in Figure \ref{fig:rand_matrix}, there is behaviour of the following form: there exists an $M$ so that 
\[
\mathbb{P}\big (\Gamma(\iota_N) \text{ provides $K$ correct digits} \, \big) > \mathbb{P}\big (\Gamma(\iota_{N+1}) \text{ provides $K$ correct digits} \, \big) \quad N \leq M,
\]
for some large $M \in \mathbb{N}$, yet for any algorithm $\Gamma$ -- such that the objective function applied to $\Gamma(\iota_N)$ is $\varepsilon$ (in this example $\varepsilon = 10^{-5}$ suffices) away from the true minimum -- we have $P^{\Gamma}_{\infty}(\mathcal{D}) = 1$. The latter is typically true for both \texttt{spgl1} and \texttt{lasso}, thus Figure \ref{fig:rand_matrix} demonstrates a transient behaviour for both the normal and exponential distributions. 
These phenomena can be mathematically analysed and proven by using the specific techniques used in the proof of Theorem \ref{Cor:main}, however, as this paper is already substantial in length, the proofs of these results will appear elsewhere.

\section{Main Theorem II: The extended Smale's 9th -- Computing the exit flag} 

The failures of the \texttt{EXITFLAG} and the \texttt{Warning} -- that were supposed to detect the incorrect solutions in the experiment in \S \ref{sec:failure} -- suggest the following question. 

\begin{problem}[Can the `exit flag' be computed?]
	Consider an algorithm designed to compute any of the problems \eqref{problems} - \eqref{problems5}. Suppose that the algorithm should produce $K$ correct digits.
	Can we compute the `exit flag' for this algorithm, i.e., the function taking on the value $1$ if the algorithm succeeds in producing $K$ correct digits, and $0$ else?
\end{problem}

\subsection{The exit flag cannot be computed - even when given an exact solution oracle} 
Let $\Xi$ denote the solution map (as in \eqref{eq:the_Xi}) to any of the problems 
\eqref{problems} - \eqref{problems5}, let $K \in \mathbb{N}$, and let $\Gamma$ be an algorithm approximating $\Xi$. Suppose that $\Omega$ is a collection of inputs such that no algorithm can produce $K$ correct digits for all inputs in $\Omega$ (recall that the existence of such an $\Omega$ is guaranteed by Theorem \ref{Cor:main}).
The exit flag problem is as follows. Is it is possible to decide whether $\Gamma$ fails or succeeds on a particular input? More precisely, can we find another algorithm that can compute the exit flag, meaning the function of the input taking on the value $1$ when $\Gamma$ succeeds in producing $K$ correct digits for a particular input and $0$ when it fails?

\subsubsection{Exact solution oracle}
To illustrate the difficulty of computing the exit flag, we add the assumption that we can also access the true solution, and show that even with this extra information the task is impossible.  We thus assume for a fixed (but arbitrarily small) parameter $\omega>0$, the algorithm can access a $\rho\in\opBall{\omega}{\Xi(\iota)}$, i.e., an element of $\mathcal{M}$ that is strictly within $\omega$ of a true solution. We will consider an algorithm with access to such an oracle to be successful only if it works for any such $\rho$. Throughout the paper we will use the wording ``with access to an exact solution oracle of precision $\omega$'' to mean the situation just described, and this concept will be formally treated in \S \ref{sec:exit-flag-formal}. 
To avoid trivial examples we need another assumption. For instance, if $\Gamma$ outputs values that do not belong to the range of the solution map $\Xi$, then one can easily construct examples where it becomes trivial to check if $\Gamma$ fails. Concretely, if $\Gamma$ produces correct output when $\Gamma( \iota) \in \Xi( \Omega)$ and wrong output when $\Gamma( \iota)=\xi \notin \Xi(  \Omega)$, the exit flag is easy to compute: it is simply 0 when the output is $\xi$ and $1$ otherwise. A natural assumption precluding this situation could thus be that $\Gamma( \iota) \in \Xi( \Omega)$, for all $ \iota \in \Omega$. This, however, would again create issues in the Turing model if $\Xi(\Omega)$ contained elements with irrational entries, and hence we introduce another precision parameter $\alpha>0$ and make the following assumption:
\begin{equation}\label{eq:introEFassum}
	\disM(\Gamma( \iota), \Xi( \Omega)) < \alpha\;\text{ for all  }  \iota \in \Omega.
\end{equation}
We are now ready to state the following result which shows that computing the exit flag is typically harder than the original problem.

\begin{theorem}[Impossibility of computing the exit flag]\label{thm:ExitFlag}
	Let $\Xi$ denote the solution map to any of the problems \eqref{problems} - \eqref{problems5} with the regularisation parameters satisfying $\delta\in[0,1]$, $\lambda\in(0,1/3]$, and $\tau\in[1/2,2]$ (and additionally being rational in the Turing case) and consider the $\|\cdot\|_p$-norm for measuring the error, for an arbitrary $p\in[1,\infty]$. Let $K \in \mathbb{N}$ and fix real $ \alpha$ and  $\omega$ so that $0<\alpha\leq \omega<10^{-K}$. Then, for any fixed dimensions $ N>m\geq 4$, there exists a class of inputs $\Omega$ for $\Xi$ such that, if $\Gamma$ is an algorithm satisfying \eqref{eq:introEFassum} with parameter $\alpha$ for the computational problem of approximating $\Xi$ with $K$ correct digits, then we have the following.
	\begin{itemize}[leftmargin=8mm]
		\item[(i)]
		No algorithm, even randomised with access to an exact solution oracle of precision $\omega$, can compute the exit flag of $\Gamma$ (with probability exceeding $\mathrm{p} > 1/2$ in the randomised case).
		\item[(ii)] If we allow randomised algorithms with non-zero probability of not halting (producing an output), then no such algorithm, even with access to an exact solution oracle of precision $\omega$, can compute the exit flag of $\Gamma$ with probability exceeding $\mathrm{p} > 1/2$. 
		\item[(iii)]
		The problem of computing the exit flag of $\Gamma$ is strictly harder than computing $K$ correct digits of $\Xi$ in the following sense: if one is given the exit flag as an oracle then it is possible to construct an algorithm that computes $K$ correct digits of $\Xi$. However, if one is instead given an oracle providing a $K$-digit approximation to $\Xi$, then it is still not possible to compute the exit flag of $\Gamma$.
		\item[(iv)] 
		For linear programming and basis pursuit, however, there exists a class of inputs $\Omega^{\sharp} \neq \Omega$ such that no algorithm, even randomised with non-zero probability of not halting, can compute the exit flag of $\Gamma$ (with probability exceeding $\mathrm{p} > 1/2$ in the randomised case), yet one can compute the exit flag with a deterministic algorithm with access to an exact solution oracle of precision $\omega$. 
	\end{itemize}
	The statements (i) and (ii) above are true even when we require the input in each $\Omega$ to be well-conditioned and bounded from above. In particular, for any input $\iota = (y,A) \in \Omega_{m,N}$ ($\iota = (y,A,c)$ in the case of LP) we have 
	$\mathrm{Cond}(AA^*)\leq 3.2$, $C_{\mathrm{FP}}(\iota)\leq 4$, $\mathrm{Cond}(\Xi) \leq 179$, $\|y\|_\infty\leq 2$, and $\|A\|_{\max} = 1$. 
\end{theorem}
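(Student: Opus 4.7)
The plan is to leverage the non-computability construction underlying Theorem \ref{Cor:main}, strengthening it so that even an algorithm enhanced with the exact solution oracle of precision $\omega$ cannot distinguish the inputs it is tested on. Recall that $\Omega$ in Theorem \ref{Cor:main} is built from adversarial ``twin pairs'' $(\iota_0, \iota_1)$ whose inexact-input oracle representations coincide but whose solutions $\Xi(\iota_0), \Xi(\iota_1)$ are separated. I would refine this construction so that, in addition, the separation satisfies $\disM(\Xi(\iota_0), \Xi(\iota_1)) \leq 2\omega$, allowing the adversarial exact solution oracle to return the same element $\rho$ on both members of the pair. Consequently, both members of the twin pair yield identical data to any algorithm, yet, as I will arrange, the true exit flag differs between them.

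\emph{Parts (i) and (ii).} Let $\Gamma$ satisfy \eqref{eq:introEFassum}. I would insert into $\Omega$ a continuous one-parameter family of twin pairs $(\iota_0^t, \iota_1^t)_{t \in [0,1]}$ whose solutions trace an arc inside a $2\omega$-ball around a common point, with the placement of $\Xi(\iota_0^t), \Xi(\iota_1^t)$ varying relative to the (constant-on-each-pair) output of $\Gamma$. An intermediate-value/pigeonhole argument applied to the binary-valued exit flag across the family then produces some $t^\star$ at which one of $\disM(\Gamma(\iota_j^{t^\star}), \Xi(\iota_j^{t^\star}))$, $j \in \{0,1\}$, is at most $10^{-K}$ while the other is strictly larger. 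Since both inputs and both oracle views are identical on such a twin pair, no deterministic algorithm, and no randomised algorithm (halting or not) with success probability exceeding $\mathrm{p} = 1/2$, can correctly produce both exit flags; the $1/2$ bound is standard for binary decision problems about indistinguishable pairs and is obtained by averaging over the pair as in Theorem \ref{Cor:main}.

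\emph{Parts (iii) and (iv).} For the strict-hardness claim (iii), I would give a reduction showing that an exit flag oracle for a suitable $\Gamma$ yields a $K$-digit solver for $\Xi$: by examining the exit flags of shifted variants of the form $\Gamma + v$ along a dense grid in $\mathcal{M}$ one localises, via binary search, a vector within $10^{-K}$ of $\Xi(\iota)$. The converse direction --- that a $K$-digit $\Xi$-oracle does not suffice for the exit flag --- is immediate from the twin witnesses of (i) and (ii), since $10^{-K}$ is precisely the resolution at which the exit flag can switch. For (iv), the combinatorial rigidity of LP and BP minimisers (vertices of a polytope, respectively unique sparse supports) allows one to construct $\Omega^{\sharp}$ on which an $\omega$-approximate oracle output uniquely identifies the combinatorial type of the minimiser; from there the exact minimiser is recovered by a well-conditioned linear-algebra refinement, after which the exit flag is trivially verifiable. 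Without the oracle, the non-computability of Theorem \ref{Cor:main} still propagates to the exit flag on $\Omega^{\sharp}$ by the same twin-pair argument.

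\emph{Main obstacle.} The delicate step is the construction in parts (i) and (ii): for any prescribed algorithm $\Gamma$ with arbitrary behaviour, producing twin pairs on which the exit flag genuinely switches, while keeping the separation below $2\omega$ and preserving the structural bounds $\mathrm{Cond}(AA^*) \leq 3.2$, $C_{\mathrm{FP}}(\iota) \leq 4$, $\mathrm{Cond}(\Xi) \leq 179$, $\|y\|_\infty \leq 2$, $\|A\|_{\max} = 1$. This requires a quantitative refinement of the construction of $\Omega$ endowed with a tunable internal parameter that governs where the two solutions sit relative to the output of the arbitrary given $\Gamma$; once this refinement is in place, parts (ii)--(iv) follow as structural consequences of the same core construction.
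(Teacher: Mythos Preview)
Your proposal has genuine gaps in essentially every part.

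\textbf{Parts (i)--(ii).} Your core construction is inverted relative to what is needed. You propose twin pairs whose solutions are separated by at most $2\omega < 2\cdot 10^{-K}$ so that a single oracle value $\rho$ is valid for both. But with solutions that close, you cannot force the exit flag to differ: if $\Gamma$ outputs the same $x$ on both members and the two solutions are within $2\omega$, then generically both are inside or both outside the $10^{-K}$-ball around $x$. Your intermediate-value argument across a continuous family $t$ does not rescue this, because $\Gamma$ is a general algorithm and need not vary continuously in $t$; moreover, varying $t$ changes the oracle representation, so $\Gamma$'s output can jump arbitrarily. The paper does the opposite: it takes two sequences $\{\iota^1_n\}$, $\{\iota^2_n\}$ converging to a common $\iota^0$ whose solution sets $S^1$, $S^2$ are separated by \emph{more} than $2\cdot 10^{-K}$, guaranteeing (Step A) that $\Gamma$ is wrong on at least one sequence. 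The solution oracle is neutralised not by making solutions close, but by exploiting that $\Xi(\iota^0)$ is \emph{multivalued} and contains both limit points $x^1\in S^1$ and $x^2\in S^2$; a fixed dyadic $v$ near one $x^j$ is then a valid oracle output simultaneously for $\iota^0$ and for the sequence $\iota^j_n$. You also never invoke assumption \eqref{eq:introEFassum}, which is essential: it is precisely $\disM(\Gamma(\tilde\iota),\Xi(\Omega))<\alpha$ together with the covering $\Xi(\Omega)\subset \clBall{\kappa-\omega}{x^1}\cup\clBall{\kappa-\omega}{x^2}\cup\clBall{\kappa-\omega}{\Xi(\iota^0)}$ that forces $\Gamma$ to be \emph{correct} on at least one of $\iota^0$, $\iota^1_n$, $\iota^2_n$ (Step B). Without this, you have no mechanism to guarantee the exit flag takes both values on indistinguishable inputs.

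\textbf{Part (iii).} Your reduction is ill-posed: the exit flag oracle is for a single fixed algorithm $\Gamma$, not for the family $\{\Gamma+v\}_v$, so you cannot ``examine the exit flags of shifted variants''. The paper instead exploits the specific structure of $\Omega$: it shows that on $\tilde\iota^0$ the exit flag is always $1$ (a consequence of \eqref{eq:introEFassum} and the covering above), so if the flag is $0$ the input must correspond to some $\iota\neq\iota^0$ with $\alpha\neq\beta$, and then a simple loop comparing $\alpha$ and $\beta$ to increasing precision recovers the solution exactly.

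\textbf{Part (iv).} Your sketch is too vague to be a proof. The paper constructs an explicit $\Omega^\sharp$ on which $\Xi$ takes only two values, $0$ and $3\cdot 10^{-K}e_1$; since $\omega<10^{-K}$, an $\omega$-accurate solution oracle immediately distinguishes them, and the exit flag is then computed by direct comparison. The impossibility without the oracle uses the same Proposition~\ref{prop:EF} machinery (conditions (a)--(e), but not (f)) on a sequence degenerating to the input with $\alpha=y_1=0$.
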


\noindent The precise version of Theorem \ref{thm:ExitFlag} can be found in Proposition \ref{prop:ExitFlag}.

\begin{remark}[Halting vs non-halting algorithms]\label{remark:EFPartiWeakerThanPartii}
	Note that (ii) in Theorem \ref{thm:ExitFlag} directly implies (i) and is hence a stronger statement. However, we deliberately put both (i) and (ii) in the statement of the theorem to allow comparison with Theorem \ref{Cor:main}. Indeed, in Theorem \ref{Cor:main} there is a difference in the lower bound for the probability between the case of a halting algorithm vs. a non-halting algorithm, which is not the case in Theorem \ref{thm:ExitFlag}.
\end{remark}

\section{Main Theorem III: The extended Smale's 9th -- Feasibility and Kepler's conjecture}\label{sec:Smale_Kepler}

\subsection{Smale's 9th problem and the computational proof of Kepler's conjecture}\label{sec:Kepler}
There is an important link between Smale's 9th problem and the recent proof of Kepler's conjecture \cite{Hales1, Hales2}. Kepler's conjecture, originally stated in 1611\cite{kepler1966strena}, hypothesised an optimality result for sphere packing in three dimensions. Continuing for nearly two decades, the program led by T. Hales \cite{Hales1, Hales2}  concluded with a computer-assisted proof (the Flyspeck program) verifying Kepler's conjecture. The link to Smale's 9th problem is that Hales' program hinges on the numerical verification of more than fifty thousand decision problems of the following form: Given $K \in \mathbb{N}$ and $M \in \real$, does there exist an $x \in \mathbb{R}^N$  such that 
\begin{equation}\label{Dual_LP_extended}
	\langle x , c \rangle_k \leq M \text{ subject to } Ax = y, \quad x \geq 0,
\end{equation}
where 
$
\langle x , c \rangle_K = \lfloor  10^{K} \langle x , c \rangle \rfloor  10^{-K}
$? What makes this numerical verification delicate is that the inputs $(y,A,c)$ are also approximated through numerical computation, and hence we find ourselves in the extended model. 
Informally, we could think of $\langle x , c \rangle_K$ as $\langle x , c \rangle$ computed to $K$ correct digits (in the Flyspeck program \eqref{Dual_LP_extended} is computed with $K = 6$). 
The key question is now, is this problem in general decidable? 

We will consider input sets of the form
\begin{equation}\label{eq:def_Omega_decision}
		\Omega = \bigcup_{m<N } \Omega_{m,N}\text{ so that }  \quad \Xi_K:  \Omega_{m,N} \rightarrow \{0,1\}= \{\mathrm{no}, \mathrm{yes}\},
\end{equation}
where, for each $m$ and $N$, $\Omega_{m,N}$ is a non-empty set of inputs of fixed dimensions $m$ and $N$ for the solution map $\Xi_K$ of \eqref{Dual_LP_extended}. Again, as in \S\ref{sec:complexity-for-noncomp}, we need to have inputs of arbitrarily large dimensions in order to be able to state ``in P'' results.
We make the dependence of the solution map on $K$ explicit, as we will consider \eqref{Dual_LP_extended} on the same input set and with the same $M$, but with different values of $K$.

Before discussing decidability, we note the following intricacy behind the choice of $K$. Suppose that the $K$-th digit behind the decimal point is 9. Then $\{x\in\real^N \,\vert\, \langle x , c \rangle_K\leq M\} = \{x\in\real^N \,\vert\, \langle x , c \rangle_{K-1}\leq M\}$, and hence $\Xi_{K}=\Xi_{K-1}$, i.e., the decision problem \eqref{Dual_LP_extended} is indistinguishable for $K$ and $K-1$. Now, writing  $\SetNine_k$ for the set of positive real numbers whose $k$-th digit after the decimal point is $9$, we have the following theorem demonstrating the intricacy of the Smale's 9th problem in the extended model. This theorem shows that Smale's 9th problem in the extended model actually becomes a classification theory and testifies to the intricacy of the computational proof of Kepler's conjecture.

\begin{theorem}[The extended Smale's 9th problem - deciding feasibility]\label{thm:Smales9}
Let  $M\geq 0$ be real and $K  > 2$ an integer, and consider the decision problem \eqref{Dual_LP_extended} for this value of $M$ and the corresponding solution maps $\Xi_K$, $\Xi_{K-1}$, and $\Xi_{K-2}$. Then there is a class of inputs $\Omega$ as in \eqref{eq:def_Omega_decision} such that we have the following:
	\begin{itemize}[leftmargin=8mm]
		\item[(i)] There does \emph{not} exist any sequence of algorithms $\{\Gamma_j\}_{j\in \mathbb{N}}$ with output in $\{0,1\}$ so that, for all $\iota \in \Omega$, we have $\lim_{j\to\infty}\Gamma_j(\iota) = \Xi_K(\iota)$ and, if $\Gamma_j(\iota) = 1$ for some $j \in \mathbb{N}$, then $\Xi_K(\iota)=\Gamma_j(\iota) =1$. 
		\item[(ii)] There does \emph{not} exist a randomised algorithm $\Gamma^{\mathrm{ran}}$, even with non-zero probability of not halting, such that, for all $\iota \in \Omega$, we have $\Gamma^{\mathrm{ran}}(\iota) = \Xi_K(\iota)$ with probability exceeding $\mathrm{p} > 1/2$. 
\end{itemize}
Additionally, depending on whether $M$ is in one (or both) of $\SetNine_{K}$ or $\SetNine_{K-1}$, one or both of the following hold as well:
\begin{itemize}[leftmargin=8mm]
		\item[(iii)] There does exist an algorithm $\Gamma$ (Turing or BSS machine) such that, for all $ \iota \in \Omega$, we have $\Gamma(\iota) = \Xi_{K-1}(\iota)$. However, any algorithm will need arbitrarily long time to compute $\Xi_{K-1}$. In particular, for any fixed dimensions $m$ and $N$, any $T > 0$, and any algorithm $\Gamma$, there exists an input $\iota \in \Omega_{m,N}$ such that either $\Gamma(\iota) \neq \Xi_{K-1}(\iota)$ or the runtime of $\Gamma$ on $\iota$ exceeds $T$. Moreover, for any randomised algorithm $\Gamma^{\mathrm{ran}}$ and $\mathrm{p} \in (0,1/2)$, there exists an input $\iota \in \Omega_{m,N}$ such that     
		\[
		\mathbb{P}\left(\gprob(\iota) \text{ is wrong }  \text{or the runtime exceeds } T \right) >\mathrm{p}.
		\]
		\item[(iv)] 
		There exists a polynomial $\mathrm{pol}: \mathbb{R} \rightarrow \mathbb{R}$ as well as a Turing machine and a BSS machine that both compute $\Xi_{K-2}$ for all inputs in $\Omega$, so that the number of arithmetic operations for both machines is bounded by $\mathrm{pol}(n)$, where $n=m+mN$ is the number of variables, and the number of digits required from the oracle \eqref{eq:oracle22} is bounded by $\mathrm{pol}(\log(n))$. Moreover, the space complexity of the Turing machine is bounded by $\mathrm{pol}(n)$.
	\end{itemize}
	Concretely, (iii) holds whenever $M\notin\SetNine_{K}$, whereas (iv) holds whenever $M\notin\SetNine_{K-1}$.
	Finally, if one only considers (i) - (iii), $\Omega$ can be chosen with any fixed dimensions $m < N$ with $N \geq 4$. Moreover, if one only considers (i) then $K$ can be chosen to be 1.

\end{theorem}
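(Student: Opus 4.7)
\vspace{2mm}
\noindent\textbf{Overall strategy.} The plan is to rephrase the decision problem \eqref{Dual_LP_extended} as a comparison between the LP optimal value $v(\iota):=\min\{\langle c,x\rangle : Ax=y,\,x\geq 0\}$ and the grid threshold $\tau_K:=(\lfloor 10^K M\rfloor+1)10^{-K}$: one checks that $\Xi_K(\iota)=1$ iff $v(\iota)<\tau_K$. I would then adapt the twin/gadget construction underlying Theorem \ref{Cor:main} to produce a single input class $\Omega$ in which $v(\iota)$ can be placed arbitrarily close to, and on either side of, $\tau_K$, with the two sides indistinguishable at any prescribed oracle precision. The very same construction will automatically keep $v(\iota)$ bounded away from $\tau_{K-1}$ whenever $M\notin\SetNine_K$, and from $\tau_{K-2}$ whenever $M\notin\SetNine_{K-1}$, which is the hook on which the positive results (iii) and (iv) hang.

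\vspace{2mm}
\noindent\textbf{Impossibility results (parts (i) and (ii)).} For (ii), I would use the standard twin-input argument: pick $\iota^0_n,\iota^1_n\in\Omega$ with $v(\iota^0_n)<\tau_K<v(\iota^1_n)$ but coinciding to within $2^{-n}$ in every coordinate. Any oracle-agnostic randomised algorithm must induce the same output distribution on both inputs when all its queries have precision at most $2^{-n}$; letting $n$ be chosen so that only an arbitrarily small total probability mass is spent on finer queries, a union bound forces $\Pr[\Gamma^{\mathrm{ran}}(\iota^0_n)=0]+\Pr[\Gamma^{\mathrm{ran}}(\iota^1_n)=1]\to 1$, so at least one of the two probabilities falls below any given $\mathrm{p}>1/2$. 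For (i), given a candidate sequence $\{\Gamma_j\}$ with the one-sided property, I would build a single $\iota^\star\in\Omega$ with $\Xi_K(\iota^\star)=1$ such that, for every $j$, there is a companion $\iota_j^\star\in\Omega$ with $\Xi_K(\iota_j^\star)=0$ that agrees with $\iota^\star$ up to the precision $\Gamma_j$ queries on $\iota^\star$. Then $\Gamma_j(\iota^\star)=\Gamma_j(\iota_j^\star)$; the one-sided assumption rules out $\Gamma_j(\iota_j^\star)=1$ (which would force $\Xi_K(\iota_j^\star)=1$), so $\Gamma_j(\iota^\star)=0$ for all $j$, contradicting $\Gamma_j(\iota^\star)\to\Xi_K(\iota^\star)=1$.

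\vspace{2mm}
\noindent\textbf{Positive results (parts (iii) and (iv)).} For (iii), the hypothesis $M\notin\SetNine_K$ yields $\tau_{K-1}\geq \tau_K+10^{-K}$ (no cascading carries at digit $K$), and the class $\Omega$ can be arranged so that every $v(\iota)$ lies within $10^{-K}/2$ of $\tau_K$; in particular, $v(\iota)$ is bounded away from $\tau_{K-1}$ by a fixed positive amount. An algorithm that requests oracle precisions $2^{-k}$ for $k=1,2,\ldots$ and solves the rounded rational LP via Khachiyan/Karmarkar at each step will, for every $\iota\in\Omega$, eventually certify on which side of $\tau_{K-1}$ the value $v(\iota)$ lies; halting is guaranteed, but the depth at which the decisive gap becomes visible can be tuned arbitrarily deep within $\Omega$, precluding any uniform runtime bound. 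For (iv), $M\notin\SetNine_{K-1}$ enlarges the buffer to $\tau_{K-2}-\tau_K\geq 10^{-(K-1)}$, which dwarfs the $10^{-K}/2$ spread of $v(\iota)$; a single oracle query at precision $\mathrm{pol}(\log n)$ followed by a rational LP solve decides $\Xi_{K-2}$ in polynomial time, mirroring part (iv) of Theorem \ref{Cor:main}.

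\vspace{2mm}
\noindent\textbf{Main obstacle.} The principal difficulty is engineering a single class $\Omega$ that simultaneously supports all four statements: pressed close enough to $\tau_K$ for (i) and (ii), decidable in finite but unbounded time at level $K-1$, and polynomially decidable at level $K-2$. This requires delicate bookkeeping of how the decimal expansion of $M$ interacts with the thresholds $\tau_K,\tau_{K-1},\tau_{K-2}$ through the $\SetNine_k$ conditions (including cascading nines), and the whole construction must be compatible with the boundedness and well-conditioning guarantees claimed at the end of the statement. A further subtlety lives in (i): the diagonal input $\iota^\star$ must be realised as a genuine element of $\Omega$ rather than as an accumulation point, which forces $\Omega$ to be built as a carefully closed family of ``fully resolved'' inputs, in the spirit of the inductive constructions already used for Theorem \ref{Cor:main}.
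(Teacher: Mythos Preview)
Your high-level strategy for (i) and (ii) is essentially what the paper does (packaged there through Proposition~\ref{prop:DrivingNegativeProposition}, whose condition (e) yields the $\Sigma_1$ failure in (i) directly, in place of your explicit diagonal argument). The gap is in part (iii).

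You propose a class $\Omega$ in which every optimal value $v(\iota)$ sits within $10^{-K}/2$ of $\tau_K$. As you correctly observe, $M\notin\SetNine_K$ forces $\tau_{K-1}\geq \tau_K+10^{-K}$, so on such an $\Omega$ one would have $v(\iota)<\tau_{K-1}$ for every input, i.e.\ $\Xi_{K-1}\equiv 1$. But then the algorithm ``output $1$'' computes $\Xi_{K-1}$ in constant time, which contradicts the negative half of (iii). That half requires $\Xi_{K-1}$ to take \emph{both} values on $\Omega$, with the two preimages indistinguishable at any finite oracle precision; a single cluster of values near $\tau_K$ cannot furnish this.

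The paper resolves this by building $\Omega$ as a union of two pieces. One piece, $\Omega^{\mathrm{LP},\mathrm{s}}_{m,N}$, has $v(\iota)\in\{0,\tau_K\}$ and drives (i)--(ii); on it $\Xi_{K-1}\equiv 1$. The second piece, $\Omega^{\mathrm{LP},\mathrm{w}}_{m,N}$, is engineered so that $v(\iota)\in\{0,\tau_{K-1}\}$, with the two values realised by one-parameter families that converge to a common limit point deliberately excluded from $\Omega$; this supplies the weak breakdown for $\Xi_{K-1}$. A side effect of this very specific construction is that the positive algorithms become far simpler than the LP solvers you propose: the algorithm for (iii) merely reads a couple of matrix entries and compares them to zero, and for (iv) one checks directly that $\Xi_{K-2}\equiv 1$ on all of $\Omega$ whenever $M\notin\SetNine_{K-1}$, so the algorithm is literally ``output $1$'' --- which is why the $\mathrm{pol}(\log n)$ oracle-digit bound holds trivially. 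Your generic LP-solving route for (iv) would require nontrivial additional argument to meet that digit bound.

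A minor point: the conditioning and boundedness guarantees you list as an obstacle are not part of this theorem; they belong to Theorem~\ref{Cor:main}.
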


\noindent The precise version of Theorem \ref{thm:Smales9} can be found in Proposition \ref{prop:SmalesNinthProposition}.

\subsection{Undecidable problems in computer-assisted proofs: The proof of Kepler's conjecture} 
The Flyspeck program is a stunning example of a successful computer-assisted proof of one of the few open questions spanning several centuries. However, Theorem \ref{thm:Smales9} reveals that the computational proof is actually even more intricate than one might think. In fact, part (ii) of Theorem \ref{thm:Smales9} demonstrates that the general decision problem is actually undecidable even for $K = 1$, and even in a randomised setting. Moreover, part (i) reveals that not only is it undecidable, but no sequence of algorithms can certify a positive answer to the decision problem. In the language of the Solvability Complexity Index (SCI) hierarchy (see \S \ref{sec:SCI_hierarchy}), this means that the problem is not in $\Sigma_1$. Given the results of Theorem \ref{thm:Smales9}, it may seem paradoxical that one could prove Kepler's conjecture through the Flyspeck program by trying to decide the decision problem $\Xi_K$ with $K = 6$, which is done in the proof.  There are other examples too of conjectures that are proven by means of computer-assisted proofs relying on problems that are non-computable in the general case, for instance the Dirac-Schwinger conjecture proved by C. Fefferman and L. Seco in \cite{fefferman1990, fefferman1992, fefferman1993aperiodicity,  fefferman1994, fefferman1994_2, fefferman1995, fefferman1996interval, fefferman1996, fefferman1997} (see further discussion in \cite{SCI}).

	Theorem \ref{thm:Smales9} shows that the Flyspeck program may have been impossible in several ways. Specifically, any of the following scenarios could have happened.
	\begin{itemize}
		\item[(1)] Conditions of part (i) of Theorem \ref{thm:Smales9} apply, i.e., the answer to all 50000 decision problems involved in the proof would have been \emph{yes}, and hence Kepler's conjecture would have been true, yet the computations in the Flyspeck program would continue forever, regardless of the computing power, never producing the 50000 affirmative answers needed. Thus, it would never confirm Kepler's conjecture.
		\item[(2)] Conditions of part (ii) of Theorem \ref{thm:Smales9} apply instead of part (i). In particular, one might design a sequence of algorithms $\{\Gamma_j\}_{j\in \mathbb{N}}$ so that for any linear program input $\iota$ we have $\Gamma_j(\iota) \rightarrow \Xi_K(\iota)$ as $j \rightarrow \infty$ and, if $\Gamma_j(\iota) = 1$ for some $j \in \mathbb{N}$, then $\Gamma_j(\iota) = \Xi_K(\iota)$. Still, because of (ii), if one of the decision problems had a negative answer, the Flyspeck program would run forever and never conclude with an answer. Moreover, one could not even use randomised algorithms and conclude with a probability better than coin flipping. 
		\item[(3)] Conditions of part (iii) of Theorem \ref{thm:Smales9} apply instead of parts (i) and (ii), i.e., there is an algorithm $\Gamma$ such that for any linear program input $\iota$ we have $\Gamma(\iota) = \Xi_K(\iota)$. However, because of (iii) the following could occur. All 50000 decision problems would have answer {\it yes}, yet it would take $10^{80}$ years for the Flyspeck program to conclude that Kepler's conjecture was true.
	\end{itemize}
The scenario (1) can, in fact, always be avoided by considering different decision problems where one replaces $\langle x , c \rangle_K \leq M$ in \eqref{Dual_LP_extended} by $\langle x , c \rangle_K < M$. For this problem, a statement analogous to part (i) of Theorem \ref{thm:Smales9} is no longer true. In fact, for every $K \in \mathbb{N}$, there does exist a sequence of algorithms $\{\Gamma_j\}_{j\in \mathbb{N}}$ such that for any linear program input $\iota$ we have $\Gamma_j(\iota) \rightarrow \Xi_K(\iota)$ as $j \rightarrow \infty$ and, if $\Gamma_j(\iota) = 1$ for some $j \in \mathbb{N}$, then $\Gamma_j(\iota) = \Xi_K(\iota)$. In the proof of Kepler's conjecture, all decision problems have the property that $\langle x , c \rangle_K < M$ for some feasible $x$ (and the appropriate $M$). Moreover, these decision problems can be solved quickly, and one is in fact in conditions analogous to part (iv) of Theorem \ref{thm:Smales9}. 

\section{Main Theorem I (Part b): The extended Smale's 9th in the sciences}

\subsection{The extended Smale's 9th problem and why things often work in practice}\label{sec:Smale9_comp_sens}

In view of Theorem \ref{Cor:main}, a natural question is the following: Under which conditions on the set of inputs may \eqref{problems3} in the extended model be in P? Obvious candidates for such conditions would be various standard assumptions on problems in the sciences such as sparse regularisation. Thus, Theorem \ref{th:smale_comp_sens} below is a continuation of Theorem \ref{Cor:main} demonstrating and quantifying the phase transitions in concrete examples.  A fundamental concept is sparsity, where we say that a vector $x$ is $s$-sparse if it has at most $s$ non-zero entries.  Furthermore, a standard requirement is that the matrix $A$ satisfy either the restricted isometry property or the robust nullspace property (RNP) \cite{AdcockHansenBook, foucartBook, CohenDahmenDeVore}. We will only consider the latter. Concretely, a matrix $A \in \mathbb{C}^{m \times N}$ is said to satisfy the \emph{$\ell^2$-robust nullspace property of order $s$} with parameters $\rho \in (0,1)$ and $\tau>0$ if
\begin{equation}\label{eq:RNP}
	\|v_S\|_2 \leq \frac{\rho}{\sqrt{s}} \|v_{S^c}\|_1 + \tau \|Av\|_2,
\end{equation}
for all index sets $S \subseteq \{1,\hdots, N\}$ with $|S| \leq s$ (where $|\cdot|$ denotes the cardinality) and vectors $v \in \mathbb{C}^N$. The notation $v_S$ means that the entries of $v$ with indices in $S^c$ are set to zero and the remaining entries are left unchanged.
We can now make our question more specific.
\begin{problem}\label{qP_CS}
	Given $K \in \mathbb{N}$, is the problem of computing $K$ correct digits of a solution to the  BP problem \eqref{problems3} in the extended model in P, assuming $y = Ax$, for some $x$ guaranteed to be $s$-sparse, and $A$ satisfies the $\ell^2$-robust nullspace property of order $s$ with parameters $\rho$ and $\tau$?
\end{problem}

We consider classes of inputs specified as follows. Fix real constants $\rho\in(1/3,1)$, $\tau>10$, $b_1>3$, and $b_2 > 6$, and, for  $\epsilon\in[0,1]$ and $s,m,N\in\mathbb{N}$ such that $m\leq N$ and $s\leq N$  define 
\begin{equation}\label{eq:Omega_smN}
	\Omega_{s,m,N}^\epsilon= \{(y,A) \in \mathbb{R}^m\times  \mathbb{R}^{m\times N} \, \vert \, (y,A) \text{ satisfies \eqref{eq:conditions_on_Ay}}\},
\end{equation}
where 
\begin{equation}\label{eq:conditions_on_Ay}
	\begin{split}
		&A \text{ satisfies the RNP } \eqref{eq:RNP} \text{ of order $s$ with parameters $\rho$ and $\tau$,}\\
		&\|y-Ax\|_2 \leq \epsilon  \text{ for some $s$-sparse $x$, } \|y\|_2 \leq b_1, \,\,  \|A\|_2 \leq b_2\sqrt{N/m} .
	\end{split}
\end{equation}
Finally, set  
\begin{equation}\label{eq:the _Omega}
	\Omega^\epsilon =\bigcup_{\substack{s,m,N\in\mathbb{N}\\ m\leq N}}\,\Omega_{s,m,N}^\epsilon.
\end{equation}
Then, defining $\log_{10}(0) = -\infty$ and recalling the definition of the $C_{\mathrm{RCC}}$ condition number (see \S\ref{condition}), we have the following theorem.

\begin{theorem} \emph{(The extended Smale's 9th problem in the sciences)}{\bf.}\label{th:smale_comp_sens}
	Let $\xibp$ denote the solution map to the $\ell^1$-BP problem \eqref{problems3} and consider the $\|\cdot\|_{2}$-norm for measuring the error. If $\Omega_{s,m,N}^\epsilon$ and $\Omega^\epsilon$ are as in \eqref{eq:Omega_smN} and \eqref{eq:the _Omega} the following holds.
	\begin{itemize}[leftmargin=8mm]
		\item[(i)] There exists a constant $C > 0$, independent of $\rho,\tau,b_1$ and $b_2$, such that if we fix $s = 2^k$ for some $k \in \mathbb{N}$ and $m, N$ satisfying $N > m\geq  7s$ and $m\geq C s\log^2(2s)\log(N)$, then we have the following. For $\delta \in (0,1]$ and  
		\[
		K \geq \left\lceil \log_{10}\left(2\delta^{-1}\right)\right \rceil,
		\]
		there does not exist any algorithm (even randomised) that can produce $K$ correct digits for $\xibp$
		(with probability exceeding $\mathrm{p} > 1/2$ in the randomised case) for all inputs in $\Omega_{s,m,N}^0$.
		\item[(ii)] There exists a polynomial $\mathrm{pol}:\real^2\to\real$ and, for every $\delta\in[0, (1-\rho)/(16\tau)]$, there exists an algorithm $\Gamma_\delta$ (either a Turing machine or a BSS machine) such that, for $K \in \mathbb{N}$ satisfying 
		\begin{equation}\label{eq:theK}
			K \leq \left  \lfloor \log_{10}\big((1-\rho)(16\tau)^{-1}\delta^{-1}\big) \right \rfloor,
		\end{equation} 
		$\Gamma_\delta$ produces $K$ correct digits for $\xibp$ for all inputs in $\Omega^\delta$. In the case that $\delta = 0$, \eqref{eq:theK} is to be interpreted as $K<\infty$.
		Moreover, the runtime of $\Gamma_\delta$ (steps performed by the Turing machine, arithmetic operations performed by the BSS machine) is bounded by $\mathrm{pol}(n,K)$, where $n = m+mN$ is the number of variables. In particular, for $\delta = 0$, the runtime is bounded by $\mathrm{pol}(n,K)$ for all $K \in \mathbb{N}$. 
		\item[(iii)]
		Consider the $\ell^1$-BP problem \eqref{problems3} with $\delta=0$.
		For any fixed $s\geq 3$, there are infinitely many pairs $(m,N)$ and inputs $\iota = (Ax,A) \in \Omega^0$, where $x\in\real^m$ is $s$-sparse and $A \in \real^{m \times N}$ is a subsampled Hadamard, Bernoulli, or Hadamard-to-Haar matrix such that
		\[
		C_{\mathrm{RCC}}(\iota) = \infty.
		\] 
		In particular, appreciating (ii) and (iii), there exist inputs in $\Omega$ with infinite RCC condition number, yet the problem is in P.
	\end{itemize} 
\end{theorem}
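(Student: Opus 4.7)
The theorem has three parts; my plan addresses each in turn.

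For part (i), the plan is to adapt the diagonalisation-style indistinguishability arguments underlying Theorem \ref{Cor:main} to the sparse recovery setting. For each precision level $n\in\mathbb{N}$, I will construct a pair of inputs $\iota_1^{(n)}, \iota_2^{(n)} \in \Omega^0_{s,m,N}$ whose matrix and vector components agree to within $2^{-n}$ in the $\|\cdot\|_\infty$ norm, yet whose BP solution sets satisfy
\[
\inf_{\xi_1\in\xibp(\iota_1^{(n)}),\ \xi_2\in\xibp(\iota_2^{(n)})}\|\xi_1-\xi_2\|_2 > 2\cdot 10^{-K}.
\]
The key observation is that with $K\geq \lceil\log_{10}(2\delta^{-1})\rceil$ one has $10^{-K}\leq \delta/2$, so the required separation is exactly of the order of the intrinsic BP ambiguity admitted by the slack $\|Az-y\|_2\leq\delta$. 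To realise this, I will pick an RNP matrix $A$ and two $s$-sparse ``decoy'' vectors $x_1,x_2$ with $\|x_1-x_2\|_2$ on the order of $\delta$, then engineer $y_1,y_2$ slightly displaced from $Ax_1,Ax_2$ so that BP uniquely selects $x_1$ (respectively $x_2$). The diagonalisation against any hypothetical successful algorithm then follows the template of Theorem \ref{Cor:main}: since any algorithm uses only finitely many oracle queries, an adversarial oracle consistent with both $\iota_1^{(n)}$ and $\iota_2^{(n)}$ at the precision actually queried forces identical outputs on both, contradicting the $K$-digit requirement on at least one. The probabilistic refinement (thresholds $\mathrm{p}>1/2$) follows from a standard two-hypothesis argument.

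For part (ii), the plan is to exhibit an explicit polynomial-time algorithm $\Gamma_\delta$. Given oracle access to $\iota=(y,A)\in\Omega^\delta$, query at precision $2^{-k}$ with $k=\mathrm{pol}(K,\log m,\log N)$ to obtain $\tilde y,\tilde A$; then solve the perturbed convex programme $\min\|z\|_1$ subject to $\|\tilde Az-\tilde y\|_2\leq \delta+\eta$, where $\eta=2^{-k}\cdot\mathrm{pol}(N,m,b_1,b_2)$ absorbs the oracle noise. This programme can be cast as an LP or SOCP and solved to arbitrary prescribed accuracy in time polynomial in the problem size and in the number of accuracy digits via standard interior-point methods. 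The error analysis combines three ingredients: the RNP bound \eqref{eq:RNP}, which yields a recovery estimate
\[
\|\hat z - x\|_2 \leq \tfrac{2\tau}{1-\rho}(\delta+\eta)
\]
for any feasible minimiser $\hat z$; standard perturbation estimates for the BP feasible set under perturbations of $A,y$ (using $\|y\|_2\leq b_1$ and $\|A\|_2\leq b_2\sqrt{N/m}$); and a choice of $k$ ensuring $\eta$ is small compared with $\delta$ (and, when $\delta=0$, with $10^{-K}$). Inserting the constants recovers the digit bound \eqref{eq:theK}.

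For part (iii), the plan is a direct algebraic construction. For each fixed $s\geq 3$, I will exhibit infinitely many pairs $(m,N)$ and a subsampled Hadamard, Bernoulli, or Hadamard-to-Haar matrix $A\in\real^{m\times N}$ together with two distinct $s$-sparse vectors $x,x'$ satisfying $Ax=Ax'$ and $\|x\|_1=\|x'\|_1$. For the Hadamard and Bernoulli cases such collisions arise naturally from the $\pm 1$-structure: one selects disjoint supports $S,S'\subset\{1,\dots,N\}$ of size at most $s$ and sign patterns such that the signed column sums cancel on the $m$ sampled rows, which can be arranged by a pigeon-hole/parity argument whenever the sampled block is narrow enough; the Hadamard-to-Haar case is handled by a similar argument after change of basis. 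The input $\iota=(Ax,A)$ then lies in $\Omega^0$ (the parameters $\rho,\tau,b_1,b_2$ being arranged via the choice of $m,N$), and since BP admits at least two distinct minimisers the constant $C_{\mathrm{RCC}}(\iota)$ is infinite by definition. Combining this with part (ii) applied at $\delta=0$ gives the stated dichotomy.

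The main obstacle lies in part (i): the RNP forbids making two distinct $s$-sparse vectors have arbitrarily close images under $A$, so one cannot naively copy the oracle-indistinguishability construction from Theorem \ref{Cor:main}. The resolution is to perturb the $y$-component (not $A$) and exploit the $\delta>0$ thickness of the feasibility set; the relevant ambiguity is then in the \emph{selection} of a minimiser within the feasibility polytope rather than in the underlying sparse signal, and this selection can be flipped between two well-separated $s$-sparse candidates by arbitrarily small $y$-perturbations, yielding the required oracle-level indistinguishability together with $\ell^2$-separation of the solution sets.
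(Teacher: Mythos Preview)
Your plan for part~(iii) contains a direct contradiction. You propose to exhibit two distinct $s$-sparse vectors $x,x'$ with $Ax=Ax'$ and $\|x\|_1=\|x'\|_1$ while simultaneously requiring $A$ to satisfy the RNP of order $s$ (so that $(Ax,A)\in\Omega^0$). But RNP of order $s$ with $\rho<1$ forces exact and \emph{unique} $\ell^1$-recovery of every $s$-sparse vector: for $y=Ax$ with $x$ $s$-sparse, the BP minimiser at $\delta=0$ is $x$ alone. Hence no second $s$-sparse $x'\neq x$ with $Ax'=Ax$ and $\|x'\|_1\leq\|x\|_1$ can exist, and your pigeonhole/parity collision is incompatible with membership in $\Omega^0$. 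The paper does not place $(Ax,A)$ inside $\Sigma^{\mathrm{RCC}}$ itself; it shows instead that $\dist_2[(Ax,A),\Sigma^{\mathrm{RCC}}\cap(\Omega^0)^{\mathrm{act}}]=0$ by producing, for every $\epsilon>0$, a nearby input (obtained by perturbing $y$ with $A$ fixed) whose BP minimiser set is a nondegenerate segment. The multiple minimisers are \emph{not} $s$-sparse, and the construction (Lemmas~\ref{lemma:ProveInfiniteCondition}--\ref{lemma:HighConditionHadSampling}) hinges on a dual certificate $v$ in the row span of $A$ with $|v_i|=1$ on the relevant support, available from the $\pm1$ structure of Hadamard (or Hadamard-to-Haar) rows.

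Your plan for part~(i) has a related gap. The inputs must lie in $\Omega^0_{s,m,N}$, which forces $y=Ax$ exactly for some $s$-sparse $x$; ``perturbing the $y$-component'' while staying in $\Omega^0$ therefore amounts to perturbing $A$ or the $s$-sparse $x$, and RNP stability makes the BP output Lipschitz in such perturbations, so arbitrarily small changes cannot flip between candidates separated by order $\delta$. What the paper does is construct (Proposition~\ref{prop:BPDNNFail}) an explicit $(5s{-}1)\times 5s$ matrix $A$ that satisfies the RNP of order $s$ yet for which $\xibp(Ax,A)$, with $x$ $s$-sparse, is an entire line segment whose endpoints $\hat x^1,\hat x^2$ (each $3s$-sparse) satisfy $\|\hat x^1-\hat x^2\|_2>\delta$. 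It then perturbs the \emph{matrix} via $A\mapsto A(I-E^{j,n})$ for arbitrarily small positive diagonal $E^{j,n}$ (Lemma~\ref{lem:PeturbToGetEitherMin}) to select the endpoint $\hat x^j$ uniquely; RNP is stable under this perturbation, and a direct sum with a second RNP block realises the prescribed dimensions. Proposition~\ref{prop:DrivingNegativeProposition} then applies with $y$ fixed and $A$ varying --- the opposite of what you propose.

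Your plan for part~(ii) is structurally correct; the paper uses the ellipsoid method (Theorems~\ref{thm:Ellipsoid} and~\ref{thm:Ellipsoid-BSS}) rather than interior-point in order to obtain clean polynomial bounds simultaneously in the Turing and BSS models, and the recovery constant is $\frac{(3+\rho)\tau}{1-\rho}$ from Theorem~\ref{l2RNPerrorest} rather than your $\frac{2\tau}{1-\rho}$, but these are inessential differences.
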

\noindent The precise version of Theorem \ref{th:smale_comp_sens} can be found in Proposition \ref{prop:CSResult}.
The proof of Theorem \ref{th:smale_comp_sens} reveals upper and lower bounds on the probabilistic strong breakdown epsilon $\epsilon_{\mathbb{P}\mathrm{B}}^{s}$ (Definition \ref{prob_strong_break}) for $\ell^1$-Basis Pursuit given that $A$ satisfies the robust nullspace property. Indeed, we have 
\[
\frac{1}{2}\delta \leq \epsilon_{\mathbb{P}\mathrm{B}}^{s} \leq \frac{16\tau}{1-\rho}\delta. 
\]

	Theorem \ref{th:smale_comp_sens} can be viewed as the quantified cousin of Theorem \ref{Cor:main}. Indeed, Theorem \ref{th:smale_comp_sens} demonstrates the facets of Theorem \ref{Cor:main} in areas in the sciences  where the phenomenon described in Theorem \ref{Cor:main} occur under usual conditions. However, it is important to emphasise that, while Theorem \ref{th:smale_comp_sens} and Theorem \ref{Cor:main} explain why things fail, they also explain why things often work. Specifically, the negative part
	(i) of Theorem \ref{th:smale_comp_sens} can be interpreted in conjunction with the positive parts (ii) and (iii) as follows. Error bounds in sparse regularisation are typically linear in $\delta$, hence not being able to compute a minimiser to better accuracy than $\delta$ is perfectly acceptable.

\begin{remark}[{\bf In P, yet the condition number is $\infty$}]\label{rem:inf_cond}	
Part (iii) of Theorem \ref{th:smale_comp_sens} demonstrates how one can have tractable problems that are nonetheless ill-conditioned according to the RCC condition number. 	Moreover, our examples are not contrived, but key examples from applications. Indeed, subsampled Hadamard matrices (where the rows are randomly sampled in a typical compressed sensing fashion) and Bernoulli matrices form the foundations of compressive imaging in microscopy, lensless camera, compressive video, etc. \cite{Breaking}.
\end{remark}

\section{The SCI hierarchy -- Mathematical preliminaries for the proofs}\label{Sec:background}
In this section we present outlines of the proofs of the main theorems and how to navigate through the manuscript. In addition we present the mathematical preliminaries needed. We begin by giving a brief account of the new concepts introduced in the paper in order to prove the theorems.  The techniques needed in our proofs substantially extend the recent program on the Solvability Complexity Index (SCI) hierarchy that was initiated in \cite{Hansen_JAMS} and has been developed in a series of papers \cite{SCI, CRAS, Colbrook_2021, Colbrook_2019, Hansen_JAMS, ben2021computing, colbrook2019foundations} in order to solve longstanding computational problems.
The SCI hierarchy generalises the arithmetical hierarchy \cite{odifreddi1992classical} to arbitrary computational problems in any computational model. It is motivated by Smale's program on foundations of computational mathematics and some of his fundamental problems \cite{Smale81, Smale85} on the existence of algorithms for polynomial root finding -- solved by C. McMullen \cite{McMullen1, McMullen2, Smale_McMullen} and P. Doyle \& C. McMullen \cite{Doyle_McMullen}.

\subsection{Condition - precise definitions}\label{condition}
We recall the standard definitions of condition used in optimisation 
\cite{Renegar1, Renegar2, BCSS, Wright, Condition}. The classical 
condition number of a matrix $A$ is given by 
$
\mathrm{Cond}(A) = \|A\|_2\|A^{-1}\|_2.
$
For different types of condition numbers related to a mapping $\Xi: \Omega \subset \mathbb{C}^n \rightarrow \mathbb{C}^m$ we need to establish what types of perturbations we are interested in. For example, if $\Omega$ denotes the set of diagonal matrices, we may not be interested in perturbations in the off-diagonal elements as they will always be zero. In particular, we may only be interested in perturbations in the coordinates that are varying in the set $\Omega$. Thus, given $\Omega \subset  \mathbb{C}^n$ we define the active coordinates of $\Omega$ to be 
$
\mathcal{A}(\Omega) = \{j \in \{1,2,\dotsc,n\} \, \vert \, \exists \, x,y \in \Omega, x_j \neq y_j\},
$
which we refer to as the active set as well as  
\begin{equation}\label{eq:def-active-set}
\Omega^{\text{act}} = \{x \in \real^{n} \, \vert \, \exists \, y \in \Omega \text{ such that } x_{\mathcal{A}^c} = y_{\mathcal{A}^c}\}, \quad \mathcal{A} = \mathcal{A}(\Omega). 
\end{equation}
In particular, $\Omega^{\text{act}}$ describes the perturbations allowed as dictated by the active set of coordinates, namely the perturbations along the non-constant coordinates of elements in $\Omega$. We can now recall some of the classical condition numbers from the literature \cite{Condition}.

We begin with {\it condition of a mapping}.
Let
$\Xi: \Omega \subset \real^n \rightarrow \real^m$ be a linear or non-linear mapping, and suppose that $\Xi$ is also defined on some set $\hat \Omega$ with $\Omega \subset \hat \Omega$. Then,  
\begin{equation}\label{eq:cond1}
	\mathrm{Cond}(\Xi)= \sup_{x \in \Omega} \lim_{\epsilon \rightarrow 0^+} \sup_{\substack{x+z \in \Omega^{\text{act} }\cap \hat \Omega\\ 0 < \| z \|_\infty  \leq \epsilon}} \left \{ \frac{\mathrm{dist}_\infty(\Xi(x+z),\Xi(x))}{\| z \|_\infty} \right \},
\end{equation}
where we allow for multivalued functions by defining 
$
\mathrm{dist}_\infty(\Xi(x),\Xi(z)) = \inf_{ x' \in \Xi(x),  z' \in \Xi(z)}\| x' -  z'\|_\infty
$ (here, the supremum of the empty set is understood to be $0$). We typically choose $\hat \Omega$ to be the largest set on which $\Xi$ can be defined in a natural fashion.

Next we have {\it distance to infeasibility} and the \emph{Feasibility Primal} condition number, defined for both Linear Programming \eqref{problems} and Basis Pursuit \eqref{problems3}. Specifically, we define the set of \emph{infeasible inputs} (which we denote by $\InfeasibleInputs$) for a linear program to be the set of $(y,A)$ such that no $x$ exists with $x \geq 0$ and $Ax=y$. Similarly, for basis pursuit denoising with parameter $\delta \geq 0$, we define the set of infeasible inputs (which we also denote, with a slight abuse of notation, by $\InfeasibleInputs$) as the set of inputs for which no $x$ exists with $\|Ax-y\|_2 \leq \delta$.

We then define the {\it Feasibility Primal} (FP) condition number for either basis pursuit or linear programming with inputs in $\Omega$ according to
\begin{equation}\label{eq:cond2Semi}
	C_{\mathrm{FP}}(y,A):= \begin{cases} \frac{\|y\|_2 \vee \|A\|_2}{\dist_2\left[(y,A),\InfeasibleInputs \cap \Omega^{\text{act}}\right]} & \text{ if } \dist_2\left[(y,A),\InfeasibleInputs \cap \Omega^{\text{act}}\right] \neq 0\\
		\infty & \text{ otherwise} 
	\end{cases} 
\end{equation}
where we use the standard $\vee, \wedge$ as notation for max/min respectively and where, for a pair $(y,A) \in \real^m \times \real^{m \times N}$ and a set of pairs $S \subseteq \real^m \times \real^{m \times N}$, $\dist_2$ is defined in the following way:
\begin{equation*}
	\dist_2[(y,A),S] = \inf\{ \|y'-y\|_2 \vee \|A'-A\|_2\,\vert\,  {(y',A') \in S} \}.
\end{equation*}
Here, the infimum of the empty-set is understood to be $\infty$.
Note that UL \eqref{problems4} and CL \eqref{problems5} are always feasible, and hence the Feasibility Primal condition number is not defined for these problems. 

Finally, we have the {\it distance to inputs with several minimisers} and the \emph{RCC condition number}: 
For any of problems \eqref{problems} to \eqref{problems5}, we define the set of inputs with several minimisers (which we denote by $\MultiMinInputs$) to be the set of pairs $(y,A)$ such that the problem with input $(y,A)$ has at least two distinct solutions. We then define the {\it RCC condition number} according to
\begin{equation}\label{eq:cond3Semi}
	C_{\mathrm{RCC}}(y,A):= \begin{cases} \frac{\|y\|_2 \vee \|A\|_2}{\dist_2\left[(y,A),\MultiMinInputs \cap \Omega^{\text{act}}\right]} & \text{ if } \dist_2\left[(y,A),\MultiMinInputs \cap \Omega^{\text{act}}\right] \neq 0\\
		\infty & \text{ otherwise}
	\end{cases} .
\end{equation}

\begin{remark}[RCC condition number]
	Note that the definition of the RCC condition number here is slightly weaker than the one in \cite{Condition}, that is to say, our RCC condition number is less than or equal to the one defined in \cite{Condition}. However, our weaker definition simply makes our results stronger in that we show that $C_{\mathrm{RCC}}(y,A) = \infty$ yet the problem of computing minimisers is still in P. 
	We have also split the definition in \cite{Condition} into two parts - one for feasibility, and one for two minimisers. This allows for a more granular analysis of the influence of condition.
\end{remark}

\subsection{SCI hierarchy}\label{sec:SCI_hierarchy}
To formalise our results and the underlying theory we recall the definitions from \cite{SCI} and start by defining a \emph{computational problem}.

\begin{definition}[Computational problem]\label{definition:ComputationalProblem}
	Let $\Omega$ be some set, which we call the \emph{input} set,
	and $\Lambda$ be a set of complex valued functions on $\Omega$ such that for $\iota_1, \iota_2 \in \Omega$, then $\iota_1 = \iota_2$ if and only if $f(\iota_1) = f(\iota_2)$ for all $f \in \Lambda$, called an \emph{evaluation} set. Let $(\mathcal{M},d)$ be a metric space, and finally let $\Xi:\Omega\to \mathcal{M}$ be a function which we call the \emph{solution map}.
	We call the collection $\{\Xi,\Omega,\mathcal{M},\Lambda\}$ a \emph{computational problem}. When it is clear what $\mathcal{M}$ and $\Lambda$ are we will sometimes write $\{\Xi,\Omega\}$ for brevity. 
\end{definition}

The set $\Omega$ is essentially the set of objects that give rise to the various instances of our computational problem. It can be a family of matrices (infinite or finite) and vectors, a collection of polynomials, a family of Schr\"odinger operators with a certain potential etc. The solution map $\Xi : \Omega\to\mathcal{M}$ is what we are interested in computing. It could be one of the problems in \eqref{problems}, the set of eigenvalues of an $n\times n$ matrix, the spectrum of a Hilbert (or Banach) space operator, root(s) of a polynomial, etc. 
Finally, the set $\Lambda$ is the collection of functions that provide us with the information we are allowed to read, say matrix elements and vector coefficients, polynomial coefficients or pointwise values of the potential of a Schr\"odinger operator, for example.

\subsubsection{Algorithms and tower of algorithms}

The cornerstone of the SCI framework is the definition of a general algorithm, introduced next.

\begin{definition}[General Algorithm]\label{definition:Algorithm}
	Given a  computational problem $\{\Xi,\Omega,\mathcal{M},\Lambda\}$, a \emph{general algorithm} is a mapping $\Gamma:\Omega\to\mathcal{M}\cup \{\nh\}$ such that, for every $\iota\in\Omega$, the following conditions hold:
	\begin{enumerate}[label=(\roman*)]
		\item there exists a nonempty subset of evaluations $\Lambda_\Gamma(\iota) \subset\Lambda $, and, whenever $\Gamma(\iota) \neq  \nh$, we have $|\Lambda_\Gamma(\iota)|<\infty$\label{property:AlgorithmFiniteInput},
		\item  the action of $\,\Gamma$ on $\iota$ is uniquely determined by $\{f(\iota)\}_{f \in \Lambda_\Gamma(\iota)}$, \label{property:AlgorithmDependenceOnInput}
		\item for every $\iota^{\prime} \in\Omega$ such that $f(\iota^\prime)=f(\iota)$ for all $f\in\Lambda_\Gamma(\iota)$, it holds that $\Lambda_\Gamma(\iota^{\prime})=\Lambda_\Gamma(\iota)$.\label{property:AlgorithmSameInputSameInputTaken}
	\end{enumerate}
\end{definition}

\begin{remark}[The purpose of a general algorithm: universal impossibility results]
	The purpose of a general algorithm is to have a definition that will encompass any model of computation, and that will allow impossibility results to become universal. Given that there are several non-equivalent models of computation, impossibility results will be shown with this general definition of an algorithm.
\end{remark}

\begin{remark}[The non-halting output $\nh$]
	The non-halting ``output'' $\nh$ of a general algorithm may seem like an unnecessary distraction given that a general algorithm is just a mapping, which is strictly more powerful than a Turing or a BSS machine. However, the $\nh$ output is needed when the concept of a general algorithm is extended to a randomised general algorithm (as done in \S \ref{sec:randomised}) needed to prove Theorem \ref{Cor:main}. Potentially surprisingly, as shown in Theorem \ref{Cor:main}, allowing a randomised algorithm to not halt with positive probability makes it more powerful. Note, however, that adding the $\nh$ as a possible output of a general algorithm does not make any results weaker.
	A technical remark about $\nh$ is also appropriate, namely that $\Lambda_{\Gamma}(\iota)$ is allowed to be infinite in the case when $\Gamma(\iota) = \nh$. This is to allow general algorithms to capture the behaviour of a Turing or a BSS machine not halting by virtue of requiring an infinite amount of input information.
\end{remark}

Owing to the presence of the special non-halting ``output'' $\nh$, we have to extend the metric $d_{\mathcal{M}}$ on $\mathcal{M}\times \mathcal{M}$ to $d_{\mathcal{M}}:\mathcal{M} \cup \{\nh\} \times \mathcal{M} \cup \{\nh\}\to \real_{\geq 0}$ in the following way:
\begin{equation}\label{eq:extended-metric}
	d_{\mathcal{M}}(x,y) = \begin{cases} d_{\mathcal{M}}(x,y) & \text{ if } x,y \in \mathcal{M} \\
		0 & \text{ if } x = y = \nh\\
		\infty & \text{ otherwise.} \end{cases}
\end{equation}

Definition \ref{definition:Algorithm} is sufficient for defining a randomised general algorithm, which is the only tool from the SCI theory needed in order to prove Theorem \ref{Cor:main}. However, for several other results, the full definition of the SCI hierarchy will be useful. The first is the concept of a tower of algorithms, which is important in the general SCI theory in order to allow for problems that require several limits, such as spectral problems for operators on Hilbert spaces. 

\begin{definition}[Tower of algorithms]\label{tower_funct}
	Given a computational problem $\{\Xi,\Omega,\mathcal{M},\Lambda\}$, a \emph{general tower of algorithms of height $k$
		for $\{\Xi,\Omega,\mathcal{M},\Lambda\}$} is a family of sequences of functions 
	\begin{equation*}
		\Gamma_{n_k}:\Omega
		\rightarrow \mathcal{M}, \ \Gamma_{n_k, n_{k-1}}:\Omega \rightarrow \mathcal{M}, \hdots 
		\Gamma_{n_k, \hdots, n_1}:\Omega \rightarrow \mathcal{M},
	\end{equation*}
	where $n_k,\hdots,n_1 \in \mathbb{N}$ and the functions $\Gamma_{n_k, \hdots, n_1}$ are general algorithms in the sense of Definition \ref{definition:Algorithm}. Moreover, for every $\iota \in \Omega$,
	\begin{equation}\label{conv}
		\Xi(\iota)= \lim_{n_k \rightarrow \infty} \Gamma_{n_k}(\iota), \quad \Gamma_{n_k, \hdots, n_{j+1}}(\iota)= \lim_{n_j \rightarrow \infty} \Gamma_{n_k, \hdots, n_j}(\iota) \quad j=k-1,\dots,1.
	\end{equation}
\end{definition}

\begin{remark}[Multivalued functions] When dealing with optimisation problems one needs a framework that can handle multiple solutions. As the setup above does not allow $\Xi$ to be multi-valued we need some slight changes.  We allow $\Xi$ to be multivalued, even though towers of algorithms are not. Hence, the only difference to the standard SCI hierarchy is that the first limit in \eqref{conv}
	is replaced by
	\[
	\disM(\Xi(\iota),\Gamma_{n_k}(\iota)) \longrightarrow 0, \qquad n_k \rightarrow \infty, 
	\]
	where 
	$
	\disM(\Xi(\iota),\Gamma_{n_k}(\iota)) := \inf_{x \in \Xi(\iota)}d_{\mathcal{M}}(x,\Gamma_{n_k}(\iota)).
	$ 
\end{remark}

As already mentioned above, the purpose of a general algorithm is to obtain universal impossibility results. Conversely, for a more granular analysis of the complementary positive results, i.e., the analysis of problems where it is possible to construct an algorithm, we define \emph{arithmetic towers} depending on the underlying computational model.

\begin{definition}[Arithmetic towers]\label{def:arithmetic_tower}
	Given a computational problem $\{\Xi,\Omega,\mathcal{M},\Lambda\}$, where $\Lambda$ is countable, an \emph{arithmetic tower of algorithms of height $k$} for $\{\Xi,\Omega,\mathcal{M},\Lambda\}$ is defined as a tower of algorithms whose lowest-level functions $\Gamma = \Gamma_{n_k, \hdots, n_1} :\Omega \rightarrow \mathcal{M}$ satisfy the following:
	For each $\iota\in\Omega$ the mapping $(n_k, \hdots, n_1) \mapsto \Gamma_{n_k, \hdots, n_1}(\iota) = \Gamma_{n_k, \hdots, n_1}(\{f(\iota)\}_{f \in \Lambda})$ is recursive, and $\Gamma_{n_k, \hdots, n_1}(\iota)$ is a finite string of complex numbers that can be identified with an element in $\mathcal{M}$.
\end{definition}

\begin{remark}[Recursiveness]\label{rem:recursiveness} For the positive results we will always construct \emph{recursive} algorithms, the meaning of which depends on whether one is working with the Turing or the BSS model. In the Turing case, recursive means that $f(\iota) \in \mathbb{Q}$, for all $f \in \Lambda$ and $\iota \in \Omega$, $\Lambda$ is countable, and $\Gamma_{n_k, \hdots, n_1}(\{f(\iota)\}_{f \in \Lambda})$ can be executed by a Turing machine \cite{Turing_Machine}, that takes $(n_k, \hdots, n_1)$ as input, and that has an oracle tape providing $f(\iota)$ for every $f \in \Lambda$, see for example \cite{Ko1991ComplexityTO}. In the BSS model, recursive means that $f(\iota) \in \mathbb{R}$ (or $\mathbb{C}$) for all $f \in \Lambda$, and $\Gamma_{n_k, \hdots, n_1}(\{f(\iota)\}_{f \in \Lambda})$ can be executed by a Blum-Shub-Smale (BSS) machine \cite{BCSS} that takes $(n_k, \hdots, n_1)$, as input, and that has an oracle node that provides $f(\iota)$, for every $f \in \Lambda$. See also Remark \ref{rem:oracles}. 
\end{remark}

\subsubsection{Solvability Complexity Index and Hierarchy}

With the concept of towers of algorithms established we can finally define the Solvability Complexity Index (SCI).

\begin{definition}[Solvability Complexity Index]\label{complex_ind}
	Given a computational problem $\{\Xi,\Omega,\mathcal{M},\Lambda\}$, it is said to have \emph{Solvability Complexity Index $\mathrm{SCI}(\Xi,\Omega,\mathcal{M},\Lambda)_{\alpha} = k$} of type $\alpha$, where $\alpha=G$ (for `general') or $\alpha=A$ (for `arithmetic'), if $k$ is the smallest integer for which there exists a tower of 
	algorithms of height $k$ and type $\alpha$. If no such tower exists then $\mathrm{SCI}(\Xi,\Omega,\mathcal{M},\Lambda)_{\alpha} = \infty.$ If 
	there exists such a tower $\{\Gamma_n\}_{n\in\mathbb{N}}$ of height 1, and additionally $\Xi = \Gamma_{n_1}$ for some $n_1 < \infty$, then we define $\mathrm{SCI}(\Xi,\Omega,\mathcal{M},\Lambda)_{\alpha} = 0$.
\end{definition}

The Solvability Complexity Index induces the SCI hierarchy. The key is that this hierarchy does not collapse regardless of the computational model \cite{SCI}. Thus, it provides a universal framework for classifying problems in scientific computing. 

\begin{definition}[The Solvability Complexity Index Hierarchy]
	\label{definition:sciHierarchy}
	Consider a collection $\mathcal{C}$ of computational problems and a type $\alpha=A$ or $\alpha=G$ for the computational problems in $\mathcal{C}$.
	Define 
	\begin{equation*}
		\begin{split}
			\Delta^{\alpha}_0 &:= \{\{\Xi,\Omega\} \in \mathcal{C} \ \vert \   \mathrm{SCI}(\Xi,\Omega)_{\alpha} = 0\}\\
			\Delta^{\alpha}_{m+1} &:= \{\{\Xi,\Omega\}  \in \mathcal{C} \ \vert \   \mathrm{SCI}(\Xi,\Omega)_{\alpha} \leq m\}, \qquad \quad \forall m \in \mathbb{N},
		\end{split}
	\end{equation*}
	as well as
	\[
	\Delta^{\alpha}_{1} := \{\{\Xi,\Omega\}  \in \mathcal{C}   \  \vert \ \exists \ \{\Gamma^n\}_{n\in \mathbb{N}} \text{ tower of type $\alpha$ s.t. } \forall n \ \disM(\Gamma^n(\iota),\Xi(\iota)) \leq 2^{-n}\}. 
	\]
\end{definition}

\subsubsection{The computational problems}\label{sec:the_setup}
We are now ready to formalise the problems of interest \eqref{problems}-\eqref{problems5}  as computational problems in the sense of Definition \ref{definition:ComputationalProblem} for fixed dimensions $m$ and $N$. In all the problems under consideration we consider $(\mathcal{M},d)=(\mathbb{R}^{m} \cup \{\infty\},\|\cdot\|)$, where the norm depends on the different problems. In all cases except for neural networks, the set $\Omega$ will always contain vectors and matrices, and thus the set of evaluations will naturally be the collection of the coordinate functions. Concretely, we will often consider input sets $\Omega$ whose elements are of the form $\iota = (y,A)$ with $y \in \mathbb{R}^m$ and $A \in \mathbb{R}^{m\times N}$, in which case we let 
\begin{equation*}
\Lambda_{m,N} = \{f_j^{\mathrm{vec}}\}_{j=1}^m\,\cup\, \{f_{j,k}^{\mathrm{mat}}\}_{j=1, k=1}^{j=m, k=N},
\end{equation*}
where $f_j^{\mathrm{vec}}(\iota)=y_j$ and $f_{j,k}^{\mathrm{mat}}(\iota)=A_{j,k}$.
In the following we write $\mathcal{J}$  for $\|\cdot\|_1$ or $\|\cdot\|_{\mathrm{TV}}$ depending on the context. 

\begin{itemize}[leftmargin=8mm]
	\item[(i)]  {\it Linear Programming}: $\iota \in \Omega$ is of the form $\iota = (y,A)$, where $y \in \mathbb{R}^m$ and $A \in \mathbb{R}^{m \times N}$. For fixed $c\in\real^N$ the solution map is given by
	\[
	\Xi(\iota) = \mathop{\mathrm{arg min}}_{x \in \mathbb{R}^N} \langle x , c \rangle \text{ such that } Ax=y, \quad x \geq 0.
	\]
	\item[(ii)] Basis Pursuit: $\iota \in \Omega$ is of the form $\iota = (y,A)$ where $y \in \mathbb{R}^m$ and $A \in \mathbb{R}^{m \times N}$. 
	For a fixed parameter $\delta \geq 0$ the solution map is given by
	\begin{equation*}
		\Xi(\iota) = 
		\begin{cases}
			\mathop{\mathrm{arg min}}_{x \in \real^N} \mathcal{J}(x) \text{ such that } \|Ax - y\|_2 \leq \delta, & \text{ if } \iota \text{ is feasible}\\
			\varnothing & \text{else}
		\end{cases}\;.
	\end{equation*}
	
	\item[(iii)] {\it Constrained Lasso}: $\Omega$ consists of inputs of the same form as  for basis pursuit. For a fixed parameter $\tau \geq 0$, the solution map is
	\[
	\Xi(\iota) =  \mathop{\mathrm{arg min}}_{x \in \real^N} \|Ax-y\|_2 \text{ such that } \|x\|_1 \leq \tau.
	\]
	\item[(iv)] {\it Unconstrained Lasso}: $\Omega$ consists of inputs of the same form as  for basis pursuit. For a fixed parameter $\lambda > 0$, the solution map is
	\[
	\Xi(\iota) =  \mathop{\mathrm{arg min}}_{x \in \real^N} \|Ax-y\|^2_2 + \lambda\, \mathcal{J}(x) .
	\]
	
\end{itemize}

\subsection{Inexact input and perturbations} 
Suppose we are given a computational problem $\{\Xi, \Omega, \mathcal{M}, \Lambda\}$, and that 
$
\Lambda = \{f_j\}_{j \in \beta},
$
 where $\beta$ is some index set that can be finite or infinite.  However, obtaining $f_j$ may be a computational task on its own, which is exactly the problem in most areas of 
computational mathematics. In particular, for $\iota \in \Omega$, $f_j(\iota)$ could be the number $e^{\frac{\pi}{j} i }$ for example. Hence, we cannot access $f_j(\iota)$, but rather $f_{j,n}(\iota)$ where $f_{j,n}(\iota) \rightarrow f_{j}(\iota)$ as $n \rightarrow \infty$. In this paper we will be interested in the case when this can be done with error control. In particular, we consider $f_{j,n}: \Omega \to \dyadic_n + i \dyadic_n$, where $\dyadic_n:=\{k\,2^{-n}\, \vert \,k\in\mathbb{Z}\}$, such that
\begin{equation}\label{Lambda_limits2}
	\|\{f_{j,n}(\iota)\}_{j\in\beta} - \{f_j(\iota)\}_{j\in\beta}\|_{\infty} \leq 2^{-n}, \quad \forall \iota \in \Omega.
\end{equation}
By analogy with $\Delta_1$ classification, we will call a collection of such functions $\Delta_1$-information for the computational problem. Formally, we have the following.

\begin{definition}[$\Delta_{1}$-information]\label{definition:Lambda_limits}
	Let $\{\Xi, \Omega, \mathcal{M}, \Lambda\}$ be a computational problem with $\Lambda = \{f_j\}_{j \in \beta}$. Suppose that, for each $j\in\beta$ and $n\in\mathbb{N}$, there exists an  $f_{j,n}: \Omega \to \dyadic_n + i \dyadic_n$ such that \eqref{Lambda_limits2} holds. We then say that the set $\hat\Lambda=\{f_{j,n} \, \vert \, j\in\beta ,n\in\mathbb{N}\}$ provides $\Delta_1$-information for $\{\Xi, \Omega, \mathcal{M}, \Lambda\}$. Moreover, we denote the family of all such $\hat \Lambda$ by $\mathcal{L}^1(\Lambda)$. 
\end{definition}
\begin{remark}[Turing vs Markov]\label{rem:TurVsMarkov}
	 It is possible to consider a restriction of $\Delta_1$ information wherein for each $j$, the $f_{j,n}$ forms a computable sequence. This restriction (known as the Markov model \cite{MarkovModel}) strengthens the negative results and weakens the positive results - our techniques apply to this model as will be seen in the proof of Proposition \ref{prop:DrivingNegativeProposition} (see Remark \ref{remark:CompDelta1Markov} for further details).
\end{remark}
\begin{remark}
One can take the analogy with $\Delta_1$ classification in the SCI hierarchy even further by considering $f_j$ that are higher up in the hierarchy, and analogously define $\Delta_m$-information for $m>1$ (see \cite{SCI}). However, this is beyond the scope of the present paper.
\end{remark}

Note that we want to have algorithms that can the computational problems $\{\Xi,\Omega,\mathcal{M},\hat \Lambda\}$ for all possible choices of  $\hat \Lambda \in  \mathcal{L}^1(\Lambda)$. In order to formalise this we define what we mean by a computational problem with $\Delta_1$-information.

\begin{definition}[Computational problem with $\Delta_1$-information]\label{definition:Omega_tilde_Delta_m}
	Given $\{\Xi,\Omega,\mathcal{M},\Lambda\}$ with  $\Lambda=\{f_j\}_{j \in \beta}$, the corresponding \emph{computational problem with $\Delta_1$-information} is defined as 
	$
	\{\Xi,\Omega,\mathcal{M},\Lambda\}^{\Delta_1} := \{\tilde \Xi,\tilde \Omega,\mathcal{M},\tilde \Lambda\},
	$
	where 
	\begin{equation}\label{eq:omega_tilde}
	\tilde \Omega = \left\{ \tilde \iota = \big\{(f_{j,1}(\iota), f_{j,2}(\iota), f_{j,3}(\iota), \dots) \big\}_{j \in \beta} \, \vert \, \iota \in \Omega,\; f_{j,n}: \Omega \to \dyadic_n + i \dyadic_n \text{ satisfy \eqref{Lambda_limits2}} \right\},
	\end{equation}
 	$\tilde \Xi(\tilde \iota) = \Xi(\iota)$, and $\tilde \Lambda = \{\tilde f_{j,n}\}_{j,n \in \beta \times \mathbb{N}}$, where $\tilde f_{j,n}(\tilde \iota) = f_{j,n}(\iota)$. Given an $\tilde\iota\in\tilde\Omega$, there is a unique $\iota\in\Omega$ for which $\tilde\iota=\big\{(f_{j,1}(\iota), f_{j,2}(\iota), f_{j,3}(\iota), \dots) \big\}_{j \in \beta}$ (by Definition \ref{definition:ComputationalProblem}). We say that this $\iota\in\Omega$ \emph{corresponds} to $\tilde\iota\in\tilde\Omega$.
\end{definition}
\begin{remark}
Note that the correspondence of a unique $\iota$ to each $\tilde\iota$ in Definition \ref{definition:Omega_tilde_Delta_m} ensures that $\tilde\Xi$ and the elements of $\tilde\Lambda$ are well-defined.
\end{remark}

One may interpret the computational problem $\{\Xi,\Omega,\mathcal{M},\Lambda\}^{\Delta_1} = \{\tilde \Xi,\tilde \Omega,\mathcal{M},\tilde \Lambda\}$ as follows. The collection $\tilde \Omega$ is the family of all sequences approximating the inputs in $\Omega$. For an algorithm to be successful for $\{\Xi,\Omega,\mathcal{M},\Lambda\}^{\Delta_1}$ it must work for all $\tilde \iota \in \tilde \Omega$, that is, for any sequence approximating $\iota$, as opposed to a particular choice of $\Delta_1$-information for $\{\Xi, \Omega, \mathcal{M}, \Lambda\}$ according to Definition \ref{definition:Lambda_limits}. The relation between these two closely related concepts will be further elucidated in \S\ref{sec:different-impossibility-results} below.

\begin{remark}[Oracle tape/node providing $\Delta_1$-information]\label{rem:oracles}
For impossibility results we use general algorithms and randomised general algorithms (as defined below), and thus, due to their generality, we do not need to specify how the algorithms read the information. However, all positive results are done for problems $\{\Xi,\Omega,\mathcal{M},\Lambda\}^{\Delta_1} := \{\tilde \Xi,\tilde \Omega,\mathcal{M},\tilde \Lambda\}$ with either a Turing or a BSS machine, and we thus need to specify how  $\tilde \iota \in \tilde \Omega$ in \eqref{eq:omega_tilde}  is passed  to the algorithm as an input. Suppose that $\beta$, the index set for $\Lambda$, is countable. In the Turing case we follow the standard convention in the literature (see for example \cite{Ko1991ComplexityTO}), in particular, $\tilde \iota$ is represented by an oracle tape that, on input $(j,n)\in\beta \times \mathbb{N}$ (where $j$ is an integer or otherwise encoded in a finite alphabet), prints the (unique) finite binary string representing the dyadic number $\tilde f_{j,n}(\tilde \iota)$. Similarly, for a BSS machine, we assume the standard setup (see \cite{BCSS}) with an oracle node that on input $(j,n)\in \beta \times \mathbb{N}$ returns $\tilde f_{j,n}(\tilde \iota)$. 
\end{remark}

\subsection{Breakdown epsilons -- the key to proving $K$, $K-1$, $K-2$ -type theorems}\label{sec:BDepsilons}
The purpose of breakdown epsilons is to characterise the lower bounds on the accuracy that can be achieved by algorithms. As the name suggests, the breakdown epsilons are the fundamental barriers on the best possible accuracy that any algorithm can achieve. There are two types of (deterministic) breakdown epsilons, namely the \emph{strong} and the \emph{weak} one, as well as their probabilistic versions. We define and discuss each of them in detail below. We begin with the strong breakdown epsilon.
\begin{definition}[Strong breakdown epsilon]\label{def:Breakdown-epsilons_strong}
	Given a computational problem $\{\Xi,\Omega,\mathcal{M},\Lambda\}$, we define its \emph{strong breakdown epsilon} as follows:
	\begin{equation*}
		\epsilon_{\mathrm{B}}^{\mathrm{s}} := \sup\{\epsilon \geq 0 \, \vert \, \forall \, \text{general algorithms } \Gamma,  \, \exists \, \iota \in \Omega  \text{ such that } \disM(\Gamma(\iota),\Xi(\iota)) > \epsilon\}.
	\end{equation*}
\end{definition}	
Hence, the strong breakdown epsilon is the largest number $\epsilon \geq 0$ such that no algorithm can provide accuracy exceeding $\epsilon$.

\begin{remark}[The breakdown epsilons with respect to a specific computational model]\label{rem:breakdown-eps}
	The purpose of the strong and other breakdown epsilons is to establish universal impossibility results in the form of lower bounds on the achievable accuracy, and hence we define them in terms of general algorithms (and randomised general algorithms to be introduced below). However, occasionally, it may be convenient to work with the concept of a breakdown epsilon specific to the computational model. This can easily be done by replacing the words `general algorithm' in Definition \ref{def:Breakdown-epsilons_strong} by either `Turing machine' or `BSS machine', for example. In this case we will use the superscript `A' (for arithmetic): 
	\[
	\epsilon_{\mathrm{B}}^{\mathrm{s,A}} \text{ is the strong breakdown epsilon in the Turing (or BSS) model},
	\]
	where, as in Remark \ref{rem:recursiveness}, the Turing model is understood if the inputs are rationals, and the BSS model otherwise. 
\end{remark}

The weak breakdown epsilon, introduced next, is the largest $\epsilon \geq 0$ such that all algorithms will need to use an arbitrarily large amount of input information to reach $\epsilon$ accuracy. 
To make this precise we first need to define the \emph{minimum amount of input information}, which, in turn, we define in terms of an enumeration of the elements in $\Lambda$. We thus assume that $\Lambda$ is countable and enumerated according to
\begin{equation}\label{eq:the_Lambd}
\Lambda = \{f_k \, \vert \, k \in \mathbb{N}, \, k\leq |\Lambda| \},
\end{equation}
where $|\Lambda|$ denotes the cardinality of $\Lambda$.
As we will see below, although we assume that $\Lambda$ has a specific enumeration, many of the key concepts defined below, including the weak breakdown epsilon, are independent of the specific enumeration, and so the only assumption needed is that $\Lambda$ be countable.

\begin{definition}[Minimum amount of input information]\label{def:minimum_runtime}
	Given the computational problem $\{\Xi,\Omega,\mathcal{M},\allowbreak\Lambda\}$, where $\Lambda =\{f_k \, \vert \, k \in \mathbb{N}, \, k\leq |\Lambda| \}$ and a general algorithm $\Gamma$, we define the \emph{minimum amount of input information} $T_{\Gamma}(\iota)$ for $\Gamma$ and $\iota \in \Omega$ as 
	\[
	T_{\Gamma}(\iota) :=\sup\lbrace m \in \mathbb{N} \, \vert \, f_{m} \in \Lambda_{\Gamma}(\iota) \rbrace.
	\]
	Note that, for $\iota$ such that $\Gamma(\iota) = \nh$, the set $\Lambda_{\Gamma}(\iota)$ may be infinite (see Definition \ref{definition:Algorithm}), in which case $T_{\Gamma}(\iota)=\infty$.
	
\end{definition}
We are now ready to define the weak breakdown epsilon. 
\begin{definition}[Weak breakdown epsilon]\label{def:Breakdown-epsilons}
	Given the computational problem $\{\Xi,\Omega,\mathcal{M},\Lambda\}$, where $\Lambda =\{f_k \, \vert \, k \in \mathbb{N}, \, k\leq |\Lambda| \}$, we define the
	\emph{weak breakdown epsilon} by
	\begin{equation*}
		\begin{split}
			\epsilon_{\mathrm{B}}^{\mathrm{w}} &= \sup\{\epsilon \geq 0 \, \vert \,  \forall  \text{ general algorithms } \Gamma  \text{ and }  M \in \mathbb{N} \, \, \exists \, \iota \in \Omega  \text{ such that } \\
			& \hspace{4cm} \disM(\Gamma(\iota),\Xi(\iota)) > \epsilon \text{ or } T_{\Gamma}(\iota) > M \}.
		\end{split}
	\end{equation*}
\end{definition}
\noindent As mentioned above, it is easy to see that the \emph{weak breakdown epsilon} is independent of the ordering of $\Lambda$. 
In words, if the breakdown epsilons are greater than $\epsilon$, we have the following:
\begin{itemize}
	\item[(i)] (Strong Breakdown Epsilon)
	For any algorithm there is an input such that the algorithm fails to produce an $\epsilon$-accurate solution on that input.
	\item[(ii)] (Weak Breakdown Epsilon)
	One can choose an arbitrary large integer $M$ and find an input such that the algorithm will need to  have used at least the $M$-th input and still not have reached $\epsilon$ accuracy. In other words, the amount of information needed is unbounded to reach $\epsilon$-accuracy. 
\end{itemize}  

\begin{remark}[Independence of ordering]
Although the minimum amount of input information is dependent of the enumeration of $\Lambda$ in \eqref{eq:the_Lambd}, it is easy to see that the weak breakdown epsilon and the probabilistic weak breakdown epsilon (to be defined below) are independent of the choice of the enumeration.  
\end{remark}

\subsubsection{The weak breakdown epsilon for problems with $\Delta_1$-information}

In the case of computational problems with $\Delta_1$-information, the minimum amount of input information is  related to the `accuracy needed' on the input to the algorithm.

\begin{definition}[Number of correct `digits' on the input]\label{def:no-of-input-bits}
Suppose that $\hat \Lambda = \{ f_{k,m} \, \vert \, (k,m) \in \beta \times \mathbb{N}\}$ provides $\Delta_1$-information for $\{\Xi,\Omega,\mathcal{M},\Lambda\}$.  Given a general algorithm $\Gamma$ for the problem $\{\Xi,\Omega,\mathcal{M},\hat\Lambda\}$, we define the \emph{`number of  digits' required on the input} according to
\[
D_{\Gamma}( \iota) = \sup\{m\in\mathbb{N} \, \vert \, \exists\,  k \in\beta \text{ s.t. } f_{k,m} \in  \hat\Lambda_{\Gamma}( \iota) \}.
\]
\end{definition}

\begin{remark}[Interpretation of $T_{\Gamma}$ and $D_{\Gamma}$: accuracy on input and lower bound on runtime\label{rem:TGammaAndDGamma}]
Consider the computational problem $\{\Xi,\Omega,\mathcal{M},\hat\Lambda\}$ above. Then, independently of the enumeration of $\hat\Lambda$ used to define $T_{\Gamma}$, we have that
\begin{equation}\label{eq:Dexplodes->Texplodes}
D_{\Gamma}( \iota_n)\to \infty\quad \text{as }n\to \infty \qquad \implies \qquad T_{\Gamma}( \iota_n)\to \infty\quad \text{as }n\to \infty,
\end{equation} 
for every general algorithm $\Gamma$ and every sequence $\{\iota_n\}_{n=0}^{\infty}$ in $ \Omega$. Moreover, in the case when $\Lambda$ is finite, the implication \eqref{eq:Dexplodes->Texplodes} also holds in reverse.
Therefore, the weak breakdown epsilon for problems with $\Delta_1$-information derived from a finite set of evaluations is equivalently given by
\begin{equation*}
		\begin{split}
			\epsilon_{\mathrm{B}}^{\mathrm{w}} &= \sup\{\epsilon \geq 0 \, \vert \,  \forall  \text{ general algorithms } \Gamma  \text{ and }  M \in \mathbb{N} \, \, \exists \, \iota \in \Omega  \text{ such that } \\
			& \hspace{4cm} \disM(\Gamma(\iota),\Xi(\iota)) > \epsilon \text{ or } D_{\Gamma}(\iota) > M \}.
		\end{split}
	\end{equation*}
Note that any reasonable complexity model would have a definition of runtime of an algorithm $\Gamma$ such that the runtime is at least the number of digits it acquires from the input. In particular, any reasonable definition of the runtime of $\Gamma$ (such as the standard definition to be specified in \S \ref{sec:comp_runtime}) will have 
	\begin{equation}\label{eq:runtimeGeqD}
	\mathbf{Runtime}_{\Gamma}( \iota) \geq D_{\Gamma}( \iota).
	\end{equation}
	Hence, for such computational problems, $D_{\Gamma}$ can be used to show that if the weak breakdown epsilon is greater than $\epsilon$ then any algorithm will have arbitrarily large runtime when attempting to achieve $\epsilon$-accuracy. 
\end{remark}

\subsection{Randomised algorithms}\label{sec:randomised}

In many contemporary fields of mathematics of information such as deep learning, the use of randomised algorithms is widespread. We therefore need to extend the concept of a general algorithm to a \emph{randomised random algorithm}.

\begin{definition}[Randomised General Algorithm]\label{definition:ProbablisticAlgorithm}	Given a computational problem $\{\Xi,\Omega,\mathcal{M},\Lambda\}$, where $\Lambda = \{f_k \, \vert \, k \in \mathbb{N}, \, k\leq |\Lambda| \}$, a \emph{randomised general algorithm} (RGA) is a collection $X$ of general algorithms $\Gamma:\Omega\to\mathcal{M}\cup \{\nh\}$, a sigma-algebra $\mathcal{F}$ on $X$, and a family of probability measures $\{\mathbb{P}_{\iota}\}_{\iota \in \Omega}$ on $\mathcal{F}$ such that the following conditions hold:
	\begin{enumerate}[label=(P\roman*)]
		\item For each $\iota \in \Omega$, the mapping $\gprob_{\iota}:(X,\mathcal{F}) \to (\mathcal{M}\cup \{\nh\}, \mathcal{B})$ defined by $\gprob_{\iota}(\Gamma) = \Gamma(\iota)$ is a random variable, where $\mathcal{B}$ is the Borel sigma-algebra on $\mathcal{M}\cup \{\nh\}$. \label{property:PAlgorithmMeasurable}
		\item For each $n \in \mathbb{N}$ and $\iota \in \Omega$, we have $\lbrace \Gamma \in X \, \vert \, T_{\Gamma}(\iota) \leq n \rbrace \in \mathcal{F}$.
		\label{property:PAlgorithmRTMeasurable}
		
		\item For all $\iota_1,\iota_2 \in \Omega$ and $E \in \mathcal{F}$ so that, for every $\Gamma \in E$ and every $f \in \Lambda_{\Gamma}(\iota_1)$, we have $f(\iota_1) = f(\iota_2)$, it holds that $\mathbb{P}_{\iota_1}(E) = \mathbb{P}_{\iota_2}(E)$.
		\label{property:PAlgorithmConsistent}
	\end{enumerate}
	It is not immediately clear whether condition \ref{property:PAlgorithmRTMeasurable} for a given RGA $(X,\mathcal{F},\{\pr_\iota\}_{\iota\in\Omega})$ holds independently of the choice of the enumeration of $\Lambda$. This is indeed the case and will be established in Lemma \ref{lemma:RGAindepOfOrdering} further below.
\end{definition}

\begin{remark}[Assumption \ref{property:PAlgorithmRTMeasurable}]
	Note that \ref{property:PAlgorithmRTMeasurable} in Definition \ref{definition:ProbablisticAlgorithm} is needed in order to ensure that the minimum amount of input information also becomes a valid random variable. More specifically, for each $\iota \in \Omega$, we define the random variable
	\[
	T_{\gprob}(\iota): X\to\mathbb{N}\cup\{\infty\} \text{ according to } \Gamma \mapsto T_{\Gamma}(\iota).
	\]
	As the minimum amount of input information is typically related to the minimum runtime, one would be dealing with a rather exotic probabilistic model if $T_{\gprob}(\iota)$ were not a random variable. Indeed, note that the standard models of randomised algorithms (see \cite{Arora2007}) can be considered as RGAs (in particular, satisfying  \ref{property:PAlgorithmRTMeasurable}).
\end{remark}	

\begin{remark}[The purpose of a randomised general algorithm: universal lower bounds]
	As for a general algorithm, the purpose of a randomised general algorithm is to have a definition that will encompass every model of computation, which will allow lower bounds and impossibility results to be universal. Indeed, randomised Turing and BSS machines can be viewed as randomised general algorithms.
\end{remark}

We will, with a slight abuse of notation, also write $\mathrm{RGA}$ for the family of all randomised general algorithms for a given a computational problem and refer to the algorithms in $\mathrm{RGA}$ by  $\gprob$. 		
With the definitions above we can now make probabilistic version of the strong breakdown epsilon as follows.

\begin{definition}[Probabilistic strong breakdown epsilon]\label{prob_strong_break} Given a computational problem $\{\Xi,\Omega,\mathcal{M},\Lambda\}$, where $\Lambda = \{f_k \, \vert \, k \in \mathbb{N}, \, k\leq |\Lambda| \}$, 
	we define the \emph{probabilistic strong breakdown epsilon} $\epsilon_{\mathbb{P}\mathrm{B}}^{\mathrm{s}}: [0,1) \to \mathbb{R}$ according to
	\begin{align*}
		\epsilon_{\mathbb{P}\mathrm{B}}^{\mathrm{s}}(\mathrm{p}) = \sup\{&\epsilon \geq 0, \, \vert \, \forall \, \gprob \in \mathrm{RGA} \,\,\exists \, \iota \in \Omega \text{ such that } \mathbb{P}_{\iota}(\disM(\gprob_{\iota},\Xi(\iota)) > \epsilon) > \mathrm{p}\},
	\end{align*}
	where $\gprob_{\iota}$ is defined in \ref{property:PAlgorithmMeasurable} in Definition \ref{definition:ProbablisticAlgorithm}. 
\end{definition}

As already mentioned in previous sections, impossibility results for randomised algorithms can differ if one considers only those algorithms that halt on every input, leading to the following two definitions.

\begin{definition}[Halting randomised general algorithms]\label{def:halting_randomised}
	A  randomised general algorithm $\gprob$ for a computational problem $\{\Xi,\Omega,\mathcal{M},\Lambda\}$ is called a \emph{halting randomised general algorithm} (hRGA) if $\mathbb{P}_{\iota}(\gprob_{\iota} = \nh) = 0$, for all $\iota\in\Omega$.
\end{definition}

We denote the class of all halting randomised general algorithms by $\mathrm{hRGA}$.
\begin{definition}[Probabilistic strong halting breakdown epsilon]\label{prob_strong_break_halt}
Given the computational problem $\{\Xi,\Omega,\mathcal{M},\Lambda\}$, where $\Lambda =\{f_k \, \vert \, k \in \mathbb{N}, \, k\leq |\Lambda| \}$, we define the \emph{halting probabilistic strong Breakdown-epsilon} $\epsilon_{\mathbb{P}h\mathrm{B}}^{\mathrm{s}}: [0,1) \to \mathbb{R}$ according to
	\begin{align*}
		\epsilon_{\mathbb{P}h\mathrm{B}}^{\mathrm{s}}(\mathrm{p}) = \sup\{&\epsilon \geq 0, \, \vert \, \forall \, \gprob \in \mathrm{hRGA} \,\,\exists \, \iota \in \Omega \text{ such that } \mathbb{P}_{\iota}(\disM(\gprob_{\iota},\Xi(\iota)) > \epsilon) > \mathrm{p}\},
	\end{align*}
	where $\gprob_{\iota}$ is defined in \ref{property:PAlgorithmMeasurable} in Definition \ref{definition:ProbablisticAlgorithm}. 
\end{definition}

Note that the probabilistic strong Breakdown-epsilon is not a single number but a function of $\mathrm{p}$. Specifically, it is the largest $\epsilon$ so that the probability of failure with at least $\epsilon$-error is greater than $\mathrm{p}$. 
Similarly, there is a probabilistic version of the weak breakdown epsilon.

\begin{definition}[Probabilistic weak breakdown epsilon]\label{prob_weak_break}
Given the computational problem $\{\Xi,\Omega,\mathcal{M},\allowbreak\Lambda\}$, where $\Lambda = \{f_k \, \vert \, k \in \mathbb{N}, \, k\leq |\Lambda| \}$, we define the \emph{probabilistic weak breakdown epsilon} $\epsilon^{\mathrm{w}}_{\mathbb{P}\mathrm{B}}: [0,1) \to \mathbb{R}$ according to
	\begin{equation*}
		\begin{split}
			\epsilon_{\mathbb{P}\mathrm{B}}^{\mathrm{w}}(\mathrm{p}) &= \sup\{\epsilon \geq 0 \, \vert \,  \forall \, \gprob \in  \mathrm{RGA} \text{ and }  M \in \mathbb{N} \, \, \exists \, \iota \in \Omega \text{ such that } \\
			& \qquad \mathbb{P}_{\iota}(\disM(\gprob_{\iota},\Xi(\iota)) > \epsilon \text{ or } T_{\gprob}(\iota) > M) > \mathrm{p} \},
		\end{split}
	\end{equation*}	
	where $\gprob_{\iota}$ is defined in \ref{property:PAlgorithmMeasurable} in Definition \ref{definition:ProbablisticAlgorithm}.
\end{definition}
As for the (deterministic)  weak breakdown epsilon discussed above, it is easy to see that the probabilistic weak breakdown epsilon is independent of the enumeration of $\Lambda$.
The probabilistic weak breakdown epsilon describes a weaker form of failure than the probabilistic strong breakdown epsilon. In particular,  the weak breakdown epsilon of $\mathrm{p}$ is the largest $\epsilon$ so that for, any randomised algorithm and $M \in \mathbb{N}$, the probability of either getting an error of size at least $\epsilon$ or having spent runtime longer than $M$, is greater than $\mathrm{p}$. 
The connection between the different breakdown epsilons will be summarised in Proposition \ref{lemma:BreakdownLinks} further below.

\subsection{Different types of impossibility results\label{sec:different-impossibility-results}}
There are several non-equivalent statements about the non-existence of algorithms for a computational problem $\{\Xi,\Omega,\mathcal{M},\Lambda\}$ with $\Delta_1$-information that we will discuss. For a fixed $\varepsilon > 0$, the statements are as follows:
\begin{itemize}
\item[(i)] $\exists \,\hat \Lambda \in \mathcal{L}^1(\Lambda)$ such that, for the computational problem $\{\Xi,\Omega,\mathcal{M},\hat \Lambda\}$, we have $\epsilon_{\mathrm{B}}^{\mathrm{s}} \geq \varepsilon$.
\item[(ii)] When considering the computational problem $\{\Xi,\Omega,\mathcal{M},\Lambda\}^{\Delta_1}$, we have $\epsilon_{\mathrm{B}}^{\mathrm{s}} \geq \varepsilon$. 
\item[(iii)] $\{\Xi,\Omega,\mathcal{M},\Lambda\}^{\Delta_1} \notin \Delta_1^G.$
\end{itemize}
Note that $\text{ (i) } \Rightarrow \text{ (ii) } \Rightarrow \text{ (iii) }.$ Observe also that (i) says that there is a particular   
choice $\hat \Lambda$ of collections of approximations that provide $\Delta_1$-information for $\Lambda$ such that no algorithm, given this specific $\Delta_1$-information, will be able to secure $\varepsilon$ accuracy. Note that the strong breakdown epsilon in (i) and (ii) could be replaced with any of the other breakdown epsilons (at a fixed $\mathrm{p}$, in the case of the probabilistic breakdown epsilons), and the implication $\text{ (i) } \Rightarrow \text{ (ii) }$ would still hold. However, the implication $\text{ (ii) } \Rightarrow \text{ (iii) }$ holds only for the various strong breakdown epsilons.  

\begin{remark}[Turing's classical definition of non-computability]
Note that in the SCI hierarchy framework, the classical Turing definition of non-computability \cite{Turing_Machine,Ko1991ComplexityTO} is equivalent to  $\{\Xi,\Omega,\mathcal{M},\Lambda\}^{\Delta_1} \notin \Delta_1^A.$ However, as the main results in this paper demonstrate, in order to characterise the many facets of scientific computing one needs the richness of the SCI hierarchy framework. 
\end{remark}

\subsection{Theorem \ref{Cor:main} -- Part (i) and (ii) in the SCI language}\label{sec:Thrm_in_SCI}
Let $\Xi$ denote the solution map (as in \eqref{eq:the_Xi}) to any of the problems \eqref{problems} - \eqref{problems5}.
The statements in Theorem \ref{Cor:main} are well-defined up to the definition of (random) algorithms and their runtime. Now that we have defined these concepts and the breakdown epsilons, we state (i) and (ii) in Theorem \ref{Cor:main} in the precise SCI language. 
For $m,N\in\mathbb{N}$ such that $m\leq N$ and a class of inputs $\Omega$ for any of the problems \eqref{problems} - \eqref{problems5}, define 
\begin{equation}\label{eq:Omega_mN}
	\Omega_{m,N} = \{(y,A)\in\Omega  \, \vert \,(y,A) \in \mathbb{R}^m\times  \mathbb{R}^{m\times N}  \}, \quad \mathcal{M}_N = \mathbb{R}^N.
\end{equation}
In the case of linear programming, we set $c = (1,\hdots, 1) \in \mathbb{R}^N$ for each fixed dimension $N$. We then have the following.

\begin{proposition}\label{Cor:main_SCI}
	Let $\Xi$ denote the solution map to any of the problems \eqref{problems} - \eqref{problems5} with the regularisation parameters satisfying $\delta\in[0,1]$, $\lambda\in(0,1/3]$, and $\tau\in[1/2,2]$ (and additionally being rational in the Turing case) and let the metric on $\mathcal{M}_N$ be induced by the $\|\cdot\|_p$-norm, for an arbitrary $p\in[1,\infty]$. Let $K \geq 1$ be an integer.
	There exist a set of inputs 
	\begin{equation}\label{eq:Omega_set}
		\Omega=\bigcup_{\substack{m,N\in\mathbb{N}\\ 4 \leq m < N}}\,\Omega_{m,N}, \text{ so that } \,\Xi:  \Omega_{m,N}\rightrightarrows \mathcal{M}_N,
	\end{equation}
	where $\Omega_{m,N}$ and $\mathcal{M}_N$ are defined as in \eqref{eq:Omega_mN}, and $\mathcal{M}_N$ is equipped with the $\|\cdot\|_p$ norm for some $p \in [1,\infty]$, as well as sets of $\Delta_1$-information $\hat \Lambda_{m,N} \in \mathcal{L}^1(\Lambda_{m,N})$ such that we have the following. 
	\begin{itemize}[leftmargin=8mm]
		\item[(i)]
		For the computational problem $\{\Xi,\Omega_{m,N},\mathcal{M}_N, \hat \Lambda_{m,N}\}$ with any $m,N\in\mathbb{N}$, $4 \leq m<N$, we have 
 $\epsilon_{\mathbb{P}h\mathrm{B}}^{\mathrm{s}}(\mathrm{p}) > 10^{-K}$, for all $\mathrm{p} > \frac{1}{2}$. Hence $\{\Xi,\Omega_{m,N},\mathcal{M}_N,\Lambda_{m,N}\}^{\Delta_1} \notin \Delta_1^G$.
		\item[(ii)] 
		For the same $\{\Xi,\Omega_{m,N},\mathcal{M}_N, \hat \Lambda_{m,N}\}$ as in (i) we have $\epsilon_{\mathbb{P}\mathrm{B}}^{\mathrm{s}}(\mathrm{p}) > 10^{-K}$, for all $\mathrm{p} > \frac{2}{3}$.
		However, when considering the problems $\{\Xi,\Omega_{m,N},\mathcal{M}_N,\Lambda_{m,N}\}^{\Delta_1} = \{\tilde \Xi,\tilde \Omega_{m,N},\mathcal{M}_N, \tilde \Lambda_{m,N}\}$, there exists a randomised algorithm $\Gamma^{\mathrm{ran}}$ (a randomised Turing machine or a randomised BSS machine), with a non-zero probability of not halting, that takes the dimensions $m$, $N$ and any $\tilde \iota \in \tilde \Omega_{m,N}$ as input (see Remark \ref{rem:oracles}) and satisfies
		\begin{equation*}
			\begin{split}
				\mathbb{P}_{\tilde{\iota}}\big(\disM(\Gamma^{\mathrm{ran}}(m, N, \tilde \iota), \tilde\Xi(\tilde\iota)) \leq 10^{-K}\big) \geq \frac{2}{3}.
			\end{split}
		\end{equation*}
	\end{itemize}
	The statements above are true even when we require the inputs in $\Omega_{m,N}$ to be well-conditioned for all $m$, $N$ and bounded from above and below. In particular, for any input $\iota = (y,A) \in \Omega_{m,N}$ we have 
	$\mathrm{Cond}(AA^*)\leq 3.2$, $C_{\mathrm{FP}}(\iota)\leq 4$, $\mathrm{Cond}(\Xi) \leq 179$, $\|y\|_\infty\leq 2$, and $\|A\|_{\max} = 1$.
\end{proposition}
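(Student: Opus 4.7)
The plan is to prove Proposition \ref{Cor:main_SCI} in three stages: (a) constructing a single parametric adversarial family $\Omega_{m,N}$ whose members host three pairwise-separated optimal values yet are mutually indistinguishable under the chosen $\Delta_1$-oracle $\hat\Lambda_{m,N}$, (b) deducing both probabilistic lower bounds from a single indistinguishability lemma, and (c) exhibiting a non-halting randomized algorithm that matches the $2/3$ threshold.

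First I would build $\Omega_{m,N}$ from a single LP template that then transfers to BP, UL, CL (and to SDP via the standard LP-to-SDP embedding). At the minimal size $m=4$, one designs a system $Ax=y$, $x\geq 0$, $\min\langle c,x\rangle$ in which a scalar parameter $t$ is hidden in one entry of $A$ at a tunable dyadic depth, such that as $t$ passes through three critical values $t^{(0)}, t^{(1)}, t^{(2)}$ clustering at a common limit, the unique optimal vertex jumps between three disjoint supports $S^{(0)}, S^{(1)}, S^{(2)}$ whose associated solutions are pairwise separated by more than $2\cdot 10^{-K}$ in $\|\cdot\|_p$. Direct verification keeps all the conditioning quantities $\mathrm{Cond}(AA^*)$, $C_{\mathrm{FP}}$, $\mathrm{Cond}(\Xi)$, $\|y\|_\infty$, $\|A\|_{\max}$ bounded by the stated constants; larger $(m,N)$ are obtained by zero-padding. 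For BP, UL, and CL the same witness is transferred by setting $y=Ax^{*}$ for $x^{*}$ supported on one of the $S^{(i)}$ and invoking standard LP-to-$\ell^1$ equivalences. The oracle $\hat\Lambda_{m,N}\in\mathcal{L}^1(\Lambda_{m,N})$ is chosen to be maximally uninformative about $t$: its $n$-th query returns dyadic truncations that coincide across all inputs whose hidden $t$ values agree to precision $2^{-n}$, so that arbitrarily long sub-collections of inputs from the cluster have identical first-$M$ queries for any prescribed $M$.

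Second, both impossibility statements rest on a single indistinguishability lemma: if $L$ inputs $\iota^{(1)},\ldots,\iota^{(L)}\in\Omega_{m,N}$ have $\hat\Lambda_{m,N}$-oracles agreeing on the first $M$ evaluations $f_1,\ldots,f_M$, then by Property \ref{property:AlgorithmSameInputSameInputTaken} of Definition \ref{definition:Algorithm} the events $\{T_\gprob(\iota^{(i)})\leq M\}$ coincide as subsets of $X$, and by Property \ref{property:PAlgorithmConsistent} of Definition \ref{definition:ProbablisticAlgorithm} all $L$ measures $\pr_{\iota^{(i)}}$ agree on this common event $E$; moreover by Property \ref{property:AlgorithmDependenceOnInput} the output distributions coincide on $E$. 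Since the success balls $B_i=\{\xi : \|\xi-\Xi(\iota^{(i)})\|_p\leq 10^{-K}\}$ are pairwise disjoint, summing successes on $E$ gives $\sum_{i=1}^L \pr_{\iota^{(i)}}(\gprob_{\iota^{(i)}} \in B_i,\, E)\leq \pr_{\iota^{(1)}}(E)\leq 1$. For Part (i), halting gives $\pr_{\iota^{(i)}}(E^c)\leq \eta$ uniformly (by choosing $M$ large enough and invoking the coincidence of measures on $E$), so the total success is at most $1+L\eta$ and at least one input has success probability $\leq 1/L+\eta$, which can be made arbitrarily small, forcing $\epsilon_{\mathbb{P}h\mathrm{B}}^{\mathrm{s}}(\mathrm{p})>10^{-K}$ for every $\mathrm{p}>1/2$. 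For Part (ii)'s impossibility the halting hypothesis is dropped, but the same argument applied directly with $L=3$ gives total success on $E$ at most $1$, irrespective of the non-halting mass; then the refinement with arbitrarily large $L$ pushes the failure probability past every $\mathrm{p}>2/3$.

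Finally, the positive part of (ii) requires an oracle-agnostic non-halting randomized algorithm for $\{\Xi,\Omega_{m,N},\mathcal{M}_N,\Lambda_{m,N}\}^{\Delta_1}$. The algorithm samples uniformly at random a pair $\{H_1,H_2\}\subset\{0,1,2\}$ among the three two-element subsets (so each $H_i$ is one of the three designed branches), queries the $\Delta_1$-oracle to a depth dictated by the bounded $\mathrm{Cond}(\Xi)$, computes for each $i\in\{1,2\}$ a candidate minimiser $\hat x_{H_i}$ associated to the $H_i$-th branch, and checks an optimality certificate for $\hat x_{H_i}$ (LP duality with complementary slackness, or a KKT/subgradient certificate for BP, UL, CL); if any certificate passes, output the corresponding candidate, otherwise output $\nh$. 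The true branch of $\iota$ lies in $\{H_1,H_2\}$ with probability exactly $2/3$, and when it does, the bounded $\mathrm{Cond}(\Xi)$ and $C_{\mathrm{FP}}$ give a uniform margin that both makes the correct certificate pass with a $K$-accurate output and makes the certificates of the other two false branches fail. The main obstacle is the precise engineering of the optimality certificate: it must be both verifiable in finite time from any $\Delta_1$-oracle and sharp enough to reject the two false branches on every $\iota\in\Omega_{m,N}$; this is where the uniform margins arising from the condition-number bounds built into $\Omega_{m,N}$ in the first step are essential.
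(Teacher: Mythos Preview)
Your three-branch construction is incompatible with the positive half of (ii), and this is the essential flaw. Suppose your $\Omega_{m,N}$ contains three sequences with pairwise $\|\cdot\|_p$-separated solutions, all indistinguishable from a common anchor (either the limit input or a fixed representative of one branch) at arbitrary oracle depth. Then on the event $E=\{T_\Gamma\leq M\}$ the common output can lie in at most one of the three success balls, so it belongs to at least \emph{two} of the failure events $F^{(1)},F^{(2)},F^{(3)}$, giving $\sum_{i=1}^3 \mu(F^{(i)}\cap E)\geq 2\mu(E)$. Combining with $\mu(F^{(i)}\cap E)\leq p$ and with the anchor constraint $\lim_M \mu(E)\geq 1-p$ (which holds since non-halting on the anchor counts as failure there) yields $2(1-p)\leq 3p$, i.e.\ $p\geq 2/5$. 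Thus $\epsilon_{\mathbb{P}\mathrm{B}}^{\mathrm{s}}(\mathrm p)>10^{-K}$ for all $\mathrm p<2/5$. But any RGA achieving $\pr(\text{success})\geq 2/3$ on every input of your $\Omega_{m,N}$ has $\pr(\text{fail})\leq 1/3<2/5$ everywhere, contradicting this bound. So for \emph{your} $\Omega_{m,N}$ the positive algorithm you describe cannot exist. The paper resolves this by using exactly \emph{two} branches together with a degenerate input $\iota^0$ whose solution set is the full segment between the two branch solutions; the resulting non-halting threshold is $1/3$, which tightly matches the $2/3$ upper bound, and the paper's randomised algorithm exploits that at $\iota^0$ a random guess between two branches is always correct.

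Two secondary gaps: your repeated appeals to ``arbitrarily large $L$'' are not supported by a construction with only three branches, so you do not get success probability ``arbitrarily small'' in (i); and your certificate-checking argument for the positive algorithm fails near the degenerate limit, where all three candidates are near-optimal and any finite-precision oracle reading leaves the LP/KKT certificates for the false branches indistinguishable from the true one --- the ``uniform margin'' you invoke does not exist precisely because $\Omega_{m,N}$ must contain inputs arbitrarily close to the degenerate point for the impossibility argument to work. The paper's positive algorithm avoids certificates entirely: it loops over increasing oracle depth to detect the sign of the hidden parameter, interleaved with a carefully tuned probability of randomly guessing one of the two branches.
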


\subsection{Runtime of algorithms}\label{sec:comp_runtime}
In order to provide exact statements of the rest of Theorem \ref{Cor:main} we need the precise definition of the runtime of an algorithm, 
for which one has to specify a computational model. Recall that we want to prove results about algorithms for problems  $\{\Xi,\Omega,\mathcal{M},\Lambda\}^{\Delta_1}$ with $\Delta_1$-information, as such results are stronger than, say, the corresponding results for $\{\Xi,\Omega_{m,N},\mathcal{M}_N, \hat \Lambda\}$ with a particular choice of $\hat \Lambda \in \mathcal{L}^1(\Lambda)$. We have already discussed specific computational models in Definition \ref{def:arithmetic_tower} and Remark \ref{rem:recursiveness}, however, in order to make precise statements about complexity we need to specify such models more closely. Concretely, we will consider the following models.
\begin{itemize}
	\item[(i)] ({\bf Turing model}).
	We follow the classical setup in \cite{Ko1991ComplexityTO}, wherein the components of the inputs $\tilde \iota = \{f_{j,n}(\iota)\}_{j,n \in \beta \times \mathbb{N}}\in\tilde\Omega$ are provided by an oracle tape that, at various times during the computation, contains a binary string representing one of the dyadic numbers $f_{j,n}(\iota)$. Specifically, the Turing machine can make a query to the \emph{input oracle} $\mathscr{O}$, which then writes $f_{j,n}(\iota)$ onto the oracle tape at cost $j+n$, always choosing the unique finite binary string representing $f_{j,n}(\iota)$. Now, given a Turing machine $\Gamma$ for the problem $\{\Xi,\Omega,\mathcal{M},\Lambda\}^{\Delta_1}$, we define its runtime on $\tilde \iota$ as follows:
	\begin{equation*}
		\begin{split}
			\mathbf{Runtime}_{\Gamma}(\tilde \iota) &= \text{ the number of steps performed by the Turing machine $\Gamma$ before halting}\\
			& \qquad \text{ + the sum of  costs of all the calls to the oracle for $\tilde \iota$ (with the cost as above).}
		\end{split}
	\end{equation*}
	Note that this model accounts implicitly for the cost of all the calls to the oracle, as the output of the oracle is printed on the work tape, and hence read by the machine.   
	\item[(ii)] ({\bf Arithmetic Turing model}). In the case where one only wishes to count the number of arithmetic operations and comparisons that an algorithm performs, there is no canonical account of the cost of calling the oracle, as opposed to the classical Turing model above. Hence, in this model (as discussed in \S \ref{rem:extended_model} and (see for example 
	Lov\'{a}sz \cite[p. 36]{lovasz1987algorithmic})), it is natural to adopt the convention that the cost of calling the oracle $\mathscr{O}$ (as specified in item (i) above) is given by $\mathrm{pol}(n)$, for all $j \in \beta$, $n\in\mathbb{N}$, and $\iota \in \Omega$, for some non-zero polynomial $\mathrm{pol}$ fixed beforehand.  
	Thus, we define the arithmetic runtime of an algorithm $\Gamma$ in the Turing model  on $\tilde \iota$  as follows:
	\begin{equation*}
		\begin{split}
			\mathbf{Runtime}_{\Gamma}(\tilde \iota) &= \text{ the number of arithmetic operations and comparisons} \\
			& \qquad \qquad\text{ performed by $\Gamma$ before halting }\\ 
			& \qquad \text{ + the sum of  costs of all the calls to the oracle for $\tilde \iota$ (with the cost as above).}
		\end{split}
	\end{equation*}
	\item[(iii)] ({\bf The BSS model}). In this model, the BSS machine is equipped with an extra oracle node $\mathscr{O}$ for $\tilde \iota = \{f_{j,n}(\iota)\}_{j,n \in \beta \times \mathbb{N}}\in\tilde\Omega$ that outputs $f_{j,n}$ on input $(j,n)$ (see \cite{BCSS} for details). As discussed in \S \ref{rem:extended_model} and in item (ii) above, it is customary to assume that the cost of calling the oracle is polynomial in $n$, as in the arithmetic Turing model. Thus, given a BSS machine $\Gamma$ for the problem $\{\Xi,\Omega,\mathcal{M},\Lambda\}^{\Delta_1}$, we define its runtime on $\tilde \iota$ as follows:
	\begin{equation*}
		\begin{split}
			\mathbf{Runtime}_{\Gamma}(\tilde \iota) &= \text{ the number of arithmetic operations and comparisons} \\
			& \qquad \qquad\text{ performed by the BSS machine $\Gamma$ before halting }\\ 
			& \qquad \text{ + the sum of  costs of all the calls to the oracle for $\tilde \iota$ (with the cost as above).}
		\end{split}
	\end{equation*}
\end{itemize}

\subsection{Theorem \ref{Cor:main} -- Part (iii) - (v) in the SCI language}
We follow the setup in \S \ref{sec:Thrm_in_SCI}. Note that with the definitions of runtime in \S \ref{sec:comp_runtime} all satisfy \eqref{eq:runtimeGeqD}, and hence we have that, for a given problem $\{\Xi,\Omega,\mathcal{M},\Lambda\}^{\Delta_1}$ with $\Lambda$ finite and a probabilistic weak breakdown epsilon $\epsilon_{\mathbb{P}\mathrm{B}}^{\mathrm{w}}(\mathrm{p}) > r$, for some $\mathrm{p}, r > 0$, any randomised algorithm will require an arbitrarily long runtime in order to achieve accuracy of at least $r$ with probability at least $\mathrm{p}$.   

We are now ready to formalise the rest of Theorem \ref{Cor:main}.

\begin{proposition}\label{Cor:main_SCI_cont}
There exist $\Omega$ and $\hat \Lambda_{m,N} \in \mathcal{L}^1(\Lambda_{m,N})$ as described in \eqref{eq:Omega_set} for which the conclusion of Proposition \ref{Cor:main_SCI} holds with $K \geq 2$, and we additionally have the following.
	\begin{itemize}[leftmargin=8mm]
		\item[(iii)] 
		For the problem $\{\Xi,\Omega_{m,N},\mathcal{M}_N, \hat \Lambda_{m,N}\}$ with any $m,N\in\mathbb{N}$, $4 \leq m<N$, we have that $\epsilon_{\mathbb{P}\mathrm{B}}^{\mathrm{w}}(\mathrm{p}) > 10^{-(K-1)}$, for all $\mathrm{p} > \frac{1}{2}$. However, when considering the problems  $\{\Xi,\Omega_{m,N},\mathcal{M}_N,\allowbreak\Lambda_{m,N}\}^{\Delta_1} = \{\tilde \Xi,\tilde \Omega_{m,N},\mathcal{M}_N, \tilde \Lambda_{m,N}\}$, there exists an algorithm $\Gamma$ (a Turing machine or a BSS machine) that takes the dimensions $m$, $N$ and any $\tilde\iota \in \tilde\Omega_{m,N}$ as input (see Remark \ref{rem:oracles}) and satisfies
		\[
		\disM(\Gamma(m, N, \tilde \iota), \tilde\Xi(\tilde\iota)) \leq 10^{-(K-1)}.
		\]
		\item[(iv)]
		There exists a polynomial $\mathrm{pol}: \mathbb{R} \rightarrow \mathbb{R}$  and an algorithm $\Gamma$ (a Turing machine or a BSS machine) that takes the dimensions $m$, $N$ and any $\tilde\iota \in \tilde\Omega_{m,N}$ as input (see Remark \ref{rem:oracles}) and satisfies
		\[
		\disM(\Gamma(m, N, \tilde \iota), \tilde\Xi(\tilde\iota)) \leq 10^{-(K-2)}.
		\]
		Furthermore, in the Turing case, the arithmetic runtime and the space complexity of the Turing machine are bounded by $\mathrm{pol}(n_{\mathrm{var}})$, where $n_{\mathrm{var}}=mN+m$ is the number of variables, and the number of digits read from the oracle tape is bounded by $\mathrm{pol}(\log(n_{\mathrm{var}}))$. In particular, the runtime in the Turing model is polynomial in $n_{\mathrm{var}}$. In the BSS case, the runtime is likewise bounded by $\mathrm{pol}(n_{\mathrm{var}})$.
	\end{itemize}	
\end{proposition}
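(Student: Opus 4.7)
\textbf{Proof plan for Proposition \ref{Cor:main_SCI_cont}.} The plan is to reuse the construction of $\Omega$ from Proposition \ref{Cor:main_SCI}, after parameterising it more carefully so that it encodes, for each pair of dimensions $(m,N)$, a one-parameter family of ``hidden branches'' whose minimisers differ by roughly $10^{-(K-1)}$ in the chosen $\ell^p$-norm. Concretely, each input $\iota \in \Omega_{m,N}$ will be indexed by a dyadic sequence $\{s_n\}_{n\in\mathbb{N}}$ (encoded into a small number of entries of $y$ and $A$) such that the true minimiser depends on whether the sequence eventually stabilises at some value; the $\Delta_1$-information $\hat\Lambda_{m,N}$ will then only reveal dyadic approximations of these entries. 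Throughout, one must verify that the conditioning bounds $\mathrm{Cond}(AA^*)\leq 3.2$, $C_{\mathrm{FP}}(\iota)\leq 4$, $\mathrm{Cond}(\Xi)\leq 179$, $\|y\|_\infty\leq 2$ and $\|A\|_{\max}=1$ from Proposition \ref{Cor:main_SCI} continue to hold; this amounts to appropriate rescalings in the construction.

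For the lower bound in (iii), I would argue as follows. Given any randomised general algorithm $\Gamma^{\mathrm{ran}}$ and any $M\in\mathbb{N}$, apply an adversarial argument: by property \ref{property:PAlgorithmConsistent} of Definition \ref{definition:ProbablisticAlgorithm}, after reading only finitely many input digits (below some threshold $M$), the algorithm cannot distinguish two inputs $\iota_1,\iota_2\in\Omega_{m,N}$ that agree to sufficient precision on those digits but whose solution sets are separated by more than $2\cdot 10^{-(K-1)}$. On at least one of the two, the algorithm fails with probability greater than $1/2$ to be within $10^{-(K-1)}$; the adversarial choice combined with the condition $T_{\Gamma^{\mathrm{ran}}}(\iota)>M$ is exactly what Definition \ref{prob_weak_break} requires, giving $\epsilon_{\mathbb{P}\mathrm{B}}^{\mathrm{w}}(\mathrm{p})>10^{-(K-1)}$ for $\mathrm{p}>1/2$. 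The matching positive statement in (iii) is then produced by the natural $\Delta_1$-algorithm that keeps requesting better and better oracle approximations, solves the resulting rational linear programme (or BP/Lasso/CL/SDP) exactly at each stage, and stops as soon as it detects (via a certificate on the input entries) that the current branch has been unambiguously resolved to within $10^{-(K-1)}$. This algorithm always halts, but its runtime is unbounded on the branch-boundary inputs, consistent with the lower bound.

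For (iv), the key observation is that moving from accuracy $10^{-(K-1)}$ to $10^{-(K-2)}$ provides a full order of magnitude of slack that covers precisely the ambiguity encoded in the hidden branch. The algorithm I would construct reads $\mathrm{pol}(\log n_{\mathrm{var}})$ digits of the input (enough to identify all entries of $y$ and $A$ to precision $\ll 10^{-K}$ except possibly the branch-encoding ones), solves the resulting approximate LP/BP/Lasso/SDP by a standard polynomial-time interior-point method with rational data, and applies a deterministic rounding rule on the output that forces the answer to one of the two candidate minimisers. The error analysis then splits into two pieces: a well-conditioning piece, where Lipschitz-type bounds in terms of $\mathrm{Cond}(\Xi)$ and $C_{\mathrm{FP}}$ (both bounded by construction) control the error contributed by the inexact data; and a branch-selection piece, where the $10^{-(K-2)}$ tolerance is chosen to exceed the distance from the rounded output to the nearer element of $\Xi(\iota)$, exploiting the multi-valuedness $\disM(\cdot,\Xi(\iota))=\inf_{\xi\in\Xi(\iota)}\|\cdot-\xi\|_p$. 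Polynomiality of runtime in the Turing model follows from standard complexity of interior-point methods on rational LPs (cf.\ \eqref{eq:Karm_bound}), with the oracle cost absorbed by the $\mathrm{pol}(\log n_{\mathrm{var}})$ digits read; in the BSS case the same counting of arithmetic operations applies.

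The main obstacle, in my view, is the construction of the polynomial-time algorithm in (iv), specifically proving that the rounding rule genuinely maps every possible oracle response to a point within $10^{-(K-2)}$ of $\Xi(\iota)$ \emph{uniformly} over the whole family $\tilde\Omega_{m,N}$ and over all five problem classes (LP, BP, UL, CL, SDP) simultaneously. Each problem class requires its own certificate-based post-processing, and for BP and Lasso the regularisation parameters $\delta$, $\lambda$ enter the error bounds non-trivially; ensuring that the single polynomial $\mathrm{pol}$ works across all these cases, and simultaneously that the space complexity in the Turing model stays polynomial while the number of oracle queries grows only logarithmically, is the technically delicate part and will require substantial bookkeeping in the formal proof.
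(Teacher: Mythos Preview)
Your overall architecture is sound and mirrors the paper in spirit, but there is one genuine gap and one significant methodological difference worth flagging.

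\textbf{The gap in (iv).} Your proposed error analysis relies on ``Lipschitz-type bounds in terms of $\mathrm{Cond}(\Xi)$ and $C_{\mathrm{FP}}$'' to control the error incurred by solving the approximate problem. This does not work on the input sets $\Omega_{m,N}$ you need for the lower bounds: by construction, those sets contain inputs arbitrarily close to a branch point (e.g.\ $\alpha=\beta$) where the solution map is \emph{multi-valued}, and an interior-point solver on rational approximate data will land on one branch or the other depending on the sign of the perturbation, not on anything controlled by $\mathrm{Cond}(\Xi)$. The condition number is bounded precisely \emph{because} $\Xi$ is allowed to be multi-valued at the branch point, so it gives no control over which branch the approximate solver picks. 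The correct argument is structural, not analytic: both candidate minimisers are within $\sim 4\cdot 10^{-(K-1)}\cdot 2^{1/p}\leq 8\cdot 10^{-(K-1)}<10^{-(K-2)}$ of each other, so \emph{any} output near either one is acceptable at the $10^{-(K-2)}$ level. You need to replace the Lipschitz step by this direct diameter bound on the candidate set.

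\textbf{The methodological difference.} The paper's algorithm for (iv) is far simpler than your interior-point-plus-rounding plan: it never solves any optimisation problem. The input sets $\Omega_{m,N}$ are built so explicitly (parameterised by $(\alpha,\beta)$ in a fixed one-dimensional set, with the remaining entries of $A$ and $y$ taking known values) that the algorithm need only read a single coordinate of $y$ to polylogarithmic precision to decide whether the input lies in the ``strong'' or ``weak'' subfamily, and then outputs a \emph{precomputed} midpoint vector (or, for the TV problems, an easily computable approximation to $\eta$ or $\psi$) that is guaranteed to be within $10^{-(K-2)}$ of every solution in that subfamily. The polynomial runtime and $\mathrm{pol}(\log n_{\mathrm{var}})$ oracle-digit bounds then follow immediately, with no appeal to \eqref{eq:Karm_bound} or any LP solver. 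Your approach could in principle be made to work (once the error analysis is fixed as above), but it is considerably heavier and the uniform polynomial bounds across all problem classes and regularisation parameters---which you correctly identify as the ``main obstacle''---become automatic in the paper's approach because the algorithm is essentially a lookup. A minor point: SDP (problem \eqref{problems2}) is not part of the formal statement here.
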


In the proof of Proposition \ref{Cor:main_SCI_cont} we will additionally need the following concepts of the size of a dyadic rational and the encoding length of rational vectors.

	\begin{definition}[Size of dyadic rationals]
		For a dyadic rational $d$ given by its binary representation $d=\pm s_ns_{n-1}\cdots s_0.t_1t_2\cdots t_m$, we call $m$ its precision, and we define its bit size as $m+n+3$. The bit size of a dyadic rational vector or matrix is defined as the sum of the bit sizes of its entries. We define the bit size of a vector of dyadic rationals as the sum of bit sizes of its entries.
	\end{definition}
	
	\begin{definition}[Encoding length of rational numbers]
		We define the encoding length of an integer $n\in\mathbb{Z}$ as $\length(n)=1+\lceil{\log_2(|n|+1)}\rceil$. For a rational $p/q\in\mathbb{Q}$, where $p$ and $q>0$ are coprime integers, we define the encoding length of $p/q$ as $\length(p/q)=\length(p)+\length(q)$. Similarly, if $w$ is a vector of rationals, we define the encoding length $\length(w)$ as the sum of the encoding lengths of the components of $w$.
	\end{definition}

\subsection{Theorem \ref{th:smale_comp_sens} in the SCI language}
We recall the setup for Theorem \ref{th:smale_comp_sens}. 
Fix real constants $\rho\in(1/3,1)$, $\tau>10$, $b_1>3$, and $b_2 > 6$. For $s,m,N\in\mathbb{N}$ such that $m\leq N$ and $\epsilon \in [0,1]$, define 
\begin{equation}\label{eq:Omega_smN2}
	\Omega^{\epsilon}_{s,m,N} = \{(y,A) \in \mathbb{R}^m\times  \mathbb{R}^{m\times N} \, \vert \, (y,A) \text{ satisfies \eqref{eq:conditions_on_Ay2}}\},
\end{equation}
where 
\begin{equation}\label{eq:conditions_on_Ay2}
	\begin{split}
		&A \text{ satisfies the RNP } \eqref{eq:RNP} \text{ of order $s$ with parameters $\rho$ and $\tau$,}\\
		\|y-Ax\|_2 &\leq \epsilon \text{ for some $x$ that is $s$-sparse, and } \|y\|_2 \leq b_1, \,\, \|A\|_2 \leq b_2\sqrt{N/m} .
	\end{split}
\end{equation}
Finally, define  
\begin{equation}\label{eq:the _Omega2}
	\Omega^{\epsilon}=\bigcup_{\substack{s,m,N\in\mathbb{N}\\ m\leq N}}\,\Omega^{\epsilon}_{s,m,N}.
\end{equation}

\begin{proposition}\label{prop:CSResult}
	Let $\xibp$ denote the solution map to the $\ell^1$-BP problem \eqref{problems3} and consider the $\|\cdot\|_{2}$-norm for measuring the error. If $\Omega_{s,m,N}^\epsilon$ and $\Omega^\epsilon$ are as in \eqref{eq:Omega_smN2} and \eqref{eq:the _Omega2} the following holds.	
	\begin{itemize}[leftmargin=8mm]
		\item[(i)] There exists a constant $C > 0$, independent of $\rho,\tau,b_1$ and $b_2$, such that if we fix $s = 2^k$ for some $k \in \mathbb{N}$ and any $m, N$ such that $N > m$ with $m\geq C s\log^2(2s)\log(N)$, then we have the following. For an arbitrary $\delta \in (0,1]$ consider the computational problem $\{\xibp,\Omega^{0}_{s,m,N},\mathcal{M},\Lambda\}^{\Delta_1}$, where $\mathcal{M} = \mathbb{R}^N$. Let
		\[
		K \geq \left\lceil\log_{10}\left(2\delta^{-1}\right)\right \rceil.
		\]
		Then, for any $\mathrm{p} > \frac{1}{2}$, we have $\epsilon_{\mathbb{P}h\mathrm{B}}^{\mathrm{s}}(\mathrm{p}) > 10^{-K}$. Hence 
		$\{\xibp,\Omega^{0}_{s,m,N},\mathcal{M},\Lambda\}^{\Delta_1} \notin \Delta_1^G$.
		
		\item[(ii)] There exists a polynomial $\mathrm{pol}: \mathbb{R}^2 \rightarrow \mathbb{R}$ and, for every $\delta\in[0, (1-\rho)/(16\tau)]$, there exists an algorithm $\Gamma_\delta$ (a Turing machine or a BSS machine) that takes any dimensions $m$, $N$, the accuracy parameter $K$, and any $\tilde \iota \in \tilde \Omega^{\delta}_{s,m,N}$ as input (as described in Remark \ref{rem:oracles}) and satisfies
		\[
		\disM(\Gamma_\delta(m, N, K, \tilde \iota), \tilde{\Xi}_{\mathrm{BP}}(\tilde \iota)) \leq 10^{-K}
		\]
		for all $K \in \mathbb{N}$ satisfying
		\begin{equation}\label{eq:BoundOnKForPositive}
			K \leq \left  \lfloor \log_{10}\big((1-\rho)(16\tau)^{-1}\delta^{-1}\big) \right \rfloor.
		\end{equation}
		In the case that $\delta = 0$, \eqref{eq:BoundOnKForPositive} is to be interpreted as $K<\infty$.
		Moreover, the runtime of $\Gamma_\delta$ (steps performed by the Turing machine, arithmetic operations performed by the BSS machine) is bounded by $\mathrm{pol}(n,K)$, where $n = m+ mN$ is the number of variables. In particular, for $\delta = 0$, the runtime is bounded by $\mathrm{pol}(n,K)$ for all $K \in \mathbb{N}$.
		\item[(iii)]
		Consider the $\ell^1$-BP problem \eqref{problems3} with $\delta=0$.
		For any fixed $s \geq 3$, there are infinitely many pairs $(m,N)$ and inputs $\iota = (Ax,A) \in \Omega^0$, where $x\in\real^m$ is $s$-sparse and $A \in \real^{m \times N}$ is a subsampled Hadamard, Bernoulli, or Hadamard-to-Haar matrix such that
		\[
		C_{\mathrm{RCC}}(\iota) = \infty.
		\] 
		In particular, appreciating (iii) and (iv), there exist inputs in $\Omega$ with infinite RCC condition number, yet the problem is in P.
		
	\end{itemize} 
\end{proposition}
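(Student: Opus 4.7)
The three parts of Proposition \ref{prop:CSResult} require distinct techniques: an adversarial information argument for the lower bound in (i), a classical RNP recovery estimate coupled with a polynomial-time interior-point solver for (ii), and explicit sign-cancellation matrix constructions for (iii).

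\textbf{Part (i): adversarial information.} The plan is to adapt the two-input indistinguishability technique used to prove Proposition \ref{Cor:main_SCI} to the RNP setting. Fix $A$ satisfying the $\ell^2$-RNP of order $s$ with parameters $(\rho,\tau)$, whose existence in the stated regime $m\geq Cs\log^2(2s)\log(N)$ follows from standard compressed sensing constructions (subsampled Gaussian or subgaussian matrices). I construct two inputs $\iota^{(1)}=(Ax^{(1)},A),\ \iota^{(2)}=(Ax^{(2)},A)\in\Omega^0_{s,m,N}$ with distinct $s$-sparse $x^{(i)}$ whose $\Delta_1$-representations can be made to coincide on any prescribed finite query set, yet whose BP minimisers are singletons at $\ell^2$-distance greater than $2\cdot 10^{-K}\leq \delta$. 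The key leverage is that, although BP with slack $\delta$ admits a minimiser set of diameter at most $O(\tau\delta/(1-\rho))$, the specific minimiser chosen depends piecewise continuously on $y$ with discontinuous jumps across ``breakpoint'' surfaces, so that nearby inputs can produce minimisers separated by order $\delta$. A coupling argument on the probability measures in Definition \ref{definition:ProbablisticAlgorithm}, analogous to the one in the proof of Proposition \ref{Cor:main_SCI}(i), then forces any halting RGA to fail with probability at least $1/2$ on at least one of the two inputs, yielding $\epsilon_{\mathbb{P}h\mathrm{B}}^{\mathrm{s}}(\mathrm{p})>10^{-K}$ for all $\mathrm{p}>1/2$.

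\textbf{Part (ii): RNP recovery plus polynomial-time BP.} Here I invoke the classical RNP recovery estimate: for $A$ with $\ell^2$-RNP of order $s$ and parameters $(\rho,\tau)$, any $s$-sparse $x$, and any $y$ with $\|y-Ax\|_2\leq \delta$, every BP-minimiser $x^\sharp$ of $\min\|z\|_1$ subject to $\|Az-y\|_2\leq \delta$ satisfies $\|x^\sharp-x\|_2\leq\frac{C\tau}{1-\rho}\delta$ with a constant compatible with the $16\tau/(1-\rho)$ factor in \eqref{eq:BoundOnKForPositive}. The algorithm $\Gamma_\delta$ reads the $\Delta_1$-oracle to precision $2^{-p(n,K)}$ for a suitable polynomial $p$, producing a dyadic approximation $(\bar A,\bar y)$ of the input, then runs a polynomial-time interior-point method on the BP linear-programme reformulation with a slightly inflated constraint $\|Az-\bar y\|_2\leq\delta+\eta$ to absorb the discretisation error $\eta$. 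Summing the RNP recovery error and the perturbation-plus-solver errors gives total error at most $10^{-K}$ under \eqref{eq:BoundOnKForPositive}. Polynomial runtime in $n$ and $K$ follows from the standard polynomial-time complexity of interior-point methods on rational/dyadic inputs.

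\textbf{Part (iii) and the main obstacle.} For (iii), exhibit for each $s\geq 3$ explicit $(m,N)$ and $\iota=(Ax,A)\in\Omega^0$ admitting two distinct $s$-sparse BP minimisers: exploit the $\pm 1$ entry structure of subsampled Hadamard and Bernoulli matrices to choose $s+1$ columns whose signed sum vanishes on the subsampled rows (a sign-balance configuration engineered by a careful choice of the sampled row set); the analogous sign-cancellation for Hadamard-to-Haar matrices follows from the orthogonality of Haar wavelets. Once two $s$-sparse representations of $y$ with equal $\ell^1$-norm exist, both lie in $\xibp(\iota)$, so $\iota\in\MultiMinInputs$ at distance $0$ from itself, giving $C_{\mathrm{RCC}}(\iota)=\infty$, while $A$ continues to satisfy the RNP on the relevant index sets. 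The main technical obstacle is in Part (i): simultaneously arranging that (a) both adversarial inputs lie in $\Omega^0_{s,m,N}$ so that $y^{(i)}=Ax^{(i)}$ exactly for $s$-sparse $x^{(i)}$, (b) the minimisers are singletons so that the multivaluedness of $\xibp$ cannot rescue the algorithm, and (c) the minimisers jump by more than $2\cdot 10^{-K}$ while the inputs remain oracle-indistinguishable. This requires a delicate placement of $(y^{(1)},y^{(2)})$ at the intersection of a BP breakpoint surface with the image of the $s$-sparse cone under $A$, and is where the bulk of the work resides.
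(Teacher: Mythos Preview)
Your plan for Part (ii) is essentially the paper's: read the oracle to dyadic precision, solve a slightly relaxed BP problem in polynomial time, and control the error via the RNP recovery estimate (the paper uses the ellipsoid algorithm rather than interior-point for cleaner Turing-model bounds, but this is cosmetic).

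Parts (i) and (iii), however, share a common fatal gap. Both rely on producing, for a matrix $A$ with the $\ell^2$-RNP of order $s$, two distinct $s$-sparse vectors whose images under $A$ are equal or nearly equal while the BP minimisers stay far apart. But RNP of order $s$ forbids exactly this: if $x^{(1)},x^{(2)}$ are $s$-sparse then $v=x^{(1)}-x^{(2)}$ is $2s$-sparse, and applying the RNP to the $s$ largest coordinates of $v$ gives $\|v\|_2\lesssim \tau(1-\rho)^{-1}\|Av\|_2$. Hence close $y^{(i)}=Ax^{(i)}$ forces close $x^{(i)}$, and since each BP minimiser is within $O(\tau\delta/(1-\rho))$ of its $x^{(i)}$, the minimisers are close too --- no jump of size $>\delta$ is possible with your construction in (i). Likewise in (iii), two distinct $s$-sparse vectors with $Av^+=Av^-$ cannot exist, so $\iota$ itself cannot lie in $\MultiMinInputs$; your sign-cancellation idea produces a null vector but not two $s$-sparse $\ell^1$-minimisers.

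The paper circumvents this in (i) by perturbing the \emph{matrix}, not $y$: it first builds (Proposition \ref{prop:BPDNNFail}) a specific $(5s-1)\times 5s$ RNP matrix $A$ for which $\xibp(y,A)$ is an entire line segment, then uses tiny diagonal perturbations $A(I-E^{j,n})$ to select one endpoint or the other (Lemma \ref{lem:PeturbToGetEitherMin}), and pads with a second RNP block $F$ to reach the desired dimensions. The two endpoint minimisers are $3s$-sparse (not $s$-sparse), so RNP of order $s$ does not force them together. For (iii) the paper does \emph{not} place $\iota$ in $\MultiMinInputs$; instead it shows $\mathrm{dist}_2[(Ax,A),\MultiMinInputs]=0$ by exhibiting arbitrarily small perturbations of $y$ (not of $A$) whose BP solutions are non-sparse and non-unique (Lemma \ref{lemma:ProveInfiniteCondition}), using that a subsampled Hadamard matrix has a row-span vector with all entries $\pm 1$.
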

\begin{remark}
	The independence of the runtime on $\delta$ in part (ii) is a consequence of \eqref{eq:BoundOnKForPositive}.
\end{remark}
\begin{remark}
	One can strengthen the result in part (ii) in the following way: it is possible to use Nemirovski's surface-following method (concretely, the result in Section 4.2 in \cite{Nemirovski1995}) to devise an algorithm that solves the basis pursuit problem with $\ell^1$ regularisation up to precision $10^{-K}>16\tau\delta/(1-\rho)$ to obtain the explicit complexity bound $\mathcal{O}(1)\cdot N^{3.5}\cdot K\cdot \log\left(2+\frac{N\tau}{1-\rho}\right) $ in the arithmetic model of computation (more precisely, the BSS model with an oracle for the square root). We choose not to do so in this paper and instead base our proof of part (ii) on the ellipsoid algorithm to obtain simple polynomial time complexity bounds in both the BSS and Turing models of computation.
\end{remark}

\subsection{Computing the exit flag -- Theorem \ref{thm:ExitFlag} in the SCI language}\label{sec:exit-flag-formal}
Let $\{\Xi,\Omega,\mathcal{M},\Lambda\}^{\Delta_1}$ be a computational problem and recall the definition of $\{\Xi,\Omega,\mathcal{M},\Lambda\}^{\Delta_1} = \{\tilde \Xi,\tilde \Omega,\mathcal{M},\tilde \Lambda\}$ from Definition \ref{definition:Omega_tilde_Delta_m}. Suppose that we are interested in finding an algorithm $\Gamma$ that, for every $\tilde \iota \in \tilde \Omega$, computes an approximation to $\tilde \Xi(\tilde \iota)$ with $\kappa$ accuracy (i.e. so that $\disM(\Gamma(\tilde \iota),\Xi(\tilde \iota)) \leq \kappa$). If the breakdown epsilon 
$\epsilon_{\mathrm{B}}^{\mathrm{s}}$ is greater than $\kappa$ then this task is impossible -- there will be at least one input for which the algorithm fails to get $\kappa$ accuracy. However, that does not mean that a given algorithm will fail on \textit{all} possible inputs. There may be a set of inputs for which the algorithm succeeds in producing an output that is $\kappa$ away from a true solution. This leads to the following exit flag question: 
\begin{displayquote}
	\normalsize
	{\it Can we design an algorithm that identifies whether or not $\Gamma$ fails on a given input (meaning that $\Gamma$ fails to produce $\kappa$ accuracy on its input)?}
\end{displayquote}

Phrased more precisely in the language of computational problems, given a recursive (either in the Turing or the BSS model) algorithm $\Gamma$ for $ \{\tilde \Xi,\tilde \Omega,\mathcal{M},\tilde \Lambda\}$, we define 
\begin{equation}\label{eq:Omega_exit}
	 \tilde\Omega_{\Gamma}:= \{\tilde \iota \in \tilde \Omega \, \vert \, \disM(\Gamma(\tilde \iota),\tilde \Xi(\tilde \iota)) \leq \kappa\}\subset \tilde \Omega,
\end{equation}
then $\epsilon_{\mathrm{B}}^{\mathrm{s}} > \kappa$ implies that $\tilde \Omega_\Gamma$ is a strict subset of $\tilde \Omega$. Now, defining
\begin{equation}\label{eq:Xi_exit}
	\Xi^E: \tilde \Omega \ni \tilde \iota \mapsto
	\begin{cases}
		1 & \text{ if } \tilde \iota \in  \tilde\Omega_\Gamma \\
		0 & \text{ if } \tilde \iota \in \tilde \Omega\setminus  \tilde\Omega_\Gamma,
	\end{cases}
\end{equation}
the exit flag problem is to design an algorithm $\Gamma^E$ to solve the computational problem 
\begin{equation}\label{eq:comp_prob_exit}
	\{\Xi^E,\Omega,\{0,1\},\Lambda\}^{\Delta_1} = \{\Xi^E,\tilde \Omega,\{0,1\},\tilde \Lambda\},
\end{equation}
where the metric on the space $\{0,1\}$ is inherited from $\real$.

\begin{remark}[Key assumption]\label{rem:EFAlgorithmAssumption}
	Of course, this problem is trivial if the algorithm produces outputs that are far away from the set $\Xi(\Omega)$. This is a somewhat contrived scenario as such algorithms would not be sensible candidates for attempting to solve the problems defined in equations \eqref{problems}-\eqref{problems5}.  We thus need to make a technical assumption on the type of algorithms we will analyse for the exit flag problem. Concretely, we fix an $\alpha < \kappa$ and assume that our algorithm $\Gamma$ defined on $\Omega$ satisfies
	\begin{equation}\label{assumption:AlgorithmCloseToTheRange}
		\disM(\Gamma(\tilde \iota),\tilde \Xi(\tilde \Omega)) < \alpha\text{ for all  } \tilde \iota \in \tilde \Omega.
	\end{equation}
\end{remark}

Given the computational problem $\{\Xi^E,\tilde \Omega,\{0,1\},\tilde \Lambda\}$, one may ask where it lies in the SCI hierarchy. We will provide results on this, however, in some circumstances we will actually prove stronger results by considering the problem of computing the exit flag with additional access to an oracle that provides an exact solution to the problem that $\Gamma$ is trying to solve. In particular, one can ask:
\begin{displayquote}
	\normalsize
	{\it Given an oracle for the true solution, can we design an algorithm that identifies whether or not $\Gamma$ fails on a given input (meaning that $\Gamma$ fails to produce $K$ correct digits when given the input)?}
\end{displayquote}
Concretely, we will assume that we are allowed to design the exit flag algorithm $\Gamma^E$ so that $\Gamma^E$ has access to both and element $\rho$ of $ \tilde \Xi(\tilde \iota)$ as well as the input $\tilde \iota$. We will consider $\Gamma^E$ to be successful only if it correctly computes the exit flag for every such $\rho$. Indeed, it is reasonable to preclude the exit algorithm from being able to access a ''convenient'' element of $ \tilde \Xi(\tilde \iota)$, as this would make it too powerful for any nontrivial statements to be made.

\subsubsection{Formalising the oracle computational problem}
In order to make the concept of an oracle for the true solution precise we need to formalise the definition of an oracle computational problem. 
Consider two computational problems  $\{\Xi_1,\Omega,\mathcal{M}_1,\Lambda_1\}$ and $\{\Xi_2,\Omega,\mathcal{M}_2,\Lambda_2\}$  with the same input set $\Omega$, and assume that $\mathcal{M}_2 \subset \mathbb{C}^M$. 
Given $\omega \geq 0$, suppose that  functions $\{g_k\}_{k=1}^M$, $g_k : \Omega_1 \rightarrow \mathbb{D}$, are such that 
\begin{equation}\label{eq:the_gk}
	 \{g_k( \iota)\}_{k=1}^M \in \mathcal{B}^{\infty}_{\omega}( \Xi_2( \iota)).
\end{equation}
Then $\{g_k( \iota)\}_{k=1}^M$ is an $\omega$-accurate approximation of an element of $ \Xi_2( \iota)$. We can thus define
\begin{equation}\label{def:mathcalLwOracle}
\mathcal{L}^{\mathcal{O}, \omega,  \Xi_2}( \Lambda_1)=\{ \Lambda_1 \cup \{g_k\}_{k=1}^M \,\vert\, g_k :  \Omega \rightarrow \mathbb{D}, k\in\{1,\dots,M\},\text{ satisfy \eqref{eq:the_gk}}\}
\end{equation} 
Note that it is crucial to allow for the approximation parameter $\omega$ since a Turing machine would be unable to access the true solution if it has irrational entries. 

Now, for a fixed $\Lambda_1^+\in \mathcal{L}^{\mathcal{O}, \omega,  \Xi_2}(\Lambda_1)$, the computational problem $\{ \Xi_1, \Omega,\mathcal{M}_1, \Lambda_1^+\}$ would allow the algorithm to access information about $ \Xi_2$.
However, as discussed above, a sensible algorithm relying on ``oracle information'' ought to work with every choice of such an oracle. We thus define a new oracle computational problem $\{ \Xi_1^{\mathcal{O}}, \Omega^{\mathcal{O}},\mathcal{M}_1, \Lambda_1^{\mathcal{O}}\}$ by analogy to Definition \ref{definition:Omega_tilde_Delta_m} according to
\begin{equation}\label{eq:exit-flag-general-Omega}
 \Omega^{\mathcal{O}} =  \Omega^{\mathcal{O}}(\omega) = 
\left\{  \iota^{\mathcal{O}} =  \iota  \oplus \{g_k( \iota)\}_{k=1}^M
\, \vert \,  \iota \in  \Omega, \, \{g_k\}_{k=1}^M \text{ satisfy } \eqref{eq:the_gk} \right\},
\end{equation}
$ \Xi_1^{\mathcal{O}}( \iota^{\mathcal{O}}) =  \Xi_1( \iota)$ and  
$ \Lambda_1^{\mathcal{O}} = \{ f^{\mathcal{O}}\, \vert \,f\in\Lambda_1\}  \cup \{g^{\mathcal{O}}_k\}_{k=1}^M$, 
where  the $f^{\mathcal{O}}$ and $g^{\mathcal{O}}_k$ are defined as
\[
 f^{\mathcal{O}}( \iota^\mathcal{O}) =  f( \iota), \quad 
g^{\mathcal{O}}_k( \iota^\mathcal{O}) = g_k( \iota).
\]
The dependency on  $\omega$ in the definition of $ \Omega^{\mathcal{O}}(\omega)$ will usually be suppressed to lighten the notation. We can now make the following formal definition.

\begin{definition}[Oracle computational problem]\label{def:oracle}
	Given two computational problems  $\{\Xi_1,\Omega,\mathcal{M}_1,\Lambda_1\}$ and $\{\Xi_2,\Omega,\mathcal{M}_2,\Lambda_2\}$ as specified above, we say that 
	\[
	\{  \Xi_1, \Omega,\mathcal{M}_1, \Lambda_1\}^{\mathcal{O},\omega, \{ \Xi_2, \Omega,\mathcal{M}_2, \Lambda_2\}}:=\{ \Xi_1^{\mathcal{O}}, \Omega^{\mathcal{O}},\mathcal{M}_1, \Lambda_1^{\mathcal{O}}\}
	\]
	where $\Omega^{\mathcal{O}}$, $\Xi_1^{\mathcal{O}}$, and $\Lambda_1^{\mathcal{O}} $ are as specified above,
	is the oracle computational problem with respect to $\{\Xi_2,\Omega,\mathcal{M}_2,\Lambda_2\}$. Whenever it is clear which computational problem the oracle problem is relative to, we will simply write $\{  \Xi_1, \Omega,\mathcal{M}_1 ,\Lambda_1\}^{\mathcal{O},\omega}$
	for $\{  \Xi_1, \Omega,\mathcal{M},_1 \Lambda_1\}^{\mathcal{O}, \omega, \{ \Xi_2, \Omega,\mathcal{M}_2, \Lambda_2\}}$ .
\end{definition}

\subsubsection{Computing the exit flag with an oracle}

Returning to the exit flag problem, one could also ask the opposite question to what we asked above: 
\begin{displayquote}
	\normalsize
	{\it Given an oracle for the exit flag can we design an algorithm that produces $\omega$ accurate solution to the original problem?}
\end{displayquote}
A positive answer to this question and a negative answer to the original question can be seen as evidence that the exit flag problem is strictly harder than the original problem, which is the topic of the following proposition formalising Theorem \ref{thm:ExitFlag}.

\begin{proposition}[Impossibility of computing the exit flag]\label{prop:ExitFlag}
	Let $\Xi$ denote the solution map to any of the problems \eqref{problems} - \eqref{problems5} with the regularisation parameters satisfying $\delta\in[0,1]$, $\lambda\in(0,1/3]$, and $\tau\in[1/2,2]$ (and additionally being rational in the Turing case)  and let the metric on $\mathcal{M}$ be induced by the $\|\cdot\|_p$ norm. Let $K \in \mathbb{N}$ and fix $\alpha$ and  $\omega$ so that $0<\alpha\leq \omega<10^{-K}$. Then, for any fixed dimensions $m < N$ with $m \geq 4$, there exists a class of inputs $\Omega$ such that, if $\Gamma$ is a general algorithm for the computational problem $\{\Xi,\Omega,\mathcal{M},\Lambda\}^{\Delta_1} = \{\tilde \Xi,\tilde \Omega,\mathcal{M},\tilde \Lambda\}$ satisfying \eqref{assumption:AlgorithmCloseToTheRange} (where we write $\Lambda $ for $\Lambda_{m,N}$ to lighten the notation) and $\{\Xi^E,\Omega,\{0,1\},\Lambda\}^{\Delta_1} = \{\Xi^E,\tilde \Omega,\{0,1\},\tilde \Lambda\}$ is the exit flag computational problem defined in \eqref{eq:comp_prob_exit}, we then have the following. 
	\begin{itemize}[leftmargin=8mm]
		\item[(i)]
		There exists a $\tilde \Lambda^+  \in \mathcal{L}^{\mathcal{O}, \omega, \tilde \Xi}(\tilde \Lambda)$ such that, for  $\{ \Xi^E,\tilde \Omega,\{0,1\},\tilde \Lambda^+\}$ we have $\epsilon_{\mathbb{P}\mathrm{B}}^{\mathrm{s}}(\mathrm{p}) \geq 1/2$, for all $\mathrm{p} > \frac{1}{2}$.
		\item[(ii)]
		The problem of computing the exit flag of $\Gamma$ is strictly harder than computing a $K$ digit approximation to $\Xi$ in the following sense: For the oracle problem $\{ \tilde \Xi,\tilde \Omega,\mathcal{M},\tilde \Lambda\}^{\mathcal{O},\omega}$ with respect to $\{ \Xi^E,\tilde \Omega,\{0,1\},\tilde \Lambda\}$ we have $\epsilon_{\mathrm{B}}^{\mathrm{s,A}} \leq 10^{-K}$ (see Remark \ref{rem:breakdown-eps}). However, when considering  the oracle problem $\{ \Xi^E,\tilde \Omega,\{0,1\},\tilde \Lambda\}^{\mathcal{O},\omega}$ with respect to $\{\tilde \Xi,\tilde \Omega,\mathcal{M},\tilde \Lambda\}$, we have $\epsilon_{\mathrm{B}}^{\mathrm{s}}\geq 1/2$ (which follows from (i)).
		\item[(iii)] 
		If $\Xi$ is the solution map to either linear programming or basis pursuit, there exists a class of inputs $\Omega^{\sharp} \neq \Omega$ such that, if $\Gamma$ is a general algorithm for the computational problem $\{\Xi,\Omega^{\sharp} ,\mathcal{M},\allowbreak\Lambda\}^{\Delta_1}$ satisfying \eqref{assumption:AlgorithmCloseToTheRange} with $\alpha$, then there is a $\hat \Lambda \in \mathcal{L}^1(\Lambda)$ so that for $\{\Xi^E,\Omega^{\sharp} ,\{0,1\},\hat \Lambda\}$ we have $\epsilon_{\mathbb{P}\mathrm{B}}^{\mathrm{s}}(\mathrm{p}) \geq 1/2$, for all $\mathrm{p} > \frac{1}{2}$. However, 
		if we consider the oracle problem $\{ \Xi^E,\tilde \Omega^{\sharp},\{0,1\},\tilde \Lambda\}^{\mathcal{O},\omega}$ with respect to $\{\Xi,\Omega^{\sharp},\mathcal{M},\Lambda\}^{\Delta_1}$, then $\{ \Xi^E,\tilde \Omega^{\sharp},\{0,1\},\tilde \Lambda\}^{\mathcal{O},\omega} \in \Delta_1^A$.  		
	\end{itemize}
	The statements above are true even when we require the inputs in $\Omega_{m,N}$ to be well-conditioned for all $m$, $N$ and bounded from above and below. In particular, for any input $\iota = (y,A) \in \Omega_{m,N}$ we have 
	$\mathrm{Cond}(AA^*)\leq 3.2$, $C_{\mathrm{FP}}(\iota)\leq 4$, $\mathrm{Cond}(\Xi) \leq 179$, $\|y\|_\infty\leq 2$, and $\|A\|_{\max} = 1$.
\end{proposition}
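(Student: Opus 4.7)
The plan is to bootstrap the switching constructions used in Proposition \ref{Cor:main_SCI} and Proposition \ref{Cor:main_SCI_cont}, reinterpreting them as information about where a fixed candidate algorithm $\Gamma$ succeeds versus fails. The lower-bound proof underlying Theorem \ref{Cor:main} produces, for each $n$, pairs of inputs $\iota_0^{(n)}, \iota_1^{(n)} \in \Omega$ whose $\Delta_1$-representations agree on the first $n$ queries but whose solution sets contain witnesses separated by more than $2\cdot 10^{-K}$ in $\|\cdot\|_p$. Because any algorithm $\Gamma$ satisfying \eqref{assumption:AlgorithmCloseToTheRange} with $\alpha<10^{-K}$ must act identically on $\Delta_1$-representations agreeing up to its querying depth, the triangle inequality forces $\Gamma$ to fail on at least one member of each pair; after relabelling we may assume $\Gamma$ succeeds on $\iota_0^{(n)}$ and fails on $\iota_1^{(n)}$, so $\Xi^E$ takes opposite values across each pair, and any algorithm restricted to the shared prefix cannot compute $\Xi^E$ correctly on both.

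For part (i), we must retain this indistinguishability after augmenting with an $\omega$-accurate solution oracle. Since $\Xi$ is in general multi-valued, we build $\iota_0^{(n)}, \iota_1^{(n)}$ so that their solution sets share a common anchor point $v$ (a single dyadic vector in the $\ell^\infty$-ball $\opBall{\omega}{v}$ is then a valid oracle output for both) while also containing respective witnesses $w_0, w_1$ with $\|v-w_0\|_p, \|v-w_1\|_p, \|w_0-w_1\|_p > 2\cdot 10^{-K}$. Then $\Gamma$'s fixed output on the shared prefix cannot simultaneously lie within $10^{-K}$ of an element of both solution sets, yet a single oracle value suffices for both inputs. Defining $\tilde\Lambda^+\in\mathcal{L}^{\mathcal{O},\omega,\tilde\Xi}(\tilde\Lambda)$ so the oracle returns this common dyadic vector on both inputs, the resulting augmented inputs are indistinguishable under any randomised general algorithm, and a standard coupling argument delivers the claimed $\epsilon_{\mathbb{P}\mathrm{B}}^{\mathrm{s}}(\mathrm{p})\geq 1/2$ bound by passing from pointwise indistinguishability to a contradiction with the required success probability on $\iota_0^{(n)}$ and $\iota_1^{(n)}$ simultaneously.

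Part (ii) combines (i) with a short direct reduction: given an exit-flag oracle for a $\Gamma$ chosen as in Proposition \ref{Cor:main_SCI_cont}(iii) (so $\Gamma$ achieves $K-1$ correct digits, and in particular succeeds on a dense subset of inputs), one obtains $K$-digit approximations by enumerating small dyadic perturbations $\tilde\iota_\ell$ of $\tilde\iota$ and returning $\Gamma(\tilde\iota_\ell)$ at the first $\ell$ where the exit-flag oracle returns $1$. The reverse direction follows from (i). The main obstacle is part (iii), which demands a class $\Omega^{\sharp}$ for LP and BP where the exit flag is undecidable from $\Delta_1$-information alone yet decidable with the solution oracle. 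The strategy is to localise all ambiguity in the input-to-algorithm interaction rather than in the solution set itself: $\Omega^{\sharp}$ is built so that $\Xi$ is essentially constant across indistinguishable inputs but $\Gamma(\tilde\iota)$ lies within $10^{-K}$ of that common solution or not, depending on an undecidable bit in the $\Delta_1$-tail. With any oracle output $\xi \in \opBall{\omega}{\Xi(\iota)}$, the recursive algorithm computes $\Gamma(\tilde\iota)$ and compares $\|\Gamma(\tilde\iota)-\xi\|_p$ with $10^{-K}$; the margin $\omega-\alpha>0$, together with the specific LP/BP structure, makes this a decidable numerical test. The condition-number bounds on $\Omega$ and $\Omega^{\sharp}$ are inherited from Proposition \ref{Cor:main_SCI} under the small perturbations required.
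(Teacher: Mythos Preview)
Your plan for part (i) has a structural problem. You propose inputs $\iota_0^{(n)},\iota_1^{(n)}$ whose solution sets both contain a common anchor $v$ (so a single oracle value works for both) as well as well-separated witnesses $w_0,w_1$. But if $v$ lies within $\omega<10^{-K}$ of both $\Xi(\iota_0^{(n)})$ and $\Xi(\iota_1^{(n)})$, then the common value $g=\Gamma(\tilde\iota_0^{(n)})=\Gamma(\tilde\iota_1^{(n)})$ may well satisfy $d_{\mathcal M}(g,v)\le 10^{-K}$, in which case $\Gamma$ succeeds on \emph{both} inputs and $\Xi^E$ is identically $1$ on the pair, so nothing is indistinguishable at the exit-flag level. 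The paper avoids this by using a \emph{three}-input structure: a central $\iota^0$ with genuinely multi-valued solution $\Xi(\iota^0)\supset\{x^1,x^2\}$, together with sequences $\iota^1_n,\iota^2_n\to\iota^0$ whose solutions are the \emph{singletons} $\{x^1\}$ and $\{x^2\}$. The range assumption \eqref{assumption:AlgorithmCloseToTheRange} then forces a case split on whether $\Gamma(\tilde\iota^0)$ is near $\Xi(\iota^0)$, near $x^1$, or near $x^2$; in each case one builds an alternating sequence $\tilde\iota_n$ on which $\Xi^E$ flips between $0$ and $1$, and the oracle vector $v$ is chosen near whichever $x^j$ is relevant to that case (so it is simultaneously valid for $\tilde\iota^0$ and for $\tilde\iota^j_n$). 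This is the content of Proposition~\ref{prop:EF} and its conditions (a)--(f).

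Your approach to part (ii) does not match the statement: $\Gamma$ is a \emph{given} algorithm satisfying \eqref{assumption:AlgorithmCloseToTheRange}, not one you may choose, so you cannot invoke the specific $(K-1)$-digit algorithm of Proposition~\ref{Cor:main_SCI_cont}(iii). Moreover, ``enumerating dyadic perturbations $\tilde\iota_\ell$'' is not an operation available in this framework---the algorithm has oracle access only to the approximations of the single input $\tilde\iota$, not to neighbouring elements of $\tilde\Omega$. The paper instead exploits the explicit structure of $\Omega$: if the exit-flag oracle returns $1$, output $\Gamma(\tilde\iota)$; if it returns $0$, one shows the input cannot correspond to the degenerate $\iota^0$ (because $\Xi(\Omega)\subset\clBall{10^{-K}-\omega}{\Xi(\iota^0)}$ forces $\Gamma$ to succeed on $\tilde\iota^0$), and then a \textit{Weak}-style loop on the sign of $\alpha-\beta$ determines the solution exactly. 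For part (iii), your description is inverted: if $\Xi$ were constant across indistinguishable inputs then $\Gamma$'s output (which is identical on indistinguishable inputs) would lie at a fixed distance from that constant solution, making $\Xi^E$ itself constant and the problem trivial. The paper's $\Omega^\sharp$ is built so that $\Xi$ takes exactly \emph{two} well-separated values ($0$ and $3\cdot 10^{-K}e_1$), indistinguishable from $\Delta_1$-data alone but immediately revealed by the solution oracle; once the true value is known, comparing $\|\Gamma(\tilde\iota)-x\|$ to $10^{-K}$ is a decidable test.
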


\subsection{The full SCI hierarchy and the feasibility part of the extended Smale's 9th problem}
In order to prove Theorem \ref{thm:Smales9} we need the full SCI hierarchy. Building on the SCI hierarchy introduced in Definition \ref{definition:sciHierarchy}, one may assume that the metric space $\mathcal{M}$, say $\mathcal{M} = \mathbb{R}$ or $\mathcal{M} = \{0,1\}$ with the standard metric, is equipped with extra structure, say a total order, that allows one to define convergence from above or below of functions valued in $\mathcal{M}$. By analogy to using the metric $d_{\mathcal{M}}$ in the approximation property \eqref{conv} of towers of algorithms, the extra structure on $\mathcal{M}$ allows us to define for a new type of approximation. As we argue below, this is important, for example, in computer-assisted proofs and scientific computing. 

For simplicity of exposition, in this subsection we consider only computational problems whose solution map $\Xi$ is single-valued.

\begin{definition}[The SCI Hierarchy for a totally ordered set]\label{def:tot_ord}
	Given the setup in Definition \ref{definition:sciHierarchy} and additionally assuming that $\mathcal{M}$ is totally ordered, we define the following sets: 
	\begin{equation*}
		\begin{split}
			\Sigma^{\alpha}_0 &= \Pi^{\alpha}_0 = \Delta^{\alpha}_0,\\
			\Sigma^{\alpha}_{m} &= \{\{\Xi,\Omega,\mathcal{M},\Lambda\} \in \Delta_{m+1}^{\alpha} \ \vert \  \exists \text{ tower }  \{\Gamma_{n_{m}, \hdots, n_1}\} \text{ of height $m$ s.t. }\Gamma_{n_{m}}(\iota) \nearrow \Xi(\iota) \ \, \forall \iota \in \Omega\}, \\
			\Pi^{\alpha}_{m} &= \{\{\Xi,\Omega,\mathcal{M},\Lambda\} \in \Delta_{m+1}^{\alpha} \ \vert \  \exists  \text{ tower }\{\Gamma_{n_{m}, \hdots, n_1}\}  \text{ of height $m$ s.t. }\Gamma_{n_{m}}(\iota) \searrow \Xi(\iota) \ \, \forall \iota \in \Omega\},		\end{split}
	\end{equation*}
	for $m\in\mathbb{N}$, where $\nearrow$ and $\searrow$ denotes convergence from below and above respectively, and the towers of algorithms are assumed to consist of halting algorithms.
\end{definition}

In the special case when $\mathcal{M} = \{0,1\}$, Definition \ref{def:tot_ord} yields the full SCI hierarchy for arbitrary decision problems, where $1$ is interpreted as \emph{true} and $0$ as \emph{false}. 
One can then naturally consider a computational problem with $\Delta_1$-information $\{\Xi,\Omega,\mathcal{M},\Lambda\}^{\Delta_1}$ and ask in which of the sets $\Delta_k^{\alpha}$, $\Sigma_k^{\alpha}$, or $\Pi_k^{\alpha}$ it lies.

\subsection{Theorem \ref{thm:Smales9} in the SCI language}

Recall that, for $k \in \mathbb{N} \cup \{\infty\}$ and $M \in \real$, our decision problem of interest is to decide whether there is an $x \in \mathbb{R}^N$ such that
\begin{equation}\label{eq:Smales9restatement}
	\langle x , c \rangle_k \leq M \text{ subject to } Ax = y, \quad x \geq 0\text{ coordinatewise},
\end{equation}
where 
$
\langle x , c \rangle_k := \lfloor  10^{k} \langle x , c \rangle \rfloor  10^{-k}
$
with $c$ set to $(1,\hdots, 1) \in \mathbb{R}^N$ for each fixed $N$ as in  \S\ref{sec:Thrm_in_SCI}, corresponding to a computational problem with solution map
\begin{equation}\label{eq:Xi_K2}
	\Xi_k : \Omega \rightarrow \{0,1\},  \quad \Xi_k(\iota)=\begin{cases} 1, & \exists \, x\in\real^N \text{ satisfying } \eqref{eq:Smales9restatement}\\ 0, & \text{else} \end{cases},
\end{equation}
and set  $\Omega$ of inputs of the form $\iota=(y,A)\in\real^m\times \real^{m\times N}$. Now, for $k\in\mathbb{N}$, define the set
\begin{equation}\label{eq:The9Sets}
\SetNine_k:=\bigcup_{n\in\mathbb{N}\cup\{0\}}\big[(n+0.9)\cdot 10^{-(k-1)}, (n+1)\cdot10^{-(k-1)}\big) ,
\end{equation}
in other words, $\SetNine_k$ is the set of positive real numbers whose $k$-th digit after the decimal point is $9$. Note that, for all $k\in\mathbb{N}$, we have
\begin{equation*}
M\in \SetNine_k \;\implies\; \{x\in\real^N \,\vert\, \langle x , c \rangle_k\leq M\} = \{x\in\real^N \,\vert\, \langle x , c \rangle_{k-1}\leq M\} \;\implies\; \Xi_k=\Xi_{k-1},
\end{equation*}
and therefore the computational problem \eqref{eq:Xi_K2} is indistinguishable for $k=K$ and $k=K-1$ whenever $M\in\SetNine_K$. When this is not the case, the following proposition shows that the problem for $k=K$ and $k=K-1$ can have fundamentally different computational properties on the same set of inputs.

\begin{proposition}\label{prop:SmalesNinthProposition}
Consider a real $M\geq 0$ and an integer $K  \geq 2$. There exist a set of inputs
	\begin{equation*}
		\Omega = \bigcup_{m<N } \Omega_{m,N}\text{ so that }  \quad \Xi_k:  \Omega_{m,N} \rightarrow \{0,1\},
	\end{equation*}
	where $\Omega_{m,N}$ is defined as in \eqref{eq:Omega_mN} and sets of $\Delta_1$-information $\hat\Lambda_{m,N}\in \mathcal{L}^1(\Lambda_{m,N})$ such that
	\begin{itemize}[leftmargin=8mm]
		\item[(i)] $\{\Xi_K,\Omega_{m,N},\{0,1\},\hat\Lambda_{m,N}\} \notin \Sigma_1^G$, for all $m,N\in\mathbb{N}$ with $m<N$, and
		\item[(ii)] for $\{\Xi_K,\Omega_{m,N},\{0,1\},\hat\Lambda_{m,N}\}$ with any $m,N\in\mathbb{N}$ such that $m<N$, we have $\epsilon_{\mathbb{P}\mathrm{B}}^{\mathrm{s}}(\mathrm{p}) \geq \frac{1}{2}$, for all $\mathrm{p} > \frac{1}{2}$. 
\end{itemize}
Additionally, depending on whether $M$ is in one of $\SetNine_{K}$ or $\SetNine_{K-1}$, one or both of the following hold as well:
\begin{itemize}[leftmargin=8mm]
		\item[(iii)] 		
		For $\{\Xi_{K-1},\Omega_{m,N},\{0,1\},\hat\Lambda_{m,N}\}$ with any $m,N\in\mathbb{N}$ such that $m<N$,  we have $\epsilon_{\mathbb{P}\mathrm{B}}^{\mathrm{w}}(\mathrm{p}) \geq \frac{1}{2}$, for all $\mathrm{p} > \frac{1}{2}$.	\label{item:SNinthThirdPart}
		However, when considering the problems 
		\[
		\{\Xi_{K-1},\Omega_{m,N},\{0,1\},\Lambda_{m,N}\}^{\Delta_1}=\{\tilde\Xi_{K-1},\tilde\Omega_{m,N},\{0,1\},\tilde\Lambda_{m,N}\},
\]
		 there exists an algorithm $\Gamma$ (a Turing machine or a BSS machine) that takes the dimensions $m$, $N$ and any $\tilde\iota \in \tilde\Omega_{m,N}$ as input and satisfies
		\[
		\Gamma(m, N, \tilde \iota) = \tilde\Xi_{K-1}(\tilde\iota).
		\]
		\item[(iv)] Considering the problems $\{\Xi_{K-2},\Omega_{m,N},\{0,1\},\Lambda_{m,N}\}^{\Delta_1}=\{\tilde\Xi_{K-2},\tilde\Omega_{m,N},\{0,1\},\tilde\Lambda_{m,N}\}$, there exists a polynomial $\mathrm{pol}: \mathbb{R} \rightarrow \mathbb{R}$ and an algorithm $\Gamma$ (a Turing machine or a BSS machine) that takes the dimensions $m$, $N$ and any $\tilde\iota \in \tilde\Omega_{m,N}$ as input and satisfies
		\[
		\Gamma(m, N, \tilde \iota) = \tilde\Xi_{K-2}(\tilde\iota).
		\]
		Furthermore, in the Turing case, the arithmetic runtime and the space complexity of the Turing machine are bounded by $\mathrm{pol}(n_{\mathrm{var}})$, where $n_{\mathrm{var}}=mN+m$ is the number of variables, and the number of digits read from the oracle tape is bounded by $\mathrm{pol}(\log(n_{\mathrm{var}}))$. In particular, the runtime in the Turing model is polynomial in $n_{\mathrm{var}}$. In the BSS case, the runtime is likewise bounded by $\mathrm{pol}(n_{\mathrm{var}})$.
	\end{itemize}
	Concretely, (iii) holds whenever $M\notin\SetNine_{K}$, whereas (iv) holds whenever $M\notin\SetNine_{K-1}$.
\end{proposition}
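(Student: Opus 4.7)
The strategy is to construct, for each $(m,N)$ with $m<N$, a class of linear programs whose feasibility is thin (restricted to a lower-dimensional subset of $\real^m$) and whose optimum value $v(\iota):=\min\{\langle x,c\rangle : Ax=y,\, x\geq 0\}$ is tunable near the critical thresholds $M^{(k)}:=(\lfloor 10^k M\rfloor+1)/10^k$. The key identity $\langle x,c\rangle_k\leq M\iff \langle x,c\rangle<M^{(k)}$ yields $\Xi_k(\iota)=1$ iff the LP is feasible with $v(\iota)<M^{(k)}$. The hypotheses $M\notin \SetNine_K$ and $M\notin\SetNine_{K-1}$ translate into the gaps $M^{(K-1)}-M^{(K)}\geq 10^{-K}$ and $M^{(K-2)}-M^{(K-1)}\geq 10^{-(K-1)}$ respectively, which drive the separation between the hardness of $\Xi_K$, $\Xi_{K-1}$ and $\Xi_{K-2}$.

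For the construction, I choose $A\in\real^{m\times N}$ whose columns all equal $(1,-1,0,\ldots,0)^T\in\real^m$; then $Ax=y$, $x\geq 0$ is feasible iff $y=(t,-t,0,\ldots,0)^T$ with $t\geq 0$, and in that case $v(\iota)=t=y_1$. I define anchor inputs $\iota^\ast_A=(y_1^A,-y_1^A,0,\ldots,0)$ with $y_1^A$ a dyadic rational with $y_1^A<M^{(K)}$ and close to $M^{(K)}$, and $\iota^\ast_B=(y_1^B,-y_1^B,0,\ldots,0)$ with $y_1^B$ dyadic, $y_1^B\neq M^{(K-1)}$, close to $M^{(K-1)}$. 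Around each anchor I adjoin the perturbed inputs $\iota^{A,n}=(y_1^A,-y_1^A+2^{-n},0,\ldots,0)$ and $\iota^{B,n}=(y_1^B,-y_1^B+2^{-n},0,\ldots,0)$ for $n\in\mathbb{N}$, which are infeasible since $y_1+y_2=2^{-n}>0$. I take $\Omega_{m,N}$ to be the union of anchors and perturbations, and $\hat\Lambda_{m,N}$ to be the nearest-dyadic rounding $f_{j,n}(\iota):=\lfloor 2^n\iota_j+1/2\rfloor\cdot 2^{-n}$. Because the anchor coordinates are dyadic, queries at precision $n'\leq n-2$ cannot distinguish $\iota^{\ast,\bullet}$ from $\iota^{\bullet,n}$.

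For Part (i), I argue by contradiction. If $\{\Gamma_j\}$ were a tower of general algorithms with $\Gamma_j(\iota)\nearrow \Xi_K(\iota)$ on $\Omega_{m,N}$, then $\Gamma_{j^\ast}(\iota^\ast_A)=1$ for some $j^\ast$, using only finitely many $\hat\Lambda$-queries up to some precision $N_{\max}$. Choosing any $n>N_{\max}+2$, the perturbed input $\iota^{A,n}\in\Omega_{m,N}$ agrees with $\iota^\ast_A$ on every queried $f_{j,n'}$, so Definition~\ref{definition:Algorithm} forces $\Gamma_{j^\ast}(\iota^{A,n})=\Gamma_{j^\ast}(\iota^\ast_A)=1$; but $\iota^{A,n}$ is infeasible, so $\Xi_K(\iota^{A,n})=0$, contradicting the monotonicity $\Gamma_{j^\ast}\leq \Xi_K$. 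For Part (ii), the same pair drives the probabilistic strong breakdown: by property (P\,iii) of Definition~\ref{definition:ProbablisticAlgorithm}, for any RGA $\Gamma^{\mathrm{ran}}$, the probability of any event determined by the shared $\hat\Lambda$-queries of $(\iota^\ast_A,\iota^{A,n})$ coincides on both inputs, so since $\Xi_K(\iota^\ast_A)\neq \Xi_K(\iota^{A,n})$, one of the two inputs incurs error probability at least $1/2$, yielding $\strbdepsp(\mathrm{p})\geq 1/2$ for every $\mathrm{p}>1/2$.

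Part (iii) (when $M\notin\SetNine_K$) handles $\Xi_{K-1}$: its weak breakdown follows by replaying the above argument on the pair $(\iota^\ast_B,\iota^{B,n})$, noting $\Xi_{K-1}(\iota^\ast_B)=1$ and $\Xi_{K-1}(\iota^{B,n})=0$. Computability of $\tilde\Xi_{K-1}$ on $\tilde\Omega_{m,N}$ is established by an algorithm that queries $\tilde\Lambda$ at increasing precision $n$ and certifies either (a) infeasibility via the Farkas dual vector $\mu=(1,1,0,\ldots,0)$ whenever $\tilde y_1+\tilde y_2>2\cdot 2^{-n}$ (outputting $0$), or (b) feasibility with $v<M^{(K-1)}$ whenever $|\tilde y_1+\tilde y_2|\leq 2\cdot 2^{-n}$ and $\tilde y_1+2^{-n}<M^{(K-1)}$ (outputting $1$). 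Since every $\iota\in\Omega$ is either strictly infeasible with a positive margin $2^{-n_0}$ or feasible with $y_1\neq M^{(K-1)}$, one of the certificates triggers at finite $n$. For Part (iv) (when $M\notin\SetNine_{K-1}$), the gap $M^{(K-2)}-M^{(K-1)}\geq 10^{-(K-1)}$ guarantees every feasible $\iota\in\Omega_{m,N}$ has $v(\iota)<M^{(K-2)}$, so $\Xi_{K-2}=0$ only on the infeasible perturbations; the polynomial-time algorithm is a truncation of the algorithm above to $\mathrm{pol}(\log n_{\mathrm{var}})$ digits of precision, certifying infeasibility by the same Farkas check. The main technical obstacle, and the heart of the proof, is the careful design of $\Omega_{m,N}$ so that the adversary arguments of (i)–(iii) succeed at every fixed $(m,N)$ on infinitely many perturbations while the polylog-precision Farkas check of (iv) correctly detects all admissible perturbations; this is achieved by enumerating the perturbation exponents $n$ in a manner dependent on $(m,N)$ that balances these competing requirements.
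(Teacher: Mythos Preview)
Your construction has a genuine flaw that prevents parts (iii) and (iv) from going through on the same $\Omega_{m,N}$ used for (i)--(ii). The problem is your choice to use \emph{infeasibility} as the mechanism producing $\Xi_K=0$: the perturbed inputs $\iota^{A,n}$ are infeasible, so $\Xi_k(\iota^{A,n})=0$ for \emph{every} $k$, in particular for $k=K-1$ and $k=K-2$. Since $\iota^\ast_A$ is feasible with $y_1^A<M^{(K)}\leq M^{(K-1)}\leq M^{(K-2)}$, we have $\Xi_{K-1}(\iota^\ast_A)=\Xi_{K-2}(\iota^\ast_A)=1$. Hence the very same indistinguishability argument you use for (i)--(ii) applies verbatim to $\Xi_{K-1}$ and $\Xi_{K-2}$ on the $A$-family, giving $\epsilon_{\mathbb{P}\mathrm{B}}^{\mathrm{s}}(\mathrm{p})\geq 1/2$ for those maps as well. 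By the implication (i)$\Rightarrow$(ii)$\Rightarrow$(iii) in \S\ref{sec:different-impossibility-results}, this forces $\{\Xi_{K-1},\Omega_{m,N},\{0,1\},\Lambda_{m,N}\}^{\Delta_1}\notin\Delta_1^G$, directly contradicting the algorithm you claim in part (iii), and likewise for part (iv).

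Concretely, your proposed algorithm for (iii) fails on the $A$-family: for $\iota^{A,n}$ with $n$ large, an adversarial oracle may return $\tilde y_2=-y_1^A$ at every precision $n'\leq n$ (this is valid $\Delta_1$-information since $|2^{-n}|\leq 2^{-n'}$). At such $n'$ your condition (b) $|\tilde y_1+\tilde y_2|\leq 2\cdot 2^{-n'}$ and $\tilde y_1+2^{-n'}<M^{(K-1)}$ triggers long before (a) can detect infeasibility, so you output $1$ on an input with $\Xi_{K-1}=0$. Your final paragraph acknowledges this tension but the proposed fix (``enumerating perturbation exponents in a manner dependent on $(m,N)$'') cannot work: for fixed $(m,N)$ you need perturbations of arbitrarily small magnitude to defeat \emph{all} general algorithms in (i)--(ii), yet any such family makes the infeasibility undetectable at bounded precision. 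The paper sidesteps this entirely by keeping every input \emph{feasible} and encoding $\Xi_K=0$ via the objective value hitting the threshold exactly ($y_1/\alpha=M_K$); then the strong family automatically has $\Xi_{K-1}\equiv 1$ (since $y_1/\alpha\leq M_K<M_{K-1}$ when $M\notin\SetNine_K$), so the hardness does not leak down to $\Xi_{K-1}$ or $\Xi_{K-2}$.
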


\section{Tools for proving impossibility results -- Breakdown epsilons and exit flags}\label{sec:main_tools}
The main tools can be summed up in the following three statements. We commence with a proposition describing the relationship between the different Breakdown epsilons.

\subsection{The key mechanisms for the Breakdown epsilons }

The connection between the different Breakdown epsilons can be summarised in Proposition \ref{lemma:BreakdownLinks} further below.

\begin{proposition}\label{lemma:BreakdownLinks}
	Given a computational problem  $\{\Xi,\Omega,\mathcal{M},\Lambda\}$ with $\Lambda=\{f_k\, \vert \, k\in\mathbb{N}, k\leq |\Lambda|\}$, and $p,q \in (0,1)$ with $p \leq q$, we have 
	\begin{align}
		\epsilon_{\mathbb{P}\mathrm{B}}^{\mathrm{s}}(q) &\leq \epsilon_{\mathbb{P}\mathrm{B}}^{\mathrm{s}}(p) \leq \epsilon^{\mathrm{s}}_{\mathrm{B}}, \label{first}
		\\
		\epsilon_{\mathbb{P}\mathrm{B}}^{\mathrm{w}}(q) &\leq \epsilon_{\mathbb{P}\mathrm{B}}^{\mathrm{w}}(p) \leq \epsilon^{\mathrm{w}}_{\mathrm{B}}, \label{second}
		\\
		\epsilon_{\mathbb{P} h \mathrm{B}}^{\mathrm{s}}(p) & \leq \epsilon_{\mathbb{P}\mathrm{B}}^{\mathrm{s}}(p) \leq \epsilon_{\mathbb{P}\mathrm{B}}^{\mathrm{w}}(p),\quad  \text{and }\label{fourth}
		\\
		\epsilon^{\mathrm{s}}_{\mathrm{B}} &\leq \epsilon^{\mathrm{w}}_{\mathrm{B}}. \label{fifth}
	\end{align}
\end{proposition}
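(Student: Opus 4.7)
I plan to prove Proposition \ref{lemma:BreakdownLinks} by reducing each of the four displayed inequalities to an elementary containment or implication between the sets appearing inside the relevant suprema. No new algorithm is constructed; every step amounts to one of the following four observations: (a) the implication $\pr>q \Rightarrow \pr>p$ for $p\leq q$; (b) the event containment $\{\disM>\epsilon\}\subseteq \{\disM>\epsilon\}\cup\{T>M\}$; (c) the Dirac embedding of deterministic general algorithms into $\mathrm{RGA}$; and (d) the $\nh$-replacement converting an arbitrary element of $\mathrm{RGA}$ into one of $\mathrm{hRGA}$ with pointwise-dominated failure probability.

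The monotonicity halves of \eqref{first} and \eqref{second} both follow from (a): given $p\leq q$, if $\epsilon$ lies in the defining set of $\epsilon_{\mathbb{P}\mathrm{B}}^{\mathrm{s}}(q)$, the same $\iota$ witnessing $\pr_\iota(\cdot)>q$ certifies $\pr_\iota(\cdot)>p$, so $\epsilon$ lies in the defining set of $\epsilon_{\mathbb{P}\mathrm{B}}^{\mathrm{s}}(p)$; the weak version is verbatim with the compound event $\{\disM>\epsilon\}\cup\{T_\gprob>M\}$ in place of $\{\disM>\epsilon\}$. The upper bounds $\epsilon_{\mathbb{P}\mathrm{B}}^{\mathrm{s}}(p)\leq\epsilon^{\mathrm{s}}_{\mathrm{B}}$ and $\epsilon_{\mathbb{P}\mathrm{B}}^{\mathrm{w}}(p)\leq\epsilon^{\mathrm{w}}_{\mathrm{B}}$ use (c): for any general algorithm $\Gamma$, the triple $X=\{\Gamma\}$, $\mathcal{F}=2^X$, $\pr_\iota=\delta_\Gamma$ is a valid RGA (conditions \ref{property:PAlgorithmMeasurable}--\ref{property:PAlgorithmConsistent} of Definition \ref{definition:ProbablisticAlgorithm} are immediate and $T_\gprob=T_\Gamma$ pointwise). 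For any $\epsilon>\epsilon^{\mathrm{s}}_{\mathrm{B}}$, a deterministic witness $\Gamma^*$ with $\disM(\Gamma^*(\iota),\Xi(\iota))\leq\epsilon$ on every $\iota$ then induces a Dirac RGA of zero strong-failure probability, placing $\epsilon$ outside the defining set of $\epsilon_{\mathbb{P}\mathrm{B}}^{\mathrm{s}}(p)$; letting $\epsilon\searrow\epsilon^{\mathrm{s}}_{\mathrm{B}}$ yields the bound. The weak version is identical, additionally demanding that $\sup_\iota T_{\Gamma^*}(\iota)<\infty$ and taking $M$ above this supremum so that the entire weak failure event has probability $0$.

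Inequality \eqref{fifth} is the one-line deterministic application of (b), and the first half of \eqref{fourth}, $\epsilon_{\mathbb{P}\mathrm{B}}^{\mathrm{s}}(p)\leq\epsilon_{\mathbb{P}\mathrm{B}}^{\mathrm{w}}(p)$, is its probabilistic analogue: from $\pr_\iota(\disM>\epsilon)\leq \pr_\iota(\{\disM>\epsilon\}\cup\{T_\gprob>M\})$, every strong witness is a weak witness for every $M$, giving the containment of defining sets.

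The remaining comparison $\epsilon_{\mathbb{P} h \mathrm{B}}^{\mathrm{s}}(p)\leq\epsilon_{\mathbb{P}\mathrm{B}}^{\mathrm{s}}(p)$ is the step I would treat with the most care, and is where the main subtlety of the proof lies. The plan is to use (d): given any $\gprob\in\mathrm{RGA}$ and a fixed $x\in\mathcal{M}$, replacing the $\nh$-mass of $\gprob$ by the point mass at $x$ yields an $\mathrm{hRGA}$ $\gprob_x$ with failure probability pointwise dominated by that of $\gprob$, since $d_{\mathcal{M}}(\nh,\cdot)=\infty$ always counts as failure whereas $\disM(x,\Xi(\iota))$ may not. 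Therefore, if the defining condition for $\epsilon_{\mathbb{P} h \mathrm{B}}^{\mathrm{s}}(p)$ holds at $\epsilon$, applying it to $\gprob_x$ produces a witness $\iota^*$ with $\pr_{\gprob_x,\iota^*}(\disM>\epsilon)>p$, and the same $\iota^*$ works for $\gprob$ by pointwise domination, placing $\epsilon$ in the defining set of $\epsilon_{\mathbb{P}\mathrm{B}}^{\mathrm{s}}(p)$. The main technical obstacle in the full write-up is the measurability bookkeeping required to certify that the Dirac embedding in (c) and the $\nh$-replacement in (d) actually produce elements of $\mathrm{RGA}$/$\mathrm{hRGA}$ in the sense of Definition \ref{definition:ProbablisticAlgorithm}, and that in (d) the modified maps still satisfy conditions \ref{property:AlgorithmFiniteInput}--\ref{property:AlgorithmSameInputSameInputTaken} of Definition \ref{definition:Algorithm}; the latter requires choosing $\Lambda_{\Gamma_x}(\iota)$ consistently across inputs where the original $\Gamma$ would not have halted. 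Modulo this bookkeeping, the proof is a sequence of one-line set-containment arguments.
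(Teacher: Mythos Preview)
For \eqref{first}, \eqref{second}, \eqref{fifth}, and the second inequality $\epsilon_{\mathbb{P}\mathrm{B}}^{\mathrm{s}}(p)\leq\epsilon_{\mathbb{P}\mathrm{B}}^{\mathrm{w}}(p)$ in \eqref{fourth}, your approach coincides with the paper's. The paper proves the right half of \eqref{first} by exactly your Dirac embedding (c), and for \eqref{second}, \eqref{fourth}, \eqref{fifth} it simply writes that they ``follow directly from the definitions''; your (a) and (b) make explicit the monotonicity and event-containment that lie behind those assertions.

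The one place you diverge is the first inequality in \eqref{fourth}, $\epsilon_{\mathbb{P}h\mathrm{B}}^{\mathrm{s}}(p)\leq\epsilon_{\mathbb{P}\mathrm{B}}^{\mathrm{s}}(p)$, and here your $\nh$-replacement (d) has a genuine gap rather than mere bookkeeping. If $\Gamma(\iota_0)=\nh$ with $|\Lambda_\Gamma(\iota_0)|=\infty$, then assigning $\Gamma_x(\iota_0)=x$ with any finite $\Lambda_{\Gamma_x}(\iota_0)$ forces, via condition~(iii) of Definition~\ref{definition:Algorithm}, the same output $x$ at every $\iota'$ that agrees with $\iota_0$ on that finite set --- including inputs on which the original $\Gamma$ halted with a \emph{correct} value. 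This can raise the failure probability at such $\iota'$ and destroy the pointwise domination you rely on. In fact the inequality as printed points the wrong way: since $\mathrm{hRGA}\subset\mathrm{RGA}$, the quantifier ``for all $\gprob\in\mathrm{RGA}$'' is the stronger hypothesis, so what follows directly from the definitions is $\epsilon_{\mathbb{P}\mathrm{B}}^{\mathrm{s}}(p)\leq\epsilon_{\mathbb{P}h\mathrm{B}}^{\mathrm{s}}(p)$, and the paper's own Proposition~\ref{Cor:main_SCI} together with the non-halting algorithm of \S\ref{sec:cor_main(i)(ii)} exhibits problems on which this is strict for $p\in[1/3,1/2)$. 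The only downstream use of \eqref{fourth} in the paper (in the proof of Proposition~\ref{prop:DrivingNegativeProposition}) is the consequence $\epsilon_{\mathbb{P}h\mathrm{B}}^{\mathrm{s}}(p)\leq\epsilon_{\mathrm{B}}^{\mathrm{s}}$, and that follows directly from your Dirac embedding (c) applied to $\mathrm{hRGA}$, so nothing else is affected; but your instinct that this step is the subtle one is correct, and the resolution is that it cannot be proved as stated.
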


\begin{proof}[Proof of Proposition \ref{lemma:BreakdownLinks}]
	We start with \eqref{first}, and observe that $\epsilon_{\mathbb{P}\mathrm{B}}^{\mathrm{s}}(q)\leq \epsilon_{\mathbb{P}\mathrm{B}}^{\mathrm{s}}(p)$ follows directly from the definition. To see that $\epsilon_{\mathbb{P}\mathrm{B}}^{\mathrm{s}}(p) \leq \epsilon^{\mathrm{s}}_{\mathrm{B}}$ we argue by contradiction and suppose that $\epsilon_{\mathbb{P}\mathrm{B}}^{\mathrm{s}}(p) > \epsilon^{\mathrm{s}}_{\mathrm{B}}$. Then there exists an $\epsilon > 0$ such that $\epsilon_{\mathbb{P}\mathrm{B}}^{\mathrm{s}}(p) > \epsilon > \epsilon^{\mathrm{s}}_{\mathrm{B}}$. Hence,  
	\begin{equation}\label{eq:contra}
		\forall \, \gprob \in \mathrm{RGA} \,\,\exists \, \iota \in \Omega \text{ such that } \mathbb{P}_{\iota}(\disM(\gprob_{\iota},\Xi(\iota)) > \epsilon) > p.
	\end{equation}
	However, since $\epsilon > \epsilon^{\mathrm{s}}_{\mathrm{B}}$ there exists a general algorithm $\Gamma$ such that for all $\iota \in \Omega$, we have that 
$
\disM(\Gamma(\iota),\Xi(\iota)) \leq \epsilon.
$
 Since $\Gamma$ can be seen as an RGA with $X = \{\Gamma\}$ and $\mathbb{P}_{\iota}(\disM(\Gamma(\iota),\Xi(\iota)) > \epsilon)=0$, which violates \eqref{eq:contra}. Note that the argument to establish \eqref{second} is identical to the proof of \eqref{first}. 
	Finally,  we notice that \eqref{fourth} and \eqref{fifth} follow directly from the definitions.  
\end{proof}

As the next two propositions will rely on the use of $D_\Gamma(\iota)$, the minimum number of digits on the input, together with randomised general algorithms, it will be opportune to immediately state and prove that $D_\Gamma(\iota)$ is a random variable. Concretely, we have the following two lemmas:

\begin{lemma}\label{lemma:RGAindepOfOrdering}
	Let  $\gprob $ be an RGA for a computational problem $\{\Xi, \Omega, \mathcal{M}, \Lambda\}$, where $\Lambda=\{f_j\,\vert\,j\leq |\Lambda|\}$ is countable, and let $X$, $\mathcal{F}$, and $\{\pr_\iota\}_{\iota\in \Omega}$ be as in Definition \ref{definition:ProbablisticAlgorithm}. Then,
	\begin{enumerate}[label = (\roman*)]
		\item for every $\iota\in\Omega$ and every $f\in\Lambda$, $\{\Gamma\in X \,\vert\, f\in \Lambda_\Gamma(\iota)\}\in\mathcal{F}$, and \label{item:GammasUsingfMeasurable}
		\item for every bijection $\theta:\mathbb{N}\to\mathbb{N}$ and every $n\in\mathbb{N}$, we have 
		$
			\{\Gamma\in X \,\vert\,  \Lambda_\Gamma(\iota) \subset\{f_{\theta(1)},\dots, f_{\theta(n)}\}  \}\in\mathcal{F}.
		$
		In particular, $\gprob $ is an RGA for  $\{\Xi, \Omega, \mathcal{M}, \Lambda\}$ independently of the enumeration of $\Lambda$.   \label{item:ChangeEnumerationRV}
	\end{enumerate}
\end{lemma}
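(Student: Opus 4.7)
The plan is to establish (i) first and derive (ii) from it by a countable intersection argument.

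For (ii), once (i) is in hand, fix a bijection $\theta:\mathbb{N}\to\mathbb{N}$ and $n\in\mathbb{N}$, and observe that
\begin{equation*}
\{\Gamma\in X \,\vert\,  \Lambda_\Gamma(\iota) \subset\{f_{\theta(1)},\dots, f_{\theta(n)}\}\} \;=\; \bigcap_{k\in\mathbb{N}\setminus\{\theta(1),\dots,\theta(n)\}} \{\Gamma\in X \,\vert\, f_k\notin\Lambda_\Gamma(\iota)\}.
\end{equation*}
Because $\Lambda$ is countable, this is a countable intersection of complements of sets in $\mathcal{F}$ (by (i)), hence itself in $\mathcal{F}$. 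Setting $g_m := f_{\theta(m)}$, this is precisely the statement that axiom \ref{property:PAlgorithmRTMeasurable} continues to hold for the re-enumeration $g_1,g_2,\ldots$ of $\Lambda$, so $\gprob$ is an RGA independently of the enumeration.

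For (i), fix $f_k\in\Lambda$. The strategy is to decompose $\{f_k\in\Lambda_\Gamma(\iota)\}$ along the measurable partition
\begin{equation*}
X \;=\; \{\Gamma(\iota)=\nh\}\,\cup\,\bigcup_{n\in\mathbb{N}}\{T_\Gamma(\iota)=n\},
\end{equation*}
whose pieces lie in $\mathcal{F}$ by \ref{property:PAlgorithmMeasurable} and \ref{property:PAlgorithmRTMeasurable}, and to refine each piece using the determinism conditions (ii) and (iii) of Definition \ref{definition:Algorithm}. The key link is that whenever $\iota'\in\Omega$ agrees with $\iota$ on every $f\in\Lambda\setminus\{f_k\}$ and $f_k\notin\Lambda_\Gamma(\iota)$, condition (iii) forces $\Lambda_\Gamma(\iota')=\Lambda_\Gamma(\iota)$, and then (ii) yields $\Gamma(\iota')=\Gamma(\iota)$. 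Consequently $\{\Gamma(\iota)\neq\Gamma(\iota')\}\subset\{f_k\in\Lambda_\Gamma(\iota)\}$, and by \ref{property:PAlgorithmMeasurable} the former lies in $\mathcal{F}$.

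The principal obstacle is the reverse inclusion, which generally fails: $\Gamma$ may physically read $f_k$ without its output depending on the value of $f_k$, so a single perturbation need not witness membership. The remedy is to aggregate over a countable separating family $\{\iota^{(j)}\}_{j\in\mathbb{N}}\subset\Omega$ of perturbations that agree with $\iota$ away from $f_k$ (whose existence is guaranteed by the separation property of $\Lambda$ in Definition \ref{definition:ComputationalProblem}), thereby obtaining the measurable set $U:=\bigcup_j\{\Gamma(\iota)\neq\Gamma(\iota^{(j)})\}\subset\{f_k\in\Lambda_\Gamma(\iota)\}$. Within each slice $\{T_\Gamma(\iota)=n\}$ with $n\geq k$, one then leverages the finiteness of $\Lambda_\Gamma(\iota)$ together with the consistency axiom \ref{property:PAlgorithmConsistent} to show that the residual set $\{f_k\in\Lambda_\Gamma(\iota),\,T_\Gamma(\iota)=n\}\setminus U$ can itself be written as a Borel combination of events already known to lie in $\mathcal{F}$. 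A countable union over $n$ together with separate treatment of the non-halting piece $\{\Gamma(\iota)=\nh\}$ (in $\mathcal{F}$ by \ref{property:PAlgorithmMeasurable}) then yields $\{f_k\in\Lambda_\Gamma(\iota)\}\in\mathcal{F}$. This matching step---reconciling the syntactic notion of $f_k$ being read with the semantic notion of affecting the output---is the technical heart of the proof.
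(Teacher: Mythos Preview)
Your argument for (ii) is correct and matches the paper's: once (i) is in hand, the desired set is the complement of a countable union of sets from (i).

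Your approach to (i), however, goes astray. The perturbation scheme requires inputs $\iota^{(j)}\in\Omega$ that agree with $\iota$ on $\Lambda\setminus\{f_k\}$ but differ at $f_k$; nothing in Definition~\ref{definition:ComputationalProblem} guarantees such inputs exist. The separation condition there says only that $\Lambda$ distinguishes distinct elements of $\Omega$, not that $\Omega$ is rich enough to realise arbitrary single-coordinate perturbations (for all you know, $\Omega$ could be a singleton). Your appeal to \ref{property:PAlgorithmConsistent} is also misplaced: that axiom asserts equality of two probabilities on a set already assumed to lie in $\mathcal{F}$; it cannot by itself produce new measurable sets. And the crucial ``residual set'' step is merely asserted, not carried out. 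The paper's proof of (i) avoids all of this and makes no reference to other inputs whatsoever: it works directly from $T_\Gamma(\iota)=\sup\{m:f_m\in\Lambda_\Gamma(\iota)\}$ to express $\{\Gamma:f_j\in\Lambda_\Gamma(\iota)\}$ as the set difference $\{T_\Gamma(\iota)\leq j\}\setminus\{T_\Gamma(\iota)\leq j-1\}$, each piece measurable by \ref{property:PAlgorithmRTMeasurable}. (One may observe that this identity as written only yields the inclusion $\supseteq$, since $\Gamma$ may read $f_j$ while also reading some $f_m$ with $m>j$; but that is an issue with the paper's argument, not with yours.)
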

\begin{proof}
	To prove (i) take arbitrary $\iota\in\Omega$ and $f_j\in\hat\Lambda$. The claim follows from \ref{property:PAlgorithmRTMeasurable} in Definition \ref{definition:ProbablisticAlgorithm} after observing that $\{\Gamma\in X \,\vert\, f_j\in \Lambda_\Gamma(\iota)\}=\{ \Gamma\in X \,\vert\, T_\Gamma(\iota)\leq j\}\setminus \{ \Gamma\in X \,\vert\, T_\Gamma(\iota)\leq j-1\}$. To prove (ii) consider an arbitrary $n\in\mathbb{N}$ and define the set $S=\{\theta(j)\, \vert \, j\in\mathbb{N},  j\leq n\}$. Then, for every $\iota\in\Omega$,
	\begin{equation*}
		\big\{\Gamma\in X \,\vert\, \Lambda_\Gamma(\iota) \subset\{f_{\theta(1)},\dots, f_{\theta(n)}\}  \big\}=X\setminus \bigcup_{j\in \mathbb{N}\setminus S} \{\Gamma\in X \,\vert\, f_{j}\in \Lambda_\Gamma(\iota)\} ,
	\end{equation*}
	which must be an element of $\mathcal{F}$, since each of the sets $\{\Gamma\in X \,\vert\, f_{j}\in \Lambda_\Gamma(\iota)\}$ is in $\mathcal{F}$, by the already established item \ref{item:GammasUsingfMeasurable}, and  $\mathcal{F}$ is a $\sigma$-algebra.
\end{proof}

\begin{lemma}\label{lemma:DGammaArandomVar}
	Let  $\{\Xi, \Omega, \mathcal{M}, \Lambda\}$ be a computational problem with $\Lambda$ countable and let $\hat\Lambda=\{f_{k,m}\,\vert\,k\leq |\Lambda|, m\in \mathbb{N}\}\in \mathcal{L}^1(\Lambda)$ be arbitrary. Furthermore, let $\gprob$ be an RGA for $\{\Xi, \Omega, \mathcal{M}, \hat \Lambda\}$  with $X$ and $\mathcal{F}$ as in Definition \ref{definition:ProbablisticAlgorithm}. Then, for each $\iota\in \Omega$, the function $D_{\gprob}(\iota):X\to \mathbb{N}\cup\{\infty\}$ defined by $\Gamma\mapsto D_{\Gamma}(\iota)$ is $\mathcal{F}$-measurable.\label{item:DGammaRV}
\end{lemma}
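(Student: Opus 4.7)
The plan is to reduce the claim to a statement that follows directly from Lemma \ref{lemma:RGAindepOfOrdering}\ref{item:GammasUsingfMeasurable}. Since $\mathbb{N}\cup\{\infty\}$ is a countable set, the function $D_{\gprob}(\iota)$ will be $\mathcal{F}$-measurable as soon as $\{\Gamma\in X\,\vert\, D_\Gamma(\iota)\leq M\}\in\mathcal{F}$ for every $M\in\mathbb{N}$, so I would aim to establish precisely this.

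Fix $\iota\in\Omega$ and $M\in\mathbb{N}$. Unpacking Definition \ref{def:no-of-input-bits}, the inequality $D_\Gamma(\iota)\leq M$ is equivalent to saying that $\hat\Lambda_\Gamma(\iota)$ contains no evaluation of the form $f_{k,m}$ with $m>M$. In other words, writing $B_M:=\{f_{k,m}\in\hat\Lambda\,\vert\, k\in\beta,\, m>M\}$, we have
\begin{equation*}
\{\Gamma\in X\,\vert\, D_\Gamma(\iota)\leq M\} \;=\; X\setminus \bigcup_{f\in B_M}\{\Gamma\in X\,\vert\, f\in\hat\Lambda_\Gamma(\iota)\}.
\end{equation*}
Because $\Lambda$ is countable (and hence so is $\hat\Lambda$, as $\hat\Lambda$ is indexed by $\beta\times\mathbb{N}$ with $\beta$ an index set for the countable $\Lambda$), the above union is countable. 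By Lemma \ref{lemma:RGAindepOfOrdering}\ref{item:GammasUsingfMeasurable}, each set $\{\Gamma\in X\,\vert\, f\in\hat\Lambda_\Gamma(\iota)\}$ lies in $\mathcal{F}$, and since $\mathcal{F}$ is a $\sigma$-algebra, the right-hand side above belongs to $\mathcal{F}$.

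Having obtained $\{\Gamma\in X\,\vert\, D_\Gamma(\iota)\leq M\}\in\mathcal{F}$ for all $M\in\mathbb{N}$, the remaining preimages are immediate: for each $n\in\mathbb{N}$, $\{\Gamma\in X\,\vert\, D_\Gamma(\iota)=n\}$ is the difference of two such sets and hence in $\mathcal{F}$, while $\{\Gamma\in X\,\vert\, D_\Gamma(\iota)=\infty\}=\bigcap_{M\in\mathbb{N}}\big(X\setminus\{\Gamma\in X\,\vert\, D_\Gamma(\iota)\leq M\}\big)$ is a countable intersection of elements of $\mathcal{F}$. Thus every singleton preimage of $D_{\gprob}(\iota)$ is measurable, which establishes the $\mathcal{F}$-measurability claimed.

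I do not anticipate any serious obstacle here: the only nontrivial input is the measurability of $\{\Gamma\in X\,\vert\, f\in\hat\Lambda_\Gamma(\iota)\}$, which is supplied by Lemma \ref{lemma:RGAindepOfOrdering}\ref{item:GammasUsingfMeasurable}, and the countability of $\hat\Lambda$ that makes the relevant union countable. The small conceptual point worth highlighting is that $D_\Gamma(\iota)$ is defined as a supremum over $m$ rather than involving any enumeration-dependent quantity, so, in contrast to $T_\Gamma(\iota)$, no reordering argument is needed.
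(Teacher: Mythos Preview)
Your proof is correct and follows essentially the same approach as the paper: both argue that $\{\Gamma\in X\,\vert\, D_\Gamma(\iota)\leq M\}$ equals $X$ minus a countable union of the sets $\{\Gamma\in X\,\vert\, f_{k,m}\in\hat\Lambda_\Gamma(\iota)\}$ with $m>M$, invoke Lemma~\ref{lemma:RGAindepOfOrdering}\ref{item:GammasUsingfMeasurable} for the measurability of each such set, and then handle the value $\infty$ by a countable set operation.
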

\begin{proof}
	Consider an arbitrary $n\in\mathbb{N}$ and define the set $S=\{(k,m)\in \mathbb{N}^2\,\vert\, k\leq |\Lambda|, m\leq n\}$. Then, for each $\iota\in\Omega$, we have
	\begin{equation*}
		\begin{aligned}
			\{\Gamma\in X\, \vert\, D_\Gamma(\iota)\leq n\}&= \big\{\Gamma\in X\,\vert\,  \hat\Lambda_\Gamma(\iota)\subset \{f_{k,m}\,\vert\, (k,m)\in S\}  \big\}\\
			&=X \setminus \bigcup_{(k,m)\in \mathbb{N}^2\setminus S} \{\Gamma\in X\,\vert\, f_{k,m}\in \hat\Lambda_\Gamma(\iota)\}\in \mathcal{F},
		\end{aligned}
	\end{equation*}
	since each of the sets $\{\Gamma\in X \,\vert\, f_{k,m}\in \Lambda_\Gamma(\iota)\}$ is in $\mathcal{F}$, by item \ref{item:GammasUsingfMeasurable} of Lemma \ref{lemma:RGAindepOfOrdering}, and  $\mathcal{F}$ is a $\sigma$-algebra. Then clearly 
\[
\{\Gamma\in X\, \vert\, D_\Gamma(\iota)=\infty\}=X\setminus \bigcup_{n\in\mathbb{N}}\{\Gamma\in X\, \vert\, D_\Gamma(\iota)\leq n\}\in\mathcal{F},
\]
 and thus $D_{\gprob}(\iota)$ is measurable, as desired.
\end{proof}

\begin{remark}\label{remark:OmegaSubsetBDE}
	It will be useful throughout the paper to note that any of the breakdown epsilons of a computational problem $\{\Xi, \Omega, \mathcal{M}, \Lambda\}$ is at least as large as the corresponding breakdown epsilon of any other computational problem $\{\Xi, \Omega', \mathcal{M}, \Lambda'\}$ with $\Omega'\subset\Omega$ and $\Lambda'=\{f\mid_{\Omega'}\, \vert f\in\Lambda\}$.
\end{remark}

The next proposition serves as the key building block in the impossibility results in all our main theorems except for Theorem \ref{thm:ExitFlag}. Note that the proposition is about arbitrary computational problems, and is hence also a tool for demonstrating lower bounds on the breakdown epsilons for general computational problems. 
\begin{proposition}\label{prop:DrivingNegativeProposition}
	Let $\{\Xi, \Omega, \mathcal{M}, \Lambda\}$ be a computational problem with $\Lambda=\{f_k\,\vert\,k\in\mathbb{N}, k\leq|\Lambda| \}$ countable, and let $\{\iota^1_n\}_{n=1}^{\infty}$, $\{\iota^2_n\}_{n=1}^{\infty}$ be sequences in $\Omega$. Consider the following conditions:
	\begin{enumerate}[leftmargin=8mm, label=(\alph*)]
		\item There are sets $S^1, S^2 \subset \mathcal{M}$ and $\kappa > 0$ such that
		$
		\inf_{x_1 \in S^1, x_2 \in S^2}d_{\mathcal{M}}(x_1,x_2) \geq \kappa
		$ and $\Xi(\iota^j_n) \subset S^j$ for $j=1,2$. \label{property:MinimisersOfNonZero}
		\item For every $k\leq |\Lambda|$ there is a $c_k \in \mathbb{C}$ such that $|f_k(\iota^j_n) - c_k|\leq 1/4^n$, for all $j = 1,2$ and $n \in\mathbb{N}$. \label{property:Del1Info}
		\item There is an $\iota^0 \in \Omega$ such that for every $ k\leq|\Lambda|$ we have that \ref{property:Del1Info} is satisfied with $c_k = f_k(\iota^0)$. \label{property:Del1withIota0}
		\item There is an $\iota^0 \in \Omega$ for which condition \ref{property:Del1withIota0} holds and additionally $\iota_n^1=\iota^0$, for all $n\in\mathbb{N}$. \label{property:SameWithSubsetS1}
		\item $\Xi$ is single-valued, $\mathcal{M}$ is a totally ordered set, and $\inf \opBall{\delta}{S^1} > \sup S^2$, for some $\delta > 0$.\label{property:MtotallyOrdered}
	\end{enumerate}
	Depending on which of the conditions \ref{property:MinimisersOfNonZero} -- \ref{property:MtotallyOrdered} are fulfilled, there exists a $\hat\Lambda\in \mathcal{L}^1(\Lambda)$
	such that some of the following claims about the computational problem $\{\Xi,\Omega,\mathcal{M},\hat\Lambda\}$ hold:
	\begin{enumerate}[leftmargin=8mm, label = (\roman*)]
		\item \label{item:SequenceCounterexampleL1WeakBDEps}
		$\epsilon_{\mathrm{B}}^{\mathrm{w}} \geq \epsilon_{\mathbb{P}\mathrm{B}}^{\mathrm{w}}(\mathrm{p}) \geq \kappa/2$ for $\mathrm{p} \in [0,1/2)$,
		\item \label{item:SequenceCounterexampleL1StrongBDEps}
		$\epsilon^{\mathrm{s}}_{\mathrm{B}} \geq \strbdepsph \geq \kappa/2 $ for $\mathrm{p} \in [0,1/2)$ and $\strbdepsp \geq \kappa/2$ for $\mathrm{p} \in [0,1/3)$,
		\item \label{item:SequenceCounterexampleL1StrongHaltEps}
		$\strbdepsp \geq \kappa/2$ for $\mathrm{p} \in [0,1/2)$. 
		\item\label{item:SequenceCounterexampleL1NoSigmaNoPi}  $\{\Xi,\Omega,\mathcal{M},\hat\Lambda\} \notin \Sigma^G_{1}$. 			 
	\end{enumerate}
	Concretely,
	if \ref{property:MinimisersOfNonZero} and \ref{property:Del1Info} are fulfilled, then \ref{item:SequenceCounterexampleL1WeakBDEps} holds,
	if \ref{property:MinimisersOfNonZero} -- \ref{property:Del1withIota0} are fulfilled, then \ref{item:SequenceCounterexampleL1WeakBDEps} and \ref{item:SequenceCounterexampleL1StrongBDEps} hold,
	if \ref{property:MinimisersOfNonZero} -- \ref{property:SameWithSubsetS1} are fulfilled, then \ref{item:SequenceCounterexampleL1WeakBDEps} -- \ref{item:SequenceCounterexampleL1StrongHaltEps} hold, and finally,
	if \ref{property:MinimisersOfNonZero} -- \ref{property:MtotallyOrdered} are fulfilled, then \ref{item:SequenceCounterexampleL1WeakBDEps} -- \ref{item:SequenceCounterexampleL1NoSigmaNoPi} hold.
\end{proposition}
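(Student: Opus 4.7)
The plan is to build a $\hat\Lambda\in\mathcal{L}^1(\Lambda)$ so that $\iota^1_N$ and $\iota^2_N$ (and, when available, $\iota^0$) share identical $\Delta_1$-information at all precision levels $n\leq N$, and then couple algorithms across these near-indistinguishable inputs. Using \ref{property:Del1Info}, for each $k$ pick a dyadic $a_{k,n}\in\dyadic_n$ with $|a_{k,n}-c_k|\leq 2^{-n-1}$ and define $f_{k,n}(\iota^j_N)=a_{k,n}$ for $n\leq N$, $j=1,2$, while for $n>N$ let $f_{k,n}(\iota^j_N)$ be the nearest dyadic in $\dyadic_n$ to $f_k(\iota^j_N)$. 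Under \ref{property:Del1withIota0} (or \ref{property:SameWithSubsetS1}), also set $f_{k,n}(\iota^0)=a_{k,n}$; on remaining inputs, $f_{k,n}$ is any valid $\Delta_1$-approximation. The triangle inequality with $|f_k(\iota^j_N)-c_k|\leq 4^{-N}\leq 2^{-n-1}$ for $n\leq N$ confirms that $\hat\Lambda$ is genuine $\Delta_1$-information.

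For \ref{item:SequenceCounterexampleL1WeakBDEps}, I fix any general algorithm $\Gamma$ and $M\in\mathbb{N}$, and let $\pi(M)$ denote the maximal precision index among the first $M$ evaluations in $\hat\Lambda$. For $N\geq \pi(M)$, on the event $\{T_\Gamma(\iota^1_N)\leq M\}$ the algorithm reads only values that agree on $\iota^1_N$ and $\iota^2_N$, so Definition \ref{definition:Algorithm}\ref{property:AlgorithmSameInputSameInputTaken}--\ref{property:AlgorithmDependenceOnInput} yields $\Gamma(\iota^1_N)=\Gamma(\iota^2_N)$, and the disjointness of $\opBall{\kappa/2}{S^1}$ and $\opBall{\kappa/2}{S^2}$ from \ref{property:MinimisersOfNonZero} forces failure of size $\geq\kappa/2$ on at least one of $\iota^1_N,\iota^2_N$; the case $T_\Gamma(\iota^1_N)>M$ is immediate. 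The randomised version uses the measurable event $E=\{T_{\gprob}(\iota^1_N)\leq M\}$ (Lemma \ref{lemma:RGAindepOfOrdering}), which, on our $\hat\Lambda$, coincides with $\{T_{\gprob}(\iota^2_N)\leq M\}$, together with the consistency axiom \ref{property:PAlgorithmConsistent} to get $\pr_{\iota^1_N}|_E=\pr_{\iota^2_N}|_E$ on output-measurable events; the disjoint-ball argument then makes the two ``success'' probabilities sum to at most $1$, yielding $\epsilon^{\mathrm{w}}_{\mathbb{P}\mathrm{B}}(\mathrm{p})\geq\kappa/2$ for $\mathrm{p}<1/2$.

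For \ref{item:SequenceCounterexampleL1StrongBDEps} and \ref{item:SequenceCounterexampleL1StrongHaltEps}, the anchor $\iota^0$ of \ref{property:Del1withIota0} is decisive. Let $q_j=\pr_{\iota^0}(\gprob_{\iota^0}\in\opBall{\kappa/2}{S^j})$, $q_{NH}=\pr_{\iota^0}(\gprob_{\iota^0}=\nh)$, and $q_0=1-q_1-q_2$; note $q_{NH}\leq q_0$ since $D_\Gamma(\iota^0)=\infty$ forces the $\nh$ output. Coupling via $E_N=\{D_\Gamma(\iota^0)\leq N\}$ (measurable by Lemma \ref{lemma:DGammaArandomVar}) together with $\pr_{\iota^0}(E_N)\to 1-q_{NH}$ gives $\liminf_N \pr_{\iota^j_N}(\gprob_{\iota^j_N}\notin \opBall{\kappa/2}{S^j})\geq 1-q_j-q_{NH}$ for $j=1,2$. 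Summing yields $1+q_0-2q_{NH}\geq 1-q_{NH}$, so either some $\iota^j_N$ realises failure probability $>(1-q_{NH})/2$, or (when $q_{NH}$ is large) $\iota^0$ itself does since $\nh$ is at infinite distance from $\Xi(\iota^0)$; a short case split delivers $\epsilon^{\mathrm{s}}_{\mathbb{P}\mathrm{B}}(\mathrm{p})\geq\kappa/2$ for $\mathrm{p}<1/3$. Halting RGAs ($q_{NH}=0$) boost the sum to $\geq 1$, improving to $\mathrm{p}<1/2$; under \ref{property:SameWithSubsetS1}, $\Xi(\iota^0)\subset S^1$ directly contributes $\pr_{\iota^0}(\text{fail})\geq 1-q_1$, so the corresponding sum is $\geq 1$ in the non-halting case too, giving $\mathrm{p}<1/2$. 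The deterministic $\epsilon^{\mathrm{s}}_{\mathrm{B}}\geq\kappa/2$ follows by noting $\Gamma(\iota^0)$ is at distance $>\kappa/2$ from at least one $S^j$ and transferring to the corresponding $\iota^j_N$.

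For \ref{item:SequenceCounterexampleL1NoSigmaNoPi} I argue by contradiction: a $\Sigma^G_1$-tower $\{\Gamma_n\}$ with $\Gamma_n\nearrow \Xi$ would, by \ref{property:MtotallyOrdered} and \ref{property:SameWithSubsetS1} (giving $\Xi(\iota^0)\geq\inf\opBall{\delta}{S^1}>\sup S^2$), admit some $n_0$ with $\Gamma_{n_0}(\iota^0)>\sup S^2$; choosing $N$ to exceed every precision index in the finite set $\hat\Lambda_{\Gamma_{n_0}}(\iota^0)$ forces $\Gamma_{n_0}(\iota^2_N)=\Gamma_{n_0}(\iota^0)>\sup S^2\geq \Xi(\iota^2_N)$, violating $\Gamma_n\leq\Xi$. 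The main obstacle throughout is the probabilistic bookkeeping --- correctly handling the $\nh$ mass for non-halting RGAs and verifying that the coupling events live in $\mathcal{F}$ --- for which Lemmas \ref{lemma:RGAindepOfOrdering} and \ref{lemma:DGammaArandomVar} do the heavy lifting; the deterministic parts and the $\Sigma_1$-argument then follow directly from the $\hat\Lambda$-construction.
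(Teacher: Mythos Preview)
Your proof is correct and follows essentially the same route as the paper: build $\hat\Lambda$ so that $\iota^1_N$, $\iota^2_N$ (and $\iota^0$ when present) are indistinguishable at all precision levels $\leq N$, then couple algorithms across these inputs via properties \ref{property:AlgorithmDependenceOnInput}--\ref{property:AlgorithmSameInputSameInputTaken} and the consistency axiom \ref{property:PAlgorithmConsistent}. Your probabilistic bookkeeping for parts \ref{item:SequenceCounterexampleL1StrongBDEps}--\ref{item:SequenceCounterexampleL1StrongHaltEps} via the decomposition $q_1,q_2,q_0,q_{NH}$ of the output law at $\iota^0$ is a somewhat cleaner repackaging of the paper's argument, which instead manipulates the events $\mathcal{G}^n(\iota_0)=\{D_\Gamma(\iota_0)\leq n\}$ and the failure sets $F^n$ directly and derives the contradiction $2\mathrm{p}<\pr_{\iota_0}(\mathcal{G}^n)\leq 2\mathrm{p}$; both approaches hinge on the same inequality $2\mathrm{p}\geq 1-q_{NH}$ combined with $q_{NH}\leq\mathrm{p}$ to reach the $1/3$ threshold, and the $1/2$ thresholds arise identically once $q_{NH}$ is eliminated (halting case) or absorbed into $1-q_1$ (condition \ref{property:SameWithSubsetS1}). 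Two minor points to tighten: your definition of $f_{k,n}$ needs a word on well-definedness when the sequences $\{\iota^j_N\}_N$ have repeated elements (the paper finesses this by interleaving and indexing $\Omega$ by a single $n$), and your claim $\pr_{\iota^0}(E_N)\to 1-q_{NH}$ should read $\geq 1-q_{NH}$, since a general algorithm may output $\nh$ with $D_\Gamma(\iota^0)<\infty$ --- but only this inequality is used.
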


\begin{remark} [Computability of $\Delta_1$ information]\label{remark:CompDelta1Markov}

Note that if $(c_k)_{k \leq |\Lambda|}$ are computable numbers in the sense of Turing-computability, then $\hat \Lambda$ can be chosen so that $\hat \Lambda = \{f_{k,n} \, \vert \, k \leq |\Lambda| \text{ and } n \in \mathbb{N}\}$ and so that, for any $\iota \in \Omega$ and each $f_k \in \Lambda$ there exists a Turing machine, such that, given input $n \in \mathbb{N}$, outputs $ \{f_{k,n}(\iota)\}_{n=1}^{\infty}$. This follows immediately from the proof of Proposition \ref{prop:DrivingNegativeProposition} - specifically, this computability is a consequence of equation \eqref{eq:def_fm}.
Thus the non-computability result presented in Proposition \ref{prop:DrivingNegativeProposition} can easily be seen to apply to the Markov model.
\end{remark}

\begin{proof}[Proof of Proposition \ref{prop:DrivingNegativeProposition}]
	
	We begin by defining the inputs $\iota_n\in\Omega$, for $n\geq 1$, according to
	$\iota_{2n} = \iota^1_{n+1}, \iota_{2n-1} = \iota^2_{n+1}$.
	Now, if only \ref{property:MinimisersOfNonZero} and \ref{property:Del1Info} are fulfilled, we can assume w.l.o.g. $\Omega = \{\iota_n \, \vert \, n\geq 1\}$ (see Remark \ref{remark:OmegaSubsetBDE}), and, in the cases when \ref{property:Del1withIota0} holds as well, we define $\iota_0:=\iota^0$ and assume w.l.o.g. $\Omega = \{\iota_n \, \vert \, n\geq 0\}$.
	
	Our aim now is to produce the desired $\Delta_1$-information for $\Omega$. 
	For $m,n\in \mathbb{N}$ and $ k\leq|\Lambda|$, choose $d^{n,m}_k \in \dyadic_m + i \dyadic_m$ such that $|f_k(\iota_n) - d^{n,m}_k | \leq 2^{-m}$ 
	as well as $c^m_k \in  \dyadic_m + i\dyadic_m$  such that $|c^m_k - c_k| \leq 2^{-m}/\sqrt{2}$. 
	Now, for $ k\leq|\Lambda|$ and $m\in\mathbb{N}$, define $f_{k,m}: \Omega \to \dyadic_m + i \dyadic_m$ according to 
	\begin{equation}\label{eq:def_fm}
		f_{k,m}(\iota_n) =\begin{cases}
			d^{n,m}_k, & \text{ if } 1\leq n \leq m\\
			c^m_k, & \text{if } n=0 \text{ or }n>m
		\end{cases},\quad \text{for }\iota_n\in\Omega.
	\end{equation}
	We claim that $
	\hat{\Lambda}:= \{ f_{k,m} \, \vert \,  k\leq|\Lambda| , m \in \mathbb{N} \}
	$
	provides $\Delta_1$-information for $\{\Xi, \Omega, \mathcal{M}, \Lambda\}$. To this end, first note that, due to assumption (b) and our construction of $\{\iota_n\}_{n\geq 1}$, we have $|c_k - f_k(\iota_n)| \leq 2^{-(n+2)}$, for all $n\geq 1$. Now, for  $n,m \in \mathbb{N}$ with $n>m$, we have $2^{-(n+2)} \leq 2^{-(m+3)}$, and therefore
	\begin{equation*}
		\begin{aligned}
			|f_{k,m}(\iota_n) - f_k(\iota_n)| &= |c^m_k - f_k(\iota_n)| \leq |c^m_k - c_k| + |c_k -  f_k(\iota_{n})|\\
			& \leq 2^{-m}/\sqrt{2} + 2^{-(n+2)}\leq (1/\sqrt{2} + 1/8)\cdot 2^{-m}<2^{-m}.
		\end{aligned}
	\end{equation*}
	Similarly, in the cases when \ref{property:Del1withIota0} holds so that $\iota_0\in\Omega$, we have
	$
	|f_{k,m}(\iota_0) - f_k(\iota_0)| = |c^m_k - f_k(\iota_0)| = |c^m_k - c_k| \leq 2^{-m}/\sqrt{2}<2^{-m}
	$.
	Finally, for $1\leq n \leq m$ we have $|f_{k,m}(\iota_n) - f_k(\iota_n)| = |d^{n,m}_k - f_k(\iota_n)| \leq 2^{-m}$ by the above definition of $d^{n,m}_k$.
	Therefore $\hat{\Lambda}$ provides $\Delta_1$-information for the computational problem $\{\Xi, \Omega, \mathcal{M}, \Lambda\}$, as desired.
	
	{\it Proof of \ref{property:MinimisersOfNonZero}, \ref{property:Del1Info} $\implies$ \ref{item:SequenceCounterexampleL1WeakBDEps}}:
	We argue by contradiction and assume that 
	\begin{equation}\label{eq:assumption}
		\epsilon^{\mathrm{w}}_{\mathbb{P}\mathrm{B}}(\mathrm{p}) < \kappa/2, \text{ for some } \mathrm{p} \in (0,1/2).
	\end{equation}
	for the computational problem $\{\Xi, \Omega, \mathcal{M}, \hat \Lambda\}$. 
	Therefore, there exist an $N\in\mathbb{N}$ and a $\gprob \in  \mathrm{RGA}$ such that 
	\begin{equation}\label{eq:prob1}
		\mathbb{P}_{\iota}(\disM(\gprob_{\iota},\Xi(\iota)) \geq \kappa/2 \; \text{ or }\; T_{\gprob}(\iota) > N) \leq \mathrm{p}, \quad\forall \iota\in\Omega.
	\end{equation}
	Now, recalling the definitions of $T_\Gamma$ and $D_\Gamma$, we see that there must exist an $M\in\mathbb{N}$ such that 
	\begin{equation}\label{eq:TGammaNDGammaM}
		\quad D_{\Gamma}(\iota)> M \implies T_{\Gamma}(\iota)> N,
	\end{equation}
	for all algorithms $\Gamma$ for $\{\Xi, \Omega, \mathcal{M}, \hat\Lambda\}$ and all $\iota\in\Omega$.
	Next, let the measurable space $(X,\mathcal{F})$ and the probability measures $\{\mathbb{P}_{\iota_n}\}_{{\iota_n}\in\Omega}$ associated with $\gprob$ be as in Definition \ref{definition:ProbablisticAlgorithm} of an RGA. We now define the following sets that form the basis for the rest of the argument:
	\[
	F_j := \lbrace \Gamma \in X\, \vert \, \disM(\Gamma(\iota_{M+j}),\Xi(\iota_{M+j})) \geq \kappa/2 \text{ or } D_{\Gamma}(\iota_{M+j}) > M \rbrace, \qquad j = 1,2,
	\]	
	 $
	\hat F_2 = \lbrace \Gamma \in X\, \vert \, \disM(\Gamma(\iota_{M+2}),\Xi(\iota_{M+2})) \geq \kappa/2 \rbrace
	$
	and
	$
	\mathring F_2 =  \lbrace \Gamma \in X\, \vert \,D_{\Gamma}(\iota_{M+2}) > M\rbrace.
	$
Note that (Pi) in Definition \ref{definition:ProbablisticAlgorithm}, the continuity of the metric $d_{\mathcal{M}}$, and Lemma \ref{lemma:DGammaArandomVar} together imply that $F_1$, $F_2$, $\hat F_2$ and $ \mathring F_2$  are measurable.  Also observe that \eqref{eq:prob1} and \eqref{eq:TGammaNDGammaM} imply that $\pr_{\iota_{M+j}}(F_j) \leq \mathrm{p}$ for $j = 1,2$.  	
	
	{\bf Claim 1:} We now claim the following.
	\begin{enumerate}[label = (\Roman*)]
		\item $X = F_1 \cup \hat F_2$. \label{proofItemMainThmwb:TnDecomp}
		\item  
		$
		\pr_{\iota_{M+1}}(\hat F_2 \cap  {\mathring F_2}^c) = \pr_{\iota_{M+2}}(\hat F_2 \cap  {\mathring F_2}^c).
		$
		\label{proofItemMainThmwb:F2F2CEquality}
		\item $\pr_{\iota_{M+1}}({\mathring F_2}) = \pr_{\iota_{M+2}}({\mathring F_2})$
		\label{proofItemMainThmwb:ProbMeasureEquality}
	\end{enumerate}
	
	To see \ref{proofItemMainThmwb:TnDecomp} we argue as follows. Consider an arbitrary $\Gamma \in X$. Suppose by way of contradiction that $\Gamma \notin F_1$ and $\Gamma \notin \hat F_2$. In particular, $D_{\Gamma}(\iota_{M+1}) \leq M$.
	Consequently, whenever $f_{k,m} \in \hat \Lambda_{\Gamma}(\iota_{M+1})$ we must have $m \leq M$ . Therefore, by \eqref{eq:def_fm} it follows that $f_{k,m}(\iota_{M+1}) =  f_{k,m}(\iota_{M+2}) = c^m_k$.
	Hence, by property \ref{property:AlgorithmSameInputSameInputTaken} in Definition \ref{definition:Algorithm} of a general algorithm, it follows that $\hat \Lambda_{\Gamma}(\iota_{M+1}) = \hat \Lambda_{\Gamma}(\iota_{M+2})$. Thus, by \ref{property:AlgorithmDependenceOnInput} in Definition \ref{definition:Algorithm}, 
	$\Gamma(\iota_{M+1}) =  \Gamma(\iota_{M+2})$. The assumption that  $\Gamma \notin F_1$ and $\Gamma \notin \hat F_2$ yields
	\begin{equation*}
		\begin{aligned}
			\disM(\Gamma(\iota_{M+1}),\Xi(\iota_{M+1})) &< \kappa/2 \qquad\text{ and} \\
			\disM(\Gamma(\iota_{M+1}),\Xi(\iota_{M+2})) &= \disM(\Gamma(\iota_{M+2}),\Xi(\iota_{M+2}))<\kappa/2.
		\end{aligned}
	\end{equation*}
	Therefore $\inf_{\xi_1\in \Xi(\iota_{M+1}), \xi_2\in \Xi(\iota_{M+2})}d_{\mathcal{M}}(\xi_1,\xi_2) < \kappa$, which contradicts \ref{property:MinimisersOfNonZero}, and hence we conclude that $\Gamma \in F_1$ or $\Gamma \in \hat F_2$, establishing (I).
	
	To prove \ref{proofItemMainThmwb:F2F2CEquality} and \ref{proofItemMainThmwb:ProbMeasureEquality} it suffices to demonstrate that 
	\begin{equation}\label{eq:change_prob}
		\pr_{\iota_{M+1}}(E \cap {\mathring F_2}^c) = \pr_{\iota_{M+2}}(E \cap {\mathring F_2}^c) \qquad \forall \, E \in \mathcal{F}.
	\end{equation}
	Indeed, given \eqref{eq:change_prob} \ref{proofItemMainThmwb:F2F2CEquality} follows immediately since $\hat{F}_2\in\mathcal{F}$, whereas \ref{proofItemMainThmwb:ProbMeasureEquality} follows by letting $E = X$ since then $\pr_{\iota_{M+1}}({\mathring F_2}) = 1 - \pr_{\iota_{M+1}}({\mathring F_2}^c) =  1 - \pr_{\iota_{M+2}}({\mathring F_2}^c) = \pr_{\iota_{M+2}}({\mathring F_2}).$ To show \eqref{eq:change_prob} consider any $E \in \mathcal{F}$ with $E \cap {\mathring F_2}^c \neq \varnothing$  (if instead  $E \cap {\mathring F_2}^c$ were empty there is nothing to prove).  Now, for $\Gamma \in E \cap {\mathring F_2}^c$ and $f_{k,m} \in \hat \Lambda_{\Gamma}(\iota_{M+2})$, it follows from
$
{\mathring F_2}^c =  \lbrace \Gamma \in X\, \vert \,D_{\Gamma}(\iota_{M+2}) \leq M\rbrace
$
	that $m \leq M$. Hence, by \eqref{eq:def_fm}, we have $f_{k,m}(\iota_{M+1}) =  f_{k,m}(\iota_{M+2}) = c^m_k$, and thus \eqref{eq:change_prob} follows immediately by \ref{property:PAlgorithmConsistent} in Definition \ref{definition:ProbablisticAlgorithm} of a randomised general algorithm, completing the proof of the claim.

	Armed with the claim we are ready to derive the desired contradiction. In particular, 
	\begin{align*}
		1&\leq \pr_{\iota_{M+1}}(F_1) + \pr_{\iota_{M+1}} (\hat F_2)&& \text{by \ref{proofItemMainThmwb:TnDecomp}}\\
		&= \pr_{\iota_{M+1}}(F_1) + \pr_{\iota_{M+1}} (\hat F_2 \cap {\mathring F_2}^c) + \pr_{\iota_{M+1}} (\hat F_2 \cap {\mathring F_2})\\
		&\leq \pr_{\iota_{M+1}}(F_1) + \pr_{\iota_{M+2}} (\hat F_2 \cap  {\mathring F_2}^c) + \pr_{\iota_{M+1}} ({\mathring F_2}) && \text{by \ref{proofItemMainThmwb:F2F2CEquality}}\\
		&= \pr_{\iota_{M+1}}(F_1) + \pr_{\iota_{M+2}} (\hat F_2 \cap  {\mathring F_2}^c) + \pr_{\iota_{M+2}} ({\mathring F_2}) && \text{by \ref{proofItemMainThmwb:ProbMeasureEquality} }\\
		&\leq \pr_{\iota_{M+1}}(F_1) + \pr_{\iota_{M+2}} (F_2) \leq 2\mathrm{p} <1&& \text{since } \hat F_2 \subset F_2,
	\end{align*}
	which yields the final contradiction.

	{\it Proof of \ref{property:MinimisersOfNonZero} -- \ref{property:Del1withIota0} $\implies$ \ref{item:SequenceCounterexampleL1WeakBDEps} -- \ref{item:SequenceCounterexampleL1StrongBDEps} }: Item \ref{item:SequenceCounterexampleL1WeakBDEps} holds by the argument above. For item \ref{item:SequenceCounterexampleL1StrongBDEps},
	we will first show that $\epsilon_{\mathbb{P}\mathrm{B}}^{\mathrm{s}}(\mathrm{p}) \geq \kappa/2$ for all $\mathrm{p} \in [0,1/3)$ and then demonstrate that $\epsilon^{\mathrm{s}}_{\mathrm{B}} \geq \strbdepsph  \geq \kappa/2$ for all $\mathrm{p} \in [0,1/2)$, for which it will suffice to show that $\strbdepsph  \geq \kappa/2$ for $\mathrm{p} \in [0,1/2)$ because by Proposition \ref{lemma:BreakdownLinks} we have that $\epsilon^{\mathrm{s}}_{\mathrm{B}} \geq \strbdepsph $. To this end, first recall from the beginning of the proof that the set $\Omega$ now contains the element $\iota_0$.
	Suppose by way of contradiction that $\epsilon_{\mathbb{P}\mathrm{B}}^{\mathrm{s}}(p)  < \kappa/2$ for some $\mathrm{p} \in [0,1/3)$, so that there exists a $\gprob \in \mathrm{RGA}$ with the corresponding measurable space $(X,\mathcal{F})$ and probability measures $\{\mathbb{P}_{\iota_n}\}_{\iota_n\in\Omega}$ such that 
	\begin{equation}\label{eq:existence}
		\mathbb{P}_{\iota_n}(F^n) \leq \mathrm{p},\quad\text{ for all }n\in\mathbb{N}\cup\{0\},
	\end{equation}
	where we define
	\begin{equation}\label{eq:the_Fn}
		F^{n} := \lbrace \Gamma \in X\, \vert \, \disM(\Gamma(\iota_{n}),\Xi(\iota_{n})) \geq \kappa/2 \rbrace, \qquad n \in \mathbb{N} \cup \{0\}.
	\end{equation}
	Note that by (Pi) in Definition \ref{definition:ProbablisticAlgorithm} and the continuity of the metric $d_{\mathcal{M}}$ it follows that the $F^{n}$ are measurable.
	Additionally, for $\iota \in \Omega$ and $n \in \mathbb{N} \cup \{0\}$, we define 
	$
	\mathcal{G}^n(\iota):= \lbrace \Gamma \in X \, \vert \, D_{\Gamma}(\iota) \leq n \rbrace,
	$
	i.e., the collection of general algorithms whose number of required input digits is bounded by $n \in \mathbb{N}$ when applied to $\iota$, and note that these sets are measurable due to Lemma \ref{lemma:DGammaArandomVar}.
	Now, 
	$
	X\setminus \bigcup_{n=1}^{\infty} \mathcal{G}^n(\iota_0) \subset F^0,
	$
	 as every $\Gamma\in X$ for which $D_\Gamma(\iota_0)=\infty$ must have $|\hat\Lambda_\Gamma(\iota_0)|=\infty$ (by Definition \ref{def:no-of-input-bits}) and therefore $\Gamma(\iota_0)=\nh$ (by Definition \ref{definition:Algorithm}), which then implies $\disM(\Gamma(\iota_0),\Xi(\iota_0))=\infty$ (by Definition \eqref{eq:extended-metric} of the extended metric on $\mathcal{M}\cup\{\infty\}$). Therefore,
	$\mathbb{P}_{\iota_0}( F^0)\leq \mathrm{p}$ implies
	\begin{equation}\label{eq:union}
		\pr_{\iota_0}\left(X \setminus  \bigcup\limits_{n=1}^{\infty} \mathcal{G}^n(\iota_0)\right) \leq \mathrm{p}.
	\end{equation}
	We next make the following claim. 
	
	{\bf Claim 2:} There is an $n\in\mathbb{N}$ such that we have the following:
	\begin{enumerate}[label = (\Roman*)]
		\item  $\pr_{\iota_0}(\mathcal{G}^n(\iota_0)) > 2 \mathrm{p}$, \label{proofItemMainThm:TotalProbability}
		\item  $\mathcal{G}^n(\iota_0) \subset F^{n+1} \cup  F^{n+2} $, \label{proofItemMainThm:TnDecomp}
		\item $\pr_{\iota_0}(F^{n+1} \cap \mathcal{G}^n(\iota_0)) = \pr_{\iota_{n+1}}(F^{n+1} \cap \mathcal{G}^n(\iota_0))$, and $\pr_{\iota_0}(F^{n+2} \cap \mathcal{G}^n(\iota_0)) = \pr_{\iota_{n+2}}(F^{n+2} \cap \mathcal{G}^n(\iota_0))$.\label{proofItemMainThm:ProbMeasureEquality}
	\end{enumerate}
	The contradiction arises by combining these results: indeed, by \ref{proofItemMainThm:TotalProbability} and \ref{proofItemMainThm:TnDecomp}, 
$
2 \mathrm{p} < \pr_{\iota_0}(\mathcal{G}^n(\iota_0)) = \pr_{\iota_0}( (F^{n+1}\cap \mathcal{G}^n(\iota_0)) \cup  (F^{n+2} \cap \mathcal{G}^n(\iota_0)))
$
 and by \ref{proofItemMainThm:ProbMeasureEquality} we get that
$
\pr_{\iota_{0}}(F^{n+1} \cap \mathcal{G}^n(\iota_0)) + \pr_{\iota_{0}}(F^{n+2} \cap \mathcal{G}^n(\iota_0))= \pr_{\iota_{n+1}}(F^{n+1} \cap \mathcal{G}^n(\iota_0)) + \pr_{\iota_{n+2}}(F^{n+2} \cap \mathcal{G}^n(\iota_0)).
$ Therefore 
	\begin{align}\label{eq:calculation1}
		2 \mathrm{p}&< \pr_{\iota_0}(\mathcal{G}^n({\iota_0})) = \pr_{\iota_0}\left((\mathcal{G}^{n}(\iota_0) \cap F^{n+1}) \cup (\mathcal{G}^{n}(\iota_0) \cap F^{n+2})\right)\\
		\label{eq:calculation2}
		&\leq \pr_{\iota_0}(\mathcal{G}^{n}(\iota_0) \cap F^{n+1}) + \pr_{\iota_0} (\mathcal{G}^{n}(\iota_0) \cap F^{n+2})\\
		\label{eq:calculation3}
		&=\pr_{\iota_{n+1}}(\mathcal{G}^{n}(\iota_0) \cap F^{n+1}) + \pr_{\iota_{n+2}}(\mathcal{G}^{n}(\iota_0) \cap F^{n+2})\\
		&\leq \pr_{\iota_{n+1}}(F^{n+1}) + \pr_{\iota_{n+2}}(F^{n+2}) \leq 2 \mathrm{p} && \text{by \eqref{eq:existence}}, \label{eq:calculation4}
	\end{align}
	which is the desired contradiction establishing item \ref{item:SequenceCounterexampleL1StrongBDEps}.
	It thus remains to prove Claim 2.

	For \ref{proofItemMainThm:TotalProbability}, suppose on the contrary that we have $\pr_{\iota_0}(\mathcal{G}^n(\iota_0)) \leq 2 \mathrm{p}$ for all $n \in \mathbb{N}$. Then, as $\mathcal{G}^n(\iota_0) \subset \mathcal{G}^{n+1}(\iota_0)$ for all $n$, monotone convergence implies 
	\begin{equation}\label{eq:fix_it}
		\mathrm p \geq \pr_{\iota_{0}}\left(X\setminus \bigcup\limits_{n=1}^{\infty}\mathcal{G}^n(\iota_0)\right) = \lim_{n \to \infty} 1- \pr_{\iota_{0}}(\mathcal{G}^n(\iota_{0})) \geq 1-2 \mathrm p,
	\end{equation}
	which is a contradiction since $\mathrm{p} \in [0,1/3)$. Therefore, there exists an $n\in\mathbb{N}$ for which \ref{proofItemMainThm:TotalProbability} holds. We now prove \ref{proofItemMainThm:TnDecomp} and \ref{proofItemMainThm:ProbMeasureEquality} for this $n$.
	To this end, we make the intermediary step of showing that 
	\begin{equation}\label{eq:useful_fact}
		\Gamma(\iota_{n+1}) = \Gamma(\iota_{n+2}),\text{ for all }\Gamma \in \mathcal{G}^n(\iota_0).
	\end{equation}
	Indeed, for $\Gamma \in \mathcal{G}^n(\iota_0)$, it follows immediately from the definition of the set $\mathcal{G}^n$ and the definition of $D_\Gamma$ that if $f_{k,m} \in \hat \Lambda_{\Gamma}(\iota_0)$, then $m \leq n < n+1<n+2$.  Thus, by the construction of $\hat \Lambda$, it follows that $f_{k,m}(\iota_0) = f_{k,m}(\iota_{n+1}) = f_{k,m}(\iota_{n+2})$, for all $f_{k,m} \in \hat \Lambda_{\Gamma}(\iota_0)$. Hence, by property \ref{property:AlgorithmSameInputSameInputTaken} in Definition \ref{definition:Algorithm} of a general algorithm, it follows that $\hat \Lambda_{\Gamma}(\iota_0) = \hat \Lambda_{ \Gamma}(\iota_{n+1})$, and similarly we get that $\hat \Lambda_{\Gamma}(\iota_0) = \hat \Lambda_{\Gamma}(\iota_{n+2})$.
	However, the fact that $\hat \Lambda_{\Gamma}(\iota_0) = \hat \Lambda_{\Gamma}(\iota_{n+1}) = \hat \Lambda_{\Gamma}(\iota_{n+2})$ together with \ref{property:AlgorithmDependenceOnInput} in Definition \ref{definition:Algorithm} imply that $\Gamma(\iota_0) = \Gamma(\iota_{n+1})$ and $ \Gamma(\iota_0) = \Gamma(\iota_{n+2})$, and hence $\Gamma(\iota_{n+1}) =  \Gamma(\iota_{n+2})$, establishing \eqref{eq:useful_fact}.
	
	We can now show \ref{proofItemMainThm:TnDecomp}. Consider an arbitrary $\Gamma \in \mathcal{G}^{n}(\iota_0)$. We have shown that $\Gamma(\iota_{n+1}) = \Gamma(\iota_{n+2})$. Hence, if $\Gamma \notin F^{n+1}$ and $\Gamma \notin F^{n+2}$ then $\disM(\Gamma(\iota_{n+1}),\Xi(\iota_{n+1})) < \kappa/2$ and 
$
\disM(\Gamma(\iota_{n+1}),\Xi(\iota_{n+2}))\allowbreak = \disM(\Gamma(\iota_{n+2}),\Xi(\iota_{n+2}))<\kappa/2
$ 
together yield 
$
\inf_{\xi_1\in\Xi(\iota_{n+1}), \xi_2\in \Xi(\iota_{n+2})} d_{\mathcal{M}}(\xi_1,\xi_2) < \kappa, 
$
which would contradict \ref{property:MinimisersOfNonZero}. Therefore \ref{proofItemMainThm:TnDecomp} must hold.
	
	Finally, to prove \ref{proofItemMainThm:ProbMeasureEquality}, we note that both
	$\mathcal{G}^{n}(\iota_0) \cap F^{n+1}$ and $\mathcal{G}^n(\iota_0) \cap F^{n+2}$ are measurable because $\mathcal{G}^n(\iota_0),F^{n+1}$, and $F^{n+2}$ are all measurable. We will show the result only for $\mathcal{G}^{n}(\iota_0) \cap F^{n+1}$ as the corresponding argument for $\mathcal{G}^{n}(\iota_0) \cap F^{n+2}$ is similar. If $\Gamma \in \mathcal{G}^{n}(\iota_0) \cap F^{n+1}$ then $\Gamma \in \mathcal{G}^{n}(\iota_0)$, which by \eqref{eq:useful_fact} implies that $f_{k,m}(\iota_{n+1}) = f_{k,m}(\iota_{0})$, for all $f_{k,m}\in \hat{\Lambda}_{\Gamma}(\iota_0)$. The result \ref{proofItemMainThm:ProbMeasureEquality} now follows immediately from \ref{property:PAlgorithmConsistent} in Definition \ref{definition:ProbablisticAlgorithm} of an RGA.
	
	The proof that $\strbdepsph  \geq \kappa/2$ for all $\mathrm p \in [0,1/2)$ is almost identical to the argument above, and we will point out the minor differences. We again argue by contradiction assuming that $\strbdepsph  < \kappa/2$, and the proof is identical up to \eqref{eq:union}. At this point it follows by Definition \ref{def:halting_randomised} of a halting randomised general algorithm that $\pr_{\iota_0}(\gprob_{\iota_0}=\nh) = 0$. Hence, \eqref{eq:union} becomes \begin{equation}\label{eq:union2}
		\pr_{\iota_0}\left(X \setminus  \bigcup\limits_{n=1}^{\infty} \mathcal{G}^n(\iota_0)\right) = 0
	\end{equation}
	and thus \ref{eq:fix_it} is to be replaced by
	\begin{equation*}
		0 \geq \pr_{\iota_{0}}\left(X\setminus \bigcup\limits_{n=1}^{\infty}\mathcal{G}^n(\iota_0)\right) = \lim_{n \to \infty} 1- \pr_{\iota_{0}}(\mathcal{G}^n(\iota_{0})) \geq 1-2 \mathrm p,
	\end{equation*}
	which contradicts $\mathrm p \in [0,1/2)$. The rest of the proof is identical to the argument above.  
	
	{\it Proof of \ref{property:MinimisersOfNonZero} -- \ref{property:SameWithSubsetS1} $\implies$ \ref{item:SequenceCounterexampleL1WeakBDEps} --\ref{item:SequenceCounterexampleL1StrongHaltEps}}: 
	Items \ref{item:SequenceCounterexampleL1WeakBDEps}  and \ref{item:SequenceCounterexampleL1StrongBDEps} have already been established. For  \ref{item:SequenceCounterexampleL1StrongHaltEps}, the proof stays close to the proof of \ref{item:SequenceCounterexampleL1StrongBDEps}. Indeed, the proof is identical to the proof of \ref{item:SequenceCounterexampleL1StrongBDEps} up to and including \eqref{eq:existence}. Now, however, condition \ref{property:SameWithSubsetS1} implies that $\iota_{2n}=\iota_n^1=\iota^0=\iota_0$ and thus $F^0 = F^{2n}$ for all $n\in\mathbb{N}$. Next, instead of \eqref{eq:union} we write 
	\begin{equation}\label{eq:x_union}
		X = F^0 \cup \bigcup\limits_{n=1}^{\infty} \mathcal{G}^n(\iota_0).
	\end{equation}
	We then change Claim 2 to the following:
	
	{\bf Claim $2^\prime$:} There is an \emph{odd} $n\in\mathbb{N}$ such that we have the following:
	\begin{enumerate}[label = (\Roman*${ }^\prime$)]
		\item  $\pr_{\iota_0}(F^0 \cup \mathcal{G}^n(\iota_0)) > 2\mathrm p$ \label{eq:change_I}
		\item  $\mathcal{G}^n(\iota_0) \subset F^{n+1} \cup  F^{n+2} $, \label{eq:PartIIVariant}
		\item $\pr_{\iota_0}(F^{n+1} \cap \mathcal{G}^n(\iota_0)) = \pr_{\iota_{n+1}}(F^{n+1} \cap \mathcal{G}^n(\iota_0))$, and $\pr_{\iota_0}(F^{n+2} \cap \mathcal{G}^n(\iota_0)) = \pr_{\iota_{n+2}}(F^{n+2} \cap \mathcal{G}^n(\iota_0))$.\label{eq:PartIIIVariant}
	\end{enumerate}
	The proof of \ref{eq:change_I} follows from \eqref{eq:x_union} and monotonicity of the sets $F^0 \cup \mathcal{G}^n({\iota_0})$: indeed, if by contradiction $\pr_{\iota_{0}}(F^0 \cup \mathcal{G}^n({\iota_0}))\leq 2p$ for all odd $n$ then \[1=\pr(X) = \pr_{\iota_{0}}\left(\lim_{n \to \infty} F^0 \cup \mathcal{G}^n({\iota_0})\right) = \lim_{n \to \infty}\pr_{\iota_{0}}( F^0 \cup \mathcal{G}^n({\iota_0})) \leq 2p \] by the monotone convergence theorem, which contradicts $\mathrm{p} \in [0,1/2)$. The proofs of part \ref{eq:PartIIVariant} and \ref{eq:PartIIIVariant} are then identical to those presented in the proof of Claim 2.
	
	The calculation in \eqref{eq:calculation1}, \eqref{eq:calculation2}, \eqref{eq:calculation3}, \eqref{eq:calculation4} is then changed to 
	\begin{align*}
		2p&< \pr_{\iota_0}(F^0 \cup \mathcal{G}^n({\iota_0})) = \pr_{\iota_0}\left(F^0 \cup (\mathcal{G}^{n}(\iota_0) \cap F^{n+1}) \cup (\mathcal{G}^{n}(\iota_0) \cap F^{n+2})\right) \\
		&\leq \pr_{\iota_0}(F^0) + \pr_{\iota_0} (\mathcal{G}^{n}(\iota_0) \cap F^{n+2}) \leq 2p \qquad \qquad \text{since } F^0 = F^{n+1} \text{ as $n$ is odd},
	\end{align*}
	which yields the desired contradiction that establishes that \ref{property:MinimisersOfNonZero} -- \ref{property:SameWithSubsetS1} $\implies$ \ref{item:SequenceCounterexampleL1WeakBDEps} --\ref{item:SequenceCounterexampleL1StrongHaltEps}.

	{\it Proof of \ref{property:MinimisersOfNonZero} -- \ref{property:MtotallyOrdered} $\implies$ \ref{item:SequenceCounterexampleL1WeakBDEps} -- \ref{item:SequenceCounterexampleL1NoSigmaNoPi}}: 
	Suppose by way of contradiction that there exists a tower of algorithms $\{\Gamma_n\}_{n\in\mathbb{N}}$ of height 1 for $\{\Xi,\Omega,\mathcal{M},\hat\Lambda\}$ such that $\Gamma_n(\iota)\nearrow \Xi(\iota)$ as $n\to\infty$, for all $\iota\in\Omega$. Thus, recalling that $\iota^0=\iota_n^1$ due to condition \ref{property:SameWithSubsetS1} and that $\Xi(\iota_n^1) \in S^1$ by \ref{property:MinimisersOfNonZero}, there exists an $n_0\in\mathbb{N}$ so that $\Gamma_{n_0}(\iota_0)\subset \opBall{\delta}{\Xi(\iota^0)}\subset \opBall{\delta}{S^1}$. 
	
	Now, let $M$ be a sufficiently large odd natural number such that $M>D_{\Gamma_{n_0}}(\iota_0)$. It then follows immediately from the definition of $D_{\Gamma_{n_0}}$ that if $f_{k,m}\in \hat\Lambda_{\Gamma_{n_0}}(\iota_0)$, then $m<M$. Thus, by the construction \eqref{eq:def_fm} of $\hat{\Lambda}$, it follows that $f_{k,m}(\iota_0)=f_{k,m}(\iota_M)$, for all $f_{k,m}\in \hat\Lambda_{\Gamma_{n_0}}(\iota_0)$, and hence by property \ref{property:AlgorithmSameInputSameInputTaken} in Definition \ref{definition:Algorithm} of a general algorithm, it follows that $\hat \Lambda_{\Gamma_{n_0}}(\iota_0) = \hat \Lambda_{ \Gamma_{n_0}}(\iota_{M})$, which further by \ref{property:AlgorithmDependenceOnInput} in Definition \ref{definition:Algorithm} implies that $\Gamma_{n_0}(\iota_M) = \Gamma_{n_0}(\iota_0)\in \opBall{\delta}{S^1} $. 
	
	On the other hand, we have $\Gamma_{n_0}(\iota_M)\leq \Xi(\iota_M)$ as we assumed $\Gamma_n(\iota_M)\nearrow \Xi(\iota_M)$ as $n\to\infty$, and hence $\Gamma_{n_0}(\iota_M)\leq \Xi(\iota_M)= \Xi\left(\iota_{(M+1)/2}^2\right)$. Furthermore, by \ref{property:SameWithSubsetS1} $\Xi\left(\iota_{(M+1)/2}^2\right) \in S^2$ and thus $\Gamma_{n_0}(\iota_M)\leq \sup S^2 < \inf \opBall{\delta}{S^1}$ where the last inequality follows from \ref{property:MtotallyOrdered}. This stands in contradiction to $\Gamma_{n_0}(\iota_M)\in \opBall{\delta}{\Xi(S^1)}$, thereby concluding the proof.
\end{proof}

\subsection{Impossibility results for the exit flag}
In this section we will state and prove a result that will be useful when proving the negative results contained in Theorem \ref{thm:ExitFlag}.

\begin{proposition}\label{prop:EF}
	Suppose that $\Gamma:\tilde \Omega \to \mathcal{M}$ is a general algorithm for a computational problem $\{\Xi,\Omega,\mathcal{M},\Lambda\}^{\Delta_1}=\{\tilde \Xi,\tilde \Omega,\mathcal{M},\tilde \Lambda\}$ specified according to Remark \ref{rem:EFAlgorithmAssumption} and assume that the assumption \ref{assumption:AlgorithmCloseToTheRange} holds. Furthermore suppose that there exists an $\iota_0 \in \Omega$ and, for $j=1,2$, there exist a set $S^j \subset \mathcal{M}$ and a sequence $\{\iota^j_n\}_{n=0}^{\infty}\subset \Omega$ satisfying the  following:
	\begin{enumerate}[label=(\alph*),series=EFBaseAssumptions]
		\item 
		$
		\inf_{\xi_1\in S^1,\xi_2\in S^2} d_{\mathcal{M}}(\xi_1,\xi_2) > 2 \kappa
		$, for some $\kappa>0$.  \label{assumption:EFS1S2Distant}
		\item $\Xi(\iota^j_n) \subseteq S^j$ for all $n \in \mathbb{N}$ and $j \in \{1,2\}$. \label{assumption:EFXiIotajInSj}
		\item There exists $x^j \in S^j$ such that $\disM (x^j,\Xi(\iota^j_n)) \to 0$ as $n \to \infty$ for $j=1,2$.  \label{assumption:EFxjLimitOfXiIotajn}
		\item For every $f \in \Lambda$ then $|f(\iota^j_n) - f(\iota_0)|\leq 1/2^n$ for all $n\in \mathbb{N}$ and $j = 1,2$.  \label{assumption:EFDel1Info}		
		\item $\Xi(\Omega) \subseteq \clBall{\kappa-\omega}{\Xi(\iota_0)} \cup \clBall{\kappa - \omega}{x^1} \cup \clBall{\kappa - \omega}{x^2}$, for some $\omega\in (0,\kappa)$.  \label{assumption:EFXiOmegaInBallAroundXiIota0X1AndX2}
	\end{enumerate}
	Then, for the exit flag problem $\{\Xi^E,\tilde \Omega,\{0,1\},\tilde \Lambda\}$ relative to $\Gamma$, as specified in \eqref{eq:comp_prob_exit}, we have  $\epsilon_{\mathbb{P}\mathrm{B}}^{\mathrm{s}}(\mathrm{p}) \geq 1/2$.
	Furthermore, whenever $\mathcal{M} = \real^N$, for some $N\in\mathbb{N}$, if we additionally assume that
	\begin{enumerate}[label=(\alph*),resume=EFBaseAssumptions]
		\item  $\disM(x^j, \Xi(\iota^0)) <  \omega$, for $j \in \{1,2\}$, \label{assumption:EFLLPO}
	\end{enumerate}
	then there exits a $\tilde \Lambda^+ \in \mathcal{L}^{\mathcal{O}, \omega, \tilde \Xi}(\tilde \Lambda)$ such that,
	for the computational problem  $\{\Xi^E,\tilde \Omega,\{0,1\},\tilde \Lambda^+\}$, we have $\epsilon_{\mathbb{P}\mathrm{B}}^{\mathrm{s}}(\mathrm{p}) \geq 1/2$.
\end{proposition}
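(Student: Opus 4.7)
The proof adapts the proof technique of Proposition \ref{prop:DrivingNegativeProposition} directly to the exit flag problem $\{\Xi^E,\tilde\Omega,\{0,1\},\tilde\Lambda\}$. Since $\{0,1\}$ has diameter $1$, establishing $\epsilon_{\mathbb{P}\mathrm{B}}^{\mathrm{s}}(\mathrm{p})\geq 1/2$ amounts to exhibiting, for every RGA, two inputs in $\tilde\Omega$ with different exit-flag values that share enough $\tilde\Lambda$-data to be indistinguishable. I would first fix a ``slack-rich'' representation $\tilde\iota_0\in\tilde\Omega$ of $\iota_0$ whose dyadic approximations obey $|\tilde f_{k,m}(\tilde\iota_0)-f_k(\iota_0)|\leq 2^{-(m+1)}$ (one extra bit of precision beyond the $\Delta_1$ requirement). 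Combined with $|f_k(\iota^j_n)-f_k(\iota_0)|\leq 2^{-n}$ from assumption \ref{assumption:EFDel1Info}, this permits matched representations $\tilde\iota^{*,j}_n\in\tilde\Omega$ of $\iota^j_n$ that coincide with $\tilde\iota_0$ on every coordinate $(k,m)$ with $m<n$ and are filled in arbitrarily elsewhere. Once $n$ exceeds the (finite) maximum $m$-index appearing in $\hat\Lambda_\Gamma(\tilde\iota_0)$, Definition \ref{definition:Algorithm} forces $\Gamma(\tilde\iota^{*,j}_n)=y_0:=\Gamma(\tilde\iota_0)$.

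\textbf{Case analysis for the distinguishing pair.} By \eqref{assumption:AlgorithmCloseToTheRange} and assumption \ref{assumption:EFXiOmegaInBallAroundXiIota0X1AndX2}, $y_0$ lies within $\kappa-\omega+\alpha\leq\kappa$ of one of $\Xi(\iota_0),x^1,x^2$; assumption \ref{assumption:EFS1S2Distant} then precludes $y_0$ from being $\kappa$-close to both $x^1$ and $x^2$. In Case 1, $d(y_0,x^{j^*})\leq\kappa$ for a unique $j^*\in\{1,2\}$, and $d(y_0,x^{3-j^*})>\kappa$ strictly; by \ref{assumption:EFxjLimitOfXiIotajn}, for all sufficiently large $n$, $\Xi^E(\tilde\iota^{*,j^*}_n)=1$ and $\Xi^E(\tilde\iota^{*,3-j^*}_n)=0$. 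In Case 2, $d(y_0,x^j)>\kappa$ for both $j$, which forces $d(y_0,\Xi(\iota_0))\leq\kappa$ so $\Xi^E(\tilde\iota_0)=1$; the pair $(\tilde\iota_0,\tilde\iota^{*,1}_n)$ then has exit flags $(1,0)$. In both cases the sequences' $\tilde\Lambda$-values stabilise to $\tilde f_{k,m}(\tilde\iota_0)$ at every fixed $(k,m)$, so the measurability manipulations from the proof of Proposition \ref{prop:DrivingNegativeProposition} (with $\kappa_E=1/2$, $S^1_E=\{1\}$, $S^2_E=\{0\}$, and the constant or stabilising sequence playing the role of $\iota^0$) go through with only notational changes, yielding $\epsilon_{\mathbb{P}\mathrm{B}}^{\mathrm{s}}(\mathrm{p})\geq 1/2$ for Part 1.

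\textbf{Oracle augmentation for Part 2.} Assumption \ref{assumption:EFLLPO} supplies $\xi_0^j\in\Xi(\iota_0)$ with $d(\xi_0^j,x^j)<\omega$. The oracle is built to emit the \emph{same} dyadic vector $\rho$ on both members of the distinguishing pair. In Case 2 the pair is $(\tilde\iota_0,\tilde\iota^{*,1}_n)$, and $\rho\approx\xi_0^1$ works because $\xi_0^1\in\Xi(\iota_0)$ and $\disM(\xi_0^1,\Xi(\iota^1_n))\to 0$ by \ref{assumption:EFxjLimitOfXiIotajn}. In Case 1, since a common $\omega$-oracle for $\iota^1_n$ and $\iota^2_n$ is impossible (their solution sets being $>2\kappa>2\omega$ apart), I re-pair through $\tilde\iota_0$: if $\Xi^E(\tilde\iota_0)=0$, use $(\tilde\iota_0,\tilde\iota^{*,j^*}_n)$ with $\rho\approx\xi_0^{j^*}$; if $\Xi^E(\tilde\iota_0)=1$, use $(\tilde\iota_0,\tilde\iota^{*,3-j^*}_n)$ with $\rho\approx\xi_0^{3-j^*}$. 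In every case $\rho\in\mathcal{B}_\omega(\Xi(\iota_0))\cap\mathcal{B}_\omega(\Xi(\iota^\bullet_n))$ for large $n$, so the oracle value is valid and identical across the pair. Extending each $g_k$ arbitrarily to the remainder of $\tilde\Omega$ subject to the $\omega$-ball constraint produces $\tilde\Lambda^+\in\mathcal{L}^{\mathcal{O},\omega,\tilde\Xi}(\tilde\Lambda)$, and the indistinguishability argument from Part 1 applies verbatim to yield $\epsilon_{\mathbb{P}\mathrm{B}}^{\mathrm{s}}(\mathrm{p})\geq 1/2$.

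\textbf{Main obstacle.} The essential difficulty, and the reason assumption \ref{assumption:EFLLPO} is indispensable for Part 2, lies in the Case 1 repackaging: the two inputs on which $\Xi^E$ naturally differs in Case 1 are representations of distinct $\iota^j_n$ whose solution sets are $>2\omega$ apart, so no single $\omega$-oracle can be valid on both. The only bypass is to swap one member of the pair for $\tilde\iota_0$, leveraging that $\Xi(\iota_0)$ ``straddles'' $x^1$ and $x^2$ to within $\omega$ --- which is exactly what \ref{assumption:EFLLPO} supplies --- at the cost of tracking which value $\Xi^E(\tilde\iota_0)\in\{0,1\}$ takes. Verifying that in both Case 1 sub-cases the repaired sequences retain distinct exit flags while admitting a common dyadic oracle value (and that the sub-case split is algorithmically immaterial since both re-pairings satisfy the same $\tilde\Lambda^+$ indistinguishability condition) is the main bookkeeping the full proof will need to carry out.
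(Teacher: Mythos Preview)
Your plan follows the paper's strategy closely, and the construction of the $\tilde\iota^{*,j}_n$ agreeing with $\tilde\iota_0$ on low-precision coordinates is exactly right (and in fact handles the dyadic constraint more carefully than the paper's own presentation). There are, however, two gaps in your treatment of Part 1.

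The minor one is in your Case 2: to conclude $\Xi^E(\tilde\iota^{*,1}_n)=0$ you need $\disM(y_0,\Xi(\iota^1_n))>\kappa$, hence $\disM(y_0,S^1)>\kappa$ since $\Xi(\iota^1_n)\subseteq S^1$ by assumption (b). But $d(y_0,x^1)>\kappa$ does not give this; $S^1$ may contain points other than $x^1$ close to $y_0$. The paper first establishes (its claim A)) that $\disM(y_0,S^{j_A})>\kappa$ for \emph{some} $j_A$ --- else both distances are $\leq\kappa$ and $d(S^1,S^2)\leq 2\kappa$, contradicting (a) --- and then uses $\tilde\iota^{j_A}_n$ on the ``$0$'' side, not an arbitrary $j=1$.

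The more substantive gap is in your Case 1: pairing $\tilde\iota^{*,j^*}_n$ with $\tilde\iota^{*,3-j^*}_n$ gives two \emph{non-constant} sequences converging to $\tilde\iota_0$. In the framework of Proposition \ref{prop:DrivingNegativeProposition} this is condition (c) but not (d), so you recover only $\strbdepsp\geq 1/2$ for $\mathrm p<1/3$, not for $\mathrm p<1/2$ as required. The paper's final contradiction (mirroring part (iii) of Proposition \ref{prop:DrivingNegativeProposition}) needs $\tilde\iota_0=\tilde\iota_{r+1}$ for the chosen index $r$ so that $F^0=F^{r+1}$, which forces one of the two sequences to be identically $\tilde\iota_0$. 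The cure is exactly the ``re-pair through $\tilde\iota_0$'' manoeuvre you already perform in Part 2: split Case 1 on the value of $\Xi^E(\tilde\iota_0)$ and pair $\tilde\iota_0$ with whichever of $\tilde\iota^{*,j^*}_n$, $\tilde\iota^{*,3-j^*}_n$ carries the opposite flag. Once both Parts route through $\tilde\iota_0$ in this way, both gaps close and your argument coincides with the paper's.
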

\begin{proof}
	Using the sequences $\{\iota^1_n\}$, $\{\iota^2_n\}$ and the input $\iota^0 \in \Omega$, we define the following inputs in $\tilde \Omega$. 
	\begin{equation}\label{eq:EFInitialIotaDef}
		\begin{split}
			\tilde \iota^1_n&:=\{ ( \underbrace{f_{j}(\iota^0), f_{j}(\iota^0),\dotsc,f_{j}(\iota^0)}_{n \text{ times }},f_{j}(\iota^1_n),f_{j}(\iota^1_n),\dotsc ) \}_{j \in \beta},\\
			\tilde \iota^2_n&:=\{ ( \underbrace{f_{j}(\iota^0), f_{j}(\iota^0),\dotsc,f_{j}(\iota^0)}_{n \text{ times }},f_{j}(\iota^2_n),f_{j}(\iota^2_n),\dotsc ) \}_{j \in \beta},\\
			\tilde \iota^0&:= \{( f_{j}(\iota^0), f_{j}(\iota^0),\dotsc,f_{j}(\iota^0),f_{j}(\iota^0),f_{j}(\iota^0),\dotsc ) \}_{j \in \beta}, \end{split}
	\end{equation}
	where $\beta$ is the indexing set (recalling Definition \ref{definition:Omega_tilde_Delta_m}).
	By assumption \ref{assumption:EFDel1Info}, each of  $\tilde \iota^0$, $\tilde \iota^1_n$ and $\tilde \iota^2_n$ are in $\tilde \Omega$, for every $n \in \mathbb{N}$. We will prove each of the following:
	\begin{enumerate}[leftmargin=8mm, label=\Alph*)]
		\item There exists $N_0 \in \mathbb{N}$ such that, for all $n > N_0$, 
		$\Gamma(\tilde \iota^1_n) = \Gamma(\tilde \iota^2_n) = \Gamma(\tilde \iota^0)$ and at least one of the following occurs:
		\begin{enumerate}[leftmargin=8mm, label=\Alph{enumi}\roman*)]
			\item $\disM(\Gamma(\tilde \iota^1_n),\tilde\Xi(\tilde \iota^1_n)) > \kappa$ for all $n > N_0$. \label{result:WrongOnIota1n}
			\item $\disM(\Gamma(\tilde \iota^2_n),\tilde\Xi(\tilde \iota^2_n)) > \kappa$ for all $n > N_0$. \label{result:WrongOnIota2n}
		\end{enumerate} \label{result:EFCommonValue}
		\item There exists $N_1 \in \mathbb{N}$ such that at least one of the following occurs:
		\begin{enumerate}[label=\Alph{enumi}\roman*)]
			\item $\disM(\Gamma(\tilde \iota^0),\tilde \Xi(\tilde \iota^0)) \leq \kappa$. \label{result:EFRightOnIota0}
			\item $\disM(\Gamma(\tilde \iota^1_n),\tilde \Xi(\tilde \iota^1_n)) \leq \kappa$ for all $n > N_1$. \label{result:RightOnIota1n}
			\item $\disM(\Gamma(\tilde \iota^2_n),\tilde \Xi(\tilde \iota^2_n)) \leq \kappa$ for all $n > N_1$. \label{result:RightOnIota2n}
			
		\end{enumerate}\label{result:EFRightOnOne}
		
		\item There exists a sequence $\{\tilde \iota_n\}_{n=0}^{\infty}\subseteq \tilde \Omega$ (constructed from $\iota^0$ and subsequences of $\{\iota^1_n\}$ and $\{\iota^2_n\}$) satisfying the following: \begin{enumerate}[label=\Alph{enumi}\roman*)]
			\item For all $n \in \mathbb{N}$ both  $\Xi^E(\tilde \iota_{2n}) =1$ and $\Xi^E(\tilde \iota_{2n-1})= 0$.  \label{result:EFTildeIotanExitValues}
			\item For all $m \leq n$ and $j \in \beta$ we have $\tilde f_{j,m}(\tilde \iota_n) =  \tilde f_{j,m}(\tilde \iota_0)$, where the $\tilde{f}_{j,m}$ are as given in Definition \ref{definition:Omega_tilde_Delta_m}. \label{result:EFQMark}
			\item Either $\tilde \iota_{2n} = \tilde \iota_0 $  for all $n$ or $\tilde \iota_{2n-1} = \tilde \iota_0$ for all $n$. \label{result:EFTildeIota2nOr2nP1Constant}
		\end{enumerate}
		Moreover, under assumption \ref{assumption:EFLLPO} we have
		\begin{enumerate}[resume,label=\Alph{enumi}\roman*)]
			\item There exists a dyadic vector $v \in \mathbb{D}^M$ such that, for all $n$, $\disM(v,\tilde \Xi(\tilde \iota_n)) < \omega$. \label{result:EFOracleVector}
		\end{enumerate}
		\label{result:EFExistenceOfSequence}
	\end{enumerate}
	
	\textit{Proof of \ref{result:EFCommonValue}:}
	The proof of this step is similar to the proof of Proposition \ref{prop:DrivingNegativeProposition}. Recall the definition of the number of digits on the input
	\[
	D_{\Gamma}(\tilde \iota)  := \sup\lbrace k \in \mathbb{N} \, \vert \, \exists j \in \beta \text{ with } \tilde f_{j,k} \in \tilde \Lambda_{\Gamma}(\tilde \iota)\rbrace.
	\]
	required by $\Gamma$, and set  $N_0 = T_{\Gamma}(\tilde \iota^0)$, which is guaranteed to be finite since the assumption \ref{assumption:AlgorithmCloseToTheRange} implies that $\Gamma(\tilde\iota^0)\neq\nh$.
	Note that by the definition of the sequences $\{\tilde \iota^1_n\}_{n=1}^{\infty}$ and $\{\tilde \iota^2_n\}_{n=1}^{\infty}$ and $\tilde{\iota}^0$ in \eqref{eq:EFInitialIotaDef}, we have that $f(\tilde \iota_0) = f(\tilde \iota^1_n) = f(\tilde \iota^1_n)$ for all $f \in \tilde \Lambda_{\Gamma}(\tilde \iota_0)$ whenever $n \geq N_0$. Thus, by  \ref{property:AlgorithmSameInputSameInputTaken} in Definition \ref{definition:Algorithm} of a general algorithm, it follows that $\tilde \Lambda_{\Gamma}(\tilde \iota_0) = \tilde \Lambda_{\Gamma}(\tilde \iota^1_n) = \tilde \Lambda_{\Gamma}(\tilde  \iota^2_n)$. Consequently, by \ref{property:AlgorithmDependenceOnInput} and \ref{property:AlgorithmSameInputSameInputTaken} in Definition \ref{definition:Algorithm} we get that, for some $x \in \mathcal{M}$,
	\begin{equation}\label{eq:EFEqualityOfInitialAlgorithm}
		x=\Gamma(\tilde \iota_0) = \Gamma(\tilde \iota^1_n) =\Gamma(\tilde \iota^2_n),\quad\text{for }n\geq N_0.
	\end{equation}
	Suppose now that neither \ref{result:WrongOnIota1n} nor \ref{result:WrongOnIota2n} hold. Then there are $n_1\geq N_0$ and $n_2\geq N_0$ such that both $\disM(x,\tilde \Xi(\tilde \iota^1_{n_1})) = \disM(\Gamma(\tilde \iota^1_{n_1}),\tilde \Xi(\tilde \iota^1_{n_1})) \leq \kappa$ and $\disM(x,\tilde \Xi(\tilde \iota^2_{n_2}))=\disM(\Gamma(\tilde \iota^2_n),\tilde \Xi(\tilde \iota^2_n)) \leq \kappa$. But then by assumption \ref{assumption:EFXiIotajInSj} we have $\disM(x,S^1)\leq \kappa ,\disM(x,S^2) \leq \kappa$ and hence $\disM(S^1,S^2) \leq 2\kappa$, contradicting assumption \ref{assumption:EFS1S2Distant}.
	
	\textit{Proof of \ref{result:EFRightOnOne}:}
	By the assumption \ref{assumption:AlgorithmCloseToTheRange} on $\Gamma$, there exist a $y \in \Xi(\Omega)$ and an $\epsilon>0$ such that $d_{\mathcal{M}}(x,y) = \alpha - \epsilon\leq \omega -\epsilon$ where $x$ is the common value in \eqref{eq:EFEqualityOfInitialAlgorithm}. Moreover, by assumption \ref{assumption:EFXiOmegaInBallAroundXiIota0X1AndX2}, we have $y \in \clBall{\kappa-\omega}{\Xi(\iota^0)} \cup \clBall{\kappa - \omega}{x^1} \cup \clBall{\kappa - \omega}{x^2}$. We now split into two cases:
	
	{\bf Case 1:} \textit{ $y \in \clBall{\kappa - \omega}{\Xi(\iota^0)}$.}  In this case, there exists $y^2 \in \Xi(\iota^0)$ (depending on $\epsilon$) such that $\disM(y,y^2) \leq \kappa - \omega $. Thus 
	\begin{align*}
		\disM(\Gamma(\tilde\iota^0),\tilde\Xi(\tilde \iota^0)) &= \disM(\Gamma(\tilde\iota^0),\Xi(\iota^0)) \leq \disM(x,y^2) \\
		&\leq \disM(x,y) + \disM(y,y^2) \leq (\omega - \epsilon) + (\kappa - \omega) < \kappa,
	\end{align*}
	and so \ref{result:EFRightOnIota0} is satisfied.
	
	{\bf Case 2:} \textit{$y \in \clBall{\kappa - \omega}{x^j}$, for some $j \in \{1,2\}$.} We take $N_1 \geq N_0$ to be sufficiently large so that, for every $n > N_1$, there is a $v^n \in \Xi(\iota^j_n)$ with $d_{\mathcal{M}}(x^j,v^n) \leq \epsilon$. The existence of such an $N_1$ is guaranteed by assumption \ref{assumption:EFxjLimitOfXiIotajn}. Then, for $n > N_1$,
	\begin{align*} 
		&\disM(\Gamma(\tilde \iota^j_n),\tilde\Xi(\tilde \iota^j_n))=\disM(\Gamma(\tilde\iota^j_n),\Xi(\iota^j_n)) \leq \disM(x,v^n)\\ & \leq \disM(x,y) + \disM(y,x^j) + \disM(x^j,v^n) \leq (\omega - \epsilon) + (\kappa- \omega) + \epsilon = \kappa .
	\end{align*}
	If $j=1$ then we have shown that \ref{result:RightOnIota1n} occurs, whereas if $j=2$ we have shown that \ref{result:RightOnIota2n} holds. 
	
	\textit{Proof of \ref{result:EFExistenceOfSequence}:}
	To construct $(\tilde \iota_n)_{n=1}^{\infty}$, we distinguish two cases depending on whether \ref{result:EFRightOnIota0} is true or false.
	
	{\bf Case 1:} \textit{$\disM(\Gamma(\tilde\iota^0),\tilde \Xi(\tilde \iota^0)) \leq \kappa$ occurs.} We set $\tilde \iota_{2n} = \tilde \iota^0$ for all $n$ so that \ref{result:EFTildeIota2nOr2nP1Constant} holds and $\Xi^E(\tilde \iota_{2n}) = 1$.
	To define $\tilde \iota_{2n-1}$, we distinguish between whether \ref{result:WrongOnIota1n} or \ref{result:WrongOnIota2n} holds. Suppose for now that \ref{result:WrongOnIota1n} holds. If assumption \ref{assumption:EFLLPO} additionally holds, we let $N_2 > N_0$ be such that $\disM (x^1, \Xi(\iota^1_n)) < \omega/2$ for all $n > N_2$ (the existence of $N_2$ is guaranteed by assumption \ref{assumption:EFxjLimitOfXiIotajn}), and else we let $N_2=0$. Now, \ref{result:WrongOnIota1n} implies that $\Xi^E(\tilde \iota^1_{n+N_2}) = 0$, for all $n$, and so setting $\tilde \iota_{2n-1} = \tilde \iota^1_{n+N_2}$ establishes \ref{result:EFTildeIotanExitValues}. Moreover, \ref{result:EFQMark} follows by \eqref{eq:EFInitialIotaDef} and the definition of $\tilde{f}_{j,m}$.
	Under assumption \ref{assumption:EFLLPO}, we use the density of the dyadic numbers in $\mathbb{R}$ to choose a dyadic vector $v$ satisfying both $d_{\mathcal{M}}(x^1,v) \leq \omega/2$ and $ \disM(x^1, \Xi(\iota^0)) +    d_{\mathcal{M}}(x^1,v) < \omega$. Then 
	\[
	\disM (v, \tilde \Xi (\tilde \iota_{2n-1})) \leq \disM (x^1, \Xi(\iota^1_{n+N_2})) + d_{\mathcal{M}}(x^1,v) < \omega/2 + \omega/2 = \omega,
	\]
	and similarly $\disM (v, \tilde \Xi(\tilde \iota_{2n})) \leq  \disM (x^1, \Xi(\iota^0)) +    d_{\mathcal{M}}(x^1,v) < \omega$, allowing us to conclude \ref{result:EFOracleVector}.
	If instead \ref{result:WrongOnIota2n} holds, we similarly let $N_2 > N_0$ be such that $\disM (x^2, \Xi(\iota^2_n)) < \omega/2$ for all $n>N_2$, provided \ref{assumption:EFLLPO} holds, and else we set $N_2=0$. Letting  $\tilde \iota_{2n-1} = \tilde \iota^2_{n + N_2}$ and choosing $v$ to be sufficiently close to $x^2$ then yields \ref{result:EFTildeIotanExitValues},  \ref{result:EFQMark}, and \ref{result:EFOracleVector}  by the same argument as above.
	
	{\bf Case 2:} \textit{$\disM(\Gamma(\tilde\iota^0),\tilde \Xi(\tilde \iota^0)) > \kappa$ occurs.} We set $\tilde \iota_{2n-1} = \tilde \iota^0$ for all $n$ so that \ref{result:EFTildeIota2nOr2nP1Constant} holds and $\Xi^E(\tilde \iota_{2n-1}) = 0$. 
	To define $\tilde \iota_{2n}$ for all $n$, we distinguish between whether \ref{result:RightOnIota1n} or \ref{result:RightOnIota2n} holds, one of which must occur since \ref{result:EFRightOnIota0} is false by the assumption $\disM(\Gamma(\tilde\iota^0),\tilde \Xi(\tilde \iota^0)) > \kappa$.
	Suppose for now that \ref{result:RightOnIota1n} holds. Provided assumption \ref{assumption:EFLLPO} holds, we let $N_2 > N_1$ be such that $\disM (x^1, \Xi(\iota^1_n)) < \omega/2$, for all $n > N_2$ (the existence of $N_2$ is guaranteed by assumption \ref{assumption:EFxjLimitOfXiIotajn}), and else we set $N_2=0$. Then, setting $\tilde \iota_{2n} = \tilde \iota^1_{n+N_2}$, we have $\Xi^E(\tilde \iota^1_{n+N_2}) = 1$ for all $n$, thus establishing \ref{result:EFTildeIotanExitValues}, whereas \ref{result:EFQMark} follows by \eqref{eq:EFInitialIotaDef} and the definition of $\tilde{f}_{j,m}$.
	Under assumption \ref{assumption:EFLLPO} we again the density of the dyadic numbers in $\mathbb{R}$ to choose a $v \in \mathbb{D}^N$ so that both  $d_{\mathcal{M}}(v,x^1) < \omega/2$ and $\disM(x^1,\Xi(\iota^0)) + d_{\mathcal{M}}(v,x^1) < \omega$. Then 
$
\disM(v,\tilde \Xi(\tilde \iota_{2n-1})) \leq \disM(x^1,\Xi(\iota^0)) + d_{\mathcal{M}}(v,x^1) < \omega
$
 and similarly $\disM(v, \tilde \Xi(\tilde \iota_{2n})) \leq \disM (x^1, \Xi(\iota^1_{n+N_2})) + d_{\mathcal{M}}(v,x^1) < \omega/2 + \omega/2 = \omega$, establishing \ref{result:EFOracleVector}.
	Similarly, if \ref{result:RightOnIota2n} holds, we let $N_2 > N_0$ be such that $\disM(\Xi(\iota^2_n),\allowbreak x^2) < \omega$ for all $n > N_2$,  provided assumption \ref{assumption:EFLLPO} holds, and else we set $N_2=0$. Letting $\tilde \iota_{2n-1} = \tilde \iota^2_{n+N_2}$ and  $v$ sufficiently close to $x^2$ then yields \ref{result:EFTildeIotanExitValues}, \ref{result:EFQMark}, and \ref{result:EFOracleVector} by the same argument as before, thus completing the proof of \ref{result:EFExistenceOfSequence}.
	
	Now that we have shown \ref{result:EFCommonValue} to \ref{result:EFExistenceOfSequence}, we can show $\epsilon_{\mathbb{P}\mathrm{B}}^{\mathrm{s}}(\mathrm{p}) \geq 1/2$ for both the exit flag problem $\{\Xi^E,\tilde \Omega,\{0,1\},\tilde \Lambda\}$ itself as well as the oracle problem $\{\Xi^E,\tilde \Omega,\{0,1\},\tilde \Lambda^+\}$, where $\tilde \Lambda^+ \in \mathcal{L}^{\mathcal{O}, \omega, \tilde \Xi}(\tilde \Lambda)$ will be specified in due course.
	The two proofs are very similar so we will discuss differences only when they occur. For the oracle problem (under assumption \ref{assumption:EFLLPO}), we fix a $g^*=(g_1^*,\dots, g_M^*): \tilde \Omega \to \mathbb{D}^M$ such that, for $\tilde \iota \in \tilde \Omega$, we have  $g^*(\tilde \iota) = v$ if $\tilde \iota = \tilde \iota_n$ for some $n$, and else $g^*(\tilde \iota)$ is assigned a value satisfying  $\disM( g^*(\tilde \iota), \tilde \Xi(\tilde \iota)) < \omega$, but is otherwise arbitrary. Then 
	$\{g_k(\tilde \iota)\}_{k=1}^M \in \mathcal{B}^{\infty}_{\omega}(\tilde \Xi(\tilde \iota))$,  for all $\tilde \iota \in \tilde \Omega$, and thus, letting $\tilde \Lambda^+ = \tilde \Lambda \cup \{g_k^*\}_{k=1}^M$, we have $\tilde \Lambda^+ \in \mathcal{L}^{\mathcal{O}, \omega, \tilde \Xi}(\tilde \Lambda)$.

	Next, we will argue by contradiction and assume that, for some fixed $\mathrm{p} \in [0,1/2)$,
	\begin{equation}\label{eq:EFExistenceOfAlgorithm}
		\exists \, \,  \gprob \in \mathrm{RGA}  \text{ such that }  \forall \, \iota \in \tilde \Omega \quad  \mathbb{P}_{\iota}(\gprob_{\iota} \neq  \Xi^E(\iota)) \leq p.
	\end{equation}
	Continuing with this $\gprob \in \mathrm{RGA}$ we define the failure sets $F^n$ by 
	$
	F^{n} := \lbrace \Gamma^E \in X\, \vert \, \Gamma^E(\tilde \iota_{n}) \neq  \Xi^E(\tilde \iota_{n}) \rbrace
	$, for $n \in \mathbb{N}$.
	For each $\iota \in \tilde \Omega$ and $n \in \mathbb{N}$ we also define the set $\mathcal{G}^n(\iota):= \lbrace \Gamma^E \in X \, \vert \, T_{\Gamma^E}(\iota)  \leq n \rbrace.$
	Note that it is clear from \ref{property:PAlgorithmMeasurable} and \ref{property:PAlgorithmRTMeasurable} in Definition \ref{definition:ProbablisticAlgorithm} of an RGA that $F^{n}$ and $\mathcal{G}^n(\iota)$ are measurable.  
	By \eqref{eq:EFExistenceOfAlgorithm} it follows that 
	\begin{equation}\label{eq:EFFailureBoundedByp}
		\mathbb{P}_{\tilde \iota_n }(F^n) \leq p   \quad  \forall \iota \in \tilde \Omega, \, n \in \mathbb{N}.
	\end{equation}
	We will show that this leads to the desired contradiction. From here the proof is very similar to the proof of \ref{item:SequenceCounterexampleL1StrongBDEps} and \ref{item:SequenceCounterexampleL1StrongHaltEps} in Proposition \ref{prop:DrivingNegativeProposition}. Suppose we have the following. 
	\begin{enumerate}[resume,label = \Alph*)]
		\item There exits $N_3$ such that if $n\geq N_3$ then $\pr_{\iota_0}(F^0 \cup \mathcal{G}^n(\iota_0)) > 2p$. \label{proofItemMainThm:TotalProbability2}
		\item For all  $n \in \mathbb{N}$, $\mathcal{G}^n(\tilde \iota_0) = (F^{n+1}\cap \mathcal{G}^n(\tilde \iota_0)) \cup  (F^{n+2} \cap \mathcal{G}^n(\tilde \iota_0))$. \label{proofItemMainThm:TnDecomp2}
		\item For all $n \in \mathbb{N}$,  $\pr_{\tilde \iota_0}(F^{n+2} \cap \mathcal{G}^n(\tilde \iota_0)) = \pr_{\tilde \iota_{n+2}}(F^{n+2} \cap \mathcal{G}^n(\tilde \iota_0))$.\label{proofItemMainThm:ProbMeasureEquality2}
	\end{enumerate}
	Assuming (D)-(F), we may choose an integer $r >N_3$ so that $r$ is even if $\tilde \iota_0 = \tilde \iota_{2n-1}$ for all $n$ and $r$ is odd if $\tilde \iota_0 = \tilde \iota_{2n}$ for all $n$ (we know at least one of these occurs by \ref{result:EFTildeIota2nOr2nP1Constant}). Then both $\tilde \iota_0 = \tilde \iota_{r+1}$ and the conclusion of \ref{proofItemMainThm:TotalProbability2} holds for this $r$. Thus, by arguing similarly to in 
	\eqref{eq:calculation1}, \eqref{eq:calculation2}, \eqref{eq:calculation3}, \eqref{eq:calculation4}, we obtain
	\begin{align*}
		2p&< \pr_{\tilde \iota_0}(F^0 \cup \mathcal{G}^r({\iota_0})) = \pr_{\tilde \iota_0}\left(F^0 \cup (\mathcal{G}^{r}(\iota_0) \cap F^{r+1}) \cup (\mathcal{G}^{r}(\tilde \iota_0) \cap F^{r+2})\right) && \text{ by \ref{proofItemMainThm:TotalProbability2} and \ref{proofItemMainThm:TnDecomp2}} \\
		&\leq \pr_{\tilde \iota_0}(F^0) + \pr_{\tilde \iota_0} (\mathcal{G}^{r}(\tilde \iota_0) \cap F^{r+2})  \qquad \qquad  \text{since } F^0 = F^{r+1} \text{ as } \tilde\iota_0 = \tilde \iota_{r+1}\\
		& = \pr_{\tilde \iota_0}(F^0) + \pr_{\tilde \iota_{r+2}} (\mathcal{G}^{r}(\iota_0) \cap F^{r+2}) \leq \pr_{\tilde \iota_0}(F^0) + \pr_{\tilde \iota_{r+2}} (F^{r+2}) \leq 2p  && \text{by \ref{proofItemMainThm:ProbMeasureEquality2} and \eqref{eq:EFFailureBoundedByp}},
	\end{align*}
	which yields the desired contradiction. Thus, to complete the proof, it remains to establish  (D)-(F).  To this end, note that \ref{proofItemMainThm:TotalProbability2} follows from $\pr_{\iota_0}(X) = 1 > 2p$,
	$
	X = F^0 \cup \bigcup_{n=1}^{\infty} \mathcal{G}^n(\iota_0),
	$
	and the fact that $\{ \mathcal{G}^n(\iota_0)\}_{n\in \mathbb{N}}$ is an increasing sequence of sets.
	To show \ref{proofItemMainThm:TnDecomp2}  we start by assuming that $\Gamma^E \in \mathcal{G}^{n}(\tilde \iota_0)$ and then argue by contradiction. If $\Gamma^E \notin F^{n+1}$ and $\Gamma^E \notin F^{n+2}$ then $\Gamma^E(\tilde \iota_{n+1})=\Xi^E(\tilde \iota_{n+1})$ and $\Gamma^E(\tilde \iota_{n+2})= \Xi^E(\tilde \iota_{n+2})$. Since $\Gamma^E \in \mathcal{G}^n(\tilde \iota_0)$ we have $\Gamma^E(\tilde \iota_{n+1}) = \Gamma^E(\tilde \iota_{n+2})$ (this almost follows verbatim from the argument demonstrating \eqref{eq:useful_fact} -- the only differences here are that we use \ref{result:EFQMark} and, for the oracle case, the fact that $g_j(\tilde{\iota}_{n+1})=v=g_j(\tilde{\iota}_{n+2})$, for all $j=1,\dots, N$ and $n\in\mathbb{N}$). Thus $\Xi^E(\tilde \iota_{n+1})= \Xi^E(\tilde \iota_{n+2})$, which contradicts \ref{result:EFTildeIotanExitValues}.
	Finally, the proof of \ref{proofItemMainThm:ProbMeasureEquality2} is verbatim from the proof of (III) in Claim 2 in (iii) in Proposition \ref{prop:DrivingNegativeProposition}.
\end{proof}

\section{Geometry of solutions to problems  \eqref{problems} - \eqref{problems5} -- Part I}\label{sec:BasicProblemsSolutions}
The purpose of this section is to provide some simple inputs and solutions of each of the problems listed in \eqref{problems}- \eqref{problems5} that will be used to show that the breakdown epsilons for such computational problems can be non-zero. The intent will be to use these inputs and solutions to prove Theorems \ref{Cor:main}, \ref{thm:ExitFlag}, and \ref{thm:Smales9}. We will separately address counterexamples for $\ell^1$ regularisation and $\mathrm{TV}$ regularisation as the examples we construct are somewhat different in these two settings. As the results of this section are simple consequences of elementary convex analysis, their proofs are deferred to Appendix \ref{appendix:geometry-lemmas}.

\subsection{Linear programming}

Our counterexample relies on a family of matrices $\matLP(\alpha,\beta,m,N) \in \real^{m \times N}$ and a family of vectors $\vecYLP(y_1,m) \in \real^m$ for positive parameters $\alpha,\beta, y_1$ where the dimensions $m$ and $N$ satisfy $m < N$ and $N \geq 3$. Additionally, where there is no ambiguity we write $\matLP = \matLP(\alpha,\beta,m,N)$ and $\vecYLP = \vecYLP(y_1,m)$ to simplify notation.The families $\matLP(\alpha,\beta,m,N)$ and $\vecYLP(y_1,m)$ are defined as follows: 
\begin{equation}\label{eq:y_and_A_LP}
\begin{split}
\matLP(\alpha,\beta,m,N) &= \begin{pmatrix} \alpha & \beta & -1 \end{pmatrix} \oplus \begin{pmatrix} I_{m-1} & 0_{m-1 \times N-m-2} \end{pmatrix}\\
\vecYLP(y_1,m) &= y_1 e_1.
\end{split}
\end{equation} where $y_1$ is always assumed to be positive. We now state a lemma that relates these inputs to the corresponding solutions of the LP problem.

\begin{lemma} \label{lemma:ProblemBasicExampleLP}
	Let $c=\ones_{N}$ be the $N$-dimensional vector of ones. Then the solution  $\xilp$ to the linear programming problem satisfies
	\begin{equation}\label{eq:lpsolnsbasic}
	\xilp(\vecYLP,\matLP) =\begin{cases}
	\left\{\frac{y_1}{\alpha\vee \beta }e_1\right\} & \text{ if } \alpha > \beta \\
	\left\{\frac{y_1}{\alpha\vee \beta }e_2\right\}  &\text{ if } \beta > \alpha\\
	\left\{\frac{y_1}{\alpha\vee \beta }(te_1 + (1-t)e_2) \, \vert \, t \in [0,1]\right\} & \text{ if } \alpha =\beta
	\end{cases}.
	\end{equation}
	\end{lemma}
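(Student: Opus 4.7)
The plan is to exploit the block structure of $\matLP$ to reduce the problem to a tractable two-variable LP. First I would parse the constraint $\matLP x = \vecYLP$ coordinatewise. The first row yields the single equation $\alpha x_1 + \beta x_2 - x_3 = y_1$, while rows $2,\ldots, m$ contain the identity block $I_{m-1}$ in columns $4, \ldots, m+2$ and have right-hand side $0$ (since $\vecYLP = y_1 e_1$), which pins $x_4 = \cdots = x_{m+2} = 0$. The remaining coordinates $x_{m+3},\ldots, x_N$ do not appear in any equality constraint; since each contributes nonnegatively to the objective $\langle x, \ones_N\rangle$ and is required to satisfy $x_j \geq 0$, all of them must vanish at any minimiser.

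This reduces the problem to the three-variable LP of minimising $x_1 + x_2 + x_3$ over $\{x\in\real^3_{\geq 0} : \alpha x_1 + \beta x_2 - x_3 = y_1\}$. Next I would eliminate $x_3$ by substituting $x_3 = \alpha x_1 + \beta x_2 - y_1$, which is required to be $\geq 0$, so that the problem becomes: minimise the linear functional $(1+\alpha) x_1 + (1+\beta) x_2 - y_1$ over the planar polyhedron $\{(x_1, x_2) \in \real_{\geq 0}^2 : \alpha x_1 + \beta x_2 \geq y_1\}$.

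The feasible region is the intersection of the first quadrant with a half-plane, whose lower boundary is the segment joining the two vertices $(y_1/\alpha, 0)$ and $(0, y_1/\beta)$. Since the gradient $(1+\alpha, 1+\beta)$ of the objective has strictly positive entries, the minimum must lie on this segment. Parameterising the segment linearly and examining the restriction of the objective, I would show that it is linear in the parameter with slope proportional to $\alpha - \beta$. This immediately yields: the unique minimiser is the vertex $(y_1/\alpha, 0)$ when $\alpha > \beta$, the unique minimiser is $(0, y_1/\beta)$ when $\beta > \alpha$, and the entire segment consists of minimisers when $\alpha = \beta$. Translating back via $x_3 = 0$ on the boundary (since we are on the constraint $\alpha x_1 + \beta x_2 = y_1$) and $x_j = 0$ for $j \geq 4$ gives exactly the three cases stated in \eqref{eq:lpsolnsbasic}.

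I do not anticipate any substantive obstacle; this is a routine vertex enumeration for a two-dimensional LP. The only points requiring care are correctly identifying the column indices pinned by the identity block (namely columns $4$ through $m+2$, since the first three columns are occupied by the $1\times 3$ block $(\alpha,\beta,-1)$) and handling the degenerate case $\alpha = \beta$, where an entire edge of the feasible polytope, rather than a single vertex, is optimal; the latter is what produces the convex combination of $e_1$ and $e_2$ appearing in the third branch of \eqref{eq:lpsolnsbasic}.
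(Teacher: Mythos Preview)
Your proposal is correct and takes essentially the same approach as the paper: exploit the block structure of $\matLP$ to pin most coordinates to zero, then solve the residual small LP directly. The only stylistic difference is that the paper compresses the final step into a single inequality chain $\langle c,x\rangle \geq x_1 + x_2 \geq (\alpha x_1 + \beta x_2)/(\alpha\vee\beta) \geq y_1/(\alpha\vee\beta)$ and reads off the equality cases, whereas you eliminate $x_3$ and perform explicit vertex enumeration on the resulting planar region.
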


\subsection{$\ell^1$ regularisation}

Here, our counterexamples rely on a family of matrices $\matl(\alpha,\beta,m,N) \in \real^{m \times N}$ and a family of vectors $\vecYl(y_1,m) \in \real^m$ for positive parameters $\alpha,\beta, y_1$ where the dimensions $m$ and $N$ satisfy $4\leq m < N$. Additionally, where there is no ambiguity we write $\matl = \matl(\alpha,\beta,m,N)$ and $\vecYl = \vecYl(y_1,m)$. Specific to constrained lasso, we also introduce the family of vectors $\vecYCL(y_1,m)$ that depend on the regularisation parameter $\tau$ and again if there is no ambiguity we write $\vecYCL = \vecYCL(y_1,m)$.

The families $\matl(\alpha,\beta,m,N)$, $\vecYl(y_1,m)$ and $\vecYCL(y_1,m)$ are defined as follows: 
\begin{equation}\label{eq:y_and_A_l1}
\begin{split}
\matl(\alpha,\beta,m,N) &= \begin{pmatrix} \alpha & \beta \end{pmatrix} \oplus \begin{pmatrix} I_{m-1} & 0_{m-1 \times N-m-1} \end{pmatrix}\\
\vecYl(y_1,m) &= y_1 e_1,\quad \vecYCL (y_1,m) =  y_1 e_1 + \tau e_2.
\end{split}
\end{equation} where $y_1$ is always assumed to be positive.
We now state some simple lemmas that relate these inputs to their solutions under unconstrained lasso, basis pursuit and constrained lasso.
	
\begin{lemma}\label{lemma:ProblemBasicExampleULASSO}
	Assuming that $\alpha \vee \beta \geq \lambda/(2y_1)$, the solution $\xiul$ to the unconstrained lasso problem  satisfies
	\begin{equation}\label{eq:unequalAlphaBetaLASSO}
	\xiul(\vecYl,\matl)  =\begin{cases}
	\left\{\frac{2\alpha y_1-\lambda}{2\alpha^2}e_1\right\} & \text{ if } \alpha > \beta \\
	\left\{\frac{2\beta y_1-\lambda}{2\beta^2}e_2\right\}  &\text{ if } \alpha < \beta\\
	\left\{t\left(\frac{2\alpha y_1-\lambda}{2\alpha^2}\right)e_1 + (1-t)\left(\frac{2\beta y_1-\lambda}{2\beta^2}\right)e_2 \, \vert \, t \in [0,1]\right\} & \text{ if } \alpha =\beta
	\end{cases}.
	\end{equation}
\end{lemma}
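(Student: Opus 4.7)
The plan is to reduce the $N$-variable minimization to a two-variable one, argue the solution is nonnegative in those two variables, and then explicitly minimize a one-parameter convex function.

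First I would exploit the block-diagonal structure of $\matl$. Writing out $\matl x = (\alpha x_1 + \beta x_2,\, x_3, \dots, x_{m+1})^{\top}$ and $\vecYl = y_1 e_1$, the objective becomes
\[
(\alpha x_1 + \beta x_2 - y_1)^2 + \sum_{j=3}^{m+1} x_j^2 + \lambda \sum_{j=1}^{N}|x_j|.
\]
For $j\in\{3,\dots,m+1\}$ the variable $x_j$ only appears as $x_j^2 + \lambda |x_j|$, which is uniquely minimized at $x_j = 0$; for $j\geq m+2$ the variable only appears as $\lambda|x_j|$ and is likewise forced to $0$. Hence any minimizer is supported on the first two coordinates, and the problem reduces to minimizing
\[
F(x_1,x_2):=(\alpha x_1 + \beta x_2 - y_1)^2 + \lambda(|x_1|+|x_2|)
\]
over $(x_1,x_2)\in\real^2$.

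Next I would argue that any minimizer of $F$ must satisfy $x_1,x_2\geq 0$. Using $y_1>0$ together with $\alpha,\beta>0$, one checks that if (say) $x_1<0$ then $F(-x_1,x_2) < F(x_1,x_2)$ strictly: the $\ell^1$ term is unchanged while the quadratic decreases because $(\alpha|x_1|+\beta x_2 - y_1)^2 < (\alpha x_1+\beta x_2 - y_1)^2$ when $x_1<0$ and $\beta x_2 - y_1 + \alpha|x_1| \geq -y_1 > -(y_1 + \alpha|x_1|)$ (the two candidates differ only in the sign of $\alpha|x_1|$, so the one closer to $y_1-\beta x_2$ wins, and the positive choice is always at least as close). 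The analogous statement holds for $x_2$, so without loss of generality $x_1,x_2\geq 0$.

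With nonnegativity in hand I would change variables: set $s=\alpha x_1 + \beta x_2$ and minimize $F$ in two stages. For fixed $s\geq 0$, the quantity $|x_1|+|x_2|=x_1+x_2$ is minimized over $\{x_1,x_2\geq 0:\alpha x_1 + \beta x_2 = s\}$ by putting all the weight on the variable with the larger coefficient. When $\alpha > \beta$ the unique minimizer is $(x_1,x_2)=(s/\alpha,0)$ giving $x_1+x_2=s/\alpha$, and the outer problem becomes $\min_{s\geq 0}(s-y_1)^2 + \lambda s/\alpha$, whose first-order condition yields $s=y_1-\lambda/(2\alpha)$ and hence $x_1=(2\alpha y_1-\lambda)/(2\alpha^2)$; the assumption $\alpha\vee\beta\geq \lambda/(2y_1)$ guarantees that this value is nonnegative, validating the reduction. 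Symmetrically, when $\beta>\alpha$ the minimizer is $x_2=(2\beta y_1-\lambda)/(2\beta^2)$, $x_1=0$. When $\alpha=\beta$ the inner minimization has the whole segment $\{x_1+x_2=s/\alpha,\ x_1,x_2\geq 0\}$ as minimizers, and the outer optimum fixes $x_1+x_2=(2\alpha y_1-\lambda)/(2\alpha^2)$; parameterizing by $t\in[0,1]$ recovers exactly the claimed set.

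The only subtle point, which I expect to be the one needing most care, is the uniqueness claim when $\alpha\neq\beta$: this rests on the fact that in the inner minimization the penalty $x_1+x_2$ on $\{\alpha x_1+\beta x_2=s,\ x_1,x_2\geq 0\}$ is \emph{strictly} minimized at a vertex when $\alpha\neq\beta$, combined with the strict concavity of the map $s\mapsto (s-y_1)^2+\lambda s/(\alpha\vee\beta)$'s optimizer being a single point. Putting these observations together delivers \eqref{eq:unequalAlphaBetaLASSO}.
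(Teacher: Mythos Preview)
Your overall strategy---reduce to the first two coordinates and then perform a two-stage minimization over $s=\alpha x_1+\beta x_2$---is sound and gives an elementary primal argument, in contrast to the paper's approach, which constructs a dual vector $p=\matl x^{opt}-\vecYl$, verifies that $-\frac{2}{\lambda}\matl^* p\in\partial\|\cdot\|_1(x^{opt})$ to certify optimality, and then reads off the full solution set from the equality conditions in the resulting chain of inequalities.

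However, your nonnegativity step has a gap. The claim that $F(-x_1,x_2)<F(x_1,x_2)$ whenever $x_1<0$ is false in general: writing $u=\beta x_2-y_1$ and $v=\alpha|x_1|>0$, the quadratic term changes from $(u-v)^2$ to $(u+v)^2$, and this decreases strictly iff $u<0$, i.e., iff $\beta x_2<y_1$. Nothing you have written rules out $\beta x_2>y_1$ at a candidate point, and your parenthetical justification only establishes $\beta x_2+\alpha|x_1|\geq 0$, which is not the relevant inequality.

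The simplest fix is to drop the nonnegativity step entirely and run the two-stage minimization over all of $\real^2$. For fixed $s\in\real$, the inner problem $\min\{|x_1|+|x_2|:\alpha x_1+\beta x_2=s\}$ has value $|s|/(\alpha\vee\beta)$, achieved (uniquely when $\alpha\neq\beta$ and $s\neq 0$) at a point whose sign matches $\sgn(s)$. The outer function $g(s)=(s-y_1)^2+\lambda|s|/(\alpha\vee\beta)$ is strictly convex with $g'(s)<0$ on $(-\infty,0)$, so its unique minimizer is $s^*=y_1-\lambda/(2(\alpha\vee\beta))\geq 0$ under the hypothesis $\alpha\vee\beta\geq\lambda/(2y_1)$; nonnegativity of every minimizer then falls out automatically, and the rest of your argument goes through. (A minor slip: you mean strict \emph{convexity} of the outer map, not concavity.)
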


\begin{lemma}\label{lemma:ProblemBasicExampleBPDNL1}
		Assuming that $y_1 \geq \delta$, we have
	\begin{equation}\label{eq:bpdnsolnsbasic}
	 \xibpdn(\vecYl,\matl) = \begin{cases} \left\{\frac{y_1 - \delta}{\alpha} e_1\right\} & \text{if } \alpha > \beta\\
	 \left\{\frac{y_1 - \delta}{\beta} e_2\right\} & \text{if } \alpha < \beta \\
	  \left\{t \frac{(y_1 - \delta)}{\alpha} e_1 + (1-t) \frac{(y_1 - \delta)}{\beta} e_2\, \vert \, t \in [0,1]\right\} & \text{if } \alpha = \beta
	   \end{cases}.
	\end{equation}
\end{lemma}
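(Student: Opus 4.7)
The plan is to reduce the high-dimensional problem to a two-dimensional one and then solve it by elementary convex-analytic arguments.

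First I would observe the block structure of $\matl$: for $x\in\real^N$, one has $\matl x = (\alpha x_1 + \beta x_2,\, x_3,\, x_4,\, \dots,\, x_{m+1})^T$, and $\vecYl - \matl x = (y_1 - \alpha x_1 - \beta x_2,\, -x_3,\, \dots,\, -x_{m+1})^T$. The constraint $\|\matl x-\vecYl\|_2\leq \delta$ thus reads
\begin{equation*}
(\alpha x_1+\beta x_2-y_1)^2 + \sum_{j=3}^{m+1} x_j^2 \leq \delta^2.
\end{equation*}
The coordinates $x_{m+2},\dots,x_N$ do not appear in the constraint at all, while they contribute $|x_j|$ to the objective, so any minimiser must have these set to zero. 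For $j\in\{3,\dots,m+1\}$, making $x_j$ nonzero only shrinks the admissible budget for $(\alpha x_1+\beta x_2 - y_1)^2$ while strictly increasing the objective; more formally, if $(x_1,\dots,x_N)$ is feasible with some $x_j\neq 0$ for $j\in\{3,\dots,m+1\}$, then setting this coordinate to $0$ preserves feasibility and strictly decreases $\|x\|_1$. So any minimiser has $x_j=0$ for $j\geq 3$, and the problem reduces to
\begin{equation*}
\min_{(x_1,x_2)\in\real^2} |x_1|+|x_2|\quad \text{s.t.}\quad |\alpha x_1+\beta x_2-y_1|\leq \delta.
\end{equation*}

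Next I would exploit the sign information coming from $y_1\geq\delta$: every feasible pair satisfies $\alpha x_1+\beta x_2\geq y_1-\delta\geq 0$. Assume first $\alpha>\beta>0$. Then for any feasible $(x_1,x_2)$,
\begin{equation*}
\alpha(|x_1|+|x_2|)\;\geq\; \alpha|x_1|+\beta|x_2|\;\geq\; \alpha x_1+\beta x_2\;\geq\; y_1-\delta,
\end{equation*}
giving the lower bound $\|x\|_1\geq (y_1-\delta)/\alpha$. Equality in the first inequality forces $x_2=0$ (since $\alpha>\beta$), equality in the second forces $x_1\geq 0$, and equality in the third forces $\alpha x_1=y_1-\delta$; these together single out $(x_1,x_2)=\bigl((y_1-\delta)/\alpha,0\bigr)$, matching the claim. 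The case $\beta>\alpha>0$ is symmetric.

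For the tie case $\alpha=\beta>0$, the same chain of inequalities with $\alpha$ in place of $\max(\alpha,\beta)$ gives $\|x\|_1\geq (y_1-\delta)/\alpha$, with equality exactly when $x_1,x_2\geq 0$ and $\alpha(x_1+x_2)=y_1-\delta$, i.e., the whole segment stated in \eqref{eq:bpdnsolnsbasic}. I expect no serious obstacle here; the only subtlety is being careful in the tie case to verify that the parametrisation $t\bigl((y_1-\delta)/\alpha\bigr)e_1+(1-t)\bigl((y_1-\delta)/\beta\bigr)e_2$, $t\in[0,1]$, coincides with $\{(s_1,s_2)\,:\, s_1,s_2\geq 0,\ \alpha s_1+\beta s_2=y_1-\delta\}$ under $\alpha=\beta$, which is immediate.
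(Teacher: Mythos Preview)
Your proposal is correct and follows essentially the same approach as the paper's proof: both establish the lower bound $\|x\|_1\geq (y_1-\delta)/(\alpha\vee\beta)$ via the chain $(\alpha\vee\beta)\|x\|_1\geq \alpha|x_1|+\beta|x_2|\geq \alpha x_1+\beta x_2\geq y_1-\delta$ and then read off the minimisers from the equality conditions. The only cosmetic difference is that you first reduce to the two-dimensional problem by a separate strict-improvement argument for $x_3,\dots,x_N$, whereas the paper absorbs this step into the first inequality of a single chain over all coordinates and treats all three cases uniformly using $\alpha\vee\beta$ rather than splitting into $\alpha>\beta$, $\alpha<\beta$, $\alpha=\beta$.
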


\begin{lemma}\label{lemma:ProblemBasicExampleCLASSO}
	Assuming that $y_1\geq 0$ is so that $r = \frac{(\alpha \vee \beta)y_1}{1+(\alpha \vee \beta)^2}$ satisfies $r \leq \tau$, the solution to the $\ell^1$ constrained lasso problem $\xicl$ satisfies
	\begin{equation}\label{eq:unequalAlphaBetaCLASSO}
	\xicl(\vecYCL,\matl)  =\begin{cases}
	\left\{re_1 + (\tau - r)e_3\right\} & \text{ if } \alpha > \beta \\
	\left\{re_2 + (\tau - r)e_3\right\}  &\text{ if } \alpha < \beta \\
	\left\{tre_1 + (1-t)re_2 + (\tau - r)e_3 \, \vert \, t \in [0,1]\right\} & \text{ if } \alpha =\beta
	\end{cases}.
	\end{equation}	
\end{lemma}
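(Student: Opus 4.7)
The plan is to exploit the sparse block-diagonal structure of $\matl(\alpha,\beta,m,N)$ to reduce the constrained Lasso problem to a three-variable convex optimization in $(x_1,x_2,x_3)$, and then to use a one-parameter reduction to pin down the optimiser explicitly. Writing out $\matl x$ from the definition in \eqref{eq:y_and_A_l1}, one sees that $(\matl x)_1 = \alpha x_1+\beta x_2$, $(\matl x)_i = x_{i+1}$ for $i=2,\dots,m$, so the squared objective becomes
\begin{equation*}
 \|\matl x - \vecYCL\|_2^2 \;=\; (y_1 - \alpha x_1 - \beta x_2)^2 \;+\; (\tau - x_3)^2 \;+\; \sum_{i=4}^{m+1} x_i^{\,2},
\end{equation*}
while coordinates $x_{m+2},\dots,x_N$ do not enter at all. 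Since these ``inactive'' coordinates only consume $\ell^1$-budget, and since each $x_i$ for $i\in\{4,\dots,m+1\}$ has a nonnegative penalty $x_i^2$ plus an $\ell^1$-cost, I would first argue in one short step that any minimiser must satisfy $x_i=0$ for all $i\geq 4$.

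With that reduction in hand, the problem becomes $\min (y_1-\alpha x_1-\beta x_2)^2+(\tau-x_3)^2$ subject to $|x_1|+|x_2|+|x_3|\leq \tau$. I would parameterise by $u:=|x_1|+|x_2|\in[0,\tau]$. For fixed $u$, the $x_3$-term is minimised by choosing $x_3=\tau-u$ (the element of the feasible interval closest to $\tau$), yielding $(\tau-x_3)^2=u^2$. On the slice $|x_1|+|x_2|=u$, the linear functional $\alpha x_1+\beta x_2$ ranges over $[-u(\alpha\vee\beta),u(\alpha\vee\beta)]$, and since $y_1>0$ the objective term $(y_1-\alpha x_1-\beta x_2)^2$ is minimised by taking this functional as close as possible to $y_1$; one then performs a one-dimensional minimisation of
\begin{equation*}
  g(u) \;=\; \bigl(y_1 - \min(y_1,u(\alpha\vee\beta))\bigr)^2 \;+\; u^2
\end{equation*}
over $u\in[0,\tau]$. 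A direct computation shows that the unique minimiser is $u=r=(\alpha\vee\beta)y_1/(1+(\alpha\vee\beta)^2)$, which lies in $[0,y_1/(\alpha\vee\beta)]$ automatically and in $[0,\tau]$ by the standing hypothesis $r\leq\tau$.

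It then remains to identify $(x_1,x_2)$. With $u=r$ we must have $\alpha x_1+\beta x_2=r(\alpha\vee\beta)$ and $|x_1|+|x_2|=r$; this is exactly the case of equality in the bound $\alpha x_1+\beta x_2\leq (\alpha\vee\beta)(|x_1|+|x_2|)$, which forces the mass to sit on the coordinate(s) attaining $\alpha\vee\beta$ and with the correct sign. If $\alpha>\beta$ this yields uniquely $x_1=r,\,x_2=0$; if $\alpha<\beta$ uniquely $x_1=0,\,x_2=r$; and if $\alpha=\beta$ the set of optimisers is $\{tre_1+(1-t)re_2:t\in[0,1]\}$. Combined with $x_3=\tau-r$ and $x_i=0$ otherwise, this produces exactly the three cases in \eqref{eq:unequalAlphaBetaCLASSO}.

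I do not expect a genuine obstacle here; the only point requiring care is verifying that the interior optimum $u=r$ really lies in the regime $u\leq y_1/(\alpha\vee\beta)$ (so that $g(u)$ is the non-trivial branch $(y_1-u(\alpha\vee\beta))^2+u^2$ rather than the flat branch $u^2$), and checking that the hypothesis $r\leq \tau$ is precisely what is needed for $x_3=\tau-u\geq 0$ and the $\ell^1$-budget to be exactly saturated. Both are one-line inequalities, so the proof is essentially just the careful bookkeeping sketched above.
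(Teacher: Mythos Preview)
Your proposal is correct and follows essentially the same idea as the paper's proof: both reduce the problem to a one-variable quadratic whose minimiser is $r=(\alpha\vee\beta)y_1/(1+(\alpha\vee\beta)^2)$, and both recover the characterisation of minimisers from the equality cases in $\alpha x_1+\beta x_2\le(\alpha\vee\beta)(|x_1|+|x_2|)$. The only organisational difference is that the paper does this in a single chain of inequalities---bounding $(\tau-x_3)^2\ge(\alpha x_1+\beta x_2)^2/(\alpha\vee\beta)^2$ directly from $\|x\|_1\le\tau$ and then completing the square in $\alpha x_1+\beta x_2$---whereas you first peel off coordinates $x_4,\dots,x_N$ and then perform a nested optimisation over $x_3$, then $(x_1,x_2)$, then $u$; the paper's route is slightly shorter, yours slightly more explicit, but the content is the same.
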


\subsection{TV regularisation}\label{sec:TV reg geometry}
Our counterexamples rely on a family of matrices $\matTV(\alpha,\beta,m,N) \in \real^{m \times N}$ and a family of vectors $\vecYTV(y_1,m) \in \real^m$ for positive parameters $\alpha,\beta, y_1$ where the dimensions $m$ and $N$ now satisfy $4\leq m < N$. As before, where there is no ambiguity we write $\matTV = \matTV(\alpha,\beta,m,N)$ and $\vecYTV = y(y_1,m)$.

The precise definitions of the families $\matTV(\alpha,\beta,m,N)$ and $\vecYTV(y_1,m)$ are as follows:
\begin{equation}\label{eq:y_and_A_TV}
\begin{split}
\matTV &= \alpha\, e_1\otimes e_1 + 
\beta \, e_1\otimes e_N +  e_m\otimes e_{N-1} +  \sum_{r=2}^{m-1} 
e_r\otimes e_r \in \mathbb{R}^{m\times N},\\
\vecYTV &= y_1 e_1 \in \mathbb{R}^{m}, \; y_1>0.
\end{split}
\end{equation}
To simplify the results that follow, we also define the value 
$\theta = \theta(\alpha,\beta,m)$ by
\begin{equation}\label{eq:the_theta}
\theta(\alpha,\beta,m)= \big((m-1)^2 + \allowbreak (\alpha + \beta)^2 (m-1) \big)^{\frac{1}{2}}.
\end{equation}

We now present some important lemmas relating these inputs to the corresponding solutions of problems \eqref{problems3} to \eqref{problems5} under TV regularisation. In the following, we use the flipping operator $\flipOp: \real^{N} \to \real^{N}$, defined as $(\flipOp v)_i = v_{N-i+1}$.
\begin{lemma}\label{lemma:BPTVSolutions}

	Assume that $y_1 >
	\delta\theta/(m-1)$. Then
	\begin{equation}
	\xibptv(\vecYTV,\matTV) =  \begin{cases}  \{\eta(y_1,\alpha,\beta)\}
	&\text{if } \alpha < \beta\\
	\{\flipOp \eta(y_1,\alpha,\beta )\} &\text{if } \alpha > \beta\\
	\{t\eta(y_1,\alpha,\beta)  + (1-t)(\flipOp \eta(y_1,\alpha,\beta)) \, \vert \, t \in [0,1] \} &\text{if } \alpha = \beta
	\end{cases},	\label{eq:TVBPDNSolutions}
	\end{equation}
	where 
	\begin{equation}\label{eq:the_eta}
	\begin{split}
	\eta(y_1,\alpha,\beta)_1 &= \eta(y_1,\alpha,\beta)_2 = \dotsb = \eta(y_1,\alpha,\beta)_{N-1}= \frac{\delta (\alpha + \beta)}{\theta}, \\
	 \eta(y_1,\alpha,\beta)_{N} &=  \frac{\delta (\alpha + \beta)}{\theta} + \frac{y_1-\delta\theta/(m-1) }{\alpha\vee \beta}
	\end{split}	
	\end{equation}
\end{lemma}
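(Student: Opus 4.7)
The plan is to prove the case $\alpha<\beta$ in detail, giving $\xibptv(\vecYTV,\matTV)=\{\eta\}$; the case $\alpha>\beta$ follows by an analogous construction of a dual certificate supporting $\flipOp\eta$, the only structural difference being that the boundary subdifferential constraint becomes $s_1=1$ rather than $s_{N-1}=-1$, while $\alpha=\beta$ combines the two. Throughout I write $\|x\|_{\mathrm{TV}}=\|Dx\|_1$ for the discrete difference operator $D\in\real^{(N-1)\times N}$ with $(Dx)_j=x_j-x_{j+1}$, so that $\partial\|x\|_{\mathrm{TV}}=D^T\partial\|Dx\|_1$.

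Feasibility of $\eta$ is immediate by direct computation: $A\eta-y$ has first coordinate $-\delta(m-1)/\theta$ and remaining coordinates all equal to $\delta(\alpha+\beta)/\theta$, so its squared Euclidean norm equals $\delta^2\bigl((m-1)^2+(m-1)(\alpha+\beta)^2\bigr)/\theta^2=\delta^2$ by the definition of $\theta$ in \eqref{eq:the_theta}. Next, I construct a dual certificate for $\eta$. The KKT stationarity condition reads $-\lambda A^Tu=D^Ts$ with $\lambda>0$, $u=(A\eta-y)/\delta$ the unit residual, and $s\in\partial\|D\eta\|_1$. Since $(D\eta)_{N-1}<0$ is the only nonzero entry of $D\eta$, the subdifferential constraint reduces to $s_{N-1}=-1$ together with $s_j\in[-1,1]$ for $j<N-1$. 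Solving the linear system row by row starting from $s_{N-1}=-1$ yields $\lambda=\theta/((m-1)\beta)$, $s_1=\alpha/\beta$, $s_j=\alpha/\beta-(j-1)(\alpha+\beta)/((m-1)\beta)$ for $1\le j\le m-1$, and $s_j=s_{m-1}$ for $m-1\le j\le N-2$; an elementary check gives $|s_j|<1$ strictly for every $j\neq N-1$ when $\alpha<\beta$, certifying that $\eta$ is a minimiser.

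For uniqueness, let $x^*$ be any minimiser. Combining the chain $\|\eta\|_{\mathrm{TV}}=\|x^*\|_{\mathrm{TV}}\ge\langle s,Dx^*\rangle=-\lambda\langle u,Ax^*\rangle$ with the identity $\|\eta\|_{\mathrm{TV}}=-\lambda\langle u,A\eta\rangle$ (from $s\in\partial\|D\eta\|_1$) and the bound $\langle u,Ax^*-y\rangle\le\|u\|_2\|Ax^*-y\|_2\le\delta=\langle u,A\eta-y\rangle$ forces equality throughout. The equality case of Cauchy--Schwarz then yields $Ax^*-y=\delta u=A\eta-y$, so $A(x^*-\eta)=0$. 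Equality $\|x^*\|_{\mathrm{TV}}=\langle s,Dx^*\rangle$ together with $|s_j|<1$ for $j<N-1$ forces $(Dx^*)_j=0$ for each such $j$, so $x^*_1=\cdots=x^*_{N-1}$; substituting $x^*=(a,a,\dots,a,b)$ into $A(x^*-\eta)=0$ then yields $a=\eta_1$ and $b=\eta_N$, and hence $x^*=\eta$.

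For $\alpha=\beta$ the same construction produces a certificate with both $s_1=1$ and $s_{N-1}=-1$ on the boundary; complementary slackness now only forces $x^*_2=\cdots=x^*_{N-1}$, and combining with $A(x^*-\eta)=0$ leaves exactly a one-parameter family that matches $\{t\eta+(1-t)\flipOp\eta:t\in[0,1]\}$, with the reverse inclusion immediate from convexity of the minimiser set and feasibility of $\flipOp\eta$. The main obstacle is the uniqueness step: because the constraint $\|Ax-y\|_2\le\delta$ is nonlinear, the standard LP-style uniqueness-from-dual-certificate argument does not apply directly, and uniqueness must instead be forced by coupling strict-interior complementary slackness at the TV level (pinning down $(Dx^*)_j=0$ except at the saturating index) with the equality case of Cauchy--Schwarz (pinning down $Ax^*-y$ as equal to $A\eta-y$); tracking these two conditions together through the chain of inequalities is the technically delicate part.
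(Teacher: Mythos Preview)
Your proof is correct and follows essentially the same route as the paper: your dual vector $s$ coincides with the paper's auxiliary vector $q$, your $\lambda u$ is the paper's $p$, and the uniqueness argument via the Cauchy--Schwarz equality case combined with strict complementary slackness at the TV level is identical in substance. One minor caveat: writing $u=(A\eta-y)/\delta$ tacitly assumes $\delta>0$, whereas the lemma must also cover $\delta=0$; the paper sidesteps this by defining the dual vector $p$ directly (independently of $\delta$) and verifying that the identity $\langle \matTV x^{opt}-\vecYTV,p\rangle=\delta\|p\|_2$ holds for all $\delta\in[0,1]$.
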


\begin{lemma}
	\label{lemma:ULTVSolutions}
	Let  $\matTV$ and $\vecYTV$ be as above, let $y_1> \frac {\lambda\theta^2}{2 (m-1)^2 (\alpha \vee \beta) }$. Let $\xiultv$ denote the solution map for TV Unconstrained Lasso with parameter $\lambda$. Then 
	\begin{equation}
	\xiultv(\vecYTV,\matTV) =  \begin{cases}  \{\psi(y_1, \alpha,\beta)\} &\text{if } \alpha<\beta\\
	\{(\flipOp \psi(y_1,\alpha,\beta))\}& \text{if } \alpha > \beta\; ,\\
	\{[t\psi(y_1, \alpha,\beta)  + (1-t)(\flipOp \psi(y_1, \alpha,\beta)) \, \vert \, t \in [0,1] \} & \text{if } \alpha = \beta
	\end{cases}	\label{eq:ULTVLassoSolutions}
	\end{equation}
	where
	\begin{equation}\label{eq:the_tilde_eta}
	\begin{split}
	\psi(y_1,\alpha,\beta)_1 &= \psi(y_1,\alpha,\beta)_2 = \dotsb = \psi(y_1,\alpha,\beta)_{N-1}= \frac{\lambda (\alpha + \beta)}{2(m-1)(\alpha\vee \beta)}\; , \\
	 \psi(y_1,\alpha,\beta)_{N} &=  \frac{\lambda (\alpha + \beta)}{2(m-1)(\alpha\vee \beta)}  + \frac{1}{\alpha\vee\beta} \left(y_1-  \frac {\lambda\theta^2}{2 (m-1)^2 (\alpha \vee \beta) } \right).
	\end{split}	
	\end{equation}
\end{lemma}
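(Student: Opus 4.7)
The plan is to prove this lemma by verifying the subgradient optimality condition for the convex objective $F(x) = \|\matTV x - \vecYTV\|_2^2 + \lambda \|x\|_{\mathrm{TV}}$. Since $F$ is convex, $x^*$ is a minimiser if and only if $0 \in 2\matTV^T(\matTV x^* - \vecYTV) + \lambda\, \partial \|x^*\|_{\mathrm{TV}}$, where the TV-subdifferential can be written as $D^T \partial \|Dx^*\|_1$ with $D$ the discrete difference operator of size $(N-1) \times N$. The structure of the proof will closely mirror that of Lemma \ref{lemma:BPTVSolutions}, the key difference being the presence of the quadratic data-fidelity rather than the hard constraint.

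First I would treat the asymmetric case $\alpha < \beta$ and verify that $\psi = \psi(y_1,\alpha,\beta)$ is optimal. The crucial structural observation is that $\psi_1 = \cdots = \psi_{N-1} = u$ with $\psi_N = v$, and the hypothesis $y_1 > \lambda\theta^2/(2(m-1)^2(\alpha\vee\beta))$ ensures $v > u > 0$. Under this ansatz the problem collapses to a two-variable problem in $(u,v)$, and TV-regularisation reduces to $\lambda(v-u)$. A direct computation shows that $\matTV\psi - \vecYTV = -\tfrac{\lambda}{2\beta} e_1 + u (e_2 + \cdots + e_m)$, after which one computes $\matTV^T(\matTV\psi - \vecYTV)$ coordinatewise (using the sparsity pattern of $\matTV$) and identifies the candidate subgradient $g = -(2/\lambda)\matTV^T(\matTV\psi-\vecYTV)$. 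One must then exhibit $w \in \real^{N-1}$ with $\|w\|_\infty \leq 1$, $w_{N-1} = -1$ (the sign forced by $\psi_{N-1} - \psi_N < 0$), and $g = D^T w$. A telescoping solve of the recursion $w_k = w_{k-1} + g_k$ starting from $w_1 = \alpha/\beta \in (0,1)$ terminates with $w_{N-1} = -1$, exactly as required, and the intermediate values $w_k = \alpha/\beta + (k-1)\cdot(-(\alpha+\beta)/(\beta(m-1)))$ stay in $[-1,1]$ for all $m \geq 2$ by a short estimate using $\alpha < \beta$.

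For the case $\alpha > \beta$ I would invoke the flipping symmetry: the operator $\flipOp$ exchanges $\alpha$ and $\beta$ in $\matTV$ (after permuting the columns), and since TV is invariant under $\flipOp$ whereas the data term is unchanged, $\flipOp\psi(y_1,\alpha,\beta) = \psi(y_1,\beta,\alpha)$ applied to the flipped matrix is optimal. The symmetric case $\alpha = \beta$ then follows immediately because the objective is convex and both $\psi$ and $\flipOp\psi$ are optimal, so the convex hull of the two is contained in the solution set. To establish the reverse inclusion (i.e., that these are the only solutions), I would argue that any optimal $x^*$ must still satisfy $\matTV x^* = \matTV\psi$ (since the data term is strictly convex on $\mathrm{range}(\matTV^T)$ and the residual is determined by the KKT conditions computed above), and the nullspace structure of $\matTV$ combined with the TV-optimality constraints forces $x^*$ to lie in the segment between $\psi$ and $\flipOp\psi$.

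The main obstacle will be the bookkeeping in Step 4: showing that the explicit $w$ reconstructed from $D^T w = g$ both satisfies $\|w\|_\infty \leq 1$ and terminates at the correct endpoint $w_{N-1} = -1$. This is the calculation that pins down the precise form of the coordinates of $\psi$ in \eqref{eq:the_tilde_eta}, in particular explaining the $\theta^2/(m-1)^2$ term in $\psi_N$ through the requirement $g_N = -w_{N-1} = 1$. For the uniqueness assertion in the asymmetric case, a separate short argument using strict convexity of the reduced two-variable problem (which is a quadratic plus a one-dimensional $\ell^1$ term) suffices once one has narrowed attention to ansatz of the form $\psi_1 = \cdots = \psi_{N-1}$; the remaining work is to justify this reduction, which follows from the subgradient conditions at the intermediate coordinates $2, \dots, m-1$ forcing equality of consecutive entries whenever the corresponding $|w_k| < 1$ is strict.
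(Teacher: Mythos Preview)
Your approach is correct but genuinely different from the paper's. The paper does not verify KKT conditions for the UL objective directly; instead it rewrites
\[
\min_x \|\matTV x-\vecYTV\|_2^2+\lambda\tv{x}=\min_{\delta\geq 0}\Big(\delta^2+\lambda\min_{\|\matTV x-\vecYTV\|_2\leq\delta}\tv{x}\Big),
\]
uses Corollary~\ref{Cor:BPTVObjFunction} to write the inner minimum as an explicit linear function of $\delta$, optimises the resulting scalar quadratic to obtain $\hat\delta=\lambda\theta/(2(m-1)(\alpha\vee\beta))$, and then simply invokes the already-proven Lemma~\ref{lemma:BPTVSolutions} at $\delta=\hat\delta$. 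This yields the result in a few lines, the point being that all the dual-vector work you propose to redo has already been done once in the BP case. Your route is more self-contained and essentially reproduces the dual calculation of Lemma~\ref{lemma:BPTVSolutions} with the residual fixed to $-\tfrac{\lambda}{2(\alpha\vee\beta)}e_1+u\sum_{j=2}^m e_j$; this is perfectly valid and the telescoping check $w_{N-1}=-1$ you describe does go through.

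One point to tighten: your flip-symmetry argument for $\alpha>\beta$ is not quite right as stated, because $\matTV(\alpha,\beta,m,N)$ does \emph{not} transform into $\matTV(\beta,\alpha,m,N)$ under a column flip (the rows $2,\dots,m-1$ hit coordinates $2,\dots,m-1$, which flip to $N-m+2,\dots,N-1$). The cleanest fix is simply to repeat the direct subgradient verification with $\alpha>\beta$, which gives $w_1=1$ and $w_{N-1}=-\beta/\alpha$; this is exactly parallel to how the paper treats both cases in the proof of Lemma~\ref{lemma:BPTVSolutions}. For uniqueness, your sketch is right: strict convexity of $\|\cdot\|_2^2$ forces $\matTV x^*=\matTV\psi$, then injectivity of $D^T$ pins down $w$ uniquely, and the strict inequalities $|w_j|<1$ for $1\leq j\leq N-2$ (respectively $2\leq j\leq N-1$) force $(Dx^*)_j=0$ there.
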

\subsection{Linear programming and basis pursuit examples for the exit flag theorem}
For $\alpha \geq 0$ let
\begin{equation}\label{eq:y_and_A_LPexitFlag}
\begin{split}
\matLPBPExit(\alpha,m,N) &=  \alpha   \oplus \begin{pmatrix} I_{m-1} & 0_{m-1 \times N-m-1} \end{pmatrix}\\
\vecYl(y_1,m) &= y_1 e_1,
\end{split}
\end{equation} where $y_1$ is always assumed to be non-negative (but not necessarily non-zero). Additionally, assume that $c$ is the $N$-dimensional vector of $1$s.
We then have the following:
\begin{lemma}\label{lemma:ProblemBasicExampleLPBPExitFlag}
	For basis pursuit and linear programming we have
	 \[
	\xilp(\vecYl,\matLPBPExit)=\xibp(\vecYl,\matLPBPExit) = \begin{cases} (y_1/\alpha) e_1 & \text{ if } \alpha > 0\\ 
	0 &\text{ if } \alpha = y_1 = 0\end{cases} .
	\]
\end{lemma}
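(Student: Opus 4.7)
The lemma concerns a highly structured input: the matrix $\matLPBPExit$ is block-diagonal with a $1\times 1$ block $[\alpha]$ and an $(m-1)\times(m-1)$ identity padded by zero columns, while $\vecYl = y_1 e_1$ is supported on the first row. Thus the constraint $Ax=y$ unpacks coordinatewise into $\alpha x_1 = y_1$, $x_2 = x_3 = \cdots = x_m = 0$, with $x_{m+1},\dots,x_N$ unconstrained by the equality. My plan is to read off the feasible set explicitly in this form and then minimise.

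First I will handle the linear programming case. Since $x \geq 0$ coordinatewise, the constraints $x_2=\cdots=x_m=0$ are already consistent with non-negativity and leave only $x_1$ and $x_{m+1},\dots,x_N$ to determine. The objective $\langle x,c\rangle = \sum_i x_i$ reduces on the feasible set to $x_1 + \sum_{i=m+1}^N x_i$. Since every coordinate involved is $\geq 0$, the minimum is achieved precisely when $x_{m+1}=\cdots=x_N = 0$ and $x_1$ is forced to the smallest non-negative value satisfying $\alpha x_1 = y_1$. In the case $\alpha > 0$ this pins $x_1 = y_1/\alpha$, while if $\alpha = y_1 = 0$ the constraint $\alpha x_1 = 0$ is automatic so the minimum is $x_1 = 0$. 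In both cases the asserted unique minimiser follows.

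For basis pursuit (taking $\delta = 0$, which is the relevant regime for the exit-flag construction and is consistent with the $\delta$-independent form of the claim), the feasible set is identical to that of LP, except we drop the non-negativity requirement. The objective becomes $\|x\|_1 = |x_1| + \sum_{i=m+1}^N |x_i|$. Each $|x_i|$ for $i \geq m+1$ is minimised at zero, and $|x_1|$ is minimised subject to $\alpha x_1 = y_1$: for $\alpha > 0$ this forces $x_1 = y_1/\alpha \geq 0$ uniquely, and for $\alpha = y_1 = 0$ we are free to choose $x_1$, with $|x_1|$ minimised at $x_1 = 0$. So the unique minimiser matches the LP minimiser in both subcases.

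There is essentially no obstacle here; the only subtlety is verifying uniqueness in the $\alpha = y_1 = 0$ case for basis pursuit, where $x_1$ is not pinned by the constraint and one must observe that $|x_1|=0$ has the unique minimiser $x_1=0$. I will write out the two cases as short displayed computations and note uniqueness explicitly. No convex analysis machinery is required beyond the fact that $x_i \mapsto |x_i|$ and $x_i\mapsto x_i$ on $[0,\infty)$ are each minimised uniquely at $0$.
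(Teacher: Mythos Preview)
Your proposal is correct and is precisely the natural argument: read off the block-diagonal constraints coordinatewise and minimise each term of the separable objective. The paper defers the proofs of all lemmas in \S\ref{sec:BasicProblemsSolutions} to Appendix~\ref{appendix:geometry-lemmas} but in fact omits this particular one as self-evident; your approach is in the same spirit as the neighbouring proofs (e.g., Lemma~\ref{lemma:ProblemBasicExampleLP} and Lemma~\ref{lemma:ProblemBasicExampleBPDNL1}), and your remark that the basis pursuit claim implicitly takes $\delta=0$ (as the stated solution is $\delta$-independent and this is exactly the regime used in the proof of Proposition~\ref{prop:ExitFlag}(iii)) is correct and worth noting.
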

\subsection{Objective values for linear programming and Smale's 9th problem}
Our counterexamples used when proving Theorem \ref{thm:Smales9} rely on the matrix and vector pairs 
defined as follows for $\alpha,\beta \geq 0$:
\begin{equation}\label{eq:y_and_A_LPObj}
\begin{split}
\matLPObj(\alpha,\beta,m,N) &= \begin{pmatrix} \alpha & -\beta \end{pmatrix} \oplus \begin{pmatrix} I_{m-1} & 0_{m-1 \times N-m-1} \end{pmatrix},\\
\vecYl(y_1,m) &= y_1 e_1.
\end{split}
\end{equation} where $y_1$ is always assumed to be non-negative. We then have the following lemma:
\begin{lemma}\label{lemma:ProblemBasicExampleLPObj}
	For $k \in\mathbb{N}$ and real $M\geq 0$, consider the decision problem \eqref{eq:Smales9restatement} with $c=\ones_N$ (for each fixed dimension $N\in\mathbb{N}$) and the corresponding solution map $\Xi_k$ as defined in \eqref{eq:Xi_K2}. Then, for $y_1>0$, $\alpha > 0$, $\beta \geq 0$, we have
	\begin{equation}\label{eq:LPObjBasic}
	\Xi_k(\vecYl(y_1,m),\matLPObj(\alpha,\beta,m,N)) = \begin{cases} 1& \text{ if } y_1/\alpha < 10^{-k} (\floor{10^{k} M}+1 )\\ 
	0& \text{ if } y_1/\alpha \geq  10^{-k} (\floor{10^{k} M}+1 )
	\end{cases},
	\end{equation}
	where $\Xi_k$ is defined in \eqref{eq:Xi_K2}.
	In addition, for all $\alpha,\beta \in \real$, we have 
	$\Xi_k(\vecYl(0,m),\matLPObj(\alpha,\beta,m,N)) = 1$.
\end{lemma}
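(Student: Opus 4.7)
The plan is to follow the same recipe used for the other geometry lemmas in Section \ref{sec:BasicProblemsSolutions}: describe the feasible polytope, minimise the linear objective over it, and then translate the minimum value into a statement about the truncated inner product $\langle \cdot ,c\rangle_k$.

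First I would unpack the structure of $\matLPObj(\alpha,\beta,m,N)$. Because of the block-diagonal construction in \eqref{eq:y_and_A_LPObj}, the constraint $\matLPObj x = \vecYl(y_1,m)$ separates into the single scalar equation $\alpha x_1-\beta x_2=y_1$ together with $x_{j+1}=0$ for $j=2,\dots,m$ (i.e.\ $x_3=\dots=x_{m+1}=0$). Combining with $x\geq 0$, I would conclude that the feasible set consists of all $x\in\real^N$ with $x_1,x_2\geq 0$ satisfying $\alpha x_1-\beta x_2=y_1$, with $x_3=\dots=x_{m+1}=0$ and with $x_{m+2},\dots,x_N\geq 0$ free. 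Since $c=\ones_N$, the objective reduces to $\langle x,c\rangle = x_1+x_2+\sum_{i=m+2}^N x_i$, so clearly any minimiser must have $x_{m+2}=\dots=x_N=0$.

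Second, I would minimise $x_1+x_2$ subject to $\alpha x_1-\beta x_2=y_1$, $x_1,x_2\geq 0$. When $y_1>0$, $\alpha>0$, $\beta\geq 0$, any feasible pair satisfies $x_1=(y_1+\beta x_2)/\alpha$, hence
\[
x_1+x_2 \;=\; \frac{y_1}{\alpha}+\Bigl(\frac{\beta}{\alpha}+1\Bigr)x_2 \;\geq\; \frac{y_1}{\alpha},
\]
with equality attained at $x_2=0$, $x_1=y_1/\alpha$. Thus the minimum of the objective over the feasible set equals $y_1/\alpha$. When $y_1=0$ (and $\alpha,\beta\in\real$ arbitrary), the point $x=0$ is feasible and yields objective value $0$.

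Third, I would convert the decision condition $\langle x,c\rangle_k\leq M$ into a statement about $\langle x,c\rangle$. By the definition $\langle x,c\rangle_k=\lfloor 10^k\langle x,c\rangle\rfloor 10^{-k}$, the inequality $\langle x,c\rangle_k\leq M$ is equivalent to $\lfloor 10^k\langle x,c\rangle\rfloor\leq \lfloor 10^k M\rfloor$, which in turn is equivalent to the strict inequality $\langle x,c\rangle < 10^{-k}(\lfloor 10^k M\rfloor+1)$. Combining this with the infima computed in the previous step immediately gives the claimed case distinction in \eqref{eq:LPObjBasic}, and the second assertion (for $y_1=0$) follows because $0\leq M$ implies $0=\langle 0,c\rangle_k\leq M$.

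There is no real obstacle; the only subtlety worth flagging is the interplay between $\lfloor\cdot\rfloor$ and the weak inequality defining $\Xi_k$, which forces the threshold $10^{-k}(\lfloor 10^k M\rfloor+1)$ and thereby the strict-versus-non-strict split between the two branches in \eqref{eq:LPObjBasic}. I would make sure the computation $\lfloor 10^k \langle x,c\rangle\rfloor 10^{-k}\leq M\iff \langle x,c\rangle<10^{-k}(\lfloor 10^k M\rfloor+1)$ is written out explicitly, since this is the one step where the statement can be easily misread.
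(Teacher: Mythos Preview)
Your proposal is correct and follows essentially the same approach as the paper: identify the feasible polytope from the block structure of $\matLPObj$, show the minimum of $\langle x,c\rangle$ over the feasible set is $y_1/\alpha$ (attained at $x=(y_1/\alpha)e_1$), and then translate the truncated inequality via the floor function. Your treatment of the equivalence $\langle x,c\rangle_k\leq M \iff \langle x,c\rangle<10^{-k}(\lfloor 10^k M\rfloor+1)$ is in fact more explicit than the paper's, which is helpful at exactly the step you flagged as the only subtlety.
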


\section{Proof of Theorem \ref{Cor:main} -- Preliminaries: constructing $\Omega$}

\subsection{Strategy for the proof}

The proof of Theorem \ref{Cor:main} formally consists of the proofs of Proposition \ref{Cor:main_SCI_cont} (corresponding to parts (i) and (ii) of the theorem) and Proposition \ref{Cor:main_SCI} (corresponding to parts (iii) and (iv) of the theorem), which will be proved in \S\ref{sec:cor_main(i)(ii)} and \S\ref{sec:cor_main(iii)(iv)}, respectively. 

The strategy for these proofs is as follows. First, for each of problems \eqref{problems} - \eqref{problems5}, we need to construct a suitable input set (which will depend both on $K$ and any relevant regularisation parameters) of the desired form, i.e., a set $\Omega=\bigcup_{m,N}\Omega_{m,N}$, where each $\Omega_{m,N}$ consists of inputs of fixed dimension.
The computational problem corresponding to each $\Omega_{m,N}$ (for fixed $m$ and $N$) will have a strong breakdown epsilon exceeding $10^{-K}$ as well as a weak breakdown epsilon exceeding $10^{-K+1}$.  This will be achieved by setting  $\Omega_{m,N}=\Omega_{m,N}^{\mathrm{s}}\cup \Omega_{m,N}^{\mathrm{w}}$, where $\Omega_{m,N}^{\mathrm{s}}$ is a set of input for which the computational problem $\{\Xi, \Omega_{m,N}^{\mathrm{s}},\mathcal{M}, \hat\Lambda\}$ has a strong breakdown epsilon of size exceeding $10^{-K}$ and $\Omega_{m,N}^{\mathrm{w}}$ is a set of input for which  $\{\Xi, \Omega_{m,N}^{\mathrm{w}},\mathcal{M}, \hat\Lambda\}$ has a weak breakdown epsilon exceeding $10^{-K+1}$, where the evaluation set $\hat \Lambda$ will be provided by Proposition \ref{prop:DrivingNegativeProposition}. The whole construction will make heavy use of the results presented in \S\ref{sec:BasicProblemsSolutions}. Specifically, our inputs will take the form of  matrix vector pairs $(\vecYl,\matLP)$, $(\vecYl,\matl)$ and $(\vecYTV,\matTV)$ presented in that section. 

 The algorithms whose existence is claimed in the statements of Propositions  \ref{Cor:main_SCI} and  \ref{Cor:main_SCI_cont} will be constructed with the help of various subroutines introduced in \S\ref{sec:UsefulSubroutinesForMainThm}.

\subsection{Constructing the sets of inputs for Theorem \ref{Cor:main}\label{sec:constr-Cor:main-sets}}
\subsubsection{The set of inputs}
Each individual problem will require a separate input set. Moreover, they will depend on the integer $K$ from the statement of Theorem \ref{Cor:main} as well as any relevant regularisation parameters.  We will denote the input sets for LP, $\ell^1$ BP, $\ell^1$ UL, CL, $\mathrm{TV}$ BP and $\mathrm{TV}$ UL
by 
\begin{equation}\label{eq:InputSetLabels}
	\omlp, \ombpl, \omull,\omcll, \ombptv \text{ and } \omultv,
\end{equation}
respectively. We remark that these will depend on $K$ and the regularisation parameters $\delta$, $\lambda$, and $\tau$, however, in order to lighten the notation, we omit making this dependence explicit. For linear programming and the $\ell^1$ regularised problems, the set $\albetSet \subset \real^2$ defined by $\albetSet=\left([1/4,1/2]\times\{1/2\} \right)\cup\left(\{1/2\} \times [1/4,1/2] \right)$ will prove useful whereas for the TV problems we denote, for integers $n$ (and for given basis pursuit denoising parameter $\delta$ and unconstrained lasso parameter $\lambda$) 
\begin{equation}\label{eq:rn_and_sn}
	r_n = \begin{cases} \frac{1}{4} \vee \frac{1}{2} \sqrt{1 - \frac{3 \cdot 10^{-n}}{4\delta}} & \text{ if } {3 \cdot 10^{-n}} \leq {4\delta}\\
		\frac{1}{4} & \text{ otherwise} \end{cases} , \quad s_n = \begin{cases} \frac{1}{4} \vee \frac{1}{2} \sqrt{1 - \frac{3 \cdot 10^{-n}}{4\lambda}} & \text{ if } {3 \cdot 10^{-n}} \leq {4\lambda}\\
		\frac{1}{4} & \text{ otherwise} \end{cases}
\end{equation} and define $\albetSetBPTV{n}=\left([r_n,1/2]\times\{1/2\} \right)\cup\left(\{1/2\} \times [r_n,1/2] \right)$ and $\albetSetULTV{n}=\left([s_n,1/2]\times\{1/2\} \right)\cup\left(\{1/2\} \times [s_n,1/2] \right)$.
Recall the definitions of  $\matLP(\alpha,\beta,m,N)$ and $\vecYLP(y_1,m)$ from \eqref{eq:y_and_A_LP}, $\matl(\alpha,\beta,m,N)$ and $\vecYl(y_1,m)$ from \eqref{eq:y_and_A_l1}, as well as $\matTV(\alpha,\beta,m,N)$ and $\vecYTV(y_1,m)$ from \eqref{eq:y_and_A_TV}, and, for $k\geq 1$, define 
\begin{equation}\label{eq:strongOmega-fixed-dim}
\begin{aligned}
	\omlps[k] &:= \left\{ \left(\vecYMainLP{k}(m) ,\matLP(\alpha,\beta,m,N)\right)\, \vert \,  (\alpha,\beta)\in \albetSet \right \},\\
	\ombpls[k] &:= \left\{ \left(\vecYMainBP{k}(m) ,\matl(\alpha,\beta,m,N)\right)\, \vert \, (\alpha,\beta)\in \albetSet \right \},\\
	\omulls[k] &:= \left\{ \left(\vecYMainUL{k}(m),\matl(\alpha,\beta,m,N)\right)\, \vert \,  (\alpha,\beta)\in \albetSet \right \} ,\\
	\omclls[k] &:= \left\{ \left(\vecYMainCL{k}(m),\matl(\alpha,\beta,m,N)\right)\, \vert \,  (\alpha,\beta)\in \albetSet \right \}  ,\\
	\ombptvs[k] &:= \left\{ \left(\vecYMainBPTV{k}(m),\matTV(\alpha,\beta,m,N)\right)\, \vert \, (\alpha,\beta)\in \albetSetBPTV{k} \right \},  \\
	\omultvs[k] &:= \left\{ \left(\vecYMainULTV{k}(m),\matTV(\alpha,\beta,m,N)\right)\, \vert \,  (\alpha,\beta)\in \albetSetULTV{k} \right \} 
\end{aligned}
\end{equation}
where the superscript $\mathrm{s}$ notes that the input sets are designed for results concerning the strong breakdown epsilon, and  
\begin{equation}\label{eq:thm3.2_y_values_strong}
	\begin{split}
		&\vecYMainLP{k}(m) := \vecYLP(2 \cdot 10^{-k},m), \qquad \vecYMainBP{k}(m) = \vecYl(2 \cdot 10^{-K}+\delta,m),\\
		&\vecYMainUL{k}(m) :=  \vecYl\left(2\cdot  10^{-k}+\lambda,m\right),\qquad \vecYMainCL{k}(m) :=  \vecYCL\left(10^{-k+1},m\right), \\
		& \vecYMainBPTV{k}(m) := \vecYTV\left(\frac{\delta m} {\theta(1/2,1/2,m)} + \left[7 - \frac{3} {\theta(1/2,1/2,m)}\right]\frac{10^{-k}}{4},m\right),\\
		&\vecYMainULTV{k}(m) := \vecYTV\left( \frac{7 \cdot 10^{-k} }{4} + \lambda \left[1 + \frac{1}{m-1}\right] - \frac{3 \cdot 10^{-k}}{4(m-1)},m\right),
	\end{split}
\end{equation}
where $\lambda \in ( 0,1/3]$ is the regularisation parameter from the UL problem \eqref{problems4} and $\delta \in [ 0,1]$ is the regularisation parameter from the BP problem \eqref{problems3}. Where it is either clear or superfluous to the result proven, we will omit the dependency on $m$ in the definition of the $y$ vectors.

To create input sets that capture the required weak breakdown epsilons, we use the superscript $\mathrm{w}$ and, for $k\geq 2$, define
\begin{equation}\label{eq:weakOmega-fixed-dim}
\begin{aligned}
	\omlpw[k] &= \left\{ \left(\vecYMainLP{k-1}(m) , \matLP(\alpha,\beta,m,N)\right)\, \vert \,  (\alpha,\beta)\in \albetSet\setminus \{z^a\}\right \} \\
	\ombplw[k] &= \left\{ \left(\vecYMainBP{k-1}(m) ,\matl(\alpha,\beta,m,N)\right)\, \vert \,  (\alpha,\beta)\in \albetSet\setminus \{z^a\} \right \} \\
	\omullw[k] &= \left\{ \left(\vecYMainUL{k-1}(m),\matl(\alpha,\beta,m,N)\right)\, \vert \,  (\alpha,\beta)\in \albetSet\setminus \{z^a\} \right \} \\
	\omcllw[k] &= \left\{ \left(\vecYMainCL{k-1}(m),\matl(\alpha,\beta,m,N)\right)\, \vert \,  (\alpha,\beta)\in \albetSet\setminus \{z^a\} \right \} ,\\
	\ombptvw[k] &= \left\{ \left(\vecYMainBPTV{k-1}(m),\matTV(\alpha,\beta,m,N)\right)\, \vert \,  (\alpha,\beta)\in \albetSetBPTV{k-1}\setminus \{z^a\} \right \}  \\
	\omultvw[k] &= \left\{ \left(\vecYMainULTV{k-1}(m),\matTV(\alpha,\beta,m,N)\right)\, \vert \,  (\alpha,\beta)\in \albetSetULTV{k-1}\setminus \{z^a\} \right \},
\end{aligned}
\end{equation}
where $z^a$ is the corner point $(1/2,1/2)$ of $\mathcal{L}$, whereas for $k=1$ we let each of $\omlpw[k]$, $\ombplw[k]$, $\omullw[k]$, $\omcllw[k]$, $\ombptvw[k]$, and $\omultvw[k]$ be the empty set.
We now define the fixed-dimension input sets as the  union of the corresponding ``strong'' and and ``weak '' sets:
\begin{equation}\label{eq:defs-5-Om-fixedDim}
\begin{gathered}
	\omlpb[k] = \omlps[k] \cup \omlpw[k] , \quad \omcllb[k]  = \omclls[k]  \cup \omcllw[k] , \\
	\ombplb[k] = \ombpls[k]  \cup \ombplw[k] ,\quad \omullb[k]  = \omulls[k]  \cup \omullw[k]  ,\\
	\ombptvb[k] = \ombptvs[k]  \cup \ombptvw[k] , \quad  \omultvb[k]  =\omultvs[k]  \cup \omultvw[k],
\end{gathered}
\end{equation}
for each $k\geq 1$, $m\geq 4$, and $N>m$, and the combined input sets as the union of the fixed-dimension input sets with the given value of $K$ over all admissible dimensions:
\begin{equation}\label{eq:defs-5-Om}
\begin{gathered}
	\omlp = \bigcup_{\substack{m,N\in\mathbb{N} \\ N>m\geq 4}} \omlpb[K], \quad \omcll = \bigcup_{\substack{m,N\in\mathbb{N} \\ N>m\geq 4}} \omcllb[K],\quad \ombpl = \bigcup_{\substack{m,N\in\mathbb{N} \\ N>m\geq 4}} \ombplb[K] ,\\
	 \omull = \bigcup_{\substack{m,N\in\mathbb{N} \\ N>m\geq 4}} \omullb[K] ,\quad \ombptv = \bigcup_{\substack{m,N\in\mathbb{N} \\ N>m\geq 4}} \ombptvb[K], \quad  \omultv = \bigcup_{\substack{m,N\in\mathbb{N} \\ N>m\geq 4}} \omultvb[K] 
\end{gathered}
\end{equation}

\subsection{Size and conditioning of the inputs}
In this section we will analyse the bounds on the inputs as well as each of the condition numbers relevant to the input sets defined in \S\ref{sec:constr-Cor:main-sets}.
\subsubsection{Size of the inputs}
\begin{lemma}\label{lemma:mainThmSizeEstimates}
	For natural numbers $m$ and $N$, let $y \in \real^m$ and $A \in \real^{m \times N}$. Then for an integer $k \geq 1$ the following hold:
	\begin{enumerate}[leftmargin=8mm]
		\item If $(y,A)$ is an element of one of the sets $\omlps[k],\ombpls[k],\omulls[k],\omclls[k]$ then $\|y\|_{\infty} \leq 2$ and $\|A\|_{\max} \leq 1 $. \label{result:L1SBSize}
		\item If $(y,A)$ is an element of $\ombptvs[k]$ then $\|y\|_{\infty} \leq 3/2$ and $\|A\|_{\max} = 1$. \label{result:BPTVSBSize}
		\item If $(y,A)$ is an element of $\omultvs[k]$ then $\|y\|_{\infty} \leq 107/180$ and $\|A\|_{\max} = 1$. \label{result:ULTVSBSize}
	\end{enumerate}
	Similarly, for an integer $k \geq 2$ the following statements hold:
	\begin{enumerate} [leftmargin=8mm, resume]
		\item If $(y,A)$ is an element of one of the sets $\omlpw[k],\ombplw[k],\omullw[k],\omcllw[k]$ then $\|y\|_{\infty} \leq 2$ and $\|A\|_{\max} \leq 1 $. \label{result:firstWBSize}
		\item If $\Omega = \ombptvw[k]$ then $\|y\|_{\infty} \leq 3/2$ and $\|A\|_{\max} = 1$. 
		\item If $\Omega = \omultvw[k]$ then $\|y\|_{\infty} \leq 107/180$ and $\|A\|_{\max} = 1$. \label{result:lastWBSize}
	\end{enumerate}
\end{lemma}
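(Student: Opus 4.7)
The plan is to verify the six size bounds case by case from the explicit formulas for the matrix and vector templates in \eqref{eq:y_and_A_LP}, \eqref{eq:y_and_A_l1}, \eqref{eq:y_and_A_TV} and \eqref{eq:thm3.2_y_values_strong}. The key observation that cuts the work in half is that each ``weak'' input set $\Omega^{\mathrm{w}}_{m,N,k}$ differs from its ``strong'' counterpart $\Omega^{\mathrm{s}}_{m,N,k}$ only by using the $y$-template at index $k-1$ instead of $k$, by restricting the $(\alpha,\beta)$-parameter region to a subset, and by imposing $k\geq 2$. Since $k\geq 2$ means $k-1\geq 1$, items \ref{result:firstWBSize}--\ref{result:lastWBSize} follow immediately from \ref{result:L1SBSize}--\ref{result:ULTVSBSize} applied with $k$ replaced by $k-1$; therefore only the strong cases require direct verification.

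For the matrix bound $\|A\|_{\max}$, every $A$ in our sets is built from (i) the active corner carrying $\alpha,\beta$ (and a $-1$ for LP), and (ii) a direct sum with an identity-and-zero block (plus a stray $1$ at position $(m,N-1)$ for the TV construction). Since $\mathcal{L}\subset [1/4,1/2]^2$, and both $\mathcal{L}^{\mathrm{BP,TV},k}$ and $\mathcal{L}^{\mathrm{UL,TV},k}$ are subsets of $[1/4,1/2]^2$ by the definition of $r_n,s_n$ in \eqref{eq:rn_and_sn}, the active entries have absolute value at most $1/2$, whereas the identity entries (and the $-1$ in LP) have absolute value exactly $1$. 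This at once yields $\|A\|_{\max}=1$ in the $\ell^1$ and TV cases and $\|A\|_{\max}\leq 1$ in the LP case, uniformly over the parameter region.

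For the vector bound, $y$ is either $y_1 e_1$ (LP, BP, UL, BPTV, ULTV) or $y_1 e_1+\tau e_2$ (CL), so $\|y\|_\infty=\max(y_1,\tau)$. Using $k\geq 1$, $\delta\in[0,1]$, $\lambda\in(0,1/3]$, $\tau\in[1/2,2]$, the LP, BP, UL and CL cases are one-line estimates from $2\cdot 10^{-k}\leq 1/5$, $10^{-k+1}\leq 1$, and $\tau\leq 2$. For BPTV and ULTV I will use the identity $\theta(1/2,1/2,m)=\sqrt{m(m-1)}$, which gives $m/\theta=\sqrt{m/(m-1)}\leq\sqrt{4/3}$ for $m\geq 4$. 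Inserted into the definition of $\vecYMainBPTV{k}(m)$ together with $\delta\leq 1$ and $[7-3/\theta]\cdot 10^{-k}/4\leq 7/40$, this bounds the BPTV value by $\sqrt{4/3}+7/40<3/2$. For ULTV, the same bound on $1/(m-1)\leq 1/3$, $\lambda\leq 1/3$ and $k\geq 1$ yields $y_1\leq 7/40+(1/3)(4/3)-1/40=107/180$, attained at $(k,m,\lambda)=(1,4,1/3)$.

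The proof is essentially a bookkeeping exercise; no step is difficult. The only mildly delicate point is the ULTV bound $107/180$, which is sharp and therefore requires an exact arithmetic computation rather than a loose estimate, but this reduces to finding a common denominator. Accordingly I do not expect any genuine obstacle.
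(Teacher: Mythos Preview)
Your proposal is correct and follows essentially the same approach as the paper: both reduce the weak cases to the strong cases via the inclusions $\Omega^{\mathrm{w}}_{m,N,k}\subseteq \Omega^{\mathrm{s}}_{m,N,k-1}$, read off the matrix bound from the explicit entries, and bound the vector component by direct estimates on the $y$-templates. The only cosmetic differences are in the intermediate inequalities used for BPTV (you invoke $m/\theta=\sqrt{m/(m-1)}\leq\sqrt{4/3}$ directly, whereas the paper routes through $\theta\geq m-1$ and a telescoping rearrangement) and ULTV (you evaluate at the extremal point $(k,m,\lambda)=(1,4,1/3)$ and assert the maximum, whereas the paper derives the same value $4/9+3/20=107/180$ via a monotone two-step chain); both routes yield the same bounds.
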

\begin{proof}
	We start with the case that $(y,A)$ is in one of $\omlps[k],\ombpls[k],\omulls[k],\omclls[k]$. In this case, we must have exactly one of  $A = \matLP(\alpha,\beta,m,N)$ or $A = \matl(\alpha,\beta,m,N)$. In any of these cases it is easy to see that $\|A\|_{\max} = 1$ directly from the definitions. We also have $\|y\|_\infty \leq \max(2\cdot 10^{-k},2\cdot 10^{-k} + \delta ,2\cdot 10^{-k} + \lambda,10^{-k+1},\tau)\leq 2$, as each of these terms is at most 2.
	
	Next, we analyse $(y,A) \in \ombptvs[k]$: if $(y,A) \in \ombptvs[k]$ then (writing $\theta = \theta(1/2,1/2,m)$ and by the definition of $\vecYMainBPTV{k}$)
	\begin{align*}
		\|y\|_{\infty} &= \frac{\delta m}{\theta} + \left(7 - \frac{3}{\theta} \right) \frac{10^{-k}}{4} \leq \frac{1}{m-1} \left(m - \frac{3\cdot 10^{-k}}{4}\right) + \frac{7 \cdot 10^{-k}}{4} \\&= 1 + \frac{4 - 3\cdot 10^{-k}}{4(m-1)} + \frac{7 \cdot 10^{-k}}{4} \leq \frac{4}{3} + \frac{3 \cdot 10^{-k}}{2}
	\end{align*}
	where the first inequality follows because $\delta \leq 1$ and $\theta \geq m-1$ and the final inequality because $m \geq 4$. Since $k \geq 1$ we obtain $\|y\|_{\infty} < 3/2$.  By the definition of $\ombptvs[k]$, we have $A = \matTV(\alpha,\beta,m,N)$ for some $(\alpha,\beta) \in \albetSetBPTV{k}$. Thus from the definition of $\matTV$ and the fact that $\alpha,\beta \leq 1/2$ we conclude that $\|A\|_{\max} \leq 1$. Thus we have shown \ref{result:BPTVSBSize}.
	
	To analyse the case where $(y,A) \in \omultvs[k]$, note that if $(y,A) \in \omultvs[k]$ then 
	\begin{align*}
		\|y\|_{\infty} &= \frac{7 \cdot 10^{-k}}{4} + \lambda \left(1 + \frac{1}{m-1}\right) - \frac{3 \cdot 10^{-k}}{4(m-1)}\\&\leq  \frac{7 \cdot 10^{-k}}{4} + \frac{1}{3} + \left(\frac{1}{3} - \frac{3 \cdot 10^{-k}}{4}\right)\frac{1}{m-1} \leq \frac{3 \cdot 10^{-k}}{2} + \frac{4}{9}
	\end{align*}
	where the first inequality follows because $\lambda \leq 1/3$ and the final inequality because $m \geq 4$. Thus for $k \geq 1$ we obtain $\|y\|_{\infty} \leq 4/9 + 3/20 = 107/180$. The argument that $\|A\|_{\max} \leq 1$ is identical to the analysis performed to prove the bounds on $\ombptv$, except now we replace all statements and sets referring to basis pursuit with those referring to unconstrained lasso. This proves \ref{result:ULTVSBSize}.
	
	Finally, we can prove \ref{result:firstWBSize} to \ref{result:lastWBSize} by using \ref{result:L1SBSize} to \ref{result:ULTVSBSize} and noting the inclusions
	\begin{gather*}
	\omlpw[k] \subseteq \omlps[k-1], \quad \ombplw[k] \subseteq \ombpls[k-1], \quad  \omullw[k]\subseteq \omulls[k-1],\quad \omcllw[k] \subseteq \omclls[k-1]\\
		\ombptvw[k] \subseteq \ombptvs[k-1],\quad \omultvw[k] \subseteq \omultvs[k-1]
		\end{gather*} that hold whenever $k \geq 2$.
\end{proof}

\subsubsection{Condition of the solution map}

We  compute the condition of the solution map for the problems LP, $\ell^1$ BP, $\ell^1$ UL, TV BP, and TV UL, with the input sets as specified in \S\ref{sec:constr-Cor:main-sets}. Concretely, we prove the following lemma.

\begin{lemma}\label{lem:cond-Xi-various}
	We have
	\begin{align}
	&\cond{\xilp,\omlp} \leq 15,\quad \cond{\xiul,\omull} \leq 28, \quad \cond{\xicl,\omcll}\leq 2, \notag\\
	&\cond{\xibp,\ombpl} \leq 24,\quad \cond{\xibptv,\ombptv} \leq 35,\quad \cond{\xiultv,\omultv} \leq 179, \label{eq:MainInputConditionMap}
	\end{align}
	where $\omlp$, $\ombpl$,  $\omull$, $\ombptv$, $\omcll$, and  $\omultv$ are defined in \eqref{eq:defs-5-Om}.
\end{lemma}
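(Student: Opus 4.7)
The plan is to exploit the explicit solution formulas from \S\ref{sec:BasicProblemsSolutions}: each input in the relevant $\Omega$ is of the form $(\vecYl(y_1,m),\matLP(\alpha,\beta,m,N))$ or analogous, with the only ``free'' coordinates being $(\alpha,\beta)$ (entries of $A$) and possibly the value $y_1$ (which takes two values, $y_1^{(K)}$ and $y_1^{(K-1)}$, as $\Omega$ is the union of the strong and weak pieces from \eqref{eq:defs-5-Om-fixedDim}). Thus the active set $\mathcal{A}(\Omega)$ in \eqref{eq:def-active-set} consists of at most three scalar coordinates, and $\Omega^{\mathrm{act}}\cap \hat\Omega$ is an open subset of $\real^2$ or $\real^3$ on which the relevant solution map factors through a simple rational/algebraic function. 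The condition number reduces to controlling a local Lipschitz constant of these explicit scalar functions.

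First I would handle LP, $\ell^1$-BP, $\ell^1$-UL, and $\ell^1$-CL. By Lemmas~\ref{lemma:ProblemBasicExampleLP}--\ref{lemma:ProblemBasicExampleCLASSO}, away from the corner $(1/2,1/2)\in\mathcal{L}$ the solutions are single-valued and depend smoothly on $(\alpha,\beta,y_1)$ through $1/(\alpha\vee\beta)$, $(2(\alpha\vee\beta)y_1-\lambda)/2(\alpha\vee\beta)^2$, $(y_1-\delta)/(\alpha\vee\beta)$, and $(\alpha\vee\beta)y_1/(1+(\alpha\vee\beta)^2)$ respectively. Since $(\alpha,\beta)\in[1/4,1/2]^2$ bounds $\alpha\vee\beta$ away from $0$, these are genuinely Lipschitz in each coordinate with constants estimated directly from the formulas using the bounds $y_1\leq 2$, $\lambda\leq 1/3$, $\delta\leq 1$, $\tau\leq 2$; a straightforward term-by-term estimate gives the claimed numerical bounds. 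At the corner $(1/2,1/2)$, the solution is the multi-valued line segment stated in the lemmas, and any small perturbation produces a single-valued solution lying at (or arbitrarily close to) an endpoint of this segment, so $\dist_\infty(\Xi(\iota+z),\Xi(\iota))$ is controlled as well, this time by the corresponding one-sided Lipschitz estimate. The variation of $y_1$ between $\omlps[K]$ and $\omlpw[K]$ contributes an additional additive term to the Lipschitz constant that is absorbed in the stated bounds.

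For the TV problems I would use the closed forms \eqref{eq:the_eta} and \eqref{eq:the_tilde_eta} from Lemmas~\ref{lemma:BPTVSolutions} and \ref{lemma:ULTVSolutions}. The first $N-1$ coordinates of the minimiser $\eta$ (respectively $\psi$) depend on $(\alpha,\beta,m)$ through $\delta(\alpha+\beta)/\theta(\alpha,\beta,m)$ (respectively $\lambda(\alpha+\beta)/[2(m-1)(\alpha\vee\beta)]$), while the last entry differs from this by an additive piece of the form $(y_1-c(m,\alpha,\beta))/(\alpha\vee\beta)$. The key observation is that on $\albetSetBPTV{K}\cup \albetSetULTV{K}$ one has $1/4\leq \alpha\vee\beta\leq 1/2$, and moreover $\theta(\alpha,\beta,m)\in[(m-1),\sqrt{(m-1)^2+m-1}]$ so that $\theta/(m-1)\in[1,\sqrt{1+1/(m-1)}]$ uniformly in $m\geq 4$. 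This lets me bound each partial derivative $\partial/\partial\alpha$, $\partial/\partial\beta$, $\partial/\partial y_1$ uniformly in $m$, and then sum these contributions.

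The main technical obstacle is handling the corner $(1/2,1/2)$ in the TV setting, where the solution jumps between $\eta$ and its flip $\flipOp\eta$, which differ by a vector whose $\|\cdot\|_\infty$ distance is of order $y_1/(\alpha\vee\beta)$ and is thus \emph{not} small. The multi-valued convex hull at the corner saves the day: for any perturbation $(\alpha',\beta')$ off the corner, the singleton solution $\eta(y_1,\alpha',\beta')$ or $\flipOp\eta(y_1,\alpha',\beta')$ lies within $\mathcal{O}(\|z\|_\infty)$ of one of the two endpoints of the multi-valued segment, so $\dist_\infty$ remains small. A similar continuity check at the endpoints of $\albetSetBPTV{K}$ and $\albetSetULTV{K}$ (where $\alpha$ or $\beta$ equals $r_K$ or $s_K$) must be made, but these are interior to $(0,1)$, so no degeneracy arises. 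Combining the explicit per-variable Lipschitz estimates with the bound on the number of active variables yields the six displayed constants in \eqref{eq:MainInputConditionMap}.
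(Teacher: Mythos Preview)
Your approach is essentially the same as the paper's: identify the active coordinates $(\alpha,\beta,y_1)$, invoke the explicit solution formulas from \S\ref{sec:BasicProblemsSolutions}, bound the local Lipschitz constants of the resulting scalar/vector functions (via the mean value theorem in the paper's version), and handle the multi-valued corner $(1/2,1/2)$ by measuring $\dist_\infty$ to the line segment of minimisers. The paper differs in organisation rather than substance: it isolates the TV perturbation estimates as standalone Lemmas~\ref{lem:eta-pert-bound} and~\ref{lem:etatilde-pert-bound} (giving $\|\eta((y_1,\alpha,\beta)+z)-\eta(y_1,\alpha,\beta)\|_\infty\leq 14(y_1+1)\|z\|_\infty$ and the analogous $112(y_1+1)$ for $\psi$), and it passes from the ``strong'' sets $\Omega^{\mathrm{s}}_{m,N,k}$ to the full $\Omega$ not by an additive correction in $y_1$ but via the inclusion $\Omega^{\mathrm{w}}_{m,N,K}\subset\Omega^{\mathrm{s}}_{m,N,K-1}$ together with the observation that the active coordinates coincide, so that $\mathrm{Cond}(\Xi,\Omega^{\mathrm{s}}_{m,N,K-1}\cup\Omega^{\mathrm{s}}_{m,N,K})$ equals the maximum of the two individual condition numbers.
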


This will give us the desired bounds on the condition numbers. Proving Lemma \ref{lem:cond-Xi-various} will be straightforward for linear programming and the $\ell^1$ regularised problems but will require a bit more effort for the TV problems. We thus state and prove three simple lemmas that will be useful.

\begin{lemma}\label{lem:alpha-vee-beta-estimate}
	Let $\alpha,\beta>0$, and let $z=(z_\alpha,z_\beta)\in\real^2$. Then
	\begin{equation*}
	|(\alpha+z_\alpha)\vee (\beta+ z_\beta) - \alpha\vee \beta|\leq \|z\|_\infty.
	\end{equation*}
\end{lemma}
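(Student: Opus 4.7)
The plan is to establish the general 1-Lipschitz property of the maximum operation in the supremum norm, from which the desired bound is an immediate specialisation. Concretely, I would first prove the auxiliary fact that, for any reals $a,b,c,d$, one has
\begin{equation*}
|\max(a,b) - \max(c,d)|\leq \max(|a-c|,|b-d|),
\end{equation*}
and then apply it with $a = \alpha + z_\alpha$, $b = \beta + z_\beta$, $c = \alpha$, $d = \beta$, noting that $\max(|a-c|,|b-d|) = \max(|z_\alpha|,|z_\beta|) = \|z\|_\infty$.

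To establish the auxiliary fact, I would argue as follows. For arbitrary reals, $a\leq c + |a-c|\leq \max(c,d) + \max(|a-c|,|b-d|)$ and similarly $b\leq \max(c,d) + \max(|a-c|,|b-d|)$, so taking the maximum over the left-hand sides gives $\max(a,b)\leq \max(c,d) + \max(|a-c|,|b-d|)$. By symmetry, swapping the roles of $(a,b)$ and $(c,d)$ yields the reverse inequality, and together they give the claim.

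The hypothesis $\alpha,\beta>0$ plays no role in the argument and the result holds for all reals; the positivity is presumably stated only because in the intended application the $\alpha,\beta$ come from the construction of the input sets in \S\ref{sec:constr-Cor:main-sets}. There is no real obstacle here — the lemma is essentially the observation that coordinatewise $1$-Lipschitz continuous functions of a pair of real arguments, such as $(u,v)\mapsto u\vee v$, remain $1$-Lipschitz with respect to $\|\cdot\|_\infty$ on $\real^2$.
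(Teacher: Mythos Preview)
Your proof is correct and essentially equivalent to the paper's. The paper argues slightly more directly: it assumes w.l.o.g.\ that $\alpha\geq\beta$ and then sandwiches $(\alpha+z_\alpha)\vee(\beta+z_\beta)$ between $\alpha-\|z\|_\infty$ and $\alpha+\|z\|_\infty$, whereas you first isolate the general $1$-Lipschitz inequality $|\max(a,b)-\max(c,d)|\leq \max(|a-c|,|b-d|)$ and then specialise; the underlying manipulation is the same.
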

\begin{proof}
	Suppose w.l.o.g. that $\alpha\geq \beta$. Then $\alpha-\|z\|_\infty \leq \alpha+z_\alpha \leq (\alpha+z_\alpha)\vee (\beta+ z_\beta)\leq \alpha+\|z\|_\infty$, and so
	$|(\alpha+z_\alpha)\vee (\beta+ z_\beta) - \alpha|\leq \|z\|_\infty$, as desired.
\end{proof}

\begin{lemma}\label{lem:eta-pert-bound} 
	Let $(\alpha,\beta)\in\albetSetBPTV{n}$ for some natural number $n$ and let $m$, $N$, $\theta=\theta(\cdot,\cdot)$, and $\eta=\eta(\cdot,\cdot,\cdot )$ be as defined in \eqref{eq:the_theta} and \eqref{eq:the_eta}. Let $y_1>0 $ be such that $\mu(y_1,\alpha,\beta):=y_1-\delta\theta(\alpha,\beta,m)/(m-1)>0$ and $z=(z_y,z_\alpha,z_\beta)\in\real^3$ with $\|z\|_\infty\leq \frac{1}{8}\wedge\frac{1}{2}\mu(y_1,\alpha,\beta) $. Then $\mu((y_1,\alpha,\beta) + z)>0$ and
	\begin{equation*}
	\|\eta(y_1, \alpha,\beta) - \eta(y_1+z_y, \alpha+z_\alpha,\beta+z_\beta)\|_\infty\leq 14(y_1+1)\|z\|_\infty.
	\end{equation*}
\end{lemma}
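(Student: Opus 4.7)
The plan is to bound $\|\eta(y_1,\alpha,\beta)-\eta(y_1+z_y,\alpha+z_\alpha,\beta+z_\beta)\|_\infty$ by estimating the two distinct components of $\eta$ separately and exploiting that throughout the argument we have $\alpha,\beta\in [1/4,1/2]$ (with at least one equal to $1/2$), so that after perturbation the relevant quantities still lie in a compact region bounded away from $0$. Specifically, since $\|z\|_\infty\leq 1/8$, the perturbed values $\alpha+z_\alpha$ and $\beta+z_\beta$ lie in $[1/8,5/8]$; in particular, $(\alpha+z_\alpha)\vee(\beta+z_\beta)\geq 1/2-1/8=3/8$ by Lemma \ref{lem:alpha-vee-beta-estimate}, and $\alpha+\beta+z_\alpha+z_\beta\in [1/2,9/8]$.

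First I would verify the positivity claim $\mu((y_1,\alpha,\beta)+z)>0$. A direct calculation using $\sqrt{a+b}\leq \sqrt{a}+\sqrt{b}/(2\sqrt{a})$ or just the derivative bound $|\partial_\alpha\theta/(m-1)|,|\partial_\beta\theta/(m-1)|\leq (\alpha+\beta)/\theta\leq 1$ (and similar for perturbations) shows that $|\theta(\alpha+z_\alpha,\beta+z_\beta,m)-\theta(\alpha,\beta,m)|/(m-1)\leq 2\|z\|_\infty$, so the shift in $\delta\theta/(m-1)$ is at most $2\|z\|_\infty$; combined with the at most $\|z\|_\infty$ shift in $y_1$, the perturbed $\mu$ exceeds $\mu(y_1,\alpha,\beta)-3\|z\|_\infty \geq \mu(y_1,\alpha,\beta)/2>0$ since $\|z\|_\infty\leq \mu/2$ (with room to spare, since $\delta\leq 1$ can be absorbed).

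Next I would bound the first $N-1$ coordinates. Since they equal $\delta(\alpha+\beta)/\theta(\alpha,\beta,m)$, and $\theta(\alpha,\beta,m)\geq m-1\geq 3$, I would compute the gradient of $(\alpha,\beta)\mapsto (\alpha+\beta)/\theta(\alpha,\beta,m)$ on the enlarged box $[1/8,5/8]^2$: it is straightforward that this function is $L$-Lipschitz with a small absolute $L$ (a crude bound of $L\leq 2$ suffices), so the first $N-1$ coordinates differ by at most $2\delta\|z\|_\infty\leq 2\|z\|_\infty$.

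For the $N$-th coordinate, the extra term is $\mu(y_1,\alpha,\beta)/(\alpha\vee\beta)$. Writing $\mu'$ and $(\alpha\vee\beta)'$ for the perturbed quantities and applying the identity
\begin{equation*}
\frac{\mu}{\alpha\vee\beta}-\frac{\mu'}{(\alpha\vee\beta)'}=\frac{\mu-\mu'}{\alpha\vee\beta}+\mu'\cdot\frac{(\alpha\vee\beta)'-(\alpha\vee\beta)}{(\alpha\vee\beta)(\alpha\vee\beta)'},
\end{equation*}
I would plug in the bounds $|\mu-\mu'|\leq 3\|z\|_\infty$ from the positivity step, $|(\alpha\vee\beta)'-(\alpha\vee\beta)|\leq \|z\|_\infty$ from Lemma \ref{lem:alpha-vee-beta-estimate}, $\alpha\vee\beta\geq 1/2$, $(\alpha\vee\beta)'\geq 3/8$, and $\mu'\leq y_1+3\|z\|_\infty\leq y_1+1$. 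This yields $|(\mu/(\alpha\vee\beta))-(\mu'/(\alpha\vee\beta)')|\leq 6\|z\|_\infty+\tfrac{16}{3}(y_1+1)\|z\|_\infty$. Adding the contribution $2\|z\|_\infty$ from the common first term, I obtain an overall bound of at most $14(y_1+1)\|z\|_\infty$, with a bit of slack to absorb the exact constants.

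The main obstacle is just the careful numerical bookkeeping to arrive at the explicit constant $14(y_1+1)$; none of the individual estimates are subtle, but combining them without losing the stated constant requires keeping track of the upper bound $\mu'\leq y_1+1$ and the lower bounds on $\alpha\vee\beta$ and $(\alpha\vee\beta)'$ simultaneously.
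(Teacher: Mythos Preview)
Your approach is essentially the same as the paper's --- direct Lipschitz estimates on the two components of $\eta$, using that the perturbed parameters stay in a compact box bounded away from zero --- and your bookkeeping for the main $\|\cdot\|_\infty$ bound goes through: $2\|z\|_\infty + 6\|z\|_\infty + \tfrac{16}{3}(y_1+1)\|z\|_\infty \leq \tfrac{40}{3}(y_1+1)\|z\|_\infty < 14(y_1+1)\|z\|_\infty$.

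However, there is a numerical slip in the positivity argument. From your derivative bound $(\alpha+\beta)/\theta \leq 1$ you get a shift in $\mu$ of at most $3\|z\|_\infty$, and then you claim $\mu - 3\|z\|_\infty \geq \mu/2$. This would require $\|z\|_\infty \leq \mu/6$, but the hypothesis only gives $\|z\|_\infty \leq \mu/2$, so with your constants $\mu'$ could be as low as $\mu - \tfrac{3}{2}\mu < 0$. The parenthetical ``$\delta\leq 1$ can be absorbed'' does not help, since you already used $\delta\leq 1$. The fix is to sharpen the derivative bound using $m\geq 4$: since $\theta \geq m-1 \geq 3$ and $\alpha+\beta \leq 5/4$ on the enlarged box, one has $(\alpha+\beta)/\theta \leq 5/12$, so the shift in $\theta/(m-1)$ is at most $\tfrac{5}{6}\|z\|_\infty$ and the shift in $\mu$ is at most $\tfrac{11}{6}\|z\|_\infty$. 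Then $\mu' \geq \mu - \tfrac{11}{6}\cdot\tfrac{\mu}{2} = \tfrac{\mu}{12} > 0$, exactly as in the paper's proof.
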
\begin{proof}[Proof of Lemma \ref{lem:eta-pert-bound}]
	First note that, using the mean value theorem and the fact that $\alpha + z_\alpha,\beta + z_\beta \in [1/8,5/8]$, we have
	\begin{equation}\label{eq:thetaBound}
	\begin{aligned}
	&|\theta(\alpha+z_\alpha,\beta+z_\beta,m)- \theta(\alpha,\beta,m)|\leq  \max_{(u,v)\in[1/8,5/8]}\|\nabla_{u,v}\theta(u,v,m)\|_1 \|z\|_\infty\\
	& \qquad \leq  \max\limits_{(u,v)\in[1/8,5/8]}\left|\frac{2(u+v)(m-1)}{\theta(u,v,m)}\right|\|z\|_\infty \leq  \max\limits_{(u,v)\in[3/8,5/8]}2(u+v) \|z\|_\infty  \leq \frac{5}{2}\|z\|_\infty,
	\end{aligned}
	\end{equation}
	since 
\[
(u+v)(m-1)/\theta(u,v,m) = [1/(u+v)^2 + 1/(m-1)]^{-1/2} \leq (u+v).
\]
 We can now write
	\begin{equation*}
	\eta(y_1 + z_y,\alpha+z_\alpha,\beta+z_\beta)= \frac{\delta \cdot (\alpha+z_\alpha+ \beta+z_\beta )}{\theta(\alpha+z_\alpha,\beta+z_\beta,m)}\ones_{N} + \frac{\mu((y_1,\alpha,\beta) + z) }{(\alpha+z_\alpha)\vee (\beta+z_\beta)} e_{N},
	\end{equation*}
	where $\ones_{N}\in\real^N$ is the vector of all ones, so that 
	\begin{equation}\label{eq:TwoTermCondBound}
	\begin{aligned}
	&\|\eta(y_1,\alpha,\beta) - \eta(y_1+z_y,\alpha+z_\alpha,\beta+z_\beta)\|_\infty\\
	& \leq \left\| \frac{\delta \cdot (\alpha+z_\alpha+ \beta+z_\beta )}{\theta(\alpha+z_\alpha,\beta+z_\beta,m)}\ones_{N} - \frac{\delta\cdot (\alpha+\beta)}{\theta(\alpha,\beta,m)}  \ones_{N} \right\|_\infty  + \left\|\frac{\mu((y_1,\alpha,\beta) + z)}{(\alpha+z_\alpha)\vee (\beta+z_\beta)} e_{N} - \frac{\mu(y_1,\alpha,\beta) }{\alpha\vee \beta} e_{N}\right\|_\infty\\
	& =  \left| \frac{\delta \cdot (\alpha+z_\alpha+ \beta+z_\beta )}{\theta(\alpha+z_\alpha,\beta+z_\beta,m)} - \frac{\delta\cdot (\alpha+\beta)}{\theta(\alpha,\beta,m)} \right|  + \left|\frac{\mu((y_1,\alpha,\beta) + z)}{(\alpha+z_\alpha)\vee (\beta+z_\beta)}- \frac{\mu(y_1,\alpha,\beta) }{\alpha\vee \beta} \right|.
	\end{aligned}
	\end{equation}
	We will bound each of the latter two terms separately. Write $f(u,v) = u/v$, for $v$ non-zero. Note that, since $(\alpha,\beta) \in \albetSetBPTV{n}$, we have $\alpha + z_\alpha + \beta + z_\beta \in (1/2,5/4)$: indeed, $1/2 = 3/4 - 2/8\leq \alpha + \beta -2\|z\|_{\infty} \leq \alpha + z_\alpha + \beta + z_\beta$ and similarly $\alpha + z_\alpha + \beta + z_\beta \leq \alpha + \beta + 2\|z\|_{\infty}\leq 1 + 2/8 = 5/4$. In a similar way,  $m-1 \leq  \sqrt{(m-1)^2+(m-1)(\alpha + z_\alpha + \beta + z_\beta)} \leq \sqrt{(m-1)^2 + 5(m-1)/4} \leq m$ so that $\theta(\alpha+z_\alpha,\beta+z_\beta,m) \in [m-1,m]$. Thus, by the mean value theorem 
	\begin{equation}\label{eq:FirstTermCondBPBound}
	\begin{aligned}
	& \left| \frac{\delta \cdot (\alpha+z_\alpha+ \beta+z_\beta )}{\theta(\alpha+z_\alpha,\beta+z_\beta,m)} - \frac{\delta\cdot (\alpha+\beta)}{\theta(\alpha,\beta,m)} \right|  \\&\leq \delta \max\limits_{(u,v)\in [1/2,5/4]\times [m-1,m] }\|\nabla f(u,v)\|_1 \cdot \left(|z_\alpha + z_\beta| \vee |\theta(\alpha+z_\alpha,\beta+z_\beta,m)- \theta(\alpha,\beta,m) |\right) \\
	&\leq \delta \left(\frac{1}{m-1} + \frac{5/4}{(m-1)^2}\right)\cdot (2\|z\|_{\infty} \vee |\theta(\alpha+z_\alpha,\beta+z_\beta,m)- \theta(\alpha,\beta,m)|) \leq  \frac{85\|z\|_{\infty}}{72}, 
	\end{aligned}
	\end{equation}
	where in the final line we used \eqref{eq:thetaBound}, $\delta\leq 1$, and the assumption that $m \geq 4$.
	Next, again using \eqref{eq:thetaBound}, we obtain 
\[	
|\mu((y_1,\alpha,\beta) + z)-\mu(y_1, \alpha,\beta)|\leq |z_y| + \frac{\delta}{m-1}|\theta(\alpha+z_\alpha,\beta+z_\beta,m)- \theta(\alpha,\beta,m)|\leq \frac{11}{6}\|z\|_\infty,
\] 
since  $\delta \in [0,1]$ and $m \geq 4$. In particular, since $\mu(y_1,\alpha,\beta) \geq 2\|z\|_{\infty} $ and $\mu(y_1,\alpha,\beta) \leq y_1$ we obtain 
\[
\mu((y_1,\alpha,\beta) + z) \in [\|z\|_\infty/6,y_1 +11\|z\|_{\infty}/6] \subset [0,y_1 + 1/4], 
\]
establishing the claim that $\mu((y_1,\alpha,\beta)+z) > 0$.
	Now, by Lemma \ref{lem:alpha-vee-beta-estimate}, $(\alpha + z_\alpha) \vee (\beta + z_\beta) \in [3/8,5/8]$.
	Therefore, using the mean value theorem together with the bounds above we obtain 
	\begin{equation}\label{eq:SecondTermCondBPBound}
	\begin{aligned}
	& \left|\frac{\mu((y_1,\alpha,\beta) + z)}{(\alpha+z_\alpha)\vee (\beta+z_\beta)}- \frac{\mu(y_1,\alpha,\beta) }{\alpha\vee \beta} \right|  \leq \max_{(u,v)\in [0,y_1+\frac{1}{4}]\times [\frac{3}{8},\frac{5}{8}] } \!\|\nabla f(u,v)\|_1 \\ 
	& \qquad \qquad \cdot \left( |\mu((y_1,\alpha,\beta) + z)-\mu(y_1, \alpha,\beta)| \vee |(\alpha+z_\alpha)\vee (\beta+z_\beta) -\alpha\vee \beta| \right) \\
	& \qquad \qquad \quad  \leq \left(\frac{1}{3/8} +\frac{(y_1 +\frac{1}{4})}{(3/8)^2}\right) \cdot \left(\frac{11}{6}\|z\|_\infty \vee \|z\|_{\infty}\right) = \left(\frac{220}{27} +  \frac{352 y_1 }{27}\right)\|z\|_{\infty}.
	\end{aligned}
	\end{equation}
	Combining \eqref{eq:TwoTermCondBound}, \eqref{eq:FirstTermCondBPBound}, \eqref{eq:SecondTermCondBPBound} as well as the assumption $\delta \leq 1$ yields $\|\eta(y_1,\alpha,\beta) - \eta(y_1+z_y,\alpha+z_\alpha,\beta+z_\beta)\|_\infty \leq  14(y_1+1)$, as desired.
\end{proof}

\begin{lemma}\label{lem:etatilde-pert-bound}
	For some natural number $n$, let $(\alpha,\beta)\in\albetSetULTV{n}$ and let $m$, $N$, $\theta=\theta(\cdot,\cdot)$, and $\psi=\psi(\cdot,\cdot,\cdot )$ be as in subsection \ref{sec:TV reg geometry}. Suppose $y_1>0$ is such that \[\varsigma(y_1,\alpha,\beta):=y_1-\lambda\, \theta(\alpha,\beta,m)^2\left[2(m-1)^2 (\alpha\vee \beta)\right]^{-1}>0\] and $z=(z_y,z_\alpha,z_\beta)\in\real^2$ with $\|z\|_\infty\leq \frac{1}{8}\wedge\frac{1}{10}\varsigma(y_1,\alpha,\beta)$. Then $\varsigma((y_1,\alpha,\beta) + z)>0$ and
	\begin{equation}\label{eq:etatilde-pert-bound}
	\|\psi(y_1, \alpha,\beta) - \psi(y_1+z_y, \alpha+z_\alpha,\beta+z_\beta)\|_\infty\leq 112(y_1+1)\|z\|_\infty.
	\end{equation}
\end{lemma}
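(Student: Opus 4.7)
The plan is to mirror the proof of Lemma~\ref{lem:eta-pert-bound}, adapted to the structure of $\psi$ in place of $\eta$. By \eqref{eq:the_tilde_eta}, the vector $\psi(y_1,\alpha,\beta)$ has the constant value $\frac{\lambda(\alpha+\beta)}{2(m-1)(\alpha\vee\beta)}$ in coordinates $1,\dots,N-1$ and an additional additive spike $\varsigma(y_1,\alpha,\beta)/(\alpha\vee\beta)$ in coordinate $N$. Thus the $\ell^\infty$ difference in \eqref{eq:etatilde-pert-bound} splits as the sum of (a) the perturbation of the constant entries and (b) the perturbation of the spike, and the whole argument reduces to bounding each piece by a suitable multiple of $\|z\|_\infty$.

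First I will handle denominator-level estimates. By Lemma~\ref{lem:alpha-vee-beta-estimate} and $\alpha\vee\beta\geq 1/4$, $\|z\|_\infty\leq 1/8$, we have $(\alpha+z_\alpha)\vee(\beta+z_\beta)\in[1/8,5/8]$. Using \eqref{eq:thetaBound} (which is valid since $(\alpha,\beta)\in\albetSetULTV{n}\subset[1/4,1/2]^2$) together with the bound $\theta(\cdot,\cdot,m)\leq m$ established inside the proof of Lemma~\ref{lem:eta-pert-bound}, the identity $\theta_1^2-\theta_2^2=(\theta_1+\theta_2)(\theta_1-\theta_2)$ yields
\begin{equation*}
\bigl|\theta(\alpha+z_\alpha,\beta+z_\beta,m)^2-\theta(\alpha,\beta,m)^2\bigr|\leq 5m\|z\|_\infty.
\end{equation*}

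Second, I will estimate the perturbation of $\varsigma$. Since $\varsigma(u,v,w)=u-\lambda\,\theta(v,w,m)^2/[2(m-1)^2(v\vee w)]$, applying the mean value theorem to $(a,b)\mapsto a/b$ on the rectangle $a\in[0,m^2]$, $b\in[1/8,5/8]$, combined with the $\theta^2$ bound above and Lemma~\ref{lem:alpha-vee-beta-estimate}, gives $|\varsigma((y_1,\alpha,\beta)+z)-\varsigma(y_1,\alpha,\beta)|\leq C\|z\|_\infty$ for an explicit constant $C$ depending only on $\lambda\leq 1/3$ and $m\geq 4$. The hypothesis $\|z\|_\infty\leq \varsigma(y_1,\alpha,\beta)/10$ then forces $C$ to be absorbed so that $\varsigma((y_1,\alpha,\beta)+z)>0$, with the elementary upper bound $\varsigma((y_1,\alpha,\beta)+z)\leq y_1+C\|z\|_\infty\leq y_1+1$.

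Finally, I will bound each $\ell^\infty$ piece using the mean value theorem applied to $f(u,v)=u/v$, exactly as in \eqref{eq:FirstTermCondBPBound} and \eqref{eq:SecondTermCondBPBound}. For the constant entries, write
\begin{equation*}
\left|\frac{\lambda(\alpha+z_\alpha+\beta+z_\beta)}{2(m-1)((\alpha+z_\alpha)\vee(\beta+z_\beta))}-\frac{\lambda(\alpha+\beta)}{2(m-1)(\alpha\vee\beta)}\right|
\end{equation*}
and apply the mean value theorem on $u\in[1/2,5/4]$, $v\in[1/8,5/8]$, with the numerator perturbation bounded by $2\|z\|_\infty$ and the denominator by $\|z\|_\infty$ via Lemma~\ref{lem:alpha-vee-beta-estimate}. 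For the spike, the extra contribution $|\varsigma((y_1,\alpha,\beta)+z)/((\alpha+z_\alpha)\vee(\beta+z_\beta))-\varsigma(y_1,\alpha,\beta)/(\alpha\vee\beta)|$ is bounded by the mean value theorem on $u\in[0,y_1+1]$, $v\in[1/8,5/8]$, using the $\varsigma$-perturbation bound from the previous step; this is the only term producing the factor $y_1$. Adding the two contributions yields the desired estimate $\|\psi(y_1,\alpha,\beta)-\psi(y_1+z_y,\alpha+z_\alpha,\beta+z_\beta)\|_\infty\leq 112(y_1+1)\|z\|_\infty$. The structural work is identical to Lemma~\ref{lem:eta-pert-bound}; the only mild obstacle is bookkeeping the explicit constants (notably the factor coming from $\theta^2$ and from the smaller denominator $\alpha\vee\beta\geq 1/8$, as opposed to $\theta\geq m-1$) so that the final constant does not exceed $112$.
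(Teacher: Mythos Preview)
Your decomposition into constant entries plus spike and the use of the mean value theorem is exactly the paper's approach, but there is a concrete gap in your denominator estimate that prevents the constants from closing. You assert only $\alpha\vee\beta\geq 1/4$, giving $(\alpha+z_\alpha)\vee(\beta+z_\beta)\in[1/8,5/8]$. In fact, by the very definition of $\albetSetULTV{n}$ one coordinate of $(\alpha,\beta)$ equals $1/2$, so $\alpha\vee\beta=1/2$ and the correct range is $[3/8,5/8]$, which is what the paper uses.

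This is not merely cosmetic. With the lower bound $1/8$, your componentwise MVT on $(a,b)\mapsto a/b$ (with $a=\theta^2$, $|\Delta a|\leq 5m\|z\|_\infty$, $|\Delta b|\leq\|z\|_\infty$) yields, at $m=4$,
\[
|\varsigma'-\varsigma|\;\leq\;\left(1+\frac{\lambda(40m+64m^2)}{2(m-1)^2}\right)\|z\|_\infty
\;\leq\;\left(1+\frac{1184}{54}\right)\|z\|_\infty\;\approx\;22.9\,\|z\|_\infty,
\]
so the hypothesis $\|z\|_\infty\leq\varsigma/10$ does \emph{not} give $\varsigma'>0$, and the spike estimate then produces a constant far exceeding $112$. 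With the correct range $[3/8,5/8]$ the analogous computation gives a Lipschitz constant below $10$ (the paper obtains $28/3$ via the equivalent route $g(u,v)=u^2/v$ with $u=\theta$), after which both $\varsigma'>0$ and the final bound $112(y_1+1)$ follow. The fix is simply to use $\alpha\vee\beta=1/2$; the rest of your argument is sound and essentially identical to the paper's.
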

\begin{proof}[Proof of Lemma \ref{lem:etatilde-pert-bound}]
	As in the proof of Lemma \ref{lem:eta-pert-bound}, we first obtain $|\theta(\alpha+z_\alpha,\beta+z_\beta,m)- \theta(\alpha,\beta,m)|\leq \frac{5}{2}\|z\|_\infty$ and $\theta(\alpha + z_\alpha, \beta + z_\beta,m) \in [m-1,m]$. Moreover, using Lemma \ref{lem:alpha-vee-beta-estimate} we get $(\alpha+z_\alpha)\vee(\beta+z_\beta) \in [3/8,5/8]$. Next, we write 
	\begin{equation*}
	\psi(y_1 + z_y,\alpha+z_\alpha,\beta+z_\beta)=\frac{\lambda\cdot (\alpha+z_\alpha+ \beta+z_\beta )}{2(m-1)((\alpha+z_\alpha)\vee(\beta+z_\beta))}\mathbf{1} + \frac{\varsigma((y_1,\alpha,\beta) + z) }{(\alpha+z_\alpha)\vee (\beta+z_\beta)} e_{N}
	\end{equation*}
	where $\ones_N\in\real^N$ is the vector of all ones, so that 
	\begin{equation}\label{eq:TwoTermCondULBound}
	\begin{aligned}
	&\|\psi(y_1,\alpha,\beta) - \psi(y_1+z_y,\alpha+z_\alpha,\beta+z_\beta)\|_\infty\\
	\leq&  \left\|  \frac{\lambda\cdot (\alpha+z_\alpha+ \beta+z_\beta )}{2(m-1)((\alpha+z_\alpha)\vee(\beta+z_\beta))}\mathbf{1}-\frac{\lambda\cdot (\alpha+\beta)}{2(m-1)(\alpha\vee\beta )} \mathbf{1} \right\|_\infty + \\&\hspace{5cm} +\left\|\frac{\varsigma((y_1,\alpha,\beta) + z)}{(\alpha+z_\alpha)\vee (\beta+z_\beta)} e_{N} - \frac{\varsigma(y_1,\alpha,\beta) }{\alpha\vee \beta} e_{N}\right\|_\infty\\
	= & \left|  \frac{\lambda\cdot (\alpha+z_\alpha+ \beta+z_\beta )}{2(m-1)((\alpha+z_\alpha)\vee(\beta+z_\beta))} - \frac{\lambda\cdot (\alpha+\beta)}{2(m-1)(\alpha\vee\beta )}  \right|  \\
	& \qquad \qquad \qquad \qquad \qquad \qquad \qquad  + \left|\frac{\varsigma((y_1,\alpha,\beta) + z)}{(\alpha+z_\alpha)\vee (\beta+z_\beta)}- \frac{\varsigma(y_1,\alpha,\beta) }{\alpha\vee \beta} \right|.
	\end{aligned}
	\end{equation}
	We will bound both of these terms separately. The first term can be bounded as follows. Let $f(u,v) = u/v$ and note that $\alpha + \beta + z_\alpha + z_\beta \in [1/2 + 1/4 - 2/8,1/2+1/2 + 2/8] = [1/2,5/4]$. We thus obtain
	\begin{equation}\label{eq:FirstTermCondULBound}
	\begin{aligned}
	& \left|  \frac{\lambda\cdot (\alpha+z_\alpha+ \beta+z_\beta )}{2(m-1)((\alpha+z_\alpha)\vee(\beta+z_\beta))} - \frac{\lambda\cdot (\alpha+\beta)}{2(m-1)(\alpha\vee\beta )}  \right| \\
	\leq&\max_{(u,v)\in [\frac{1}{2},\frac{5}{4}]\times [\frac{3}{8},\frac{5}{8}] } \frac{\lambda}{2(m-1)} \|\nabla f(u,v)\|_1 \cdot \left(|z_\alpha+z_\beta|\vee \left|(\alpha+z_\alpha)\vee (\beta+z_\beta) -\alpha\vee \beta  \right|\right) \\
	\leq & \frac{\lambda}{2(m-1)} \left(\frac{1}{3/8} + \frac{5/4}{(3/8)^2}\right)\cdot (2\|z\|_\infty \vee \|z\|_{\infty} )\leq \frac{12\lambda  \|z\|_{\infty}}{m-1} \leq \frac{4\|z\|_{\infty}}{3}
	\end{aligned}
	\end{equation}
	since $\lambda \leq 1/3$ and $m \geq 4$.
	For the second term, using the mean value theorem with the function $g(u,v)=u^2/v$, we first obtain the following bound
	\begin{equation*}
	\begin{aligned}
	&\left| \frac{\theta(\alpha+z_\alpha,\beta+z_\beta,m)^2}{(\alpha+z_\alpha)\vee(\beta+z_\beta)}- \frac{\theta(\alpha,\beta,m)^2}{\alpha\vee \beta}\right|\\
	\leq & \max_{\substack{u\in [m-1,m]\\ v \in [\frac{3}{8},\frac{5}{8}]}} \|\nabla g (u,v)\|_1 \cdot \left[\left|\theta(\alpha+z_\alpha,\beta+z_\beta,m) -\theta(\alpha,\beta,m)\right| \vee \left| (\alpha+z_\alpha)\vee(\beta+z_\beta)-\alpha\vee \beta\right|\right]\\
	\leq &  \left(\frac{2m}{3/8} +\frac{m^2}{(3/8)^2} \right)\cdot \left(\frac{5}{2}\|z\|_\infty \vee \|z\|_{\infty}\right) \leq 18 (1+m)^2 \|z\|_\infty,
	\end{aligned}
	\end{equation*}
	and therefore, since $\lambda \leq 1/3$ and $m \geq 4$, we find 
	\begin{equation*}
	\frac{\lambda}{2(m-1)^2}\left|\frac{\theta(\alpha+z_\alpha,\beta+z_\beta,m)^2}{(\alpha+z_\alpha)\vee(\beta+z_\beta)}- \frac{\theta(\alpha,\beta,m)^2}{\alpha\vee \beta}\right|\\
	\leq 9\lambda  \|z\|_\infty\left(1+\frac{2}{m-1}\right)^2 \leq \frac{25 \|z\|_{\infty}}{3}.
	\end{equation*}
	Therefore,
	\begin{align*}
	|\varsigma((y_1,\alpha,\beta) + z)-\varsigma(y_1, \alpha,\beta)|&\leq  |z_y| +  \frac{\lambda }{2(m-1)^2}\left| \frac{\theta(\alpha+z_\alpha,\beta+z_\beta,m)^2}{(\alpha+z_\alpha)\vee(\beta+z_\beta)}- \frac{\theta(\alpha,\beta,m)^2}{\alpha\vee \beta}\right|\\&
	\leq \left(1+\frac{25}{3}\right)\|z\|_{\infty}\leq  28 \|z\|_\infty /3.
	\end{align*}
	In particular, since $y_1 \geq \varsigma(y_1,\alpha,\beta) \geq 10\|z\|_{\infty}$, we must have $\varsigma((y_1,\alpha,\beta) + z) \in[2\|z\|_\infty/3, y_1+ 28\|z\|_\infty/3]\subset [0,y_1+ 7/6]$ and hence $\varsigma((y_1,\alpha,\beta) + z) > 0$.
	Using the mean value theorem together with the bounds above and Lemma \ref{lem:alpha-vee-beta-estimate} we obtain
	\begin{equation}\label{eq:SecondTermCondULBound}
	\begin{aligned}
	&\left|\frac{\varsigma((y_1,\alpha,\beta) + z)}{(\alpha+z_\alpha)\vee (\beta+z_\beta)}- \frac{\varsigma(y_1,\alpha,\beta) }{\alpha\vee \beta} \right|\\
	& \leq \!\!\!\!\!\! \max_{(u,v)\in [0,y_1+\frac{7}{6}]\times [\frac{3}{8},\frac{5}{8}] } \!\|\nabla f(u,v)\|_1 \cdot \left[ \left|\varsigma((y_1,\alpha,\beta) + z)-\varsigma(y_1, \alpha,\beta)| \vee|(\alpha+z_\alpha)\vee (\beta+z_\beta) -\alpha\vee \beta  \right|\right]\\
	&\leq  \left(\frac{1}{3/8} +\frac{(y_1 +\frac{7}{6})}{(3/8)^2}\right) \cdot \left(10\|z\|_\infty \vee \|z\|_{\infty}\right)\leq 110 (y_1+1 )\|z\|_\infty.
	\end{aligned}
	\end{equation}
	Combining \eqref{eq:TwoTermCondULBound}, \eqref{eq:FirstTermCondULBound} and \eqref{eq:SecondTermCondULBound} gives the desired inequality \eqref{eq:etatilde-pert-bound}.
\end{proof}

With Lemma \ref{lem:alpha-vee-beta-estimate}, Lemma \ref{lem:eta-pert-bound} and Lemma \ref{lem:etatilde-pert-bound} at hand, we are ready to prove Lemma \ref{lem:cond-Xi-various}.

\begin{proof}[Proof of Lemma \ref{lem:cond-Xi-various}]
We first establish that
	\begin{equation}\label{eq:StrongBDEpsConditionMap}
	\begin{split}
	&\cond{\xilp,\omlps[k]} \leq 15,\quad \cond{\xiul,\omulls[k]} \leq 28, \quad \cond{\xicl,\omclls[k]}\leq 2,\\
	&\cond{\xibp,\ombpls[k]} \leq 24, \quad \cond{\xibptv,\ombptvs[k]} \leq 35,\quad \cond{\xiultv,\omultvs[k]} \leq 179 
	\end{split}
	\end{equation}
	 for integers $k \geq 1$ and $N>m\geq 4$, and then argue via the inclusions between the ``strong'' and ``weak'' input sets to prove \eqref{eq:MainInputConditionMap}. We work through each of the problems in turn, calculating their condition numbers. 
	
	\textbf{Case 1 (Linear programming)}: 
	The active coordinates of $\omlps[k]$are $\{(1,1),(1,2)\}$ for the matrix part of the input and $\{1\}$ for the vector part. Thus, 
\[
\iota^z \in \actv{\omlps[k]} \text{ only if } \iota^z = (\vecYMainLP{k}+z_ye_1,\matLP(\alpha+z_{\alpha},\beta+z_{\beta},m,N)),
\]
 for some suitable $z$ and $\alpha,\beta$ such that there is an $\iota \in \omlps[k]$ with $\iota= (\vecYMainLP{k},\matLP(\alpha,\beta,m,N))$. For the sake of brevity, we will let $y_1 = \vecYMainLP{k}_1.$
	Let us assume first that $\iota$ is such that $\alpha \neq \beta$. Then, for $\epsilon\in\big(0, y_1 \wedge \frac{1}{2} |\alpha-\beta|\big) $ and $z=(z_y,z_\alpha,z_\beta)\in \real^3$ such that $0<\|z\|_\infty\leq \epsilon$ we have both $\alpha+z_\alpha\neq\beta+z_{\beta}$ and $\sgn(\alpha + z_{\alpha} - (\beta + z_{\beta})) = \sgn(\alpha - \beta)$, and so by Lemma \ref{lemma:ProblemBasicExampleLP}
	\begin{equation*}
	\xilp( \iota^z)= \frac{y_1+z_y}{(\alpha+z_\alpha)\vee (\beta+z_\beta)}\, {v} \quad \text{and} \quad \xilp(\iota)=\frac{y_1}{\alpha\vee \beta}\, {v},
	\end{equation*}
	where ${v}=e_1$ if $\alpha>\beta$, and ${v}=e_2$ if $\alpha<\beta$. 
	
	Next, consider an input $\iota = (y,\matLP(\alpha,\alpha,m,N))\in \omlps \cup \omlpw$, and let $z=(z_y,z_\alpha,z_\beta)\in \real^3$ be such that $0<\|z\|_\infty\leq y_1\wedge \frac{1}{2}\alpha$. It then follows by Lemma \ref{lemma:ProblemBasicExampleLP} that
	\begin{equation*}
	\begin{aligned}
	\xilp(\iota^z)&\subset  \left\{  \frac{y_1+z_y}{(\alpha+ z_\alpha )\vee (\alpha + z_\beta)} \left(t e_1+ (1-t)e_2 \right)\, \vert \,t\in[0,1]\right\}\text{ and} \\
	\xilp(\iota)&= \left\{ \frac{y_1}{\alpha} \left(t e_1+ (1-t)e_2 \right)\, \vert \,t\in[0,1]\right\}.
	\end{aligned}
	\end{equation*}
	Therefore in all possible cases for $\iota $ and $ \iota^z$ with $\epsilon$ sufficiently small we must have 
	\begin{equation*}
	\dist_\infty \left (\xilp(\iota),\xilp(\iota^z)\right) \leq \max_{\alpha,\beta \in \albetSet} \left|\frac{y_1+z_y}{(\alpha+z_\alpha)\vee (\beta+z_\beta)} - \frac{y_1}{\alpha\vee \beta}\right|.
	\end{equation*}
	Moreover, in all the cases above we have $0\leq y_1+z_y \leq 2y_1 \leq 4 \cdot 10^{-k} \leq 1$, and $|(\alpha+z_\alpha)\vee (\beta+ z_\beta) - \alpha\vee \beta|\leq \|z\|_\infty$ by Lemma \ref{lem:alpha-vee-beta-estimate}. Thus, for $\|z\|_{\infty}$ sufficiently small (in particular insisting that $\|z\|_{\infty} < 1/8$), the mean value theorem applied to the function $f_{\text{LP}}(u,v):= u/v$ gives
	\begin{equation*}
	\max_{\alpha,\beta \in \albetSet} \left|\frac{y_1+z_y}{(\alpha+z_\alpha)\vee (\beta+z_\beta)} - \frac{y_1}{\alpha\vee \beta}\right| \leq \max_{\substack{u \in [0,1] \\ v \in [\frac{3}{8},\frac{5}{8}]}} \|\nabla f_{\text{LP}}(u,v)\|_1 \cdot \|z\|_\infty\leq  \left(\frac{8}{3}+\frac{64}{9}\right)\cdot \|z\|_\infty<10\|z\|_\infty.
	\end{equation*}
	We hence deduce that  
	\begin{equation*}
	\cond{\xilp,\omlps[k]} =\sup_{N > m \geq 4} \,\sup_{\iota \in  \omlps[k]}\, \limsup_{\epsilon\to 0^+} \!\!\sup_{\substack{  \iota^z \in  \actv{\omlps[k]}  \\0<\|z\|_\infty \leq \epsilon} }\!\left\{   \frac{\dist_\infty \left(\xilp( \iota),\xilp(\iota^z)\right) }{\|z\|_{\infty}} \right\}\leq 10.
	\end{equation*}
	
	\textbf{Case 2 (Unconstrained lasso with $\ell^1$ regularisation):}	
	The active coordinates of $\omulls[k]$are $\{(1,1),(1,2)\}$ for the matrix part of the input and $\{1\}$ for the vector part. Thus,
\[
\iota^z \in  \actv{\omulls[k]} \text{ only if } \iota^z = (\vecYMainUL{k}+z_ye_1,\matl(\alpha+z_{\alpha},\beta+z_{\beta},m,N)),
\] 
 for some suitable $z$ and $\alpha,\beta$ such that there is an $\iota \in \ombpls[k]$ with $\iota= (\vecYMainUL{k},\matl(\alpha,\beta,m,N))$. For the sake of brevity, we will let $y_1 = \vecYMainUL{k}_1.$
	For such an $\iota$ note that $\alpha\vee \beta =\frac{1}{2}> \frac{\lambda}{2y_1}$. Let $\epsilon\in(0,y_1)$ be small enough so  that $\alpha\vee \beta -\epsilon > \frac{\lambda}{2(y_1+\epsilon)}$ and consider $z=(z_y,z_\alpha,z_\beta)\in\real^3$ such that $0<\|z\|_\infty\leq \epsilon$. We then have $0\leq y_1+ z_y\leq 2y_1\leq 2(1+\lambda)\leq \frac{8}{3}  $ and again $|(\alpha+z_\alpha)\vee (\beta+ z_\beta) - \alpha\vee \beta|\leq \|z\|_\infty$ by Lemma \ref{lem:alpha-vee-beta-estimate}. Thus an argument  analogous to the one presented when analysing $\cond{\xilp}$, but employing Lemma \ref{lemma:ProblemBasicExampleULASSO} instead of Lemma \ref{lemma:ProblemBasicExampleLP}, gives
	\begin{equation*}
	\begin{aligned}
	&\dist_\infty \left(\xiul(\iota),\xiul( \iota^z ) \right) \\
	\leq & \max_{(\alpha,\beta) \in \albetSet}   \left|\frac{2(\alpha\vee \beta)y_1-\lambda}{2 (\alpha\vee \beta)^2}- \frac{2\left((\alpha+z_\alpha)\vee (\beta+z_\beta)\right)(y_1+z_y)-\lambda}{2 \left((\alpha+z_\alpha)\vee (\beta+z_\beta)\right)^2} \right|\\
	\leq & \max_{ (u,v)\in [0,\frac{8}{3}]\times [\frac{3}{8},\frac{5}{8}]} \|\nabla f_{\text{UL},\ell^1}(u,v)\|_1 \cdot \|z\|_\infty = \max_{ (u,v)\in [0,\frac{8}{3}]\times [\frac{3}{8},\frac{5}{8}]} \left(\left|\frac{1}{v}\right| + \left|\frac{\lambda}{v^3} - \frac{u}{v^2}\right| \right) \cdot \|z\|_\infty,
	\end{aligned}
	\end{equation*}
	where $f_{\text{UL},\ell^1}(u,v)=\frac{2uv-\lambda}{2v^2}$. Noting that $|\lambda/v^3 - u/v^2| \leq ((\lambda/v^3) \vee (u/v^2))$ and $\lambda \leq 1/3$ we obtain
	$\dist_\infty \left(\xiul(\iota),\xiul( \iota^z ) \right) \leq 8/3 + (8^3/3^4 \vee 8^3/3^3) \leq 22$ from which $\cond{\xiul,\omulls[k]}\leq 22$ follows.

	\textbf{Case 3 (Constrained lasso with $\ell^1$ regularisation):}	
	The active coordinates of $\omclls[k]$ are $\{(1,1),(1,2)\}$ for the matrix part of the input and $\{1\}$ for the vector part. Thus, 
	\[
	\iota^z \in \actv{\omclls[k]} \text{  only if } \iota^z = (\vecYMainCL{k}+z_ye_1,\matl(\alpha+z_{\alpha},\beta+z_{\beta},m,N)),
	\]
	  for some suitable $z$ and $\alpha,\beta$ such that there is an $\iota \in \omclls[k]$ with $\iota= (\vecYMainCL{k},\matl(\alpha,\beta,m,N))$. For the sake of brevity, we will let $y_1 = \vecYMainCL{k}_1$.
	For such an $\iota$ let $r = \frac{(\alpha \vee \beta)y_1}{1+(\alpha \vee \beta)^2}=2y_1/5=4\cdot 10^{-k}$ and note that $r< 1/2 \leq \tau$ and so Lemma \ref{lemma:ProblemBasicExampleCLASSO} applies. Next, let $\epsilon\in(0,y_1\wedge 1/2)$ be small enough so  that 
$
\frac{(\alpha \vee \beta +\epsilon )(y_1+\epsilon)}{1+(\alpha \vee \beta +\epsilon)^2}< \tau,
$
and consider $z=(z_y,z_\alpha,z_\beta)\in\real^3$ such that $0<\|z\|_\infty\leq \epsilon$. We then have $0\leq y_1+ z_y\leq 2y_1\leq 2 $ and $|(\alpha+z_\alpha)\vee (\beta+ z_\beta) - \alpha\vee \beta|\leq \|z\|_\infty$ by Lemma \ref{lem:alpha-vee-beta-estimate}. Thus an argument  analogous to the one presented when analysing $\cond{\xilp}$, but employing Lemma \ref{lemma:ProblemBasicExampleCLASSO} instead of Lemma \ref{lemma:ProblemBasicExampleLP}, gives
	\begin{equation*}
	\begin{aligned}
	&\dist_\infty \left(\xicl(\iota),\xicl( \iota^z ) \right) \\
	\leq & \max_{(\alpha,\beta) \in \albetSet}   \left|\left(\tau- \frac{(\alpha \vee \beta)y_1}{1+(\alpha \vee \beta)^2}\right)- \left(\tau-\frac{\left((\alpha+z_\alpha)\vee (\beta+z_\beta)\right)(y_1+z_y)}{1+ \left((\alpha+z_\alpha)\vee (\beta+z_\beta)\right)^2} \right)\right|\\
	\leq & \max_{ (u,v)\in [0,1]\times [\frac{3}{8},\frac{5}{8}]} \|\nabla f_{\text{CL}}(u,v)\|_1 \cdot \|z\|_\infty = \max_{ (u,v)\in [0,1]\times [\frac{3}{8},\frac{5}{8}]} \left(\left|\frac{1}{1+v^2}\right| + \left|\frac{u(1-v^2)}{(1+v^2)^2}\right| \right) \cdot \|z\|_\infty
	\end{aligned}
	\end{equation*}
	where $f_{\text{CL}}(u,v)=\frac{uv}{1+v^2}$. Noting that $\left|1/(1+v^2)\right| + \left|{u(1-v^2)}/{(1+v^2)^2}\right|\leq 2/(1+v^2)^2$ for the specified range of $(u,v)$, we obtain
$
\dist_\infty \left(\xicl(\iota),\xicl( \iota^z ) \right) \leq 2/(1+(3/8)^2)^2 \leq 2,
$
 from which we deduce that $\cond{\xicl,\omclls[k]}\leq 2$. 
	
	\textbf{Case 4 (Basis pursuit with $\ell^1$ regularisation):}
	As before the active coordinates of $\ombpls[k]$ are $\{(1,1),(1,2)\}$ for the matrix part of the input and $\{1\}$ for the vector part. Thus,  
\[	
\iota^z \in \actv{\ombpls[k]} \text{ only if } \iota^z = (\vecYMainBP{k}+z_ye_1,\matl(\alpha+z_{\alpha},\beta+z_{\beta},m,N))
\] 
for some suitable $z$ and $\alpha,\beta$ such that there is an $\iota \in \ombpls[k]$ with $\iota= (\vecYMainBP{k},\matl(\alpha,\beta,m,N))$. For the sake of brevity, we will let $y_1 = \vecYMainBP{k}_1.$
For such an $\iota$ note that $\delta< \delta + 10^{-k}\leq  y_1 \leq 2$  so for $\epsilon\in(0,y_1-\delta)$ and $z=(z_y,z_\alpha,z_\beta)\in\real^3$ such that $0<\|z\|_\infty\leq \epsilon$, an argument analogous to the analysis of $\cond{\xilp}$ except employing Lemma \ref{lemma:ProblemBasicExampleBPDNL1} instead of Lemma \ref{lemma:ProblemBasicExampleLP} shows that
	\begin{align*}
	\dist_\infty \left(\xibpdn(\iota)),\xibpdn( \iota^z)) \right)  &\leq   \left|\frac{y_1-\delta}{\alpha\vee \beta}- \frac{y_1+z_y-\delta}{(\alpha+z_\alpha)\vee (\beta+z_\beta)} \right|\\&\leq  \max_{ (u,v)\in [0,2]\times [\frac{3}{8},\frac{5}{8}]} \|\nabla f_{\text{BP},\ell^1 }(u,v)\|_1 \cdot \|z\|_\infty
	\\&\leq   \left(\frac{1}{v} +  \left|\frac{u-\delta}{v^2}\right| \right)\cdot \|z\|_\infty \leq \left(\frac{8}{3} +  \frac{2}{(3/8)^2} \right) <17\|z\|_\infty,
	\end{align*}
	where $f_{\text{BP},\ell^1 }(u,v)=\frac{u-\delta}{v}$, from which we deduce that $\cond{\xibpdn,\ombpls[k]}\leq 17$. 
	
	\textbf{Case 5 (Basis pursuit with TV regularisation):}
	The active coordinates of $\ombptvs[k]$ are $\{(1,1),(1,N)\}$ for the matrix and $\{1\}$ for the vector.  Thus, 
\[
\iota^z \in \actv{\ombptvs[k]} \text{ only if } \iota^z = (\vecYMainBPTV{k}+z_ye_1,\matTV(\alpha+z_{\alpha},\beta+z_{\beta},m,N)),
\]
for some suitable $z$ and $\alpha,\beta$ such that there is an $\iota \in \ombptvs[k]$ with $\iota= (\vecYMainBPTV{k},\matTV(\alpha,\beta,m,N))$. For the sake of brevity, we will let $y_1 = \vecYMainBPTV{k}_1.$
	Suppose that $\alpha \neq \beta$. For $\epsilon\in\big( 0,\;\frac{1}{2} |\alpha-\beta|\wedge \frac{1}{8}\wedge \frac{1}{2}\big(y_1 -\frac{\delta\theta(\alpha,\beta,m)}{m-1}\big) \big) $ and $z=(z_y,z_\alpha,z_\beta)\in \real^3$ such that $0<\|z\|_\infty\leq \epsilon$ we have $\alpha+z_\alpha\neq\beta+z_{\beta}$ and, by Lemma \ref{lem:eta-pert-bound}, 
\[
y_1+z_y>\frac{\delta\theta(\alpha+z_\alpha,\beta+z_\beta,m)}{m-1}.
\]
 Suppose for now that $\alpha<\beta$. Then, by Lemma \ref{lemma:BPTVSolutions}, we have
	$\xibptv( \iota^z)= \eta(y+z_y,\alpha+z_\alpha, \beta+z_\beta)$
	and thus by Lemma \ref{lem:eta-pert-bound} we find 
	\begin{equation*}
	\begin{split}
	\dist_\infty\left(\xibptv(\iota), \xibptv( \iota^z )\right)&=\| \eta(y_1+z_y, \alpha+z_\alpha,\beta+z_\beta)-\eta(y_1, \alpha,\beta) \|_\infty\\
	& \leq  14(y_1+1)\|z\|_\infty<35\|z\|_\infty,
	\end{split}
	\end{equation*}
	where the final inequality follows from $y_1 \leq \|y\|_{\infty}\leq 3/2$, proven in Lemma \ref{lemma:mainThmSizeEstimates}.
	In the case $\alpha > \beta$, the same logic and an application of Lemma \ref{lemma:BPTVSolutions} yields 
\begin{equation*}
\begin{split}
\dist_\infty\left(\xibptv(\iota), \xibptv( \iota^z )\right)&=\| \flipOp \eta(y_1+z_y, \alpha+z_\alpha,\beta+z_\beta)-\flipOp \eta(y_1, \alpha,\beta) \|_\infty \\
& \leq  14(y_1+1)\|z\|_\infty<35\|z\|_\infty,
\end{split}
\end{equation*}
as the flipping operator $\flipOp$ is an isometry.
Next, assume $\alpha = \beta$ and let $z=(z_y,z_\alpha,z_\beta)\in \real^3$  be such that $0<\|z\|_\infty\leq \frac{1}{8}\wedge\frac{1}{2}\left( y_1 - \frac{\delta\theta(\alpha,\beta)}{m-1}\right)$. Then, by Lemma \ref{lemma:BPTVSolutions}, we have both 
	\begin{equation*}
	\xibptv( \iota^z)\cap  \left\{ \eta(y_1+z_y, \alpha+ z_\alpha,\beta+z_\beta ),\flipOp \eta(y_1+z_y,\beta+z_\beta, \alpha+z_\alpha )\right\} \neq \varnothing
	\end{equation*}
	and 
$
\xibptv(\iota) =  \left\{ t \eta(y_1, \alpha,\beta) + (1-t)  \flipOp \eta(y_1,\beta, \alpha) \, \vert \, t\in[0,1]\right\}.
$
	Therefore, 
	\begin{equation*}
	\begin{split}
	 \dist_\infty\left(\xibptv( \iota)), \xibptv(\iota^z) \right)
	&\leq \|\eta(y_1+z_y, \alpha+ z_\alpha,\beta+z_\beta )- \eta(y_1, \alpha,\beta )\|_{\infty}\\
	& \qquad \vee \|\flipOp \eta(y_1+z_y, \alpha+ z_\alpha,\beta+z_\beta )- \flipOp\eta(y_1, \alpha,\beta )\|_{\infty}
	\end{split}
	\end{equation*}
	and thus using Lemma \ref{lem:eta-pert-bound} we obtain $\dist_\infty\left(\xibptv(\iota^z),\xibptv(\iota)\right)\leq 14(y_1+1)\|z\|_\infty<35\|z\|_\infty$. Since we have now shown that this bound holds in the cases $\alpha > \beta$, $\alpha < \beta$ and $\alpha = \beta$ we can conclude that $\cond{\xibptv,\ombptvs}\leq 35$.
	
	\textbf{Case 6 (Unconstrained lasso with TV regularisation):}
	The active coordinates of $\omultvs[k]$ are $\{(1,1),(1,N)\}$ for the matrix and $\{1\}$ for the vector.  Thus $\iota^z$ is in the active set 
\[
\actv{\omultvs[k]} \text{ only if } \iota^z = (\vecYMainULTV{k}+z_ye_1,\matTV(\alpha+z_{\alpha},\beta+z_{\beta},m,N)),\] 
for some suitable $z$ and $\alpha,\beta$ such that there is an $\iota \in \omultvs[k]$ with $\iota= (\vecYMainULTV{k},\matTV(\alpha,\beta,m,N))$. For the sake of brevity, we will let $y_1 = \vecYMainULTV{k}_1.$
For such an $\iota$ we consider $z=(z_y,z_\alpha,z_\beta)\in\real^3$ such that $0<\|z\|_\infty\leq \frac{1}{8}\wedge\frac{1}{10}\varsigma(y_1,\alpha,\beta)$, where $\varsigma(y_1,\alpha,\beta)=y_1-\lambda\, \theta(\alpha,\beta)^2\left[2(m-1)^2 (\alpha\vee \beta)\right]^{-1}$. By Lemma \ref{lem:etatilde-pert-bound} we conclude that $\varsigma(y_1+z_y,\alpha+z_\alpha,\beta+z_\beta)>0$, and so Lemma \ref{lemma:ULTVSolutions} applies to the input $(y+z_ye_1,\matTV(\alpha+z_\alpha,\beta+z_\beta,m,N))$. An argument analogous to the one presented for Basis Pursuit with TV regularisation employing Lemma \ref{lem:etatilde-pert-bound} instead of Lemma \ref{lem:eta-pert-bound} then yields 
	\begin{equation*}
	\dist_\infty\left(\xiultv( \iota^z ), \xiultv(\iota) \right)\leq 112(y_1+1)\leq 112\left(\frac{107}{180}+1\right) \leq 179 ,
	\end{equation*}
	where the penultimate inequality follows from Lemma \ref{lemma:mainThmSizeEstimates}. Hence $\cond{\xiultv,\omultvs[k]}\leq 179$, as desired.
	
	This establishes each of the claims in \eqref{eq:StrongBDEpsConditionMap}. It remains to prove \eqref{eq:MainInputConditionMap}. We do this for linear programming only as the argument for the other cases is analogous. If $K=1$ there is nothing to prove, so assume w.l.o.g. that $K\geq 2$.  Note that the active coordinates for $\ombpls[K-1]$ are the same as the active coordinates for $\ombpls[K]$. Thus \[\cond{\xilp,\ombpls[K-1] \cup \ombpls[K]} = \cond{\xilp,\ombpls[K-1]} \vee \cond{\xilp,\ombpls[K]}\] and \eqref{eq:StrongBDEpsConditionMap} implies that $\cond{\ombpls[K-1]}$ and $\cond{\ombpls[K]}$ are both bounded above by $10$ since $K \geq 2$. Next, note that $\ombplw[K] \subset \ombpls[K-1]$. This allows us to conclude that 
	\[
	\cond{\xilp,\ombplb[K]} = \cond{\xilp,\ombpls[K-1] \cup \ombpls[K]} \leq 10, 
	\]
	and  hence
	$
	\cond{\xilp,\ombpl} = \sup_{N> m\geq 4} \cond{\ombpls[K] \cup \ombplw[K]} \leq 10.
	$
\end{proof}

\begin{lemma}\label{lem:cond-FP-various}
	For any natural numbers $m$, $N$, and $k$ with $k \geq 1$ and  $4 \leq m < N$ the FP condition number of every input to any of the problems LP, $\ell^1$ BP, $\ell^1$ UL, CL, $\mathrm{TV}$ BP and $\mathrm{TV}$ UL with respective input sets $\omlp$, $\ombpl$,  $\omull$, $\omcll$, $\ombptv$, and  $\omultv$ is bounded by 4.
\end{lemma}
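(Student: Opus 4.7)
The plan is to handle the six problems separately. For UL and CL, the claim is immediate: these problems are always feasible (UL has no constraints, and CL always admits $x=0$ as a feasible point since $\|0\|_1=0\leq \tau$), so $\InfeasibleInputs \cap \Omega^{\text{act}} = \varnothing$, $\dist_2 = \infty$, and $C_{\mathrm{FP}} = 0$, trivially bounded by $4$.

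For the remaining three problems (LP, $\ell^1$ BP, and TV BP), the approach is uniform. In each case the active coordinates comprise only two entries of $A$ (the positions of $\alpha$ and $\beta$) together with the single entry $y_1$; all other entries of $A$---including the crucial $-1$ at position $(1,3)$ for LP and the identity-block structure in the lower rows---stay fixed, as do the components $y_2,\dots,y_m$. The strategy is first to characterise which configurations $(y_1',\alpha',\beta')$ render the problem infeasible, and then to lower-bound the minimum size of the required perturbation, observing that $\|\Delta A\|_2 = \sqrt{(\Delta\alpha)^2 + (\Delta\beta)^2}$ since $\Delta A$ is a rank-one matrix with two nonzero entries.

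For LP, the fixed $-1$ at position $(1,3)$ makes the first-row constraint $\alpha' x_1 + \beta' x_2 - x_3 = y_1'$ (with $x\geq 0$) satisfiable whenever at least one of $\alpha', \beta'$ is strictly positive or whenever $y_1' \leq 0$. Hence infeasibility requires $\alpha' \leq 0$, $\beta' \leq 0$, and $y_1' > 0$ simultaneously. Since the unperturbed $y_1 > 0$, we may take $y_1' = y_1$ at no cost in $y$, leaving $\|\Delta A\|_2 \geq \sqrt{\alpha^2 + \beta^2} \geq \sqrt{5}/4$, attained at $\alpha' = \beta' = 0$ (using that $(\alpha,\beta)\in \albetSet$ forces both coordinates to be at least $1/4$). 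A diagonal calculation for $AA^T$ yields $\|A\|_2 = \sqrt{\alpha^2+\beta^2+1}\leq \sqrt{3/2}$, and $\|y\|_2 \leq 2\cdot 10^{-(K-1)} \leq 2$, so $C_{\mathrm{FP}} \leq 8/\sqrt{5} < 4$.

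For $\ell^1$ BP and TV BP the absence of the $x \geq 0$ constraint is what drives the argument: the range of the perturbed matrix $A'$ covers all of $\real^m$ whenever at least one of $\alpha',\beta'$ is nonzero, so infeasibility requires both $\alpha' = \beta' = 0$ and $|y_1'| > \delta$. Straightforward estimates using the explicit formulas for $\vecYMainBP{K}$ and $\vecYMainBPTV{K}$---in the TV case exploiting $m/\theta(1/2,1/2,m) = \sqrt{m/(m-1)} > 1$---confirm that the unperturbed $y_1$ already satisfies $y_1 > \delta$, so $y_1' = y_1$ is a valid choice and the distance to infeasibility is again $\sqrt{\alpha^2+\beta^2}\geq \sqrt{5}/4$. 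Combining with $\|A\|_2 = 1$ (from a diagonal $AA^T$ calculation using $\alpha,\beta\leq 1/2$) and the size bounds from Lemma \ref{lemma:mainThmSizeEstimates} gives $C_{\mathrm{FP}} \leq 6/\sqrt{5} < 4$ in both cases. The primary subtlety is verifying that $y_1 > \delta$ once the matrix perturbation has been applied, which reduces to the aforementioned routine size estimates.
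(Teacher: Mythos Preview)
Your proof is correct and follows essentially the same route as the paper's: for UL (and you rightly include CL) the problem is always feasible so $C_{\mathrm{FP}}=0$; for LP, $\ell^1$ BP, and TV BP you characterise infeasibility in terms of the perturbed parameters $(\alpha',\beta',y_1')$, lower-bound the distance to the infeasible set, and upper-bound the numerator via the size estimates. The one notable difference is that you work consistently with the $\|\cdot\|_2$/operator norms appearing in the formal definition of $C_{\mathrm{FP}}$, obtaining a distance lower bound of $\sqrt{\alpha^2+\beta^2}\geq\sqrt{5}/4$, whereas the paper's proof uses $\|\cdot\|_\infty$/$\|\cdot\|_{\max}$ throughout and gets $\rho=1/2$; this yields slightly different constants ($8/\sqrt{5}$ versus $2$ for LP) but both land below $4$. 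Your verification that $y_1>\delta$ for the BP problems is a small extra step the paper sidesteps by simply noting that surjectivity of the perturbed matrix already guarantees feasibility, so a lower bound on the distance to non-surjective matrices (namely those with $\alpha'=\beta'=0$) is all that is needed.
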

\begin{proof}[Proof of Lemma \ref{lem:cond-FP-various}]
Suppose for now that the input is in one of the ``strong'' input sets, and consider first an input $(y,\matLP)=(y,\matLP(\alpha,\beta,m,N))\in \omlps[k]$. Recall that $C_{\mathrm{FP}}(y,\matLP)=\frac{\|y\|_\infty\vee \|\matLP\|_{\max}}{\rho(y,\matLP)}$, where
	\begin{equation*}
	\begin{aligned}
	\rho(y,\matLP)&=\sup\{\epsilon\, \vert\;  \exists z=(z_y,z_\alpha,z_\beta)\in\real^3,\|z\|_\infty\leq \epsilon \\
	&\qquad\qquad\qquad \text{ s.t. }(y+z_ye_1,\matLP(\alpha+z_\alpha,\beta+z_\beta,m,N))\text{ is feasible for LP} \}.
	\end{aligned}
	\end{equation*}
	Concretely, by recalling that $c$ is the vector of ones of length $N$ and inspecting the explicit form of $\matLP(\alpha+z_\alpha,\beta+z_\beta,m,N)$, we see that it is necessary and sufficient for at least one of $\alpha+z_\alpha$ and $\beta+z_\beta$ to be positive for the LP problem to be feasible. As $\alpha\vee\beta=\frac{1}{2}$, we deduce that $\rho(y,\matLP)=\frac{1}{2}$, and therefore $C_{\mathrm{FP}}(y,\matLP)= \frac{(2\cdot 10^{-k} )\vee 1}{1/2} = 2$.
	
	Next, note that, for $z=(z_y,z_\alpha,z_\beta)\in\real^3$, the matrices $\matl(\alpha+z_\alpha,\beta+z_\beta,m,N)$ and $\matTV(\alpha+z_\beta,\beta+z_\beta,m,N)$ are onto as long as at least one of $\alpha+z_\alpha$ and $\beta+z_\beta$ is nonzero, and therefore $\rho(y,A)\geq \frac{1}{2}$ for the problems $\ell^1$ BP,  CL, and $\mathrm{TV}$ BP with inputs $(y,A)$ in $\ombpls[k]$, $\omclls[k]$, and $\ombptvs[k]$, respectively. By Lemma \ref{lemma:mainThmSizeEstimates}  $\|y\|_\infty \leq 2$ and $\|A\|_{\max}\leq 1$ , we deduce that $C_{\mathrm{FP}}(y,A)\leq \frac{2}{1/2}=4$ in the case of $\ell^1$ BP, CL,  and $\mathrm{TV}$ BP.
	
	Finally, inputs $(y,\matl(\alpha,\beta,m,N))$ and $(y,\matTV(\alpha,\beta,m,N))$ are feasible for the $\ell^1$ UL and $\mathrm{TV}$ UL problems for any values of $y$,  $\alpha$, and $\beta$, and therefore $C_{\mathrm{FP}}(y,A)=0$ in the case of $\ell^1$ UL and $\mathrm{TV}$ UL for any input set.
	The proof is entirely analogous in the case when the input is in one of the ``weak'' input sets, and thus the proof of the lemma is complete.
\end{proof}

\begin{lemma}\label{lem:cond-mat-various}
	Let $(\alpha,\beta)\in\albetSet$. Then $\cond{\matLP \matLP^*},\cond{\matl \matl^*}\leq 16/5$. In addition, for $k\geq 1$ and $(\alpha,\beta) \in \albetSetBPTV{k}$ or $(\alpha,\beta) \in \albetSetULTV{k}$, we have $\cond{\matTV\matTV^*}\leq\frac{16}{5}$.
\end{lemma}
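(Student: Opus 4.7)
The plan is to compute $AA^*$ explicitly for each of the three matrix families and to observe that in each case it turns out to be diagonal, so the eigenvalues (and hence the condition number) can be read off directly. The key structural observation common to all three families is that the first row is supported on a small column set disjoint from the supports of the remaining rows, while rows $2,\dots,m$ are distinct standard basis vectors (up to which column carries the $1$). This makes rows pairwise orthogonal, so $AA^*$ is diagonal with the squared row norms on the diagonal.

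Concretely, for $\matLP(\alpha,\beta,m,N)$, the first row is supported on columns $\{1,2,3\}$ with entries $(\alpha,\beta,-1)$, while rows $2,\dots,m$ place a single $1$ in columns $4,\dots,m+2$ respectively. Since $N>m$ and $m\geq 4$, these supports are pairwise disjoint, and therefore $\matLP\matLP^*=\mathrm{diag}(\alpha^2+\beta^2+1,\,1,\dots,1)$. An analogous verification gives $\matl\matl^* = \mathrm{diag}(\alpha^2+\beta^2,\,1,\dots,1)$, since the first row of $\matl$ is supported on columns $\{1,2\}$ and the remaining rows have their $1$s on columns $3,\dots,m+1$. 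For $\matTV$, the first row is supported on $\{1,N\}$, row $m$ is $e_{N-1}^\top$, and rows $2,\dots,m-1$ are $e_2^\top,\dots,e_{m-1}^\top$; using $N>m\geq 4$ one checks that columns $\{1,N\}$, $\{N-1\}$, and $\{2,\dots,m-1\}$ are mutually disjoint, giving $\matTV\matTV^* = \mathrm{diag}(\alpha^2+\beta^2,\,1,\dots,1)$ as well.

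It then remains to estimate $\alpha^2+\beta^2$ on each of the parameter sets. For $(\alpha,\beta)\in\albetSet$, one of the entries equals $1/2$ and the other lies in $[1/4,1/2]$, so $\alpha^2+\beta^2\in[5/16,\,1/2]$. For $(\alpha,\beta)\in\albetSetBPTV{k}$ or $\albetSetULTV{k}$ with $k\geq 1$, the definitions \eqref{eq:rn_and_sn} enforce $r_k, s_k\geq 1/4$, so the same bound $\alpha^2+\beta^2\in[5/16,\,1/2]$ holds. Consequently, for $\matl\matl^*$ and $\matTV\matTV^*$ the largest eigenvalue is $1$ and the smallest is $\alpha^2+\beta^2\in[5/16,1/2]$, yielding $\mathrm{cond}\leq 1/(5/16)=16/5$. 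For $\matLP\matLP^*$, the eigenvalues are $\alpha^2+\beta^2+1\in[21/16,\,3/2]$ and $1$, so $\mathrm{cond}= \alpha^2+\beta^2+1\leq 3/2\leq 16/5$. This completes the argument.

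There is no real obstacle here — the argument is a direct computation. The only point deserving a moment of care is verifying the disjointness of column supports in the $\matTV$ case, where one must use both $m\geq 4$ and $N>m$ to ensure that $N-1$ does not coincide with any column index in $\{1,2,\dots,m-1\}\cup\{N\}$; everything else is bookkeeping with the explicit ranges of $\alpha$ and $\beta$.
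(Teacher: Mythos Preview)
Your proof is correct and follows essentially the same approach as the paper: compute $AA^*$ explicitly, observe it is diagonal, and bound the ratio of extreme eigenvalues using $\alpha^2+\beta^2\in[5/16,1/2]$. In fact you are slightly more careful than the paper, which states $\matLP\matLP^*=(\alpha^2+\beta^2)\oplus I_{m-1}$; as you correctly note, the first row of $\matLP$ carries an extra $-1$, so the $(1,1)$ entry is $\alpha^2+\beta^2+1$ and the condition number is $\alpha^2+\beta^2+1\leq 3/2$, which is still well within the stated bound of $16/5$.
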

\begin{proof}[Proof of Lemma \ref{lem:cond-mat-various}]
	Note that $M:=\matLP \matLP^*=\matl \matl^*= \matTV\matTV^*= \big(\alpha^2 + \beta^2\big) \oplus I_{m-1}$. Therefore $\cond{M}=\|M\|_{2}\|M^{-1}\|_2= \left((\alpha^2+\beta^2)\vee 1\right)\cdot \left((\alpha^2+\beta^2)^{-1}\vee 1\right)\leq 1\cdot \frac{16}{5}=\frac{16}{5}$.
\end{proof}
\subsection{Inequalities to control the breakdown epsilons for the TV problems}
Below follows a collection of inequalities that are needed to estimate the breakdown epsilons for the TV problems.
\begin{lemma}\label{lemma:BPTVEtaBounds}
	Let $p\in [1,\infty]$, $n\in\mathbb{N}$, and set 
	\begin{equation*}
	y_1 = \delta m [\theta(1/2,1/2,m)]^{-1} + [7 - 3 (\theta(1/2,1/2,m))^{-1}]10^{-n}/4.
	\end{equation*}
	 Then
	\begin{equation}\label{eq:BPTVIneqBDEpsLower}
	\min \limits_{\substack{(\alpha,\beta) \in \albetSetBPTV{n} \\ (\alpha',\beta') \in \albetSetBPTV{n}}} \|\eta(y_1,\alpha,\beta) - \flipOp \eta(y_1,\alpha',\beta')\|_p > 2 \cdot 10^{-n}
	\end{equation} and
	\begin{equation}\label{eq:BPTVIneqBDEpsUpper}
	\max \limits_{(\alpha,\beta) \in \albetSetBPTV{n}} |\eta(y_1,\alpha,\beta)_1 - \eta(y_1,\alpha,\beta)_N| \leq 5 \cdot 10^{-n+1}2^{-1/p}/6.
	\end{equation}
\end{lemma}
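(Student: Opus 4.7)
My first step would be to reparametrize the input set. For $(\alpha,\beta)\in\albetSetBPTV{n}$, the structure of the set forces $\alpha\vee\beta=1/2$ and $\alpha+\beta = 1/2+t$ for some $t\in[r_n,1/2]$. Writing $\theta_t := \theta(\alpha,\beta,m) = \sqrt{(m-1)^2+(m-1)(1/2+t)^2}$, $c_t := \delta(1/2+t)/\theta_t$ and $\mu_t := y_1 - \delta\theta_t/(m-1)$, Lemma~\ref{lemma:BPTVSolutions} (together with $\alpha\vee\beta=1/2$) gives
\begin{equation*}
\eta(y_1,\alpha,\beta) \;=\; c_t\,\mathbf 1 + 2\mu_t\,e_N.
\end{equation*}
The algebraic identity $\delta m/\theta_0 = \delta\theta_0/(m-1)$ (both equal $\delta\sqrt{m/(m-1)}$, where $\theta_0 := \theta_{1/2}$) lets me rewrite the prescribed value of $y_1$ as $y_1 = \delta\theta_0/(m-1) + [7-3/\theta_0]\,10^{-n}/4$, and hence
\begin{equation*}
\mu_t \;=\; \frac{\delta(\theta_0-\theta_t)}{m-1} + \Bigl(7-\frac{3}{\theta_0}\Bigr)\frac{10^{-n}}{4}.
\end{equation*}
Since $\theta_t\le \theta_0$ and $m\ge 4$ forces $\theta_0 \ge \sqrt{(m-1)m}\ge \sqrt{12}>3$, I obtain the uniform lower bound $\mu_t > 6\cdot 10^{-n}/4 = 3\cdot 10^{-n}/2$.

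For the lower bound \eqref{eq:BPTVIneqBDEpsLower}, the identity $(\flipOp v)_i = v_{N-i+1}$ gives $\flipOp\eta(y_1,\alpha',\beta') = c_{t'}\mathbf{1} + 2\mu_{t'}e_1$, so the difference $v := \eta(y_1,\alpha,\beta) - \flipOp\eta(y_1,\alpha',\beta')$ satisfies $v_1 = (c_t-c_{t'})-2\mu_{t'}$ and $v_N = (c_t-c_{t'})+2\mu_t$. Hence $v_N-v_1 = 2(\mu_t+\mu_{t'}) > 6\cdot 10^{-n}$, and the triangle inequality yields $\max(|v_1|,|v_N|)\ge |v_N-v_1|/2 > 3\cdot 10^{-n}$. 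Since $\|v\|_p\ge \|v\|_\infty \ge \max(|v_1|,|v_N|)$ for every $p\in[1,\infty]$, I conclude $\|v\|_p > 3\cdot 10^{-n} > 2\cdot 10^{-n}$, as desired.

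For the upper bound \eqref{eq:BPTVIneqBDEpsUpper}, the structure of $\eta$ gives $|\eta_1-\eta_N| = 2\mu_t$, so it suffices to upper bound $\mu_t$. Writing $\theta_0-\theta_t = (m-1)(1-(1/2+t)^2)/(\theta_0+\theta_t)$ and using $\theta_0+\theta_t\ge 2(m-1)\ge 6$, the first term of $\mu_t$ is at most $\delta(1-(r_n+1/2)^2)/6$. The key step is then a case split on $\delta$ to bound $\delta(1-(r_n+1/2)^2)$ uniformly: if $\delta\le 10^{-n}$ then $r_n=1/4$ and $\delta(1-9/16)=7\delta/16\le 7\cdot 10^{-n}/16$; if $\delta>10^{-n}$ then $r_n = \tfrac12\sqrt{1-x}$ with $x := 3\cdot 10^{-n}/(4\delta)\in(0,3/4]$, expanding $(r_n+1/2)^2 = 1/2 - 3\cdot 10^{-n}/(16\delta) + r_n$, and invoking the elementary inequality $\sqrt{1-x}\ge 1-x$ on $[0,1]$ gives $\delta r_n \ge \delta/2 - 3\cdot 10^{-n}/8$, whence $\delta(1-(r_n+1/2)^2) \le 3\cdot 10^{-n}/8 + 3\cdot 10^{-n}/16 = 9\cdot 10^{-n}/16$. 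Combining the two cases,
\begin{equation*}
\mu_t \;\le\; \frac{3\cdot 10^{-n}}{32} + \frac{7\cdot 10^{-n}}{4} \;=\; \frac{59\cdot 10^{-n}}{32},
\end{equation*}
so $2\mu_t\le 59\cdot 10^{-n}/16$. Since $59\cdot 6 = 354 < 400 = 25\cdot 16$, we have $59/16 \le 25/6$, and because $2^{-1/p}\ge 1/2$ for all $p\in[1,\infty]$, this yields $2\mu_t \le 25\cdot 10^{-n}/6 \le 5\cdot 10^{-n+1}\,2^{-1/p}/6$, completing the proof.

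The main obstacle is the upper-bound case split: the threshold $r_n$ depends piecewise on $\delta$, and when $\delta$ is large $r_n$ approaches $1/2$, so a naive bound on $1-(r_n+1/2)^2$ multiplied by $\delta$ could blow up; the identity $r_n = \tfrac12\sqrt{1-3\cdot 10^{-n}/(4\delta)}$ together with the sharp lower bound $\sqrt{1-x}\ge 1-x$ is just enough to keep $\delta(1-(r_n+1/2)^2)$ of order $10^{-n}$ uniformly in $\delta$, which is the crux of the argument.
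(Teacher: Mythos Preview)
Your proof is correct. Both the lower and upper bounds go through as you describe.

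For the lower bound \eqref{eq:BPTVIneqBDEpsLower} your argument and the paper's are essentially the same idea (compare the first and $N$th coordinates of $\eta-\flipOp\eta'$); your packaging via $\max(|v_1|,|v_N|)\ge|v_N-v_1|/2$ is a touch more direct than the paper's minimisation over $|\eta_N-\eta'_1|$.

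For the upper bound \eqref{eq:BPTVIneqBDEpsUpper} the two arguments genuinely diverge. The paper does not factor $\theta_0-\theta_t$ or case-split on $\delta$; instead it uses the intermediate estimate $\theta(r_n,r_n,m)/(m-1)\ge [4r_n^2+(m-1)]/\theta_0$, chosen precisely so that the resulting expression contains the term $2\delta(1-4r_n^2)/\theta_0$. Since the very definition of $r_n$ gives $\delta(1-4r_n^2)\le 3\cdot 10^{-n}/4$, this term is bounded by $3\cdot 10^{-n}/(2\theta_0)$, which cancels \emph{exactly} against the $-3/(2\theta_0)$ contribution in $y_1$, yielding the clean bound $|\eta_1-\eta_N|\le 7\cdot 10^{-n}/2$. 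Your route via $\theta_0+\theta_t\ge 2(m-1)$ and the inequality $\sqrt{1-x}\ge 1-x$ is more mechanical, requires the case split on $\delta$, and gives the slightly weaker constant $59\cdot 10^{-n}/16$ --- still sufficient, but the paper's engineered cancellation is what makes the case distinction unnecessary.
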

\begin{proof}
We first prove the following inequalities for general $y_1\geq 0$ and then specialise to the value of $y_1$ in the statement of the lemma:
\begin{equation}\label{eq:BPTVDistS1S2Estimate}
		\min \limits_{\substack{(\alpha,\beta) \in \albetSetBPTV{n} \\ (\alpha',\beta') \in \albetSetBPTV{n}}} \|\eta(y_1,\alpha,\beta) - \flipOp\eta (y_1,\alpha',\beta')\|_p \geq 2^{1/p}\left[2y_1 - \frac{2\delta m}{\theta(1/2,1/2,m)}\right] 
		\end{equation}
		(with the convention $2^1/p=1$ when $p=\infty$) and, recalling $r_n$ from \eqref{eq:rn_and_sn},
		\begin{equation}\label{eq:BPTVComputeBoundEstimate}
		\max \limits_{(\alpha,\beta) \in \albetSetBPTV{n}} |\eta(y_1,\alpha,\beta)_1 - \eta(y_1,\alpha,\beta)_N| \leq  2y_1 - \frac{2\delta \left[4r_n^2 + (m-1)\right]}{\theta(1/2,1/2,m)}.
		\end{equation}
	To this end, we have for $(\alpha,\beta),(\alpha',\beta')\in\albetSetBPTV{n}$ and $p<\infty$
	\begin{equation*}
	\begin{aligned}
	&\|\eta(y_1,\alpha,\beta) - \flipOp \eta(y_1,\alpha',\beta')\|_p \geq \Big[\left|\eta(y_1,\alpha,\beta)_1- \left(\flipOp \eta(y_1,\alpha',\beta')\right)_1\right|^p\;+\\
	&\hspace{7cm} +\;\left|\eta(y_1,\alpha,\beta)_N- \left(\flipOp \eta(y_1,\alpha',\beta')\right)_N\right|^p \Big]^{1/p}\\
	= & \Big[\left|\eta(y_1,\alpha,\beta)_1- \eta(y_1,\alpha',\beta')_N\right|^p + \left|\eta(y_1,\alpha,\beta)_N- \eta(y_1,\alpha',\beta')_1\right|^p \Big]^{1/p}\\
	\geq & \Big[2 \cdot \min \limits_{\substack{(u,v) \in \albetSetBPTV{n} \\ (u',v') \in \albetSetBPTV{n}}} |\eta(y_1,u,v)_N - \eta(y_1,u',v')_1|^p \Big]^{1/p},
	\end{aligned}
	\end{equation*}
	and therefore
	\begin{equation*}
	\min \limits_{\substack{(\alpha,\beta)  \in \albetSetBPTV{n} \\  (\alpha',\beta')  \in \albetSetBPTV{n}}}\|\eta(y_1,\alpha,\beta) - \flipOp \eta(y_1,\alpha',\beta')\|_p  \geq	 \min \limits_{\substack{(\alpha,\beta) \in \albetSetBPTV{n} \\ (\alpha',\beta') \in \albetSetBPTV{n}}} 2^{1/p} |\eta(y_1,\alpha,\beta)_N - \eta(y_1,\alpha',\beta')_1|,
	\end{equation*}
	where this inequality also holds for $p=\infty$ by an analogous argument.
	Next, by the definition of $\eta$ we have, for $(\alpha,\beta), (\alpha',\beta')\in\albetSetBPTV{n}$,
	\[
	\eta(y_1,\alpha,\beta)_N - \eta(y_1,\alpha',\beta')_1 \geq 2y_1+\frac{\delta  (\alpha + \beta)}{\theta(\alpha,\beta)}  -\frac{2\delta\theta(\alpha,\beta)}{m-1}-\frac{\delta  (\alpha' + \beta')}{\theta(\alpha',\beta')} = 2y_1+\delta g(\alpha+ \beta)+\delta h(\alpha'+\beta')
	\] 
	where 
\[
g(u) = \frac{u}{\sqrt{(m-1)u^2+(m-1)^2}}  -\frac{2\sqrt{(m-1)u^2+(m-1)^2}}{m-1}
\]
and 
$
h(u) = \frac{-1}{\sqrt{(m-1)+(m-1)^2/u^2}}. 
$
A simple calculation yields, for $u \in [1/2,1]$, \[g'(u) = \frac{-(m-1)\left[(2u-1)(m-1)+2u^3\right]}{[(m-1)^2+u^2(m-1)]^{3/2}} \leq 0,\]
	so that $g(u) \geq g(1)$. It is also clear that $h(u)$ is decreasing on $[1/2,\infty)$ and that if $(\alpha,\beta) , (\alpha',\beta') \in \albetSetBPTV{n}$ then  $(\alpha + \beta) , (\alpha' + \beta') \in [3/4,1]$. Thus, 
	\begin{equation*}
	\min \limits_{\substack{\alpha,\beta \in \albetSetBPTV{n}\\ \alpha',\beta' \in \albetSetBPTV{n}}}2^{1/p} |\eta(y_1,\alpha,\beta)_N - \eta(y_1,\alpha',\beta')_1|\geq 2^{1/p} \left(2y_1+\delta g(1)+\delta h(1)\right).
	\end{equation*}
	Noting that $g(1) + h(1) = -2\sqrt{m-1 + (m-1)^2}/(m-1) = -2m/\theta(1/2,1/2,m)$ gives \eqref{eq:BPTVDistS1S2Estimate}.

	To obtain \eqref{eq:BPTVComputeBoundEstimate}, we note that $\eta(y_1,\alpha,\beta)_1 \leq \eta(y_1,\alpha,\beta)_N$ and $\theta(\alpha,\beta,m)$ is increasing in its arguments provided $(\alpha,\beta) \in [0,\infty)^2$. Thus 
	\begin{equation*}
	\max \limits_{(\alpha,\beta) \in \albetSetBPTV{n}} |\eta(y_1,\alpha,\beta)_1 - \eta(y_1,\alpha,\beta)_N| = \max \limits_{(\alpha,\beta) \in \albetSetBPTV{n}}\left[2y_1 - 2\delta \frac{\theta(\alpha,\beta,m)}{m-1}\right] = 2y_1 - 2\delta \frac{\theta(r_n,r_n,m)}{m-1}.
	\end{equation*}
	We conclude \eqref{eq:BPTVComputeBoundEstimate} by noting that 
\[
\theta(r_n,r_n,m)/(m-1) \geq \theta(r_n,r_n,m)^2 \left[\theta(1/2,1/2,m)(m-1)\right]^{-1} = [4r_n^2 + (m-1)] [\theta(1/2,1/2,m)]^{-1}.
\]

	With the specific choice $y_1 = \delta m [\theta(1/2,1/2,m)]^{-1} + [7 - 3 (\theta(1/2,1/2,m))^{-1}]10^{-n}/4$ and using $m\geq 4$, we calculate
	\begin{align*}
	2^{1/p}\left[2y_1 - \frac{2\delta m}{\theta(1/2,1/2,m)}\right]  
	&\geq   \left(\frac{7}{2} - \frac{3}{2\theta(1/2,1/2,m)}\right) 10^{-n} >  \left(\frac{7}{2} - \frac{3}{2(m-1)}\right) 10^{-n} \geq 2 \cdot 10^{-n},
	\end{align*}
	from which we conclude \eqref{eq:BPTVIneqBDEpsLower}, as well as 
	\begin{align*}2y_1 - \frac{2\delta \left[4r_n^2 + (m-1)\right]}{\theta(1/2,1/2,m)}
	&= \frac{2\delta (1-4r_n^2)}{\theta(1/2,1/2,m)} +  \left(\frac{7}{2} - \frac{3}{2\theta(1/2,1/2,m)}\right) 10^{-n} \\
	&\leq \frac{6\delta  \cdot 10^{-n}}{4\delta\theta(1/2,1/2,m)} +  \left(\frac{7}{2} - \frac{3}{2\theta(1/2,1/2,m)}\right) 10^{-n}\\
	&= \frac{7\cdot 10^{-n}}{2} < \frac{5 \cdot 10^{-n+1}}{12} \leq \frac{5 \cdot 10^{-n+1}2^{-1/p}}{6},
	\end{align*}
	where we used $1-4r_n^2 \leq  3 \cdot 10^{-n}/(4\delta)$, which is by the definition \eqref{eq:rn_and_sn}.
	This concludes the proof of \eqref{eq:BPTVIneqBDEpsUpper}.
\end{proof}

\begin{lemma}\label{lemma:ULTVPsiBounds}
Let $p\in [1,\infty]$, $n\in\mathbb{N}$, and
\begin{equation*}
y_1 = 7 \cdot 10^{-n}/4 + \lambda (1 + 1/(m-1)) - 3 \cdot 10^{-n}/(4(m-1)).
\end{equation*}
Then
	\begin{equation}\label{eq:ULTVIneqBDEpsLower}\min \limits_{\substack{(\alpha,\beta) \in \albetSetULTV{n} \\ (\alpha',\beta') \in \albetSetULTV{n}}} \|\psi(y_1,\alpha,\beta) - \flipOp\psi(y_1,\alpha',\beta')\|_p > 2 \cdot 10^{-n}
	\end{equation}
	and 
	\begin{equation} \label{eq:ULTVIneqBDEpsUpper}
	\max \limits_{\alpha,\beta \in \albetSetULTV{n}} |\psi(y_1,\alpha,\beta)_1 - \psi(y_1,\alpha,\beta)_N| \leq 5 \cdot  10^{-n+1}2^{-1/p}/6.
	\end{equation}
\end{lemma}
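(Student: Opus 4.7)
The plan is to mirror the two–step strategy used for Lemma \ref{lemma:BPTVEtaBounds}: first establish, for general $y_1\geq 0$, two preliminary inequalities controlling $\|\psi(y_1,\alpha,\beta)-\flipOp \psi(y_1,\alpha',\beta')\|_p$ from below and $|\psi(y_1,\alpha,\beta)_1-\psi(y_1,\alpha,\beta)_N|$ from above, expressed in terms of the parameters $\alpha+\beta$, $\alpha\vee\beta$, and $s_n$; then specialise to the specific $y_1$ in the statement and verify the numerical constants. The key simplification, compared to the BPTV case, is that every $(\alpha,\beta)\in\albetSetULTV{n}$ satisfies $\alpha\vee\beta=\tfrac{1}{2}$ and $\alpha+\beta\in[\tfrac{1}{2}+s_n,1]$, so the denominators in the definition \eqref{eq:the_tilde_eta} are constant along the set, and $\psi_N-\psi_1=2y_1-2\lambda-\tfrac{2\lambda(\alpha+\beta)^2}{m-1}$.

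For \eqref{eq:ULTVIneqBDEpsLower}, I would argue exactly as in the BPTV proof: drop all but the first and last coordinates of $\psi - \flipOp\psi'$ to get a factor $2^{1/p}$, then observe that the resulting scalar $\psi(y_1,\alpha,\beta)_N-\psi(y_1,\alpha',\beta')_1$ equals $2y_1-2\lambda+\tfrac{\lambda}{m-1}\bigl((\alpha+\beta)-(\alpha'+\beta')-2(\alpha+\beta)^2\bigr)$. With $u=\alpha+\beta$, $v=\alpha'+\beta'$ in $[\tfrac{3}{4},1]$, the function $u-v-2u^2$ is easily minimised (taking $v=1$ and noting it is decreasing on $[\tfrac{1}{2},1]$, so the minimum is at $u=1$) to give $-2$. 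Plugging in $y_1=\tfrac{7\cdot 10^{-n}}{4}+\lambda\bigl(1+\tfrac{1}{m-1}\bigr)-\tfrac{3\cdot 10^{-n}}{4(m-1)}$ yields a lower bound of $2^{1/p}\bigl(\tfrac{7}{2}-\tfrac{3}{2(m-1)}\bigr)10^{-n}\geq 2^{1/p}\cdot 3\cdot 10^{-n}>2\cdot 10^{-n}$ for $m\geq 4$.

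For \eqref{eq:ULTVIneqBDEpsUpper}, I would note that $|\psi_N-\psi_1|=2y_1-2\lambda-\tfrac{2\lambda(\alpha+\beta)^2}{m-1}$ is positive (by the same computation as above) and decreasing in $\alpha+\beta$, so the maximum is attained at $\alpha+\beta=\tfrac{1}{2}+s_n$. Substituting the value of $y_1$ gives
\[
\max|\psi_N-\psi_1|= \tfrac{7}{2}\cdot 10^{-n}+ \tfrac{2\lambda}{m-1}\bigl(\tfrac{3}{4}-s_n-s_n^2\bigr)-\tfrac{3\cdot 10^{-n}}{2(m-1)}.
\]
The goal is to show this is at most $\tfrac{7}{2}\cdot 10^{-n}$, which is then $\leq \tfrac{5\cdot 10^{-n+1}}{12}\leq \tfrac{5\cdot 10^{-n+1}}{6}\cdot 2^{-1/p}$ for all $p\in[1,\infty]$. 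The main (and essentially only) obstacle is handling the two cases in the definition \eqref{eq:rn_and_sn} of $s_n$ and ensuring the extra term $\tfrac{2\lambda}{m-1}(\tfrac{3}{4}-s_n-s_n^2)$ is absorbed by $\tfrac{3\cdot 10^{-n}}{2(m-1)}$: in the regime $10^{-n}\geq \lambda$ (where $s_n=\tfrac{1}{4}$) this reduces to checking $\tfrac{7\lambda}{8}\leq \tfrac{3}{2}\cdot 10^{-n}$, which is immediate; in the regime $10^{-n}<\lambda$, one has $s_n^2=\tfrac{1}{4}-\tfrac{3\cdot 10^{-n}}{16\lambda}$ by construction, so $2\lambda(\tfrac{3}{4}-s_n-s_n^2)=\lambda(1-2s_n)+\tfrac{3\cdot 10^{-n}}{8}$, and the elementary inequality $1-\sqrt{1-x}\leq x$ applied to $x=\tfrac{3\cdot 10^{-n}}{4\lambda}$ gives $\lambda(1-2s_n)\leq \tfrac{3}{4}\cdot 10^{-n}$, closing the estimate. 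The two cases combine to give the desired uniform bound, completing the proof.
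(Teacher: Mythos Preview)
Your proposal is correct and, for \eqref{eq:ULTVIneqBDEpsLower}, essentially identical to the paper's argument (the paper completes the square in $\alpha+\beta$ rather than differentiating, but arrives at the same minimiser $\alpha+\beta=\alpha'+\beta'=1$).

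For \eqref{eq:ULTVIneqBDEpsUpper} there is a minor but genuine difference. You compute the maximum exactly at $\alpha+\beta=\tfrac{1}{2}+s_n$, producing the term $\tfrac{2\lambda}{m-1}\bigl(\tfrac{3}{4}-s_n-s_n^2\bigr)$, which then forces the case split on whether $s_n=\tfrac{1}{4}$. The paper instead uses the coarser bound $\theta(\alpha,\beta,m)\geq\theta(s_n,s_n,m)$ (valid since both $\alpha,\beta\geq s_n$, even though $(s_n,s_n)\notin\albetSetULTV{n}$), yielding the simpler term $\tfrac{2\lambda}{m-1}(1-4s_n^2)$; this is then absorbed in one line via the inequality $1-4s_n^2\leq \tfrac{3\cdot 10^{-n}}{4\lambda}$, which holds uniformly across both branches of the definition \eqref{eq:rn_and_sn} of $s_n$. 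Your sharper bound works but buys nothing here; the paper's coarser one avoids the case analysis.
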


\begin{proof}
	As in the proof of Lemma \ref{lemma:BPTVEtaBounds},  we will first prove the following inequalities for general $y_1\geq 0$, and then specialise to $y_1$ in the statement of the lemma:
	\begin{equation}\label{eq:ULTVDistS1S2Estimate}
		\min \limits_{\substack{\alpha,\beta \in \albetSetULTV{n} \\ \alpha',\beta' \in \albetSetULTV{n}}} \|\psi(y_1,\alpha,\beta) - \flipOp\psi(y_1,\alpha',\beta')\|_p \geq 2^{1/p}\left[2y_1 - 2\lambda \left(1 + \frac{1}{m-1}\right)\right] 
		\end{equation}
		and,  recalling $s_n$ from \eqref{eq:rn_and_sn},
		\begin{equation}\label{eq:ULTVComputeBoundEstimate}
		\max \limits_{(\alpha,\beta) \in \albetSetULTV{n}} |\psi(y_1,\alpha,\beta)_1 - \psi(y_1,\alpha,\beta)_N| \leq  2y_1 - 2\lambda \frac{\theta(s_n,s_n,m)^2}{(m-1)^2}.
		\end{equation}
	To this end, note that
	\begin{equation}\label{eq:ULTVDistS1S2Int}
	\min \limits_{\substack{(\alpha,\beta) \in \albetSetULTV{n} \\ (\alpha',\beta') \in \albetSetULTV{n}}}\|\psi(y_1,\alpha,\beta) - \flipOp\psi (y_1,\alpha',\beta')\|_p  \geq \min \limits_{\substack{(\alpha,\beta) \in \albetSetULTV{n} \\( \alpha',\beta' )\in \albetSetULTV{n}}}2^{1/p} |\psi(y_1,\alpha,\beta)_N - \psi(y_1,\alpha',\beta')_1|.
	\end{equation}
	Next, a simple calculation and the definition of $\psi$ yields, for $(\alpha,\beta),(\alpha',\beta')\in\albetSetULTV{n}$,
	\begin{align}\psi(y_1,\alpha,\beta)_N - \psi(y_1,\alpha',\beta')_1 &= 2y_1 - \lambda\left(\frac{2(\alpha+\beta)^2 +2(m-1)-(\alpha + \beta)}{(m-1)} + \frac{(\alpha'+\beta')}{m-1}\right)
	\notag\\&=2y_1 - \lambda\left(\frac{2(\alpha+\beta-1/4)^2 +2(m-1)-1/8}{(m-1)} + \frac{(\alpha'+\beta')}{m-1}\right).\label{eq:ULTVDistS1S2Int2}
	\end{align}
	By noting that $\alpha + \beta , \alpha'+\beta' \in [1/2,1]$ we see that the right hand side of this expression is minimised when $\alpha + \beta = \alpha' + \beta' = 1$. Applying these values of $\alpha+\beta$ and $\alpha' + \beta'$ in \eqref{eq:ULTVDistS1S2Int2} and combining with \eqref{eq:ULTVDistS1S2Int} yields \eqref{eq:ULTVDistS1S2Estimate}.
	For \eqref{eq:ULTVComputeBoundEstimate}, we note that $\psi(y_1,\alpha,\beta)_1 \leq \psi(y_1,\alpha,\beta)_N$ and $\theta(\alpha,\beta,m)$ is increasing in its arguments provided $(\alpha,\beta) \in [0,\infty)^2$. Thus 
	\begin{equation*}
	\max \limits_{(\alpha,\beta) \in \albetSetULTV{n}} \!\!|\psi(y_1,\alpha,\beta)_1 - \psi(y_1,\alpha,\beta)_N| =\!\!\!\!\! \max\limits_{(\alpha,\beta) \in \albetSetULTV{n}}\!\!\left[2y_1 \!-\! 2\lambda \frac{\theta(\alpha,\beta,m)^2}{(m-1)^2}\right] \!\leq\! 2y_1 - 2\lambda \frac{\theta(s_n,s_n,m)^2}{(m-1)^2}.
	\end{equation*}
	
	Now, with the specific choice $y_1 = 7 \cdot 10^{-n} /4 + \lambda (1 + 1/(m-1)) - 3 \cdot 10^{-n}/(4(m-1))$ and using $m\geq 4$, we calculate
	\begin{align*}
	2^{1/p}\left[2y_1 - 2\lambda \left(1 + \frac{1}{m-1}\right)\right] 
	&\geq \left(\frac{7}{2} - \frac{3}{2(m-1)}\right) 10^{-n} >  2 \cdot 10^{-n},
	\end{align*}
	as well as 
	\begin{align*} 2y_1 - 2\lambda \frac{\theta(s_n,s_n,m)^2}{(m-1)^2}
	&= \frac{7 \cdot 10^{-n} }{2} + 2\lambda \left(\frac{1}{m-1} - \frac{4s_n^2}{m-1}\right) - \frac{3 \cdot 10^{-n}}{2(m-1)} \\&\leq \frac{7 \cdot 10^{-n}}{2}< \frac{5 \cdot 10^{-n+1}}{12} \leq  \frac{5 \cdot 10^{-n+1}2^{-1/p}}{6}
	\end{align*}
	since $1-4s_n^2 \leq  3 \cdot 10^{-n}/(4\lambda)$ by the definition \eqref{eq:rn_and_sn}. This concludes the proof of \eqref{eq:ULTVIneqBDEpsUpper}.
\end{proof}
\subsection{The strong breakdown epsilons}\label{sec:MainBDEpsStrong}
Our aim will be to prove that, for each of the computational problems under consideration, we have $\epsilon_{\mathbb{P}h\mathrm{B}}^\mathrm{s}(\mathrm{p}) > 10^{-k}$ for $\mathrm{p} \in [0,1/2)$ and  $\epsilon_{\mathbb{P}\mathrm{B}}^\mathrm{s}(\mathrm{p}) > 10^{-k}$ for $\mathrm{p} \in [0,1/3)$. We do so in the following lemma:

\begin{lemma}\label{lemma:StrongBDEpsilonBounds10K}
	Let $k$, $m$, and $N$ be natural numbers with $N>m\geq 4$ and $k\geq 1$, and consider the computational problem $\{\Xi,\Omega_{m,N,k}^{\mathrm{s}},\mathcal{M},\Lambda_{m,N} \}$,
	 where $\Omega_{m,N,k}^{\mathrm{s}}$ is one of \eqref{eq:strongOmega-fixed-dim} and $\Xi$ is the appropriate solution map.
	 Then there exists a $\hat\Lambda\in\mathcal{L}^1(\Lambda_{m,N})$ such that,  for  the computational problem $\{\Xi,\Omega_{m,N,k}^{\mathrm{s}},\mathcal{M},\hat\Lambda\}$, we have $\epsilon_{\mathbb{P}h\mathrm{B}}^\mathrm{s}(\mathrm{p}) > 10^{-k}$ for $\mathrm{p} \in [0,1/2)$ and $\epsilon_{\mathbb{P}\mathrm{B}}^\mathrm{s}(\mathrm{p}) > 10^{-k}$ for $\mathrm{p} \in [0,1/3)$.
\end{lemma}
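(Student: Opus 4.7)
The plan is to apply the implication \ref{property:MinimisersOfNonZero}, \ref{property:Del1Info}, \ref{property:Del1withIota0} $\Rightarrow$ \ref{item:SequenceCounterexampleL1StrongBDEps} of Proposition \ref{prop:DrivingNegativeProposition} separately to each of the six cases. For each problem it will suffice to exhibit two sequences $\{\iota_n^1\},\{\iota_n^2\}\subset\Omega^{\mathrm{s}}_{m,N,k}$, disjoint sets $S^1,S^2\subset\mathcal{M}$, and an anchor input $\iota^0\in\Omega^{\mathrm{s}}_{m,N,k}$ verifying conditions \ref{property:MinimisersOfNonZero}--\ref{property:Del1withIota0} with some $\kappa>2\cdot 10^{-k}$. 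The proposition will then produce a $\hat\Lambda\in\mathcal{L}^1(\Lambda_{m,N})$ for which simultaneously $\strbdepsph\geq\kappa/2>10^{-k}$ on $[0,1/2)$ and $\strbdepsp\geq\kappa/2>10^{-k}$ on $[0,1/3)$, which is the full conclusion of the lemma.

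In every case the anchor will be the ``corner'' input $\iota^0=(y,A(1/2,1/2,m,N))$, with $A$ and $y$ the appropriate matrix/vector from \eqref{eq:y_and_A_LP}, \eqref{eq:y_and_A_l1}, or \eqref{eq:y_and_A_TV} and $y_1$ chosen according to \eqref{eq:thm3.2_y_values_strong}. Since $r_k,s_k<1/2$, the corner $(1/2,1/2)$ lies in each of $\albetSet$, $\albetSetBPTV{k}$, and $\albetSetULTV{k}$, so $\iota^0\in\Omega^{\mathrm{s}}_{m,N,k}$. For the sequences, I would set $\iota_n^1=(y,A(1/2,1/2-\epsilon_n,m,N))$ and $\iota_n^2=(y,A(1/2-\epsilon_n,1/2,m,N))$ with $\epsilon_n:=\min\{4^{-(n+1)},(1/2-r_\star)/2\}$, where $r_\star\in\{1/4,r_k,s_k\}$ is the lower bound of the admissible parameter interval appropriate to the problem; then $\epsilon_n>0$, $\epsilon_n\to 0$, $1/2-\epsilon_n$ lies in the admissible interval, and in particular $\iota_n^1,\iota_n^2\in\Omega^{\mathrm{s}}_{m,N,k}$. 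Since $\iota_n^1$ and $\iota_n^2$ differ from $\iota^0$ only in two matrix entries by at most $\epsilon_n\leq 4^{-(n+1)}<4^{-n}$, conditions \ref{property:Del1Info} and \ref{property:Del1withIota0} hold immediately with $c_k:=f_k(\iota^0)$.

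For LP, $\ell^1$-BP, $\ell^1$-UL, and CL, the choices of $y_1$ in \eqref{eq:thm3.2_y_values_strong} are designed precisely to satisfy the hypotheses of Lemmas \ref{lemma:ProblemBasicExampleLP}, \ref{lemma:ProblemBasicExampleBPDNL1}, \ref{lemma:ProblemBasicExampleULASSO}, and \ref{lemma:ProblemBasicExampleCLASSO} (namely $y_1\geq \delta$ for BP, $\alpha\vee\beta\geq\lambda/(2y_1)$ for UL with $\alpha\vee\beta=1/2$, and $r=(\alpha\vee\beta)y_1/(1+(\alpha\vee\beta)^2)\leq\tau$ for CL). Those lemmas identify the unique solutions $\Xi(\iota_n^1)=\{\xi^1\}$ and $\Xi(\iota_n^2)=\{\xi^2\}$, both independent of $n$, as $\xi^1=4\cdot 10^{-k}\,e_1+w$ and $\xi^2=4\cdot 10^{-k}\,e_2+w$ for some common $w$ supported outside $\{e_1,e_2\}$ ($w=0$ except in CL, where $w=(\tau-4\cdot 10^{-k})e_3$). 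Taking $S^1:=\{\xi^1\}$ and $S^2:=\{\xi^2\}$ then yields $\|\xi^1-\xi^2\|_p=4\cdot 10^{-k}\cdot 2^{1/p}\geq 4\cdot 10^{-k}$ for every $p\in[1,\infty]$ (with $2^{1/\infty}:=1$), so \ref{property:MinimisersOfNonZero} holds with $\kappa=4\cdot 10^{-k}>2\cdot 10^{-k}$.

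For TV BP and TV UL, Lemmas \ref{lemma:BPTVSolutions} and \ref{lemma:ULTVSolutions} --- whose $y_1$-hypotheses are met uniformly in $(\alpha,\beta)\in\albetSetBPTV{k}$ (resp.\ $\albetSetULTV{k}$) by design, which is exactly why $r_k,s_k$ were defined as in \eqref{eq:rn_and_sn} --- identify the solutions as $\Xi(\iota_n^1)=\{\flipOp\eta(y_1,1/2,1/2-\epsilon_n)\}$ and $\Xi(\iota_n^2)=\{\eta(y_1,1/2-\epsilon_n,1/2)\}$ for TV BP, and analogously with $\psi$ in place of $\eta$ for TV UL. I would then take $S^1$ and $S^2$ to be the full images $\{\flipOp\eta(y_1,\alpha,\beta):(\alpha,\beta)\in\albetSetBPTV{k}\}$ and $\{\eta(y_1,\alpha,\beta):(\alpha,\beta)\in\albetSetBPTV{k}\}$ respectively (and correspondingly for $\psi$); the inequalities \eqref{eq:BPTVIneqBDEpsLower} from Lemma \ref{lemma:BPTVEtaBounds} and \eqref{eq:ULTVIneqBDEpsLower} from Lemma \ref{lemma:ULTVPsiBounds} then give $\inf_{(x_1,x_2)\in S^1\times S^2}\|x_1-x_2\|_p>2\cdot 10^{-k}$, so \ref{property:MinimisersOfNonZero} holds with some $\kappa>2\cdot 10^{-k}$. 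The only nontrivial obstacle in the whole argument is bookkeeping --- verifying that the $y_1$-hypotheses in the solution-geometry lemmas remain consistent with the uniform choice of $y_1$ in \eqref{eq:thm3.2_y_values_strong} over every admissible $(\alpha,\beta)$ --- after which Proposition \ref{prop:DrivingNegativeProposition}\ref{item:SequenceCounterexampleL1StrongBDEps} immediately delivers the stated breakdown-epsilon bounds in all six cases.
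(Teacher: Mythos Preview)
Your proposal is correct and follows essentially the same route as the paper: invoke Proposition~\ref{prop:DrivingNegativeProposition}\ref{item:SequenceCounterexampleL1StrongBDEps} after exhibiting, for each of the six problems, an anchor $\iota^0$ at the corner $(1/2,1/2)$, two sequences perturbing the $(\alpha,\beta)$-entries toward that corner, and sets $S^1,S^2$ separated by more than $2\cdot 10^{-k}$ (via the solution-geometry Lemmas~\ref{lemma:ProblemBasicExampleLP}--\ref{lemma:ProblemBasicExampleCLASSO} in the $\ell^1$ cases and Lemmas~\ref{lemma:BPTVEtaBounds}, \ref{lemma:ULTVPsiBounds} in the TV cases). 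The only cosmetic difference is your uniform choice $\epsilon_n=\min\{4^{-(n+1)},(1/2-r_\star)/2\}$ in place of the paper's shifted $4^{-n-n_0}$, which serves the same purpose of keeping the perturbed inputs inside the admissible parameter sets.
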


\begin{proof}
	The proof is based on using Proposition \ref{prop:DrivingNegativeProposition}. 
	Concretely, for linear programming, we will construct input sequences $\{\iota^1_n\}_{n=1}^{\infty}\subset \Omega_{m,N,k}^{\mathrm{s}}$, $\{\iota^2_n\}_{n=1}^{\infty}\subset \Omega_{m,N,k}^{\mathrm{s}}$, an input $\iota^0 \in \Omega_{m,N,k}^{\mathrm{s}}$, as well as sets $S^1\subset  \real^{m}$ and $S^2 \subset \real^m$ such that the following hold: 
	\begin{itemize}
		\item[(i)] $\inf_{x^1 \in S^1, x^2 \in S^2} \|x^1 - x^2\|_{p} > 2 \cdot 10^{-k}$.
		\item[(ii)] $\iota^0=(y^0,\matLP^0)\in \real^m\times \real^{m \times N}$ and $\iota^j_n=(y^{j,n},\matLP^{j,n})\in \real^m\times \real^{m \times N} $, for $j=1,2$ and $n\in\mathbb{N}$, satisfy 
		\begin{equation}\label{eq:ShowCloseMatVectorS} 
		\|y^{j,n} - y^0\|_{\infty},\|\matLP^{j,n}-\matLP^0\|_{\max} \leq 4^{-n}.
		\end{equation}
		\item[(iii)] $\Xi(\iota^1_n) \subset S^1$ and $\Xi(\iota^2_n) \subset S^2$, for all $n\in\mathbb{N}$. 
	\end{itemize}
	For the $\ell^1$ and TV problems we will do the same, except that we respectively write $\matl$ and $\matTV$ instead of $\matLP$.
	Once we have done this, the result will follow by part \ref{item:SequenceCounterexampleL1StrongBDEps} of Proposition \ref{prop:DrivingNegativeProposition}.
	 We thus work through each of the computational problems $\{\xilp,\omlps[k]\}$, $\{\xibp,\ombpls[k]\}$, $\{\xiul,\omulls[k]\}$, $\{\xicl,\omclls[k]\}$, $\{\xibptv,\ombptvs[k]\}$, and $\{\xiultv,\omultvs[k]\}$	in turn.
	
	{\bf Case $\{\xilp,\omlps[k]\}$}: We set 
	\begin{equation*}
	\begin{aligned}
	&\iota^0_n = (\vecYMainLP{k},\matl(1/2,1/2,m,N)), \\
	&\iota^1_n = (\vecYMainLP{k},\matl(1/2,1/2-4^{-n},m,N)),\qquad \iota^2_n = (\vecYMainLP{k},\matl(1/2 - 4^{-n},1/2,m,N)),\\
	&S^1 = \{4\cdot 10^{-k}e_1\},\quad\text{and}\quad S^2 = \{4 \cdot 10^{-k}e_2\},
	\end{aligned}
	\end{equation*}
	where $e_i$ is the $i$-th canonical basis vector of $\real^m$. We note that \eqref{eq:ShowCloseMatVectorS} immediately holds, hence  establishing (ii).
	Next, we have $\inf_{x^1 \in S^1, x^2 \in S^2} \|x^1 - x^2\|_{p} = \|4\cdot 10^{-k}e_1 - 4\cdot 10^{-k}e_2\|_{p} > 2 \cdot 10^{-k}$, yielding (i). 
	Finally, to see (iii), note that Lemma \ref{lemma:ProblemBasicExampleLP} implies $\xilp(\iota^1_n) =  \{4\cdot 10^{-k}e_1\}$ and $\xilp(\iota^2_n)  = \{4\cdot 10^{-k}e_2\}$, for $j=1,2$ and $n\in\mathbb{N}$.
	
	{\bf Case $\{\xibp,\ombpls[k]\}$}: We set 
	\begin{equation*}
	\begin{aligned}
	&\iota^0 = (\vecYMainBP{k},\matl(1/2,1/2,m,N)), \\
	&\iota^1_n =  (\vecYMainBP{k},\matl(1/2,1/2-4^{-n},m,N)) ,\qquad \iota^2_n = (\vecYMainBP{k},\matl(1/2 - 4^{-n},1/2,m,N)) ,\\
	&S^1 = \{4\cdot 10^{-k}e_1\},\quad\text{and}\quad S^2 = \{4 \cdot 10^{-k}e_2\},
	\end{aligned}
	\end{equation*}
	from which \eqref{eq:ShowCloseMatVectorS} immediately holds, hence establishing (ii).
	Next, we again have $\inf_{x^1 \in S^1, x^2 \in S^2} \|x^1 - x^2\|_{p} = \|4\cdot 10^{-k}e_1 - 4\cdot 10^{-k}e_2\|_{p} > 2 \cdot 10^{-k}$, yielding (i).
	Finally, to see (iii), we use Lemma \ref{lemma:ProblemBasicExampleBPDNL1} to find that $\xibpdn(\iota^1_n) = \{2(\delta + 2 \cdot 10^{-k} - \delta)e_1\} = \{4\cdot 10^{-k}e_1\}=S^1$ and $\xibpdn(\iota^2_n) = \{2(\delta + 2 \cdot 10^{-k} - \delta)e_2\} = \{4\cdot 10^{-k}e_2\}=S^2$.

	{\bf Case $\{\xiul,\omulls[k]\}$}: The argument is almost identical to that for  $\{\xibp,\ombpls[k]\}$, except that $\vecYMainUL{k}$ replaces $\vecYMainBP{k}$, and we use Lemma \ref{lemma:ProblemBasicExampleULASSO} to obtain
	\[\xiul(\iota^1_n) = \{(\lambda  + 2\cdot 10^{-k} -\lambda) [2(1/2)^2]^{-1} e_1\} = \{4\cdot 10^{-k}e_1\}=S^1\] and similarly $\xiul(\iota^2_n) =  \{4\cdot 10^{-k}e_2\}=S^2$.

	{\bf Case $\{\xicl,\omclls[k]\}$}: Here, the argument differs slightly from the argument for $\{\xibp,\ombpls[k]\}$. Again, we replace $\vecYMainBP{k}$ with $\vecYMainCL{k}$. As before, we immediately obtain (ii). We now define $S^1 = \{4\cdot 10^{-k}e_1 + (\tau - 4\cdot 10^{-k})e_3\}$ and $S^2 = \{4 \cdot 10^{-k}e_2+(\tau - 4\cdot 10^{-k})e_3\}$ and note that 
\[	
\inf_{x^1 \in S^1, x^2 \in S^2} \|x^1 - x^2\|_{p} =  \|4\cdot 10^{-k}e_1 - 4\cdot 10^{-k}e_2\|_{p} > 2 \cdot 10^{-k},
\] 
and thus (i) is satisfied. To establish (iii), we note that for $(\alpha,\beta) \in \albetSet$ we have $\alpha \vee \beta = 1/2$ and thus the parameter $r$ given in \ref{lemma:ProblemBasicExampleCLASSO} satisfies 
	\[r=(\alpha \vee \beta) \vecYMainCL{k}_1 /[1+(\alpha \vee \beta)^2] = 2\cdot 10^{-k+1}/5  = 4\cdot 10^{-k} \leq \tau.\] Therefore Lemma \ref{lemma:ProblemBasicExampleCLASSO} applies and so 
\[
\xicl(\iota^1_n) = \{re_1 + (\tau -r) e_3\} = \{4\cdot 10^{-K}e_1 + (\tau - 4\cdot 10^{-K}) e_3\}
\]
 and similarly $\xicl(\iota^2_n) = \{4\cdot 10^{-K}e_2 + (\tau - 4\cdot 10^{-K}) e_3\}$, as desired. 
	
	{\bf Case $\{\xibptv,\ombptvs[k]\}$}: We choose $n_0$ so that $1/2 - 4^{-n_0} \in [r_k,1/2]$ and, writing $y_1 = \vecYMainBP{k}_1$, we set
	\begin{equation*}
	\begin{aligned}
	&\iota^0 =(\vecYMainBPTV{k},\matTV(1/2,1/2,m,N)), \\
	&\iota^1_n = (\vecYMainBPTV{k},\matTV(1/2,1/2-4^{-n-n_0}),m,N)),\qquad \iota^2_n =  (\vecYMainBPTV{k},\matTV(1/2 - 4^{-n-n_0}),1/2,m,N)) ,\\
	&S^1 = \{\flipOp \eta(y_1,\alpha,\beta)\, \vert \, (\alpha,\beta) \in\albetSetBPTV{k} \},\quad\text{and}\quad S^2 = \{\eta(y_1,\alpha,\beta) \, \vert \, (\alpha,\beta) \in\albetSetBPTV{k}\}.
	\end{aligned}
	\end{equation*}
	It then follows by the choice of $n_0$ that $\iota^0, \iota^1_n$ and $\iota^2_n$ are all in $\ombptvs$. We also immediately have  \eqref{eq:ShowCloseMatVectorS} and hence (ii) holds.
	Now, by Lemma \ref{lemma:BPTVEtaBounds} (where we choose $n=k$ in Lemma \ref{lemma:BPTVEtaBounds}) and more specifically equation \eqref{eq:BPTVIneqBDEpsLower} we have
	\begin{equation*}
	\min_{\substack{x^1 \in S^1\\ x^2 \in S^2}} \|x^1 - x^2\|_p = \min \limits_{\substack{(\alpha,\beta) \in \albetSetBPTV{K} \\ (\alpha',\beta') \in \albetSetBPTV{K}}} \|\eta(y_1,\alpha,\beta) - \flipOp(y_1,\alpha',\beta')\|_p > 2 \cdot 10^{-k},
	\end{equation*} and thus (i) follows.
	Finally, using Lemma \ref{lemma:BPTVSolutions} we immediately obtain $\xibptv(\iota^j_n) \subseteq S^j$, for $j=1,2$ and $n\in\mathbb{N}$, establishing (iii).
	
	\textbf{Case $\{\xiultv,\omultvs[k]\}$}: This time the argument is very similar to that for $\{\xibptv,\ombptvs[k]\}$. We list only the differences: 
	first, we replace $\vecYMainBPTV{k}$ with $\vecYMainULTV{k}$. Again, it follows immediately that \ref{eq:ShowCloseMatVector} holds. The definitions of $S^1$ and $S^2$ are also identical with one difference -- we replace $\eta$ with $\psi$ defined in Lemma \ref{lemma:ULTVSolutions}. The argument for (i) is the same except we replace all mentions of Lemma \ref{lemma:BPTVEtaBounds} with Lemma \ref{lemma:ULTVPsiBounds} and inequality \eqref{eq:BPTVIneqBDEpsLower} with \eqref{eq:ULTVIneqBDEpsLower}. Finally, part (iii) is identical except now we use Lemma \ref{lemma:ULTVSolutions} instead of Lemma \ref{lemma:BPTVSolutions}.
\end{proof}

\subsection{The weak breakdown epsilons}\label{sec:MainBDEpsWeak} 
Similarly to the previous subsection, our aim will be to prove that, for each of the computational problems under consideration, we have both $\epsilon^{\mathrm{w}}_{\mathbb{P}\mathrm{B}}(\mathrm{p}) > 10^{-k+1}$, for $\mathrm{p} \in [0,1/2)$, and $\epsilon^{\mathrm{w}}_{\mathrm{B}}> 10^{-k+1}$. 
\begin{lemma}\label{lemma:WeakBDEpsilonBounds10K}
	Let $k$, $m$, and $N$ be natural numbers with $N>m\geq 4$ and $k\geq 2$, and consider the computational problem $\{\Xi,\Omega_{m,N,k}^{\mathrm{w}},\mathcal{M},\Lambda_{m,N} \}, $
	 where $\Omega_{m,N,k}^{\mathrm{w}}$ is one of \eqref{eq:weakOmega-fixed-dim} and $\Xi$ is the appropriate solution map. 
	 Then there exists a $\hat\Lambda\in\mathcal{L}^1(\Lambda_{m,N})$ such that,  for  the computational problem $\{\Xi,\Omega_{m,N,k}^{\mathrm{s}},\mathcal{M},\hat\Lambda\}$, we have $\epsilon^{\mathrm{w}}_{\mathbb{P}\mathrm{B}}(\mathrm{p}) > 10^{-k+1}$, for $\mathrm{p} \in [0,1/2)$, and $\epsilon^{\mathrm{w}}_{\mathrm{B}}> 10^{-k+1}$.
\end{lemma}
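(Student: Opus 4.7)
The plan is to mirror the proof of Lemma \ref{lemma:StrongBDEpsilonBounds10K}, but now invoke only part \ref{item:SequenceCounterexampleL1WeakBDEps} of Proposition \ref{prop:DrivingNegativeProposition}, whose conclusion is precisely a bound on $\epsilon_{\mathrm{B}}^{\mathrm{w}}$ and $\epsilon_{\mathbb{P}\mathrm{B}}^{\mathrm{w}}(\mathrm{p})$ of size $\kappa/2$. Crucially, this part requires only conditions \ref{property:MinimisersOfNonZero} and \ref{property:Del1Info}, not the existence of an input $\iota^0\in\Omega$ that realises the limiting $\Delta_1$-information. This is exactly the flexibility we need, since the sets in \eqref{eq:weakOmega-fixed-dim} are obtained from the ``strong'' sets by removing the corner point $z^a=(1/2,1/2)$; the sequences we will use still converge (in the $\Delta_1$ sense) to the input corresponding to that corner, but that input itself need not, and will not, lie in $\Omega^{\mathrm{w}}_{m,N,k}$.

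For each of the six problems I would reuse almost verbatim the sequences constructed in the proof of Lemma \ref{lemma:StrongBDEpsilonBounds10K}, with two modifications. First, the vector $y$ is replaced everywhere by its $(k-1)$-indexed counterpart, as mandated by the definitions in \eqref{eq:weakOmega-fixed-dim}; for example in the LP case I would set
\[
\iota^1_n=(\vecYMainLP{k-1},\matLP(1/2,\,1/2-4^{-n},\,m,N)),\qquad \iota^2_n=(\vecYMainLP{k-1},\matLP(1/2-4^{-n},\,1/2,\,m,N)),
\]
and take $S^1=\{4\cdot 10^{-k+1}e_1\}$, $S^2=\{4\cdot 10^{-k+1}e_2\}$. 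Second, the $\iota^0$ appearing in the strong-case proofs is \emph{not} added to the construction; instead, the constants required in \ref{property:Del1Info} of Proposition \ref{prop:DrivingNegativeProposition} are taken to be $c_k=f_k(\vecYMainLP{k-1},\matLP(1/2,1/2,m,N))$, i.e.\ the values of the coordinate functions at the excluded corner. Since $\iota^1_n,\iota^2_n$ differ from this virtual input only in one matrix entry perturbed by $4^{-n}$, \ref{property:Del1Info} holds immediately (with room to spare, $2^{-n}<4^{-n}$ not even needed here). The separation in \ref{property:MinimisersOfNonZero} becomes $\inf_{x^1\in S^1,x^2\in S^2}\|x^1-x^2\|_p>2\cdot 10^{-k+1}$, and the inclusions $\Xi(\iota^j_n)\subseteq S^j$ follow at once from Lemma \ref{lemma:ProblemBasicExampleLP}. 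The five remaining problems (BP, UL, CL with $\ell^1$, and BP, UL with TV) are treated in exactly the same way, using Lemmas \ref{lemma:ProblemBasicExampleBPDNL1}, \ref{lemma:ProblemBasicExampleULASSO}, \ref{lemma:ProblemBasicExampleCLASSO}, \ref{lemma:BPTVSolutions}, \ref{lemma:ULTVSolutions} to describe the solutions and Lemmas \ref{lemma:BPTVEtaBounds}, \ref{lemma:ULTVPsiBounds} (applied at index $k-1$) to verify the quantitative separation $\kappa>2\cdot 10^{-k+1}$ in the TV cases.

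The one genuinely non-automatic point is membership of the sequences in the correct ``weak'' input set. For LP and $\ell^1$ this is trivial, since $(1/2,1/2-4^{-n})$ and $(1/2-4^{-n},1/2)$ lie in $\albetSet\setminus\{z^a\}$ for every $n\geq 1$. For the TV problems the constraint $(\alpha,\beta)\in\albetSetBPTV{k-1}\setminus\{z^a\}$ (resp.\ $\albetSetULTV{k-1}\setminus\{z^a\}$) forces $\alpha\vee \beta\geq r_{k-1}$ (resp.\ $s_{k-1}$); I would absorb this by the same trick used in the TV portion of Lemma \ref{lemma:StrongBDEpsilonBounds10K}, namely replacing $4^{-n}$ with $4^{-n-n_0}$ for an $n_0$ chosen so that $1/2-4^{-n_0}\geq r_{k-1}$ (resp.\ $s_{k-1}$). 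This does not affect condition \ref{property:Del1Info}, as the constants $c_k$ are unchanged and the bound $|f_k(\iota^j_n)-c_k|\leq 4^{-n}$ only improves. The main (and essentially only) obstacle is therefore bookkeeping this $n_0$ across the TV cases; once that is done, Proposition \ref{prop:DrivingNegativeProposition}\ref{item:SequenceCounterexampleL1WeakBDEps} yields a $\hat\Lambda\in\mathcal{L}^1(\Lambda_{m,N})$ for which $\epsilon_{\mathrm{B}}^{\mathrm{w}}\geq \epsilon_{\mathbb{P}\mathrm{B}}^{\mathrm{w}}(\mathrm{p})\geq \kappa/2>10^{-k+1}$ for all $\mathrm{p}\in[0,1/2)$, which is exactly the claim.
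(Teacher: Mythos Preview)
Your proposal is correct and follows essentially the same approach as the paper: reuse the sequences from Lemma \ref{lemma:StrongBDEpsilonBounds10K} with $k$ replaced by $k-1$ in the $y$-vectors, observe that the excluded corner point $(1/2,1/2)$ still furnishes the constants $c_k$ needed for condition \ref{property:Del1Info} (the paper phrases this as ``$\iota^0$ need not be in $\Omega$''), and apply only part \ref{item:SequenceCounterexampleL1WeakBDEps} of Proposition \ref{prop:DrivingNegativeProposition}. The $n_0$ shift for the TV cases and the use of Lemmas \ref{lemma:BPTVEtaBounds} and \ref{lemma:ULTVPsiBounds} at index $k-1$ match the paper exactly. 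One cosmetic slip: your parenthetical ``$2^{-n}<4^{-n}$'' has the inequality reversed, but this plays no role in the argument since the perturbations are exactly $4^{-n}$ (or smaller), which is what condition \ref{property:Del1Info} requires.
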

\begin{proof}
	The proof reads almost identically to that of Lemma \ref{lemma:StrongBDEpsilonBounds10K}, so we will only list the differences. Concretely, for linear programming, the proof proceeds by constructing input sequences $\{\iota^1_n\}_{n=1}^{\infty}\subset \Omega$, $\{\iota^2_n\}_{n=1}^{\infty}\subset \Omega$, an $\iota^0 =(y^0,A^0)\in \real^m \times \real^{m \times N}$ (a crucial difference from the proof of Lemma \ref{lemma:StrongBDEpsilonBounds10K} is that we no longer require $\iota^0 \in \Omega$) as well as sets $S^1\subset  \real^N$ and $S^2 \subset \real^N$ such that the following hold: 
	\begin{itemize}[leftmargin=8mm]
		\item[(i)] $\inf_{x^1 \in S^1, x^2 \in S^2} \|x^1 - x^2\|_{\infty} > 2 \cdot 10^{-k+1}$.
		\item[(ii)] For $j=1,2$ and each natural $n$, $\iota^j_n$ is a tuple $(y^{j,n},A^{j,n})$ for $y^{j,n} \in \real^m$ and $A^{j,n}$. These tuples will satisfy \begin{equation}\label{eq:ShowCloseMatVector} \|y^{j,n} - y^0\|_{\infty},\|A^{j,n}-A^0\|_{\max} \leq 4^{-n}.\end{equation}
		\item[(iii)] For each $n$, both $\Xi(\iota^1_n) \subset S^1$ and $\Xi(\iota^2_n) \subset S^2$. 
	\end{itemize}
	Again, for the $\ell^1$ and TV problems we do the same, except that we respectively write $\matl$ and $\matTV$ instead of $\matLP$.
	
	These are the conditions required for us to apply part \ref{item:SequenceCounterexampleL1WeakBDEps} of Proposition \ref{prop:DrivingNegativeProposition}  and conclude that $\epsilon^{\mathrm{w}}_{\mathbb{P}\mathrm{B}}(\mathrm{p}) > 10^{-k+1}$ for $\mathrm{p} \in [0,1/2)$ and that $\epsilon^{\mathrm{w}}_{\mathrm{B}}> 10^{-K+1}$. We go through the differences to the proof of Lemma \ref{lemma:StrongBDEpsilonBounds10K} for each of the computational problems  in turn.
	
	{\bf Case $\{\xilp,\omlpw[k]\}$}: in the definitions of $\iota^0$, $\iota^1_n$ and $\iota^2_n$ we replace $\vecYMainLP{k}$ with $\vecYMainLP{k-1}$, and every instance of $10^{-k}$ is replaced by $10^{-k+1}$.

	{\bf Case $\{\xibp,\ombplw[k]\}$}: 
	we replace $\vecYMainBP{k}$ with $\vecYMainBP{k-1}$ and every instance of $10^{-k}$ is replaced by $10^{-k+1}$.
	
	{\bf Case $\{\xiul,\omullw[k]\}$}: 
	we replace $\vecYMainUL{k}$ with $\vecYMainUL{k-1}$ and every instance of $10^{-k}$ is replaced by $10^{-k+1}$.
	
	{\bf Case $\{\xicl,\omcllw[k]\}$}:
	we replace $\vecYMainCL{k}$ with $\vecYMainCL{k-1}$. We replace $4\cdot 10^{-k}$ in the definitions of $S^1$ and $S^2$ by $4 \cdot 10^{-k+1}$ so that $\inf_{x^1 \in S^1, x^2 \in S^2} \|x^1 - x^2\|_p  > 2 \cdot 10^{-K+1}$. Finally, the value of $r$ now becomes $2\cdot 10^{-k+2}/5 = 4 \cdot 10^{-k+1}$.
	
	{\bf Case $\{\xibptv,\ombptvw[k]\}$}: 
	we replace $\vecYMainBPTV{k}$ with $\vecYMainBPTV{k-1}$ and every instance of $10^{-k}$ is replaced by $10^{-k+1}$. In addition, Lemma \ref{lemma:BPTVEtaBounds} is applied by setting $n$ to $k-1$ instead of $k$.
	
	{\bf Case $\{\xiultv,\omultvw[k]\}$}: 
	we replace $\vecYMainULTV{k}$ with $\vecYMainULTV{k-1}$ and every instance of $10^{-k}$ is replaced by $10^{-k+1}$. In addition,  Lemma \ref{lemma:ULTVPsiBounds} is applied by setting $n$ to $k-1$ instead of $k$.
	\end{proof}
Note that in none of the cases  in the proof of Lemma \ref{sec:MainBDEpsWeak} is $\iota^0$ contained in $\Omega$, so these arguments are not sufficient to prove that the strong breakdown epsilons are greater than or equal to $10^{-k+1}$ (indeed, this statement is not true).

\section{Proof of Theorem \ref{Cor:main} -- Preliminaries: constructing the subroutines}\label{sec:UsefulSubroutinesForMainThm} 
To construct the algorithms  required to prove Proposition \ref{Cor:main_SCI_cont} and part (ii) of Proposition \ref{Cor:main_SCI} and, we require various subroutines. 
Throughout this section, we consider the computational problem $\{\Xi,\Omega_{m,N},\allowbreak\mathcal{M}_N,\Lambda_{m,N}\}^{\Delta_1}=\{\tilde{\Xi},\tilde\Omega_{m,N},\mathcal{M}_N,\tilde{\Lambda}_{m,N}\}$, where $\Omega_{m,N}$ is one of \eqref{eq:defs-5-Om-fixedDim} and $\Xi$ is the corresponding solution map. For the linear programming case, we fix the notation for an element of $\tilde\Omega_{m,N}$ by writing $\tilde{\iota}=\big(\{y_j^{(n)}\}_{n=0}^{\infty}, \{\matLP_{j,k}^{(n)}\}_{n=0}^{\infty}\big)_{j,k}$, corresponding to an $\iota=(y,\matLP)\in \Omega$. For the $\ell^1$ and TV problems the notation is analogous, except that we respectively write $\matl$ and $\matTV$ instead of $\matLP$.

We begin with a subroutine termed \textit{OutputEta} which applies only to the basis pursuit with TV regularisation problem. Informally, the purpose of this subroutine is to approximate the function $\eta$ defined in \S\ref{sec:TV reg geometry}. The exact specification of the subroutine is given below and a proof of its correctness and complexity is given in Lemma \ref{lemma:OutputEtaCorrect}.

{\it Subroutine} {\bf OutputEta:}

\indent Inputs: Dimensions $m$, $N$, and natural numbers $k_K$ and a $k_\epsilon$. \\
\indent Oracles: $\orvec$ and $\ormat$ providing access to the components $y_j^{(n)}$ and $\matTV_{j,k}^{(n)}$ of an input $\tilde{\iota}$.\\
\indent Output: $\eta^* \in \dyadic^N$ (in the Turing case) or $\eta^* \in \real^N$ (in the BSS case).

\begin{enumerate}[leftmargin=10mm, label= \arabic*.,ref = step \arabic*]
	\item Set $n_1=(4k_\epsilon + \length(N)+ 7)\vee (4k_K+1)$ and use the oracle $\orvec$ to obtain $y_1':=y_1^{(n_1)}$ and the oracle $\ormat$ to obtain $\alpha':=\matTV_{1,1}^{(n_1)}$ and $\beta':=\matTV_{1,N}^{(n_1)}$. We then compute a $w \in \dyadic$ so that
	\begin{equation}\label{eq:wDef}
		w \approx \left(\frac{m-1+(\alpha'+\beta')^2}{m-1}\right)^{-1/2}
	\end{equation}
	to $k_\epsilon + \length(N)+4$ bits of precision.
	\item Next, set $n_2=k_\epsilon+\length(N)+2$ and compute
	$\eta^*_1 = \eta^*_2=\dotsb = \eta^*_{N-1} \in \dyadic$ (or $ \real$, in the BSS case) so that
	\begin{equation} \label{eq:eta1Assignment}
		\eta^*_1 \approx \frac{\delta(\alpha'+\beta')w}{m-1}
		\end{equation} 
		to $n_2$ bits of precision, incurred by converting a rational to a dyadic (in the BSS case, we simply assign the right hand side to $\eta^*_1$).
	\item 
	Similarly we compute $\eta^*_N \in \dyadic$ (or $ \real$, in the BSS case)
	\begin{equation} \label{eq:etaNAssignment}
	\eta^*_N \approx \frac{ \delta(\alpha'+\beta')w}{m-1}+ \frac{y'_1}{(\alpha' \vee \beta')} - \frac{\delta w}{(\alpha'\vee \beta')} \left(\frac{m-1+(\alpha'+\beta')^2}{m-1}\right)
	\end{equation}
	to $n_2$ bits of precision, again incurred by converting a rational to a dyadic  (in the BSS case, we assign the right hand side to $\eta^*_N$).
\end{enumerate}

\begin{lemma}\label{lemma:OutputEtaCorrect}
Assume that the subroutine `\textit{OutputEta}' is applied to an input $\tilde\iota$ corresponding to $\iota=(y,\matTV) = (\vecYMainBPTV{k},\matTV(\alpha,\beta,m,N)) \in \ombpls[k]$, where $(\alpha,\beta) \in \albetSetBPTV{k}$ for some natural number $k$ with $k \leq k_K$. Then the output $\eta^*$  satisfies $\|\eta^*- \eta(y_1,\alpha,\beta)\|_p\leq 10^{-k_\epsilon}$, and the number of digits needed by the oracles as well as  the BSS  and the Turing runtime
 are all bounded above by some polynomial in $k_K$, $k_\epsilon$, and $\log(N)$.
\end{lemma}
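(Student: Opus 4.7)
The plan is as follows. First I would verify that \eqref{eq:eta1Assignment} and \eqref{eq:etaNAssignment} are exact formulas for $\eta(y_1,\alpha,\beta)$ when $y'_1,\alpha',\beta',w$ are replaced by their true values. Since $\theta(\alpha,\beta,m)^2=(m-1)^2+(m-1)(\alpha+\beta)^2$, we have $\theta(\alpha,\beta,m)=(m-1)/w_{\mathrm{true}}$, where $w_{\mathrm{true}}:=\bigl((m-1+(\alpha+\beta)^2)/(m-1)\bigr)^{-1/2}$. Hence $\delta(\alpha+\beta)/\theta(\alpha,\beta,m) = \delta(\alpha+\beta)w_{\mathrm{true}}/(m-1)$, matching the first $N-1$ components of $\eta$ in \eqref{eq:the_eta}, while $\delta\theta(\alpha,\beta,m)/\bigl((m-1)(\alpha\vee\beta)\bigr) = \delta w_{\mathrm{true}}(m-1+(\alpha+\beta)^2)/\bigl((m-1)(\alpha\vee\beta)\bigr)$, matching the subtracted term in the $N$-th component.

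Next, I would propagate errors. Under the hypothesis $(\alpha,\beta)\in\albetSetBPTV{k}$ with $k\leq k_K$ and $m\geq 4$, one has $\alpha+\beta\in[1/2,1]$, $\alpha\vee\beta=1/2$, $(m-1+(\alpha+\beta)^2)/(m-1)\in[1,2]$, and therefore $w_{\mathrm{true}}\in[1/\sqrt2,1]$; moreover $y_1\leq 3/2$ by Lemma \ref{lemma:mainThmSizeEstimates}. All intermediate quantities in \eqref{eq:eta1Assignment} and \eqref{eq:etaNAssignment} are thus bounded above and bounded away from $0$ by absolute constants, so the map $(y'_1,\alpha',\beta',w)\mapsto \eta^*$ is Lipschitz in each argument with an absolute constant (depending only on $\delta\leq 1$ and $m\geq 4$). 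Combining the oracle precision $|y'_1-y_1|,|\alpha'-\alpha|,|\beta'-\beta|\leq 2^{-n_1}$ with the inverse-square-root approximation error $|w-w_{\mathrm{true}}|\leq 2^{-(k_\epsilon+\length(N)+4)}$ from \eqref{eq:wDef} and the dyadic rounding error $\leq 2^{-n_2}$ from steps 2--3, each coordinate of $\eta^*$ then lies within $2^{-(k_\epsilon+\length(N)+1)}$ of the corresponding coordinate of $\eta(y_1,\alpha,\beta)$. Summing $N$ such coordinate errors in the $p$-norm and using $N^{1/p}\leq 2^{\length(N)}$ yields $\|\eta^*-\eta(y_1,\alpha,\beta)\|_p\leq 10^{-k_\epsilon}$. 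The main technical obstacle is the $N$-th coordinate, where three terms of comparable magnitude are summed and near-cancellation can occur on inputs with $k$ close to $k_K$ (where $y_1-\delta\theta/(m-1)$ is of order $10^{-k_K}$); this is precisely what motivates the $4k_K+1$ term in the choice of $n_1$ and the factor of $4$ in $4k_\epsilon+\length(N)+7$ provides the corresponding safety margin for $k_\epsilon$.

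Finally, the runtime analysis is essentially bookkeeping. In both models the subroutine performs a bounded number of arithmetic operations on dyadic (respectively real) numbers of representation size $O(n_1)=O(k_K+k_\epsilon+\log N)$, plus the one inverse square root in \eqref{eq:wDef}. In the Turing case, Newton iteration applied to the rational argument $(m-1+(\alpha'+\beta')^2)/(m-1)\in[1,2]$ computes this inverse square root to the required precision $k_\epsilon+\length(N)+4$ in $O(\log(k_\epsilon+\log N))$ iterations, each involving arithmetic on dyadic numbers of bit size polynomial in $n_1$, for a total runtime polynomial in $k_K$, $k_\epsilon$, and $\log N$. In the BSS case, the same Newton iteration on real arguments (or a single call to a square-root subroutine, implemented in $O(\log(k_\epsilon+\log N))$ arithmetic operations) suffices. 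Since the number of oracle digits requested is bounded by $n_1=O(k_K+k_\epsilon+\log N)$ and all arithmetic is on numbers of polynomial size, the total runtime and oracle-digit count are polynomial in $k_K$, $k_\epsilon$, and $\log N$, as required.
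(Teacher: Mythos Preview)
Your approach is correct and close to the paper's, though the paper organizes the error differently: it splits via the triangle inequality into $\|\eta^*-\eta(y'_1,\alpha',\beta')\|_1$ (absorbing only the square-root error in $w$ and the dyadic rounding $2^{-n_2}$, bounded directly from the formulas after showing the coefficient of $|w-w(\alpha',\beta')|$ is at most $5N/2$) plus $\|\eta(y'_1,\alpha',\beta')-\eta(y_1,\alpha,\beta)\|_\infty$, the latter handled by invoking the already-established perturbation estimate Lemma~\ref{lem:eta-pert-bound}. The hypothesis $\|z\|_\infty\le\tfrac18\wedge\tfrac12\mu(y_1,\alpha,\beta)$ of that lemma is precisely where the $4k_K+1$ branch of $n_1$ enters, via the lower bound $\mu(y_1,\alpha,\beta)\ge 10^{-k_K}$ valid for the given inputs with $k\le k_K$. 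Your one-step Lipschitz analysis in all four variables $(y'_1,\alpha',\beta',w)$ is more direct and self-contained; the paper's two-step split buys reuse of Lemma~\ref{lem:eta-pert-bound} (needed anyway for the condition-number analysis). The complexity bookkeeping is essentially identical. One arithmetic point to watch: your final step gives only $N^{1/p}\cdot 2^{-(k_\epsilon+\length(N)+1)}\le 2^{-(k_\epsilon+1)}$, which is not $\le 10^{-k_\epsilon}$ for $k_\epsilon\ge 1$; the factor $4$ in $4k_\epsilon$ in the definition of $n_1$ is exactly what permits the conversion $2^{-4k_\epsilon}\le 10^{-k_\epsilon}$, and tracking it through yields the needed per-coordinate bound of order $10^{-k_\epsilon}/N$---a constant-chasing detail, not a gap in the method.
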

\begin{proof}
	Let $\alpha$ and $\beta$ be as in the statement of the lemma. For ease of notation we also write $z_y = y_1' - y_1$, $z_{\alpha} = \alpha' - \alpha$ and $z_{\beta} = \beta' - \beta$ as well as $z = (z_y,z_{\alpha},z_{\beta})$. 
	Note that then $\|z\|_\infty\leq 2^{-n_1}\leq 10^{-k_\epsilon}/(70N)\wedge 10^{-k_K}/2$, by definition of $n_1$.
	
	Our argument to prove the correctness of $\textit{OutputEta}$ will rely on bounding  
	$\|\eta^* - \eta(y_1',\alpha',\beta')\|_1$ and $\|\eta(y_1,\alpha,\beta) - \eta(y_1',\alpha',\beta')\|_1$ separately. For the first of these two terms, we note that $\eta(y_1',\alpha',\beta')$ can be written as 
	\begin{equation*}
		\eta(y_1',\alpha',\beta')_1 = \eta(y_1',\alpha',\beta')_2 = \dotsb = \eta(y_1',\alpha',\beta')_{N-1} = \frac{\delta(\alpha'+ \beta')}{m-1}\left(\frac{m-1+(\alpha'+\beta')^2}{m-1}\right)^{-\frac{1}{2}}
	\end{equation*}
	and 
	\begin{equation*}
		\eta(y_1',\alpha',\beta')_N= \eta(y_1',\alpha',\beta')_1 + \frac{y_1'}{\alpha' \vee \beta'}  - \delta\left(\frac{m-1+(\alpha'+\beta')^2}{(\alpha'\vee \beta')(m-1)}\right)\left(\frac{m-1+(\alpha'+\beta')^2}{m-1}\right)^{-\frac{1}{2}}
	\end{equation*}
	Thus, using the triangle inequality, we obtain 
	\begin{align*}
		\|\eta^* - \eta(y_1',\alpha',\beta')\|_1 &\leq \!\delta \! \left(\frac{N (\alpha'+\beta')}{m-1} + \frac{m-1 + (\alpha'+\beta')^2}{(\alpha'\vee \beta')(m-1)}\right)\!\left|w - \left(1+\frac{(\alpha'+\beta')^2}{m-1}\right)^{-\frac{1}{2}}\right| \!+\! N2^{-n_2}
	\end{align*}
	Since $(\alpha, \beta) \in \albetSetBPTV{k}$ we must have $\alpha+ \beta \leq 1$ and $\alpha \vee \beta = 1/2$. Thus because $z_{\alpha},z_{\beta}\leq 10^{-k_K} \leq 1/4$, we note that both $\alpha' + \beta' \leq 3/2$ and, using Lemma \ref{lem:alpha-vee-beta-estimate}, $\alpha' \vee \beta' \geq 1/4$. Therefore
	\begin{equation*}
		\delta\left(\frac{N (\alpha'+\beta')}{m-1} + \frac{m-1 + (\alpha'+\beta')^2}{(\alpha'\vee \beta')(m-1)}\right)\leq \delta\left[\frac{3 N }{2(m-1)} + 4\left(1+\frac{9}{4(m-1)}\right)\right].
	\end{equation*}
	Furthermore, as $\delta \leq 1$ and $N > m \geq 4$ we obtain \[\delta\left[\frac{3 N }{2(m-1)} + 4\left(1+\frac{9}{4(m-1)}\right)\right]\leq \frac{N}{2} + 4+3\leq \frac{5N}{2}	.
	\]
	Therefore, as $w$ is computed to precision $k_\epsilon + \length(N)+4$ so that $2^{-(k_\epsilon + \length(N)+4)}\leq 10^{-k_\epsilon}/(10N)$ and  $2^{-n_2}\leq 10^{-k_\epsilon}/(10N)$ by definition of $n_2$, we obtain
 \[\|\eta^* - \eta(y_1',\alpha',\beta')\|_1 \!\leq\! \frac{5N}{2} \!\!\left|w - \left(1+\frac{(\alpha'+\beta')^2}{m-1}\right)^{-\frac{1}{2}}\right|+N2^{-n_2} \!\leq\! \frac{5N\cdot10^{-k_\epsilon}}{20N} + \frac{N\cdot 10^{-k_\epsilon} }{4N} \!=\! \frac{10^{-k_\epsilon}}{2}.\]
	
	Next, we will bound $\|\eta(y_1,\alpha,\beta) - \eta(y_1',\alpha',\beta')\|_\infty$ using Lemma \ref{lem:eta-pert-bound}. To apply Lemma \ref{lem:eta-pert-bound} we must show that  $\|z\|_{\infty} \leq (1/8) \wedge (\mu(y_1,\alpha,\beta)/2)$ where the function $\mu$ is defined within the statement of Lemma \ref{lem:eta-pert-bound}. Because $(\vecYTV,\matTV) \in \ombpls[k]$ with $k \leq k_K$ and $\theta(1/2,1/2,m) \geq m-1 \geq 1$ we have \[y_1 = \frac{\delta m} {\theta(1/2,1/2,m)} + \left(7 - \frac{3} {\theta(1/2,1/2,m)}\right)\frac{10^{-k}}{4} \geq \frac{\delta m} {\theta(1/2,1/2,m)} + \frac{(7 - 3)\cdot10^{-{k_K}}}{4}\] and thus, because $\theta(\alpha,\beta,m)$ is increasing in both $\alpha$  and $\beta$ and $(\alpha, \beta) \in \albetSetBPTV{k} \subset [1/4,1/2]\times [1/4,1/2]$,
	\begin{equation*}
		\mu(y_1,\alpha,\beta) \geq \frac{\delta m}{\theta(1/2,1/2,m)} - \frac{\delta \theta(1/2,1/2,m)}{m-1} + \frac{(7-3) \cdot 10^{-{k_K}}}{4} = 10^{-{k_K}}
	\end{equation*}
	Hence, as $\|z\|_{\infty} \leq 10^{-{k_K}}/2 \leq (1/8) \wedge (\mu(y_1,\alpha,\beta)/2)$, the conditions of Lemma \ref{lem:eta-pert-bound} are met.
We conclude, this time using  $\|z\|_\infty\leq 10^{-k_\epsilon}/(70N)$, that 
\[
\|\eta(y_1,\alpha,\beta) - \eta(y_1',\alpha',\beta')\|_\infty \leq 14(y_1 +1) \|z\|_{\infty} \leq 10^{-k_\epsilon} (y_1+1)/(5 N).
\]
 By Lemma \ref{lemma:mainThmSizeEstimates} we have $y_1 \leq 3/2$ so  $\|\eta(y_1,\alpha,\beta) - \eta(y_1',\alpha',\beta')\|_\infty \leq 10^{-k_\epsilon}/(2N)$. Therefore $\|\eta(y_1,\alpha,\beta) - \eta(y_1',\alpha',\beta')\|_p \leq 10^{-k_\epsilon}/2$. 
	
	The proof of correctness completed by combining the already established inequalities $\|\eta(y_1,\alpha,\beta) - \eta(y_1',\alpha',\beta')\|_1 \leq 10^{-k_\epsilon}/2$ and $\|\eta^* - \eta(y_1',\alpha',\beta')\|_1\leq 10^{-k_\epsilon}/2$. Indeed, we have 
\begin{equation*}
\begin{split}
\|\eta^* - \eta(y_1',\alpha',\beta')\|_p &\leq \|\eta^* - \eta(y_1',\alpha',\beta')\|_1 \\
&\leq  \|\eta(y_1,\alpha,\beta) - \eta(y_1',\alpha',\beta')\|_1 + \|\eta^* - \eta(y_1',\alpha',\beta')\|_1 \leq 10^{-k_\epsilon}.
\end{split}
\end{equation*}	
	 Next, we note that number of digits  $n_1=(4k_\epsilon + \length(N)+ 7)\vee (4k_K+1)$ needed by the oracles is polynomial in  $k_K$, $k_\epsilon$, and $\log(N)$.
	All that remains is to bound the complexities. Note that it suffices to show that the Turing runtime is polynomial in $k_\epsilon$, $k_K$, and $\length(N)$, as this will then imply the desired polynomial bound on the BSS runtime.
	
	To this end, as noted in \cite[Page 92-93]{ElementaryFunctionsImplementation}, we recall that it is possible to use Newton-Raphson iteration to compute the reciprocal square root of an $u$-bit number to $v$ bits of precision at the same cost (up to constants and asymptotically in $u$ and $v$) as a multiplication operation on two integers each with number of bits bounded above by $u\vee v$. Since we have already shown that $(\alpha' + \beta')^2 \leq 4$, the number of bits of the numerator of $\frac{m-1+(\alpha'+\beta')^2}{m-1}$ is bounded above by a polynomial in $k_\epsilon$, $k_K$, and $\length(N)$ and the number of bits of the denominator is $\oh(\log(m))$. The length of $k_\epsilon+\length(N)+4$ is clearly polynomial in  $k_\epsilon$, $k_K$, and $\length(N)$. Hence (since multiplication of $b$-bit integers can be done in $\oh(b\log(b))$, the runtime  of computing $w$ in the Turing model is polynomial in $k_\epsilon$, $k_K$, and $\length(N)$. In particular, $\length(w)$ must be bounded above by a polynomial in $k_\epsilon$, $k_K$ and $\length(N)$.
	
	 The right hand sides of both \eqref{eq:eta1Assignment} and \eqref{eq:etaNAssignment} involve finitely many additions, multiplications and subtractions of rational numbers each with lengths bounded above by a polynomial in $k_\epsilon$, $k_K$, $\length(N)$, and $\length(\delta)$. Since $\delta$ is assumed to be fixed, the numerator and denominator of the right hand side of both \eqref{eq:eta1Assignment} and \eqref{eq:etaNAssignment} can be computed in Turing runtime  polynomial in $k_\epsilon$, $k_K$, and $\length(N)$.

	For each of equations \eqref{eq:eta1Assignment} and \eqref{eq:etaNAssignment} the conversion between the rational (say, $q_1/q_2$) right hand side to a dyadic is  done through integer division to  $n_2$ bits of precision. Again, as noted in \cite[Page 92-93]{ElementaryFunctionsImplementation} this can be done in Turing runtime polynomial in $\length(q_1/q_2)$ and $n_2$. But we have already established that both of these quantities are bounded above by some polynomial in $k_\epsilon$, $k_K$, and $\length(N)$, and  thus the Turing runtime of this step is also polynomial in the same quantities.
	
	We have therefore bounded each of the steps of the subroutine by polynomials in $k_\epsilon$, $k_K$, and $\length(N)$. But the subroutine itself only performs finitely many steps and thus the overall Turing runtime is also bounded by a polynomial in $k_\epsilon$, $k_K$, and $\length(N)$. The proof is complete by noting that $\length(N) = \oh(\log(N))$.
\end{proof}

Similarly, we formulate \textit{OutputPsi}, which applies only to the unconstrained lasso with TV regularisation problem. Informally, the purpose of this subroutine is to approximate the function $\psi$ defined in \S\ref{sec:TV reg geometry}. Its exact specification is given below and a proof of its correctness and complexity is given in Lemma \ref{lemma:OutputPsiCorrect}.

{\it Subroutine} {\bf OutputPsi:} 

\indent Inputs: Dimensions $m$, $N$, and natural numbers $k_K$ and a $k_\epsilon$. \\
\indent Oracles: $\orvec$ and $\ormat$ providing access to the components $y_j^{(n)}$ and $\matTV_{j,k}^{(n)}$ of an input $\tilde{\iota}$.\\
\indent Output: $\psi^* \in \dyadic^N$ (in the Turing case) or $\psi^* \in \real^N$ (in the BSS case).

\begin{enumerate}[leftmargin=10mm, label= \arabic*.,ref = step \arabic*]
	\item Set $n_1=(4k_\epsilon + \length(N)+ 9)\vee (4k_K+3)$  and use the oracle $\orvec$ to obtain $y_1':=y_1^{(n_1)}$ and the oracle $\ormat$ to obtain $\alpha':=\matTV_{1,1}^{(n_1)}$ and $\beta':=\matTV_{1,N}^{(n_1)}$.  
	\item Next, set $n_2=4k_\epsilon +\length(N)+1$ and compute $\psi^*_1 = \psi^*_2=\dotsb = \psi^*_{N-1} \in \dyadic$  (or $ \real$, in the BSS case)  so that
	\begin{equation}\label{eq:etaTilde1Assignment} 
	\psi^*_1 \approx \frac{\lambda(\alpha' + \beta')}{2(m-1)(\alpha' \vee \beta')}
	\end{equation}
	to $n_2$ bits of precision, incurred by converting a rational to a dyadic (in the BSS case, we assign the right hand side to $\psi^*_1$).
	\item Similarly, compute $\psi^*_N \in \dyadic$  (or $ \real$, in the BSS case) so that
	\begin{equation}\label{eq:etaTildeNAssignment}
	\psi^*_N \approx \frac{\lambda(\alpha' + \beta')}{2(m-1)(\alpha' \vee \beta')}+ \frac{1}{\alpha'\vee \beta'} \left(y'_1 - \frac{\lambda \left[(\alpha'+\beta')^2+(m-1)\right]}{2(m-1)(\alpha' \vee \beta')}\right)
	\end{equation}
	to $n_2$ bits of precision, again incurred by converting a rational to a dyadic  (in the BSS case, we assign the right hand side to $\psi^*_N$).
\end{enumerate}
\begin{lemma}\label{lemma:OutputPsiCorrect}
Assume that the subroutine `\textit{OutputPsi}' is applied to an input $\tilde\iota$ corresponding to $\iota=(y,\matTV) = (\vecYMainULTV{k},\matTV(\alpha,\beta,m,N))\in \omulls[k]$, where $(\alpha,\beta) \in \albetSetULTV{k}$ for some natural number $k$ with $k \leq k_K$. Then the output $\psi^*$  satisfies $\|\psi^*- \psi(y_1,\alpha,\beta)\|_p\leq 10^{-k_\epsilon}$, and  the number of digits needed by the oracles as well as  the BSS  and the Turing runtime
 are all bounded above by some polynomial in $k_K$, $k_\epsilon$, and $\log(N)$.
\end{lemma}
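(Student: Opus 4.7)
The plan is to follow the same structure as the proof of Lemma \ref{lemma:OutputEtaCorrect}, with the simplification that $\psi$ does not involve a square root (since $\theta$ enters only through $\theta^2$, which is a rational function of $\alpha,\beta,m$), so no Newton--Raphson step is needed. Writing $z=(z_y,z_\alpha,z_\beta):=(y_1'-y_1,\alpha'-\alpha,\beta'-\beta)$, we have $\|z\|_\infty\leq 2^{-n_1}$. I would split
\[
\|\psi^*-\psi(y_1,\alpha,\beta)\|_p\leq \|\psi^*-\psi(y_1',\alpha',\beta')\|_p+\|\psi(y_1',\alpha',\beta')-\psi(y_1,\alpha,\beta)\|_p
\]
and bound each summand by $10^{-k_\epsilon}/2$.

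For the first summand, the assignments \eqref{eq:etaTilde1Assignment} and \eqref{eq:etaTildeNAssignment} incur coordinatewise dyadic rounding error at most $2^{-n_2}$, so $\|\psi^*-\psi(y_1',\alpha',\beta')\|_p\leq \|\cdot\|_1\leq N\cdot 2^{-n_2}$. The choice $n_2=4k_\epsilon+\length(N)+1$ gives $2^{-n_2}\leq 10^{-k_\epsilon}/(2N)$ (using $2^{-4k_\epsilon}\leq 10^{-k_\epsilon}$ and $2^{-\length(N)}\leq 1/N$), which yields the desired bound. For the second summand, I would invoke Lemma \ref{lem:etatilde-pert-bound}, whose hypothesis requires $\|z\|_\infty\leq \tfrac{1}{8}\wedge \tfrac{1}{10}\varsigma(y_1,\alpha,\beta)$. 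Since $(\alpha,\beta)\in\albetSetULTV{k}$ forces $\alpha\vee\beta=1/2$ and $\alpha+\beta\leq 1$, we have
\[
\frac{\lambda\,\theta(\alpha,\beta,m)^2}{2(m-1)^2(\alpha\vee\beta)}=\lambda\!\left[1+\frac{(\alpha+\beta)^2}{m-1}\right]\leq \lambda\!\left[1+\frac{1}{m-1}\right],
\]
so, substituting the definition of $\vecYMainULTV{k}$ and using $m\geq 4$,
\[
\varsigma(y_1,\alpha,\beta)\geq y_1-\lambda\!\left[1+\tfrac{1}{m-1}\right]=\tfrac{7}{4}\cdot 10^{-k}-\tfrac{3}{4(m-1)}\cdot 10^{-k}\geq \tfrac{3}{2}\cdot 10^{-k_K}.
\]
Because $n_1\geq 4k_K+3$, we have $\|z\|_\infty\leq 2^{-(4k_K+3)}\leq 10^{-k_K}/8\leq \tfrac{1}{10}\varsigma(y_1,\alpha,\beta)$, and trivially $\|z\|_\infty\leq 1/8$, so Lemma \ref{lem:etatilde-pert-bound} applies.

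Lemma \ref{lem:etatilde-pert-bound} then gives $\|\psi(y_1,\alpha,\beta)-\psi(y_1',\alpha',\beta')\|_\infty\leq 112(y_1+1)\|z\|_\infty$. By Lemma \ref{lemma:mainThmSizeEstimates} we have $y_1\leq 107/180<1$, so $112(y_1+1)\leq 224$. Using $\|\cdot\|_p\leq N\|\cdot\|_\infty$ and $n_1\geq 4k_\epsilon+\length(N)+9$ we obtain
\[
\|\psi(y_1,\alpha,\beta)-\psi(y_1',\alpha',\beta')\|_p\leq 224 N\cdot 2^{-(4k_\epsilon+\length(N)+9)}\leq \tfrac{224}{512}\cdot 10^{-k_\epsilon}<\tfrac{10^{-k_\epsilon}}{2},
\]
which combined with the first-summand bound yields the claimed error inequality.

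For the complexity claims, the argument is easier than in Lemma \ref{lemma:OutputEtaCorrect} because no reciprocal square root has to be computed: the right-hand sides of \eqref{eq:etaTilde1Assignment} and \eqref{eq:etaTildeNAssignment} are rational expressions in $\alpha',\beta',y_1',\lambda,m$ involving only finitely many additions, subtractions, multiplications, divisions, and a $\max$. Since $n_1$ and $n_2$ are manifestly polynomial in $k_\epsilon,k_K,\length(N)$, and since $\alpha',\beta',y_1'$ each have bit size polynomial in these quantities, every intermediate rational has encoding length polynomial in $k_\epsilon,k_K,\length(N)$ (by standard bounds on arithmetic of rationals). The final rational-to-dyadic conversion to $n_2$ bits of precision runs in Turing time polynomial in the length of the rational and in $n_2$, hence polynomial in $k_\epsilon,k_K,\length(N)=\oh(\log N)$. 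This dominates the BSS runtime and the oracle query cost $n_1$ as well. The main obstacle is purely bookkeeping, specifically verifying the smallness hypothesis of Lemma \ref{lem:etatilde-pert-bound}, which is where the specific choice of $\vecYMainULTV{k}$ is essential in order to leave a residual of order $10^{-k}$ after cancelling the $\lambda$-dependent part of $\varsigma(y_1,\alpha,\beta)$.
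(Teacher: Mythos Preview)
Your proof is correct and follows essentially the same approach as the paper's: the same triangle-inequality split, the same use of Lemma \ref{lem:etatilde-pert-bound} after verifying its hypothesis via the explicit form of $\vecYMainULTV{k}$, and the same complexity argument exploiting that $\psi$ is a rational function of the inputs. The only cosmetic differences are in the constants (you bound $112(y_1+1)\leq 224$ using $y_1<1$, whereas the paper uses the slightly weaker $112(y_1+1)\leq 256$ from $y_1\leq 107/180$) and in the $\ell^p$--$\ell^\infty$ comparison (you use $\|\cdot\|_p\leq N\|\cdot\|_\infty$, the paper uses the tighter $N^{1/p}$), but both choices suffice for the stated conclusion.
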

\begin{proof}
	We start by proving correctness. Let $\alpha$ and $\beta$ be as in the statement of the lemma. For ease of notation we also write $z_y = y_1' - y_1$, $z_{\alpha} = \alpha' - \alpha$ and $z_{\beta} = \beta' - \beta$ as well as $z = (z_y,z_{\alpha},z_{\beta})$. 
	Note that then $\|z\|_\infty\leq 2^{-n_1}\leq \big(10^{-k_\epsilon}/(512 N)\big) \wedge \big( 10^{-k_K}/8\big)$, by definition of $n_1$.
	Next, by definition of $n_2$, we have $2^{-n_2}\leq 10^{-k_\epsilon}/(2N)$, and thus $\|\psi^* - \psi(y_1',\alpha',\beta')\|_p \leq N\cdot 2^{-n_2} \leq 10^{-k_\epsilon}/2$.  It thus suffices to show that $\|\psi(y_1,\alpha,\beta) - \psi(y_1',\alpha',\beta')\|_p\leq 10^{-k_\epsilon}/2$. We will accomplish this by using Lemma \ref{lem:etatilde-pert-bound}. 
	
	As in the proof of Lemma \ref{lemma:OutputEtaCorrect}, let $z_y = y_1' - y_1$, $z_{\alpha} = \alpha' - \alpha$, and $z_{\beta} = \beta' - \beta$.  To apply Lemma \ref{lem:etatilde-pert-bound} we must verify the condition  $\|z\|_{\infty} \leq (1/8) \wedge (\varsigma(y_1,\alpha,\beta)/10)$ where the function $\varsigma$ is defined in Lemma \ref{lem:etatilde-pert-bound}. Because $(y,T) \in \omulls$ as well as $m \geq 4$ and $k \leq k_K$ 
	\begin{equation*}
		y_1 = \frac{7 \cdot 10^{-k}}{4} + \lambda \left( 1 + \frac{1}{m-1}\right) - \frac{3 \cdot 10^{-k}}{4(m-1)} \geq \frac{3 \cdot 10^{-k_K}}{2} + \lambda \left( 1 + \frac{1}{m-1}\right).
	\end{equation*}
	Thus the definition of $r$ from \ref{lem:etatilde-pert-bound} implies that 
	\begin{align*}
		\varsigma(y_1,\alpha,\beta) &\geq \frac{3 \cdot 10^{-k_K}}{2} + \lambda \left(1+\frac{1}{m-1}\right)  - \lambda \frac{\theta(\alpha,\beta,m)^2}{(m-1)^2}
		\\&=  \frac{3 \cdot 10^{-k_K}}{2} + \lambda \frac{\theta(1/2,1/2,m)^2}{(m-1)^2}  - \lambda \frac{\theta(\alpha,\beta,m)^2}{(m-1)^2}\geq  \frac{3 \cdot 10^{-k_K}}{2}
	\end{align*}
	since once again $\theta(\alpha,\beta,m)$ is increasing in $\alpha $ and $\beta$, and $(\alpha, \beta) \in \albetSetBPTV{K} \cup \albetSetBPTV{K-1} \subset [1/4,1/2]\times [1/4,1/2]$. Hence $\|z\|_{\infty} \leq 10^{-k_K}/8 < 3\cdot 10^{-k_K}/20 \leq (1/8) \wedge (\varsigma(y_1,\alpha,\beta)/10)$ and thus the conditions of Lemma \ref{lem:etatilde-pert-bound} are met.
	
	We conclude that $\|\psi(y_1,\alpha,\beta) - \psi(y_1',\alpha',\beta')\|_\infty \leq  112(y_1 +1) \|z\|_{\infty} \leq  112(y_1+1)\cdot 10^{-k_\epsilon}/(512 N)$ where we used $\|z\|_{\infty}\leq 10^{-k_\epsilon}/(512 N)$. By Lemma \ref{lemma:mainThmSizeEstimates}, $y_1 \leq 107/180$ so $112(y_1+1) \leq 112\cdot 287/180 \leq 256$. Therefore  $\|\psi(y_1,\alpha,\beta) - \psi(y_1',\alpha',\beta')\|_\infty \leq 10^{-k_\epsilon}/(2N)$ and thus  $\|\psi(y_1,\alpha,\beta) - \psi(y_1',\alpha',\beta')\|_p\leq \|\psi(y_1,\alpha,\beta) - \psi(y_1',\alpha',\beta')\|_\infty\cdot  N^{1/p} \leq 10^{-k_\epsilon}/2$, as desired.

	Next, we note that number of digits  $n_1=(4k_\epsilon + \length(N)+ 9)\vee (4k_K+3)$ needed by the oracles is polynomial in  $k_K$, $k_\epsilon$, and $\log(N)$.
	All that remains is to bound the complexities. Note that it suffices to show that the Turing runtime is polynomial in $k_\epsilon$, $k_K$, and $\length(N)$, as this will then imply the desired polynomial bound on the BSS runtime.
	
	The right hand sides of both \eqref{eq:etaTilde1Assignment} and \eqref{eq:etaTildeNAssignment} involve finitely many additions, multiplications and subtractions of rational numbers each with lengths bounded above by a polynomial in $k_\epsilon$, $k_K$, $\length(N)$, and $\length(\lambda)$. Since $\lambda$ is assumed to be fixed, numerator and thee denominator of the right hand side of both \eqref{eq:eta1Assignment} and \eqref{eq:etaNAssignment} can thus be computed in Turing runtime  complexity polynomial in $k_\epsilon$, $k_K$, and $\length(N)$.
	
	For each of equations \eqref{eq:eta1Assignment} and \eqref{eq:etaNAssignment} the conversion between the rational (say, $q_1/q_2$) right hand side to a dyadic is  done through integer division to  $n_2$ bits of precision. Again, as noted in \cite[Page 92-93]{ElementaryFunctionsImplementation} this can be done with complexity polynomial in $\length(q_1/q_2)$ and $n_2$. But we have already established that both of these quantities are bounded above by some polynomial in $k_\epsilon$, $k_K$, and $\length(N)$, and thus the complexity of this step is also polynomial in the same quantities.
	
	We have therefore bounded each of the steps of the subroutine by polynomials in $k_\epsilon$, $k_K$, and $\length(N)$, and,  as the subroutine itself only performs finitely many steps, the overall Turing runtime is also bounded by a polynomial  in $k_\epsilon$, $k_K$, and $\length(N)$. Noting that $\length(N) = \oh(\log(N))$ concludes the proof.	
\end{proof}

Next, we need the following subroutine which operates in the case when the input $\tilde\iota$ corresponds to an $\iota$ in one of  $\omlpw[k]$, $\ombplw[k]$ $\omullw[k]$, $\omcllw[k]$, $\ombptvw[k]$ or $\omultvw[k]$. We title this subroutine \textit{Weak} as it is used in \S\ref{sec:MainBDEpsWeak} to give lower bounds on the weak breakdown epsilons. The subroutine is defined as follows:

{\it Subroutine} {\bf Weak:}

\indent Inputs: Dimensions $m$, $N$, and a natural number $k_\epsilon$. \\
\indent Oracles: $\orvec$ and $\ormat$ providing access to the components $y_j^{(n)}$ and $\matLP_{j,k}^{(n)}$ (respectively  $\matl_{j,k}^{(n)}$ or $\matTV_{j,k}^{(n)}$) of an input $\tilde{\iota}$.\\
\indent Output: $x \in \dyadic^N$ (in the Turing case) or $x \in \real^N$ (in the BSS case).

\begin{enumerate}[leftmargin=10mm, label= \arabic*.,ref = step \arabic*]
	\item We execute a loop that proceeds as follows -- at each iteration, we increase $n$, starting with $n=1$. What we do now depends on the problem at hand. In the linear programming case we use the oracle $\ormat$ to read  $\matLP_{1,1}^{(n)}$ and $\matLP_{1,2}^{(n)}$ and set $\dalgo = \matLP_{1,1}^{(n)}-\matLP_{1,2}^{(n)}$. For  the $\ell^1$ problems we likewise read  $\matl_{1,1}^{(n)}$ and $\matl_{1,2}^{(n)}$ and set $\dalgo = \matl_{1,1}^{(n)}-\matl_{1,2}^{(n)}$. For  the TV problems we  read  $\matTV_{1,1}^{(n)}$ and $\matTV_{1,N}^{(n)}$ and set $\dalgo = \matTV_{1,1}^{(n)}-\matTV_{1,N}^{(n)}$.
	Next, we branch depending on the value of $\dalgo$:
	\begin{enumerate}[label = \alph*.]
		\item If $\dalgo > 2^{-n+1}$ then we output $x \in \dyadic^N$ with $\|x - 4\cdot 10^{-K+1} e_1\|_p \leq 10^{-k_\epsilon}$ for linear programming, basis pursuit with $\ell^1$ regularisation or unconstrained lasso with $\ell^1$ regularisation. For constrained lasso we output $x \in \dyadic^N$ with $\|x - 4\cdot 10^{-K+1}e_1 - (\tau - 4\cdot 10^{-K+1})e_3\|_p \leq 10^{-k_\epsilon}$. For basis pursuit TV we apply the subroutine $\textit{OutputEta}$ to obtain $\eta^*=\textit{OutputEta}(m,N,k_K=K-1,k_\epsilon) \in \dyadic^N$, to which we apply $\flipOp$ and output as $x$ (so that $x = \flipOp \eta^*$). Finally, for unconstrained lasso TV we apply the subroutine $\textit{OutputPsi}$ to obtain $\psi^*=\textit{OutputPsi}(m,N,k_K=K-1,k_\epsilon)\in \dyadic^N$, to which we apply $\flipOp$ and output the result as $x$ (so that $x = \flipOp \psi^*$). In all of the above cases we terminate execution after outputting $x$ \label{inst:WeakExitAlphaGBeta}.
		\item Alternatively, if $\dalgo < -2^{-n+1}$ then we output $x \in \dyadic^N$ with $\|x- 4 \cdot 10^{-K+1} e_2 \|_p \leq 10^{-k_\epsilon}$ for linear programming, basis pursuit with $\ell^1$ regularisation or unconstrained lasso with $\ell^1$ regularisation. For constrained lasso we output $x \in \dyadic^N$ with $\|x - 4\cdot 10^{-K+1}e_2 - (\tau - 4\cdot 10^{-K+1})e_3\|_p \leq 10^{-k_\epsilon}$. For basis pursuit TV we apply  the subroutine $\textit{OutputEta}$ to obtain $\eta^*=\textit{OutputEta}(m,N,k_K=K-1,k_\epsilon) \in \dyadic^N$, which we output as $x$. Finally, for unconstrained lasso TV we apply  the subroutine $\textit{OutputPsi}$ to obtain $\psi^*=\textit{OutputPsi}(m,N,k_K=K-1,k_\epsilon)\in \dyadic^N$, which we output as $x$. In all of the above cases we terminate execution after outputting $x$. \label{inst:WeakExitAlphaLBeta}
	\end{enumerate}
	If neither of these conditions are met then the loop continues by executing the next iteration.
\end{enumerate}

\noindent We have presented the Turing version of the subroutine. For the BSS version, all instances of $\dyadic$ are replaced by $\real$.

\begin{lemma}\label{lemma:WeakCorrect}
	Assume that the subroutine `\textit{Weak}' is applied to an input $\tilde\iota$ corresponding to $\iota$ in one of $\omlpw$, $\ombplw$, $\omullw$, $\omcllw$, $\ombptvw$ or $\omultvw$.
	Then the subroutine always terminates with an output $x$ such that $\disM(x,\tilde\Xi(\tilde\iota)) \leq 10^{-k_\epsilon}$, for the solution map $\Xi$ corresponding to the problem at hand.
\end{lemma}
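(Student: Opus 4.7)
The plan is to establish two things: (a) termination of the loop on every admissible input, and (b) correctness of the output when the loop exits. The proof will proceed by examining the geometry of the input sets $\omlpw$, $\ombplw$, $\omullw$, $\omcllw$, $\ombptvw$, and $\omultvw$ and then invoking the already established closed-form descriptions of the solutions and the two previous subroutines \textit{OutputEta} and \textit{OutputPsi}.

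For termination, the key observation is that each of the ``weak'' input sets was constructed by excluding the corner point $z^a=(1/2,1/2)$ from $\mathcal{L}$, $\albetSetBPTV{K-1}$, or $\albetSetULTV{K-1}$. Since $\mathcal{L}$ is the union of two axis-aligned segments meeting only at $z^a$, this exclusion forces $\alpha\neq\beta$ (for the LP and $\ell^1$ problems) or $\alpha\neq\beta$ reading off the first row $A_{1,1}$ and $A_{1,N}$ (for the TV problems). Writing $d$ as in the subroutine, the $\Delta_1$-information guarantee \eqref{Lambda_limits2} gives $|d-(\alpha-\beta)|\leq 2\cdot 2^{-n}$. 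Hence once $n$ is chosen large enough so that $|\alpha-\beta|>4\cdot 2^{-n}$ (which is possible since $|\alpha-\beta|>0$ is a fixed positive number depending on $\iota$), one has $|d|>2^{-n+1}$ and $\sgn(d)=\sgn(\alpha-\beta)$, so the loop exits with the correct branch.

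For correctness, one proceeds case by case using the closed-form solutions from \S\ref{sec:BasicProblemsSolutions}. In the branch $d>2^{-n+1}$ we know $\alpha>\beta$; for LP, Lemma \ref{lemma:ProblemBasicExampleLP} identifies the unique minimiser as $(y_1/(\alpha\vee\beta))e_1=4\cdot 10^{-K+1}e_1$ (since $\alpha\vee\beta=1/2$ and $y_1=2\cdot 10^{-K+1}$); for $\ell^1$ BP, Lemma \ref{lemma:ProblemBasicExampleBPDNL1} gives $((y_1-\delta)/(\alpha\vee\beta))e_1=4\cdot 10^{-K+1}e_1$; for $\ell^1$ UL, Lemma \ref{lemma:ProblemBasicExampleULASSO} gives $((2\alpha y_1-\lambda)/(2\alpha^2))e_1=4\cdot 10^{-K+1}e_1$; for CL, Lemma \ref{lemma:ProblemBasicExampleCLASSO} gives $re_1+(\tau-r)e_3$ with $r=(\alpha\vee\beta)y_1/(1+(\alpha\vee\beta)^2)=4\cdot 10^{-K+1}$. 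In all these cases the routine outputs a dyadic vector within $10^{-k_\epsilon}$ (in $\|\cdot\|_p$) of the explicit minimiser, so the required inequality $\disM(x,\tilde\Xi(\tilde\iota))\leq 10^{-k_\epsilon}$ holds. For BP--TV and UL--TV one instead invokes Lemmas \ref{lemma:BPTVSolutions} and \ref{lemma:ULTVSolutions}, whose solutions are $\flipOp \eta(y_1,\alpha,\beta)$ and $\flipOp\psi(y_1,\alpha,\beta)$ respectively in this branch; the subroutines \textit{OutputEta} and \textit{OutputPsi} (applied with $k_K=K-1$, so the hypotheses $k\leq k_K$ of Lemmas \ref{lemma:OutputEtaCorrect} and \ref{lemma:OutputPsiCorrect} hold since the inputs are in $\ombptvw$ or $\omultvw$ with $k=K-1$) produce approximations within $10^{-k_\epsilon}$ of $\eta$ or $\psi$, and since $\flipOp$ is an isometry in $\|\cdot\|_p$ the flipped outputs remain within $10^{-k_\epsilon}$ of the true solution.

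The symmetric branch $d<-2^{-n+1}$, corresponding to $\alpha<\beta$, is handled identically after swapping the roles of $e_1$ and $e_2$ in the $\ell^1$ cases and dropping the $\flipOp$ in the TV cases, again using the same solution lemmas. The main technical point to verify carefully is simply the sign-recovery step in termination, i.e., the deduction that $\sgn(d)=\sgn(\alpha-\beta)$ once $|d|>2^{-n+1}$; this is immediate from the triangle inequality and $\Delta_1$-information bound and is the only non-routine ingredient.
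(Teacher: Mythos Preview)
Your proposal is correct and follows essentially the same approach as the paper's proof. The only cosmetic difference is in how you phrase the branch-selection argument: the paper shows, for each sign of $\alpha-\beta$, that the wrong branch never triggers and the right one eventually does, whereas you argue that the loop eventually exits and that \emph{whenever} $|d|>2^{-n+1}$ one necessarily has $\sgn(d)=\sgn(\alpha-\beta)$; these are equivalent formulations of the same triangle-inequality bound $|d-(\alpha-\beta)|\leq 2\cdot 2^{-n}$.
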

\begin{proof}

	We start the proof by analysing the case where the problem is either linear programming or an $\ell^1$ regularisation problem. By the assumption that $\iota$ is in one of the ``weak'' input sets, it must be of one of the following forms: $\big(\vecYMainLP{K-1} ,\matLP(\alpha,\beta,m,N)\big)$ for linear programming, $\big(\vecYMainBP{K-1},\matl(\alpha,\beta,m,N)\big)$ for basis pursuit denoising with $\ell^1$ regularisation, $\big(\vecYMainUL{K-1} ,\matl(\alpha,\beta,m,N)\big)$ for unconstrained lasso with $\ell^1$ regularisation or $\big(\vecYMainCL{K-1},\matl(\alpha,\beta,m,N)\big)$ for constrained lasso with $\ell^1$ regularisation, where $(\alpha,\beta) \in \mathcal{L}$, $\alpha \neq \beta$. Our argument for each of the cases $\alpha>\beta$ and $\alpha<\beta$ is identical, so we will start by assuming that $\alpha < \beta$. First, we fix an $n$ and assume that the values of each of the variables accessed in the statement of the loop are given at the $n$-th iteration. In particular, $\dalgo \leq \alpha + 2^{-n} - (\beta - 2^{-n})  \leq 2^{-n+1}$ and hence the subroutine never exits at \ref{inst:WeakExitAlphaGBeta}. 
	
	By contrast, for $n$ sufficiently large $\dalgo \leq \alpha - \beta + 2^{-n+1} < -2^{-n+1}$ since $\alpha$ is strictly smaller than $\beta$ and thus eventually the subroutine does exit at \ref{inst:WeakExitAlphaLBeta}. Hence the output $x$ of the subroutine satisfies $\|x - 4\cdot 10^{-K+1} e_2\|_1 \leq 10^{-k_\epsilon}$ (or $\|x - 4\cdot 10^{-K+1}e_2- (\tau - 4\cdot 10^{-K+1})e_3\|_1\leq 10^{-k_\epsilon}$ if the problem is constrained lasso). We now use Lemma \ref{lemma:ProblemBasicExampleLP} in the case of LP, Lemma \ref{lemma:ProblemBasicExampleBPDNL1} in the case of BP with $\ell^1$ regularisation, Lemma \ref{lemma:ProblemBasicExampleULASSO} in the case of unconstrained lasso with $\ell^1$ regularisation, or Lemma \ref{lemma:ProblemBasicExampleCLASSO} in the case of constrained lasso with $\ell^1$ regularisation) to conclude that the subroutine is correct in the case $\alpha < \beta$. 
	
	The case $\alpha > \beta$ is identical except now the value $\dalgo$ will never be smaller than $-2^{-n+1}$ and instead we will eventually (after sufficiently many iterations, depending on $\iota$) have $\dalgo > 2^{-n+1}$. Thus the subroutine always outputs $x$ with $\|x-  4\cdot 10^{-K+1}e_1\|_1 \leq 10^{-k_\epsilon}$ (or $\|x - 4\cdot 10^{-K+1}e_1 - (\tau - 4\cdot 10^{-K+1})e_3\|_1 \leq 10^{-k_\epsilon}$ for constrained lasso), which for $\alpha > \beta$ is the correct output by one of Lemma \ref{lemma:ProblemBasicExampleLP}, Lemma \ref{lemma:ProblemBasicExampleBPDNL1}, Lemma \ref{lemma:ProblemBasicExampleULASSO} or Lemma \ref{lemma:ProblemBasicExampleCLASSO} depending on the computational problem being analysed.
	
	The argument that the subroutine is correct for the TV problems is similar. This time $\iota$ is either 
	\[
	\big(\vecYMainBPTV{K-1} ,\matTV(\alpha,\beta,m,N)\big)
	\]
	 for basis pursuit or $\big(\vecYMainULTV{K-1},\matTV(\alpha,\beta,m,N)\big)$ for unconstrained lasso.	Once again, if $\alpha < \beta$ the subroutine will exit at \ref{inst:WeakExitAlphaLBeta} (and never at \ref{inst:WeakExitAlphaGBeta}). This time, however,  the subroutine $\textit{OutputEta}$ for basis pursuit denoising is called to find $\eta^*$ and then outputs $x = \eta^*$  (similarly,  for unconstrained lasso,  $\textit{OutputPsi}$ is called to find $\psi^*$ and output $x=\psi^*$). Note that in this case, for basis pursuit denoising, by Lemma \ref{lemma:BPTVSolutions} we have $\Xi(\iota) = \{\eta(y_1,\alpha,\beta)\}$ (or for unconstrained lasso we use Lemma \ref{lemma:ULTVSolutions} to obtain $\Xi(\iota) = \{\psi (y_1,\alpha,\beta)\}$), where $y_1$ is the first coordinate of $y^{\mathrm{BP},\mathrm{TV},\mathrm{w}}$ or $y^{\mathrm{UL},\mathrm{TV},\mathrm{w}}$, as appropriate. Lemma \ref{lemma:OutputEtaCorrect} for basis pursuit denoising (correspondingly Lemma \ref{lemma:OutputPsiCorrect} for unconstrained lasso) as well as the inclusion $\ombplw \subset \ombpls[K-1]$ (correspondingly $\omullw \subset \omulls[K-1]$ for unconstrained lasso)  now yields $\|x - \eta(y_1,\alpha,\beta)\|_p = \|\eta^* - \eta(y_1,\alpha,\beta)\|_p \leq  \|\eta^* - \eta(y_1,\alpha,\beta)\|_1\leq 10^{-k_\epsilon}$ (correspondingly $\|x -  \psi(y_1,\alpha,\beta)\|_p \leq 10^{-k_\epsilon}$). Thus for the TV problems the subroutine returns $x$ with 
$
\disM(x,\tilde\Xi(\tilde\iota))=\disM(x,\Xi(\iota)) \leq 10^{-k_\epsilon}
$ 
in the case $\alpha < \beta$.
	
	The case $\alpha > \beta$ for the TV problems is identical to the above except now the subroutine will execute \ref{inst:WeakExitAlphaGBeta} and never \ref{inst:WeakExitAlphaLBeta}, giving $x = \flipOp\eta^*$ or $x=\flipOp\psi^*$. The same argument as before shows that $\disM(x,\tilde\Xi(\tilde\iota))=\disM(x,\Xi(\iota)) \leq 10^{-k_\epsilon}$.
\end{proof}

Finally, we need a subroutine `IdentifyStrongOrWeak' which determines whether the input $\tilde\iota$ corresponds to an $\iota$ in  one of 
\[
\text{$\omlps$, $\ombpls$, $\omulls$ , $\omclls$, $\ombptvs$ , or $\omultvs$,}
\]
 (which we call the `InputStrong' case) or an $\iota$ in of 
 \[
\text{$\omlpw$ , $\ombplw$ , $\omullw$ , $\omullw$, $\ombptvw$ , or $\omultvw$,}
 \]
 (which we call the `InputWeak' case), provided either of these two cases occurs.

{\it Subroutine} {\bf IdentifyStrongOrWeak:}

\indent Inputs: Dimensions $m$, $N$. \\
\indent Oracles: $\orvec$  providing access to the components $y_j^{(n)}$ of an input $\tilde{\iota}$.\\
\indent Output: Either `InputStrong' or `InputWeak'.

\begin{enumerate}[leftmargin=10mm, label= \arabic*.,ref = step \arabic*]
	\item \label{inst:IdentifyStrongOrWeakCases}We set two values, $t_0$ and $t_1$, depending on the problem at hand:
	\begin{enumerate}[leftmargin=8mm, label = \alph*.]
		\item For linear programming, we set $t_0 = 2\cdot 10^{-K}$ and $t_1 = 2 \cdot 10^{-K+1}$. 
		\item For basis pursuit with $\ell^1$ regularisation we set $t_0 = 2\cdot 10^{-K} + \delta$ and $t_1 = 2 \cdot 10^{-K+1} + \delta$.
		\item For unconstrained lasso with $\ell^1$ regularisation we set $t_0 = 2 \cdot 10^{-K} + \lambda$ and $t_1 = 2\cdot 10^{-K+1} + \lambda$.
		\item For constrained lasso we set $t_0 = 10^{-K+1}$ and $t_1 = 10^{-K+2}$.
		\item For basis pursuit with TV we compute $\left[(m-1)/m \right]^{-1/2}$ to $4K$ bits of precision, yielding some value $w$. We then set 
		\[
		t_0 = \delta w + \left(7 - \frac{3}{m}\right)\frac{10^{-K}}{4}, \quad t_1 = \delta  w + \left(7 - \frac{3}{m-1}\right)\frac{10^{-K+1}}{4}. 
		\]
		\item For unconstrained lasso with TV we set $t_0 = 7 \cdot 10^{-K}/4 + \lambda (1+1/(m-1))$ and $t_1 = 3 \cdot 10^{-K+1}/2 + \lambda (1+1/(m-1))$.
	\end{enumerate} 
	\item Using the oracle $\orvec$, we read  $y'_1:=y_1^{(4K)}$. If $y'_1 \leq t_0 + 2 \cdot 2^{-4K}$ we output `InputStrong' and terminate the subroutine. Else if $y'_1 \geq t_1 - 2\cdot 2^{-4K}$ we output `InputWeak'. The subroutine then terminates. \label{inst:IdentifyStrongOrWeakY}
\end{enumerate}
\begin{lemma}\label{lemma:IdentifyStrongOrWeakCorrect}
	The subroutine `IdentifyStrongOrWeak' correctly identifies the cases `InputStrong' and `InputWeak' as described above. Moreover,  the number of digits needed by the oracles as well as  the BSS  and the Turing runtime 
 are all bounded above by some polynomial in $\log(m)$. 
 \end{lemma}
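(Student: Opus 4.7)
The plan is to verify that the two thresholds $t_0$ and $t_1$ computed in Step 1 lie on the correct sides of the ``strong'' and ``weak'' values of the first coordinate $y_1$ of the input vector, with a margin exceeding the oracle approximation error $2^{-4K}$. Recall that each ``strong'' input set from \eqref{eq:strongOmega-fixed-dim} fixes $y_1$ to a specific value (call it $y_1^{\mathrm{s}}$) built from the $10^{-K}$ scaling in \eqref{eq:thm3.2_y_values_strong}, while each ``weak'' input set from \eqref{eq:weakOmega-fixed-dim} fixes $y_1$ to the analogous value (call it $y_1^{\mathrm{w}}$) with $10^{-K}$ replaced by $10^{-K+1}$. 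The correctness claim amounts to showing that (i) in the strong case $y_1^{\mathrm{s}}\leq t_0+2^{-4K}$ so that $y_1'\leq t_0+2\cdot 2^{-4K}$ triggers the first branch of Step 2, and (ii) in the weak case $y_1^{\mathrm{w}}>t_0+3\cdot 2^{-4K}$ and $y_1^{\mathrm{w}}\geq t_1-2^{-4K}$ so that the first branch is skipped and the second branch is entered.

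For LP, $\ell^1$ BP, $\ell^1$ UL, and CL the thresholds match $y_1^{\mathrm{s}}$ and $y_1^{\mathrm{w}}$ exactly, i.e.\ $t_0=y_1^{\mathrm{s}}$ and $t_1=y_1^{\mathrm{w}}$ as is immediate from comparing Step 1(a)--1(d) with \eqref{eq:thm3.2_y_values_strong}. In this situation $t_1-t_0$ equals $2(10^{-K+1}-10^{-K})=18\cdot 10^{-K}$ (or $9\cdot 10^{-K}$ for CL), which vastly exceeds $4\cdot 2^{-4K}$, so (i) and (ii) are immediate from $|y_1'-y_1|\leq 2^{-4K}$. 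The TV cases are the delicate ones, since Step 1(e) uses the simplified factor $(7-3/m)$ in place of $(7-3/\theta(1/2,1/2,m))$ and the approximation $w\approx\sqrt{m/(m-1)}$ in place of $m/\theta(1/2,1/2,m)$, and Step 1(f) uses $1+1/(m-1)$ in place of $\theta(1/2,1/2,m)^{2}/(m-1)^2$. Using $\theta(1/2,1/2,m)=\sqrt{m(m-1)}$, the algebraic identity $m-\sqrt{m(m-1)}=m/(m+\sqrt{m(m-1)})\leq 1/2$, and $m\geq 4$, I will bound $|t_0-y_1^{\mathrm{s}}|\leq \delta\cdot 2^{-4K}+3\cdot 10^{-K}/[4(m-1)^2]\leq \delta 2^{-4K}+10^{-K}/12$ and likewise $t_1\leq y_1^{\mathrm{w}}+\delta 2^{-4K}$ (the bracket coefficient $7-3/(m-1)$ being smaller than $7-3/\sqrt{m(m-1)}$). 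Combined with the strong--weak gap $y_1^{\mathrm{w}}-y_1^{\mathrm{s}}\geq 6\cdot (10^{-K+1}-10^{-K})/4\cdot 2\geq 13\cdot 10^{-K}$ (and analogously $y_1^{\mathrm{w}}-y_1^{\mathrm{s}}\geq 3\cdot 10^{-K}/2$ for UL TV), which still dominates everything else, this yields (i) and (ii) for the TV problems.

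The main obstacle is therefore a careful bookkeeping of the approximation errors in the TV cases; every other case is essentially a direct comparison. Once correctness is in hand, the complexity bounds follow easily: the only oracle query made is for $y_1^{(4K)}$, which in the Turing model costs $1+4K$ per the conventions of \S\ref{sec:comp_runtime} and therefore $\mathcal{O}(1)$ for fixed $K$. For the $\ell^1$ and LP branches, the thresholds $t_0$ and $t_1$ have encoding length $\mathcal{O}(K)$ and are computed in $\mathcal{O}(1)$ time. For the TV branches, computing $w$ to $4K$ bits by Newton--Raphson iteration on an input with $\mathcal{O}(\log m)$ bits takes time $\mathrm{poly}(K,\log m)$ in the Turing model (using the usual fast-multiplication bound $\mathcal{O}(b\log b)$ cited in \cite{ElementaryFunctionsImplementation}), and the subsequent rational arithmetic on numbers of length $\mathcal{O}(K+\log m)$ is likewise in $\mathrm{poly}(K,\log m)$; the comparison in Step 2 is then performed on dyadics of the same length. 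Since $K$ is fixed throughout the use of the subroutine, all bounds reduce to a polynomial in $\log m$, as claimed, and the same applies in the BSS model, where the arithmetic cost is at most the Turing cost.
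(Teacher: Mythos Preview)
Your approach is essentially the same as the paper's: compare the actual first coordinate $y_1$ of the input vector in the strong and weak cases with the thresholds $t_0$ and $t_1$, using the oracle error $2^{-4K}$ as slack. The complexity analysis is also the same.

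However, there is a genuine (if easily repaired) slip in your treatment of the BP TV strong case. You bound the \emph{absolute value} $|t_0-y_1^{\mathrm{s}}|\leq \delta\cdot 2^{-4K}+10^{-K}/12$, but this does not give the one-sided inequality $y_1^{\mathrm{s}}\leq t_0+2^{-4K}$ that you need for (i): when $\delta=1$ the extra $10^{-K}/12$ cannot be absorbed. The paper avoids this by using the \emph{sign} of the discrepancy: since $\theta(1/2,1/2,m)=\sqrt{m(m-1)}<m$, one has $3/\theta\geq 3/m$, so
\[
y_1^{\mathrm{s}}=\frac{\delta m}{\theta}+\Big(7-\frac{3}{\theta}\Big)\frac{10^{-K}}{4}\;\leq\; \frac{\delta m}{\theta}+\Big(7-\frac{3}{m}\Big)\frac{10^{-K}}{4}\;\leq\; \delta w+2^{-4K}+\Big(7-\frac{3}{m}\Big)\frac{10^{-K}}{4}=t_0+2^{-4K},
\]
and similarly $y_1^{\mathrm{w}}\geq t_1-2^{-4K}$ uses $m-1<\theta$. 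Two small side remarks: your identity $m-\sqrt{m(m-1)}\leq 1/2$ is false at $m=4$ (the value is $4-\sqrt{12}\approx 0.54$), and in the UL TV case the factor $1+1/(m-1)$ is not an approximation but is \emph{exactly} $\theta(1/2,1/2,m)^2/(m-1)^2$, so no error term arises there at all. Once you switch to the one-sided inequalities as above, your argument goes through and coincides with the paper's.
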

\begin{proof}
	Suppose first that $\iota=(y,\cdot)$ is in one of  $\omlps$, $\ombpls$, $\omulls$ , $\omclls$, $\ombptvs$ , or $\omultvs$. For linear programming, basis pursuit with $\ell^1$ regularisation, unconstrained lasso with $\ell^1$ regularisation and constrained lasso we have $y_1 = t_0$ and so $y_1 \leq t_0 + 2^{-4K}$.
	For basis pursuit TV, we have 
	\begin{align*}
		y_1  &\leq \frac{\delta m }{\theta(1/2,1/2,m)} + \left[7 - \frac{3}{m}\right]\frac{10^{-K}}{4}\leq \delta  w + \left|\delta\left(w - \frac{m}{\theta(1/2,1/2,m)}\right)\right|  + \left[7 - \frac{3}{m}\right]\frac{10^{-K}}{4} \\
		&\leq \delta w + \delta\cdot  2^{-4K}  + \left[7 - \frac{3}{m}\right]\frac{10^{-K}}{4} \leq t_0 + 2^{-4K} 
	\end{align*}
	where the first inequality holds because $\theta(1/2,1/2,m) \leq m$ and the final inequality because $\delta \leq 1$. For unconstrained lasso TV 
	\begin{equation*}
		y_1  = \frac{7 \cdot 10^{-K}}{4 } + \lambda \left[1 + \frac{1}{m-1}\right] - \frac{3 \cdot 10^{-K}}{4(m-1)} \leq  \frac{7 \cdot 10^{-K}}{4 } + \lambda \left[1 + \frac{1}{m-1}\right] = t_0 \leq t_0 + 2^{-4K}.
	\end{equation*}
	Thus in each of the cases $y_1' \leq y_1 + 2^{-4K} \leq t_0 + 2\cdot 2^{-4K}$ and so the subroutine outputs `InputStrong'.
	
	We now consider the case where $\iota=(y,\cdot)$ is in one of $\omlpw$ , $\ombplw$ , $\omullw$ , $\omullw$, $\ombptvw$ , or $\omultvw$. We first show that $y_1 \geq t_1 - 2^{-4K} $ and hence $y'_1 \geq t_1- 2\cdot 2^{-4K} $. For linear programming, basis pursuit with $\ell^1$ regularisation, unconstrained lasso (with both $\ell^1$ and TV regularisation) and constrained lasso we have $y_1 = t_1$ and thus the claim is true. For basis pursuit with TV, we have
	\begin{equation*}
		y_1 \geq \frac{\delta m }{\theta(1/2,1/2,m)} + \left[7 - \frac{3}{m-1}\right] \frac{10^{-K+1}}{4} \geq \delta w  - \delta \cdot 2^{-4K} + \left[7 - \frac{3}{m-1}\right] \frac{10^{-K+1}}{4} \geq t_1 - 2^{-4K} 
	\end{equation*}
	since $\delta \leq 1$, as claimed. Finally, for unconstrained lasso we have	\begin{equation*}
		y_1 = \frac{7 \cdot 10^{-K+1}}{4} + \lambda \left[1 + \frac{1}{m-1}\right]  - \frac{3 \cdot 10^{-K+1}}{4(m-1)} \geq \frac{6 \cdot 10^{-K+1}}{4} + \lambda \left[1 + \frac{1}{m-1}\right] = t_1
	\end{equation*}
	since $m \geq 4$ and so the claim follows. Therefore the subroutine outputs `InputWeak' provided we can also show that $y'_1 > t_0 + 2\cdot 2^{-4K}$ (i.e. the subroutine does not branch and output `InputStrong').

	We thus will aim to show that $y'_1 > t_0 + 2\cdot2^{-4K}$ for each of the computational problems. For linear programming we have $y_1' \geq 2\cdot 10^{-K+1} - 2^{-4K} >2\cdot  10^{-K} +  2\cdot2^{-4K} = t_0 + 2\cdot2^{-4K}$. The same argument for basis pursuit with $\ell^1$ regularisation or unconstrained lasso with $\ell^1$ regularisation gives $y_1' > t_0 + 2\cdot 2^{-4K}$. Similarly, for constrained lasso we get $y'_1 \geq 	10^{-K+2} - 2^{-4K} > 10^{-K+1} + 9\cdot 10^{-K+1} - 5\cdot 10^{-K}/4 > t_0 + 2\cdot 2^{-4K}$. For basis pursuit with TV, starting from $y_1 \geq t_1 - 2^{-4K}$ we get
	\begin{equation*}
		y_1\geq t_0 - 2^{-4K} +  \left[7 - \frac{3}{m-1}\right] \frac{10^{-K+1}}{4} - \left[7 - \frac{3}{m}\right] \frac{10^{-K}}{4} =  t_0 - 2^{-4K} +  \left[63 - \frac{30}{m-1}  +\frac{3}{m}\right] \frac{10^{-K}}{4}
	\end{equation*}
	and since $-30/(m-1) + 3/m$ is increasing in $m$, for $m \in [4,\infty)$ (as can be seen by analysing the derivative), we obtain 
	$y_1\geq t_0 - 2^{-4K} +  \left(63 - 30/3 + 3/4\right) \frac{10^{-K}}{4} > t_0 - 2^{-4K} + 5 \cdot 10^{-K} > t_0 + 3\cdot 2^{-4K}$ and thus $y_1' \geq y_1 - 2^{-4K} > t_0 + 2\cdot 2^{-4K}$. Finally for unconstrained lasso with TV we start from $y_1 \geq t_1$ and argue 
	\begin{equation*}
		y_1 \geq t_1 = \frac{30 \cdot 10^{-K}}{2} + \lambda \left[1 + \frac{1}{m-1}\right] = \frac{7 \cdot 10^{-K}}{4} + \lambda \left[1 + \frac{1}{m-1}\right] + \frac{23 \cdot 10^{-K}}{2} \geq t_0 + 4\cdot 2^{-4K},
	\end{equation*}
	so that $y_1' \geq y_1 - 2^{-4K} > t_0 + 2\cdot 2^{-4K}$.
	
	Next, we note that the only call to the oracle $\orvec$ requires $4K$ digits, which is constant, as $K$ is assumed to be fixed.
	It remains to estimate the complexity. Note that it suffices to show that the Turing runtime is polynomial in $\log(m)$, as this will then imply the desired polynomial bound on the BSS runtime. 
		
	We do so by separately considering the different possible problems.

	\begin{enumerate}[leftmargin=10mm, label = \alph*.]
		\item For linear programming,  basis pursuit, unconstrained lasso with $\ell^1$ and constrained lasso,  the runtime of  computing the numerator and the denominator of $t_0$ and $t_1$ is $\oh(1)$ since $K$, $\delta$, $\lambda$, and  $\tau$ are all assumed to be fixed.
		\item For basis pursuit with TV, the computation of $w$ can be done as in Lemma \ref{lemma:OutputEtaCorrect} - we use Newton-Raphson iteration to compute $w$ as in \cite[Page 92-93]{ElementaryFunctionsImplementation}. The complexity of this operation, done to $4K$ bits of precision, is polynomial in $\length[m/(m-1)]$ and $4K$. Since $\length[m/(m-1)]=\oh(\log(m))$ and  $K$ is fixed, the overall complexity of computing $w$ is polynomial in $\log(m)$. Note that such a $w$ also has $\length(w)$ bounded above by a polynomial in $\log(m)$.
		The computation of $t_0$ and $t_1$ is then done by finitely many arithmetic operations on fractions each of lengths bounded above by a polynomial in $\length(\delta)$, $\length(w)$, $\length(m)$ and $\length(10^{-K+1})$. Since $\delta$ and $K$ are assumed to be fixed across all inputs and $\length(w)$ is bounded above by a polynomial in $\log(m)$, the overall complexity of this step is bounded above by a polynomial in $\log(m)$.
		\item For unconstrained lasso with TV the complexity of computing $t_0$ and $t_1$ is bounded above by a polynomial in $\length(\lambda)$, $\length(m)$, and $K$. Since $\lambda$ and $K$ are fixed, the complexity of this step is bounded above by a polynomial in $\log(m)$.
	\end{enumerate}

	In all the cases above both $t_0$ and $t_1$ are computed in Turing runtime bounded above by a polynomial in $\log(m)$, so their length must also be bounded above by a polynomial in $\log(m)$. Furthermore, by Lemma \ref{lemma:mainThmSizeEstimates}, we must have $\length(y_1') \leq \length(y_1) + 4K= \oh(1)$. Hence the comparisons in \ref{inst:IdentifyStrongOrWeakY} can be done in complexity polynomial in $\log(m)$. The final output of `InputWeak' or `InputStrong' can be done as an $\oh(1)$ boolean assignment.
	
	We conclude that in the bit complexity model the subroutine `$\textit{IdentifyStrongOrWeak}$' takes at most some polynomial in $\log(m)$ bit operations. Estimating the arithmetic complexity is simpler - there are finitely many arithmetic operations done, except in \ref{inst:IdentifyStrongOrWeakCases} where the number of Newton-Raphson iterates required can be bound by a polynomial in $\log(m)$. Each Newton-Raphson iteration takes finitely many arithmetic operations and hence the overall arithmetic complexity is bounded by a polynomial in $\log(m)$.

\end{proof}

\section{Proof of Theorem \ref{Cor:main}: parts (i) and (ii)\label{sec:cor_main(i)(ii)}}

Parts (i) and (ii) of Theorem \ref{Cor:main} are formally stated in Proposition \ref{Cor:main_SCI}, which we now prove with the set $\Omega$ being one of \eqref{eq:defs-5-Om} depending on the problem. Before proceeding to the breakdown epsilon bounds and algorithm constructions, we note that Lemmas \ref{lem:cond-Xi-various}, \ref{lem:cond-FP-various}, and \ref{lem:cond-mat-various} guarantee the desired bounds on the condition numbers and Lemma \ref{lemma:mainThmSizeEstimates} establishes the upper and lower bounds on the size of the inputs.

\subsection{Proof of Proposition  \ref{Cor:main_SCI}\label{sec:lambdaHatStrong-Cor:main_SCI}}
We consider the fixed-dimensional computational problems $\{\Xi,\Omega_{m,N},\allowbreak\mathcal{M}_N,\Lambda_{m,N} \}$,  where $\Omega_{m,N}$ is one of \eqref{eq:defs-5-Om-fixedDim} with $k=K$. Writing $\Omega_{m,N}=\Omega_{m,N}^{\mathrm{s}}\cup \Omega_{m,N}^{\mathrm{w}}$, for the corresponding ``strong'' and ``weak'' components as defined in \eqref{eq:strongOmega-fixed-dim} and \eqref{eq:weakOmega-fixed-dim}, Lemma \ref{lemma:StrongBDEpsilonBounds10K} establishes the existence of a 
\[
\hat\Lambda^{\mathrm{s}}=\{f_{j}^{\mathrm{s}}\,\vert\, j\leq n_{\mathrm{var}} ,n\in\mathbb{N}\}\in\mathcal{L}^1(\Lambda_{m,N})
\] 
such that,  for  the computational problem $\{\Xi,\Omega_{m,N}^{\mathrm{s}},\mathcal{M}_N,\hat\Lambda^{\mathrm{s}}\}$, we have $\epsilon_{\mathbb{P}h\mathrm{B}}^\mathrm{s}(\mathrm{p}) > 10^{-K}$, for $\mathrm{p} \in [0,1/2)$, as well as $\epsilon_{\mathbb{P}\mathrm{B}}^\mathrm{s}(\mathrm{p}) > 10^{-K}$, for $\mathrm{p} \in [0,1/3)$. 
Similarly, Lemma \ref{lemma:WeakBDEpsilonBounds10K} establishes the existence of a 
\[
\hat\Lambda^{\mathrm{w}}=\{f_{j,n}^{\mathrm{w}}\,\vert\, j\leq n_{\mathrm{var}},n\in\mathbb{N}\}\in\mathcal{L}^1(\Lambda_{m,N})
\]
 such that,  for the computational problem $\{\Xi,\Omega_{m,N}^{\mathrm{w}},\mathcal{M}_N,\hat\Lambda^{\mathrm{w}}\}$, we have $\epsilon^{\mathrm{w}}_{\mathbb{P}\mathrm{B}}(\mathrm{p}) > 10^{-(K-1)}$ for $\mathrm{p} \in [0,1/2)$ and $\epsilon^{\mathrm{w}}_{\mathrm{B}}> 10^{-(K-1)}$. 
Now, defining $\hat\Lambda_{m,N}:=\{f_{j,n}\,\vert\, j\leq n_{\mathrm{var}},n\in\mathbb{N}\}$, where we let $f_{j,n}(\iota)=f_{j,n}^{\mathrm{s}}(\iota)$ if $\iota\in \Omega_{m,N}^{\mathrm{s}}$ and $f_{j,n}(\iota)=f_{j,n}^{\mathrm{w}}(\iota)$ if $\iota\in \Omega_{m,N}^{\mathrm{w}}$, for $j\leq n_{\mathrm{var}}$ and $n\in\mathbb{N}$, we have that $\hat\Lambda_{m,N}$ provides $\Delta_1$-information for $\{\Xi,\Omega_{m,N},\mathcal{M}_N,\Lambda_{m,N} \}$, and in view of Remark \ref{remark:OmegaSubsetBDE}, we have that all the breakdown epsilon bounds mentioned above also hold for $\{\Xi,\Omega_{m,N},\mathcal{M}_N,\hat\Lambda_{m,N}\}$. This already establishes part (i) of Proposition \ref{Cor:main_SCI} as well as the breakdown epsilon bound in part (ii) (and, indeed, the breakdown epsilon bounds in Proposition \ref{Cor:main_SCI_cont}, which will be useful later).

In order to complete the proof of  part (ii) of Proposition \ref{Cor:main_SCI}, it remains to show the existence of a recursive (i.e., implementable on a Turing machine) algorithm (which we will call {\it Randomised $K$ digit algorithm}) that returns $K$ correct digits with probability greater than or equal to $2/3$ on all inputs of the problems $\{\tilde\Xi,\tilde\Omega_{m,N},\mathcal{M}_N,\tilde\Lambda_{m,N}\}=\{\Xi,\Omega_{m,N},\mathcal{M}_N,\Lambda_{m,N} \}^{\Delta_1}$ for varying $m$ and $N$, where $\Omega_{m,N}$ is one of \eqref{eq:defs-5-Om-fixedDim} and $\Xi$ is the corresponding solution map. As in the previous section, we fix the notation for an element $\tilde\iota$ of $\tilde\Omega_{m,N}$. For the linear programming case, we write  $\tilde{\iota}=\big(\{y_j^{(n)}\}_{n=0}^{\infty}, \{\matLP_{j,k}^{(n)}\}_{n=0}^{\infty}\big)_{j,k}$, corresponding to an $\iota=(y,\matLP)\in \Omega_{m,N}$. For the $\ell^1$ and TV problems the notation is analogous, except that we respectively write $\matl$ and $\matTV$ instead of $\matLP$.

To construct this algorithm, we  need a randomised subroutine, which we call \textit{BiasedCoinFlip}, that takes a natural number $n$ and returns `true' with probability $1/n$ and `false' with probability $1-1/n$. Note that this subroutine halts with probability $1$, and, for each execution of the subroutine, the probabilities of returning `true', respectively `false', are assumed independent of previous executions of the subroutine.  Such a subroutine can easily be constructed using a randomised Turing machine with access to coin flips that return true with probability $1/2$ and false with probability $1/2$.

To construct the desired {\it Randomised $K$ digit algorithm}, we first need to design a subroutine \textit{Guess} that randomly chooses between $K$ digit approximations to two plausible solutions, each with probability $1/2$. Concretely, we define:

{\it Subroutine} {\bf Guess:} 

\indent Inputs: Dimensions $m$, $N$.\\
\indent Oracles: $\orvec$ and $\ormat$ providing access to the components $y_j^{(n)}$ and $\matLP_{j,k}^{(n)}$ (respectively  $\matl_{j,k}^{(n)}$ or $\matTV_{j,k}^{(n)}$) of an input $\tilde{\iota}$.\\
\indent Output: A potential solution vector $x \in \dyadic^N$ (in the Turing case) or $x \in \real^N$ (in the BSS case).
\begin{enumerate}[leftmargin=10mm, label= \arabic*.,ref = step \arabic*]
	\item First, we make a  random coin flip that returns `true' with probability $1/2$ and `false' with probability $1/2$. 
	\begin{enumerate}[label = \alph*.]
		\item If the coin flip outputted `true' then the output of \textit{Guess} depends on the problem at hand: for linear programming, basis pursuit with $\ell^1$ regularisation or unconstrained lasso with $\ell^1$ regularisation we output $x\in\dyadic^N$ with $\| x- 4 \cdot 10^{-K} e_1\|_p\leq 10^{-K}$, for constrained lasso we output $x\in\dyadic^N$ with $\|x - 4\cdot 10^{-K}e_1 + (\tau - 4\cdot 10^{-K})e_3\|_p\leq 10^{-K}$, for basis pursuit with TV regularisation we output the result of $\textit{OutputEta}^{\orvec,\ormat}(m,N,k_0=K, k_\epsilon=K)$ and for unconstrained lasso with TV regularisation we output the result of  $\textit{OutputPsi}^{\orvec,\ormat}(m,N,k_0=K,k_\epsilon=K)$. 
		\item If the coin flip outputted `false' then our output depends on the problem at hand: for linear programming, basis pursuit with $\ell^1$ regularisation or unconstrained lasso with $\ell^1$ regularisation we output $x\in\dyadic^N$ with $\|x - 4 \cdot 10^{-K} e_2\|_p\leq 10^{-K}$, for constrained lasso we output $x\in\dyadic^N$ with $\|x - 4\cdot 10^{-K}e_2 + (\tau - 4\cdot 10^{-K})e_3\|_p\leq 10^{-K}$, for basis pursuit with TV regularisation we output the result of  $\flipOp \textit{OutputEta}^{\orvec,\ormat}(m,N,k_0=K, k_\epsilon=K)$ and for unconstrained lasso with TV regularisation we output the result of  $\flipOp \textit{OutputPsi}^{\orvec,\ormat}(m,N,k_0=K, k_\epsilon=K)$. 
	\end{enumerate}
\end{enumerate}

With the subroutine \textit{Guess} and the subroutines constructed in \S\ref{sec:UsefulSubroutinesForMainThm}, we are ready to specify the desired {\it Randomised $K$ digit algorithm}. In accordance to the claim of part (ii) of Proposition \ref{Cor:main_SCI}, this algorithm does not necessarily halt.

{\bf Randomised $K$ digit algorithm}

\indent Inputs: Dimensions  $m$, $N$.\\
\indent Oracles: $\orvec$ and $\ormat$ providing access to the components $y_j^{(n)}$ and $\matLP_{j,k}^{(n)}$ (respectively  $\matl_{j,k}^{(n)}$ or $\matTV_{j,k}^{(n)}$) of an input $\tilde{\iota}$.\\
\indent Output: With probability at least $2/3$, some vector $x \in \real^N$ with $\disM(x,\tilde\Xi(\tilde\iota))\leq 10^{-K}$.
\begin{enumerate}[leftmargin=10mm, label= \arabic*.,ref = step \arabic*]
	\item \label{inst:StronOrWeak} We execute $\textit{IdentifyStrongOrWeak}^{\orvec,\ormat}(m,N)$. If this evaluates to `InputWeak', we execute the subroutine $\textit{Weak}^{\orvec,\ormat}(m,N,k_\epsilon=K)$  and terminate. Otherwise, we continue to \ref{inst:NonHaltRandom}.
	\item \label{inst:NonHaltRandom} Initialise $n=1$ and execute the following loop: First, execute $\textit{BiasedCoinFlip}(2^{n-1}+2)$. If this subroutine returns `true' then we execute $\textit{Guess}^{\orvec,\ormat}(m,N)$ and terminate. If instead \textit{BiasedCoinFlip} returns `false' then we increment $n$. Next, in the linear programming case we use the oracle $\ormat$ to read $\matLP_{1,1}^{(n)}$ and $\matLP_{1,2}^{(n)}$ and set $d =\matLP_{1,1}^{(n)} - \matLP_{1,2}^{(n)}$. For the $\ell^1$ problems we read $\matl_{1,1}^{(n)}$ and $\matl_{1,2}^{(n)}$ and set  $d =\matl_{1,1}^{(n)} - \matl_{1,2}^{(n)}$, and for the TV problems we read $\matTV_{1,1}^{(n)}$ and $\matTV_{1,N}^{(n)}$ and set $d =\matTV_{1,1}^{(n)} - \matTV_{1,N}^{(n)}$. We then branch depending on the value of $d$:
	\begin{enumerate}[label = \alph*., ref = step 2\alph*]
		\item If $d > 2 \cdot 2^{-n}$ then we choose $x\in\dyadic^N$ with $\|x - 4\cdot 10^{-K} e_1\|_p\leq 10^{-K}$ for linear programming, basis pursuit with $\ell^1$ regularisation or unconstrained lasso with $\ell^1$ regularisation. For constrained lasso we choose $x\in\dyadic^{-K}$ $\|x - 4\cdot 10^{-K}e_1 + (\tau - 4\cdot 10^{-K})e_3\|_p\leq 10^{-K}$. For basis pursuit TV, we set $x$ to be the result of $ \flipOp\textit{OutputEta}^{\orvec,\ormat}(m,N,k_0=K, k_\epsilon=K)$. Finally, for unconstrained lasso with TV regularisation we set $x$ to be the result of $\flipOp \textit{OutputPsi}^{\orvec,\ormat}(m,N,k_0=K, k_\epsilon=K)$. In all cases, we output $x$ and then terminate the procedure.  \label{inst:ExitAlphaGBetaRandom}
		\item Alternatively, if $d < -2 \cdot 2^{-n}$ then we choose $x\in\dyadic^N$ with $\|x - 4\cdot 10^{-K} e_2 \|_p\leq 10^{-K}$ for linear programming, basis pursuit with $\ell^1$ regularisation or unconstrained lasso with $\ell^1$ regularisation. For constrained lasso we choose $x\in\dyadic^{N}$ with $\|x - 4\cdot 10^{-K}e_2 + (\tau - 4\cdot 10^{-K})e_3\|_p\leq 10^{-K}$. For basis pursuit TV, we set $x$ to be the result of the subroutine $ \textit{OutputEta}^{\orvec,\ormat}(m,N,k_0=K, k_\epsilon=K)$. Finally, for unconstrained lasso with TV regularisation we set $x$ to be the result of $ \textit{OutputPsi}^{\orvec,\ormat}(m,N,k_0=K, k_\epsilon=K)$. In all cases, we output $x$ and then terminate the procedure. \label{inst:ExitAlphaLBetaRandom}
	\end{enumerate}
	If neither of these conditions are met then the loop continues by incrementing $n$ and executing the next iteration.
\end{enumerate}

Firstly, it is clear that this algorithm can be executed on a randomised Turing machine, and hence can be executed on a randomised BSS machine. Secondly, we need to prove that this algorithm does indeed achieve what is stated in its preamble, i.e., we need to show that, for each given input, the algorithm terminates with probability greater than or equal to $2/3$ with a correct output, i.e., a vector $x$ at most $10^{-K}$ away from a true solution. To this end, we let $\iota$ be the element of $\Omega$ that $\tilde\iota$ corresponds to, and consider the following four cases separately:

{\bf Case 1 :} For this case, $\iota$ is of the following form: $\big(y^{\mathrm{LP},\mathrm{s}} ,\matLP(\alpha,1/2,m,N)\big)$ for linear programming, $\big(y^{\mathrm{BP},\ell^1,\mathrm{s}} ,\matl(\alpha,1/2,m,N)\big)$ for basis pursuit denoising with $\ell^1$ regularisation, $\big(y^{\mathrm{UL},\ell^1,\mathrm{s}},\matl(\alpha,1/2,m,\allowbreak N)\big)$ for unconstrained lasso, $\big(y^{\mathrm{CL},\mathrm{s}},\matl(\alpha,1/2,m,N)\big)$ for constrained lasso, $\big(y^{\mathrm{BP},\mathrm{TV},\mathrm{s}},\matTV(\alpha,1/2,m,\allowbreak N)\big)$ for basis pursuit denoising with TV regularisation and $\big(y^{\mathrm{UL},\mathrm{TV},\mathrm{s}},\matTV(\alpha,1/2,m,N)\big)$ for unconstrained lasso with TV regularisation, where $1/4 < \alpha < 1/2$ for the $\ell^1$ problems, $r_n < \alpha < 1/2$ for basis pursuit with TV, and $s_n < \alpha < 1/2$ for unconstrained lasso with TV. 

In this case, Lemma \ref{lemma:ProblemBasicExampleLP} (respectively Lemma \ref{lemma:ProblemBasicExampleBPDNL1} or Lemma \ref{lemma:ProblemBasicExampleULASSO}) show that the solution is $4\cdot 10^{-K} e_2$ for linear programming (respectively, basis pursuit with $\ell^1$ regularisation or unconstrained lasso with $\ell^1$ regularisation). For constrained lasso, applying Lemma \ref{lemma:ProblemBasicExampleCLASSO} shows that the solution is $4\cdot 10^{-K}e_2 + (\tau - 4\cdot 10^{-K}e_2)$. Similarly, Lemma \ref{lemma:BPTVSolutions} shows that the solution is $ \eta(y_1,\alpha,1/2)$ for basis pursuit with TV regularisation and Lemma \ref{lemma:ULTVSolutions} shows that the solution is $ \psi(y_1,\alpha,1/2)$  for unconstrained lasso with TV regularisation. Therefore, for linear programming, basis pursuit with $\ell^1$ regularisation or unconstrained lasso with $\ell^1$ regularisation the algorithm is correct whenever it outputs $x\in\dyadic^N$ within $10^{-K}$ of $4\cdot 10^{-K}e_2$, and is likewise correct for constrained lasso whenever it outputs $x\in\dyadic^N$ within $10^{-K}$ of $4\cdot 10^{-K}e_2 + (\tau - 4\cdot 10^{-K}e_2)$. The correctness of \textit{OutputEta} (Lemma \ref{lemma:OutputEtaCorrect}) thus implies that the algorithm is correct for basis pursuit TV whenever the output is $ \textit{OutputEta}^{\orvec,\ormat}(m,N,k_0=K, k_\epsilon=K)$ and similarly the correctness of \textit{OutputPsi} (Lemma \ref{lemma:OutputPsiCorrect}) implies that the algorithm is correct for unconstrained lasso TV whenever the output is $ \textit{OutputPsi}^{\orvec,\ormat}(m,N,k_0=K, k_\epsilon=K)$.

By Lemma \ref{lemma:IdentifyStrongOrWeakCorrect}, the subroutine  \textit{IdentifyStrongOrWeak}  in \ref{inst:StronOrWeak} of the algorithm will evaluate to `InputStrong', and hence the algorithm will proceed with the loop in \ref{inst:NonHaltRandom}. We therefore proceed with an analysis of this loop.

Next, let $F_n$ be the event that the subroutine \textit{Guess} is executed in the $n$-th iteration of the loop and note that, on the event $\left(\bigcup_{r=1}^{n} F_r\right)^c$, the value of $d$ after $n$ iterations satisfies $d \leq \alpha + 2^{-n} - (1/2 - 2^{-n}) < 2 \cdot 2^{-n}$ (since $\alpha <1/2$) and hence \ref{inst:ExitAlphaGBetaRandom} never results in the termination of the algorithm. By contrast, $d \leq \alpha + 2^{-n} - 1/2 +2^{-n}$ and since $\alpha$ is independent of $n$ and strictly smaller than $1/2$ this expression will be smaller than $-2 \cdot 2^{-n}$ for sufficiently large $n$. Now, define
\begin{equation*}
n_0=\inf\{n\in\mathbb{N}\, \vert \, F_n \text{ does not occur and } d < -2 \cdot 2^{-n}\}.
\end{equation*}
Note that the value of $n_0$ depends on $\tilde\iota$ through $d$.

Then $\probab(F_{n}) = 0$, for $n > n_0$, whereas for $n \leq n_0$, $\probab(F_n)$ is equal to the probability that the algorithm has executed $n-1$ iterations of the loop without terminating and the execution of $\textit{BiasedCoinFlip}(2^{n-1}+2)$ returns `true'. 
Note that these two events are independent since the result of $\textit{BiasedCoinFlip}(2^{n-1}+2)$ is independent of all prior calls to the subroutine $\textit{BiasedCoinFlip}$. Thus 
\[
\probab(F_n) = \left[1-\probab\big(\bigcup_{r=1}^{n-1} F_r\big)\right] (2^{n-1}+2)^{-1}, \qquad n \leq n_0
\] 
and, since the events $F_r$, for $r=1,\dots,n-1$, are disjoint, we obtain the recurrence 
\[
\probab(F_n) = \left[1 - \sum_{r=1}^{n-1} \probab(F_r)\right](2^{n-1}+2)^{-1}, \qquad n \leq n_0
\]
 and $\probab(F_n) = 0$ for $n > n_0$. Using strong induction one can show that this implies that 
 \[
 \probab(F_n) = 3^{-1} \cdot 2^{-(n-1)}, \,\, n \leq n_0 \, \text{  and  }\,  \probab(F_n) = 0, \,\, n > n_0.
 \]

We have argued that $n_0$ is finite -- this implies that the algorithm halts with probability $1$. Because the algorithm is correct if it outputs an $x$ within $10^{-K}$ of $4 \cdot 10^{-K}e_2$ for LP and $\ell^1$ problems or $4\cdot 10^{-K}e_2 + (\tau - 4\cdot 10^{-K})e_3$ for the constrained lasso case, as well as $ \textit{OutputEta}^{\orvec,\ormat}(m,N,k_0=K, k_\epsilon=K)$ in the basis pursuit with TV case or $  \textit{OutputPsi}^{\orvec,\ormat}(m,N,k_0=K, k_\epsilon=K)$ for the unconstrained lasso with TV case (and we have already argued that \textit{\textit{Weak}} is never executed) we conclude that the only possible incorrect outputs are an $x$ within $10^{-K}$ of $4 \cdot 10^{-K}e_1$ (in the linear programming or $\ell^1$ cases) or $4\cdot 10^{-K}e_1 + (\tau - 4\cdot 10^{-K})e_3$ (in the constrained lasso case), or 
\[
x =\flipOp \textit{OutputEta}^{\orvec,\ormat}(m,N,k_0=K, k_\epsilon=K)
\] (in the basis pursuit TV case), respectively $x = \flipOp \textit{OutputPsi}^{\orvec,\ormat}(m,N,k_0=K, k_\epsilon=K)$ (in the unconstrained lasso TV case). Each of these can occur only if the subroutine \textit{Guess} returns `true'. Since \textit{Guess} returns `true' with probability $1/2$, an incorrect output occurs with probability $\sum_{n=1}^{\infty} \probab(F_n)/2 \leq  \sum_{n=1}^{\infty} 3^{-1} \cdot 2^{-n} = 1/3$. Thus, with probability at least $1-1/3=2/3$, the algorithm produces a correct output.

{\bf Case 2:} For this case, $\iota$ is of the following form: $\big(y^{\mathrm{LP},\mathrm{s}} ,\matLP(1/2,1/2,m,N)\big)$ for linear programming, $\big(y^{\mathrm{BP},\ell^1,\mathrm{s}} ,\matl(1/2,1/2,m,N)\big)$ for basis pursuit denoising and $\big(y^{\mathrm{UL},\ell^1,\mathrm{s}},\matl(1/2,1/2,m,N)\big)$ for unconstrained lasso,  $\big(y^{\mathrm{CL},\mathrm{s}},\matl(1/2,1/2,m,N)\big)$ for constrained lasso, $\big(y^{\mathrm{BP},\mathrm{TV},\mathrm{s}},\matTV(1/2,1/2,m,N)\big)$ for basis pursuit denoising with TV regularisation and $\big(y^{\mathrm{UL},\mathrm{TV},\mathrm{s}},\matTV(1/2,1/2,m,N)\big)$ for unconstrained lasso with TV regularisation. 

As in the proof of Case 1, and again by Lemma \ref{lemma:IdentifyStrongOrWeakCorrect}, the algorithm proceeds to execute the loop in \ref{inst:NonHaltRandom}. This time, however, the algorithm never terminates at either \ref{inst:ExitAlphaGBetaRandom} or \ref{inst:ExitAlphaLBetaRandom} -- indeed, in the $n$-th iteration we have $d \leq 1/2 + 2^{-n} - (1/2 - 2^{-n}) = 2 \cdot 2^{-n}$ and similarly $d \geq 1/2 - 2^{-n} -(1/2 + 2^{-n}) = - 2 \cdot 2^{-n}$. 
Thus the algorithm only terminates in the $n$-th iteration if the subroutine \textit{Guess} is executed. As in the proof for Case 1, we let $F_n$ be the event that the subroutine \textit{Guess} is executed in the $n$-th iteration. The same argument as before shows that $\probab(F_n) = 3^{-1}\cdot 2^{-(n-1)}$ (and this occurs for all $n$ since \ref{inst:ExitAlphaGBetaRandom} and \ref{inst:ExitAlphaLBetaRandom} never result in the termination of the algorithm). Hence the probability the algorithm terminates with the execution of the subroutine \textit{Guess} is given by 
\[
\probab\big(\bigcup_{r=1}^{\infty}F_r\big) = \sum_{r=1}^{\infty} \probab(F_r) = \sum_{r=1}^{\infty} 3^{-1}\cdot 2^{-n+1} = 2/3.
\] 

Note that, whenever \textit{Guess} is executed by the algorithm, then, for each of linear programming, basis pursuit with $\ell^1$ regularisation and unconstrained lasso with $\ell^1$ regularisation, the output of the algorithm is an $x\in\dyadic^N$ within $10^{-K}$ of either $ 4 \cdot 10^{-K}e_1$ or $ 4\cdot 10^{-K} e_2$, which, by Lemma \ref{lemma:ProblemBasicExampleLP}, Lemma \ref{lemma:ProblemBasicExampleBPDNL1}, or Lemma \ref{lemma:ProblemBasicExampleULASSO}, satisfies 
$
\disM(x,\tilde\Xi(\tilde\iota)) = \disM(x,\Xi(\iota)) \leq 10^{-K}.
$
 Similarly, for constrained lasso the output of the algorithm is an $x\in\dyadic^{-K}$ within $10^{-K}$ of either $4\cdot 10^{-K}e_1 + (\tau - 4\cdot 10^{-K})e_3$ or $4\cdot 10^{-K}e_2 + (\tau - 4\cdot 10^{-K})e_3$ and this time Lemma \ref{lemma:ProblemBasicExampleCLASSO} implies that $\disM(x,\Xi(\iota)) \leq 10^{-K}$.
The situation is only marginally more complicated for basis pursuit with TV regularisation -- indeed, Lemma \ref{lemma:BPTVSolutions} shows that $\eta(y_1,1/2,1/2),\flipOp \eta(y_1,1/2,1/2) \in \Xi(\iota)$ and thus, writing $x$ for the output of \textit{Guess}, we have by Lemma \ref{lemma:OutputEtaCorrect} that $\disM(x,\tilde\Xi(\tilde\iota)) \leq 10^{-K}$. Finally, for unconstrained lasso with TV regularisation Lemma \ref{lemma:ULTVSolutions} shows that 
$
\psi(y_1,1/2,1/2),\flipOp \psi(y_1,1/2,1/2) \in \Xi(\iota)
$
 and thus, writing $x$ for the output of \textit{Guess}, Lemma \ref{lemma:OutputPsiCorrect} implies that $\disM(x,\tilde\Xi(\tilde\iota)) \leq 10^{-K}$.
We conclude that, with probability $2/3$, the algorithm produces a correct output.

{\bf Case 3:} For this case, $\iota$ is of the following form: $\big(y^{\mathrm{LP},\mathrm{s}} ,\matLP(1/2,\beta,m,N)\big)$ for linear programming, $\big(y^{\mathrm{BP},\ell^1,\mathrm{s}} ,\matl(1/2,\beta,m,N)\big)$ for basis pursuit denoising with $\ell^1$ regularisation, $\big(y^{\mathrm{UL},\ell^1,\mathrm{s}},\matl(1/2,\beta,m,\allowbreak N)\big)$ for unconstrained lasso, $\big(y^{\mathrm{CL},\mathrm{s}},\matl(1/2,\beta,m,N)\big)$ for constrained lasso, $\big(y^{\mathrm{BP},\mathrm{TV},\mathrm{s}},\matTV(1/2,\beta,m,\allowbreak N)\big)$ for basis pursuit denoising with TV regularisation and $\big(y^{\mathrm{UL},\mathrm{TV},\mathrm{s}},\matTV(1/2,\beta,m,N)\big)$ for unconstrained lasso with TV regularisation, where $1/4 < \beta < 1/2$ in the $\ell^1$ case, $r_n < \beta < 1/2$ in the basis pursuit with TV case and $s_n < \beta < 1/2$ in the unconstrained lasso with TV case. 

In this case, the argument for correctness proceeds as in Case $1$, with the only exceptions being that now \ref{inst:ExitAlphaLBetaRandom} never results in the termination of the algorithm (as opposed to Case 1 where \ref{inst:ExitAlphaGBetaRandom} never results in the termination of the algorithm) and that now the correct solution is $ 4\cdot 10^{-K} e_1$ for linear programming and basis pursuit with $\ell^1$ regularisation, $ 4\cdot 10^{-K}e_1 + (\tau - 4\cdot 10^{-K})e_3$ for constrained lasso, $\flipOp \eta(y_1,1/2,\beta)$ for basis pursuit with TV, or $ \flipOp \psi(y_1,1/2,\beta)$ for unconstrained lasso with TV. Similarly to the argument for Case 1, the algorithm will only provide the wrong answer or fail to halt if  \textit{Guess} is called and outputs an $x\in\dyadic^{N}$ within $10^{-K}$ of $4\cdot 10^{-K} e_2$ for linear programming and basis pursuit with $\ell^1$ regularisation, $4\cdot 10^{-K}e_2 + (\tau - 4\cdot 10^{-K})e_3$ for constrained lasso, $\eta(y_1,1/2,\beta)$ for basis pursuit with TV, or $\psi(y_1,1/2,\beta)$ for unconstrained lasso with TV. This occurs with probability bounded above by 
\[
\sum_{n=1}^{\infty} \probab(F_n)/2 = 1/3,
\]
 and thus the algorithm produces a correct output with probability at least $2/3$.

{\bf Case 4:} For this case, $\iota$ is of the following form: $\big(y^{\mathrm{LP},\mathrm{w}} ,\matLP(\alpha,\beta,m,N)\big)$ for linear programming, $\big(y^{\mathrm{BP},\ell^1,\mathrm{w}} ,\matl(\alpha,\beta,m,N)\big)$ for basis pursuit denoising with $\ell^1$ regularisation, $\big(y^{\mathrm{UL},\ell^1,\mathrm{w}},\matl(\alpha,\beta,m,N)\big)$ for unconstrained lasso, $\big(y^{\mathrm{CL},\mathrm{w}},\matl(\alpha,\beta,m,N)\big)$ for unconstrained lasso, $\big(y^{\mathrm{BP},\mathrm{TV},\mathrm{w}},\matTV(\alpha,\beta,m,N)\big)$ for basis pursuit denoising with TV regularisation and $\big(y^{\mathrm{UL},\mathrm{TV},\mathrm{w}},\matTV(\alpha,\beta,m,N)\big)$ for unconstrained lasso with TV regularisation, where $(\alpha,\beta) \in \albetSet$  in the $\ell^1$ case, $(\alpha,\beta) \in \albetSetBPTV{K-1}$  in the basis pursuit with TV case and $(\alpha,\beta) \in \albetSetULTV{K-1}$ in the unconstrained lasso with TV case. 

By the correctness of \textit{IdenifyStrongOrWeak} (Lemma \ref{lemma:IdentifyStrongOrWeakCorrect}), the algorithm proceeds immediately by executing $\textit{Weak}^{\orvec,\ormat}(m,N, k_\epsilon=K)$ . The correctness of \textit{Weak} (Lemma \ref{lemma:WeakCorrect}) then shows that the algorithm always outputs some $x$ with $\disM(x,\tilde\Xi(\tilde\iota)) \leq 10^{-K}$. Therefore, in this case the algorithm is correct with probability $1$.

These are the only possible cases for $\iota$, and  thus, with probability at least $2/3$, the algorithm halts  and outputs an $x$ such that $\disM(x,\tilde\Xi(\tilde\iota)) \leq 10^{-K}$ , as desired.

\section{Proof of Theorem \ref{Cor:main}: parts (iii) and  (iv) \label{sec:cor_main(iii)(iv)}}

Parts (iii) and (iv) of Theorem \ref{Cor:main} are formally stated in Proposition \ref{Cor:main_SCI_cont}, which we now prove. 
Recall that the breakdown epsilon bounds in Proposition \ref{Cor:main_SCI_cont} were already proved in \S \ref{sec:lambdaHatStrong-Cor:main_SCI}. Thus, all that remains to prove in part (iii) is the existence of a deterministic algorithm that achieves at least $10^{-(K-1)}$ accuracy on the problems  $\{\tilde\Xi,\tilde\Omega_{m,N},\mathcal{M}_N,\tilde\Lambda_{m,N}\}=\{\Xi,\Omega_{m,N},\mathcal{M}_N,\Lambda_{m,N} \}^{\Delta_1}$ for varying dimensions $m$ and $N$, where $\Omega_{m,N}$ is one of \eqref{eq:defs-5-Om-fixedDim} and $\Xi$ and $\Lambda_{m,N}$ are the corresponding solution map and set of evaluations. 

\subsection{Proof of part (iii) of Proposition  \ref{Cor:main_SCI_cont}}

As in the previous  two sections, we fix the notation for an element $\tilde\iota$ of $\tilde\Omega_{m,N}$, writing   $\tilde{\iota}=\big(\{y_j^{(n)}\}_{n=0}^{\infty}, \{\matLP_{j,k}^{(n)}\}_{n=0}^{\infty}\big)_{j,k}$, corresponding to an $\iota=(y,\matLP)\in \Omega_{m,N}$, in the linear programming case, and analogously for the $\ell^1$ and TV problems,  writing respectively $\matl$ and $\matTV$ instead of $\matLP$.
The desired algorithm makes use of the subroutines established in \ref{sec:UsefulSubroutinesForMainThm} and is specified for each of the computational problems as follows:

{\it Deterministic $K-1$ digit algorithm}

\indent Inputs: Dimensions  $m$, $N$.\\
\indent Oracles: $\orvec$ and $\ormat$ providing access to the components $y_j^{(n)}$ and $\matLP_{j,k}^{(n)}$ (respectively  $\matl_{j,k}^{(n)}$ or $\matTV_{j,k}^{(n)}$) of an input $\tilde{\iota}$.\\
\indent Output: A vector $x \in \dyadic ^N$ (in the Turing case) or $x \in \real ^N$ (in the BSS case) with $\disM(x,\tilde\Xi(\tilde\iota))\leq 10^{-K+1}$.

\begin{enumerate}[leftmargin=10mm, label= \arabic*.,ref = step \arabic*]
	\item We execute $\textit{IdentifyStrongOrWeak}^{\orvec,\ormat}(m,N)$. If this evaluates to `InputWeak', we execute the algorithm $\textit{Weak}^{\orvec,\ormat}(m,N,k_\epsilon=K-1)$  and terminate. Otherwise, we continue to \ref{inst:K-1DigitsAlgorithm}.
	\item \label{inst:K-1DigitsAlgorithm} For each of linear programming, basis pursuit with $\ell^1$ regularisation and unconstrained lasso with $\ell^1$ regularisation we output an $x\in\dyadic^{N}$ with $\|x-2\cdot 10^{-K}e_1 + 2\cdot 10^{-K}e_2\|_p\leq 10^{-K}$ and terminate. For constrained lasso we output an $x\in\dyadic^{N}$ with $\|x - 2\cdot 10^{-K}e_1 + 2\cdot 10^{-K}e_2 + (\tau-4\cdot 10^{-K})e_3\|_p\leq 10^{-K}$. For basis pursuit with TV regularisation we set $\eta^* = \textit{OutputEta}^{\orvec,\ormat}(m,N,k_0=K, k_\epsilon=K)$ and output $x = (\flipOp \eta^* + \eta^*)/2$ and terminate and finally for unconstrained lasso with TV regularisation we set $\psi^* = \textit{OutputPsi}^{\orvec,\ormat}(m,N,\allowbreak k_0=K, k_\epsilon=K)$ and output $x = (\flipOp \psi^* + \psi^*)/2$.
\end{enumerate}

Unlike the algorithm described in the proof of part (ii) of  Proposition \ref{Cor:main_SCI}, it is clear that this algorithm always terminates. Hence it suffices to show that if $x$ is the output of the algorithm then $\disM(x,\tilde\Xi(\tilde\iota)) \leq 10^{-K+1}$. We do this by separately considering the  following  two cases for $\iota\in \Omega$ that $\tilde\iota$ corresponds to.

{\bf Case 1:} For this case, $\iota$ is of the following form: $\big(y^{\mathrm{LP},\mathrm{s}} ,\matLP(\alpha,\beta,m,N)\big)$ for linear programming, \newline $\big(y^{\mathrm{BP},\ell^1,\mathrm{s}} ,\matl(\alpha,\beta,m,N)\big)$ for basis pursuit denoising with $\ell^1$ regularisation, $\big(y^{\mathrm{UL},\ell^1,\mathrm{s}},\matl(\alpha,\beta,m,N)\big)$ for unconstrained lasso, $\big(y^{\mathrm{CL},\mathrm{s}},\matl(\alpha,1/2,m,N)\big)$ for constrained lasso,  $\big(y^{\mathrm{BP},\mathrm{TV},\mathrm{s}},\matTV(\alpha,\beta,m,N)\big)$ for basis pursuit denoising with TV regularisation and $\big(y^{\mathrm{UL},\mathrm{TV},\mathrm{s}},\matTV(\alpha,\beta,m,N)\big)$ for unconstrained lasso with TV regularisation, where $(\alpha,\beta) \in \albetSet$  for the $\ell^1$ problems, $(\alpha,\beta) \in \albetSetBPTV{K}$  for basis pursuit with TV, and $(\alpha,\beta) \in \albetSetULTV{K}$ for unconstrained lasso with TV. By the correctness of \textit{IdentifyStrongOrWeak} (Lemma \ref{lemma:IdentifyStrongOrWeakCorrect}), in this case the algorithm will execute \ref{inst:K-1DigitsAlgorithm}. 

By Lemma \ref{lemma:ProblemBasicExampleLP} (respectively Lemma \ref{lemma:ProblemBasicExampleBPDNL1} or Lemma \ref{lemma:ProblemBasicExampleULASSO}) we have 
$
\Xi(\iota) \cap \{4\cdot 10^{-K} e_1, 4\cdot 10^{-K} e_2\}\neq \varnothing
$
 for linear programming (respectively, basis pursuit with $\ell^1$ regularisation or unconstrained lasso with $\ell^1$ regularisation). Thus for any of linear programming, basis pursuit with $\ell^1$ regularisation or unconstrained lasso with $\ell^1$ regularisation we conclude that 
\begin{equation*}
\begin{split}
\disM(x,\Xi(\iota)) &\leq \max \{ d_{\mathcal{M}}(x, 4\cdot 10^{-K}e_1), d_{\mathcal{M}}(x,4\cdot 10^{-K}e_2)\} \\
&\leq \|2\cdot 10^{-K} e_1 +2 \cdot  10^{-K} e_2\|_p + 2\cdot 10^{-K} = (2+ 2^{1+1/p} )10^{-K} \leq 10^{-K+1}.
\end{split}
\end{equation*}
 The same argument but this time with Lemma \ref{lemma:ProblemBasicExampleCLASSO} and the addition of $(\tau - 4\cdot 10^{-K})e_3$ where appropriate shows that $\disM(x,\Xi(\iota)) \leq 10^{-K+1}$ for constrained lasso.

Similarly, Lemma \ref{lemma:BPTVSolutions} shows that $\Xi(\iota) \cap \{\eta(y_1,\alpha,\beta),  \flipOp \eta(y_1,\alpha,\beta)\} \neq \varnothing$ for basis pursuit with TV regularisation. Thus if $x$ is the output of the algorithm and $\eta = \eta(y_1,\alpha,\beta)$ we have 
\begin{align*}
\disM(x,\Xi(\iota)) &\leq \max \{ d_{\mathcal{M}}(x,\eta), d_{\mathcal{M}}(x,\flipOp \eta)\} \\
&\leq \Bigg\|\frac{\eta^* + \flipOp \eta^*}{2} - \frac{\eta - \flipOp \eta}{2}\Bigg\|_p + \max\Bigg\{ \Bigg \|\frac{\eta + \flipOp \eta}{2} - \eta\Bigg\|_p ,  \Bigg\|\frac{\eta + \flipOp \eta}{2} - \flipOp \eta\Bigg\|_p\Bigg\} \\
&\leq \|\eta^* - \eta\|_p +  \frac{1}{2} \|\eta - \flipOp \eta\|_p=  \|\eta^* - \eta\|_p +  \frac{1}{2} \|(\eta_1 - \eta_N)(e_1 - e_N)\|_p.
\end{align*}
By Lemma \ref{lemma:OutputEtaCorrect} we obtain $\|\eta^* - \eta\|_p \leq 10^{-K}  < 10^{-K+1}/6$. Moreover, $\|(\eta_1 - \eta_N)(e_1 - e_N)\|_p  = 2^{1/p} |\eta_1 - \eta_N|$ and by applying Lemma \ref{lemma:BPTVEtaBounds} (equation \ref{eq:BPTVIneqBDEpsUpper}) with $n = K$ we obtain 
\[
|\eta_1 - \eta_N| \leq \allowbreak \max_{(\alpha,\beta) \in \albetSetBPTV{K}} |\eta(y_1,\alpha,\beta) - \eta(y_1,\alpha,\beta)_N| \leq 5\cdot 10^{-K+1} 2^{-1/p}/6. 
\]
We conclude that $\disM(x,\Xi(\iota))\leq 10^{-K+1}/6 + 5\cdot 10^{-K+1}/6 = 10^{-K+1}$.

Finally,  for unconstrained lasso with TV regularisation, applying Lemma \ref{lemma:ULTVSolutions} shows that 
\[
\Xi(\iota) \cap \{\psi(y_1,\alpha,\beta),  \flipOp \psi(y_1,\alpha,\beta)\} \neq \varnothing.
\] Therefore if $x$ is the output of the algorithm and $\psi = \psi(y_1,\alpha,\beta)$ then an  argument similar to the above establishes  
\begin{align*}
\disM(x,\Xi(\iota)) &\leq \max \{ d_{\mathcal{M}}(x,\psi), d_{\mathcal{M}}(x,\flipOp \psi)\} \leq  \|\psi^* - \psi\|_p +  \frac{1}{2}\|(\psi_1 - \psi_N)(e_1 - e_N)\|_p.
\end{align*}
This time, by Lemma \ref{lemma:OutputPsiCorrect} we obtain $\|\psi^* - \psi\|_p \leq 10^{-K}<  10^{-K+1}/6$. Moreover, $\|(\psi_1 - \psi_N)(e_1 - e_N)\|_p  = 2^{1/p} |\psi_1 - \psi_N|$ and by Lemma \ref{lemma:ULTVPsiBounds} (equation \ref{eq:ULTVIneqBDEpsUpper}) with $n = K$ we obtain $|\psi_1 - \psi_N| \leq \max_{(\alpha,\beta) \in \albetSetBPTV{K}} |\psi(y_1,\alpha,\beta) - \psi(y_1,\alpha,\beta)_N| \leq 5\cdot10^{-K+1} 2^{-1/p}/6 $. We therefore conclude that 
\[
\disM(x,\Xi(\iota))) \leq 5\cdot10^{-K+1}/6 + 10^{-K+1}/6  = 10^{-K+1}.
\] 

We have thus established that for any of linear programming, basis pursuit or unconstrained lasso (with either $\ell^1$ or with TV regularisation), the output $x$ satisfies $\disM(x,\tilde\Xi(\tilde\iota)) =\disM(x,\Xi(\iota)) \leq 10^{-K+1}$.

{\bf Case 2:} For this case,  $\iota$ is of the following form: $\big(y^{\mathrm{LP},\mathrm{w}} ,\matLP(\alpha,\beta,m,N)\big)$ for linear programming, $\big(y^{\mathrm{BP},\ell^1,\mathrm{w}} ,\matl(\alpha,\beta,m,N)\big)$ for basis pursuit denoising with $\ell^1$ regularisation, $\big(y^{\mathrm{UL},\ell^1,\mathrm{w}},\matl(\alpha,\beta,m,N)\big)$ for unconstrained lasso, $\big(y^{\mathrm{CL},\mathrm{w}},\matl(\alpha,\beta,m,N)\big)$ for constrained lasso, $\big(y^{\mathrm{BP},\mathrm{TV},\mathrm{w}},\matTV(\alpha,\beta,m,N)\big)$ for basis pursuit denoising with TV regularisation and $\big(y^{\mathrm{UL},\mathrm{TV},\mathrm{w}},\matTV(\alpha,\beta,m,N)\big)$ for unconstrained lasso with TV regularisation, where $(\alpha,\beta) \in \albetSet$  for linear programming and the $\ell^1$ regularised problems, $(\alpha,\beta) \in \albetSetBPTV{K-1}$  for basis pursuit with TV, and $(\alpha,\beta) \in \albetSetULTV{K-1}$ for  unconstrained lasso with TV.

By the correctness of \textit{IdenifyStrongOrWeak} (Lemma \ref{lemma:IdentifyStrongOrWeakCorrect}), the algorithm proceeds immediately by executing $\textit{Weak}^{\orvec,\ormat}(m,N,k_\epsilon=K-1)$. The correctness of \textit{Weak} (Lemma \ref{lemma:WeakCorrect}) then shows that the algorithm always outputs some $x$ with $\disM(x,\tilde\Xi(\tilde\iota)) =\disM(x,\Xi(\iota)) \leq 10^{-K+1}$. 

These are the only possible cases for each of the inputs, and we can thus conclude that the algorithm halts and outputs an $x\in\real^N$ such that $\disM(x,\tilde\Xi(\tilde\iota)) \leq 10^{-K+1}$, completing the proof of Proposition \ref{Cor:main_SCI_cont} part (iii).

\subsection{Proof of part (iv) of Proposition  \ref{Cor:main_SCI_cont}}
The desired algorithm that achieves $10^{-(K-2)}$ accuracy and has polynomial Turing  arithmetic and BSS runtime, as well as Turing space complexity, and requires at most a polynomial in $\log(n_\mathrm{var})$ digits form the oracles, where $n_\mathrm{var}=mN+m$ is the number of variables, is specified as follows:

{\bf Polynomial time $K-2$ digit algorithm}

\indent Inputs: Dimensions  $m$, $N$.\\
\indent Oracles: $\orvec$ and $\ormat$ providing access to the components $y_j^{(n)}$ and $\matLP_{j,k}^{(n)}$ (respectively  $\matl_{j,k}^{(n)}$ or $\matTV_{j,k}^{(n)}$) of an input $\tilde{\iota}$.\\
\indent Output: A vector $x\in\dyadic^N$ (in the Turing case) or $x \in \real^N$ (in the BSS case) with $\disM(x,\tilde\Xi(\tilde\iota))\leq 10^{-K+2}$.

\begin{enumerate}[leftmargin=10mm, label= \arabic*.,ref = step \arabic*]
	\item We execute $\textit{IdentifyStrongOrWeak}^{\orvec,\ormat}(m,N)$ and branch depending on the result: 
	\begin{enumerate}[label = \alph*., ref = step 1\alph*)]
	\item \label{inst:K-2DigitsAlgorithmWeak}If the output of \textit{IdentifyStrongOrWeak} was `InputWeak' then we output depending on the problem at hand. For each of linear programming, basis pursuit with $\ell^1$ regularisation and unconstrained lasso with $\ell^1$ regularisation we output an $x\in\dyadic^N$ with $\|x-2\cdot 10^{-K+1}e_1 + 2\cdot 10^{-K+1}e_2\|_p\leq 10^{-(K-1)}$ and terminate. For constrained lasso we output an $x\in\dyadic^N$ with $\|x - 2\cdot 10^{-K+1}e_1 + 2\cdot 10^{-K+1}e_2 + (\tau - 4\cdot 10^{-K+1})e_3\|_p\leq 10^{-(K-1)}$ and terminate. For basis pursuit with TV regularisation we set $\eta^* = \textit{OutputEta}^{\orvec,\ormat}(m,N,k_0=K-1, k_\epsilon=K-1 )$ and output $x = (\flipOp \eta^* + \eta^*)/2$ and terminate and finally for unconstrained lasso with TV regularisation we set $\psi^* = \textit{OutputPsi}^{\orvec,\ormat}(m,N,k_0=K-1, k_\epsilon=K-1)$ and output $x = (\flipOp \psi^* + \psi^*)/2$ and   terminate.
	\item \label{inst:K-2DigitsAlgorithmStr} If the output of \textit{IdentifyStrongOrWeak} was `InputStrong' then we output depending on the problem at hand. For each of linear programming, basis pursuit with $\ell^1$ regularisation and unconstrained lasso with $\ell^1$ regularisation we output an $x\in\dyadic^N$ with $\| x- 2\cdot 10^{-K}e_1 + 2\cdot 10^{-K}e_2\|_p\leq 10^{-(K-1)}$ and terminate. For constrained lasso we output an $x\in\dyadic^N$ with $\| x-  2\cdot 10^{-K}e_1 + 2\cdot 10^{-K}e_2 + (\tau - 4\cdot 10^{-K})e_3\|_p\leq 10^{-(K-1)}$ and terminate. For basis pursuit with TV regularisation we set $\eta^* = \textit{OutputEta}^{\orvec,\ormat}(m,N,k_0=K, k_\epsilon=K-1)$ and output $x = (\flipOp \eta^* + \eta^*)/2$ and terminate and finally for unconstrained lasso with TV regularisation we set $\psi^* = \textit{OutputPsi}^{\orvec,\ormat}(m,N,k_0=K, k_\epsilon=K-1)$ and output $x = (\flipOp \psi^* + \psi^*)/2$ and terminate.
	\end{enumerate}
\end{enumerate}

We show that the algorithm outputs a vector $x$ with $\disM(x,\tilde\Xi(\tilde\iota)) \leq 10^{-K+2}$ by analysing the following two cases for $\iota\in\Omega$ that $\tilde\iota$ corresponds to:

{\bf Case 1:} For this case, $\iota$ is of the following form: $\big(y^{\mathrm{LP},\mathrm{s}} ,\matLP(\alpha,\beta,m,N)\big)$ for linear programming, \newline $\big(y^{\mathrm{BP},\ell^1,\mathrm{s}} ,\matl(\alpha,\beta,m,N)\big)$ for basis pursuit denoising with $\ell^1$ regularisation, $\big(y^{\mathrm{UL},\ell^1,\mathrm{s}},\matl(\alpha,\beta,m,N)\big)$ for unconstrained lasso,  $\big(y^{\mathrm{CL},\mathrm{s}},\matl(\alpha,\beta,m,N)\big)$ for constrained lasso, $\big(y^{\mathrm{BP},\mathrm{TV},\mathrm{s}},\matTV(\alpha,\beta,m,N)\big)$ for basis pursuit denoising with TV regularisation and $\big(y^{\mathrm{UL},\mathrm{TV},\mathrm{s}},\matTV(\alpha,\beta,m,N)\big)$ for unconstrained lasso with TV regularisation, where $(\alpha,\beta) \in \albetSet$  in the $\ell^1$ case, $(\alpha,\beta) \in \albetSetBPTV{K}$  in the basis pursuit with TV case and $(\alpha,\beta) \in \albetSetULTV{K}$ in the unconstrained lasso with TV case. By the correctness of \textit{IdentifyStrongOrWeak} (Lemma \ref{lemma:IdentifyStrongOrWeakCorrect}), in this case the algorithm will execute \ref{inst:K-2DigitsAlgorithmStr}. 

We have already shown in the proof of part (iii) that this will result in an output $x$ with $\disM(x,\tilde\Xi(\tilde\iota))\allowbreak \leq 10^{-K+1}$. It thus remains to analyse the complexity of the algorithm. To this end, first note that Lemmas \ref{lemma:OutputEtaCorrect}, \ref{lemma:OutputPsiCorrect}, and \ref{lemma:IdentifyStrongOrWeakCorrect} imply that \textit{OutputEta}, \textit{OutputPsi}, and \textit{IdentifyStrongOrWeak} request at most a polynomial in $\log(n_{\mathrm{var}})$ digits from the oracles, and therefore so does the 
 algorithm overall.
Next, it suffices to show that the Turing and BSS runtime of the algorithm is polynomial in $n_{\mathrm{var}}$, as this will then imply the desired polynomial bound on the Turing arithmetic runtime and the Turing space complexity as well. Note that Lemma \ref{lemma:IdentifyStrongOrWeakCorrect} implies that the Turing and BSS runtime of executing \textit{IdentifyStrongOrWeak} is bounded by a polynomial of $\log(m)$, whereas Lemmas \ref{lemma:OutputEtaCorrect} and \ref{lemma:OutputPsiCorrect} imply that the Turing and BSS runtimes of executing \textit{OutputEta} and \textit{OutputPsi} are bounded by a polynomial of $K$ and $\log(N)$. This, in particular, means that $\length(\eta^*)$ and $\length(\psi^*)$, and hence the Turing runtime of computing $x$, are bounded by a polynomial of $\log(N)$. As $K$ is fixed, we deduce that the the overall Turing and BSS runtime of the algorithm is bounded by a polynomial of $\log(m)$ and $\log(N)$, which is itself bounded by a polynomial of $n_{\mathrm{var}}$, as desired.

{\bf Case 2:} For this case, $\iota$ is of the following form: $\big(y^{\mathrm{LP},\mathrm{w}} ,\matLP(\alpha,\beta,m,N)\big)$ for linear programming, $\big(y^{\mathrm{BP},\ell^1,\mathrm{w}} ,\matl(\alpha,\beta,m,N)\big)$ for basis pursuit denoising with $\ell^1$ regularisation, $\big(y^{\mathrm{UL},\ell^1,\mathrm{w}},\matl(\alpha,\beta,m,N)\big)$ for unconstrained lasso, $\big(y^{\mathrm{CL},\mathrm{w}},\matl(\alpha,\beta,m,N)\big)$ for unconstrained lasso, $\big(y^{\mathrm{BP},\mathrm{TV},\mathrm{w}},\matTV(\alpha,\beta,m,N)\big)$ for basis pursuit denoising with TV regularisation and $\big(y^{\mathrm{UL},\mathrm{TV},\mathrm{w}},\matTV(\alpha,\beta,m,N)\big)$ for unconstrained lasso with TV regularisation, where $(\alpha,\beta) \in \albetSet$  in the $\ell^1$ case, $(\alpha,\beta) \in \albetSetBPTV{K-1}$  in the basis pursuit with TV case and $(\alpha,\beta) \in \albetSetULTV{K-1}$ in the unconstrained lasso with TV case. In this case, the correctness of \textit{IdentifyStrongOrWeak} (Lemma \ref{lemma:IdentifyStrongOrWeakCorrect}) implies that the algorithm will execute \ref{inst:K-2DigitsAlgorithmWeak}. Our analysis is very similar to that of part (iii) Case 1.

By Lemma \ref{lemma:ProblemBasicExampleLP} (respectively Lemma \ref{lemma:ProblemBasicExampleBPDNL1} or Lemma \ref{lemma:ProblemBasicExampleULASSO}) we have 
$
\Xi(\iota) \cap \{4\cdot 10^{-K+1} e_1, 4\cdot 10^{-K+1} e_2\}\neq \varnothing
$
 for linear programming (respectively, basis pursuit with $\ell^1$ regularisation or unconstrained lasso with $\ell^1$ regularisation). Thus for any of linear programming, basis pursuit with $\ell^1$ regularisation or unconstrained lasso with $\ell^1$ regularisation we conclude that 
\begin{equation*}
\begin{split}
\disM(x,\Xi(\iota)) &\leq \max \{ d_{\mathcal{M}}(x, 4\cdot 10^{-K+1}e_1), d_{\mathcal{M}}(x,4\cdot 10^{-K+1}e_2)\} \\
& \leq \|2\cdot 10^{-K+1} e_1 + 2\cdot 10^{-K+1} e_2\|_p + 2\cdot 10^{-(K-1)} =(2+ 2^{1+1/p} )10^{-K+1} \leq 10^{-K+2}.
\end{split}
\end{equation*}
The argument for constrained lasso is identical but now we add $(\tau -4\cdot 10^{-K+1})e_3$ to both the true solution and the output value of the algorithm. Applying Lemma \ref{lemma:ProblemBasicExampleCLASSO} allows us to conclude that $\disM(x,\Xi(\iota)) \leq 10^{-K+2}$.
Similarly, Lemma \ref{lemma:BPTVSolutions} shows that $\Xi(\iota) \cap \{\eta(y_1,\alpha,\beta),  \flipOp \eta(y_1,\alpha,\beta)\} \neq \varnothing$ for basis pursuit with TV regularisation. Thus, writing $\eta = \eta(y_1,\alpha,\beta)$, we obtain as in the proof of part (iii), case 1 that
$\disM(x,\Xi(\iota)) \leq \|\eta^* - \eta\|_p + |\eta_1 - \eta_N|2^{1/p}/2$.
Next, by Lemma \ref{lemma:OutputEtaCorrect} we obtain $\|\eta^* - \eta\|_p \leq 10^{-(K-1)}< 10^{-K+2}/6$. Moreover, Lemma \ref{lemma:BPTVEtaBounds} (equation \ref{eq:BPTVIneqBDEpsUpper}) with $n = K-1$ gives 
\[
|\eta_1 - \eta_N| \leq \max_{(\alpha,\beta) \in \albetSetBPTV{K-1}} |\eta(y_1,\alpha,\beta) - \eta(y_1,\alpha,\beta)_N| \leq 5\cdot 10^{-K+2} 2^{-1/p}/6. 
\]
We conclude that $\disM(x,\Xi(\iota)) \leq 5\cdot 10^{-K+2}/6 + 10^{-K+2}/6 = 10^{-K+2}$.

Finally,  for unconstrained lasso with TV regularisation Lemma \ref{lemma:ULTVSolutions} shows that $\Xi(\iota) \cap \{\psi(y_1,\alpha,\beta), \allowbreak \flipOp \psi(y_1,\alpha,\beta)\} \neq \varnothing$. Therefore, writing $\psi = \psi(y_1,\alpha,\beta)$, we obtain as before that $\disM(x,\Xi(\iota)) \leq  \|\psi^* - \psi\|_p +  |\psi_1 - \psi_N|2^{1/p}/2$
Thus applying Lemma \ref{lemma:ULTVPsiBounds} (equation \ref{eq:ULTVIneqBDEpsUpper}) with $n = K-1$ gives 
\[
|\psi_1 - \psi_N| \leq \max_{(\alpha,\beta) \in \albetSetBPTV{K-1}} |\psi(y_1,\alpha,\beta) - \psi(y_1,\alpha,\beta)_N| \leq 5\cdot 10^{-K+2} 2^{-1/p}/6.
\] 
We conclude that $\disM(x,\Xi(\iota)) \leq 5\cdot10^{-K+2}/6 + 10^{-K+2}/6 = 10^{-K+2}$.
Therefore, in all cases we have $\disM(x,\tilde\Xi(\tilde\iota))=\disM(x,\Xi(\iota))\leq 10^{-(K-2)}$. Finally, the complexity analysis is entirely analogous to the one presented in Case 1.
As these were the only possible cases for $\iota$, the proof of part (iv) of Proposition \ref{Cor:main_SCI_cont}  is complete.

\section{Proof of Theorem \ref{thm:ExitFlag}}

Theorem \ref{thm:ExitFlag} is formally stated in Proposition \ref{prop:ExitFlag}, which we now prove. 

\subsection{Constructing the sets of inputs} 
The constructions here will be similar to those in the proof of Theorem \ref{Cor:main}. As before, each computational problem will require a separate input set.  We will denote the input sets for LP, $\ell^1$ BP, $\ell^1$ UL, CL, $\mathrm{TV}$ BP and $\mathrm{TV}$ UL
by 
\newcommand{\EFAlgorithmRange}{\omega}
\begin{equation}\label{eq:EFInputSetLabels}
\omlpef, \ombplef, \omullef,\omcllef, \ombptvef, \omultvef
\end{equation}
respectively. These input sets may depend on $K$, $\EFAlgorithmRange$, $\omega$, the dimensions $m$, $N$, and any relevant regularisation parameters. As in the proof of Theorem \ref{Cor:main}, we will omit this dependency in order to avoid cluttered notation. 

For the $\ell^1$ problems, there is no dependence on $\omega$ or $\EFAlgorithmRange$ and we simply set
\[\omlpef = \omlps,\,
\ombplef= \ombpls,\,
\omullef = \omulls,\,
\omcllef = \omclls 
\]
where the sets $\omlps,\ombpls,\omulls$ and $\omclls$ are defined as in \S\ref{sec:constr-Cor:main-sets}.

The situation is slightly more complicated for the TV problems: starting with basis pursuit TV, we recall $\eta$ from \S\ref{sec:TV reg geometry} and $r_K$ as defined in \S\ref{sec:constr-Cor:main-sets}, and choose  $r_K' \in (r_K,1/2) $ so that if $\alpha \in [r_K',r_k]$ and $y_1\in[0,  3/2]$, then 
$\|\eta(y_1,\alpha,1/2) - \eta(y_1,1/2,1/2)\|_{p} \leq 10^{-K} - \EFAlgorithmRange$. The existence of $r_K'$ is guaranteed by the continuity of $\eta$ and the assumption that $10^{-K} - \EFAlgorithmRange>0$.

For unconstrained lasso TV we recall $\psi$ from \S\ref{sec:TV reg geometry} and $s_K$ as defined in \S\ref{sec:constr-Cor:main-sets}, and choose  $s_K' \in (s_K , 1/2) $ so that if $\alpha \in [r_K',r_k]$ and $y_1\in [0,  107/180]$  then 
$\|\psi(y_1,\alpha,1/2) - \psi(y_1,1/2,1/2)\|_{p} \leq 10^{-K} - \EFAlgorithmRange $.
Again, the existence of $s_K'$ is guaranteed by the continuity of $\psi$ and the assumption that $ 10^{-K} - \EFAlgorithmRange>0$.
We then set
\begin{align*}
\albetSetEFBPTV{K}&=\left([r_K',1/2]\times\{1/2\} \right)\cup\left(\{1/2\} \times [r_K',1/2] \right)\\
\albetSetEFULTV{K}&=\left([s_K',1/2]\times\{1/2\} \right)\cup\left(\{1/2\} \times [s_K',1/2] \right)
\end{align*}
and 
\begin{align*}
\ombptvef = \left\{ \left(y^{\mathrm{BP},\mathrm{TV},\mathrm{s}},\matTV(\alpha,\beta,m,N)\right)\, \vert \, (\alpha,\beta)\in \albetSetEFBPTV{K} \right \},  \\
\omultvef = \left\{ \left(y^{\mathrm{UL},\mathrm{TV},\mathrm{s}},\matTV(\alpha,\beta,m,N)\right)\, \vert \,  (\alpha,\beta)\in \albetSetEFULTV{K} \right \} 
\end{align*}
Note then that $\ombptvef \subseteq \ombptvs$ and $\omultvef \subseteq \omultvs$ so once again the statements about the condition and size of the inputs follows from Lemmas \ref{lemma:mainThmSizeEstimates}, \ref{lem:cond-Xi-various},  \ref{lem:cond-FP-various}, and \ref {lem:cond-mat-various}.

\subsection{Proof of Proposition \ref{prop:ExitFlag} part (i)} Using the setup above we can now prove part (i) of the proposition.

\begin{proof}[Proof of Proposition \ref{prop:ExitFlag} part (i) ]

Note that part (i) is an immediate consequence of part (ii) as discussed in Remark \ref{remark:EFPartiWeakerThanPartii}. Therefore we will focus on proving part (ii). To do this we will employ Proposition \ref{prop:EF}  with $\kappa=10^{-K}$. For each of the input sets $\Omega$ discussed above, we will construct an input $\iota_0 \in \Omega$, sequences of inputs $\{\iota^1_n\}_{n=1}^{\infty}$ and $\{\iota^2_n\}_{n=1}^{\infty}$ in $\Omega$, subsets $S^1$ and $S^2$ of $\real^N$ and vectors $x^1,x^2 \in \real^N$ satisfying requirements \ref{assumption:EFS1S2Distant} to \ref{assumption:EFLLPO} in Proposition \ref{prop:EF} with $\kappa = 10^{-K}$. To do so, we present an argument similar to the one used in Lemma \ref{lemma:StrongBDEpsilonBounds10K}, covering each computational problem separately.

{\bf Case $\{\xilp,\omlpef \} $}: We recall $\matLP$ from \eqref{eq:y_and_A_LP} and  $y^{\mathrm{LP},\mathrm{s}}$ from \eqref{eq:thm3.2_y_values_strong} and set
\begin{equation*}
\begin{aligned}
\iota^0_n &= (y^{\mathrm{LP},\mathrm{s}}, \matLP(1/2,1/2,m,N)), \\
\iota^1_n &= (y^{\mathrm{LP},\mathrm{s}}, \matLP(1/2,1/2-4^{-n},m,N)),\qquad   \iota^2_n= (y^{\mathrm{LP},\mathrm{s}}, \matLP(1/2 - 4^{-n},1/2,m,N)),\\
x^1 &= 4\cdot 10^{-K}e_1, \qquad x^2  = 4 \cdot 10^{-K}e_2, \qquad S^1= \{x^1\},\quad\text{and }\quad  S^2 = \{x^2\},
\end{aligned}
\end{equation*}
from which Proposition \ref{prop:EF} \ref{assumption:EFDel1Info} immediately holds. Arguing as in Lemma \ref{lemma:StrongBDEpsilonBounds10K} gives us Proposition \ref{prop:EF} \ref{assumption:EFS1S2Distant}. Lemma \ref{lemma:ProblemBasicExampleLP} gives Proposition \ref{prop:EF} \ref{assumption:EFXiIotajInSj}. It is then obvious that Proposition \ref{prop:EF} \ref{assumption:EFxjLimitOfXiIotajn} holds. Now, by Lemma \ref{lemma:ProblemBasicExampleLP}, we have that, for $\alpha\in[0, 1/2)$, $\xilp(y^{\mathrm{LP},\mathrm{s}},\matLP(\alpha,1/2,m,N)) = \{ x^2\} $ and similarly, for $\beta \in[0, 1/2) $, $\xilp(y^{\mathrm{LP},\mathrm{s}},\matLP(1/2,\beta,m,N)) = \{x^1\} $, and thus
\begin{equation}\label{eq:EFXiLPInX1X2}
\xilp(\omlpef) \subseteq \{x^1\} \cup \{x^2\} \cup \xilp(\iota^0).
\end{equation}
which implies \ref{prop:EF} \ref{assumption:EFXiOmegaInBallAroundXiIota0X1AndX2}. Finally, to show \ref{prop:EF} \ref{assumption:EFLLPO} we note that Lemma \ref{lemma:ProblemBasicExampleLP} implies $x^1,x^2 \in \Xi(\iota^0)$.

{\bf Case $\{\xibpdn, \ombplef\} $}: We recall  $y^{\mathrm{BP},\ell^1,\mathrm{s}}$ from \eqref{eq:thm3.2_y_values_strong} and set
\begin{equation*}
\begin{aligned}
\iota^0 & = (y^{\mathrm{BP},\ell^1,\mathrm{s}}, \matl(1/2,1/2,m,N)),\\
\iota^1_n & = (y^{\mathrm{BP},\ell^1,\mathrm{s}}, \matl(1/2,1/2-4^{-n},m,N)),\qquad \iota^2_n = (y^{\mathrm{BP},\ell^1,\mathrm{s}},\matl(1/2 - 4^{-n},1/2,m,N)),\\
x^1 &= 4\cdot 10^{-K}e_1,\qquad  x^2 = 4 \cdot 10^{-K}e_2,\qquad S^1 = \{x^1\}, \quad\text{and}\quad S^2 = \{x^2 \}. 
\end{aligned}
\end{equation*}
The remainder of the argument is identical to the LP case except we use Lemma \ref{lemma:ProblemBasicExampleBPDNL1} instead of Lemma \ref{lemma:ProblemBasicExampleLP} and obtain
\begin{equation}\label{eq:EFXiBPInX1X2}
\xibpdn(\ombplef) \subseteq \{x^1\} \cup \{x^2\} \cup \xibpdn(\iota^0)
\end{equation}
instead of \eqref{eq:EFXiLPInX1X2}.

{\bf Case $\{\xiul, \omullef\}$ }: We recall  $y^{\mathrm{UL},\ell^1,\mathrm{s}}$ from \eqref{eq:thm3.2_y_values_strong} and set
\begin{equation*}
\begin{aligned}
\iota^0 &= (y^{\mathrm{UL},\ell^1,\mathrm{s}}, \matl(1/2,1/2,m,N)),\\
\iota^1_n &= (y^{\mathrm{UL},\ell^1,\mathrm{s}}, \matl(1/2,1/2-4^{-n},m,N) ),\qquad \iota^2_n = (y^{\mathrm{UL},\ell^1,\mathrm{s}}, \matl(1/2 - 4^{-n},1/2,m,N)) ,\\
x^1 &= 4\cdot 10^{-K}e_1, \qquad x^2 = 4 \cdot 10^{-K}e_2, \qquad S^1 = \{x^1\},\quad\text{and}\quad S^2 = \{x^2\}.
\end{aligned}
\end{equation*}
The remainder of the argument is identical to the LP case except we use Lemma \ref{lemma:ProblemBasicExampleULASSO} instead of Lemma \ref{lemma:ProblemBasicExampleLP} and obtain
\begin{equation}\label{eq:EFXiULInX1X2}
\xiul(\omullef) \subseteq \{x^1\} \cup \{x^2\} \cup \xiul(\iota^0)
\end{equation}
instead of \eqref{eq:EFXiLPInX1X2}.

{\bf Case $\{ \xicl,\omcllef\}$}:  We recall  $y^{\mathrm{CL},\mathrm{s}}$ from \eqref{eq:thm3.2_y_values_strong} and set
\begin{equation*}
\begin{aligned}
\iota^0_n &= (y^{\mathrm{CL},\mathrm{s}}, \matl(1/2,1/2,m,N)),\\
\iota^1_n &= (y^{\mathrm{CL},\mathrm{s}}, \matl(1/2,1/2-4^{-n},m,N)),\qquad \iota^2_n = (y^{\mathrm{CL},\mathrm{s}},\matl(1/2 - 4^{-n},1/2,m,N) ),\\
x^1 &= 4\cdot 10^{-K}e_1 + (\tau - 4\cdot 10^{-K})e_3, \qquad x^2 = 4 \cdot 10^{-K}e_2+(\tau - 4\cdot 10^{-K})e_3,\\
S^1 &= \{x^1\},\quad\text{and}\quad S^2 = \{x^2\},
\end{aligned}
\end{equation*}
from which Proposition \ref{prop:EF} \ref{assumption:EFDel1Info} immediately holds. 
Arguing as in Lemma \ref{lemma:StrongBDEpsilonBounds10K} gives us Proposition \ref{prop:EF} \ref{assumption:EFS1S2Distant}. Lemma \ref{lemma:ProblemBasicExampleCLASSO} gives Proposition \ref{prop:EF} \ref{assumption:EFXiIotajInSj}. It is then obvious that Proposition \ref{prop:EF} \ref{assumption:EFxjLimitOfXiIotajn} holds. Next, by Lemma \ref{lemma:ProblemBasicExampleCLASSO}, we have that, for $ \alpha\in[0, 1/2)$, $\xicl(y^{\mathrm{CL},\mathrm{s}},\matl(\alpha,1/2,m,N)) = \{x^2\}$ and similarly, for $\beta\in[0, 1/2)$, $\xicl(y^{\mathrm{CL},\mathrm{s}},\matl(1/2,\beta,m,N)) = \{x^1\} $. We thus obtain
\begin{equation}\label{eq:EFXiCLInX1X2}
\xicl (\omlpef) \subseteq \{x^1\} \cup \{x^2\} \cup \xicl(\iota^0) 
\end{equation}
which implies \ref{prop:EF} \ref{assumption:EFXiOmegaInBallAroundXiIota0X1AndX2}. Finally, to show \ref{prop:EF} \ref{assumption:EFLLPO} we note that Lemma \ref{lemma:ProblemBasicExampleCLASSO} implies $x^1,x^2 \in \Xi(\iota^0)$.

{\bf Case $\{\xibptv, \ombptvef\} $}: We recall $\matTV$ from \eqref{eq:y_and_A_TV} and $y^{\mathrm{BP},\mathrm{TV},\mathrm{s}}$ from \eqref{eq:thm3.2_y_values_strong}, and choose $n_0\in \mathbb{N}$ so that $1/2 - 4^{-n_0} \in [r_K',1/2]$. Now, writing $y_1 = y^{\mathrm{BP},\mathrm{TV},\mathrm{s}}_1$, we set
\begin{equation*}
\begin{aligned}
\iota^0 &= (y^{\mathrm{BP},\mathrm{TV},\mathrm{s}}, \matTV(1/2,1/2,m,N)), \\
\iota^1_n &= (y^{\mathrm{BP},\mathrm{TV},\mathrm{s}} ,\matTV(1/2,1/2-4^{-n-n_0},m,N)),\qquad \iota^2_n = (y^{\mathrm{BP},\mathrm{TV},\mathrm{s}}, \matTV(1/2 - 4^{-n-n_0},1/2,m,N)),\\
x^1 &= \flipOp \eta(y_1,1/2,1/2),\qquad x^2 = \eta(y_1 ,1/2,1/2),\\
S^1 &=  \{\flipOp \eta(y_1 ,\alpha,1/2)\, \vert \, \alpha \in [r_K',1/2]\} ,\quad\text{and}\quad S^2 =  \{\eta(y_1 ,1/2,\beta) \, \vert \, \beta \in [r_K',1/2]\} .
\end{aligned}
\end{equation*}
By the choice of $n_0$ and the definition of $r_K'$ we have that $\iota^0, \iota^1_n$ and $\iota^2_n$ are all elements of $\ombptvef$, and we also see immediately from the definition that Proposition \ref{prop:EF} \ref{assumption:EFDel1Info} holds.
Next, by Lemma \ref{lemma:BPTVSolutions} we get Proposition \ref{prop:EF} \ref{assumption:EFXiIotajInSj}, whereas the continuity of $\eta$ together with Lemma \ref{lemma:BPTVSolutions} yield Proposition \ref{prop:EF} \ref{assumption:EFxjLimitOfXiIotajn}. The same argument as in the proof of Lemma \ref{lemma:StrongBDEpsilonBounds10K} gives us Proposition \ref{prop:EF} \ref{assumption:EFS1S2Distant}. 

Now, by Lemma \ref{lemma:mainThmSizeEstimates} we have that $y_1\in[0, 3/2]$, and so, by Lemma \ref{lemma:BPTVSolutions} and the definition of $r_K'$ we obtain that, for $\alpha \in [r_K',1/2]$ and $\iota = (y^{\mathrm{BP},\mathrm{TV},\mathrm{s}}, \matTV(\alpha,1/2,m,N))$ we have $\|\xibptv(\iota)-x^1\|_{\infty} = \|\eta(y_1,\alpha,1/2) - \eta(y_1,1/2,1/2)\|_\infty \leq 10^{-K} - \EFAlgorithmRange$. Similarly, for $\beta \in [r_K',1/2]$ and 
$
\iota = (y^{\mathrm{BP},\mathrm{TV},\mathrm{s}}, \matTV(1/2,\beta,m,N))
$
 we obtain $\|\xibptv(\iota) - x^2\|_{\infty} \leq 10^{-K} - \EFAlgorithmRange$. Hence 
\begin{equation}\label{eq:EFXiBPTVInX1X2}
\xibptv (\ombptvef) \subseteq \clBall{10^{-K} -  \EFAlgorithmRange}{x^1} \cup \clBall{10^{-K} -  \EFAlgorithmRange }{x^2}\cup \clBall{10^{-K} -  \EFAlgorithmRange}{\xibptv(\iota^0)}.
\end{equation}
We conclude that Proposition \ref{prop:EF} \ref{assumption:EFXiOmegaInBallAroundXiIota0X1AndX2} holds. Finally, to show \ref{prop:EF} \ref{assumption:EFLLPO} we note that Lemma \ref{lemma:BPTVSolutions} implies $x^1,x^2 \in \Xi(\iota^0)$.

\textbf{Case $\{\xiultv, \omultvef\} $}: We recall  $y^{\mathrm{UL},\mathrm{TV},\mathrm{s}}$ from \eqref{eq:thm3.2_y_values_strong} and choose  $n_0\in \mathbb{N}$ such that $1/2 - 4^{-n_0} \in [s_K',1/2]$. Writing $y_1= y^{\mathrm{UL},\mathrm{TV},\mathrm{s}}_1$, we set 
\begin{equation*}
\begin{aligned}
\iota^0 &= (y^{\mathrm{UL}, \mathrm{TV},\mathrm{s}}, \matTV(1/2,1/2,m,N) ), \\
\iota^1_n &= (y^{\mathrm{UL},\mathrm{TV},\mathrm{s}}, \matTV(1/2,1/2-4^{-n-n_0},m,N)) ,\qquad  \iota^2_n = (y^{\mathrm{UL},\mathrm{TV},\mathrm{s}}, \matTV(1/2 - 4^{-n-n_0},1/2,m,N)),\\
x^1 &= \flipOp \psi(y_1,1/2,1/2),\qquad x^2 = \psi(y_1,1/2,1/2),\\
S^1 &= \{\flipOp \psi(y_1,\alpha,1/2)\, \vert \, \alpha \in [s_K',1/2]\},\quad \text{and} \quad S^2 = \{\psi(y_1,1/2,\beta) \, \vert \, \beta \in [s_K',1/2]\}.
\end{aligned}
\end{equation*}

The remainder of the  argument is very similar to that in the case $\{\xibptv,\ombptvef\} $ with the major difference being that we replace $\eta$ by $\psi$, $r_K'$ by $s_K'$, references to Lemma \ref{lemma:BPTVSolutions} by references to Lemma \ref{lemma:ULTVSolutions}, $y^{\mathrm{BP},\mathrm{TV},\mathrm{s}}$ by $y^{\mathrm{UL},\mathrm{TV},\mathrm{s}}$, and the bounds $y_1\in[0, 3/2]$ by $y_1\in[0, 107/180]$. This then yields
\begin{equation}\label{eq:EFXiULTVInX1X2}
\xiultv (\omultvef) \subseteq \clBall{10^{-K} - \EFAlgorithmRange}{x^1} \cup \clBall{10^{-K} - \EFAlgorithmRange }{x^2} \cup \clBall{10^{-K} -  \EFAlgorithmRange}{\xiultv(\iota^0)}
\end{equation}
instead of \eqref{eq:EFXiBPTVInX1X2}. As before, we are able to conclude that assumptions \ref{assumption:EFS1S2Distant} to \ref{assumption:EFLLPO} in Proposition \ref{prop:EF} hold.

Therefore, for each computational problem we have shown that assumptions \ref{assumption:EFS1S2Distant} to \ref{assumption:EFLLPO} in Proposition \ref{prop:EF} hold, and thus there exits a $\tilde \Lambda^+ \in \mathcal{L}^{\mathcal{O}, \omega, \tilde \Xi}(\tilde \Lambda)$ such that, for the computational problem  $\{\Xi^E,\tilde \Omega,\{0,1\},\tilde \Lambda^+\}$, we have $\epsilon_{\mathbb{P}\mathrm{B}}^{\mathrm{s}}(\mathrm{p}) \geq 1/2$, as desired, completing the proof of part (i) of Proposition \ref{prop:ExitFlag}.
\end{proof}

\subsection{Proof of Proposition \ref{prop:ExitFlag} part (ii)}
The fact that for the oracle problem $\{\Xi^E, \tilde\Omega, \{0,1\}, \tilde{\Lambda}\}^{\mathcal{O}, \omega}$ with respect to $\{\tilde\Xi, \tilde\Omega, \mathcal{M}, \tilde{\Lambda}\}$ we have $\epsilon_{\mathrm{B}}^{\mathrm{s}}\geq 1/2$ follows directly from part (i) of the proposition. It thus suffices to show that the oracle problem $\{\tilde\Xi, \tilde\Omega, \mathcal{M}, \tilde{\Lambda}\}^{\mathcal{O}, \omega} $ with respect to $\{\Xi^E, \tilde\Omega, \{0,1\}, \tilde{\Lambda}\}$ can be computed in the arithmetic model to within $10^{-K}$ accuracy.  The first step of our recursive algorithm that achieves this will make use of the exit flag to determine if the input is a representation of $\iota^0$ -- if not, the algorithm proceeds by executing a loop similar to the algorithm \textit{Weak} defined in \S\ref{sec:UsefulSubroutinesForMainThm}. 

To this end, for the linear programming case, we recall \eqref{eq:exit-flag-general-Omega} and fix the notation for an element of $\tilde\Omega^{\mathcal{O}}$ by writing $\tilde{\iota}=\big(\{y_j^{(n)}\}_{n=0}^{\infty}, \{\matLP_{j,k}^{(n)}\}_{n=0}^{\infty}\big)_{j,k}\oplus \Xi^E(\tilde \iota)$, corresponding to an $\iota=(y,\matLP)\in \Omega$ and the solution $\Xi^E(\tilde \iota)$ to the exit flag problem, i.e., $\Xi^E(\tilde \iota) = 1$ if $\disM(\Gamma(\tilde{\iota}),\tilde \Xi(\tilde \iota)) \leq 10^{-K}$  and $\Xi^E(\tilde \iota) = 0$ else. For the $\ell^1$ and TV problems the notation is entirely analogous, except that we respectively write $\matl$ and $\matTV$ instead of $\matLP$.
The exact specification of the algorithm is now as follows:

{\it Algorithm} {\bf ComputeTrueSolution:}

\indent Inputs: Dimensions $m$, $N$.\\
\indent Oracles: $\orvec$, $\ormat$, and $\orsol$ providing access to the components $y_j^{(n)}$, $\matLP_{j,k}^{(n)}$ (respectively $\matl_{j,k}^{(n)}$ or $\matTV_{j,k}^{(n)}$), and $\Xi^E(\tilde \iota)$ of an input $\tilde \iota$. \\
\indent Output: A vector $x \in \mathbb{D}^N$ (for the Turing machine) or  $x \in \real^N$ (for the BSS machine) with 
\[
\disM(x,\tilde \Xi(\tilde \iota)) \leq 10^{-K}.
\]

\begin{enumerate}[leftmargin=10mm, label= \arabic*.,ref = step \arabic*]
	\item If $\Xi^E(\tilde \iota) = 1$, we run $\Gamma$ on $\tilde\iota$ and output $\Gamma(\tilde \iota)$. Otherwise, we proceed to the next step. \label{inst:CTSExitAlgCorrect}
	\item We execute a loop that proceeds as follows -- at each iteration, we increase $n$, starting with $n=1$. What we do now depends on the problem at hand. In the linear programming case we use the oracle $\ormat$ to read the values $\matLP_{1,1}^{(n)}$ and $\matLP_{1,2}^{(n)}$ and set $d = \matLP_{1,1}^{(n)} - \matLP_{1,2}^{(n)}$. In the $\ell^1$ case we use the oracle $\ormat$ to read the values $\matl_{1,1}^{(n)}$ and $\matl_{1,1}^{(n)}$and set  $d = \matl_{1,1}^{(n)} -  \matl_{1,2}^{(n)}$. For the TV problems we similarly read the values $\matTV_{1,1}^{(n)}$ and $\matTV_{1,N}^{(n)}$ and set $d =  \matTV_{1,1}^{(n)} - {\matTV}_{1,N}^{(n)}$.
	
	Next, we branch depending on the value of $d$:
	\begin{enumerate}[label = \alph*.]
		\item If $d > 2 \cdot 2^{-n}$ then we output an $x\in\dyadic^N$ with $\|x -  4\cdot 10^{-K} e_1 \|_p\leq 10^{-K}$ for linear programming, basis pursuit with $\ell^1$ regularisation or unconstrained lasso with $\ell^1$ regularisation. For constrained lasso we output an $x\in\dyadic^N$ with $\|x -   4\cdot 10^{-K}e_1 + (\tau - 4\cdot 10^{-K})e_3 \|_p\leq 10^{-K}$. For basis pursuit TV we apply the subroutine $\textit{OutputEta}$ to obtain $\eta^*=\textit{OutputEta}^{\orvec,\ormat}(m,N,k_K=K,k_\epsilon=K)$, to which we apply $\flipOp$ and output as $x$ (so that $x = \flipOp \eta^*$). Finally, for unconstrained lasso TV we apply the subroutine $\textit{OutputPsi}$  to obtain $\psi^*=\textit{OutputPsi}^{\orvec,\ormat}(m,N,k_K=K,k_\epsilon=K)$, to which we apply $\flipOp$ and output as $x$ (so that $x = \flipOp \psi^*$). In all of the above cases we terminate the algorithm after outputting $x$. \label{inst:CTSExitAlphaGBeta}
		\item Alternatively, if $d < -2 \cdot 2^{-n}$ then we output an $x\in\dyadic^N$ with $\|x -   4 \cdot 10^{-K} e_2 \|_p\leq 10^{-K}$ for linear programming, basis pursuit with $\ell^1$ regularisation or unconstrained lasso with $\ell^1$ regularisation. For constrained lasso we output an $x\in\dyadic^N$ with $\|x -   4\cdot 10^{-K}e_2 + (\tau - 4\cdot 10^{-K})e_3\|_p\leq 10^{-K}$. For basis pursuit TV we apply the subroutine $\textit{OutputEta}$ to obtain $\eta^*=\textit{OutputEta}^{\orvec,\ormat}(m,N,k_K=K,k_\epsilon=K)$, which we output as $x$. Finally, for unconstrained lasso TV we apply the subroutine $\textit{OutputPsi}$ with parameter $\epsilon$ to obtain $\psi^*=\textit{OutputPsi}^{\orvec,\ormat}(m,N,k_K=K,k_\epsilon=K)$, which we output as $x$. In all of the above cases we terminate the algorithm after outputting $x$. \label{inst:CTSExitAlphaLBeta}
	\end{enumerate}
	If neither of these conditions are met then the loop continues by executing the next iteration.
\end{enumerate}

\begin{lemma} \label{lemma:EFComputeTrueSolutionCorrect}
	For every input $\tilde \iota $ in $\tilde \Omega$, \textit{ComputeTrueSolution} outputs a vector $x$ with $\disM(x,\tilde \Xi(\tilde \iota)) \leq 10^{-K}$.
\end{lemma}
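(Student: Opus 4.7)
The plan is to prove correctness by case analysis on the value of $\Xi^E(\tilde\iota)$. If $\Xi^E(\tilde\iota)=1$, then by the very definition of the exit-flag problem in \eqref{eq:Omega_exit}--\eqref{eq:Xi_exit} with $\kappa=10^{-K}$, the output $\Gamma(\tilde\iota)$ produced in \ref{inst:CTSExitAlgCorrect} already satisfies $\disM(\Gamma(\tilde\iota),\tilde\Xi(\tilde\iota))\leq 10^{-K}$, so the lemma holds. The remaining (and more delicate) case is $\Xi^E(\tilde\iota)=0$, in which the algorithm enters the loop, and we must ensure both that the loop terminates and that it produces an output within $10^{-K}$ of a true solution.

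My first step in the second case will be to rule out the possibility that $\tilde\iota$ corresponds to the ``degenerate'' input $\iota^0$ (for which $\alpha=\beta=1/2$), since on such an input the termination condition $|d|>2\cdot 2^{-n}$ can never be verified. I will argue that this ruling-out is forced by the containment properties already derived in the proof of part (i): the inclusions \eqref{eq:EFXiLPInX1X2}, \eqref{eq:EFXiBPInX1X2}, \eqref{eq:EFXiULInX1X2}, \eqref{eq:EFXiCLInX1X2}, \eqref{eq:EFXiBPTVInX1X2}, \eqref{eq:EFXiULTVInX1X2}, together with the fact that both $x^1$ and $x^2$ lie in $\tilde\Xi(\tilde\iota^0)$ (by Lemmas \ref{lemma:ProblemBasicExampleLP}, \ref{lemma:ProblemBasicExampleBPDNL1}, \ref{lemma:ProblemBasicExampleULASSO}, \ref{lemma:ProblemBasicExampleCLASSO}, \ref{lemma:BPTVSolutions}, \ref{lemma:ULTVSolutions}), imply $\tilde\Xi(\tilde\Omega)\subset \clBall{10^{-K}-\omega}{\tilde\Xi(\tilde\iota^0)}$ (with the radius being $0$ in the $\ell^1$/LP/CL cases). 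Combining this with assumption \eqref{assumption:AlgorithmCloseToTheRange} that $\disM(\Gamma(\tilde\iota^0),\tilde\Xi(\tilde\Omega))<\alpha\leq\omega$ gives $\disM(\Gamma(\tilde\iota^0),\tilde\Xi(\tilde\iota^0))\leq \alpha+(10^{-K}-\omega)\leq 10^{-K}$, i.e.\ $\Xi^E(\tilde\iota^0)=1$. Hence $\Xi^E(\tilde\iota)=0$ forces $\tilde\iota$ to correspond to an input with $\alpha\neq\beta$.

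Given $\alpha\neq\beta$, say $\alpha<\beta$ (the opposite case is symmetric), the quantity $d$ computed in the $n$-th loop iteration satisfies $d\leq \alpha-\beta+2\cdot 2^{-n}$, which for all sufficiently large $n$ is less than $-2\cdot 2^{-n}$; hence the loop exits at \ref{inst:CTSExitAlphaLBeta} after finitely many iterations. It remains to verify that the returned vector $x$ is within $10^{-K}$ of $\tilde\Xi(\tilde\iota)$. For LP, $\ell^1$ BP, $\ell^1$ UL and CL, the applicable Lemma among \ref{lemma:ProblemBasicExampleLP}--\ref{lemma:ProblemBasicExampleCLASSO} identifies $\tilde\Xi(\tilde\iota)$ with $\{4\cdot 10^{-K}e_2\}$ (plus the term $(\tau-4\cdot 10^{-K})e_3$ in the CL case), and by construction $x$ is $\ell^p$-close to this vector within $10^{-K}$. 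For BP TV and UL TV, we use that $\albetSetEFBPTV{K}\subset\albetSetBPTV{K}$ and $\albetSetEFULTV{K}\subset\albetSetULTV{K}$ to invoke Lemmas \ref{lemma:OutputEtaCorrect} and \ref{lemma:OutputPsiCorrect} with $k_\epsilon=K$ and $k_K=K$, so that the $\textit{OutputEta}$/$\textit{OutputPsi}$ call produces a vector within $10^{-K}$ of $\eta(y_1,\alpha,\beta)$, respectively $\psi(y_1,\alpha,\beta)$, which by Lemmas \ref{lemma:BPTVSolutions} and \ref{lemma:ULTVSolutions} is the unique true solution when $\alpha<\beta$. The symmetric case $\alpha>\beta$ is handled analogously at \ref{inst:CTSExitAlphaGBeta}, using the flip operator $\flipOp$ together with the observation that $\flipOp$ is an $\ell^p$-isometry. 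The main obstacle is the $\iota^0$ argument outlined above; once that is in hand, all remaining steps follow by direct appeal to the subroutine correctness lemmas and the geometry results from \S\ref{sec:BasicProblemsSolutions}.
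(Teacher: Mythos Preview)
Your proposal is correct and follows essentially the same approach as the paper's proof: both handle the case $\Xi^E(\tilde\iota)=1$ trivially via the definition of the exit flag, both rule out the degenerate input $\iota^0$ by combining the containment $\Xi(\Omega)\subset \clBall{10^{-K}-\omega}{\Xi(\iota^0)}$ (derived from the inclusions in part~(i) together with $x^1,x^2\in\Xi(\iota^0)$) with assumption~\eqref{assumption:AlgorithmCloseToTheRange}, and both then reduce the remaining case $\alpha\neq\beta$ to the argument of Lemma~\ref{lemma:WeakCorrect}. One minor notational point: your argument for the degenerate case must apply to \emph{every} $\tilde\iota$ corresponding to $\iota^0$, not only the specific $\tilde\iota^0$ from part~(i); your reasoning does in fact cover this (since \eqref{assumption:AlgorithmCloseToTheRange} holds for all $\tilde\iota\in\tilde\Omega$ and $\tilde\Xi(\tilde\iota)=\Xi(\iota^0)$ depends only on $\iota^0$), but the notation should reflect it.
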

\begin{proof}
	First, observe that if `ComputeTrueSolution' exits at \ref{inst:CTSExitAlgCorrect} then `ComputeTrueSolution' outputs a value within $10^{-K}$ of the true solution -- this follows from the definition of the exit flag oracle. 
	
	Next, for each of the computational problems, let $\iota^0$,  $x^1$, and $x^2$ be as in the proof of part (i).
	Now, for any $\tilde \iota$ corresponding to $\iota^0$, we claim that the algorithm always terminates at \ref{inst:CTSExitAlgCorrect}. Indeed, we proved there that 
	\[
	\Xi(\Omega) \subseteq \clBall{10^{-K} - \omega}{x^1} \cup \clBall{10^{-K} - \omega}{x^2} \cup \Xi(\iota^0)
	\]
	 and that $x^1,x^2 \in \Xi(\iota^0)$. Consequently, we have $\Xi(\Omega) \subseteq \clBall{10^{-K} - \omega}{ \Xi(\iota^0) }$. Furthermore, by assumption \ref{assumption:AlgorithmCloseToTheRange} we have $\disM(\Gamma(\tilde \iota),\Xi(\Omega)) \leq \omega$. Hence 
	\begin{align*}
	\disM(\Gamma(\tilde \iota),\tilde \Xi(\tilde \iota))&= \disM(\Gamma(\tilde \iota),\Xi(\iota^0)) \leq \disM(\Gamma(\iota),\Xi(\Omega)) + \sup_{w \in \Xi(\Omega)} \disM(w,\tilde \Xi(\tilde \iota^0))\\
	& \leq \omega + 10^{-K} - \omega = 10^{-K},
	\end{align*}
	completing the argument that the algorithm always exits at \ref{inst:CTSExitAlgCorrect}.

	All that remains is to prove that `ComputeTrueSolution' is correct whenever  $\tilde \iota$ corresponds to an element $\iota\in\Omega$  other than $\iota^0$ as well as $\Xi^E(\tilde \iota) = 0$. The proof of this step is very similar to the proof of Lemma \ref{lemma:WeakCorrect}. Indeed, since $ \iota\neq \iota^0$, $\iota$  has to be of one of the following forms for the $\ell^1$ problems: $\big(y^{\mathrm{LP},\mathrm{s}} ,\matl(\alpha,\beta,m,N)\big)$ for linear programming, $\big(y^{\mathrm{BP},\ell^1,\mathrm{s}} ,\matl(\alpha,\beta,m,N)\big)$ for basis pursuit denoising with $\ell^1$ regularisation and $\big(y^{\mathrm{UL},\ell^1,\mathrm{s}},\matl(\alpha,\beta,m,N)\big)$ for unconstrained lasso with $\ell^1$ regularisation where $\alpha,\beta \in \mathcal{L}$, $\alpha \neq \beta$. Similarly, for the TV problems $\iota$ must be of one of the following forms: $\big(y^{\mathrm{BP},\mathrm{TV},\mathrm{s}} ,\matTV(\alpha,\beta,m,N)\big)$ for basis pursuit TV where $\alpha \neq \beta$ and $(\alpha,\beta) \in  \albetSetEFBPTV{K}$ and  $\big(y^{\mathrm{UL},\mathrm{TV},\mathrm{s}} ,\matTV(\alpha,\beta,m,N)\big)$ for unconstrained lasso TV where $\alpha \neq \beta$ and $(\alpha,\beta) \in \albetSetEFULTV{K}$. The remainder of the proof from here is identical to that of Lemma \ref{lemma:WeakCorrect}, except that all references to the value $10^{-K+1}$ are now replaced by $10^{-K}$.
The above construction establishes that, given an oracle for the exit flag, one can compute $\tilde \Xi$ to precision $10^{-K}$, concluding the proof of part (ii).
\end{proof}

\subsection{Proof of Proposition \ref{prop:ExitFlag} part (iii)}

\begin{proof}[Proof of Proposition \ref{prop:ExitFlag} part (iii)]

We recall the definition of $\matLPBPExit$ from \eqref{eq:y_and_A_LPexitFlag}, define the input set $\Omega^{\sharp}$ according to
\begin{equation*}
\Omega^{\sharp}:= \left \{ (\vecYl(y_1,m),\matLPBPExit(\alpha,m,N)) \, \vert \, y_1 = 3 \cdot 10^{-K} \cdot \alpha\quad  \text{ and } \quad \alpha \in [0,1/2)\right \},
\end{equation*}
and let $\Xi$ be either $\xilp$ or $\xibpdn$ with $\delta=0$. 
We first construct a $\hat\Lambda\in\mathcal{L}^1(\Lambda)$ such that, for the computational problem $\{\Xi^{E}, \Omega^{\sharp},\{0,1\}, \hat{\Lambda}\}$ and $\mathrm{p}>1/2$, we have $\epsilon_{\mathbb{P}\mathrm{B}}^{\mathrm{s}}(\mathrm{p}) \geq 1/2$ and second, we design an algorithm that outputs a solution to the oracle problem $\{\Xi^{E}, \tilde\Omega^{\sharp},\{0,1\}, \tilde{\Lambda}\}^{\mathcal{O},\omega}$ with respect to $\{\tilde\Xi,\tilde\Omega^\sharp, \mathcal{M},\tilde\Lambda\}$.

To accomplish the first task, we will use Proposition \ref{prop:EF} with $\kappa=10^{-K}$. Concretely, we set 
\begin{equation*}
\begin{aligned}
\iota^0 &= (\vecYl(0,m),\matLPBPExit(0,m,N)),\quad \iota^1_n = (\vecYl(3\cdot 10^{-K}\cdot  2^{-n},m),\matLPBPExit(2^{-n},m,N)),\qquad   \iota^2_n = \iota^0\\
x^1 &= 3\cdot 10^{-K} e_1 \in \real^N, \quad x^2 = 0 \in \real^N, \quad S^1 = \{x^1\}, \quad \text{ and }\quad  S^2 = \{x^2\}.
\end{aligned}
\end{equation*}
We now prove that conditions \ref{assumption:EFS1S2Distant} to \ref{assumption:EFXiOmegaInBallAroundXiIota0X1AndX2} of Proposition \ref{prop:EF} hold. Starting with \ref{assumption:EFS1S2Distant}, we note that 
\[
\disM(S^1,S^2) = d_{\mathcal{M}}(x^1,0) = 3\cdot 10^{-K}. 
\]
By Lemma \ref{lemma:ProblemBasicExampleLPBPExitFlag}, $\Xi(\iota^j_n)  = \{x^j\} $ for all $n\in\mathbb{N}$ and $j=1,2$. This gives us both \ref{assumption:EFXiIotajInSj} and \ref{assumption:EFxjLimitOfXiIotajn}. We also have $\iota^0 = \iota^2_n$ and $|f(\iota^1_n) - f(\iota^0)| \leq \max(3\cdot 10^{-K}2^{-n},2^{-n}) = 2^{-n}$, for all $f\in \Lambda$, and thus \ref{assumption:EFDel1Info} holds. Finally, for $\iota \in \Omega^{\sharp}$ we can use Lemma \ref{lemma:ProblemBasicExampleLPBPExitFlag} to see that $\Xi(\iota) = \{x^1\} $ or $\Xi(\iota) = \{x^2\}$ and hence \ref{assumption:EFXiOmegaInBallAroundXiIota0X1AndX2} holds. An application of Proposition \ref{prop:EF} therefore establishes that  $\epsilon_{\mathbb{P}\mathrm{B}}^{\mathrm{s}}(\mathrm{p}) \geq 1/2$, for $\mathrm{p}>1/2$, as desired.
Next, we construct the desired algorithm, which will be identical  for both basis pursuit and linear programming. Similarly to the proof of part (ii), we write  
\[
\tilde{\iota}=\big(\{y_j^{(n)}\}_{n=0}^{\infty}, \{A_{j,k}^{(n)}\}_{n=0}^{\infty}\big)_{j,k}\oplus \{g_k(\tilde{\iota})\}_{k}, 
\]
for an input $\tilde\iota\in\tilde\Omega$ corresponding to an $\iota=(y,\matLP)\in \Omega$ and a solution oracle $\{g_k(\tilde{\iota})\}_k\in\mathcal{B}^\infty_\omega(\tilde\Xi(\tilde\iota))$.
We now define:

{\it Algorithm} {\bf ComputeExitFlag:} 

\indent Inputs: Dimensions $m$, $N$.\\
\indent Oracles: $\orsol$ providing access to the components $g_k$ of an input $\tilde \iota$ \\
\indent Output: Either $1$ if $\disM(\Gamma(\tilde\iota),\tilde\Xi(\tilde\iota)) \leq 10^{-K}$ or $0$ if $\disM(\Gamma(\tilde\iota),\tilde\Xi(\tilde\iota)) > 10^{-K}$.

\begin{enumerate}[leftmargin=10mm, label= \arabic*.,ref = step \arabic*]
	\item We use  $\orsol$ to read $g_1(\tilde \iota)$: if $|g_1(\tilde \iota)| \leq 10^{-K}$ then we set $x = 0\in\mathbb{R}^N$, otherwise we set $x = 3 \cdot 10^{-K} e_1 $.
	\item We output $1$ if $\|\Gamma(\tilde \iota) - x\|_\infty \leq 10^{-K}$. Otherwise, we output $0$. \label{inst:EFExit}
\end{enumerate}
To prove the correctness of `ComputeExitFlag', we will prove that $\tilde \Xi(\tilde \iota) = \{ x\} $ for every input $\tilde \iota \in \tilde \Omega^{\sharp}$. Indeed, let $\iota$ be the element of $\Omega^\sharp$ that $\tilde \iota$ corresponds to. There are two cases to consider: either $\iota = (\vecYl(3 \cdot 10^{-K} \alpha,m),\matLPBPExit(\alpha,m,N))$ for some $\alpha > 0$ or $\iota = (\vecYl(0,m),\matLPBPExit(0,m,N))$. In the first case Lemma \ref{lemma:ProblemBasicExampleLPBPExitFlag} tells us that $\Xi(\iota) = \{ 3 \cdot 10^{-K}\} $. In particular, since $\{g_k(\tilde{\iota})\}_k\in\mathcal{B}^\infty_\omega(\tilde\Xi(\tilde\iota))$, we must have $|g_1 (\tilde \iota)| \geq 3\cdot 10^{-K} - \omega >10^{-K}$ and hence `ComputeExitFlag' sets $x = 3\cdot 10^{-K}$, which is the only element of $\Xi(\iota)$.
If instead $\iota = (\vecYl(0,m),\matLPBPExit(0,m,N))$ then Lemma \ref{lemma:ProblemBasicExampleLPBPExitFlag} tells us that $\Xi(\iota) = 0$. Hence $|g_1(\tilde \iota)| \leq \omega \leq 10^{-K}$. Thus `ComputeExitFlag' sets $x = 0$, which is the only element of $\Xi(\iota) = \tilde \Xi(\tilde \iota)$. 
We have thus shown that $\tilde \Xi(\tilde \iota) = \{ x\} $. Therefore $\Gamma^E(\tilde \iota) = 1$ if $\|\Gamma(\tilde \iota) - x\|_{\infty} \leq 10^{-K}$ and $\Gamma^E(\tilde \iota) = 0$ otherwise. But this is exactly the output of `ComputeExitFlag' in \ref{inst:EFExit} and thus the algorithm `ComputeExitFlag' computes the exit flag as claimed.
\end{proof}

\section{Proof of Theorem \ref{thm:Smales9}}
Theorem  \ref{thm:Smales9} is formally stated in Proposition \ref{prop:SmalesNinthProposition}, which we now prove. Our strategy is similar to that for the proof of Theorem \ref{Cor:main}. We begin by defining the input set $\Omega$ and then verify the conditions of \eqref{prop:DrivingNegativeProposition}. This, together with an additional algorithm we write to prove part (iii) will complete the proof of Proposition \ref{prop:SmalesNinthProposition}.

\subsection{Constructing the input and evaluation sets}
Let $M$ and $K$ be as in the statement in the proposition, recall the definition of $\SetNine_k$ from \eqref{eq:The9Sets} and define $M_{k} = 10^{-k}(\floor{10^{k} M}+1)$, for $k\in\mathbb{N}\cup \{0\}$. Note that $M_{k}\leq M_{k-1}$, for all $k\in\mathbb{N}$, with equality if and only if $M\in\SetNine_k$.

We recall the definitions of $\vecYl$ and $\matLPObj$ from \eqref{eq:y_and_A_LPObj} and for natural $m$ and $N$ with $m<N$ define the sets $\omlpos$ and $\omlpow$ as follows:
\begin{align*}
\omlpos &= \{ (\vecYl (M_{K}\alpha,m),\matLPObj(\alpha ,1,m,N)) \,\vert \, \alpha \in [0,1] \}\\
\omlpow &= \{ (\vecYl(M_{K-1} \alpha ,m),\matLPObj(\alpha ,0,m,N))  \,\vert \, \alpha \in (0,1]  \}\\
&\;\cup \{ (\vecYl(0,m),\matLPObj(-\alpha,0,m,N)) \,\vert\, \alpha \in (0,1]  \}
\end{align*}
and set 
\[
\Omega = \bigcup_{1\leq m< N} \omlpos \cup \omlpow.
\]

\subsection{Proof of Proposition \ref{prop:SmalesNinthProposition} parts (i) and (ii)}
Fix $m,N\in\mathbb{N}$ with $m<N$ and define the constant sequence $\{\iota^1_n\}_{n=1}^{\infty} \subseteq \omlpos$ and input $\iota^0 \in \omlpos$ by $\iota^1_n = \iota^0= (\vecYl(0,m),\matLPObj(4^{-n},1,m,N))$, as well as the sequence $\{\iota^2_n\}_{n=1}^{\infty} \subseteq \omlpos$ by $\iota^2_n = (\vecYl(4^{-n}M_K),\matLPObj(4^{-n},1,m,N))$. 
If we define $S^1 = \{\text{1}\}$ and $S^2 = \{0\}$, then $(4^{-n}M_K)/4^{-n}=10^{-K} (\floor{10^{K} M}+1)$, and so by Lemma \ref{lemma:ProblemBasicExampleLPObj} we have $\Xi_K(\iota^1_n) = \Xi_K(\iota^0) \in S^1$ and $\Xi_K(\iota^2_n) \in S^2$, for all $n\in\mathbb{N}$. 
The conditions \ref{property:MinimisersOfNonZero} -- \ref{property:MtotallyOrdered} of Proposition  \ref{prop:DrivingNegativeProposition}  are readily seen to hold with for these sequences and the sets $S^1$ and $S^2$, allowing us to conclude that there exists a $\hat\Lambda^{\mathrm{s}}\in \mathcal{L}^1(\Lambda_{m,N})$ such that $\{\Xi_K,\omlpos,\{0,1\},\hat\Lambda^{\mathrm{s}}\}\notin \Sigma_1^{G}$ as well as $\strbdepsp \geq 1/2$, for $\mathrm{p} \in [0,1/2)$.

Next, consider the sequences $\{\iota^1_n\}_{n=1}^{\infty}$ and $\{\iota^2_n\}_{n=1}^{\infty}$ given by $\iota^1_n = (\vecYl(0,m),\matLPObj(-4^{-n},0,m,N))\allowbreak \in \omlpow$ and $\iota^2_n = (\vecYl(M_{K-1}\cdot 4^{-n},m),\matLPObj(4^{-n},0,m,N))\in \omlpow$. Defining $\iota^0 \notin \Omega$ according to $\iota^0 = (\vecYl(0,m),\matLPObj(0,0,m,N))$, we have $|f(\iota^j_n) - f(\iota^0)| \leq 4^{-n}$, for $f\in\Lambda_{m,N}$, $j\in\{1,2\}$, and $n\in\mathbb{N}$, whereas by Lemma \ref{lemma:ProblemBasicExampleLPObj} we obtain $\Xi(\iota^1_n) \in S^1=\{1\}$ and $\Xi(\iota^2_n) \in S^2=\{0\}$, and thus  Proposition  \ref{prop:DrivingNegativeProposition}  implies the existence of a $\hat\Lambda^{\mathrm{w}}\in \mathcal{L}^1(\Lambda_{m,N})$ such that, for the computational problem $\{\Xi_{K-1},\omlpow ,\{0,1\},\hat\Lambda^{\mathrm{w}}\}$, we have $\epsilon_{\mathbb{P}\mathrm{B}}^{\mathrm{w}}(\mathrm{p}) \geq 1/2$, for $\mathrm{p} \in [0,1/2)$.

Now, defining $\hat\Lambda_{m,N}:=\{f_{j,n}\,\vert\, j\leq n_{\mathrm{var}},n\in\mathbb{N}\}$, where we let $f_{j,n}(\iota)=f_{j,n}^{\mathrm{s}}(\iota)$ if $\iota\in \Omega_{m,N}^{\mathrm{s}}$ and $f_{j,n}(\iota)=f_{j,n}^{\mathrm{w}}(\iota)$ if $\iota\in \Omega_{m,N}^{\mathrm{w}}$, for $j\leq n_{\mathrm{var}}$ and $n\in\mathbb{N}$, we have that $\hat\Lambda_{m,N}$ provides $\Delta_1$-information for $\{\Xi_K,\Omega_{m,N},\mathcal{M}_N,\Lambda_{m,N} \}$, and in view of Remark \ref{remark:OmegaSubsetBDE}, we have that all the breakdown epsilon bounds mentioned above also hold for $\{\Xi_K,\Omega_{m,N},\{0,1\},\hat\Lambda_{m,N}\}$. This establishes parts (i) and (ii) of Proposition \ref{prop:SmalesNinthProposition} as well as the weak breakdown epsilon bound in part (iii).

\subsection{Proof of Proposition \ref{prop:SmalesNinthProposition}, $M\notin \SetNine_K \,\implies\,$ (iii)}
To complete the proof of part (iii) under the additional assumption $M\notin \SetNine_K$, it remains to construct an algorithm that, for each $m$ and $N$ with $m<N$, outputs an approximate solution to $\{\Xi_{K-1},\Omega_{m,N},\{0,1\},\Lambda_{m,N}\}^{\Delta_1}=\{\tilde\Xi_{K-1},\tilde\Omega_{m,N},\{0,1\},\tilde\Lambda_{m,N}\}$ of accuracy $10^{-{K-1}}$. Our construction is similar to the one used in \S \ref{sec:cor_main(iii)(iv)}.
Concretely, the algorithm is constructed as follows. First note that $M_K<M_{K-1}$, since we are assuming $M\notin \SetNine_K$.
Now, writing $\tilde{\iota}=\big(\{y_j^{(n)}\}_{n=0}^{\infty}, \{A_{j,k}^{(n)}\}_{n=0}^{\infty}\big)_{j,k}$ for an input $\tilde\iota\in\tilde\Omega_{m,N}$ corresponding to an $\iota=(y, \matLP)\in \Omega_{m,N}$, we define:

{\bf Algorithm $K-1$ digit Smale's 9th problem} 

\indent Inputs: Dimensions $m$, $N$.\\
\indent Oracles: $\ormat$ providing access to the components $A_{1,k}^{(n)}$ of an input $\tilde\iota\in\tilde\Omega_{m,N}$. \\
\indent Output: Either $1$ or $0$.
\begin{enumerate}[leftmargin=10mm]
	\item We use $\ormat$ to read $A_{1,2}^{(2)}$. If $A_{1,2}^{(2)} \geq 3/4$ then we output $1$ and terminate the algorithm, otherwise we proceed to the next step.\label{step:Smale9thWeakOrStrong}
	\item We execute a loop that proceeds as follows -- at each iteration, we increase $n$, starting with $n=1$. Use oracle $\ormat$ to read $A_{1,1}^{(n)}$. If $A_{1,1}^{(n)} > 2^{-n}$ we output $0$ and terminate the algorithm. If instead $A_{1,1}^{(n)} < -2^{-n}$ we output $1$ and terminate the algorithm.\label{step:Smale9thWeak}
\end{enumerate}
To show the correctness of this algorithm, we consider three cases depending on which $\iota\in\Omega_{m,N}$ the input $\tilde\iota$ corresponds to:

{\bf Case 1:} The input $\iota = (\vecYl(M_K\alpha ,m),\matLPObj(\alpha ,1,m,N)) \in \omlpos$, for some $\alpha[0,1]$. Observing that
$
(M_K \alpha)/\alpha= M_K< M_{K-1}
$
whenever $\alpha\in(0,1]$, Lemma \ref{lemma:ProblemBasicExampleLPObj} yields $\Xi_{K-1}(\iota) = 1$, for all $\alpha[0,1]$. On the other hand, since $\matLPObj(\alpha,1,m,N)_{1,2} = 1$ by the definition of $\matLPObj$, we obtain $A_{1,2}^{(2)} \geq 1 - 1/4 =3/4$ and hence the algorithm also outputs $1$.

{\bf Case 2:} The input $\iota = (\vecYl(M_{K-1} \alpha,m),\matLPObj(\alpha,0,m,N))$ for some $\alpha \in (0,1]$. In this case we have $(M_{K-1}\alpha)/\alpha= M_{K-1}$, and thus by Lemma \ref{lemma:ProblemBasicExampleLPObj} we obtain that $\Xi_{K-1}(\iota) = 0$. To analyse the output of the algorithm, note first that since $\matLPObj(\alpha,1,m,N)_{1,2} = 0$ by the definition of $\matLPObj$ we obtain $ A_{1,2}^{(2)} \leq 0 + 1/4 <3/4$ and hence the algorithm proceeds to step \ref{step:Smale9thWeak}. We have $A_{1,1}^{(n)} \geq  \alpha - 2^{-n} > -2^{-n}$ since $\alpha $ is positive. Hence the loop never terminates by outputting $1$. However, again using that $\alpha$ is positive, for sufficiently large $n$ we have $A_{1,1}^{(n)} \geq \alpha - 2^{-n} > 2^{-n}$ and hence the algorithm eventually outputs $0$.

{\bf Case 3:} The input $\iota = (\vecYl(0,m),\matLPObj(- \alpha,0,m,N))$ for some $\alpha \in (0,1]$. In this case, by Lemma \ref{lemma:ProblemBasicExampleLPObj} we obtain that $\Xi_{K-1}(\iota) = 1$. Our analysis of the output of the algorithm is similar to Case 2 and we once again proceed to step \ref{step:Smale9thWeak}. This time the loop never terminates by outputting $0$ since for all $n$ we have $ A_{1,1}^{(n)} \leq - \alpha + 2^{-n} \leq 2^{-n}$ because $\alpha $ is positive.  Instead for sufficiently large $n$ we have $A_{1,1}^{(n)} \leq - \alpha - 2^{-n} < -2^{-n}$ and hence the algorithm eventually outputs $1$. 

These are the only possible cases for $\iota$ and so the algorithm always returns $\Xi_{K-1}(\iota)$, as desired.

\subsection{Proof of Proposition \ref{prop:SmalesNinthProposition}, $M\notin \SetNine_{K-1} \,\implies\,$(iv)}
The assumption $M\notin \SetNine_{K-1}$ implies that $M_{K-1}<M_{K-2}$ and therefore, as $(M_K\alpha)/\alpha=M_K\leq M_{K-1}<M_{K-2}$ and $(M_{K-1}\alpha)/\alpha=M_{K-1}<M_{K-2}$, for all $\alpha\in(0,1]$, Lemma \ref{lemma:ProblemBasicExampleLPObj} implies that $\Xi_{K-2}(\iota)=1$ for all $\iota\in\Omega_{m,N}$. Our construction of an algorithm to compute $\Xi_{K-2}$ is thus trivial -- we simply create an algorithm that immediately outputs the value $1$.

\section{Geometry of solutions to problems  \eqref{problems3} - \eqref{problems4} -- Part II}

\subsection{A ${(5s-1)\times 5s}$ matrix with the RNP of order $s=2^n$ for non-unique minimisers of $\ell^1$ BP  and $\ell^1$ UL}

We construct a family of robust nullspace matrices which have a line segment of minimisers when performing basis pursuit denoising or unconstrained lasso with $\ell^1$ regularisation.

\begin{proposition}\label{prop:BPDNNFail}
	Fix a natural number $s=2^{n-1}$ and real numbers $\alpha,\gamma > 0$ with $\alpha > \gamma (\alpha^2+1)$. Then there is a matrix $A \in \real^{m \times N}$ with $m = 5s-1 $ and $N = 5s$ such that the following properties hold:
	\begin{enumerate}
		\item $\|AA^*\|_2 \leq \sqrt{\left(1+\frac{\alpha^2\gamma^2}{4}\right)^2 + \frac{2\alpha^2\gamma^4}{16} + \frac{\gamma^4}{16}},\quad  \|(AA^*)^{-1}\|_2 \leq  \sqrt{1+2\alpha^2 + \frac{1}{\gamma^4} \left( 4+\alpha^2\gamma^2 \right)^2}$.  \label{prop:BPDNNCond}
		\item $A$ obeys the robust nullspace property of order $s$ with parameters $\rho = 1/3$ and \\ $\tau =  \sqrt{\frac{4}{3}}\|A\|_2 \|(AA^*)^{-1}\|_2$. \label{prop:BPDNNNSP}
		\item 
		\label{prop:BPDNNMultiSolns} For any $\delta \geq 0$ there is a $y = Ax$ for some $s$-sparse $x$ with $\|y\|_2 \leq \delta \sqrt{1+\alpha^2}$, such that 
		\begin{equation*}
			\xibpdn(y,A)= \{t\left(c_1 \mathbf{1}_{2s} \oplus \mathbf{0}_{2s} \oplus c_2 \mathbf{1}_{s} \right)+(1-t)\left( \mathbf{0}_{2s} \oplus -c_1 \mathbf{1}_{2s} \oplus c_2 \mathbf{1}_{s}\right) \, \vert \, t \in [0,1]\},
		\end{equation*}
		where
		\begin{equation}\label{eq:3.11(i)-c1c2}
			c_1 = \frac{\delta}{\sqrt{s}} \frac{\left(-\gamma - \gamma \alpha^2 + \alpha\right) }{C} ,\quad 
			c_2 = \frac{2\delta}{\gamma\sqrt{s}} \left(\frac{\alpha\gamma  -1}{C} + 1\right), \quad 
			C = \sqrt{\gamma^2 + (1-\alpha\gamma)^2}.
		\end{equation} 
	\end{enumerate}  
\end{proposition}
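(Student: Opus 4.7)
The plan is to split the proof into four stages: (a) write down an explicit block matrix $A\in\real^{(5s-1)\times 5s}$ encoding the desired nullspace direction and the parameters $\alpha,\gamma$; (b) diagonalise $AA^*$ and read off the norm bounds in \eqref{prop:BPDNNCond}; (c) deduce the robust nullspace property from the norm bounds together with a direct estimate on $s$-sparse subsets; and (d) produce a dual certificate witnessing that the line segment in \eqref{prop:BPDNNMultiSolns} lies in $\xibpdn(y,A)$.

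The first step is the construction. Partitioning the $5s$ columns as $2s+2s+s$, I would take the first $2s$ columns to equal a block $B$ whose columns sum to a vector $w$, and the next $2s$ columns to equal $-B'$ whose columns sum to $-w$, so that the first $4s$ columns of $A$ sum to zero and $\mathbf{1}_{4s}\oplus\mathbf{0}_s$ lies in $\ker A$. A natural choice is to build $B,B'$ from the $s=2^{n-1}$ Hadamard structure (so intra-block columns are orthonormal up to a scale) together with an extra row carrying the scale $\alpha$ that balances $w$ against $-w$, and to devote the last $s$ columns (scaled by $\gamma$) to an essentially orthogonal tail block. This is exactly where the interplay between $\alpha$ and $\gamma$ enters: the relation $\alpha>\gamma(\alpha^2+1)$ will be what lets the dual certificate in step (d) have $\|A^*\eta\|_\infty<1$ off the combined support.

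Once $A$ is written down, $AA^*$ will have an explicit $2\times 2$ (or $3\times 3$) block structure whose diagonal entries are polynomials in $\alpha,\gamma$ coming from the row norms and whose off-diagonal entries come from the row overlap in $B$ and the tail. The two claimed bounds on $\|AA^*\|_2$ and $\|(AA^*)^{-1}\|_2$ in (i) will then drop out by computing eigenvalues of this small matrix, so this step is mechanical once the construction is fixed. For (ii), the RNP with $\rho=1/3$ I would extract from the fact that on any $s$-sparse index set $S$ the orthonormality of the Hadamard sub-blocks yields $\|v_S\|_2\leq \frac{1}{3\sqrt{s}}\|v_{S^c}\|_1 + \tau\|Av\|_2$ with $\tau$ controlled by the singular value bounds just derived; one writes $v=v_S+v_{S^c}$, uses $\|v_S\|_2\leq \|A^\dagger A v_S\|_2$ and the triangle inequality to introduce $\|Av\|_2$, then absorbs the $v_{S^c}$ contribution using the $\ell^1\!\to\!\ell^2$ bound on the normalised columns. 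Choosing $\tau=\sqrt{4/3}\,\|A\|_2\|(AA^*)^{-1}\|_2$ closes this estimate with slack $1/3$.

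The main obstacle — and therefore the step to spend the most care on — is (d), verifying both points are in fact $\ell^1$ minimisers of BPDN. Because the argument in the introduction shows any candidate line segment of minimisers forces $Ax_1=Ax_2$, it suffices to exhibit one dual vector $\eta\in\real^m$ with $\|\eta\|_2$ of the right size so that (i) $\|Ax_1-y\|_2=\|Ax_2-y\|_2=\delta$ with $Ax_j-y$ proportional to $\eta$, and (ii) $A^*\eta\in\partial\|\cdot\|_1$ simultaneously at every point of the segment, i.e.\ $(A^*\eta)_j=+1$ for $j\in\{1,\dots,2s\}\cup\{4s+1,\dots,5s\}$, $(A^*\eta)_j=-1$ for $j\in\{2s+1,\dots,4s\}$. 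Using the block structure of $A$, the equations $A^*\eta=\mathrm{sign}$ on the support reduce to a small linear system in a few unknown coordinates of $\eta$, solvable in closed form; the solution gives formulas for $c_1,c_2$ matching \eqref{eq:3.11(i)-c1c2}, and the strict inequality $|A^*\eta|<1$ on coordinates outside the union of supports is exactly where the hypothesis $\alpha>\gamma(\alpha^2+1)$ gets used, ruling out further minimisers. Computing $\|y\|_2$ from $y=Ax_1$ and the explicit formulas for $c_1,c_2$ then yields the claimed bound $\|y\|_2\leq \delta\sqrt{1+\alpha^2}$, completing the proof.
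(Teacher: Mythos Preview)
Your overall outline---explicit Hadamard-based block construction, direct spectral computation of $AA^*$, and a dual-certificate argument for the segment of minimisers---matches the paper's approach. However, two of your steps contain genuine gaps.

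First, your RNP sketch in (c) cannot work as written. The inequality $\|v_S\|_2\leq \|A^\dagger A v_S\|_2$ goes the wrong way: $A^\dagger A$ is the orthogonal projection onto the row space of $A$, hence contractive. The paper instead decomposes $v=\xi+A^*w$ with $\xi\in\ker(A)$ and exploits that $\ker(A)=\mathrm{span}(\mathbf{1}_{4s}\oplus\mathbf{0}_s)$ is one-dimensional with its nonzero entries \emph{constant} on the first $4s$ coordinates. For any $S$ with $|S|=s$ this gives $\|\xi_S\|_2\leq|\beta|\sqrt{s}$ while $\|\xi_{S^c\cap K}\|_1=|\beta|\,|S^c\cap K|\geq 3s|\beta|$ (here $K=\{1,\dots,4s\}$), so $\|\xi_S\|_2\leq \tfrac{1}{3\sqrt{s}}\|\xi_{S^c}\|_1$. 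The constant $\rho=1/3$ is thus the combinatorial ratio $s/(4s-s)$, not a consequence of Hadamard orthonormality; the $A^*w$ piece is then handled via $\|A^*w\|_2\leq\|A\|_2\|(AA^*)^{-1}\|_2\|Av\|_2$, which is what produces the stated $\tau$.

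Second, your explanation of where the hypothesis $\alpha>\gamma(\alpha^2+1)$ enters is wrong. The union of supports of the two endpoint minimisers is all of $\{1,\dots,5s\}$, so there are no ``off-support'' coordinates on which to check a strict inequality. In the paper's construction the dual vector $p$ satisfies $-A^*p=\mathbf{1}_{2s}\oplus(-\mathbf{1}_{2s})\oplus\mathbf{1}_s$ \emph{exactly} on every coordinate, and the condition $\alpha>\gamma(\alpha^2+1)$ is used only to ensure $c_1>0$, so that this sign pattern lies in $\partial\|\cdot\|_1$ at each point of the segment. Ruling out further minimisers then comes not from a strict dual inequality but from the fact that equality in the dual calculation forces $Az=Ax^{opt}$, after which the one-dimensionality of $\ker(A)$ pins $z$ to the claimed segment.

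A minor point: when $\delta>0$ one has $\|Ax_1-y\|_2=\delta$, not $y=Ax_1$; the required $s$-sparse $x$ with $y=Ax$ is a separate vector supported only on the last $s$ coordinates, and you should exhibit it explicitly.
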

\begin{proof}
	Let $H_n$ be the $2^n \times 2^n$ dimensioned Hadamard matrix in `natural ordering' (see \ref{sec:CSMatricesAppendix} for a definition). Set $m = 5s-1$ and $N = 5s$, and let $A$ be the $ \real^{m \times N}$ matrix defined by
	\begin{equation*}
		\begin{split}
			A &=  \frac{1}{2\sqrt{s}}\, 
			P \, \left(
			\begin{array}{c c}
				H_1 \otimes H_n & \eta \otimes H_{n-1}\\
				\mathbf{0}_{s \times 4s} & \gamma H_{n-1} 
			\end{array}
			\right), \quad \eta = \left(0,0,\alpha \gamma ,0\right)^T,\\
			Pe_j &= e_{j-1}, \, Pe_1 = 0,
		\end{split}
	\end{equation*}
	where $\{e_j\}$ represents the canonical basis, so that $P$ is the projection from $1,2,\dotsc,N$ to $2,3,\dotsc,N$.
	To show (\ref{prop:BPDNNCond}) we first compute
	\begin{equation*}
		\begin{split}
			&\left(
			\begin{array}{c c}
				H_1 \otimes H_n & \eta \otimes H_{n-1}\\
				\mathbf{0}_{s\times 4s} & \gamma H_{n-1} 
			\end{array}
			\right) 
			\left(
			\begin{array}{c c}
				H_1 \otimes H_n & \eta \otimes H_{n-1}\\
				\mathbf{0}_{s\times 4s} & \gamma H_{n-1} 
			\end{array}
			\right)^* \\
			& \qquad \qquad =
			\left(
			\begin{array}{c c}
				4s\,  I_{4s} + \eta \otimes \eta^* \otimes s I_{s} & \gamma (\eta \otimes H_{n-1})H_{n-1}^*\\
				\gamma ((\eta \otimes H_{n-1}) H_{n-1}^*)^* & \gamma^2  s I_{s}
			\end{array}
			\right)\\
			& \qquad \qquad = \left(
			\begin{array}{c c}
				4s\, I_{4s} + \alpha^2\gamma^2 \xi \otimes \xi^* \otimes sI_{s} & \alpha\gamma^2 \xi \otimes  sI_{s}\\
				\alpha\gamma^2 (\xi \otimes  sI_{s})^*& \gamma^2  s I_{s}
			\end{array}
			\right) =  4s\, (I_{2s} \oplus \left(S
			\otimes  I_{s}\right)), 
		\end{split}		
	\end{equation*}
	where $\xi = (0,0,1,0)^T$ and 
	\begin{equation}\label{S}
		S = \begin{pmatrix}
			(1+\frac{\alpha^2\gamma^2}{4}) & 0 & \frac{\alpha\gamma^2}{4} \\
			0 & 1 & 0 \\
			\frac{\alpha\gamma^2}{4} & 0 & \frac{\gamma^2}{4}
		\end{pmatrix}, 
		\quad S^{-1} = \begin{pmatrix}
			1 & 0 & -\alpha\\
			0 & 1 & 0 \\
			-\alpha & 0 & \frac{1}{\gamma^2}(4+\alpha^2 \gamma^2) 	
		\end{pmatrix}.
	\end{equation}
	Thus, by the definition of $P$ it follows that 
	\begin{equation}\label{AA}
		AA^* = I_{2s-1} \oplus \left(S
		\otimes  I_{s}\right),
		\quad 
		(AA^*)^{-1} = I_{2s-1} \oplus \left(S^{-1} \otimes I_{s} \right),
	\end{equation}
	and so to prove \eqref{prop:BPDNNCond} we only need to estimate $\|S\|$ and $\|S^{-1}\|$ from \eqref{S}. To do that, note that for a symmetric matrix $M\in\real^{2\times 2}$ we have $\|M\|_2\leq \|M\|_F=\left(M_{11}^2 + 2M_{12}^2 + M_{22}^2 \right)^{\frac{1}{2}}$, and we therefore find
	\[
	\|AA^{*}\|_2  \leq 1 \vee \sqrt{\left(1+\alpha^2\gamma^2/4\right)^2 + \alpha^2\gamma^4/8 + \gamma^4/16}, \quad
	\|(AA^*)^{-1}\|_2 \leq 1 \vee \sqrt{1+2\alpha^2 + \frac{1}{\gamma^4} \left( 4+\alpha^2\gamma^2 \right)^2}, 
	\]
	establishing \eqref{prop:BPDNNCond}.
	
	To prove (\ref{prop:BPDNNNSP}), take an arbitrary vector let $v \in \mathbb{R}^N$ and write $v = \xi + A^*w$ where $\xi\in \ker(A)$. Note that $\ker(A)$ consists exactly of vectors of the form $\beta (\ones_{4s} \oplus  \mathbf{0}_{s})$ , for $\beta\in\real$.
	Now let $K = \lbrace 1,2,\dotsc,4s\rbrace$ and consider a set $S\subset\{1,\dots,N\}$ of cardinality $s$. We then have $\|\xi_S\|_2 = \beta\sqrt{s}$ and $\|\xi_{S^c \cap K}\|_1 = \beta|S^c \cap K|$. 
	Consequently
	\begin{align*}
		\|\xi_S\|_2 &=   \frac{\sqrt{s}}{|S^c \cap K|} \|\xi_{S^c \cap K}\|_1 \leq \frac{\sqrt{s}}{|S^c \cap K|} \left(\|v_{S^c\cap K}\|_1 + \|(A^*w)_{S^c \cap K}\|_1\right)\\
		& \leq \frac{s\|v_{S^c}\|_1}{\sqrt{s} |S^c \cap K|} + \sqrt{\frac{s}{|S^c \cap K|}}\;   \frac{\|(A^*w)_{S^c \cap K}\|_1}{\sqrt{|S^c \cap K|}} \leq \frac{\rho\|v_{S^c}\|_1}{\sqrt{s}} +  \sqrt{\frac{s}{|S^c \cap K|}} \|(A^*w)_{S^c \cap K}\|_2,
	\end{align*}
	with $\rho = \frac{1}{3}$, where the last line follows because $|S^c \cap K| \geq 4s-s=3s$. Hence,
	\begin{equation*}\|v_S\|_2  \leq \frac{\rho\|v_{S^c}\|_1}{\sqrt{s}}  + \sqrt{\rho}\|(A^*w)_{S^c \cap K}\|_2 + \|(A^*w)_S\|_2 \leq  \frac{\rho\|v_{S^c}\|_1}{\sqrt{s}}  +  \sqrt{1+\rho} \|A^*w\|_2,
	\end{equation*}
	where the last inequality follows by applying the Cauchy-Schwartz inequality with the vectors $(\sqrt{\rho},1)$ and $(\|(A^*w)_{S^c \cap K}\|_2, \|(A^*w)_S\|_2)$.
	To bound $\|A^{*}w\|_2$, we see that 
	\begin{equation*}\|A^{*}w\|_2 \leq  \|A\|_2\|(AA^*)^{-1}(AA^*)w\|_2 \leq \|A\|_2\|(AA^*)^{-1}\|_2 \|AA^*w\|_2 = \|A\|_2 \|(AA^*)^{-1}\|_2\|Av\|_2
	\end{equation*}
	where the last inequality is due to $Av = AA^*w + A\xi = AA^*w$ (recall that $\xi\in\ker(A)$).
	We conclude that
	\begin{equation*}
		\|v_S\|_2   \leq  \frac{\rho\|v_{S^c}\|_1}{\sqrt{s}}  +  \sqrt{1+\rho} \|A^*w\|_2   \leq  \frac{\rho\|v_{S^c}\|_1}{\sqrt{s}}  +  \tau\|Av\|_2.
	\end{equation*}
	where  $\tau  = \sqrt{\frac{4}{3}} \|A\|_2 \|(AA^*)^{-1}\|_2$. This establishes (2).

	We next prove (\ref{prop:BPDNNMultiSolns}) with the vector $y = \mathbf{0}_{2s-1}\oplus  \, \alpha\delta\,  \oplus\,   \mathbf{0}_{2s-1} \oplus \, \delta \, \oplus \,  \mathbf{0}_{s-1}=Ax$, where
	$
	x = \frac{2 \delta}{\gamma\sqrt{s}}\, \left(\mathbf{0}_{4s}  \oplus   \mathbf{1}_{s} \right)
	$ 
	is $s$-sparse. A simple calculation shows that $\|y\|_2 \leq \delta \sqrt{1+\alpha^2}$. With the aim of proving (\ref{prop:BPDNNMultiSolns}), define $x^{opt}=\frac{c_1}{2} \ones_{2s} \oplus \left(-\frac{c_1}{2} \ones_{2s} \right)\oplus c_2 \ones_{s}$ and note that then
	$
	A x^{opt}-y=\frac{ \delta}{C}\left( \mathbf{0}_{2s-1} \oplus (-\gamma) \oplus \mathbf{0}_{2s-1} \oplus (\alpha\gamma-1)\oplus \mathbf{0}_{s -1} \right).
	$ 
	Thus $\|A x^{opt}-y\|_2=\delta$, and so $x^{opt}$ is a feasible point for the BP denoising problem. 
	Next, we define a dual vector 
	\[
	p=\frac{2C\sqrt{s}}{\delta \gamma}(A x^{opt}-y)= 2\sqrt{s}\big( \mathbf{0}_{2s-1} \oplus (-1) \oplus \mathbf{0}_{2s-1} \oplus (\alpha-\frac{1}{\gamma})\oplus \mathbf{0}_{s -1}\big) \in\real^m,
	\] 
	and note that then by some simple calculations $\langle Ax^{opt} - y,p \rangle=\delta \|p\|_2$ and 
	$-A^* p= \mathbf{1}_{2s}\oplus(-\mathbf{1}_{2s}) \oplus \mathbf{1}_{s}$. It is easily seen that $\alpha\gamma -1 + C >0$ and hence $c_2 > 0$. Moreover $\alpha > \gamma(\alpha^2+1)$ by an assumption in the statement of the proposition so that $c_1 > 0$. Hence $-A^*p \in \partial\|\cdot\|_1(x^{opt})$. Therefore, for arbitrary $z\in \real^N$ satisfying $\|Az-y\|_2\leq \delta$, we have
	\begin{equation}\label{eq:oldnew-dual-calc-BP}
		\begin{aligned}
			\|z\|_1 \;  \stackrel{\mathclap{C-S}}{\geq } &\; \|z\|_1 + \langle Az- y, p \rangle -\delta\|p\|_2= \|z\|_1 + \langle Az- y, p \rangle -\langle Ax^{opt}- y, p \rangle\\
			=&\|z\|_1 - \langle z- x^{opt} , -A^*p \rangle \geq \|x^{opt}\|_1.
		\end{aligned}
	\end{equation}
	As $x^{opt}$ is feasible for the BP denoising problem, we deduce that $z$ is a minimiser if and only if the inequalities in \eqref{eq:oldnew-dual-calc-BP} hold as equalities. This is the case if and only if $Az-y = \delta p/\|p\|_2=Ax^{opt} - y$, $z_j\geq 0$ for $j\in\{1,\dots, 2s\}\cup \{4s+1,\dots, N\}$, as well as $z_j\leq 0$ for $j\in\{2s+1,\dots, 4s\}$. In particular  $A(x^{opt}-z)=0$, and so, recalling that $\ker(A)= \{ \beta (\mathbf{1}_{4s} \oplus  \mathbf{0}_{s})\, \vert\, \beta\in\real \}$, we must have $x^{opt}-z = \beta\,(  \mathbf{1}_{4s} \oplus  \mathbf{0}_{s})$, for some $\beta\in\real$. It now follows that the minimisers are exactly as claimed in the statement of the proposition.
\end{proof}

\subsection{Perturbing $\ell^1$ BP with several minimisers}

Before stating the main result of this section, we need to introduce the concept of the set of minimisers with minimal support. 

\begin{definition}[Minimisers with minimal support]  
	Let $\{\Xi,\Omega\}$ denote the computational problem of either basis pursuit or unconstrained lasso with $\ell^1$ regularisation. For feasible inputs $(y,U)\in\Omega$ we define the set of minimisers with minimal support by
	\begin{equation}\label{eq:def-min-supp}
		\Xi^{\text{ms}}(y,U):= \lbrace x \in \Xi(y,U) \, | \, \forall  x' \in \Xi(y,U), \, \supp(x') \subset \supp(x) \Rightarrow x=x' \rbrace.
	\end{equation}
\end{definition}

\begin{lemma}\label{lem:PeturbToGetEitherMin}
	Let $\{\Xi,\Omega\}$ denote the computational problem of  basis pursuit with $\ell^1$ regularisation with dimensions $m$ and $N$.  Suppose that $(y,U) \in \Omega$ is input data such that $\Xi^{\text{ms}}(y,U)$ contains two distinct points $x^1$ and $x^2$. Then, for every $\epsilon\in(0,1)$, there exist positive semidefinite diagonal matrices $E^1=E^1(\epsilon)$ and $E^2=E^2(\epsilon)$ with $\|E^1\|_\infty\leq \epsilon$ and $\|E^2\|_\infty\leq \epsilon$ such that $\Xi(y,U(I-E^1))=\{x^1\}$ and $\Xi(y,U(I-E^2))=\{x^2\}$.
\end{lemma}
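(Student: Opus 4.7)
The plan is to exploit the KKT duality structure of basis pursuit. First I would show that any two minimisers of a BP instance share a common dual vector: there is some $p\in\real^m$ with $\|{-}U^*p\|_\infty\leq 1$, $(-U^*p)_i = \sgn(x^j_i)$ for every $i\in \supp(x^j)$ and $j\in\{1,2\}$, and $Ux^j - y = -\delta p/\|p\|_2$ (in the case $\delta>0$; if $\delta=0$, then $Ux^j=y$). This is standard convex duality applied to the convex set of primal optima; the distinctness of $x^1$ and $x^2$ additionally forces $p\neq 0$ and the norm constraint to be active, as otherwise optimality would collapse to $x^j=0$.

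Next I would give the explicit construction. Let $S_j := \supp(x^j)$ and set $E^j$ to be the diagonal PSD matrix with $(E^j)_{ii}=\epsilon$ for $i\notin S_j$ and $0$ otherwise, so that $\tilde U := U(I-E^j)$ agrees with $U$ on the columns indexed by $S_j$ and scales the remaining columns by $(1-\epsilon)\in(0,1)$. Since $\tilde U x^j = Ux^j$, the vector $x^j$ is feasible for the perturbed problem, and reusing the dual $p$ we find that $-\tilde U^*p = -(I-E^j)U^*p$ agrees with $-U^*p$ on $S_j$ while $|(-\tilde U^*p)_i|\leq(1-\epsilon)<1$ strictly for $i\notin S_j$. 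This certifies that $x^j$ is a minimiser of the perturbed BP, with the bonus of \emph{strict} dual feasibility outside $S_j$.

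The main obstacle will be uniqueness. For any perturbed minimiser $z$, the duality chain $\|z\|_1\geq \langle z,-\tilde U^*p\rangle$ and the Cauchy-Schwarz bound on $-\langle \tilde U z - y,p\rangle$ must hold with equality. Strict dual feasibility off $S_j$ forces $z_i = 0$ for $i\notin S_j$, and equality in Cauchy-Schwarz forces $\tilde U z = \tilde U x^j$; since $z$ is supported on $S_j$ where $E^j=0$, this reduces to $U(z-x^j)=0$. Thus everything will boil down to showing that the columns of $U$ indexed by $S_j$ are linearly independent, which is where the minimal-support hypothesis enters. If there were a nonzero $v$ with $\supp(v)\subseteq S_j$ and $Uv=0$, the identity $\langle -U^*p,v\rangle = 0$ combined with $(-U^*p)_i=\sgn(x^j_i)$ on $S_j$ would give $\sum_{i\in S_j}\sgn(x^j_i)v_i=0$, which makes $t\mapsto \|x^j+tv\|_1$ locally constant at $t=0$; sliding $t$ until a coordinate of $x^j+tv$ vanishes then yields a minimiser with support strictly contained in $S_j$, contradicting minimality. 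Hence $z=x^j$, completing the proof.
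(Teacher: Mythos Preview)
Your approach is correct but differs from the paper's. The paper gives a purely primal argument that avoids duality entirely: for any candidate minimiser $\tilde x$ of the perturbed problem, set $\hat x := (I-E^j)\tilde x$ and observe that $U\hat x = U(I-E^j)\tilde x$, so $\hat x$ is feasible for the \emph{original} problem. Since $(I-E^j)$ shrinks coordinates off $S_j$ by a factor $(1-\epsilon)$, either $\supp(\tilde x)\subseteq S_j$ (in which case $\hat x=\tilde x$) or $\|\hat x\|_1<\|\tilde x\|_1\leq\|x^j\|_1$, contradicting optimality of $x^j$ for the original problem. In the remaining case $\hat x=\tilde x$ is an original minimiser with $\supp(\hat x)\subseteq S_j$, so minimal support forces $\hat x=x^j$, hence $\tilde x=x^j$.

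Your dual-certificate route is more structural: it explains \emph{why} the perturbation selects $x^j$ (the same dual $p$ remains valid and becomes strictly feasible off $S_j$), and along the way establishes that a minimal-support minimiser sits on linearly independent columns, which is a useful fact in its own right. The cost is more machinery---you must justify the existence of a common dual vector (hence implicitly a constraint qualification) and treat $\delta=0$ separately---whereas the paper's argument is a handful of elementary inequalities with no duality at all. Both proofs invoke the minimal-support hypothesis at the end, but the paper applies it directly to $\hat x$ rather than via your kernel-vector contradiction.
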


\begin{proof}[Proof of Lemma \ref{lem:PeturbToGetEitherMin}]
	We define the $N \times N$ diagonal positive semidefinite matrices 
	\begin{align*}
		E^j(\epsilon) :=  \mathrm{diag}(\epsilon\,\indic_{\{1 \notin \supp(x^j)\}}, \epsilon\,\indic_{\{2 \notin \supp(x^j)\}} , \dotsc,\epsilon\,\indic_{\{N \notin \supp(x^j)\} }),\quad j=1,2\, 
	\end{align*}
	$\indic_{\{ i \notin \supp(x^j) \} }$ is $1$ if $i \notin \supp(x^j)$ and $0$ otherwise.
	We proceed to show that $x^j$ is the unique vector in $\xi(y,U-UE^j)$, for $j=1,2$. It suffices to argue for $j=1$, as the proof for $j=2$ is analogous. 
	
	Firstly, as $(U -  UE^1)v = Uv$ for $v\in\real^N$ with $\supp(v) = \supp(x^1)$, we have $\|(U - UE^1) x^1 - y\|_2 \leq \delta$. Let us suppose that $\tilde{x}^1\in\real^N$ is a vector such that $\|(U-UE^1) \tilde{x}^1 - y\|_2 \leq \delta$ and $\|\tilde{x}^1\|_1 \leq \|x^1\|_1$. Set $\hat{x}^1 = \tilde{x}^1- E^1\tilde{x}^1$. Then $\|U \hat{x}^1 - y\|_2 \leq \delta$, and, for every $k \in \supp(\hat{x}^1)$ with $k \notin \supp(x^1)$ we have $|\hat{x}^1_k| = (1-{\epsilon}) |\tilde{x}^1_k| < |\tilde{x}^1_k|$, whereas for $k \in \supp(x^1)$ we have $\hat{x}^1_k = \tilde{x}^1_k$. Therefore, unless $\supp(\hat{x}^1)\subset\supp(x^1)$, we have $\|\hat{x}^1\|_1 < \|\tilde{x}^1\|_1 \leq \|x^1\|_1$, contradicting the fact that $x^1 \in \xibpdn (y,U)$. Hence $\supp(\hat{x}^1)\subset\supp(x^1)$ and so $ \hat{x}^1 =x^1$, by definition of $\Xi^{\text{ms}}(y,U)$. We deduce that $ \tilde{x}^1 =x^1$, and so $x^1$ is the unique vector in $\xibpdn (y,U-UE^1)$.
\end{proof}

\section{Proof of Theorem \ref{th:smale_comp_sens}: part (i)}
Parts (i) of Theorem \ref{th:smale_comp_sens} is formally stated in Proposition \ref{prop:CSResult}, which we now prove. We choose the constant $C$ mentioned in Proposition \ref{prop:CSResult} to be $C=(C_5+3)\vee 5$ where $C_5$ is the universal constant in Theorem \ref{thm:NSPExistenceDCT}.

\subsection{Proof of Proposition \ref{prop:CSResult} part (i)}
\begin{proof}[Proof of Proposition \ref{prop:CSResult} part (i)]
	We will establish this part using Proposition \ref{prop:DrivingNegativeProposition}. 
	Concretely, we will construct input sequences $\{\iota^1_n =(y^0,A^{1,n})\}_{n=1}^{\infty}\subset \Omega^{0}_{s,m,N}$, $\{\iota^2= (y^0,A^{2,n})\}_{n=1}^{\infty}\subset \Omega^{0}_{s,m,N}$ and an input $\iota^0=(y^0, A^0) \in \Omega^{0}_{s,m,N}$ such that the following hold: 
	\begin{itemize}
		\item[(i)] there exists $x^1,x^2\in\real^N$ such that $\Xi(\iota^1_n) =\{x^1\}$ and $\Xi(\iota^2_n) =\{x^2\}$, for all $n\in\mathbb{N}$, and $\|x^1 - x^2\|_{2} > \delta$.
		\item[(ii)] the inputs satisfy
		\begin{equation}\label{eq:ShowCloseMatVector-thm3.11} 
			\|A^{j,n}-A^0\|_{\max} \leq 4^{-n},
		\end{equation}
		for all $n\in\mathbb{N}$ and $j\in\{1,2\}$.
	\end{itemize}
	Once we have done this, the result will follow by applying item (iii) of Proposition \ref{prop:DrivingNegativeProposition} to obtain $\epsilon_{\mathbb{P}h\mathrm{B}}^{\mathrm{s}}(\mathrm{p}) > \delta/2 \geq 10^{-K}$. 
	Proposition \ref{prop:BPDNNFail} comes close to constructing a suitable $A^0$. However, the dimensions used in Proposition \ref{prop:BPDNNFail} are fixed as soon as $s$ is chosen. To construct the desired matrix $A^0$ of dimensions $m$ and $N$ satisfying  $N> m$ and $m\geq Cs\log^2(2s)\log(N)$, but otherwise arbitrary, we will concatenate a matrix provided by Proposition \ref{prop:BPDNNFail} and a matrix satisfying the nullspace property provided by Theorem \ref{thm:NSPExistenceDCT}.
	
	We begin by applying Proposition \ref{prop:BPDNNFail} with the constants $\alpha = 1.4$ and $\gamma = 0.37$ to obtain $A\in \real ^{(5s-1)\times 5s}$, $y \in \real^{5s-1}$ and $x \in \real^{5s}$ such that the following hold.
	\begin{enumerate}[label= \alph*)]
		\item By item (1) of Proposition \ref{prop:BPDNNFail} as well as the choices of $\alpha$ and $\gamma$, both $\|AA^*\|_2 < 31.3$ and $\|(AA^*)^{-1}\|_2 < 1.2$. In particular, $\|A\|_2 \leq 6$.\label{item:smaleCompSens:MatrixSize}
		\item By item (2) of Proposition \ref{prop:BPDNNFail} $A$ satisfies the $\ell^2$-RNP of order $s$ with parameters $\rho_A$ and $\tau_A$, where	
		\begin{equation*}
			\rho_A:=1/3<\rho\quad\text{and}\quad \tau_A:=\sqrt{4/3}\|A\|_2 \|(AA^*)^{-1}\|_2\leq 1.2\cdot 6\cdot 1.2<9.
		\end{equation*}
		\item By item (3) of Proposition \ref{prop:BPDNNFail}, we have $y = Ax$ with $x$ an $s$-sparse vector. Furthermore, $\|y\|_2 \leq \delta \sqrt{1+\alpha^2} \leq 2$ where we have used the choice of $\alpha$ and the bound $\delta \leq 1$ assumed in the statement of Theorem \ref{th:smale_comp_sens}. \label{item:smaleCompSens:VectorSize}
		\item Again, using item (3) of Proposition \ref{prop:BPDNNFail}, $
		\Xi^{\text{ms}}(y,A)=\{\hat{x}^1,\hat{x}^2\}
		$
		where
		\begin{equation*}
			\hat{x}^1=c_1 \mathbf{1}_{2s} \oplus \mathbf{0}_{2s} \oplus c_2 \mathbf{1}_{s},\qquad \hat{x}^2=\mathbf{0}_{2s} \oplus -c_1 \mathbf{1}_{2s} \oplus c_2 \mathbf{1}_{s},
		\end{equation*}
		and the constants $c_1$ and $c_2$ are as given in \eqref{eq:3.11(i)-c1c2}. \label{item:smaleCompSens:MinSupp}
	\end{enumerate}

	Next, since $s \geq 2$ and $N > m \geq 5s \geq 10$ we must have 
	\[
	5s-1 \leq 5s \leq \frac{5s \log(N)\log[s\log(N)]\log^2(s)}{\log(10)\log[2\log(10)]\log^2(2)} \leq 3s\log(N)\log[s\log(N)]\log^2(s)
	\]
	so that $m - (5s-1) \geq (C-3)s\log(N)\log[s\log(N)]\log^2(s)$. In particular, since $(C-3)\geq C_5$ and $(C-3) \geq 2$ we have 
	\begin{align*}m - (5s-1) &\geq C_5s\log(N)\log[s\log(N)]\log^2(s) \\ (N-5s) \geq m - (5s-1) &\geq 4\log(10)\log[2\log(10)]\log^2(2) > 3 .\end{align*}
	
	Therefore we can apply Theorem  $\ref{thm:NSPExistenceDCT}$ to conclude there exists a matrix $F\in\real^{(m-(5s-1))\times (N-5s)}$ such that $\|F\|_2 \leq \sqrt{N/m}$ and $F$ obeys the robust nullspace property with parameters $(\rho_F,\tau_F)$ satisfying $\rho_F< 1/3$ and $\tau_F < 2$.
	
	At this stage we can finally define $\iota^0, \iota^1_n$ and $\iota^2_n$: using Lemma \ref{lem:PeturbToGetEitherMin} and \ref{item:smaleCompSens:MinSupp}, we can find sequences of diagonal positive semidefinite matrices $\{E^{1,n}\}_{n=1}^\infty$ and $\{E^{2,n}\}_{n=1}^\infty$, with $\|E^{1,n}\|_{\max},\|AE^{1,n}\|_{2} \leq \gamma_n$ and $\|E^{2,n}\|_{\max},\|AE^{2,n}\|_{2} \leq \gamma_n$ 
	such that $\Xi(y,A(I-E^{1,n}))=\{\tilde{x}^1\}$ and $\Xi(y,A(I-E^{2,n}))=\{\tilde{x}^2\}$ where the positive real numbers $\gamma_n$ are chosen so that 
	\begin{equation*}
		0 < \gamma_n \leq \left(4^{-n} \wedge \frac{2}{3[10 (\sqrt{s}+1)]} \wedge \frac{3\rho -1}{30(\sqrt{s}+\rho)} \wedge \frac{\tau - 10}{10\tau}\right).
	\end{equation*} We define the matrices $A^0=A\oplus F$, $A^{1,n}=A(I-E^{1,n})\oplus F$, $A^{2,n}=A(I-E^{2,n})\oplus F \in \real^{m\times N}$ as well as the vector $y^0=y\oplus \mathbf{0}_{m-(5s-1)}\in\real^m$,
	and set $\iota^0=(y^0,A^0)$, $\iota^j_n=(y^0,A^{j,n})$ for $j\in\{1,2\}$, $n\in\mathbb{N}$.

	Next, we establish that $\iota^0$ is an element of $\Omega^{0}_{s,m,N}$. Firstly, note that $\|A^0\|_2 \leq \|A\| \vee \|F\|_2 \leq 6\sqrt{N/m}\leq b_2 \sqrt{N/m}$ where we have used the bound $\|F\|_2 \leq \sqrt{N/m}$ from Theorem \ref{thm:NSPExistenceDCT} and the bound $\|A\|_2 \leq 6$ from \ref{item:smaleCompSens:MatrixSize}. Set $x^0 = x \oplus 0_{N-5s}$ where $x$ is taken from \ref{item:smaleCompSens:VectorSize}. Note that $A^0 x^0 = y \oplus 0_{m-(5s-1)} = y^0$ so that $y^0 = A^0x^0$ with $x^0$ an $s$-sparse vector. 
	
	The last condition that we will check to ensure that $\iota^0 \in \Omega^{0}_{s,m,N}$ is the robust nullspace property. Let $v$ be an arbitrary vector in $\real^N$ and write $v=v^A\oplus v^F\in\real^N$ with $v^A \in\real ^{5s}$ and $v^F \in\real^{N-5s}$. Next, let $S$ be a subset of $\{1,\dots,N\}$ of cardinality $s$, and write $S_A=S\cap \{1,\dots, 5s\}$, $S_F=S\cap\{5s+1,\dots, N\}$, $S_A'=\{1,\dots, 5s\}\setminus S_A$, and $S_F'=\{5s+1,\dots, N\}\setminus S_F$. Then, as $A$ (respectively $F$) satisfies the $\ell^2$-RNP with constants $\rho_A$ and $\tau_A$ (respectively $\rho_F$ and $\tau_F$), we have
	\begin{equation*}
		\begin{aligned}
			\|v_S\|_2 & \leq \|(v^A)_{S_A}\|_2+\|(v^F)_{S_F}\|_2
			\leq \frac{\rho_A}{\sqrt{{s}}} \|(v^A)_{S_A'}\|_1 + \tau_A \|A v\|_2+  \frac{\rho_F}{\sqrt{{s}}} \|(v^F)_{S_F'}\|_1 + \tau_F \|F v\|_2 \\
			&\leq  \frac{\rho_{A}\vee \rho_F}{\sqrt{{s}}}\left( \|(v^A)_{S_A'}\|_1 +  \|(v^F)_{S_F'}\|_1 \right) +  \sqrt{ \tau_A^2 + \tau_F^2 }\sqrt{ \|A v^A\|_2^2+ \|F v^F\|_2^2} \\
			&\leq  \frac{\rho_{A^0}}{\sqrt{{s}}} \|v_{S^c}\|_1 +\tau_{A^0} \|A^0 v\|_2,
		\end{aligned}
	\end{equation*}
	where $\rho_{A^0}:=\rho_A\vee \rho_F<1/3$ and $\tau_{A^0}:=\sqrt{ \tau_A^2 + \tau_F^2 }<\sqrt{9^2+2^2}<10$, and so $A^0$ satisfies the $\ell^2$-RNP of order $s$ with parameters $1/3$ and $10$ (and in particular, $A^0$ also satisfies the $\ell^2$-RNP with parameters $\rho$ and $\tau$).

	Next, we argue that $\iota^j_n \in \Omega^{0}_{s,m,N}$, where $n \in \mathbb{N}$ and $j=1$ or $j=2$. We have $\|A^{j,n}\|_2 \leq \|A^0\|_2\|(I - E^{j,n})\|_2 < \|A^0\|_2 < b_2 \sqrt{N/m}$ since $E^{j,n}$ is a positive semidefinite diagonal matrix with entries bounded above by $\gamma_n$ and $\gamma_n < 1$. Setting $x^{j,n} = ((I-E^{j,n})^{-1}x) \oplus 0_{N-5s}$ where $x$ is taken from \ref{item:smaleCompSens:VectorSize} gives $y^0 = A^{j,n} x^{j,n}$ for each $n$. Because $I - E^{j,n}$ is a diagonal matrix we conclude that $y^0 = A^{j,n} x^{j,n}$ and that $x^{j,n}$ is an $s$-sparse vector.

	All that remains to prove that $\iota^j_n \in \Omega^{0}_{s,m,N}$ is to check that the robust nullspace property is satisfied with parameters $\rho$ and $\tau$. We have already shown that $A^0$ satisfies the RNP with parameters $(1/3,10)$. We then use Lemma \ref{lemma:RNSPIsStableProperty} as well as the definition of $\gamma_n$ to see that $A^{j,n}$ obeys the robust nullspace property with parameters $(\rho^j_n,\tau^j_n)$ where
	\begin{equation*}
		\rho^j_n = \frac{\frac{1}{3} + 10 \gamma_n \sqrt{s}}{1-10 \gamma_n} \leq \frac{\frac{1}{3} + 10\sqrt{s} \left(\frac{3\rho - 1}{30(\sqrt{s}+\rho)}\right) }{1-10 \left(\frac{3\rho - 1}{30(\sqrt{s}+\rho)}\right) } = \rho,\quad 	\tau^j_n = \frac{10}{1-10\gamma_n} \leq \frac{10}{1-\frac{10(\tau - 10)}{10\tau}} = \tau
	\end{equation*}
	
	We have therefore shown that $\iota^0\in\Omega^0_{s,m,N}$ as well as $\iota^j_n\in\Omega^0_{s,m,N}$, for $j\in\{1,2\}$ and $n \in \mathbb{N}$. Furthermore, by the definition of $\gamma_n$, $A^{j,n}$ and $E^{j,n}$ we have $\|A^{j,n} - A^0\|_{\max} \leq 4^{-n}$. By \ref{item:smaleCompSens:MinSupp}, the block diagonality of $A^{j,n}$ and the fact that the last $m-(5s-1)$ entries of $y^0$ are zero, it follows that the solution to the $\ell^1$ basis pursuit problem with input $\iota^j_n\in\Omega^0_{s,m,N}$ is the unique point $x^j:=\hat{x}^j\oplus \mathbf{0}_{N-5s}$, for $j\in\{1,2\}$ and $n\in\mathbb{N}$. Now, recalling  \eqref{eq:3.11(i)-c1c2} and our choice $\alpha=1.4$ and $\gamma=0.37$, we find
	\begin{equation*}
		\|x^1-x^2\|_2=\|\hat{x}^1-\hat{x}^2\|_2=\|c_1 \mathbf{1}_{2s} \oplus c_1\mathbf{1}_{2s} \oplus \mathbf{0}_{s}\|_2=c_1\sqrt{4s}=2\delta\cdot \frac{-\gamma -\gamma\alpha^2 + \alpha}{\sqrt{\gamma^2+(1-\alpha\gamma)^2}}>\frac{2\delta}{2}=\delta.
	\end{equation*}
	We have thus verified the conditions of Proposition \ref{prop:DrivingNegativeProposition} and so we deduce that $\epsilon^{\mathrm{s}}_{\mathrm{B}} \geq \strbdepsph \geq \delta/2 $ for $\mathrm{p} \in [0,1/2)$, completing the proof of part (i) of Proposition \ref{prop:CSResult}.

\end{proof}

\section{Proof of Theorem \ref{th:smale_comp_sens}: part (ii)}

\subsection{Preliminaries - results from the literature}

The ellipsoid algorithm will be the main driver in the proof of part (ii) of Theorem \ref{th:smale_comp_sens}. Even though it is well known that this algorithm neither has the optimal theoretical complexity nor performs well in practice, it is a powerful and flexible tool to establish the ``in P'' statements rigorously.

We start by presenting the framework necessary to implement the ellipsoid algorithm in the context of $\ell^1$ BP as necessitated in the proof of the theorem. We refer the reader to Appendix \ref{appendix:ellipsoid} for detailed definitions, and here we only list the notation assuming familiarity with the underlying concepts.
For a compact convex set $\CompactK\subset\real^n$ and real $\zeta>0$, write $S(\CompactK,-\zeta)=\{x\in\real^n\, \vert \, \clBall{\zeta}{x}\subset \CompactK\}$ and $S(\CompactK,\zeta)=\{x\in\real^n \, \vert \, \clBall{\zeta}{x} \cap \CompactK\neq \varnothing\}$. For the same $\CompactK$, we write $\mathrm{SEP}_\CompactK$ for the weak separation oracle for $\CompactK$.

The theorems we will cite can only be stated for compact convex sets that are contained within a ball of known radius $R$, so we introduce the following definition for conciseness.

\begin{definition}[Circumscribed class]
	We call a set $\mathscr{K}$ a \emph{circumscribed class} if it is  a subset of 
	\begin{equation*}
		\{(\CompactK,n,R)\,\vert\, n\in\mathbb{N}, R\in\mathbb{Q}\cap (0,\infty), \CompactK\text{ compact and }\CompactK\subset \clBall{R}{0}\subset \real^n \}.
	\end{equation*}
\end{definition}

We state the following result which combines {\cite[Cor. 4.2.7]{Lovasz_book}} on the existence of an `oracle-polynomi\-al' algorithm for the weak optimisation problem with the observation on pages 101 and 102 of \cite{Lovasz_book} that if a separation oracle for a compact convex set $\CompactK$ can be implemented on a Turing machine whose runtime is polynomial in $\DataTur(\CompactK,n,R)$, then `oracle-polynomial' implies that the weak optimisation problem can be solved in time polynomial in $\DataTur(\CompactK,n,R)$ as well.

\begin{theorem}[{\cite[Cor. 4.2.7]{Lovasz_book}} Ellipsoid algorithm , Turing case] \label{thm:Ellipsoid}
	Suppose $\mathscr{K}$ is a circumscribed class equipped with a Turing encoding function $\DataTur:\mathscr{K}\to\Alphabet^*$ (as in Definition \ref{def:EncFun}), and assume that $\mathscr{K}$ is Turing-polynomially separable with respect to $\DataTur$ (according to Definition \ref{def:Tur-poly-sep-class}). Then there exists a Turing machine that takes in $\DataTur(\CompactK,n,R) \in\Alphabet^*$, for $(\CompactK,n,R)\in\mathscr{K}$, a $c\in \mathbb{Q}^n$, and a rational $\zeta>0$ as input and solves the weak optimisation problem $(\CompactK,R,c,\zeta)$ (see Definition \ref{def:WoptProb} for the definition of a weak optimisation problem), such that the runtime of the Turing machine is bounded by a polynomial in \begin{equation*}
		\length\big(\DataTur(\CompactK,n,R)\big),\; \length(R),\;  \length(c),\; \length(\zeta),\; \text{and}\;  n.
	\end{equation*}
\end{theorem}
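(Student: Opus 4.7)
The plan is to derive this statement as an essentially immediate combination of the cited Cor.~4.2.7 of \cite{Lovasz_book} with the assumption of Turing-polynomial separability, so the proof will be a short bookkeeping argument rather than anything substantive.

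\textbf{Step 1: Invoke the cited oracle-polynomial algorithm.}
First I would apply \cite[Cor.~4.2.7]{Lovasz_book} directly. For any $(\CompactK,n,R)\in\mathscr{K}$ and any rational $c$ and $\zeta>0$, this gives an \emph{oracle} Turing machine that, making calls to $\mathrm{SEP}_\CompactK$, solves the weak optimisation problem $(\CompactK,R,c,\zeta)$. The crucial feature of the cited corollary is that the runtime, the number of oracle calls, \emph{and} the bit-length of every rational argument passed to the oracle are all simultaneously bounded by some fixed polynomial $p_1$ in $n$, $\length(R)$, $\length(c)$, and $\length(\zeta)$. This is exactly what is meant by oracle-polynomial in the terminology of \cite{Lovasz_book}.

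\textbf{Step 2: Substitute in a polynomial-time implementation of the oracle.}
By the assumption that $\mathscr{K}$ is Turing-polynomially separable with respect to $\DataTur$, there is a Turing machine that, on input $\DataTur(\CompactK,n,R)$ together with a query to $\mathrm{SEP}_\CompactK$ of bit-length $L$, halts in time bounded by some polynomial $p_2$ in $\length(\DataTur(\CompactK,n,R))$ and $L$. I would now replace each of the at most $p_1(\cdots)$ oracle calls in the machine from Step~1 by a direct simulation using this separation-oracle subroutine. Since every query has length bounded by $p_1(\cdots)$ by Step~1, each simulation runs in time $p_2(\length(\DataTur(\CompactK,n,R)),\, p_1(\cdots))$, and there are at most $p_1(\cdots)$ of them. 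The composition and product of polynomials is a polynomial, so summing the non-oracle computation from Step~1 with the total simulation cost gives an overall runtime bounded by a polynomial in $\length(\DataTur(\CompactK,n,R))$, $\length(R)$, $\length(c)$, $\length(\zeta)$, and $n$, as required.

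\textbf{Where, if anywhere, the difficulty lies.}
There is no real mathematical obstacle: the content of the theorem is already in \cite{Lovasz_book}, and the argument above is essentially the observation on pp.~101--102 of that book that oracle-polynomial algorithms become genuinely polynomial once the oracle itself is polynomial-time implementable. The only point that requires a moment of care is that the polynomial bound on \emph{query length} in Step~1 is what allows Step~2 to avoid a blow-up (otherwise a polynomial number of calls to a polynomial-time oracle on superpolynomial-length inputs could be superpolynomial). Once this is noted, the rest is just composition of polynomials.
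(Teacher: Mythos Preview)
Your proposal is correct and matches the paper's approach exactly: the paper does not prove this theorem but simply states it as the combination of \cite[Cor.~4.2.7]{Lovasz_book} with the observation on pp.~101--102 of \cite{Lovasz_book} that oracle-polynomial becomes genuinely polynomial once the separation oracle is polynomial-time implementable. Your sketch spells out precisely this bookkeeping, including the key point about bounded query lengths.
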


The other result we will need is an analogue of Theorem \ref{thm:Ellipsoid} in the BSS model for convex bodies that are specified as the intersection of the level sets of several convex functions. This will be sufficient for our needs. 

\begin{theorem}[{\cite[Thm. 1.1]{Nemirovski1995}} Ellipsoid algorithm, BSS case]\label{thm:Ellipsoid-BSS}
	Suppose $\mathscr{K}$ is a circumscribed class so that, for every $(\CompactK,n,R)\in\mathscr{K}$, there exist convex functions $f_1^{\CompactK},\dots, f_{M(\CompactK)}^\CompactK:\real^n \to \real$ such that $ \CompactK=\{ z\in \real^n \, \vert \, f_j^\CompactK(z)\leq 0, j=1,\dots,M(\CompactK)\}$. Furthermore, assume that $\mathscr{K}$ is equipped with a BSS encoding function $\DataBSS:\mathscr{K}\to\InputVecsBSS$ (as in Definition \ref{def:EncFunBSS}) and assume that $\mathscr{K}$ is BSS-polynomially separable with respect to $\DataBSS$ (according to Definition \ref{def:BSS-poly-sep-class}). Then there exists a BSS machine that takes in $\DataBSS(\CompactK,n,R) \in\InputVecsBSS$, for $(\CompactK,n,R)\in\mathscr{K}$, a $c\in \mathbb{R}^n$, and a real $\zeta>0$ as input and solves the weak optimisation problem $(\CompactK,R,c,\zeta)$, such that the runtime of the BSS machine is bounded by a polynomial in 
	\begin{equation*}
		\dim\big(\DataBSS(\CompactK,n,R)\big)\quad\text{and}\quad  \log \big( 2+ V(\CompactK,n,R)/\zeta\big),
	\end{equation*}
	where the scale factor $V(\CompactK,n,R)$ is defined according to
	\begin{equation*}
		V(\CompactK,n,R)=\max_{1\leq j\leq M(\CompactK)} \left[ \max_{\|z\|_2\leq R} f_j^\CompactK(z) - \min_{\|z\|_2\leq R} f_j^\CompactK(z)\right]\vee R\|c\|_2.
	\end{equation*}
\end{theorem}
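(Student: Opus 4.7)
The proof is essentially a repackaging of the classical result \cite[Thm.~1.1]{Nemirovski1995}, which establishes precisely this type of complexity bound for the ellipsoid algorithm applied to convex bodies defined by finitely many convex inequalities in the real-number (BSS) model. My plan is therefore to verify that our formulation maps onto Nemirovski's hypotheses and that the quoted complexity bound transfers correctly to the language of circumscribed classes and BSS-polynomial separability used here. First I would set up the ellipsoid iteration starting from $E_0 = \clBall{R}{0} \supset \CompactK$, which is legitimate because $(\CompactK,n,R)\in\mathscr{K}$ guarantees $\CompactK\subset\clBall{R}{0}$. At iteration $k$, with ellipsoid $E_k=\{z\in\real^n\,\vert\,(z-z_k)^T B_k^{-1}(z-z_k)\leq 1\}$, the BSS machine calls the weak separation oracle $\mathrm{SEP}_{\CompactK}$ at the centre $z_k$ with a tolerance parameter scaled to the current ellipsoid size. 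The oracle either certifies that $z_k\in S(\CompactK,\zeta)$, in which case we evaluate $\langle c,z_k\rangle$ and update a running record of the best approximately feasible point, or it returns a separating direction $g_k$ which is used to form the half-space $H_k=\{z\,\vert\, g_k^T(z-z_k)\leq 0\}$ and the next ellipsoid $E_{k+1}$ as the Löwner–John minimum-volume ellipsoid containing $E_k\cap H_k$. The closed-form update for $z_{k+1}$ and $B_{k+1}$ consists of $\mathcal{O}(n^2)$ BSS arithmetic operations plus one square root, the latter being computable in $\mathrm{poly}(n,\log(V/\zeta))$ operations by Newton iteration to the requisite working precision.

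The crux of the complexity estimate is the geometric shrinkage $\mathrm{vol}(E_{k+1})\leq e^{-1/(2(n+1))}\mathrm{vol}(E_k)$ combined with the standard potential argument: after $\mathcal{O}\!\left(n(n+1)\log(2+V(\CompactK,n,R)/\zeta)\right)$ iterations either a point $z\in S(\CompactK,\zeta)$ with $\langle c,z\rangle \leq \min_{\CompactK}\langle c,\cdot\rangle + \zeta$ has been recorded, or else $E_k$ is thin enough to certify $S(\CompactK,-\zeta)=\varnothing$. The scale factor enters precisely here: approximate feasibility is governed by the oscillation $\max_{\clBall{R}{0}} f_j^{\CompactK}-\min_{\clBall{R}{0}}f_j^{\CompactK}$ of the defining functions, while approximate optimality is governed by the range $R\|c\|_2$ of the objective over $\clBall{R}{0}$; the maximum of these two, which is $V(\CompactK,n,R)$, is exactly the quantity whose logarithm appears in the iteration count, matching the definition in the theorem statement.

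The one substantive bookkeeping step — and the main obstacle I expect — is verifying that BSS-polynomial separability of $\mathscr{K}$ with respect to $\DataBSS$ (Definition~\ref{def:BSS-poly-sep-class}) delivers exactly the form of oracle needed by the ellipsoid iteration and at the required per-call cost $\mathrm{poly}(\dim \DataBSS(\CompactK,n,R))$. Assuming the definition is the natural BSS analogue of the Turing-case Definition~\ref{def:Tur-poly-sep-class} (so that the oracle takes a rational point and a tolerance and returns either an approximate feasibility certificate or a separating vector in $\mathrm{poly}(\dim\DataBSS)$ BSS operations), the total runtime is the product of the polynomial cost per ellipsoid update, the polynomial per-call cost of the oracle, and the $\mathrm{poly}(n,\log(2+V/\zeta))$ iteration count — each a polynomial in $\dim(\DataBSS(\CompactK,n,R))$ and $\log(2+V(\CompactK,n,R)/\zeta)$, so their composition is also such a polynomial. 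This yields the bound in the theorem statement and essentially reproduces \cite[Thm.~1.1]{Nemirovski1995}, the only non-cosmetic adaptation being that problem data is packaged through $\DataBSS$ rather than as a finite list of real parameters.
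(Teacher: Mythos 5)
Your proposal takes essentially the same approach as the paper: the paper does not prove this statement at all but imports it directly as \cite[Thm.~1.1]{Nemirovski1995}, exactly as you propose, and your sketch of the underlying ellipsoid iteration (Löwner--John updates, volume shrinkage, the role of the scale factor $V$ in both the feasibility and optimality tolerances, and the reduction of the oracle to Definition~\ref{def:BSS-poly-sep-class}) is a faithful account of the classical argument being cited. No gap to report.
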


\subsection{Proof of part (ii) of Proposition \ref{prop:CSResult}}
Part (ii) of Theorem \ref{th:smale_comp_sens} is formally stated as part (ii) of Proposition \ref{prop:CSResult}, which we now prove. To do so, we will make use of are Theorems \ref{thm:Ellipsoid} and \ref{thm:Ellipsoid-BSS} on the complexity of the ellipsoid algorithm. In addition, we will make use of the standard compressed sensing result Theorem \ref{l2RNPerrorest} and the easy Lemma \ref{lemma:NSPImplicationBoundingSparse} stated in Appendix \ref{appendix:sparsity}.

As in \S\ref{sec:UsefulSubroutinesForMainThm}, we fix the notation for an element $\tilde\iota$ of $\tilde\Omega$, writing   $\tilde{\iota}=\big(\{y_j^{(n)}\}_{n=0}^{\infty}, \{A_{j,k}^{(n)}\}_{n=0}^{\infty}\big)_{j,k}$, corresponding to an $\iota=(y,A)\in \Omega^{\epsilon}_{s,m,N}$.

\newcommand{\AllOraOnInp}{\mathcal{I}}
\begin{proof}[Proof of Proposition \ref{prop:CSResult} part (ii).]

	To start with, we mention the following quantities that the algorithm stores in a lookup table
	\begin{flalign} 
		L_1 &= \left \lceil \log_2\left(\frac{16\tau}{1-\rho}\right) \right\rceil,\quad L_2 = \ceil{\log_2(b_2)}, \quad L_3 = 4\tau b_1
	\end{flalign}
	The algorithm computes the following quantities, which (for the convenience of the reader) are listed alongside a brief description of their purpose:{\allowdisplaybreaks
		\begin{flalign} 
			n_1&=\left\lceil\frac{\length(m)}{2}\right\rceil+ 4K+5+L_1&&\text{\parbox{7cm}{The number of digits of precision for the input vector requested from the oracle}}\notag\\[3mm]
			n_2&=\left\lceil\frac{\length(N)+\length(m)}{2}\right\rceil+1 + L_1 &&\text{\parbox{7cm}{The number of digits of precision for the input matrix requested from the oracles.}}\notag\\[3mm]
			n_3&= \lceil \length(N) \rceil + 4K +6 + L_1 +L_2 && \text{\parbox{7cm}{The solution precision for the reformulated optimisation problem \eqref{eq:reform1}.}}\notag\\[3mm]
			n_4&= \left\lceil\frac{\length(N)}{2}\right\rceil + K + 3  && \text{\parbox{7cm}{The precision used when converting algorithm output to a dyadic vector.}}\notag\\[3mm]
			\delta'&=\frac{10^{-K}(1-\rho)}{32\tau}+2^{ -4K-3-L_1} + 4b_2N\,2^{-n_3} &&\text{\parbox{7cm}{The denoising parameter used in \eqref{eq:reform1}}} \notag\\[3mm]
			R&=NL_3 &&\text{\parbox{6cm}{Used as a bound on solutions of the reformulated optimisation problem.}} \label{eq:ConstantsDefinitions}
	\end{flalign}}
	Note that these quantities are functions of $m$, $N$, $K$, however we suppress making this dependence explicit to lighten the notation.

	For $\iota = (y,A)\in\Omega_{s,m,N}^\varepsilon$, instead of directly solving the basis pursuit problem with $\ell^1$ regularisation, our algorithm will be based on the solution of the following reformulated optimisation problem:
	\begin{equation}\label{eq:reform1}
		\begin{aligned}
			\min_{z\in\real^{2N}}{\langle\mathbf{1}_{2N},z\rangle}\quad\text{s.t.}  \quad &z\geq \mathbf{0}_{2N},\,\|\hat{A} z -y'\|_2\leq \delta', \, \|z\|_2\leq R,
		\end{aligned}
	\end{equation}
	where $\hat{A}=(A'\;-\!A')\in\real^{m\times 2N}$, and $y'$ and $A'$ are approximations to $y$, $A$. More formally, for $\iota=(y,A)\in\Omega_{m,N}^\varepsilon$, we write 
	\begin{equation}\label{eq:iota-UnionOverAllOracles}
		\AllOraOnInp(\iota):=\left\{(y',A')\in\real^{m}\times\real^{m\times N}\,\vert\, \|y'-y\|_2 \leq 2^{-n_1}, \|A'-A\|_{\max} \leq 2^{-n_2}   \right\}.
	\end{equation}
	Next, for $\iota\in\Omega_{m,N}^\varepsilon$ and $(y',A') \in \AllOraOnInp(\iota)$, define the compact convex set $\CompactK_{y',A'}=\{z\in\real^{2N}\,\vert\, z\geq \mathbf{0}_{2N}, \|\hat{A} z-y'\|_2\leq \delta', \|z\|_2\leq R \}$, where $\hat{A}=(A'\;-\!A')$, and set 
	\begin{equation*}
		\mathscr{K}=\{(\CompactK_{y',A'},2N,R)\,\vert\, m,N\in \mathbb{N}, m\leq N, \iota\in\Omega_{m,N}^\varepsilon, (y',A') \in \AllOraOnInp(\iota)\}.
	\end{equation*}\
	As each of the $\CompactK_{y',A'}$ are compact convex sets with $\CompactK_{y',A'} \subset \clBall{R}{0} \subset \real^{2N}$, $\mathscr{K}$ is a circumscribed class.
	
	An important observation that will be useful throughout is the following: the bound on $K$ in \eqref{eq:BoundOnKForPositive} and the assumption that $16\tau\delta \leq (1-\rho)$ yields  \begin{equation}\label{eq:CSPosEpsBound}
		10^{-K}\geq \frac{16\tau(\delta+\epsilon)}{1-\rho} \geq \frac{32\tau\epsilon}{1-\rho}. 
	\end{equation}
	We are now in a position to precisely define our algorithm, where the instructions whose labels end with a `T' are only executed by the Turing model and those whose labels end with a `B' are only executed by the BSS model: 
	
	{\it Algorithm for basis pursuit with $ \ell^1$ regularisation:}  
	
	\noindent Inputs: Dimensions $m,N$ and an accuracy parameter $K$ with $(m,N,K)\in\mathbb{N}^3$.\\
	Oracles: $\orvec$ and $\ormat$ providing access to the components $y_j^{(n)}$ and $A_{j,k}^{(n)}$  of an input $\tilde{\iota} \in \tilde \Omega^{\epsilon}_{s,m,N}$.\\
	Output: $x_{\mathrm{out}} \in \mathbb{D}^N$ (in the Turing case) or $x_{\mathrm{out}} \in \mathbb{R}^N$ (in the BSS case) with $\disM(x_{\mathrm{out}},\tilde \Xi_{\mathrm{BPDN}}(\tilde\iota))\leq 10^{-K}$.
	
	\begin{enumerate}[leftmargin=10mm, label= \arabic*.,ref = step \arabic*]
		\item [1T.] Using the inputs $m,N$ and $K$ as well as the stored values $L_1,L_2$ and $L_3$, compute the quantities $n_1$, $n_2$, $n_3$, $n_4$, $\delta'$ and $R$ in \eqref{eq:ConstantsDefinitions}. We encode the integers $n_1,\dotsc n_4$ using their dyadic expansions. The rational numbers $\delta'$ and $R$ are stored as pairs of integers each encoded using their dyadic expansions.
		\item [1B.] Using the inputs $m,N$ and $K$ as well as the stored values $L_1,L_2$ and $L_3$, compute the quantities $n_1$, $n_2$, $n_3$, $n_4$, $\delta'$ and $R$ where we encode $n_1,\dotsc n_4$ as integers and $\delta',R$ as real numbers. \addtocounter{enumi}{1}
		\item Call the oracles to obtain $(y',A')\in \AllOraOnInp(\iota) $ with $y'=y^{(n_1)} \in \real^m$ and $A' = A^{(n_2)} \in \real^{m \times N}$.
		\item Use the ellipsoid algorithm provided by Theorem \ref{thm:Ellipsoid} (in the Turing case) or Theorem \ref{thm:Ellipsoid-BSS} (in the BSS case) to solve the weak optimisation problem $(\CompactK_{y',A'},R,\allowbreak\mathbf{1}_{2N},2^{-n_3})$, yielding the point $z^*$.
		\item Compute $x^*$ according to $x^*_j=z^*_j-z^*_{j+N}$, for $j\in\{1,\dots,N\}$.
		\item[5T.] For the Turing machine, approximate $x^*$ by a vector of dyadic rationals so that every component has precision $n_4$ by means of Newton-Raphson iteration and output the resulting vector as $x_{\mathrm{out}}$.
		\item[5B.] For the BSS machine, output $x_{\mathrm{out}}:=x^*$.
	\end{enumerate}
	\vspace{3mm}
	
	To prove the correctness and polynomial bound on the complexity of the algorithm, we will do each of the following
	\begin{enumerate}[leftmargin=10mm, label = \Roman*)]
		\item We will show that the set $S(\CompactK_{y',A'},-2^{-n_3})$ is non-empty. We will do this by defining \begin{equation}\label{eq:x-tilde-Ellipsoid}
			\hat{x}:=((x)^+,(-x)^+)+2^{-n_3}\mathbf{1}_{2N}
		\end{equation} and showing that $\hat{x} \in S(\CompactK_{y',A'},-2^{-n_3})$, where $(x)^+=\max\{0,x\}$ coordinate-wise.
		\item We will to show that the conditions for Theorem \ref{thm:Ellipsoid} (in the Turing case) or Theorem \ref{thm:Ellipsoid-BSS} (in the BSS case) are met. With the result that $\hat{x}\in  S(\CompactK_{y',A'},-2^{-n_3})$, this will imply that $z^*$ is well defined
		\item We need to show that the algorithm is correct i.e. that $\disM(x_{\mathrm{out}},\Xi(\iota)) \leq 10^{-K}$.
		\item We need to show that the algorithm has bit complexity (in the Turing model) bounded above by a polynomial in $N$ and $K$.
		\item We need to show that the algorithm has arithmetic complexity (in the BSS model) bounded above by a polynomial in $N$ and $K$.
	\end{enumerate}
	
	For the purpose of simplifying the proof, we also introduce the following quantities
	\[\varepsilon_1 = 2^{-4K-5-\ceil {\log_2(\beta)}}, \quad \varepsilon_2 = 2^{-1-\ceil{\log_2(\beta)}}.\] Using these definitions, we note that the $y'$ and $A'$ defined in Instruction 2 of the algorithm will satisfy
	\begin{align}
		\|y-y'\|_2&\leq \sqrt{m}\cdot 2^{-n_1}\leq 2^{\frac{\log_2(m)}{2}}\cdot 2^{-n_1} \leq \varepsilon_1	\label{eq:CSPositiveYSubtractYPBound}\\ \|A-A'\|_2&\leq \|A-A'\|_{\text{F}}\leq \sqrt{mN }\cdot 2^{-n_2}\leq 2^{\frac{\log_2(m) + \log_2(N)}{2}}\cdot 2^{-n_2}\leq \varepsilon_2.\label{eq:CSPositiveASubtractAPBound}
	\end{align}
	where we have used the fact that $\log_2(n) \leq \length(n)$ for any integer $n$. We see from these definitions that $\delta'$ can be written as $\delta'= 10^{-K}(1-\rho)/(32\tau) + 2\epsilon_1 + 4b_2 N 2^{-n_3}$.

	\textbf{Step I - Showing that $\hat{x} \in S(\CompactK_{y',A'},-2^{-n_3})$}
	
	Using \eqref{eq:CSPositiveYSubtractYPBound} and \eqref{eq:CSPositiveASubtractAPBound} we obtain
	\begin{equation}\label{eq:bounds-y'-and-A'}
		\|y'\|_2\leq \varepsilon_1+b_1\sqrt{N/m}\leq 2b_1\sqrt{N/m} \quad\text{and}\quad \|A'\|_2\leq \varepsilon_2+b_2\sqrt{N/m}\leq 2b_2\sqrt{N/m}.
	\end{equation}
	Furthermore, by Lemma \ref{lemma:NSPImplicationBoundingSparse}, the definition of $\Omega_{m,N}^\varepsilon$ and \eqref{eq:CSPosEpsBound} we get
	\begin{equation}\label{eq:bounds-x-and-Axminusy}
		\|x\|_2\leq \tau\left(\varepsilon +  b_1 \sqrt{N/m} \right) \leq 2\tau b_1N \quad\text{and}\quad\|A'x-y'\|_2 \leq \varepsilon+ 2\varepsilon_2 \leq \frac{10^{-K}(1-\rho)}{32\tau}+ 2\varepsilon_2
	\end{equation} Using this, we obtain the following for $u\in \clBall{2^{-n_3}}{0}$,
	\begin{align*}
		\big[\hat{x}+u\big]_j&\geq 2^{-n_\zeta}-u_j\geq 0,\quad\text{for all }j\in\{1,\dots,2N\},\\
		\|\hat{A}(\hat{x}+u)-y'\|_2&=\|A'x-y'+\hat{A}u \|_2\\
		&\leq \| A'x-y'\|_2+2\|{A'}\|_{2}\cdot \|u \|_2 \leq \frac{10^{-K}(1-\rho)}{32\tau}+2\varepsilon_1 + 2b_2\sqrt{N/m}\cdot 2^{-n_3}\leq  \delta'\\
		\|\hat{x}+u\|_2&\leq \|((x)^+,(-x)^+)\|_2+2^{-n_3}\|\mathbf{1}_{2N}\|_2+\|u\|_2\\
		&\leq 2\tau b_1 N+ 2^{-n_3}\sqrt{2N}+2^{-n_3} \leq 2\tau b_1 N + \tau b_1 \sqrt{2N}/2 + \tau b_1/2 \leq R \
	\end{align*} where the penultimate inequality follows because $2^{-n_{3}} \leq 1$ and $\tau,b_1 \geq 1$.
	
	Therefore, we conclude that $\clBall{2^{-n_3}}{\hat{x}}\subset \CompactK_{y',A'}$ and thus  $\hat{x} \in S(\CompactK_{y',A'},-2^{-n_3})$.
	
	\textbf{Step II - Showing that the conditions for Theorem \ref{thm:Ellipsoid} (in the Turing case) or Theorem \ref{thm:Ellipsoid-BSS} (in the BSS case) are met}
	
	We will show that $\mathscr{K}$ is Turing-polynomially separable and BSS-polynomially separable (as specified in Definitions \ref{def:Tur-poly-sep-class} and \ref{def:BSS-poly-sep-class}) with appropriate encoding functions $\DataTur:\mathscr{K}\to \Alphabet^*$ and $\DataBSS:\mathscr{K}\to \InputVecsBSS$. Concretely, in the Turing case we define  
	\begin{equation}\label{eq:DataEncTur}
		\DataTur\big((\CompactK_{y',A'},2N,R)\big)  := \; m\; ;\; 2 N   \; ; \;  \delta'  \; ; \; R \; ; \; y'_1 \; ; \; y'_2  \; \cdots\; ; \; y'_m \; ; \; A'_{1,1} \; ; \; A'_{1,2}   \; \cdots\; ; \; A'_{m,N}  \in \Alphabet^*
	\end{equation}
	where $\Alphabet= \{0, \;1, -\,,\; .\,,\; ; \}$ and all the dyadic rationals are written out in their binary representation, whereas in the BSS case we define
	\begin{equation*}
		\DataBSS\big((\CompactK_{y',A'},2N,R)\big)  =   \left( m,  2N,  \delta', R , y'_1 , \cdots,  y'_m ,  A'_{1,1} , A'_{1,2}   , \cdots ,  A'_{m,N} \right)\in \InputVecsBSS.
	\end{equation*} where $\InputVecsBSS$ is defined in Definition \ref{def:EncFunBSS}.
	
	We now present a polynomial-runtime subroutine that acts as a weak separation oracle for $\CompactK_{y',A'}$ in both the Turing and BSS cases. We term this subroutine `WSS'.
	
	{\it Subroutine WSS}
	
	\noindent Input: $\DataTur\big((\CompactK_{y',A'},2N,R)\big)$ (in the Turing case) or $ \DataBSS\big((\CompactK_{y',A'},2N,R)\big)$ (in the BSS case)  specifying the set $\CompactK_{y',A'}$, a vector $w$ with $2N$ entries, and $\xi>0$ (rational in the Turing case and real in the BSS case).\\
	Output: either a $d\in\real^{2N}$ (rational in the Turing case) such that  $\|d\|_\infty=1$ and $\langle d,z-w \rangle<\xi$, for all $z\in S(\CompactK_{y',A'},-\xi)$, or the assertion that $w\in S(\CompactK_{y',A'},\xi)$.
	
	\begin{enumerate}[leftmargin=10mm, label= \arabic*.,ref = step \arabic*]
		\item Compute $\|\hat{A} w - y'\|_2^2$ and $\|w\|_2^2$ and verify whether or not $w\in \CompactK_{y',A'}$ by testing the inequalities $w\geq \mathbf{0}_{2N}$ coordinate-wise, the inequality $\|\hat{A} w - y'\|_2^2\leq \delta'^2$, and the inequality $\|w\|_2^2 \leq R^2$ in the definition of $\CompactK_{y',A'}$. If  the inequalities all hold then assert $w\in S(\CompactK_{y',A'},\xi)$ and exit this subroutine.
		\item If the subroutine has not terminated, one of the following must be true.
		\begin{enumerate}[label = \alph*.]
			\item $w_j<0$, for some $j\in\{1,\dots, 2N\}$. In this case, we set $d=-e_j$.
			\item $\|\hat{A}w-y'\|_2^2 > \delta'^2$. In this case and assuming that a. does not hold, we set $g=\nabla \big( \|\hat{A}\cdot\,-y'\|_2^2\big) \vert_w=2\hat{A}^T(\hat{A}w-y')$ and $d=g/\|g\|_\infty$.
			\item $\|w\|_2^2>R^2$. In this case and assuming that neither a. nor b. hold, we set $d=w/\|w\|_\infty$.
		\end{enumerate}
		
	\end{enumerate}
	\begin{remark}Strictly speaking, `WSS' is in fact a subroutine that solves the strong separation problem for $\CompactK_{y',A'}$ (see \cite[Def 2.1.4]{Lovasz_book}). However, for the purpose of applying either Theorem \ref{thm:Ellipsoid} or Theorem \ref{thm:Ellipsoid-BSS} it is sufficient to show that `WSS' is a weak separation subroutine.
	\end{remark}
	
	This subroutine is well defined except for one potential issue with the execution of the division $g/\|g\|_{\infty}$ in instruction 2b if $g = 0$. We will first argue that this cannot be the case by showing that if $w$ is such that $\|\hat{A}w-y'\|_2^2 > \delta'^2$ then $g \neq 0$. 
	
	Assume otherwise (i.e. that $g = 0$) for the sake of a contradiction. Then (decomposing $y$ as $y' = y^0 +\hat A v$ where $y^0 \in \ker(\hat A ^*)$) we obtain from $g = 0$ that $\hat A^*\hat A(w-v) = 0$. Hence $\|\hat A(w-v)\|^2 = \langle \hat A^*\hat A(w-v),w-v\rangle =0$ so that $\hat A (w-v) = 0$. Therefore $\delta' > \|\hat Aw - y'\|_2 = \|y^0\|_2$. But this contradicts the already established fact that $\hat x \in S(\CompactK_{y',A'},-2^{-n_3})$ - in particular, $\hat x \in S(\CompactK_{y',A'},-2^{-n_3})$ implies that $\|\hat A \hat x - y'\|_2 \leq \delta'$ and we derive a contradiction from the following argument: \[ \|y^0\|^2_2 \leq \|\hat A (\hat x - v)\|^2_2 + \|y_0\|^2_2 = \|\hat A (\hat x - v)\|^2_2 - 2\langle \hat A(\hat x -v),y_0 \rangle + \|y_0\|^2_2 = \|\hat A \hat x - y'\|^2_2\leq \delta'^2\] where the equality follows from the fact that $y^0 \in \ker(\hat A^*)$.
	
	We have therefore shown that `WSS' is well defined. To show that this is indeed a weak separation subroutine for $\CompactK_{y',A'}$, we first show that `WSS' exits either with the correct assertion that $w \in S(\CompactK_{y',A'},\xi)$ or by constructing $d$ with $\langle d,z-w \rangle<0<\xi$, for all $z\in \CompactK_{y',A'}$. To see this, note first that if `WSS' exits in line 1 then $w \in \CompactK_{y',A'}$ and that $\CompactK_{y',A'} \subseteq S(\CompactK_{y',A'},\xi)$. Hence if `WSS' exits in line 1 then it correctly asserts that $w \in S(\CompactK_{y',A'},\xi)$. 
	
	If `WSS' does not exit in line 1, we examine the three possible cases for line 2, assuming that $z$ is a vector in $S(\CompactK_{y',A'},-\xi) \subset \CompactK_{y',A'}$.
	\begin{enumerate}[label = \alph*.]
		\item If $w_j < 0$ for some $j\in\{1,\dots, 2N\}$ then the output $d$ of `WSS' satisfies  $\langle d, z - w \rangle = -z_j + w_j < 0$ 	where we have used that $z \in \CompactK_{y',A'}$ implies that $z_j \geq 0$.
		\item If $ \|\hat{A}w-y'\|_2^2 > \delta'^2$ then 
		\begin{align*}
			\langle \hat{A}^T(\hat{A}w-y'), z-w \rangle &= \langle \hat{A}w-y', \hat{A}z -y\rangle  +\langle  \hat{A}w-y', y-\hat{A}w \rangle \\&\leq  (\|\hat{A}w-y'\|_2 -\|\hat{A}z -y\|_2 ) \|\hat{A}z -y\|_2 \leq 0 
		\end{align*}
		where we have used that $z \in \CompactK_{y',A'}$ implies that $\|\hat{A}z-y\|_2 \leq \delta$. Thus $\langle d, z-w\rangle = \|g\|_{\infty}^{-1}\langle g,z-w \rangle = 2\|g\|_{\infty}^{-1}\langle \hat{A}^T(\hat{A}w-y'), z-w \rangle \leq 0.$
		\item If $\|w\|_2^2>R^2$ then $
		\langle w, z-w \rangle \leq \|w\|_2(\|z\|_2 - \|w\|_2) < \|w\|_2(R-R)
		$
		where we have used that $z \in \CompactK_{y',A'}$ implies that $\|z\|_2 \leq R$. Hence $\langle d,z-w\rangle < 0$.
	\end{enumerate} 
	In all three cases we have constructed a $d\in\real^{2N}$ (rational in the Turing case) such that  $\|d\|_\infty=1$ and $\langle d,z-w \rangle<0<\xi$, for all $z\in S(\CompactK_{y',A'},-\xi)$.
	
	It is easy to see that the runtime of `WSS' is bounded by a polynomial in $\length\big(\DataTur(\CompactK_{y',A' },2N,\allowbreak R)\big)$, $\length(R)$, $\length(w)$, $\length(\xi)$, and $N$ in the Turing case, and, in the BSS case, by a polynomial of $\dim(\big( \DataBSS\big((\CompactK_{y',A'}, 2N, R)\big) \big)$. We hence deduce that $\mathscr{K}$ is both Turing-polynomially separable and BSS-polynomially separable with respect to the encoding functions $\DataTur$ and $\DataBSS$.

	We are now in a position to apply Theorem \ref{thm:Ellipsoid} to the weak optimisation problem $(\CompactK_{y',A'},R,\allowbreak\mathbf{1}_{2N},2^{-n_\zeta})$, for all $(\CompactK_{y',A'},2N,R)\in \mathscr{K}$ to complete the argument for this step in the Turing model. For the BSS case, in order to apply Theorem \ref{thm:Ellipsoid-BSS} to the same weak optimisation problem, however, we need to establish the existence of convex functions $f_{1}^{y',A'},\dots, f_{M(y',A')}^{y',A'}:\real^{2N}\to \real$ such that $ \CompactK_{y',A'}=\{ z\in \real^{2N}\, \vert \, f_j^{\CompactK_{y',A'}}(z)\leq 0, j=1,\dots,M(y',A')\}$,  for all $(\CompactK_{y',A'},2N,R)\in \mathscr{K}$. Recalling the definition of $\CompactK_{y',A'}$, this is achieved easily by setting
	\begin{equation}\label{eq:EllipsoidBSSFDefinition}
		f^{\CompactK_{y',A'}}_j(z)=-z_j \quad j=1,\dots,2N,\quad f_{2N+1}^{\CompactK_{y',A'}}(z)= \|\hat{A}z - y'\|_2^2- \delta'^2,\quad f_{2N+2}^{\CompactK_{y',A'}}(z)= \|z\|_2^2 - R^2.
	\end{equation}

	\textbf{Step III - Proof of correctness}
	
	We will show that $\disM(x_{\mathrm{out}},\xibpdn(\iota))\leq 10^{-K}$. Since $z^*$ is a solution to the weak optimisation problem $(\CompactK_{y',A'},R',\mathbf{1}_{2N},2^{-n_\zeta})$
	\begin{enumerate}[label =\alph*)]
		\item There exists a $\hat{z}\in \CompactK_{y',A'} $ such that $\|z^*-\hat{z}\|_2\leq2^{-n_3}$. In particular, such a $\hat{z}$ must have non-negative entries and must satisfy $\|\hat{A}\hat{z}-y'\|_2 \leq \delta'$.  \label{item:Ellipsoid:TildeZExistenceProperties}
		\item For all $z\in S(\CompactK,-2^{-n_3})$ we have $\langle \mathbf{1}_{2N},z^*\rangle\leq\langle \mathbf{1}_{2N},z\rangle + 2^{-n_3} $. \label{item:Ellipsoid:ObjectiveFunction}
	\end{enumerate}
	By \ref{item:Ellipsoid:TildeZExistenceProperties} we obtain
	\begin{align}
		\left|\langle \mathbf{1}_{2N},z^*\rangle  - \|z^*\|_1\right| &\leq \left|\langle \mathbf{1}_{2N},z^*\rangle - \langle \mathbf{1}_{2N},\hat z\rangle\right| +  \left|\langle \mathbf{1}_{2N},\hat z\rangle - \|\hat z\|_1 \right| + \left|\,\|\hat z\|_1 - \|z^*\|_1\right|\notag\\
		&\leq\left|\langle \mathbf{1}_{2N},z^* - \hat{z}\rangle\right| + \left|\,\|\hat z-z^*\|_1\right| \leq 2\sqrt{2N}\|z^* - \hat{z}\|_2 \leq 2\sqrt{2N} 2^{-n_3} \label{eq:Ellipsoid:l1to12NComparison}
	\end{align}
	Combining \eqref{eq:Ellipsoid:l1to12NComparison} with \ref{item:Ellipsoid:ObjectiveFunction} as well as the previous result that $\hat x$ defined in \eqref{eq:x-tilde-Ellipsoid} satisfies $\hat x \in S(\CompactK_{y',A'},\allowbreak -2^{-n_3})$ gives the following
	\begin{align}
		\|x^*\|_1&\leq \|\{z_{j}^*\}_{j=1}^N\|_1+\|\{z_j^*\}_{j=N+1}^{2N}\|_1 \underset{\eqref{eq:Ellipsoid:l1to12NComparison}}{\leq} \sqrt{2N} 2^{1-n_3} +\langle \mathbf{1}_{2N}, z^*\rangle\leq \langle \mathbf{1}_{2N}, \hat{x}\rangle + (2\sqrt{2N}+1)2^{-n_3}\notag\\
		&=\|\hat x\|_1 + (2\sqrt{2N}+1)2^{-n_3}\leq \|x\|_1+ (2N+2\sqrt{2N}+1)2^{-n_3} \leq \|x\|_1 + 6Nb_2 2^{-n_3}\label{eq:EllipsoidL1Bounds}
	\end{align}
	where we have used the definition of $\hat x$ in the penultimate inequality and the final inequality uses the fact that $b_2 \geq 1$.
	
	Next, returning to \ref{item:Ellipsoid:TildeZExistenceProperties} and using the definition of $\delta'$ and \eqref{eq:bounds-y'-and-A'} we can estimate
	\begin{align}
		\|A'x^*-y'\|_2&=\|\hat{A}z^*-y'\|_2=\|\hat{A}\hat{z}-y'+\hat{A}(z^*-\hat{z})\|_2\leq \|\hat{A}\hat{z}-y'\|_2+2\|A'\|_{2}\|z^*-\hat{z}\|_2 \notag\\&\leq \delta'+4b_2\sqrt{N/m}\cdot 2^{-n_3}
		\leq \frac{10^{-K}(1-\rho)}{32\tau} +2\varepsilon_1+8b_2N2^{-n_{3}}.\label{eq:EllipsoidBoundAPrimexStar-YPrime}
	\end{align}
	Therefore
	\begin{align}
		\|Ax - Ax^*\|_2 &\leq \|A-A'\|_{2}\|x-x^*\|_2+\|A'x-y'\|_2+\|A'x^*-y'\|_2 \notag\\
		&\leq  \varepsilon_2 \|x-x^*\|_2+\frac{10^{-K}(1-\rho)}{16\tau}+4\varepsilon_1 +8b_AN2^{-n_3} \label{eq:EllipsoidAxAxStarBound}
	\end{align}
	where the bound from $\|A-A'\|_2$ comes from the definition of $A'$ in the algorithm $\Gamma^{\orvec, \ormat}$, the bound for $\|A'x-y'\|_2$ comes from \eqref{eq:bounds-x-and-Axminusy} and the bound for $\|A'x^*-y'\|_2$ from \eqref{eq:EllipsoidBoundAPrimexStar-YPrime}.
	
	We now use Theorem \ref{l2RNPerrorest} applied to $x$ and $x^*$ as well as the fact that $x$ is $s$-sparse to obtain
	\begin{align*}
		\|x-x^*\|_2 &\leq \frac{(1+\rho)^2}{1-\rho}\frac{1}{\sqrt{s}}\left(\|x^*\|_1-\|x\|_1\right)+\frac{(3+\rho)\tau}{1-\rho}\|Ax-Ax^*\|_2 \\ &\leq \frac{4\tau}{1-\rho}\left(\|x^*\|_1-\|x\|_1+\|Ax-Ax^*\|_2\right)
	\end{align*}
	since $\tau,s \geq 1$ and $\rho \in [0,1)$.
	Therefore employing equations \eqref{eq:EllipsoidL1Bounds} and \eqref{eq:EllipsoidAxAxStarBound} gives
	\begin{equation}
		\|x-x^*\|_2\leq \frac{4\tau}{1-\rho} \left(14Nb_22^{-n_3}+ \varepsilon_2 \|x-x^*\|_2+\frac{10^{-K}(1-\rho)}{16\tau}+4\varepsilon_1 \right).\label{eq:3.11iii-long-correctness}
	\end{equation}
	
	From the definitions of $\varepsilon_1,\varepsilon_2$ and $n_{3}$, we also obtain \[2^{-n_3} \leq 2^{-4K - \log_2(N)-\log_2(64) - \log_2(8\tau/(1-\rho)) - \log_2 b_2} \leq \frac{10^{-K}(1-\rho)}{512Nb_2\tau} \] and
	\begin{equation*}\frac{16\tau\varepsilon_1}{1-\rho} = \frac{16\tau}{1-\rho} \cdot 2^{-4K-5-\ceil {\log_2(8\tau/(1-\rho))}} \leq \frac{10^{-K}}{16},\quad \frac{4\tau\varepsilon_2} {1-\rho} = \frac{4\tau}{1-\rho} \cdot 2^{-1-\ceil{\log_2(8\tau/(1-\rho))}}\leq \frac{1}{4}.
	\end{equation*}
	Therefore \eqref{eq:3.11iii-long-correctness} can be written as 
	\begin{equation}\label{x1x*estSD}
		\|x-x^*\|_2 \leq \frac{10^{-K}}{8}  +\frac{\|x-x^*\|_2}{4}+\frac{10^{-K}}{4}+\frac{10^{-K}}{16} = \frac{\|x-x^*\|_2}{4} + \frac{7 \cdot 10^{-K}}{16}
	\end{equation}

	Since $\epsilon \leq \delta$, the set $\xibpdn(\iota)$ is non-empty. Fix a solution $\xi^*\in\xibpdn(\iota)$. Then, as 
	$
	\|Ax-y\|_2\leq\varepsilon\leq \delta
	$, we have that $x$ is feasible for the basis pursuit problem with $\ell^1$ regularisation with parameter $\delta$ and input $(y,A)$. In particular, since $\xi^*$ is optimal for this problem, we have $\|\xi^*\|_1\leq\|x\|_1$, and thus Theorem \ref{l2RNPerrorest} applied to $\xi^*$ and $x$ yields
	\begin{align}
		\|\xi^*-x\|_2 \leq\frac{(3+\rho)\tau}{1-\rho}\left(\|A\xi^*-y\|_2+\|Ax-y\|_2\right)\leq\frac{4\tau(\delta+\varepsilon)}{1-\rho} \leq \frac{10^{-K}}{4}\label{x1z*estSD}
	\end{align} where we have once again made use of \eqref{eq:CSPosEpsBound}.
	
	Next, we note that $x_{\mathrm{out}}=x^*$ in the BSS case, whereas \[\|x_{\mathrm{out}}-x^*\|_2\leq 2^{-n_{4}}\sqrt{N}= 2^{-\left\lceil\frac{\length(N)}{2}\right\rceil - K - 3}\sqrt{N} \leq \frac{10^{-K}}{8}\]  in the Turing case, where we have used the fact that $\length(n) \geq \log_2(n)$ for a natural number $n$. Combining this with \eqref{x1x*estSD} and \eqref{x1z*estSD}  finally yields
	\begin{align*}
		\|x_{\mathrm{out}}-\xi^*\|_2 &\leq \|x_{\mathrm{out}}-x^*\|_2 +  \|x-x^*\|_2+\|\xi^*-x\|_2\leq \frac{10^{-K}}{8} + \frac{4\cdot 7 \cdot 10^{-K}}{3\cdot 16} + \frac{10^{-K}}{4} \leq 10^{-K}
	\end{align*}
	Hence $\disM\left(x_{\mathrm{out}},\xibpdn(\iota)\right)\leq \|x_{\mathrm{out}}-\xi^*\|_2\leq {10^{-K}}$.
	
	\textbf{Step IV - Establishing a polynomial upper bound on the computational complexity of the algorithm in the Turing case}
	
	The initial computation of the integers $n_1,n_2,n_3$ and $n_4$ can be analysed as follows: firstly, computation of both $\length(N)/2$ and $\length(m)/2$ can be established in $\oh(\length(N) \vee \length(m))$ bit operations. After that, we note that we require only finitely many multiplications and additions involving the quantities $\length(N),\length(m),K$ and the fixed quantities $L_1$ and $L_2$. Thus the complexity of computing $n_1,n_2,n_3$ and $n_4$ can be bound by a polynomial in $\length(\length(N)),\length(\length(m))$ and $\length(K)$. This immediately implies that $n_1,n_2,n_{3}$ and $n_{4}$ each have lengths polynomial in $\length(\length(N)),\length(\length(m))$ and $\length(K)$. 
	
	$\delta'$ is the sum of three fractions: the first, $10^{-K}(1-\rho)/(32\tau)$, can be computed as a pair of integers in bit complexity polynomial in $K$ (as $(1-\rho)/(32\tau)$ is fixed. The second, $2^{-4K-3-L_1}$ can also be computed in bit complexity polynomial in $K$ as $L_1$ is fixed. Finally, $4b_AN2^{-n_3}$ can be computed in bit complexity polynomial in $N$, $\length(m)$ and $K$ as $b_A$ is fixed and we have already established that the length of $n_3$ is polynomial in $\length(\length(N)),\length(\length(m))$ and $\length(K)$. Thus the overall cost of computing the rational number $\delta'$ expressed as a pair of two integers is polynomial in $K,N$ and $\length(m)$.
	
	Finally, $R$ involves a multiplication between $N$ and the fixed value $L_3$ and can thus be computed in time polynomial in $N$. Therefore the overall complexity of the first instruction is polynomial in $K,N$ and $\length(m)$.
	
	Since $n_1$ and $n_2$ are both bounded from above by a polynomial in $\length(m)$, $\length(N)$ and $K$, the cost of accessing the oracle to produce each $y'_j$ and $A'_{j,k}$ is bounded above by a polynomial in $j$, $k$, $\length(m)$, $\length(N)$, and $K$ (in both the arithmetic and standard Turing complexity models). As this process is repeated for $j\in \{1,2,\dotsc,m\}$ and $k\in \{1,2,\dotsc,N\}$ to obtain $y'$ and $A'$, the overall complexity of the second instruction for the algorithm is bounded above by a polynomial in $m,N$ and $K$.
	
	Moreover, we have already shown that $n_1$ and $n_2$ are such that both $\length(n_1)$ and $\length(n_2)$ are bounded from above by a polynomial in $\length(\length(N)),\length(\length(m))$ and $\length(K)$. Furthermore, \eqref{eq:bounds-y'-and-A'} tells us that $y'$ and $A'$ are bound above by polynomials in $\sqrt{N/m}$.	Thus the resulting lengths of $y'_k$ and $A'_{j,k}$ is also polynomial in $\length(N),\length(m)$ and $\length(K)$.  
	
	By Theorem \ref{thm:Ellipsoid}, the bit-complexity of the third instruction is polynomial in  \begin{equation*}
		\length\big(\DataTur(\CompactK_{y',A'},R,\mathbf{1}_{2N},2^{-n_3})\big),\; \length(R),\;  \length(1_{2n}),\; \length(2^{-{n_3}})\; \text{and}\;  2N.
	\end{equation*}
	Because of the way $\DataTur$ has been defined in \eqref{eq:DataEncTur} we have \begin{align*}\length\big(\DataTur(\CompactK_{y',A'},R,\mathbf{1}_{2N})\big) &= \length(m)+\length(N)+\length(\delta')\\&\;\;\;+\length(R) + \sum_{j=1}^{m} \left(\length(y'_j) + \sum_{k=1}^{N}  \length(A'_{j,k})\right)
	\end{align*}
	and we have already established that each term on the right hand side is polynomial in $m,N$ and $\length(K)$. Furthermore, $\length(R)$ is polynomial in $\length(N)$, $\length(1_{2N})$ is linear in $N$ and $\length(2^{-{n_3}}) = -n_3$ which is polynomial in $\length(N)$ and $K$. Thus the bit-complexity of the third instruction is polynomial in $m,N$ and $K$. The length of each component of the resulting vector $z^*$ must therefore also have $\length(z^*)$ polynomial in $m,N$ and $K$.
	
	Since the fourth instruction involves a subtraction between two portions of $z^*$, the bit-complexity of executing this instruction is polynomial in $m,N$ and $K$. Finally, the division to convert the rational vector $x^* \in \mathbb{Q}^N$ into the output $x_{out}$ expressed as a vector of dyadic rationals can be done in complexity polynomial in $\length(x^*),\length(n_4)$ and $N$ using Newton-Raphson iterations (see \cite[pp. 92-93]{ElementaryFunctionsImplementation}). As each of these quantities are bound above by a polynomial in $m,N$ and $K$, we see that the bit-complexity of executing this final step is polynomial in $m,N$ and $K$. 
	
	We have therefore shown that each instruction can be executed in bit complexity bound above by a polynomial in $m,N$ and $K$ and thus as there are only five instructions executed in the algorithm the overall complexity can be bound above by a polynomial in $m,N$ and $K$, completing the proof in the Turing case.

	\textbf{Step V - Establishing a polynomial upper bound on the computational complexity of the algorithm in the BSS case}
	
	For a natural number $n$, it is possible to compute $\length(n)$ on a BSS machine by incrementing a value $k$ (starting at $k=1$) and stopping when $2^k > n$. This can be done in $\length(n)$ arithmetic operations and thus the BSS complexity of computing $n_1$ through $n_{4}$ is bounded by a polynomial in $\length(N)$ and $\length(m)$. The computation of $\delta'$ can be done with BSS complexity bounded above by a polynomial in $K$ and $n_{3}$. Since $n_{3}$ is itself linear in $K$ and $\length(N)$, the BSS	 complexity of computing $\delta'$ is bounded above by a polynomial in $K$ and $\length(N)$. Finally, computing $R$ requires a single arithmetic operation. Thus the computational cost of executing the first instruction on a BSS machine is polynomial in $\length(N),\length(m)$ and $K$.
	
	In the same way as in the arithmetic Turing case, the cost of calling the oracles in the second instruction of the algorithm is polynomial in $m,N$ and $K$. 
	In the context of our application of Theorem \ref{thm:Ellipsoid-BSS} in the third instruction of the algorithm, the parameter $V$ is equal to
	\begin{equation*}
		V=R\|\ones_{2N}\|_2 \vee\max_{1\leq j\leq 2N+2} g_j  ,\quad \text{where}\quad g_j:= \max_{\|z\|_2\leq R} f^{\CompactK_{y',A'}}_j(z) - \min_{\|z\|_2\leq R} f^{\CompactK_{y',A'}} _j(z)
	\end{equation*}
	and $f^{\CompactK_{y',A'}}$ are defined as in \eqref{eq:EllipsoidBSSFDefinition}. From these definitions, it follows that $g_j = \max_{\|z\|_2\leq R} z_j - \min_{\|z\|_2\leq R}z_j = 2R$ for $j=1,2,\dotsc,2N$, that 
	\begin{align*}
		g_{2N+1} &= \max_{\|z\|_2\leq R} \|Az-y'\|_2^2 - \min_{\|z\|_2\leq R}\|Az-y'\|_2^2 \\&\leq \left(\|A'\|_2 \cdot 2\|z\|_2 + \|y'\|_2 \right)^2 \leq \left(4b_2 \sqrt{\frac{N}{m}}R + 2b_1 \sqrt{\frac{N}{m}}\right)^2\leq 16NR^2(b_1+b_2)^2
	\end{align*}
	and that $g_{2N+2} = \max_{\|z\|_2\leq R} \|z\|^2_2 - \min_{\|z\|_2\leq R}\|z\|^2_2 = R^2$.
	Therefore $V \leq R\sqrt{2N}\vee 2R \vee  16NR^2(b_1+b_2)^2 \vee R^2 = 16NR^2(b_1+b_2)^2$. Thus $V \leq C_{\text{BSS}}N^3$ where the real number $C_{\text{BSS}}$ exceeding $16$ depends only on $b_1,b_2,\tau$ (crucially, $C_{\text{BSS}}$ is independent of $m,N$ and $K$).
	
	Thus $\log(2+V/2^{-n_{3}}) \leq \log(2C_{\text{BSS}}N^3 2^{-n_{3}}) = n_{3} \log(2) + 3\log(N) + \log(2C_{\text{BSS}})$. In particular, using the definition of $n_3$, we note that $\log(2+V/2^{-n_3})$ can be written as a polynomial in $\length(N)$ and $K$. The dimension of $\DataBSS \big((\CompactK_{y',A'},2N,R)\big)$ is $4+m+mN$ and thus we conclude via Theorem \ref{thm:Ellipsoid-BSS} that the BSS complexity of the third instruction of the algorithm is polynomial in $m,N$ and $K$.
	
	The fourth instruction takes $N$ arithmetic operations and the final instruction is constant time on a BSS machine. Thus the overall complexity of the algorithm is polynomial in $m,N$ and $K$, as claimed.
\end{proof}

\section{Proof of Theorem \ref{th:smale_comp_sens}: part (iii)}\label{sec:smale_comp_sens_proof_iv}
\subsection{Preliminaries - distance to several minimisers for the $\ell^1$ BP  problem}
We begin by proving the following lemma that allows us to create examples of problems with infinite condition number for the basis pursuit problem with $\delta=0$.

\begin{lemma}\label{lemma:ProveInfiniteCondition}
	Let $\{\Xi,\Omega\}$ be the computational problem of basis pursuit with $\delta=0$. Suppose that $A \in \real^{m \times N}$ is a matrix such that for every $y' \in \real^m$, $(y',A) \in \Omega^{\text{act}}$. Furthermore,  suppose that there is a non-empty set $S \subseteq \lbrace 1,2,\dotsc,N \rbrace$ and a vector $v$ in the row span of $A$ such that the following conditions hold:
	\begin{enumerate}
		\item there is a non-zero $\hat{\xi}\in\real^N $ such that $\supp(\hat{\xi}) \subseteq S$ and $A\hat{\xi}= 0$, and  \label{property:NonTrivialNullspace}
		\item $\|v_{S^c}\|_{\infty} \leq 1$, and $|v_i| = 1$ for all $i\in S$. \label{property:Constantvi}
	\end{enumerate}
	If $T \subseteq S$ is a set such that
	\begin{enumerate}[resume]
		\item if $A\rho = 0$ and $\supp(\rho) \cap T \neq \varnothing$ for some vector $\rho \in \real^{N}$, then there is a non-zero $\rho'$ with 
		$
		\supp(\rho) \cap T \subseteq \supp(\rho') \subseteq S 
		$ 
		and $A\rho' = 0$, \label{property:SupportBoundsCondition}
	\end{enumerate}
	then there exists an $x\in\real^N$ with $\supp(x) = T$ so that $\dist_2[(Ax,A),\MultiMinInputs \cup \Omega^{\text{act}}] = 0$ and that $\|Ax\|_2 \leq 1$. Note that condition \eqref{property:SupportBoundsCondition} is trivially satisfied if $S = \left\lbrace 1,2,\dotsc,N \right\rbrace$ (with $\rho' = \rho$) or $T = \varnothing$ (where there is nothing to show). 
\end{lemma}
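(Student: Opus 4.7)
The strategy is to construct $x$ explicitly using the dual certificate $v$, verify that it is an $\ell^1$-BP minimiser for the input $(Ax,A)$, and then exhibit either a second minimiser at $(Ax,A)$ or a sequence in $\MultiMinInputs \cap \Omega^{\text{act}}$ converging to $(Ax,A)$, making use of the nullspace data supplied by conditions \eqref{property:NonTrivialNullspace} and \eqref{property:SupportBoundsCondition}. I read the conclusion as $\dist_2[(Ax,A),\MultiMinInputs \cap \Omega^{\text{act}}] = 0$, since with $\cup$ the claim is trivially satisfied under the hypothesis $(y',A)\in\Omega^{\text{act}}$ for every $y'\in\real^m$ and would not yield $C_{\mathrm{RCC}}(Ax,A)=\infty$.

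Because $v$ lies in the row span of $A$, write $v=A^\top w$ for some $w\in\real^m$. Define $x\in\real^N$ by $x_i=tv_i$ for $i\in T$ and $x_i=0$ otherwise, with $t\in(0,1]$ chosen small enough that $\|Ax\|_2\leq 1$. Condition \eqref{property:Constantvi} forces $\supp(x)=T$ and $\sgn(x_i)=v_i$ on $T$. For any $z\in\real^N$ with $Az=Ax$, the standard dual-certificate calculation gives
\[
\|z\|_1 \geq \langle v,z\rangle = \langle w,Az\rangle = \langle w,Ax\rangle = \langle v,x\rangle = \|x\|_1,
\]
using $\|v\|_\infty\leq 1$, and shows $x$ is a minimiser. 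Equality forces $\supp(z)\subseteq\{i:|v_i|=1\}$ and $\sgn(z_i)=v_i$ on $\supp(z)$; thus a distinct minimiser $z=x+s\rho$ exists whenever one can locate $\rho\in\ker(A)\setminus\{0\}$ with $\supp(\rho)\subseteq S$ and $v_i\rho_i\geq 0$ for every $i\in (S\setminus T)\cap\supp(\rho)$, by taking $s>0$ small enough that signs on $T$ are preserved by the dominant term $tv_i$.

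To produce such a $\rho$, begin with the nullspace vector $\hat\xi$ from condition \eqref{property:NonTrivialNullspace}; if $\supp(\hat\xi)\cap T\neq\varnothing$, use condition \eqref{property:SupportBoundsCondition} to replace $\hat\xi$ by a nullspace vector still supported inside $S$ whose support meets $T$. The identity $\langle v,\rho\rangle=\langle w,A\rho\rangle=0$ for $\rho\in\ker(A)$, combined with $|v_i|=1$ on $S$, then allows a sign-balancing argument inside the finite-dimensional subspace $V:=\{\rho\in\ker(A):\supp(\rho)\subseteq S\}$ to extract a sign-compatible element of $V$. The hardest step is precisely this sign-alignment: conditions \eqref{property:NonTrivialNullspace} and \eqref{property:SupportBoundsCondition} provide a plentiful supply of nullspace vectors inside $S$ but do not automatically force their signs to match $v$ on $S\setminus T$. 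When direct alignment fails, the plan is to close the argument by an arbitrarily small perturbation of the columns of $A$ indexed outside of $T$, exploiting the hypothesis that $(y',A)\in\Omega^{\text{act}}$ for every $y'\in\real^m$ to transfer the perturbation into $\Omega^{\text{act}}$ and thereby produce an approximating sequence in $\MultiMinInputs\cap\Omega^{\text{act}}$ converging to $(Ax,A)$, exactly as needed for the $C_{\mathrm{RCC}}=\infty$ conclusion of Theorem \ref{th:smale_comp_sens}(iii).
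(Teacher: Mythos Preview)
You correctly spot that the conclusion should read $\MultiMinInputs \cap \Omega^{\text{act}}$, and your dual-certificate verification that $x$ (with $x_i=tv_i$ on $T$) is a minimiser for $(Ax,A)$ is fine. The genuine gap is in the second half.

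Your plan is to find a second minimiser at the \emph{same} input $(Ax,A)$ by locating $\rho\in\ker(A)$ supported in $S$ with $v_i\rho_i\geq 0$ on $(S\setminus T)\cap\supp(\rho)$. This is impossible in general: take $T=\varnothing$. Then $x=0$, and you would need $v_i\rho_i\geq 0$ on all of $\supp(\rho)\subseteq S$, while $\langle v,\rho\rangle=0$ and $|v_i|=1$ on $S$ force every term $v_i\rho_i$ to vanish, hence $\rho=0$. So $(0,A)$ has the unique minimiser $0$ and your direct approach cannot produce a second one. Your fallback of perturbing the columns of $A$ is not licensed by the hypotheses either: you are only told that $(y',A)\in\Omega^{\text{act}}$ for every $y'$, not that $(y',A')\in\Omega^{\text{act}}$ for nearby $A'$, so a column perturbation need not land in $\Omega^{\text{act}}$ at all.

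The paper instead perturbs $y$, not $A$. It first picks $\xi\in\ker(A)$ with $\supp(\xi)\subseteq S$ of \emph{maximal} support, splits $\supp(\xi)=J_+\cup J_-$ according to the sign of $v_i\xi_i$, and builds $x$ so that for every small $\epsilon>0$ both $x+\epsilon\,\xi_{J_+}$ and $x-\epsilon\,\xi_{J_-}$ are minimisers for the input $\big(A(x+\epsilon\,\xi_{J_+}),A\big)$. The orthogonality $\langle v,\xi\rangle=0$ gives $\|\xi_{J_+}\|_1=\|\xi_{J_-}\|_1$, which makes the two candidates have equal $\ell^1$-norm; maximality of $\supp(\xi)$ together with condition~(3) is what forces any competing nullspace direction $\hat\xi$ to vanish on $T\setminus\supp(\xi)$, closing the optimality argument. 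Letting $\epsilon\downarrow 0$ then yields $\dist_2[(Ax,A),\MultiMinInputs\cap\Omega^{\text{act}}]=0$ via a sequence of inputs of the form $(y_\epsilon,A)$, which the hypothesis does place in $\Omega^{\text{act}}$. The idea you are missing is precisely this $J_+/J_-$ split combined with the move to a nearby $y$; once you have it, no ``sign-balancing'' search inside $\ker(A)$ is needed.
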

\begin{proof}
	Let $\xi\in\real^N$ be a vector such that $A\xi = 0$, $\supp(\xi) \subseteq S$, and $\xi$ has maximal support in $S$, in the sense that
	\begin{equation}
		\text{If }\xi'\in\real^N\text{ is such that, if } A\xi' = 0\text{ and } \supp(\xi) \subseteq \supp(\xi') \subseteq S, \text{ then } \supp(\xi')  = \supp(\xi). \label{property:maximalSupportNullspace}
	\end{equation}
	The existence of $\xi$ is guaranteed by condition (\ref{property:NonTrivialNullspace}) in the statement of the lemma: indeed, the set of supports of all vectors in the nullspace of $A$ is a finite, non-empty (here we use \ref{property:NonTrivialNullspace}), partially ordered set under the set inclusion ordering and thus such a set of supports contains at least one maximal element (one of which we choose arbitrarily and call $R$). Thus there must exist a vector $\xi$ in the nullspace of $A$ with support $R$ satisfying \eqref{property:maximalSupportNullspace}.  
	
	Next, set $c = (\sqrt{N}\|A\|_2)^{-1} \wedge (\sqrt{N}\|\xi\|_{\infty}\|A\|_2)^{-1}$ (note that $c$ is well defined as both $\xi$ and $A$ are non-zero) and partition $R = \supp(\xi)$ into sets $J_{+}$ and $J_{-}$ according to
	\begin{equation*}
		J_{+} = \lbrace i \in \supp(\xi) \, \vert \, v_i \xi_i > 0\rbrace, \qquad J_{-} = \lbrace i \in \supp(\xi) \, \vert \, v_i \xi_i < 0 \rbrace,
	\end{equation*}
	and let
	\begin{equation*}
		x_i = \begin{cases}
			\sgn(v_i\xi_i)\,c\xi_i & \text{if } i \in T\cap \supp(\xi)\\[-1mm]
			c & \text{if } i \in  T \setminus \supp(\xi) \\[-1mm]
			0 & \text{otherwise}
		\end{cases}\;.
	\end{equation*}
	Then clearly $\supp(x) = T$ and $\|Ax\|_2 \leq \|A\|_2 \sqrt{N}\|x\|_{\infty} \leq 1$. We now proceed to establish the following for every sufficiently small $\epsilon>0$:
	\begin{enumerate}[label=(\Roman*)]
		\item $A(x+\epsilon\xi_{J_+}) = A(x-\epsilon\xi_{J_-})$, \label{eq:CondUx+xi = Ux-xi}
		\item $\|x + \epsilon\xi_{J_+}\|_1 = \|x -\epsilon\xi_{J_-}\|_1$, and\label{eq:Condx+xi=x-xi}
		\item if $\hat{\xi} \in \real^{N}$ is such that $A(\hat{\xi}+x+\epsilon\xi_{J_+}) = A(x+\epsilon\xi_{J_+})$, then $\|x+\epsilon\xi_{J_+}+\hat{\xi}\|_1 \geq \|x+\epsilon\xi_{J_+}\|_1$.\label{eq:CondUx+xiMin}
	\end{enumerate} 
	This will imply that 
	$
	\{x+\epsilon\xi_{J_+},x+\epsilon\xi_{J_-}\} \subset \Xi\big(A(x+\epsilon\xi_{J_+}),A\big)
	$ and thus that $\big(A(x+\epsilon\xi_{J_+}),A\big) \in \Sigma^{\mathrm{RCC}}$.
	Indeed, from \ref{eq:CondUx+xiMin}, $x+\epsilon\xi_{J_+}$ is a minimiser, whereas \ref{eq:CondUx+xi = Ux-xi} implies that $x-\epsilon\xi_{J_-}$ is feasible and \ref{eq:Condx+xi=x-xi} implies that $x-\epsilon\xi_{J_-}$ is also a minimiser.
	Consequently, using the assumption that $\big(A(x+\epsilon\xi_{J_+}),A\big) \in \Omega^{\text{act}}$, we obtain
	\begin{equation*}
		\dist_2[(Ax,A),\MultiMinInputs \cap \Omega^{\text{act}}] \leq \|A(x+\epsilon\xi_{J_+}) - Ax\|_2 \leq \epsilon \|\xi\|_2 \|U\|_2
	\end{equation*}
	and, as $\epsilon$ was arbitrary and $\xi$ and $A$ are independent of $\epsilon$, we must have $C_{\mathrm{RCC}}(Ax,A) = \infty$.
	
	It hence suffices to establish \ref{eq:CondUx+xi = Ux-xi}, \ref{eq:Condx+xi=x-xi}, and \ref{eq:Condx+xi=x-xi} in order to complete the proof.
	To this end, note that $0=A\xi=A(\xi_{J_+} + \xi_{J_-})=A\xi_{J_+} + A\xi_{J_-}$, and so \ref{eq:CondUx+xi = Ux-xi} follows immediately. Next, since $\xi$ is in the nullspace of $A$ and $v$ is in the row span of $A$, we have $\langle v,\xi \rangle = 0$ and, as $v_i=\sgn(v_i)$ for $i\in \supp(\xi)\subset S$, we obtain
	\[
	0=\sum_{i\in\supp(\xi)}v_i\xi_i=\sum_{i\in\supp(\xi)} \sgn(v_i)\sgn(\xi_i)|\xi_i|=\sum_{i\in\supp(\xi)} \sgn(v_i\xi_i)|\xi_i|= \|\xi_{J_+}\|_1-\|\xi_{J_-}\|_1
	\]
	i.e. that $\|\xi_{J_+}\|_1=\|\xi_{J_-}\|_1$. Next, we have
	\begin{equation}\label{eq:x+xi, x-xi explicit}
		\big(x + \epsilon \xi_{J_+}\big)_i= \begin{cases}
			(c+\epsilon)\xi_i & \text{if } i \in T\cap J_+\\[-1mm]
			-c\xi_i & \text{if } i \in T\cap J_-\\[-1mm]
			c & \text{if } i \in  T \setminus \supp(\xi) \\[-1mm]
			\epsilon\xi_i & \text{if }i\in J_+ \setminus T\\[-1mm]
			0 & \text{otherwise}
		\end{cases},\quad 
		\big(x_i- \epsilon \xi_{J_-}\big)_i= \begin{cases}
			c\xi_i & \text{if } i \in T\cap J_+\\[-1mm]
			-(c+\epsilon)\xi_i & \text{if } i \in T\cap J_-\\[-1mm]
			c & \text{if } i \in  T \setminus \supp(\xi) \\[-1mm]
			-\epsilon\xi_i & \text{if }i\in J_- \setminus T\\[-1mm]
			0 & \text{otherwise}
		\end{cases}
	\end{equation}
	and therefore $\|x + \epsilon \xi_{J_+}\|_1=\|x\|_1+\epsilon \| \xi_{J_+}\|_1=\|x\|_1+\epsilon \| \xi_{J_-}\|_1=\|x - \epsilon \xi_{J_-}\|_1 $, establishing \ref{eq:Condx+xi=x-xi}.
	
	Now consider an arbitrary $\hat{\xi}$ such that $A\hat{\xi}=0$. We claim that $\supp(\hat{\xi})\subset (T\setminus \supp(\xi))^c$. To prove this, suppose by way of contradiction that there exists a $t\in \supp(\hat{\xi})\cap(T\setminus \supp(\xi))$. Then property \eqref{property:SupportBoundsCondition} implies that we can find a $\rho'\neq 0$ such that $A\rho'=0$ and $\supp(\hat{\xi})\cap T\subset \supp(\rho')\subset S$. But then $A(\xi+\kappa \rho')=0$ and $\supp(\xi+\kappa \rho')\supseteq \supp(\xi)\cup\{t\}\supsetneq \supp(\xi)$, for sufficiently small $\kappa>0$, contradicting \eqref{property:maximalSupportNullspace}. Therefore $\supp(\hat{\xi})\subset (T\setminus \supp(\xi))^c$. 
	Next, by inspecting the explicit expression for $x + \epsilon \xi_{J_+}$ in \eqref{eq:x+xi, x-xi explicit} and recalling that $\|v\|_\infty\leq 1$, we see that $(x+\epsilon\xi_{J_+}+ \hat{\xi})_i=(x+\epsilon\xi_{J_+})_i=c$, for $i\in T\setminus \supp(\xi)$, and $\sgn\big((x + \epsilon \xi_{J_+})_i\big)\supset \{v_i\}$, for all $i\in (T\setminus \supp(\xi))^c$, and therefore
	\begin{equation*}
		\begin{aligned}
			\|x+\epsilon\xi_{J_+}+ \hat{\xi}\|_1 - \|x+\epsilon\xi_{J_+}\|_1&= \sum_{i\in (T\setminus \supp(\xi))^c} |(x+\epsilon\xi_{J_+}+ \hat{\xi}\,)_i|- |(x+\epsilon\xi_{J_+})_i|\\
			&\geq \sum_{i\in (T\setminus \supp(\xi))^c} \hat{\xi}_i v_i= \langle \hat{\xi},v \rangle=0,
		\end{aligned}
	\end{equation*}
	where the last equality is due to the fact that $v$ is in the row span of $A$ and $A\hat{\xi}=0$. This establishes \ref{eq:CondUx+xiMin} and concludes the proof.
\end{proof}

For the purpose of proving part (iv) of Theorem \ref{th:smale_comp_sens}, we will apply Lemma \ref{lemma:ProveInfiniteCondition} to both the case where $A$ is a subsampled Hadamard matrix and the case where $A$ is a subsampled Hadamard to Haar matrix, as in \S\ref{sec:CSMatricesAppendix}. We do this in the following result:

\begin{lemma}\label{lemma:HighConditionHadSampling}
	Let $\{\Xi,\Omega\}$ be the computational problem of basis pursuit with $\delta=0$, and let $A\in\real^{m\times N}$ with $1\leq m<N$ be either
	\begin{itemize}
		\item[(i)] a row-subsampled Hadamard matrix, or 
		\item[(ii)] a row-subsampled Hadamard-to-Haar matrix as given in \eqref{eq:HadToHaarDecomposition}, with $S_i \neq \varnothing$, for $i=1,2\dots,n-1$.
	\end{itemize}
	Additionally, assume that $(y',A) \in \Omega^{\text{act}}$ for all $y' \in \real^m$. 	Then, for every set $T \subseteq \lbrace 1,2,\dotsc, N\rbrace$, there is a vector $x\in\real^N$ with support $T$ so that $\dist_2[(Ax,A),\MultiMinInputs \cap \Omega^{\text{act}}] = 0$ and $\|Ax\|_2 \leq 1$. 
\end{lemma}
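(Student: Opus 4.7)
The plan is to apply Lemma \ref{lemma:ProveInfiniteCondition} with the choice $S = \{1,2,\ldots,N\}$. Under this choice, condition \eqref{property:SupportBoundsCondition} in the hypothesis of that lemma is automatic (take $\rho' = \rho$), and condition \eqref{property:NonTrivialNullspace} is immediate from $m < N$, which guarantees that $\ker A$ has positive dimension and hence admits a nonzero $\hat{\xi}$. Thus the entire burden reduces to exhibiting a vector $v$ in the row span of $A$ with $|v_i| = 1$ for every $i \in \{1,\ldots,N\}$: given such a $v$, Lemma \ref{lemma:ProveInfiniteCondition} produces, for every $T \subseteq \{1,\ldots,N\}$, a vector $x$ with $\supp(x) = T$, $\|Ax\|_2 \leq 1$, and $\dist_2[(Ax,A),\MultiMinInputs \cap \Omega^{\text{act}}] = 0$, which is exactly the conclusion of the lemma.

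For case (i), this is immediate: every row of a Hadamard matrix (in its unnormalized form) has entries in $\{-1,+1\}$, so every row of a row-subsampled Hadamard matrix $A$ is, after rescaling by the normalisation constant, a $\pm 1$ vector. Picking any such row and rescaling it yields a $v$ in the row span of $A$ with $|v_i| = 1$ throughout, and applying Lemma \ref{lemma:ProveInfiniteCondition} concludes this case.

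For case (ii), the construction of $v$ is more delicate and exploits the multilevel block decomposition \eqref{eq:HadToHaarDecomposition} of the Hadamard-to-Haar matrix together with the hypothesis $S_i \neq \varnothing$ for $i = 1,\ldots,n-1$. The idea is to select a single row from each level—available because each $S_i$ is non-empty—and form a linear combination whose coefficients are chosen to absorb the dyadic scale differences between Haar levels. Because rows originating from level $i$ are locally constant on dyadic blocks of a prescribed size and take values in $\{-c_i,+c_i\}$ for some level-dependent constant $c_i$, one can telescope contributions from successive levels so that on each dyadic block the resulting entry collapses to a single value of magnitude $1$. After this rescaling, the constructed $v$ lies in the row span of $A$ and satisfies $|v_i|=1$ for all $i$, after which Lemma \ref{lemma:ProveInfiniteCondition} again completes the argument.

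The principal obstacle is case (ii). Since the Haar levels carry intrinsically different scales (powers of $\sqrt{2}$), ensuring that the final linear combination has entries of \emph{uniform} magnitude $1$ on \emph{every} coordinate—and not merely on the union of supports of the selected rows—requires a careful bookkeeping of the block sizes and sign patterns in \eqref{eq:HadToHaarDecomposition}, and this is precisely the point at which the non-emptiness of every $S_i$ is essential: a single missing level would leave a block of coordinates on which no contribution is available to match the target magnitude.
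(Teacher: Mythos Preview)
Your overall strategy matches the paper's exactly: apply Lemma \ref{lemma:ProveInfiniteCondition} with $S=\{1,\dots,N\}$, so that conditions \eqref{property:NonTrivialNullspace} and \eqref{property:SupportBoundsCondition} are immediate, and reduce everything to exhibiting a $v$ in the row span of $A$ with $|v_i|=1$ for all $i$. Case (i) is handled the same way.

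For case (ii), however, your explanation invokes the wrong mechanism. The Hadamard-to-Haar matrix in \eqref{eq:HadToHaarDecomposition} is \emph{block-diagonal}: a row selected from the block $c_iP_{S_i}X^i$ is supported on the $i$-th block of columns and vanishes on every other block. Rows from different levels therefore have \emph{disjoint} column supports---there is nothing to telescope, and the rows of $X^i$ are not ``locally constant on dyadic blocks'' (that description fits Haar wavelets, not the blocks $X^i$, about which all we know is $|(X^i)_{j,k}|=1/\sqrt{2^i}$). The paper's construction is consequently a one-liner: pick one row index $j_i\in S_i$ from each block (possible since $S_i\neq\varnothing$), scale that row by $\sqrt{2^i}$ so its entries become $\pm 1$, and set $v=(1)\oplus\bigoplus_{i}\sqrt{2^i}(X^i)_{j_i,:}$. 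This is a linear combination of rows of $A$ by construction, and $|v_i|=1$ on every coordinate because each column block is hit by exactly one rescaled row. Your core idea---one row per level, with level-dependent rescaling---is right, but the justification is a direct sum, not a cancellation.
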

\begin{proof}[Proof of Lemma \ref{lemma:HighConditionHadSampling}]
	Our aim will be to verify that $A$ satisfies the desired conditions of Lemma \ref{lemma:ProveInfiniteCondition}. 
	
	It suffices to verify the conditions (\ref{property:NonTrivialNullspace}) and to construct a suitable $v$ satisfying (\ref{property:Constantvi}) with $S=\{1,\dots, N\}$, noting the observation that (\ref{property:SupportBoundsCondition}) holds trivially for such an $S$ and any $T$ as defined in this lemma. To this end, we first observe that (\ref{property:NonTrivialNullspace}) holds simply as $m<N$. 
	
	Next, in order to establish (\ref{property:Constantvi}), we construct a vector $v\in\real^N$ in the row span of $A$ such that $|v_i|=1$, for all $i=1,\dots,N$. If  $A$ is a row-subsampled Hadamard  matrix, we can simply take $v$ to be the first row of $A$. Suppose now that $A$ is a row-subsampled Hadamard-to-Haar matrix as given in \eqref{eq:HadToHaarDecomposition}, with $S_i \neq \varnothing$, for $i=1,2\dots,n-1$.  In this case we select an arbitrary $j_i\in S_i$, for every  $i=1,2\dots,n-1$, and let $v=(1)\oplus \bigoplus_{i=0}^{n-1} \sqrt{2^i}(X^i)^*_{j_i}$, where $(X^i)^*_{j_i}$ denotes the $j_i$-th row of $X^i$. Since $|(X^i)_{j,k}| = 1/\sqrt{2^i}$, for $i=0,1,2,\dotsc,n-1$ we have $|v_i| = 1$ for $i=1,2,\dotsc,N$.
	
	We can now apply Lemma \ref{lemma:ProveInfiniteCondition} to complete the proof, where we use the assumption that $(y',A) \in \Omega^{\text{act}}$ for all $y' \in \real^m$.
\end{proof}

We are now ready to prove part (iii) of Theorem \ref{th:smale_comp_sens}. The argument proceeds as follows - first, we use existing results from the literature (Theorem \ref{thm:RIPExistenceBOS} and Theorem \ref{thm:RIP->RNP}) to argue that if $N$ is sufficiently large and provided $m$ is sufficiently large relative to $N$, then, with strictly positive probability, a randomly subsampled Hadamard or Hadamard-to-Haar matrix will satisfy the RIP and hence the robust nullspace property.
This will, in particular, imply the existence of at least one such matrix $A$, to which Lemma \ref{lemma:HighConditionHadSampling} will be applied to conclude the proof.

\subsection{Proof of part (iii) of Proposition \ref{prop:CSResult}}

In the language of the SCI hierarchy, Theorem \ref{th:smale_comp_sens} part (iii) is written as Proposition \ref{prop:CSResult} part (iii). We will prove this proposition. 
\begin{proof}[Proof of Proposition \ref{prop:CSResult} part (iii)]
	
	Fix an $s$ as in the theorem statement and set $K=1$. Let $C_4$ be the constant in Theorem \ref{thm:RIPExistenceBOS} corresponding to $K$. We proceed assuming that $N = 2^n$ for some natural number $n \geq 2$ satisfying $N-2 \geq 2C_4s\log(N)\log(s\log(N))\log(s)^2$ and we set $m = N-1$. Clearly, there are infinitely many such pairs $(m,N)$.

	Next we let $U \in \real^{N \times N} = \real^{2^n \times 2^n}$ be either a Hadamard matrix or a Hadamard to Haar matrix. We will produce a subsampled matrix $A$, a vector $x \in \real^N$ and a vector $y \in \real^m$ with the following properties:
	\begin{enumerate}
		\item $y = Ax$ with $|\supp(x)| = s$
		\item $A$ satisfies the $\ell^2$-RNP of order $s$ with parameters $(\rho,\tau)$.
		\item $\|y\|_2 \leq 1$ and $\|A\|_2 \leq \sqrt{\frac{N}{m}}$
		\item $\dist_2[(y,A),\MultiMinInputs \cap \actv{\Omega^{0}_{s,m,N}}] = 0$.
	\end{enumerate}
	This will complete the proof. Indeed, if we set $\iota = (y,A)$ then (1)-(3) proves that $\iota \in \Omega^{0}_{s,m,N}$ and (4) proves that $C_{\mathrm{RCC}}(\iota) = \infty$.
	
	We divide into two cases depending on whether $U$ is a Hadamard matrix or a Hadamard to Haar matrix.
	\begin{itemize}
		\item If $U$ is a Hadamard matrix,  we start by noting that \[m = N-1 \geq N-2 \geq C_4s\log(N)\log(s\log(N))\log(s)^2\] and thus we can employ Theorem \ref{thm:RIPExistenceBOS} to conclude that there exists a set $S$ with $|S| = m$ such that $A:=P_S \sqrt{\frac{N}{m}} U$ satisfies the RIP of order $2s$ with constant $\delta_{2s} \leq 1/5$. Moreover, note that since $U$ is unitary, $\|A\|_2 = \sqrt{N/m} \leq b_2 \sqrt{N/m}$. 
		\item If $U$ is a Hadamard-to-Haar matrix then $U$ can be decomposed as $
		U = (1) \oplus \bigoplus_{i=0}^{n-1} X^{i}$ by the argument in \eqref{eq:HadToHaarDecomposition}. By observing that 
		\[
		2^{n-1} - 1 = N/2 - 1 \geq C_4s\log(N)\log(s\log(N))\log(s)^2
		\] we can employ 
		Theorem \ref{thm:RIPExistenceBOS}  to see that there exists a set $S_{n-1}$ of size $2^{n-1} - 1$ such that $c_{n-1}P_{S_{n-1}} X^{n-1}$ with $c_{n-1} = \sqrt{\frac{2^{n-1}}{2^{n-1}-1}}$ satisfies the RIP of order $2s$ with constant $\delta_{2s} \leq 1/5$. We choose \[A = \big(\hspace{1pt} 1\,\big) \oplus \bigoplus_{i=0}^{n-2} X^{i} \oplus c_{n-1}P_{S_{n-1}} X^{n-1}.\] Since $X^i$ are unitary for $i=0,1,\dotsc,n-2$ and $c_{n-1}P_{S_{n-1}}X^{n-1}$ obeys the RIP of order $2s$ with constant $\delta_{2s}\leq 1/5$ the matrix $A$ also obeys the RIP of order $2s$ with constant $\delta_{2s} \leq 1/5$. Furthermore, \begin{equation*}\|A\|_2 = c_{n-1} = \sqrt{\frac{2^{n-1}}{2^{n-1}-1}} \leq \sqrt{2}\sqrt{\frac{2^n}{2^{n}-1}} \leq b_2 \sqrt{\frac{N}{m}}
		\end{equation*}
	\end{itemize}

	In either case, we deduce that there is a matrix $A \in \real^{m \times N}$ subsampled from $U$ such that $A$ obeys the RIP of order $2s$ with constant $\delta_{2s}\leq 1/5$ and $\|A\|_2 \leq b_2 \sqrt{N/m}$. By Theorem \ref{thm:RIP->RNP}, $A$ satisfies the $\ell^2$-RNP of order $s$ with parameters 
	\begin{equation}\label{eq:RCCInfinityARhoTau}
		\frac{1/4}{\sqrt{1-(1/4)^2}-(1/4)/4}<\frac{1}{3}<\rho\quad\text{and}\quad \frac{\sqrt{1+1/4}}{\sqrt{1-(1/4)^2}-(1/4)/4}<10<\tau.
	\end{equation}

	Furthermore, the construction of $A$ is such that $A$ contains a non-zero entry in both the Hadamard case or the Hadamard to Haar case (here we have used the fact that $S_{n-1}$ is non-empty and that every element of $X^i$ is non-zero). For a given $j \in \{1,2,\dotsc,m\}$, let $k \in \{1,2,\dotsc,N\}$ be such that $A_{j,k} \neq 0$. Using \eqref{eq:RCCInfinityARhoTau} and the bound $\|A\|_2 \leq b_2 \sqrt{N/m}$, we note that for sufficiently small $\epsilon > 0$ both $( \epsilon A e_k,A),(-\epsilon A e_k,A) \in \Omega^0_{s,m,N}$. 
	
	Since the $j$th entry of the vectors $A(\epsilon e_k)$ and $-A(\epsilon e_k)$ are not equal, we have shown that the $j$th coordinate of the input vector is an active coordinate of $\Omega^0_{s,m,N}$, i.e. the $j$th coordinate of the input vector  is in $\mathcal{A}(\Omega^0_{s,m,N})$ (see \eqref{eq:def-active-set}). We have thus shown that for every $y' \in \real^m$, we have $(y',A) \in \actv{\Omega^{0}_{s,m,N}}$.
	Thus the conditions of Lemma \ref{lemma:HighConditionHadSampling} apply and we obtain $x$ such that $\|Ax\|_2 \leq 1$, $\dist_2[(Ax,A),\MultiMinInputs\allowbreak \cap \actv{\Omega^{0}_{s,m,N}}] = 0$ and $|\supp(x)|=s$. Setting $y = Ax$ completes the proof.
\end{proof}

\appendix

\section{Auxiliary results for Theorem \ref{Cor:main} -- Solutions to convex problems}\label{appendix:geometry-lemmas}

In this section we derive the exact solution to the introductory Lasso example \eqref{eq:lasso43} and prove the lemmas given in  \S\ref{sec:BasicProblemsSolutions} on the solutions of the convex problems used in Theorems \ref{Cor:main}, \ref{thm:ExitFlag}, and \ref{thm:Smales9}. We use standard tools from convex analysis, most extensively the subdifferential $\partial f (x) :=\{v\in\real^N \,\vert\, f(y)-f(x)\geq \langle y-x,v\rangle ,\, \forall y\in D_f \}$, for $x\in D_f$, where $f:D_F\to(-\infty,+\infty]$ is a lower semi-continuous convex function on a closed convex domain $D_f\subset\real^N$. For a more detailed account of convex analysis see \cite{Rockafellar1970}.

\subsection{Solution to \eqref{eq:lasso43}}
\begin{lemma}
	Let $m =3, N=2$ and $\lambda \in (0, 1/\sqrt{3}\, ]$, as well as \begin{equation*}
		A = \begin{pmatrix} \frac{1}{\sqrt{2}} - a & \frac{1}{\sqrt{2}} \\ -\frac{1}{\sqrt{2}} - a & -\frac{1}{\sqrt{2}}\\ 2a & 0 \end{pmatrix} \in \real^{3 \times 2}, \quad y = \begin{pmatrix} 1/\sqrt{2} & -1/\sqrt{2} & 0 \end{pmatrix}^T \in \real^3.
	\end{equation*}
	where $a> 0$.
	Then if $w^0 \in \real$ and $w^1 \in \real^2$ are such that
	\begin{equation*}
		(w^0,w^1) \in \argmin_{(x^0, x^1) \in \real \times \real^2} \frac{1}{2m}\|ADx^1- x^0\ones_{m} - y\|_2^2+ \lambda \|x^1\|_1
	\end{equation*}
	where $D$ is the unique diagonal matrix such that every column of $AD$ has norm $\sqrt{m}$, we have $w^0 = 0$, $Dw^1 = (1-\sqrt{3}\lambda)e_2$.
\end{lemma}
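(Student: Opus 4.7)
The plan is to apply KKT (subgradient) optimality conditions, exploiting that both the design matrix $B := AD$ and the response $y$ are \emph{centred} (have zero column sums, respectively zero entry sum). I would first eliminate the intercept $x^0$, and then identify $w^1$ by a direct two-variable Lasso check.

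\textbf{Step 1: the intercept vanishes.} Differentiating the objective in $x^0$ gives the first-order condition $\ones_m^{\top}(x^0\ones_m + y - Bx^1) = 0$, i.e.\ $x^0 = \tfrac{1}{m}\ones_m^{\top}(y - Bx^1)$. Both columns of $A$ sum to zero --- the first is $(1/\sqrt{2}-a)+(-1/\sqrt{2}-a)+2a = 0$ and the second is $1/\sqrt{2}-1/\sqrt{2}+0 = 0$ --- and $D$ only rescales columns, so $\ones_m^{\top} B = 0$; combined with $\ones_m^{\top} y = 0$ this forces $w^0 = 0$ at every minimiser, reducing the task to $\min_{x\in\real^2}\frac{1}{2m}\|Bx - y\|_2^2 + \lambda\|x\|_1$.

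\textbf{Step 2: normal system and KKT verification.} A direct computation yields $D = \diag{\sqrt{3/(1+6a^2)},\,\sqrt{3}}$, and
\[
B^{\top}B = \begin{pmatrix} 3 & 3/\sqrt{1+6a^2} \\ 3/\sqrt{1+6a^2} & 3 \end{pmatrix},\qquad B^{\top}y = \bigl(\sqrt{3/(1+6a^2)},\,\sqrt{3}\bigr)^{\top}.
\]
With the candidate $w^1 = (0,\,1/\sqrt{3}-\lambda)^{\top}$ (which satisfies $Dw^1 = (1-\sqrt{3}\lambda)e_2$ because $D_{22} = \sqrt{3}$), substitution gives $\tfrac{1}{3}B^{\top}(Bw^1 - y) = -\lambda\bigl(1/\sqrt{1+6a^2},\,1\bigr)^{\top}$, so the subgradient inclusion $0 \in \tfrac{1}{m}B^{\top}(Bw^1 - y) + \lambda\,\partial\|w^1\|_1$ demands the subgradient $s = \bigl(1/\sqrt{1+6a^2},\,1\bigr)^{\top}$. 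This $s$ lies in $\partial\|w^1\|_1$ since $|s_1|\leq 1$ (as $1+6a^2\geq 1$) and $s_2 = 1 = \sgn(w^1_2)$ whenever $\lambda < 1/\sqrt{3}$; the same $s$ still works in the boundary case $\lambda = 1/\sqrt{3}$, where $w^1 = 0$ and both coordinates of $s$ only need to lie in $[-1,1]$.

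\textbf{Step 3: uniqueness.} Strict convexity of the objective in $x^0$ is immediate, and $B$ has full column rank --- its two columns disagree in the third coordinate, namely $2a\sqrt{3/(1+6a^2)}\neq 0$ versus $0$ --- so $\|B\cdot - y\|_2^2$ is strictly convex in $x^1$, and consequently the entire objective has a unique minimiser, which must be the pair $(w^0,w^1)$ identified above. The proof is largely mechanical: the one genuine subtlety is the boundary case $\lambda = 1/\sqrt{3}$ handled above, everything else reducing to careful bookkeeping of the centering/standardisation normalisation.
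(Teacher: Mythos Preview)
Your proof is correct but proceeds differently from the paper. The paper changes variables to $\hat w^1=Dw^1$, then explicitly expands
\[
\|Ax^1 - x^0\ones_m - y\|_2^2 = (x_1^1+x_2^1-1)^2 + 6a^2(x_1^1)^2 + 3(x^0)^2,
\]
and derives the result by a chain of elementary inequalities (using $\|D^{-1}x^1\|_1\geq \|x^1\|_1/\sqrt{3}$ and completing the square in $x_1^1+x_2^1$), characterising equality. Your route instead eliminates the intercept via centring, verifies the subgradient inclusion at the candidate, and closes with strict convexity from $B$ having full column rank. The paper's argument is slightly more self-contained (no appeal to KKT theory, and uniqueness drops out of the equality analysis without needing the rank of $B$), while your approach is more systematic and would generalise more readily to higher-dimensional variants. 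Both are equally valid here.
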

\begin{proof}
	Note that 
	\begin{equation*}
		(w^0,w^1) \in \argmin_{(x^0, x^1) \in \real \times \real^2} \frac{1}{2m}\|ADx^1- x^0\ones_{m} - y\|_2^2+ \lambda \|x^1\|_1
	\end{equation*}
	if and only if 
	\begin{equation*}
		(w^0,\hat{w}^1) \in \argmin_{(x^0, x^1) \in \real \times \real^2} \frac{1}{2m}\|Ax^1- x^0\ones_{m} - y\|_2^2+ \lambda \| D^{-1}x^1\|_1,
	\end{equation*}
	where $\hat w^1 = Dw^1$. Now, for all $(x^0, x^1) \in \real \times \real^2$, we have
	\begin{align*}
		\|Ax^1- x^0\ones_{m} - y\|_2^2 &= \left[ \left(\frac{1}{\sqrt{2}} - a\right)x^1_1 + \frac{x^1_2}{\sqrt{2}} - x^0 - \frac{1}{\sqrt{2}}\right]^2 + \left[ \left(\frac{1}{\sqrt{2}} + a\right)x^1_1 + \frac{x^1_2}{\sqrt{2}} +  x^0 - \frac{1}{\sqrt{2}}\right]^2 \\&\quad+ (2ax^1_1 - x^0)^2 = (x_1^1 + x_2^1 -1)^2 + 6a^2 (x_1^1)^2 + 3 (x^0)^2
	\end{align*}
	Note also that by definition of $D=\mathrm{diag}(d_{11}, d_{22})$ we have
	\begin{equation*}
		d_{11} = \sqrt{3} \left[\left(\frac{1}{\sqrt{2}}-a\right)^2 +\left(\frac{1}{\sqrt{2}}+a\right)^2 + 4a^2 \right]^{-\frac{1}{2}} = \sqrt{3}(1+6a^2)^{-\frac{1}{2}} < \sqrt{3}
	\end{equation*}
	and $d_{22} = \sqrt{3}$.
	Hence
	\begin{align*}
		& \frac{1}{2m}  \|Ax^1- x^0\ones_{m} - y\|_2^2 + \lambda \|D^{-1}x^1\|_1 \geq \frac{1}{6} {\|Ax^1- x^0\ones_{m} - y\|_2^2} + \lambda \frac{\|x^1\|_1}{\sqrt{3}} \\
		 =\;&  \frac{1}{6}(x^1_1+x^1_2 - 1)^2 + a^2(x^1_1)^2 + \frac{1}{2} (x^0)^2 + \lambda \frac{\|x^1\|_1}{\sqrt{3}}\geq   \frac{1}{6} (x^1_1+x^1_2 - 1)^2 + \frac{\lambda}{\sqrt{3}}  (x_1^1+x^1_2) \\
		=\;&  \frac{1}{6} \left[\left(x^1_1+x^1_2 - (1-\sqrt{3}\lambda ) \right)^2 - (1-\sqrt{3}\lambda)^2 +1 \right] \geq  1 - (1-\sqrt{3}\lambda )^2 
	\end{align*}
	These inequalities hold as equalities if and only if $x_1^1=x^0=0$, $x_2^1\geq 0$, and $x_1^1+x_2^1=1-\sqrt{3}\lambda$. 
	Therefore, we must have $w^0=0$ and $Dw^1=\hat{w}^1=(1-\sqrt{3}\lambda)e_2$, as desired.
	\end{proof}

\subsection{Proofs of results in \S\ref{sec:BasicProblemsSolutions}}

\begin{proof}[Proof of Lemma \ref{lemma:ProblemBasicExampleLP}]
	Since both $\alpha$ and $\beta$ are assumed to be positive, we have that for any feasible $x$ (that is, $x$ with $x \geq 0$ and $\matLP x = \vecYLP$),
	\begin{equation}\label{eq:InequalitiesForLPBasic}
	\langle c,x \rangle \geq x_1 + x_2 + 1 \geq  \frac{\alpha x_1 + \beta x_2}{\alpha \vee \beta} + 1= \frac{y_1}{\alpha \vee \beta} + 1.
	\end{equation}
	Thus $\min \{\langle c,x \rangle  \, \vert \, x\geq 0, \matLP x = \vecYLP \} \geq y_1/(\alpha \vee \beta)\,+ 1$.
	Moreover, all claimed minimisers in the statement of the lemma obey \eqref{eq:InequalitiesForLPBasic} as an equality and furthermore such $x$ are feasible for the LP problem. Therefore 
	$\min \{\langle c,x \rangle  \, \vert \, x\geq 0, \matLP x = \vecYLP \} = y_1/(\alpha \vee \beta)$, and thus the solutions to $\xilp(\vecYLP,\matLP)$ are precisely the vectors $x$ for which every inequality in \eqref{eq:InequalitiesForLPBasic} is obeyed as an equality. More specifically, the following must occur:
	\begin{itemize}
		\item For the first inequality to be an equality we must have $x_3=0$, $x_4=1$, and  $x_5 = \dotsb = x_N = 0$.
		\item For the second inequality to be an equality we must have $x_2 = 0$ in the case $\alpha > \beta$ and $x_1 = 0$ in the case $\alpha < \beta$. In the case $\alpha = \beta$ this is always an equality.
	\end{itemize}
	It is easy to verify that the $x$ satisfying these properties as well as the conditions $x \geq 0$ and $\matLP x = \vecYLP$ are exactly the claimed minimisers in the statement of the lemma.
\end{proof}

\begin{proof}[Proof of Lemma \ref{lemma:ProblemBasicExampleULASSO}]
Let $x^{opt}=\frac{2\alpha y_1-\lambda}{2\alpha^2} e_1$ if $\alpha\geq \beta$, and $x^{opt}=\frac{2\beta y_1-\lambda}{2\beta^2} e_2$ if $\beta>\alpha$. Define a dual vector $p=\matl x^{opt}-\vecYl=-\frac{\lambda}{2(\alpha\vee \beta)}e_1\in \real^m$, and note that then
\begin{equation*}
 -\frac{2}{\lambda}\matl^* p= \left(1\wedge \frac{\alpha}{\beta}\right)e_1 + \left(1\wedge \frac{\beta}{\alpha}\right)e_2\in \partial \|\cdot\|_1(x^{opt}).
\end{equation*}
Therefore, for every $x\in\real^N$ we have
\begin{align}
\frac{1}{2}\|\matl x -\vecYl\|_2^2 + \frac{1}{2}\lambda\|x\|_1&\geq \langle \matl x -\vecYl,p\rangle  - \frac{1}{2}\|p\|_2^2 + \frac{\lambda}{2} \|x\|_1\notag\\
&=   \langle \matl x^{opt} - \vecYl,p \rangle  - \frac{1}{2}\|p\|_2^2+   \frac{\lambda}{2}\left(\|x\|_1 - \langle  x -x^{opt} , -\frac{2}{\lambda}\matl^*p\rangle   \right)\notag\\
&\geq \frac{1}{2}\|\matl x^{opt} -\vecYl\|_2^2+  \frac{\lambda}{2}\|x^{opt}\|_1\notag.
\end{align}
It follows that $x$ is a minimiser if and only if this string of inequalities holds with equality. This is the case if and only if $\matl x - \vecYl = p =\matl x^{opt}-\vecYl$ and $\|x\|_1 - \langle  x -x^{opt},-\frac{2}{\lambda}\matl^*p\rangle = \|x^{opt}\|_1  $, or equivalently $\alpha x_1 + \beta x_2 = \alpha x^{opt}_1 + \beta x^{opt}_2 = y_1-\frac{\lambda}{2(\alpha\vee \beta )}$, $\{x_j\}_{j=3}^N=\{x^{opt}_j\}_{j=3}^N$,
\begin{equation*}
 |x_1| - (x_1-x_1^{opt})\left(1\wedge \frac{\alpha}{\beta}\right)=|x^{opt}_1|,\quad \text{and}\quad |x_2| - (x_2-x_2^{opt})\left(1\wedge \frac{\beta}{\alpha}\right)=|x^{opt}_2|.
\end{equation*}
It is now straightforward to verify that the minimisers are precisely those as in the statement of the lemma.
\end{proof}

\begin{proof}[Proof of Lemma \ref{lemma:ProblemBasicExampleBPDNL1}]
	From the definition of $\matl$ and $\vecYl$, if $\|\matl x-\vecYl\|_2 \leq \delta$ then $|\alpha x_1 + \beta x_2 - y_1| \leq \delta$. Thus
	\begin{equation}\label{eq:InequalitiesForBPDNBasic}
	(\alpha \vee \beta)\|x\|_1 \geq (\alpha \vee \beta) (|x_1| + |x_2|) \geq \alpha|x_1| + \beta |x_2| \geq \alpha x_1 + \beta x_2 \geq y_1 - \delta.
	\end{equation}
	Thus $\min \{\|x\|_1 \, \vert \, \|\matl x-\vecYl\|_2 \leq \delta \} \geq ( y_1 - \delta)/(\alpha \vee \beta)$. 
	Note that all claimed minimisers $x$ in Lemma \ref{eq:bpdnsolnsbasic} obey \eqref{eq:InequalitiesForBPDNBasic} as an equality and furthermore such $x$ are feasible for the BPDN problem. Therefore 
	$\min \{\|x\|_1 \, \vert \, \|\matl x-\vecYl \|_2 \leq \delta \} = ( y_1 - \delta)/(\alpha \vee \beta)$.
	Thus the solutions to $\xibpdn(\vecYl,\matl)$ are precisely the vectors $x$ for which every inequality in \eqref{eq:InequalitiesForBPDNBasic} is obeyed as an equality. More specifically, the following must occur:
	\begin{itemize}
		\item For the first inequality to be an equality we require $x_3 = x_4 = \dotsb = x_N = 0$.
		\item For the second inequality to be an equality we require $x_2 = 0$ in the case $\alpha > \beta$ and $x_1 = 0$ in the case $\alpha < \beta$. In the case $\alpha = \beta$ this is always an equality.
		\item For the third inequality to be an equality we require $x_1$ and $x_2$ to be non-negative.
		\item For the final inequality to be an equality we need $\alpha x_1 + \beta x_2 = y_1 - \delta$.
	\end{itemize}
	It is easy to verify that the $x$ satisfying these properties are exactly the claimed minimisers in the statement of the lemma.
\end{proof}

\begin{proof}[Proof of Lemma \ref{lemma:ProblemBasicExampleCLASSO}]
	Suppose that $x$ is such that $\|x\|_1 \leq \tau$. Then \begin{equation}\tau - x_3 \geq \sum\limits_{\substack{1 \leq i \leq N\\i \neq 3}} |x_i| \geq |x_1|+|x_2| \geq \frac{\alpha |x_1| + \beta |x_2|}{\alpha \vee \beta} \geq \frac{|\alpha x_1 + \beta x_2|}{\alpha \vee \beta}\label{eq:tau-x3Inequality}\end{equation} and in particular since $|\alpha x_1 + \beta x_2| \geq 0$ we conclude that $(\tau - x_3)^2 \geq (\alpha x_1 + \beta x_2)^2/(\alpha \vee \beta)^2$. Thus for such $x$ we have 
	\begin{align}
	\|\matl x - \vecYCL\|_2^2 &= (\alpha x_1 +\beta x_2 - y_1)^2 + (\tau - x_3)^2 + \sum_{i=4}^{m} x_i^2\notag \geq [\alpha x_1 + \beta x_2 - y_1]^2 + \frac{(\alpha x_1 + \beta x_2)^2}{(\alpha \vee \beta)^2}  \\&=\left[\frac{(\alpha \vee \beta)^2 + 1}{(\alpha \vee \beta)^2}\right]\left[ \alpha x_1 + \beta x_2 - \frac{y_1 (\alpha \vee \beta)^2}{(\alpha \vee \beta)^2+1}\right]^2 + \frac{y_1^2}{(\alpha \vee \beta)^2+1} \geq \frac{y_1^2}{(\alpha \vee \beta)^2+1} \label{eq:InequalitiesForCLBasic}
	\end{align}
	Therefore $\min \{\|\matl x - \vecYCL\|_2 \, \vert \, \|x\|_1 \leq \tau \} \geq y_1 [(\alpha \vee \beta)^2 + 1]^{-1/2}$. 
	Note that all claimed minimisers $x$ in Lemma \ref{lemma:ProblemBasicExampleCLASSO} obey \eqref{eq:InequalitiesForCLBasic} as an equality and furthermore such $x$ have $\|x\|_1 \leq \tau$. Therefore $\min \{\|\matl x - \vecYCL\|_2 \, \vert \, \|x\|_1 \leq \tau \} \geq y_1 [(\alpha \vee \beta)^2 + 1]^{-1/2}$.
	Thus the solutions to $\xicl(\vecYCL,\matl)$ are precisely the vectors $x$ for which every inequality in \eqref{eq:InequalitiesForCLBasic} is obeyed as an equality. More specifically, the following must occur:
	\begin{enumerate}[label= \arabic*.,ref = step \arabic*]
		\item We require each of the equalities in \eqref{eq:tau-x3Inequality} to hold as equality. Thus
		
		\begin{enumerate}[label = \alph*.]
			\item The first inequality in \eqref{eq:tau-x3Inequality} is an equality if and only if $x_3 = \tau - \sum\limits_{\substack{1 \leq i \leq N\\i \neq 3}} |x_i|$.
			\item The second inequality in \eqref{eq:tau-x3Inequality} is an equality if and only if $x_4 = x_5 = \dotsb = x_N = 0$.
			\item The third inequality in \eqref{eq:tau-x3Inequality} is an equality if and only if $x_2 = 0$ if $\alpha > \beta$ or $x_1 = 0 $ if $\alpha < \beta$. If $\alpha = \beta$ then the third inequality in \eqref{eq:tau-x3Inequality} is always an equality.
			\item The final inequality is an equality provided $x_1$ and $x_2$ both have the same sign.
		\end{enumerate}
		\item We require $\alpha x_1 + \beta x_2 = y_1 (\alpha \vee \beta)^2 [(\alpha \vee \beta)^2 + 1]^{-1}$.
	\end{enumerate}
	It is now simple to check that the claimed minimisers in \eqref{eq:unequalAlphaBetaCLASSO} are the only vectors which satisfy these conditions.
	
\end{proof}

\begin{proof}[Proof of Lemma \ref{lemma:BPTVSolutions}]

	 Let $x^{opt}=\eta(y_1,\alpha,\beta)$, if $\beta\geq \alpha$, and $x^{opt}=\flipOp \eta(y_1,\alpha,\beta)$, if $\alpha>\beta$. In both cases we find $\matTV x^{opt} -\vecYTV= -\frac{\delta(m-1)}{\theta}e_1 +\sum_{j=2}^{m}\frac{\delta(\alpha+\beta)}{\theta} e_{j}$, and so $\|\matTV x_{opt} -\vecYTV\|_2 =\delta$, by definition of $\theta$. Next, define a dual vector according to 
	$
	p=-\frac{1}{\alpha\vee \beta}\, e_1+\frac{\alpha+\beta}{(\alpha\vee\beta)(m-1)} \sum_{j=2}^m e_j \in \real^{m}
	$ so that $p= \frac{\theta}{(\alpha \vee \beta) \delta(m-1)}(\matTV x^{opt} -\vecYTV )$ whenever $\delta>0$.
	This in particular implies $\langle \matTV x^{opt} -\vecYTV, p \rangle=\delta\|p\|_2$ and $\delta p/\|p\|_2 = \matTV x^{opt}  - \vecYTV $, for all $\delta\in[0,1]$.
	
	Next, write $D\in\real^{(N-1)\times N}$ for the matrix corresponding to the pairwise differences operator: $Dx=(x_1-x_2,x_2-x_3,\dots, x_{N-1}-x_N)$, so that $\tv{x}=\|Dx\|_{1}$, for $x\in\real^{N}$.
	Defining an auxiliary vector $q=\frac{1}{\alpha\vee \beta }\left( \sum_{j=1}^{N-2} \left(\alpha  - \frac{(\alpha+\beta)\left(j\wedge(m-1)-1\right)}{(m-1)}\right)e_j\; -\beta e_{N-1}\right)\in \real^{N-1}$, we have 
	\begin{equation}\label{eq:BPTV-dual-q}
	1\geq  \frac{\alpha}{\beta} \wedge 1=q_1>q_2>\dots>q_{m-1}=\dots=q_{N-2}= q_{N-1}+\frac{\alpha+\beta}{(\alpha\vee\beta)(m-1)}>q_{N-1}=- \frac{\beta}{\alpha \vee \beta} \geq -1.
	\end{equation}
	Note that $D x^{opt}=- re_{N-1}$ in the case $\beta\geq \alpha$, and $D x^{opt} = r e_1$ in the case $\alpha>\beta$, where $r:=\frac{y_1-\delta\theta/(m-1)}{\alpha\vee \beta}>0$ by the assumption in the statement of the lemma. We therefore deduce $q_j\in {\partial |\cdot|_1\big((Dx^{opt})_j\big)}$, for all $j=1,\dots, N-1$.
	Furthermore,
	\begin{equation*}
	-{\matTV}^* p = \frac{1}{\alpha\vee \beta} \Bigg( \alpha  e_1 - \frac{\alpha+\beta}{m-1}\Big(\displaystyle\sum_{j=2}^{m-1}e_j + e_{N-1}\Big) + \beta e_{N}\Bigg)=D^* q
	\end{equation*}
	and we thus have, for arbitrary $x\in\real^N$ such that $\|\matTV x -\vecYTV \|_2\leq \delta$, the following inequalities
	\begin{align}
	\tv{x} &\stackrel{\mathclap{C-S}}{\geq} \tv{x} +\langle \matTV x  - \vecYTV , p \rangle -\delta\|p\|_2 \label{eq:BPTV-dual-calc-1}\\
	&=\tv{x}  - \langle x-x^{opt} ,  - \matTV^* p \rangle + \underbrace{ \langle \matTV x^{opt}  - \vecYTV , p \rangle -\delta\|p\|_2}_{\mathclap{=0}}\notag \\
	&=\sum_{j=1}^{N-1}\left(|(Dx)_j| -  (Dx -Dx^{opt})_j q_j\right) \geq \sum_{j=1}^{N-1}|(Dx^{opt})_j| =\tv{x^{opt}}.\label{eq:BPTV-dual-calc-2}
	\end{align}
	A feasible $x$ is therefore a minimiser if and only if the inequality in \eqref{eq:BPTV-dual-calc-1} as well as the inequalities in \eqref{eq:BPTV-dual-calc-2} (for each $j$) all hold with equality, that is to say 
	\begin{itemize}
		\item By the Cauchy-Schwarz inequality, \eqref{eq:BPTV-dual-calc-1} holds with equality iff $\matTV x  - \vecYTV =\delta p/\|p\|_2 = \matTV x^{opt}  - \vecYTV  $ i.e. if $\matTV x=\matTV x^{opt}$. By the definition of $\matTV$ this occurs when each of $\alpha x^1 + \beta x_N = \alpha x_1 +\beta x_ N= \alpha x_1^{opt} +\beta x_N^{opt}= y_1 -\frac{\delta(m-1)}{\theta}$, $\{x_j\}_{j=2}^{m-1} = \{x^{opt}_j\}_{j=2}^{m-1}$ and $x_{N-1} = x^{opt}_{N-1}$.
		\item The inequalities in \eqref{eq:BPTV-dual-calc-2} hold with equality iff for each $j=1,2,\dotsc,N-1$ we have $ |(Dx)_j| - (Dx - Dx^{opt})_j q_j = |D(x^{opt})_j|$. 
	\end{itemize}
	Let $x$ be a minimiser. Suppose for now that $\beta\geq \alpha$. Since $D\{x^{opt}_j\}_{j=2}^{N-2} = 0$ and $|q_j|<1$, for all $j\in\{2,\dots,N-2\}$, the requirements for \eqref{eq:BPTV-dual-calc-2} to be an equality imply that $D\{x_j\}_{j=2}^{N-2} = 0$ and hence $x_2 = x_3 = x_4 = \dotsb = x_{N-1}$. It thus follows that $ \{x_j\}_{j=2}^{N-1}= \{x^{opt}_j\}_{j=2}^{N-1}$. Furthermore, the inequalities in \eqref{eq:BPTV-dual-calc-2} hold with equality only if
$
	|x_1-x_2|-\left(x_1-x_2\right)q_1= 0,
$
and
$
|x_{N-1}-x_{N}|-\left(x_{N-1} - x_N +r \right)q_{N-1}=r.
$
As $q_1=\frac{\alpha}{\beta}$ and $q_{N-1}=-1$, it follows that the second of these inequalities is satisfied provided $x_{N}\geq x_{N-1}$, whereas the first is satisfied provided $x_1=x_2$ in the case $\alpha>\beta$, or $x_1\geq x_2$ in the case $\alpha=\beta$. As $x_{N-1}=x_2=\eta(y_1,\alpha,\beta)_1<\eta(y_1,\alpha,\beta)_N $, and appreciating that $x$ must additionally satisfy $\alpha x_1 +\beta x_ N= y_1 -\frac{\delta(m-1)}{\theta}$, we find that $x$ must have the form claimed in the statement of the lemma. 
	
	Similarly, if $\alpha>\beta$, we once again  $ \{x_j\}_{j=2}^{N-1}= \{x^{opt}_j\}_{j=2}^{N-1}$ but now the inequalities in \eqref{eq:BPTV-dual-calc-2} hold with equality only if 
	\begin{equation*}
	\begin{aligned}
	|x_1-x_2|-\left(x_1-x_2-r \right)q_1&= r,\text{ and}\\
	|x_{N-1}-x_{N}|-\left(x_{N-1} - x_N \right)q_{N-1}&=0.
	\end{aligned}
	\end{equation*}
	Then, as $q_1=1$ and $q_{N-1}=-\beta/\alpha$, these are satisfied only if $x_1>x_2$ and $x_{N}=x_{N-1}$. Again appreciating $x_{N-1}=x_2=\eta(y_1,\alpha,\beta)_1$ and $\alpha x_1 +\beta x_ N= y_1 -\frac{\delta(m-1)}{\theta}$ establishes $x=x^{opt}$.
	Conversely, every $x$ of the claimed form satisfies \eqref{eq:BPTV-dual-calc-1} and \eqref{eq:BPTV-dual-calc-2} as equalities, and is therefore a minimiser.
	
\end{proof}
\begin{corollary}\label{Cor:BPTVObjFunction}
	Write $f(\delta) = \min \{\tv{x} \, \vert \,  \|\matTV x-\vecYTV\|_2 \leq \delta\}$. Then 
	\begin{equation*}
	f(\delta) = \begin{cases} \frac{y_1 - \delta\theta/(m-1)}{\alpha \vee \beta}  & \text{if } y_1 > \frac{\delta\theta}{m-1}\\
	0 & \text{if } y_1 \leq \frac{\delta\theta}{m-1}
	\end{cases}.
	\end{equation*}
\end{corollary}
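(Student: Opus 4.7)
\textbf{Proof proposal for Corollary \ref{Cor:BPTVObjFunction}.} The plan is to handle the two cases separately, using Lemma \ref{lemma:BPTVSolutions} directly for the first and an explicit construction for the second.

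First, suppose $y_1 > \delta\theta/(m-1)$. Then Lemma \ref{lemma:BPTVSolutions} describes $\xibptv(\vecYTV,\matTV)$ explicitly. From \eqref{eq:the_eta}, the vector $\eta = \eta(y_1,\alpha,\beta)$ satisfies $\eta_1 = \eta_2 = \dots = \eta_{N-1}$ with a single jump $\eta_N - \eta_{N-1} = (y_1 - \delta\theta/(m-1))/(\alpha\vee\beta)$, so $\tv{\eta} = (y_1 - \delta\theta/(m-1))/(\alpha\vee\beta)$. The flipped vector $\flipOp\eta$ has the same TV norm (TV is invariant under $\flipOp$). For a convex combination $x=t\eta+(1-t)\flipOp\eta$, a direct calculation shows $x_1,\dots,x_N$ is constant except for a jump of size $(1-t)b$ between $x_1$ and $x_2$ and a jump of size $tb$ between $x_{N-1}$ and $x_N$ (with $b$ the above constant), so $\tv{x}=b$ as well. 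Hence in all three sub-cases of Lemma \ref{lemma:BPTVSolutions} we recover $f(\delta) = (y_1 - \delta\theta/(m-1))/(\alpha\vee\beta)$.

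Second, suppose $y_1 \leq \delta\theta/(m-1)$. Here I would exhibit a constant vector $x = c\mathbf{1}_N$ which is feasible, since any constant vector has $\tv{x}=0$. From the definition of $\matTV$, $\matTV\mathbf{1}_N = (\alpha+\beta)e_1 + \sum_{r=2}^{m}e_r$, so
\[
\|\matTV(c\mathbf{1}_N) - \vecYTV\|_2^2 = (c(\alpha+\beta) - y_1)^2 + (m-1)c^2.
\]
Minimising over $c$ gives $c^* = y_1(\alpha+\beta)/[(\alpha+\beta)^2 + (m-1)]$, and the minimum value equals $y_1^2(m-1)/[(\alpha+\beta)^2 + (m-1)] = y_1^2(m-1)^2/\theta^2$, using the identity $\theta^2 = (m-1)[(m-1)+(\alpha+\beta)^2]$. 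Hence the feasibility constraint $\|\matTV(c^*\mathbf{1}_N)-\vecYTV\|_2 \leq \delta$ becomes precisely $y_1(m-1)/\theta \leq \delta$, which is the assumption. Thus $c^*\mathbf{1}_N$ is feasible with $\tv{c^*\mathbf{1}_N}=0$, giving $f(\delta)=0$.

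There is no significant obstacle: the first case is an immediate reading of Lemma \ref{lemma:BPTVSolutions} combined with a one-line TV calculation, and the second case reduces to a single-variable quadratic minimisation whose optimal value exactly matches the threshold $\delta\theta/(m-1)$, which is what allows the two cases to meet cleanly.
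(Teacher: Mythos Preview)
Your proof is correct. The first case matches the paper's approach exactly: read off the minimiser from Lemma \ref{lemma:BPTVSolutions} and compute its TV norm (your explicit treatment of the convex-combination sub-case is a harmless elaboration, since the objective is constant on the minimiser set).

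For the second case you take a genuinely different route from the paper. The paper argues abstractly: $f$ is nonnegative and decreasing in $\delta$, and by the first case $f(\delta')\to 0$ as $\delta'\uparrow y_1(m-1)/\theta$, hence $f(\delta)\leq f(y_1(m-1)/\theta)\leq \inf_{\delta'<y_1(m-1)/\theta}f(\delta')=0$. Your approach instead exhibits an explicit feasible point with zero TV: you minimise $\|\matTV(c\mathbf{1}_N)-\vecYTV\|_2$ over constants $c$ and check that the optimal residual equals exactly $y_1(m-1)/\theta$. The paper's argument is shorter and avoids any computation, leaning entirely on the first case; your argument is constructive and self-contained, and in particular reveals \emph{why} the threshold $\delta\theta/(m-1)$ is the right one (it is precisely the distance from $\vecYTV$ to the range of $\matTV$ restricted to constant vectors). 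Both are perfectly valid.
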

\begin{proof}[Proof of Corollary \ref{Cor:BPTVObjFunction}]
	Assume first that $y_1 > \delta\theta/(m-1)$. Then, using Lemma \ref{lemma:BPTVSolutions}, we see that $f(\delta)= \eta(y_1,\alpha,\beta)_N - \eta(y_1,\alpha,\beta)_1=  \frac{y_1 - \delta\theta/(m-1)}{\alpha \vee \beta}  $. To prove the result in the case $y_1 \leq  \delta\theta/(m-1)$, we note that by the definition of $f$, $f$ is a non-negative decreasing function, and hence we have
	$0 \leq f(\delta) \leq f(y_1(m-1)/\theta) \leq \inf\{ f(\delta')\, \vert\,  \delta'< y_1(m-1)/\theta \}=0$ and thus $f(\delta) = 0$. 
\end{proof}
\begin{proof}[Proof of Lemma \ref{lemma:ULTVSolutions}]
	We can write the unconstrained lasso problem as 
	\begin{equation}\label{eq:ULTVConverted}
	\argmin\limits_{x \in \real^N} \min_{\delta \geq 0} \{\delta^2 + \lambda \tv{x} \, \vert \, \|\matTV x - \vecYTV\|_2 \leq \delta\}.
	\end{equation}
	Let us analyse $\argmin_{\delta \geq 0} \min_{x \in \real^N} \{\delta^2 + \lambda \tv{x} \, \vert \, \|\matTV x - \vecYTV\|_2 \leq \delta\}$. Note that this is equivalent to 
	$\argmin_{\delta \geq 0}\left( \delta^2 +  \min_{x \in \real^N} \{\lambda \tv{x} \, \vert \, \|\matTV x - \vecYTV\|_2 \leq \delta\}\right) = \argmin_{\delta \geq 0}\{\delta^2 + \lambda f(\delta)\}$ where $f(\delta)$ is defined in Corollary \ref{Cor:BPTVObjFunction}. Using Corollary \ref{Cor:BPTVObjFunction}, for $y_1 \leq \delta \theta/(m-1)$ the function $\delta \to \delta^2 + \lambda f(\delta)$ is strictly increasing since $f(\delta) = 0$. Thus we can write  $\argmin_{\delta \geq 0} \min_{x \in \real^N} \{\delta^2 + \lambda \tv{x} \, \vert \, \|\matTV x - \vecYTV\|_2 \leq \delta\}$ as 
	$\argmin\limits_{\delta \geq 0, y_1 \geq \delta\theta/(m-1)} \delta^2 + \lambda f(\delta)$. Noting that $f(\delta) = (\alpha \vee \beta)^{-1} (y_1 - \delta\theta/(m-1))$, we get
	\[\argmin\limits_{\delta \geq 0, y_1 \geq \delta\theta/(m-1)} \delta^2 + \lambda f(\delta)=\argmin_{\delta \geq 0, y_1 \geq \delta\theta/(m-1)} \delta^2 -\lambda \delta\theta (m-1)^{-1}(\alpha \vee \beta)^{-1} + \lambda y_1 (\alpha \vee \beta)^{-1}\] which is minimised when $\delta = \hat{\delta}$, where $\hat{\delta} := \lambda\theta (m-1)^{-1}(\alpha \vee \beta)^{-1}/2$ (we have used here that $\hat{\delta}$ is feasible for the problem since $y_1> \lambda\theta^2[2 (m-1)^2(\alpha \vee \beta)]^{-1} $ and so $y_1 > \hat\delta \theta / (m-1)$). 
	
	We can use this result in \eqref{eq:ULTVConverted} to get
	\begin{align*} 
	\xiultv(\matTV,\vecYTV) &= \argmin\limits_{x \in \real^N}\left\{\hat\delta^2+\lambda \tv{x} \, \middle\vert \, \|\matTV x - \vecYTV\|_2 \leq \hat{\delta}\right\} \\&=\argmin\limits_{x \in \real^N} \left\{ \tv{x} \, \middle\vert \, \|\matTV x - \vecYTV\|_2 \leq \frac{\lambda\theta}{ 2(m-1)(\alpha \vee \beta)}\right\} 
	\end{align*} 
	Noting that $\hat{\delta}\leq \frac{\lambda m }{2(m-1)\frac{1}{4}}\leq 3\lambda \leq 1$, and so Lemma \ref{lemma:BPTVSolutions} can be applied in each of the cases $\alpha < \beta$, $\alpha = \beta$ and $\alpha > \beta$ to reach the desired conclusion.
	
\end{proof}

	\begin{proof}[Proof of Lemma \ref{lemma:ProblemBasicExampleLPObj}]
	Suppose first that  $y_1>0$, $\alpha > 0$, and $\beta \geq 0$.
	Simple gaussian elimination shows that the set of $x$ for which $x \geq 0$ and $\matLPObj x = \vecYl$ is exactly
	\[
	\{x \in \real^N \, \vert \, \alpha x_1 + \beta x_2 = y_1, x_3 = x_4 = \dotsb = x_{m} = x_{m+1} = 0, x_{m+2},x_{m+3},\dotsc ,x_N \geq 0\}
	\]
	 so that $\langle x,c \rangle  = \sum_{i=1}^{N} x_i \geq x_1 \geq (\alpha x_1+\beta x_2)/\alpha \geq y_1/\alpha$.
	Therefore as $\langle x,c \rangle_k$ is increasing as a function of $\langle x,c \rangle$ we have that any $x \geq 0$ such that $\matLPObj x = \vecYl$ must satisfy $\langle x, c \rangle_k \geq \lfloor  10^{k} y_1/\alpha \rfloor  10^{-k}$ and this equality is attained when $x = y_1 e_1 /\alpha$. The proof of \eqref{eq:LPObjBasic} is complete by noting that $ 10^{-k} \lfloor  10^{k} y_1/\alpha \rfloor  \leq 10^{-k} \floor{10^{k} M} \leq M$ if $y_1/\alpha < 10^{-k} (\floor{10^{k} M}+1)$ and $10^{-k} \lfloor  10^{k} y_1/\alpha \rfloor \geq 10^{-k} (\floor{10^{k} M}+1)>M$ if $y_1/\alpha \geq  10^{-k} (\floor{10^{k} M}+1 )$.
	
	Now suppose that $y_1 = 0$. Then $x = 0$ satisfies $\matLPObj x = \vecYl$ and $x\geq 0$. Moreover, $\langle 0, c \rangle_k  = 0\leq M$. This concludes the proof of the lemma.
\end{proof}

\section{Auxiliary results for Theorem \ref{th:smale_comp_sens} -- Sparse recovery}\label{appendix:sparsity}
The concept of sparsity has been dominating modern signal and image processing over the last few decades. Sparsity is a crucial element to this paper and we use this appendix to cover some standard concepts and tools that we will make use of.

\subsection{The basics of sparsity}
Recall that we say that a vector $x \in \mathbb{C}^N$ is $s$-sparse if $x$ has at most $s$ nonzero entries. One of the key mainstays in the theory of sparse recovery is the $\ell^2$-robust nullspace property (RNP).
\begin{definition}[Robust Nullspace Property]\label{def:l2NSPROBUSTSM}
	A matrix $U \in \real^{m \times N}$ satisfies the \emph{$\ell^2$-robust nullspace property of order $s$} with parameters $\rho \in (0,1)$ and $\tau>0$ if
	\begin{equation}\label{eq:NSPDefinitionOrderSM}
		\|v_S\|_2 \leq \frac{\rho}{\sqrt{{s}}} \|v_{S^c}\|_1 + \tau \|Uv\|_2
	\end{equation}
	for all sets $S\subset \{1,\dots, N\}$ of cardinality $s$ and all vectors $v \in \real^n$.
\end{definition} 

Directly showing that a matrix has the RNP is often difficult. Thus, one usually attempts to instead establish the restricted isometry property (RIP) that implies the RNP, recalled next.
\begin{definition}[RIP]A matrix $U \in \complex^{m \times N}$ is said to satisfy the \emph{restricted isometry property (RIP) or order $s$} if there is a $\delta \in [0,1)$ such that
	\begin{equation} \label{eq:RIPDefinition}
		(1-\delta)\|x\|_2^2 \leq \|Ux\|^2_2 \leq (1+\delta) \|x\|^2_2,
	\end{equation}
	for all $s$-sparse $x \in \real^{N}$.
 The \emph{restricted isometry constant of order $s$}, denoted by $\delta_s$, is defined to be the smallest such $\delta$ so that \eqref{eq:RIPDefinition} is satisfied.
	Equivalently, $\delta_s$ is given by 
	\[
	\delta_s = \sup_{ \substack{ T\subset \{1,2,\dotsc,N\} \\ |T| \leq s}} \| P_TU^*UP_T - I_T\|_{2} 
	\]
	where $P_T$ denotes the projection onto the coordinates indexed by $T$.
\end{definition}
We thus have the following result linking the RIP and RNP.
\begin{theorem}[\!{\cite[Theorem 6.13]{foucartBook}}]\label{thm:RIP->RNP}
	If the restricted isometry constant $\delta_{2s}$ of order $2s$ of a matrix $A\in\real^{m\times N}$ obeys $\delta_{2s}<4/\sqrt{41}$, then $A$ satisfies the $\ell^2$-robust nullspace property of order $s$ with parameters
	\begin{equation*}
		\rho=\frac{\delta_{2s}}{\sqrt{1-\delta_{2s}^2}-\delta_{2s}/4}\quad\text{and}\quad \tau=\frac{\sqrt{1+\delta_{2s}}}{\sqrt{1-\delta_{2s}^2}-\delta_{2s}/4}.
	\end{equation*}
\end{theorem}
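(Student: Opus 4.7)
My plan is to follow the classical ``RIP implies RNP'' template, fixing $v\in \real^N$ and a set $S\subset \{1,\dots, N\}$ with $|S|\leq s$, and decomposing the off-support part of $v$ into a sum of sparse blocks of decreasing magnitude. Concretely, I would set $S_0 = S$ and partition $S^c$ into disjoint sets $S_1, S_2, \ldots$ of size at most $s$, with $S_1$ containing indices of the $s$ largest (in absolute value) entries of $v_{S^c}$, $S_2$ the next $s$ largest, and so on. A standard sparsification estimate then gives $\|v_{S_k}\|_2 \leq \|v_{S_{k-1}}\|_1/\sqrt{s}$ for $k\geq 2$, so that $\sum_{k\geq 2}\|v_{S_k}\|_2 \leq \|v_{S^c}\|_1/\sqrt{s}$. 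This will be the route by which the $\rho/\sqrt{s}$ term eventually appears.

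Next I would apply RIP of order $2s$ to the ``head'' vector $u := v_{S_0\cup S_1}$, which is $2s$-sparse. The lower RIP bound gives $\|u\|_2^2 \leq (1-\delta_{2s})^{-1} \|Au\|_2^2$, and I would write $\|Au\|_2^2 = \langle Au, Av\rangle - \sum_{k\geq 2}\langle Au, A v_{S_k}\rangle$. Cauchy--Schwarz together with the RIP upper bound controls $\langle Au, Av\rangle$ by $\sqrt{1+\delta_{2s}}\|u\|_2\|Av\|_2$. For the cross terms, the standard RIP inner-product lemma (Foucart--Rauhut Prop.~6.3 type) yields $|\langle A w, A w'\rangle| \leq \delta_{r+r'}\|w\|_2\|w'\|_2$ for disjointly supported $w$, $w'$ of sparsity $r$ and $r'$. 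Naively this would demand $\delta_{3s}$, so to stay within the $\delta_{2s}$ regime I would use the Cai--Zhang/Foucart--Rauhut refinement: split the head $u$ itself into a convex combination of sparser pieces that can be paired with the $v_{S_k}$ inside a set of total size $\leq 2s$. This refinement is what produces the characteristic $\delta_{2s}/4$ correction, coming from the polarization identity $4\langle Ap, Aq\rangle = \|A(p+q)\|^2 - \|A(p-q)\|^2$ applied to balanced $s$-sparse pieces $p,q$ with disjoint support.

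Collecting the estimates, I would obtain an inequality of the form
\begin{equation*}
\bigl(\sqrt{1-\delta_{2s}^2} - \tfrac{\delta_{2s}}{4}\bigr)\|u\|_2 \;\leq\; \sqrt{1+\delta_{2s}}\,\|Av\|_2 \;+\; \delta_{2s}\cdot \frac{\|v_{S^c}\|_1}{\sqrt{s}},
\end{equation*}
where the denominator $\sqrt{1-\delta_{2s}^2}-\delta_{2s}/4$ on the left is exactly what appears in the formulas for $\rho$ and $\tau$ in the theorem statement, and the threshold $\delta_{2s}<4/\sqrt{41}$ is precisely the condition under which this quantity is positive. Dividing through, and using $\|v_S\|_2 \leq \|u\|_2$, yields the robust nullspace inequality \eqref{eq:NSPDefinitionOrderSM} with the claimed $\rho$ and $\tau$.

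The main obstacle I anticipate is not the overall scheme (which is textbook) but keeping the constants sharp: a naive partition-and-triangle-inequality argument gives $\rho = \delta_{2s}/\sqrt{1-\delta_{2s}^2}$ together with a matching $\tau$, but not the $-\delta_{2s}/4$ improvement that pushes the threshold up to $4/\sqrt{41}$. Getting that sharper constant requires the careful Cai--Zhang-style balanced decomposition of $u$ together with the polarization trick, and I expect the bookkeeping around which pieces get paired with which $v_{S_k}$ to be the most delicate part. Since the statement is cited verbatim from Foucart--Rauhut, Theorem~6.13, in practice one would simply quote that proof rather than re-deriving the inequality.
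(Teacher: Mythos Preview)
The paper does not prove this statement at all; it is simply quoted as a black-box tool from the literature (Foucart--Rauhut, Theorem~6.13) and used as-is in the proofs of Theorem~\ref{th:smale_comp_sens}. Your sketch is the standard textbook argument behind that citation, and your own closing remark --- that in practice one simply quotes the result --- is exactly what the paper does.
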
 

It is generally difficult to construct deterministic matrices with the RIP. Instead, one often resorts to random matrices, such as provided by the following result taken from \cite{RudelsonVershyninRIP} and adapted to fit the notation of this paper.
\begin{theorem}[\!{\cite[Theorem 3.9]{RudelsonVershyninRIP}}]\label{thm:RVRandomRIP}
	For every $K> 0$, there exist constants $C_1, C_2$ and $C_3$ depending only on $K$ such that the following occurs. 
	Let $x^1,x^2,\dotsc,x^N$ be $N$-dimensional complex vectors with $N \geq 3$ and $\|x^i\| \leq K$, for all $i$. Assume that $\sum_{i=1}^{N} x^i \otimes x^i = N I_N$ and that $m$ is a natural number with \[N \geq m \geq C_1s\log(N)\epsilon^{-2}\log[s\log(N)\epsilon^{-2}]\log^2{s}\] where $\epsilon \in (0,1)$ and $s$ is an integer with $s \geq 3$. Let $R$ be a random subset of $\{1,2,\dotsc,N\}$ chosen according to a Bernoulli model with probability $m/N$, that is, each entry of $R$ is chosen independently and $i \in R$ with probability $m/N$. Set $X := \sup_{ |T| \leq s } \|I_T - m^{-1}\sum_{i \in R} x^i_T \otimes x^i_T \|_2$, where $x^i_T$ is the vector $x^i$ restricted to a set $T$. Then
	\begin{equation}\label{eq:RVRandomRIPConclusion}
		\probab( X > C_{2} \alpha \epsilon) \leq 3 \exp\left(- C_3\alpha \epsilon m s^{-1}\right) + 2 \exp(-\alpha^2), \quad\text{for all }\alpha > 1.
	\end{equation}
\end{theorem}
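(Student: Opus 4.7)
The plan is to prove this Rudelson--Vershynin-type random RIP theorem via the now-standard combination of symmetrization, generic chaining (Dudley-type entropy integral), and a concentration inequality of Talagrand type. Since the statement concerns the deviation of an empirical covariance, restricted to $s$-sparse supports, from its expectation, the natural route is to first control $\mathbb{E} X$ and then upgrade to a tail bound.

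First I would reformulate the problem. Let $\xi_i$ be independent Bernoulli variables with $\mathbb{P}(\xi_i = 1) = m/N$, so that $R = \{i : \xi_i = 1\}$ and $|R|$ concentrates around $m$. Then
\[
I_T - \frac{1}{m}\sum_{i\in R} x^i_T \otimes x^i_T \;=\; \frac{1}{m}\sum_{i=1}^N\left(\tfrac{m}{N} - \xi_i\right) x^i_T \otimes x^i_T,
\]
using the isotropy assumption $\sum_{i=1}^N x^i \otimes x^i = N I_N$. By the classical symmetrization lemma, up to a factor of $2$, one may replace $\xi_i - m/N$ with $\epsilon_i \xi_i$, where $\{\epsilon_i\}$ are independent Rademacher variables independent of everything else. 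Conditioning on $\{\xi_i\}$, it then remains to bound the expected supremum over sparse supports $T$ of a Rademacher chaos indexed by the normalized vectors $x^i_T$.

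Next I would apply Dudley's entropy integral (or, for a sharper bound, generic chaining) to the Rademacher process $Z_T := \|\sum_i \epsilon_i \xi_i\, x^i_T \otimes x^i_T\|_2$. The metric induced by this process on the set of rank-$s$ projections is controlled, via the boundedness $\|x^i\| \le K$, by the operator norm of the matrix $\sum_i \xi_i x^i \otimes x^i$ restricted to the sparse set. The key combinatorial input is that the number of $s$-subsets of $\{1,\dots,N\}$ is $\binom{N}{s} \le (eN/s)^s$, giving a $\log(N/s)$ factor, and covering numbers of the unit ball of $\real^s$ contribute the $\log s$ factor. Carrying out the chaining carefully (for instance, in the style of Rudelson--Vershynin or Bourgain's argument) yields
\[
\mathbb{E} X \;\lesssim\; \sqrt{\frac{s\log(N)\log^2(s)\log(s\log N)}{m}}\cdot K,
\]
which is precisely why the assumed lower bound on $m$ forces $\mathbb{E} X \lesssim \epsilon$.

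Finally, I would upgrade this in-expectation bound to the stated tail bound by invoking a Talagrand-type concentration inequality for suprema of empirical processes (e.g.\ Bousquet's version). The bounded summands $x^i_T \otimes x^i_T$ have operator norm at most $K^2$, and the variance term is dominated by $\mathbb{E} X$, so the resulting deviation inequality produces a subexponential term $\exp(-c \alpha \epsilon m / s)$ and a subgaussian term $\exp(-\alpha^2)$, matching the two terms in \eqref{eq:RVRandomRIPConclusion}. The main obstacle, and the technical heart of the argument, is the chaining step: one must organize the hierarchy of admissible partitions of the sparse sphere so that both the supremum over rank-$s$ projections and the supremum over unit vectors in $\real^s$ are handled simultaneously without losing any logarithmic factors, which is exactly where the $\log^2(s)$ term arises rather than a larger power.
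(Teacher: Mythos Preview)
The paper does not prove this theorem at all: it is quoted verbatim as \cite[Theorem 3.9]{RudelsonVershyninRIP} and used as a black box in the proof of Theorem~\ref{thm:RIPExistenceBOS}. There is therefore no ``paper's own proof'' to compare against.

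Your sketch is a faithful outline of the actual Rudelson--Vershynin argument in the cited reference: symmetrization to pass to a Rademacher process, a Dudley-type entropy bound over the set of $s$-sparse unit vectors (which is where the $\log^2 s$ and $\log(s\log N)$ factors enter), and then Talagrand-type concentration to convert the expectation bound into the two-term tail in \eqref{eq:RVRandomRIPConclusion}. One small quibble: the subgaussian $\exp(-\alpha^2)$ term in the original paper comes not directly from the variance part of Talagrand's inequality but from a separate argument controlling the deviation of $|R|$ from $m$ under the Bernoulli model; otherwise your decomposition is correct.
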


We can use Theorem \ref{thm:RVRandomRIP} to show the existence of matrices that satisfy the RIP, as follows:
\begin{theorem}\label{thm:RIPExistenceBOS}
	For every $K> 0$, there exists a constant $C_4$ depending only on $K$ so that for all natural numbers $m$, $N$ and $s$, each at least $3$ and satisfying 
	\begin{equation}\label{eq:RIPThm:mLB} 
	N \geq m \geq C_4s\log(N)\log[s\log(N)]\log^2{s},
	\end{equation}
	as well as any unitary matrix $U$ with $\|U\|_{\max} \leq {K}/{\sqrt{N}}$, there exists a set $S$ of size exactly $m$ such that $\sqrt{{N}/{m}}\, P_{S} U$ has the RIP of order $s$ with $\delta_s \leq 1/5$. 
\end{theorem}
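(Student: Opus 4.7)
The plan is to apply Theorem \ref{thm:RVRandomRIP} to the rescaled rows of $U$ and then use the probabilistic method. Let $u_j\in\mathbb{R}^N$ denote the $j$-th row of $U$ viewed as a column vector, and set $x^j := \sqrt{N}\,u_j$, for $j=1,\dots,N$. The hypothesis $\|U\|_{\max}\leq K/\sqrt{N}$ gives $\|x^j\|_\infty \leq K$, and the unitarity of $U$ (orthonormality of its rows) yields $\sum_{j=1}^N x^j\otimes x^j = N\sum_j u_j u_j^T = NI_N$, so the hypotheses of Theorem \ref{thm:RVRandomRIP} are satisfied. The key observation is that for any $S\subset\{1,\dots,N\}$, the condition $\delta_s\leq 1/5$ for the matrix $\sqrt{N/m}\,P_S U$ is equivalent to
\[
\sup_{|T|\leq s} \Big\|I_T - m^{-1}\sum_{j\in S}(x^j)_T\otimes (x^j)_T\Big\|_2 \leq \tfrac{1}{5},
\]
which is precisely the requirement $X(S)\leq 1/5$ in Theorem \ref{thm:RVRandomRIP} (with $R$ replaced by $S$).

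I would then apply Theorem \ref{thm:RVRandomRIP} with $\epsilon$ chosen as a small \emph{constant} depending only on $K$ (independent of $N,m,s$), and with $\alpha\geq 1$ a constant chosen so that $C_2\alpha\epsilon = 1/5$. With $\epsilon$ held constant, the required hypothesis $m\geq C_1 s \log(N)\epsilon^{-2}\log[s\log(N)\epsilon^{-2}]\log^2(s)$ of Theorem \ref{thm:RVRandomRIP} is implied by the hypothesis $m\geq C_4 s \log(N)\log[s\log(N)]\log^2(s)$ of our theorem upon absorbing the factor $\epsilon^{-2}$ into the new constant $C_4$ (which accordingly depends only on $K$). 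The conclusion of Theorem \ref{thm:RVRandomRIP} then gives that for a Bernoulli random subset $R$ of $\{1,\dots,N\}$ with inclusion probability $m/N$, the event $\{X(R)\leq 1/5\}$ occurs with probability bounded away from $0$ uniformly in the admissible parameters.

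The main obstacle is converting this probabilistic guarantee for a Bernoulli-random subset, whose cardinality $|R|$ is not exactly $m$, into the existence of a deterministic subset of size exactly $m$ with the RIP. My plan is to handle this via the standard conditioning trick: the conditional distribution of the Bernoulli subset $R$ given $|R|=m$ coincides with the uniform distribution on subsets of size exactly $m$, so it suffices to show that $\probab(X(R)\leq 1/5 \,\vert\, |R|=m)>0$. From the total-probability decomposition
\[
\probab\big(X(R)\leq 1/5\big) \leq \probab\big(|R|\neq m\big) + \probab\big(|R|=m\big)\cdot \probab\big(X(R)\leq 1/5 \,\vert\, |R|=m\big),
\]
the desired positivity of the conditional probability follows once one shows $\probab(X(R)>1/5) < \probab(|R|=m)$. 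Since $\probab(|R|=m)\geq c/\sqrt{m}$ for a universal $c>0$ (as $m$ is the mode of $\mathrm{Bin}(N,m/N)$ and Stirling's formula applies), this in turn requires sharpening the Rudelson-Vershynin tail bound by taking $\alpha$ in Theorem \ref{thm:RVRandomRIP} to grow mildly with $m$ (e.g., $\alpha=\sqrt{\log(cm)}$), which inflates the lower bound on $m$ by a logarithmic factor that must be absorbed into $C_4$. The delicate step is balancing this inflation against the target bound $m\geq C_4 s\log(N)\log[s\log(N)]\log^2(s)$; once the constants are chosen appropriately and the required tail estimate holds, the existence of $S$ with $|S|=m$ and $\delta_s\leq 1/5$ is immediate, completing the proof.
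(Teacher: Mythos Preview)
Your setup applying Theorem \ref{thm:RVRandomRIP} to the rescaled rows $x^j=\sqrt{N}\,u_j$ is correct and matches the paper exactly, including the identification of $X(S)$ with the restricted isometry constant of $\sqrt{N/m}\,P_SU$.

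The gap is in your Bernoulli-to-fixed-size conversion. To force $\probab(X(R)>1/5)<\probab(|R|=m)\sim c/\sqrt{m}$ via the tail bound \eqref{eq:RVRandomRIPConclusion}, you must drive $\exp(-\alpha^2)$ below $c/\sqrt{m}$, which indeed forces $\alpha\gtrsim\sqrt{\log m}$. But the bound in Theorem \ref{thm:RVRandomRIP} controls $\probab(X>C_2\alpha\epsilon)$, not $\probab(X>1/5)$, so to keep the threshold at $1/5$ you must simultaneously shrink $\epsilon$ like $1/\sqrt{\log m}$. This makes $\epsilon^{-2}\gtrsim\log m$, and the hypothesis $m\geq C_1 s\log(N)\epsilon^{-2}\log[s\log(N)\epsilon^{-2}]\log^2 s$ then demands roughly $m\gtrsim s\log(N)\,\log(m)\,\log[s\log N]\log^2 s$. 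Since $\log m$ can be as large as $\log N$, this is a genuine extra $\log N$ factor relative to the target \eqref{eq:RIPThm:mLB}, and it cannot be absorbed into a constant $C_4$ depending only on $K$. The ``delicate balancing'' you allude to therefore fails.

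The paper avoids this by a different conversion argument that needs only $\probab(X(R)>1/5)$ bounded away from $1$, not below $1/\sqrt{m}$. It splits the RIP failure into an upper-deviation event $E^u$ and a lower-deviation event $E^l$, observes that $\probab(E^u_B\mid |R|=i)$ is \emph{increasing} in $i$ while $\probab(E^l_B\mid |R|=i)$ is \emph{decreasing} in $i$ (proved via Sperner's shade/shadow inequalities for set families), and combines this monotonicity with the fact that $m$ is the median of $|R|\sim\mathrm{Bin}(N,m/N)$ to get $\probab(E^u_U)\leq 2\,\probab(E^u_B)$ and $\probab(E^l_U)\leq 2\,\probab(E^l_B)$. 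This yields $\probab(E^l_U\cup E^u_U)\leq 4\,\probab(E^l_B\cup E^u_B)\leq 20e^{-4}<1$ with $\epsilon$ and $\alpha$ held at fixed constants, so no extra logarithmic factor appears.
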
 
\begin{proof}
	Fix $K > 0$ and let $C_1, C_2$ and $C_3$ be the constants from Theorem \ref{thm:RVRandomRIP}. We set
	\begin{equation*} 
		\epsilon =  \frac{1}{10C_2}\wedge \frac{1}{2}, \quad \alpha = \frac{1}{5C_2\epsilon} ,\quad C_4 =\left[ C_1 \epsilon^{-2}\left(1 + 2\log \epsilon^{-1}\right)\right] \vee \frac{5C_2s}{\epsilon^2C_3}
	\end{equation*}
	These choices of parameters ensure that $0 < \epsilon \leq 1/2 < 1$, that $\alpha \geq \frac{10C_2}{5C_2} = 2$ and, using \ref{eq:RIPThm:mLB}, that \begin{equation}\label{eq:RIPThm:constantDef}
		m \geq C_1 s\log(N) \epsilon^{-2}\log[s\log(N)]\left[1 + 2\log \epsilon^{-1}\right]\log^2{s} \geq C_1s\log(N)\epsilon^{-2}\log[s\log(N)\epsilon^{-2}]\log^2{s}.
	\end{equation} since both $s$ and $N$ are at least $3$. Furthermore, our choice of $C_4,\alpha$ and $\epsilon$ depend only on $K$.
	
	Next, we choose the vectors $x^1,x^2,\dotsc,x^N$ so that $x^i$ is the $i$th column of $U$ multiplied by $\sqrt{N}$. Such a choice of $x^i$ gives $\|x^i\|_{\infty} \leq K$. Moreover, $\sum_{i=1}^{N} x^i \otimes x^i = NU^*U = I_N$ by the assumption that $U$ is unitary. Therefore, Theorem \ref{thm:RVRandomRIP} applies and we conclude that for a random set $R$ chosen according to the Bernoulli model equation \eqref{eq:RVRandomRIPConclusion} holds.
	
	The choice of the parameters made in \eqref{eq:RIPThm:constantDef} ensures that $C_2 \alpha \epsilon \leq 1/5$. Because $\log(s),\log(N)$ and $\log(s\log(N))$ are each at least $1$, condition \eqref{eq:RIPThm:mLB} implies that $m \geq C_4 \geq \frac{5C_2s}{\epsilon^2 C_3}$. Therefore, again using \eqref{eq:RIPThm:constantDef}, we see that $C_3 \alpha \epsilon m/s \geq \epsilon^{-2}$. Finally, our choice of $x^i$ ensure that $X$ can be written as $X = \|I_T - m^{-1} N P_T U^*P_S U P_T\|_2$. By the definition of the restricted isometry constant this is in fact $\delta_s$ for the matrix $\sqrt{N/m} \, P_S U$. Thus \eqref{eq:RVRandomRIPConclusion} implies that the random variable $\delta_s$ corresponding to the random matrix $\sqrt{N/m} \, P_S U$ satisfies
	$\probab(\delta_s > 1/5) \leq 3 \exp(-\epsilon^{-2})  + 2 \exp(-\alpha^2) \leq 5 \exp(-4)$.
	
	At this point, the proof would be done if the cardinality of $S$ were fixed since $5\exp(-4) < 1$ and hence there must exist at least one such $R$ with $\delta_s \leq 1/5$. Unfortunately, the cardinality of $R$ is a random variable since $S$ is selected according to the Bernoulli model. We will therefore consider random sets $S'$ chosen as follows: let $X$ be the set of all subsets of $\{1,2,\dotsc,N\}$ with cardinality $m$. We take $S$ to be an element of $X$ chosen uniformly at random. Using an argument similar to the one presented in \cite[p. 468]{foucartBook}
	we will bound the probability that the random variable $\delta_s$ corresponding to the random matrix $\sqrt{N/m} \, P_{S'} U$ exceeds $1/5$.

	To this end, let $E^u_U$ be the event that there exists an $s$-sparse unit vector $x$ such that $\|\sqrt{N/m} \, P_{S'} U x\|^2_2 > 6/5$ and let $E^l_U$ be the event that there exists an $s$-sparse unit vector $x$ such that $\| \sqrt{N/m} \, P_{S'} U x\|^2_2 < 4/5$. Similarly, for the random sets $S$ chosen according to the Bernoulli model, let $E^u_B$ be the event that there exists an $s$-sparse unit vector $x$ such that $\|\sqrt{N/m} \, P_{S} U x\|^2_2 > 6/5$ and let $E^l_B$ be the event that there exists an $s$-sparse unit vector $x$ such that $\|\sqrt{N/m} \, P_{S} U x\|^2_2 < 4/5$. We have shown already that $\probab(E^l_B \cup E^u_B) \leq 5\exp(-4)$.

	For a given $i$, let $B_i$ denote the collection of sets $S$ with $S \subseteq \{1,2,\dotsc,N\}$,  $|S| = i$ and such that there exists an $s$-sparse unit vector $x$ such that $\|\sqrt{N/m}\,P_{S} U x\|^2_2 > 6/5$. Because the Bernoulli model selects elements independently and with equal probability, $\probab\left(E^u_B\, \big\vert \, |S| = i\right)  = {|B_i|} /{{n \choose i}}$. By an argument originally used by Sperner to prove Sperner's theorem \cite[p.3]{SpernerTheory}, the shade $\nabla B_i$ of $B_i$ defined by 
	\[\nabla B_i = \{ \hat S \subseteq \{1,2,\dotsc,N\} |, \vert \, |\hat S| = i+1 \text{ and } \exists S \in B_i \text{ with } S \subseteq \hat S\}  
	\]  
	satisfies $|\nabla B_i| \geq (n-i)|B_i|/(i+1) $. Moreover, if $S$ is in $B_i$ and $\hat S$ is such that $|\hat S| = i+1$ and $S \subset \hat S$ then $\|\sqrt\frac{N}{m}P_{\hat S } Ax\|^2_2 \geq \|\sqrt\frac{N}{m}P_S Ax\|^2$ and hence $\hat S \in B_{i+1}$. Thus $\nabla B_i \subset B_{i+1}$. Therefore for $i \leq N-1$ we have
	\begin{equation*}
		\probab\left(E^u_B\, \big\vert \, |S| = i\right)  = \frac{|B_i|} {{n \choose i}} \leq \frac{(i+1)|\nabla B_i|}{{n \choose i} (n-i)} = \frac{|\nabla B_i|}{{n \choose i+1} } \leq \frac{|B_{i+1}|}{{n \choose i+1} }  = \probab\left(E^u_B\, \big\vert \, |S| = (i+1)\right).
	\end{equation*}  
	
	Let $A_i$ denote the collection of sets $S$ with $S \subseteq \{1,2,\dotsc,N\}$,  $|S| = i$ and such that there exists an $s$-sparse unit vector $x$ such that $\|\sqrt{N/m}\,P_{S} U x\|^2_2 < 4/5$. A similar argument to the preceding one for $B_i$ this time using the fact (also proven by Sperner) that the shadow $\Delta A_i$ of $A_i$ defined by 
	\[\Delta A_i = \{ \hat S \subseteq \{1,2,\dotsc,N\} |, \vert \, |\hat S| = i-1 \text{ and } \exists S \in A_i \text{ with } \hat S \subseteq S\}  
	\] 
	satisfies $|\Delta A_i| \geq (i |A_i|)/(n-i+1)$ gives
	$	\probab\left(E^l_B\, \big\vert \, |S| = i\right) \leq \probab\left(E^l_B\, \big\vert \, |S| = (i-1)\right)$.
	Thus
	\begin{align} \probab(E^u_B) = \sum_{i=0}^{N} \probab\left(E^u_B\, \big\vert \, |S| = i\right) \probab(|S| = i) &\geq \sum_{i=m}^{N}\probab\left(E^u_B\, \big\vert \, |S| = i\right) \probab(|S| = i)\notag \\&\geq \probab\left(E^u_B\, \big\vert \, |S| = m\right) \sum_{i=m}^{N} \probab(|S| = i) \geq \frac{\probab(E^u_U)}{2}. \label{eq:RIPThm:UBBernoulliUniform}
	\end{align}
	where the final inequality follows because $m$ is the median of the random variable $|S|$. Similarly we obtain 
	\begin{equation}
		\probab(E^l_B) \geq \sum_{i=0}^{m} \probab\left(E^u_B\, \big\vert \, |S| = i\right) \probab(|S| = i) \geq \frac{\probab(E^l_U)}{2}.\label{eq:RIPThm:LBBernoulliUniform}
	\end{equation}
	
	Combining \eqref{eq:RIPThm:UBBernoulliUniform}, \eqref{eq:RIPThm:LBBernoulliUniform} and the already established result $\probab(E^l_B \cup E^u_B) \leq 5\exp(-4)$ gives 
	\begin{equation*}
		\probab(E^l_U \cup E^u_U) \leq \probab(E^l_U) + \probab(E^u_U) \leq 2 \left[\probab(E^l_B) + \probab(E^u_B)\right] \leq 4 \probab(E^l_B \cup E^u_B) \leq 20 \exp(-4) < 1
	\end{equation*}
	Hence $\probab(\delta_s >1/5) <1$ where $\delta_s$ is the random RIP constant associated to the random matrix $P_{S'}U$ such that $S'$ a uniformly randomly chosen subset of $\{1,2,\dotsc,N\}$ with $|S'| = m$. 
	
	Thus there exists a set $S$ with $|S| = m$ such that the matrix $P_{S} U$ has $\delta_{s} \leq 1/5$, as otherwise we would have $\probab(\delta_s \leq 1/5) =0$ and this would be a contradiction. 
\end{proof}
We can apply this result to the discrete cosine matrix to derive the following:
\begin{theorem}\label{thm:NSPExistenceDCT}
	There exists a constant $C_5$ with the following property: for any natural numbers $m$,$N$, and $s$ with $N \geq m \geq 3$ and $s\geq 2$ obeying 
	\begin{equation}\label{eq:NSPThm:mLB}
	N \geq m \geq C_5s\log(N)\log[s\log(N)]\log^2{s},
	\end{equation}  
	there exists a matrix $F \in \real^{m \times N}$ with the RNP of order $s$ with parameters $\rho < 1/3$ and $\tau < 2$ that also satisfies $\|F\|_2 \leq \sqrt{N/m}$. \end{theorem}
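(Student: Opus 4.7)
The plan is to produce $F$ as a renormalised row-subsampled discrete cosine transform (DCT) matrix, and then combine the two black-box results already proved in the paper: the random subsampling RIP result (Theorem \ref{thm:RIPExistenceBOS}) and the RIP-to-RNP conversion (Theorem \ref{thm:RIP->RNP}). Concretely, let $U\in\real^{N\times N}$ be the (real, orthogonal) DCT matrix. Its entries are bounded in absolute value by $\sqrt{2/N}$, so $\|U\|_{\max}\leq K/\sqrt{N}$ with the universal constant $K=\sqrt{2}$. Let $C_4=C_4(K)$ be the constant furnished by Theorem \ref{thm:RIPExistenceBOS} for this choice of $K$.

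The key step is to apply Theorem \ref{thm:RIPExistenceBOS} to $U$ \emph{with sparsity level $2s$ rather than $s$}, in order to meet the hypothesis of Theorem \ref{thm:RIP->RNP}. This requires $m\geq C_4\cdot 2s\log(N)\log[2s\log(N)]\log^2(2s)$. Using $\log(2s)\leq 2\log s$ for $s\geq 2$ and $\log[2s\log N]\leq 2\log[s\log N]$ for $s\log N\geq 2$, this condition is implied by $m\geq 16\,C_4\,s\log(N)\log[s\log(N)]\log^2 s$, so I would set $C_5:=16\,C_4$. Theorem \ref{thm:RIPExistenceBOS} then yields a set $S\subset\{1,\dots,N\}$ with $|S|=m$ such that
\begin{equation*}
F:=\sqrt{N/m}\;P_S U\in\real^{m\times N}
\end{equation*}
satisfies the RIP of order $2s$ with $\delta_{2s}\leq 1/5$.

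Next, since $1/5<4/\sqrt{41}$, Theorem \ref{thm:RIP->RNP} applies to $F$ and gives the $\ell^2$-robust nullspace property of order $s$ with parameters
\begin{equation*}
\rho\;\leq\;\frac{1/5}{\sqrt{1-1/25}-1/20}\;=\;\frac{1/5}{\sqrt{24/25}-1/20},\qquad \tau\;\leq\;\frac{\sqrt{1+1/5}}{\sqrt{24/25}-1/20}.
\end{equation*}
A direct numerical check shows $\rho\leq 0.216<1/3$ and $\tau\leq 1.18<2$, as required. Finally, the spectral norm bound is immediate from orthogonality: $\|F\|_2=\sqrt{N/m}\,\|P_S U\|_2\leq \sqrt{N/m}$, since $P_S U$ is a row-submatrix of an orthogonal matrix.

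There is no real obstacle: everything reduces to a bookkeeping exercise once one identifies the right pair of auxiliary theorems. The only thing one has to be slightly careful about is the factor-of-two inflation in the sparsity parameter when going from RIP to RNP, which is what forces the constant $C_5$ to be a bounded multiple (here $16$) of $C_4(\sqrt{2})$; the rest of the proof is mechanical.
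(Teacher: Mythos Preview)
Your proposal is correct and is essentially identical to the paper's own proof: both take $U$ to be the DCT matrix with $K=\sqrt{2}$, apply Theorem \ref{thm:RIPExistenceBOS} at sparsity level $2s$ (using the same elementary logarithm inequalities to absorb the factor of $2$, yielding the same constant $C_5=16\,C_4$), and then invoke Theorem \ref{thm:RIP->RNP} followed by the orthogonality bound $\|F\|_2\leq\sqrt{N/m}$.
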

\begin{proof}
	
	Let $U\in\real^{N\times N}$ be the matrix corresponding to the second variant of the discrete cosine transform, i.e., its entries are given by \begin{equation*} U_{jk}=\begin{cases} \sqrt{\frac{2}{N}}\cos\left(\frac{\pi}{2N}(j-1)(2k-1)\right) &\text{ for } j,k\in\{1,\dots,N\}, j\neq 1 \\\sqrt{\frac{1}{N}} & \text{ for } j = 1 \end{cases} \end{equation*} 
	Note that $U$ is an orthonormal matrix with $\|U\|_{\max} \leq \sqrt{{2/N}}$. Take $C_5 = 16C_4$ where $C_4$ is taken from the statement of Theorem \ref{thm:RIPExistenceBOS} with $K = \sqrt{2}$. Then condition \ref{eq:NSPThm:mLB} implies that
	\[
	N \geq m \geq 2C_4s\log(N) (2\log(s\log(N)))\left(2\log(s)\right)^2 \geq  2C_4s\log(N) (\log(2s\log(N)))\left(\log(2s)\right)^2
	\]
	since $s \geq 2$. Thus, we can apply Theorem \ref{thm:RIPExistenceBOS} to obtain a set $S$ so that $F:=\sqrt{N/m}\, P_S U$ obeys the RIP of order $2s$ with $\delta_{2s}\leq 1/5$. Hence, by Theorem \ref{thm:RIP->RNP}, the matrix $F$ satisfies the RNP with parameters
	\begin{equation*}
		\rho:=\frac{1/5}{\sqrt{1-(1/5)^2}-(1/5)/4}<\frac{1}{3}\quad\text{and}\quad \tau:= \frac{\sqrt{1+1/5}}{\sqrt{1-(1/5)^2}-(1/5)/4}<2\,.
	\end{equation*}
	The proof is complete by noting that $\|F\|_2 \leq \sqrt{N/m}\, \|U\|_2 \leq \sqrt{N/m}$.
	
\end{proof}

This paper also makes use of the following results:
\begin{theorem}[\!{\cite[Theorem 4.25]{foucartBook}}] \label{l2RNPerrorest}
	Suppose that $A\in\mathbb{C}^{m\times N}$ satisfies the RNP of order $s$ with parameters $\rho\in(0,1)$ and $\tau>0$.
	Then, for all $x,z\in\mathbb{C}^N$, we have
	\begin{equation}
		\|x-z\|_2\leq \frac{(1+\rho)^2}{1-\rho}\frac{1}{\sqrt{s}}\left(\|z\|_1-\|x\|_1+2\sigma_s(x)_1\right)+\frac{(3+\rho)\tau}{1-\rho}\|Ax-Az\|_2,
	\end{equation}
	where $\sigma_s(x)_1=\min\{\|x-y\|_1 \,\vert\,\text{$s$-sparse } y\in\real^N\}$.
\end{theorem}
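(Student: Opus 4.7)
The plan is to follow the standard two-step approach of first deriving an $\ell^1$-error estimate from the robust nullspace property (RNP) and then upgrading it to an $\ell^2$-estimate via a Stechkin-type inequality. Set $v = x - z$, so that $\|A(x-z)\|_2 = \|Av\|_2$, and let $T \subset \{1,\ldots,N\}$ be the index set of the $s$ largest entries (in magnitude) of $x$, so $\|x_{T^c}\|_1 = \sigma_s(x)_1$. The first task is the combinatorial $\ell^1$-comparison: since $z_{T^c} = x_{T^c} - v_{T^c}$, the reverse triangle inequality gives $\|z_{T^c}\|_1 \geq \|v_{T^c}\|_1 - \|x_{T^c}\|_1$, and similarly $\|z_T\|_1 \geq \|x_T\|_1 - \|v_T\|_1$. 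Adding these and rearranging yields
\begin{equation*}
\|v_{T^c}\|_1 \leq \|z\|_1 - \|x\|_1 + 2\sigma_s(x)_1 + \|v_T\|_1.
\end{equation*}

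Next, I would invoke the $\ell^2$-RNP applied to $v$ with the set $T$ of size $s$, together with the Cauchy–Schwarz-type estimate $\|v_T\|_1 \leq \sqrt{s}\,\|v_T\|_2$, to obtain $\|v_T\|_1 \leq \rho\|v_{T^c}\|_1 + \tau\sqrt{s}\,\|Av\|_2$. Substituting this into the $\ell^1$-comparison above and solving for $\|v_{T^c}\|_1$ produces
\begin{equation*}
(1-\rho)\|v_{T^c}\|_1 \leq \|z\|_1 - \|x\|_1 + 2\sigma_s(x)_1 + \tau\sqrt{s}\,\|Av\|_2,
\end{equation*}
and then summing $\|v\|_1 = \|v_T\|_1 + \|v_{T^c}\|_1 \leq (1+\rho)\|v_{T^c}\|_1 + \tau\sqrt{s}\|Av\|_2$ leads to the $\ell^1$-estimate
\begin{equation*}
\|v\|_1 \;\leq\; \frac{1+\rho}{1-\rho}\bigl(\|z\|_1 - \|x\|_1 + 2\sigma_s(x)_1\bigr) + \frac{2\tau\sqrt{s}}{1-\rho}\,\|Av\|_2.
\end{equation*}

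For the $\ell^2$-upgrade, I would switch to a new index set $S$ consisting of the $s$ largest entries of $v$ (not of $x$); this is the crucial re-indexing trick. The RNP directly gives $\|v_S\|_2 \leq \rho s^{-1/2}\|v_{S^c}\|_1 + \tau\|Av\|_2 \leq \rho s^{-1/2}\|v\|_1 + \tau\|Av\|_2$. For the complement, the fact that every entry of $v_{S^c}$ is bounded by $\|v_S\|_1/s \leq \|v\|_1/s$ yields the Stechkin-type inequality $\|v_{S^c}\|_2^2 \leq (\|v\|_1/s)\cdot \|v_{S^c}\|_1 \leq \|v\|_1^2/s$. Adding these two bounds gives
\begin{equation*}
\|v\|_2 \;\leq\; \frac{1+\rho}{\sqrt{s}}\,\|v\|_1 + \tau\|Av\|_2.
\end{equation*}

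Substituting the $\ell^1$-estimate into this inequality and simplifying the constant on the residual term via $2(1+\rho)+(1-\rho)=3+\rho$ yields the claim. The chief obstacle in this argument is the re-indexing: one naturally wants to use $T$ throughout so as to keep the $\sigma_s(x)_1$ term visible, but $\|v_{T^c}\|_2$ has no good bound in terms of $\|v_{T^c}\|_1$ because $T$ was chosen based on $x$ rather than $v$. Introducing the auxiliary set $S$ (the top-$s$ set of $v$) to handle the $\ell^2\!\to\!\ell^1$ passage, while still using $T$ inside the RNP-derived $\ell^1$-bound to expose $\sigma_s(x)_1$, is the key point; everything else is algebraic bookkeeping.
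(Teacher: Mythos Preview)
Your proof is correct and follows the standard two-step Foucart--Rauhut argument (the $\ell^1$-cone constraint from the RNP followed by the Stechkin-type $\ell^2$-upgrade using the auxiliary top-$s$ set of $v$). Note that the paper does not actually supply its own proof of this statement: it is quoted directly as \cite[Theorem 4.25]{foucartBook} and used as a black box, so there is nothing further to compare against beyond the cited source, which your argument faithfully reproduces.
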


\begin{lemma}\label{lemma:NSPImplicationBoundingSparse}
	Let $b_y,\varepsilon_y>0$ and suppose $A\in\mathbb{R}^{m\times N}$ satisfies the RNP of order $s$ with parameters $\rho\in(0,1)$ and $\tau>0$. Suppose we are given $y\in\real^m$ such that $\|y\|_2\leq b_y\sqrt{N/m}$ and $\|Ax-y\|_2\leq \varepsilon$, for some $s$-sparse $x \in \real^{N}$. Let $A'$ and $y'$ be such that
	\begin{equation*}
		\|y-y'\|_2\leq \varepsilon_y,\qquad\|A-A'\|_{2}\leq\frac{\varepsilon_y}{ \tau\left( \varepsilon+ b_y\sqrt{N/m}\right)}.
	\end{equation*}
	Then
	\begin{equation*}
		\|x\|_2\leq \tau\left(\varepsilon +  b_y \sqrt{N/m} \right)\quad\text{and}\quad\|A'x-y'\|_2 \leq \varepsilon+ 2\varepsilon_y.
	\end{equation*}
\end{lemma}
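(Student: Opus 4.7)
The plan is to prove the two bounds in turn, with the first bound being the key ingredient feeding into the second.

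For the first bound $\|x\|_2\leq \tau(\varepsilon+b_y\sqrt{N/m})$, I would exploit the $s$-sparsity of $x$ by taking $S=\supp(x)$, so that $|S|\leq s$, $x_S=x$, and $x_{S^c}=0$. Applying the RNP \eqref{eq:RNP} directly with this $S$ to the vector $x$ itself gives
\[
\|x\|_2=\|x_S\|_2\leq \frac{\rho}{\sqrt{s}}\|x_{S^c}\|_1+\tau\|Ax\|_2=\tau\|Ax\|_2,
\]
since $\|x_{S^c}\|_1=0$. Then the triangle inequality combined with the hypotheses $\|Ax-y\|_2\leq \varepsilon$ and $\|y\|_2\leq b_y\sqrt{N/m}$ yields $\|Ax\|_2\leq \varepsilon+b_y\sqrt{N/m}$, which gives the desired bound after multiplying by $\tau$.

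For the second bound $\|A'x-y'\|_2\leq \varepsilon+2\varepsilon_y$, I would insert $Ax$ and $y$ into $A'x-y'$ and apply the triangle inequality:
\[
\|A'x-y'\|_2\leq \|(A'-A)x\|_2+\|Ax-y\|_2+\|y-y'\|_2\leq \|A'-A\|_2\,\|x\|_2+\varepsilon+\varepsilon_y.
\]
Substituting the just-established bound on $\|x\|_2$ together with the hypothesis on $\|A-A'\|_2$ makes the first term collapse to exactly $\varepsilon_y$, producing the total $\varepsilon+2\varepsilon_y$.

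There is no real obstacle here: both estimates are one-line triangle-inequality computations once the RNP is applied to the sparse vector $x$ itself. The only thing to be careful about is that the hypothesis on $\|A-A'\|_2$ is calibrated precisely so that $\|A'-A\|_2\cdot \tau(\varepsilon+b_y\sqrt{N/m})=\varepsilon_y$, which is exactly what is needed to absorb the matrix perturbation into the $\varepsilon_y$ budget.
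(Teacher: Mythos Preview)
Your proposal is correct and essentially identical to the paper's proof: both apply the RNP with $S=\supp(x)$ to obtain $\|x\|_2\leq\tau\|Ax\|_2\leq\tau(\varepsilon+b_y\sqrt{N/m})$, then use the triangle inequality $\|A'x-y'\|_2\leq\|A-A'\|_2\|x\|_2+\|Ax-y\|_2+\|y-y'\|_2$ and substitute the first bound to collapse the matrix-perturbation term to $\varepsilon_y$.
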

\begin{proof}[Proof of Lemma \ref{lemma:NSPImplicationBoundingSparse}]
	Let $S=\mathrm{supp}(x)$. Now, by applying \eqref{eq:NSPDefinitionOrderSM} to $x$ we have
	\begin{equation*}
		\begin{aligned}
			\|x\|_2&\leq\tau\|Ax\|_2\leq\tau\left(\|Ax-y\|_2+\|y\|_2\right)\leq \tau\left( \varepsilon+ b_y\sqrt{N/m}\right),
		\end{aligned}
	\end{equation*}
	and thus 
	\begin{equation*}
		\begin{aligned}
			\|A'x-y'\|_2&\leq\|A-A'\|_{2}\|x\|_2+\|Ax-y\|_2+\|y-y'\|_2\\
			&\leq \|A-A'\|_{2}\cdot \tau\left( \varepsilon+ b_y\sqrt{N/m}\right)+\varepsilon+\varepsilon_y\leq \varepsilon + 2\varepsilon_y.
		\end{aligned}
	\end{equation*}
\end{proof}
\begin{lemma}[\!{\cite[Lemma 8.5]{AdcockHansenBook}}] \label{lemma:RNSPIsStableProperty}
	 Suppose $A \in \real^{m \times N}$ has the RNP of order $s$ with constants $\rho \in (0,1)$ and $\tau > 0$. If $A' \in \real^{m \times N}$ satisfies $\|A'-A\|_2 \leq \epsilon$ where $\epsilon$ is a non-negative real number with $\epsilon \leq \frac{1-\rho}{\tau(\sqrt{s}+1)}$ then $A'$ satisfies the RNP of order $s$ with constants $\rho'$ and $\tau'$ satisfying
	\begin{equation*}
		\rho' = \frac{\rho + \tau \epsilon \sqrt{s}}{1-\tau \epsilon},\quad \tau' = \frac{\tau}{1-\tau\epsilon}
	\end{equation*}
\end{lemma}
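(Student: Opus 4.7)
The plan is to derive the RNP inequality for $A'$ directly from the RNP inequality for $A$ by absorbing the perturbation term. Given an arbitrary $v \in \real^N$ and a set $S$ of cardinality at most $s$, I would start with the inequality $\|Av\|_2 \leq \|A'v\|_2 + \|(A-A')v\|_2 \leq \|A'v\|_2 + \epsilon \|v\|_2$, coming from the assumption $\|A-A'\|_2 \leq \epsilon$. Plugging this into the RNP inequality for $A$ applied to $v$ gives
\[
\|v_S\|_2 \leq \frac{\rho}{\sqrt{s}} \|v_{S^c}\|_1 + \tau \|A'v\|_2 + \tau\epsilon\|v\|_2.
\]

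The next step would be to bound $\|v\|_2$ in terms of $\|v_S\|_2$ and $\|v_{S^c}\|_1$. Using $\|v\|_2 \leq \|v_S\|_2 + \|v_{S^c}\|_2 \leq \|v_S\|_2 + \|v_{S^c}\|_1$ (the last inequality being the standard $\ell^2$-to-$\ell^1$ comparison), the display above becomes
\[
\|v_S\|_2 \leq \frac{\rho}{\sqrt{s}} \|v_{S^c}\|_1 + \tau \|A'v\|_2 + \tau\epsilon\|v_S\|_2 + \tau\epsilon\|v_{S^c}\|_1.
\]
Rearranging and dividing by $1 - \tau\epsilon$ (which is positive under the hypothesis on $\epsilon$, since $\tau\epsilon \leq (1-\rho)/(\sqrt{s}+1) < 1$) yields exactly
\[
\|v_S\|_2 \leq \frac{\rho + \tau\epsilon\sqrt{s}}{\sqrt{s}(1 - \tau\epsilon)}\|v_{S^c}\|_1 + \frac{\tau}{1-\tau\epsilon}\|A'v\|_2,
\]
which is the RNP for $A'$ with the claimed constants $\rho'$ and $\tau'$.

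The only mild subtlety is the requirement that $\rho' \in (0,1)$ in order for the resulting statement to be a genuine instance of the RNP as defined in Definition \ref{def:l2NSPROBUSTSM}. A quick check confirms that $\rho' < 1$ is equivalent to $\tau\epsilon(\sqrt{s}+1) < 1 - \rho$, which is precisely the hypothesis $\epsilon \leq (1-\rho)/(\tau(\sqrt{s}+1))$ (up to boundary). Since the derivation is essentially a short algebraic manipulation, I do not anticipate any serious obstacle; the proof amounts to the perturbation bound plus the elementary inequalities above, with the hypothesis on $\epsilon$ tailored exactly to keep the resulting constants within the admissible range.
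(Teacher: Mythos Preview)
Your argument is correct and is the standard elementary proof of this stability result. The paper does not supply its own proof of this lemma---it is quoted directly from \cite[Lemma 8.5]{AdcockHansenBook}---so there is nothing to compare against; your derivation (perturbation bound, then $\|v\|_2 \leq \|v_S\|_2 + \|v_{S^c}\|_1$, then rearrange) is exactly the intended one-paragraph proof, with the hypothesis on $\epsilon$ calibrated to keep $\rho'<1$ and $1-\tau\epsilon>0$.
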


\subsection{Standard matrices used in the theory of sparsity}\label{sec:CSMatricesAppendix}
We recall here two types of matrices that are frequently used in compressive sensing and sparse regularisation applications:

\subsubsection*{Hadamard matrices}
In this paper we will only consider Hadamard matrices of size $2^n \times 2^n$, for $n \in \mathbb{N}$. We define a `naturally ordered' Hadamard matrix $H_n$ with entries $\pm 1$ of dimension $2^n \times 2^n$ by the recurrence relation 
\begin{equation*}
	H_j = H_1 \otimes H_{j-1}, \quad H_1 = \begin{pmatrix} 1 & 1 \\ 1 &- 1 \end{pmatrix} \quad H_0 = 1, \quad j \in \mathbb{N}.
\end{equation*} 
where $\otimes$ denotes the Kronecker product. $H_n$ has orthogonal rows, columns and the property that $H_n^*H_n = 2^{n}I_{2^n}$ where $I_{2^n} \in \real^{2^n \times 2^n}$ is the identity matrix. We also consider Hadamard matrices in the `sequency ordering': that is, the rows of $H_n$ are ordered so that the number of sign changes in a given row is increasing. We shall make the distinction clear where important.

\subsubsection*{Hadamard-to-Haar matrices}

Let $H_{n} \in \mathbb{R}^{2^n \times 2^n}$ be the Hadamard matrix in the sequency ordering and let $W_n \in \mathbb{R}^{2^n \times 2^n}$ be the 1D discrete Haar wavelet transform matrix. Then, by \cite[Lemma 1 \& Lemma 2]{LauraWalshHaar} we have $H_{n}W_n^{-1} = \big(\hspace{1pt} 1\,\big) \oplus \bigoplus_{i=0}^{n-1} X^i$ where the matrix $X^{i} \in \mathbb{R}^{2^{i} \times 2^{i}}$ is unitary and satisfies $|(X^i)_{jk}|=1/\sqrt{2^i}$, for all $i=0,\dots, n-1$ and $j,k=1,\dots, 2^{i}$. Now, a row-subsampled Hadamard-to-Haar matrix is any matrix of the form
\begin{equation}\label{eq:HadToHaarDecomposition}
	A = \big(\hspace{1pt} 1\,\big) \oplus \bigoplus_{i=0}^{n-1} c_i P_{S_i}X^{i},
\end{equation} 
where the $S_i$ are subsets of $\{1,\dots, 2^{i-1}\}$,$P_{S_i}:\real^{2^{i-1}}\to \real^{|S_{i}|}$ are the corresponding projection operators selecting the coordinates in $S_i$ and $c_i = \sqrt{\frac{2^{i}}{|S_i|}}$ for nonempty $S_i$ and $c_i = 0$ if $S_i = \varnothing$.
Matrices formed by taking the product of a Hadamard transform with an inverse wavelet transform, like the Hadamard-To-Haar matrices defined above, have proven to be very effective in compressive sensing, particularly on imaging applications \cite{FloHadHaar}.

\section{Separation oracles and the ellipsoid algorithm}\label{appendix:ellipsoid}

In this section we define the concepts needed in the statements of Theorem \ref{thm:Ellipsoid} and Theorem \ref{thm:Ellipsoid-BSS}, namely those of the weak optimisation problem, weak separation oracle, encoding functions and polynomially separable classes. We start with the weak optimisation problem, which we define as a minimisation problem (rather than a maximisation problem as in \cite{Lovasz_book}) for convenience.

\begin{definition}[{\cite[Def. 2.1.10]{Lovasz_book}}, Weak optimisation problem]\label{def:WoptProb}
	Let $\CompactK\subset\real^n$ be a compact convex set, and suppose that $R>0$ is a rational  such that $\CompactK\subset \clBall{R}{0}$. Furthermore, let $c\in\mathbb{Q}^n$ and  $\zeta\in\mathbb{Q}$. The weak optimisation problem $(\CompactK,R,c,\zeta)$ is the task to either
	\begin{itemize}
		\item[(a)] find a $z^*\in\mathbb{Q}^n$ such that $z^*\in S(\CompactK,\zeta)$ and $\langle c,z^*\rangle\leq\langle c,z\rangle +\zeta $, for all $z\in S(\CompactK,-\zeta)$, or
		\item[(b)] assert that $S(\CompactK,-\zeta)=\varnothing$.
	\end{itemize}
\end{definition}
The following is the definition of a weak separation oracle, which we present as a synthesis of  \cite[Assump. 1.2.1]{Lovasz_book},  \cite[Def. 2.1.13]{Lovasz_book}, and the discussion on pages 54 and 55 of \cite{Lovasz_book} on the description of compact convex sets by means of a separation oracle.

\begin{definition}[Weak separation oracle]\label{def:WsepOr}
	Let $\CompactK\subset\real^n$ be a compact convex set. We say that a procedure $\mathrm{SEP}_\CompactK$ is a weak separation oracle for $\CompactK$ if, given a vector $w\in\mathbb{Q}^n$ and a rational $\xi>0$,
	\begin{itemize}
		\item[(a)] $\mathrm{SEP}_\CompactK$ either outputs a $d\in\mathbb{Q}^n$ with $\|d\|_\infty=1$ and  such that $\langle d,z\rangle\leq\langle d,w\rangle +\xi $, for all $z\in S(\CompactK,-\xi)$, or 
		\item[(b)] $\mathrm{SEP}_\CompactK$ asserts that $w\in S(\CompactK,\xi)$.
	\end{itemize}
	Moreover, in the Turing case,  we insist that there exist a polynomial $P_{\CompactK}:\real\to\real$ such that, whenever $\mathrm{SEP}_\CompactK$ outputs a vector $d$ as in item (i), we have $\length(d) \leq P_{\CompactK}( \length( w )  +\length( \xi )  )$.
\end{definition}

\noindent Note that by the separating hyperplane theorem \cite{Boyd} applied to the convex sets $K$ and $\{w\}$, at least one of (a) and (b) in Definition \ref{def:WsepOr} are satisfied whenever $K$ is non-empty. Moreover, (a) is trivially satisfied for any $d \in \mathbb{Q}^n$ with $\|d\|_{\infty}=1$ if $K$ is empty.  

\begin{remark}
	When considering the BSS instead of the Turing model, all quantities in Definitions \ref{def:WoptProb} and \ref{def:WsepOr} specified to be rationals are allowed to be (not necessarily rational) real numbers.
\end{remark}

Next, we introduce Turing encoding functions which represents the encoding of the various sets $\CompactK$ in a form that can be presented to a Turing machine as input.
\begin{definition}[Turing encoding function]\label{def:EncFun}
	Let $\Alphabet^*$ denote the set of finite-length strings in a finite alphabet  $\Alphabet$. A Turing encoding function for $\mathscr{K}$ is an injective function $\DataTur:\mathscr{K}\to\Alphabet^*$.
\end{definition}
\noindent Note that the explicit form of $\DataTur$ depends on the particular class $\mathscr{K}$ under consideration. The condition on $\DataTur$ in Definition \ref{def:EncFun} simply states that the sets are encoded uniquely. For example, for basis pursuit denoising where our compact convex sets will be of the form $\{z\in\real^N \, \vert \, \|A'z-y'\|_2\leq \delta', \|z\|_2\leq R' \}$, where $\delta',R'\in\mathbb{D}$, $y'\in\mathbb{D}^m$, and $A'\in\mathbb{D}^{m\times N}$, one possible encoding in the alphabet $\Alphabet= \{0, \;1, -\,,\; .\,,\; ; \}$ is
\begin{equation*}%\label{eq:DataEncTur}
	m\; ;\;  N   \; ; \;  \delta'  \; ; \; R' \; ; \; y'_1 \; ; \; y'_2  \; \cdots\; ; \; y'_m \; ; \; A'_{1,1} \; ; \; A'_{1,2}   \; \cdots\; ; \; A'_{m,N}  \quad\in \Alphabet^*
\end{equation*}
where all the dyadic rationals are written out in their binary representation. 

We are now ready to define polynomially separable classes in the Turing model.

\begin{definition}[Polynomially separable class -- Turing case]\label{def:Tur-poly-sep-class}
	Suppose $\mathscr{K}$ is a circumscribed class equipped with a Turing encoding function $\DataTur:\mathscr{K}\to\Alphabet^*$. We say that $\mathscr{K}$ is \emph{Turing-polynomially separable with respect to $\DataTur$} if there exist 
	a Turing machine that takes in $\DataTur(\CompactK,n,R) \in\Alphabet^*$, a $w\in\mathbb{Q}^n$, and a rational $\xi>0$ as its input and acts as a weak separation oracle for $\CompactK$, i.e., it either
	\begin{itemize}
		\item[(a)] outputs a $d\in\mathbb{Q}^n$ with $\|d\|_\infty=1$ and  such that $\langle d,z\rangle\leq\langle d,w\rangle +\xi $, for all $z\in \CompactK$, or
		\item[(b)] asserts that $w\in S(\CompactK,\xi)$,
	\end{itemize}
	such that the runtime of the Turing machine is bounded by a polynomial of $\length\big(\DataTur(\CompactK,n,R)\big)$, $ \length(R)$,  $\length(w)$, $\length(\xi)$, and $n$.
\end{definition}

Finally, we present analogues of the concepts above for the BSS model of computation, following the ideas in \cite[Sec. 1.1, Sec. 1.2]{Nemirovski1995}.

\begin{definition}[BSS encoding function]\label{def:EncFunBSS}
	We define $\InputVecsBSS=\bigcup_{k=1}^\infty \real^k$, i.e., the set of real vectors of arbitrary length. A BSS encoding function for $\mathscr{K}$ is a function $\DataBSS:\mathscr{K}\to\InputVecsBSS$ so that, for all $(\CompactK_1,n,R),(\CompactK_2,n,R)\in\mathscr{K}$,  $\CompactK_1\neq\CompactK_2$ implies $\DataBSS(\CompactK_1,n,R)\neq\DataBSS (\CompactK_2,n,R)$.
\end{definition}

In analogy to the Turing encoding function, $\DataBSS$ serves to encode $\CompactK$ as a vector of reals, which can be accepted as input by a BSS machine.  For basis pursuit denoising, the convex set $\{z\in\real^N\,\vert\, \|A'z-y'\|_2\leq \delta', \|z\|_2\leq R \}$, where $y'\in\mathbb{R}^m$ and $A'\in\mathbb{R}^{m\times N}$, can be encoded as
\begin{equation*}%\label{eq:DataEncBSS}
	\left( m,  N,  \delta', R' , y'_1 , \cdots,  y'_m ,  A'_{1,1} , A'_{1,2}   , \cdots ,  A'_{m,N} \right)\in\real^{4+m+mN}\subset\InputVecsBSS.
\end{equation*}

\begin{definition}[Polynomially separable class -- BSS case]\label{def:BSS-poly-sep-class}
	Suppose $\mathscr{K}$ is a circumscribed class equipped with a BSS encoding function $\DataBSS:\mathscr{K}\to \InputVecsBSS$. We say that $\mathscr{K}$ is \emph{BSS-polynomially separable with respect to $\DataBSS$} if there exists a BSS machine that takes in $\DataBSS(\CompactK,n,R) \in\InputVecsBSS$, a $w\in\mathbb{\real}^n$, and a real $\xi>0$ as its input and acts as a weak separation oracle for $\CompactK$, i.e., it either
	\begin{itemize}
		\item[(a)] outputs a $d\in\mathbb{\real}^n$ with $\|d\|_\infty=1$ and  such that $\langle d,z\rangle\leq\langle d,w\rangle +\xi $, for all $z\in \CompactK$, or
		\item[(b)] asserts that $w\in S(\CompactK,\xi)$,
	\end{itemize}
	such that the runtime of the BSS machine is bounded by a polynomial of $\dim(\DataBSS(\CompactK,n,R))$, i.e., the dimension of the real vector $\DataBSS(\CompactK,n,R)$.
\end{definition}

\noindent Note that, unlike in the Turing case, we now have no concept of the length of the encoding of a convex set, but instead the separation oracle must be executable in runtime which is polynomial only in the dimension of the data vector encoding the convex set.

\bibliographystyle{abbrv}
\bibliography{FoundOpt}
\end{document}